\newtheorem{theorem}{Theorem}[section]
\newtheorem{proposition}[theorem]{Proposition}
\newtheorem{lemma}[theorem]{Lemma}
\newtheorem{corollary}[theorem]{Corollary}
\newtheorem{definition}[theorem]{Definition}
\newtheorem{remark}[theorem]{Remark}
\numberwithin{equation}{section}
\newcommand \Rbb {\mathbb{R}}
\newcommand \Ecal {\mathcal E}
\newcommand \Fcal{\mathcal{F}}
\newcommand \Hcal {\mathcal H}
\newcommand \Kcal {\mathcal K}
\newcommand \Pcal {\mathcal P}
\newcommand \Scal{\mathcal{S}}
\newcommand \Tcal{\mathcal T}
\newcommand \TPcal{\mathcal{TP}}
\newcommand \Econ {{\bf E}_{con}}
\newcommand \econ {{\bf e}_{con}}
\newcommand \nablab{\bar{\nabla}}
\newcommand \delb {\bar {\del}}
\newcommand \Qb{\bar Q}
\newcommand \Tb {\overline {T}}
\newcommand \Phib{\overline{\Phi}}
\newcommand \Psib{\overline{\Psi}}
\newcommand {\chih}{\hat{\chi}}
\newcommand {\delh}{\hat{\del}}
\newcommand {\hh}{\hat{h}}
\newcommand \m{\text{m}}
\newcommand \del \partial
\newcommand \delu {\uline{\del}}
\newcommand \Tu {\uline{T}}
\newcommand \minu{\uline{\text{m}}}
\newcommand \Psiu{\uline{\Psi}}
\newcommand \Phiu{\uline{\Phi}}
\newcommand \Qu{\uline{Q}}
\newcommand {\delt} {\tilde{\del}}
\newcommand {\Tt} {\tilde{T}}
\newcommand {\Phit}{\tilde{\Phi}}
\newcommand {\Psit}{\tilde{\Psi}}
\newcommand {\Qt}{\tilde{Q}}
\newcommand {\vt}{\tilde{v}}
\newcommand \RR{\mathbb{R}}
\newcommand {\vep}{\varepsilon}
\newcommand {\rhoH} {\rho^{\mathcal{H}}}
\newcommand {\near} {\text{\sl near}}
\newcommand {\far} {\text{\sl far}}
\newcommand {\la}{\langle}
\newcommand {\ra}{\rangle}
\newcommand {\Ebf}{{\bf E}}
\newcommand {\ebf}{{\bf e}}
\newcommand {\Fbf}{{\bf F}}
\newcommand {\ord}{\text{ord}}
\newcommand {\rank}{\text{rank}}
\newcommand {\ih}{\hat{\imath}}
\newcommand {\jh}{\hat{\jmath}}
\newcommand {\kh}{\hat{k}}
\newcommand {\lh}{\hat{l}}
\newcommand {\xh}{\hat{x}}
\newcommand {\phih}{\hat{\phi}}
\def\hlinew#1{%
  \noalign{\ifnum0=`}\fi\hrule \@height #1 \futurelet
   \reserved@a\@xhline}
\begin{document}
\title{Nonlinear stability of a type of totally geodesic wave maps in non-isotropic manifolds}

\author[S.~Duan] {Senhao Duan}
\address{D\'epartement ing\'enierie Math\'ematique et informatique, \'Ecole des Ponts ParisTech, Paris France, 77420}
\email{senhao.duan@eleves.enpc.fr}

\author[Y.~ Ma] {Yue MA}
\address{School of Mathematics and Statistics, Xi'an Jiaotong University, Xi'an, Shaanxi 710049, China.}
\email{yuemath@xjtu.edu.cn}

\author[W. ~Zhang] {Weidong Zhang}
\address{School of Mathematics and Statistics, Xi'an Jiaotong University, Xi'an, Shaanxi 710049, China.}
\email{zwd13892650621@stu.xjtu.edu.cn}

\maketitle

%
% {\tt To be done
%\\
%1. Draw a figure for geodesic normal coordinates
%\\
%2. Draw a figure for totally geodesic wave map from $\RR^{1+2}$ to $hyperboloid$ for example
%\\
%3. The degenerate case where $\m(\del\varphi_S,\del\varphi_S) = 0$ is quite complicated.
%}
%
% {{\bf Global Problems to be fixed}
%\\
%1. The definition of $\TPcal^{\near}$ is changed from $\{t-1\leq r\leq 2t\}$ to $\{t-1\leq r\leq 3t\}$. 
%}
%\begin{abstract}
%Dans cet article, on \'etudie une classe d'application totalement g\'eod\'esique dont l'image est une g\'eod\'esique dans une vari\'et\'e riemannienne non-isotrope. On consid\`ere sa stabilit\'e non-lin\'eaire dans la famille des applications d'onde. On \'etablit d'abord les propri\'et\'es de factorisation. Puis avec une carte des coordon\'ees g\'eod\'esiques normales sp\'ecialement construites, on formule ce probl\`eme de stabilit\'e en un probl\`eme de Cauchy associ\'e \`a un syst\`eme d'onde-Klein-Gordon avec des petites donn\'ees. Compar\'e avec les cas isotropes (Abbrescia-Chen (2021), Duan-Ma (2022) et Dong-Wyatt (2021)), ce syst\`eme contient des nouvaux termes non-lin\'eaires. De plus, notre r\'esultat ne demande pas que les donn\'ees initiales soient de support compact. Notre strat\'egie est d'utiliser une g\'en\'eralisation du feuilletage hyperbolo\"idal, qui permet de feuilleter l'espace-temps entier de mani\`ere lisse. Visant aux nouvaux termes non-lin\'eaires, on a aussi d\'ev\'elopp\'e une estimation d'energie conforme sur ces feuilles et une transformation de forme normale.
%\end{abstract}
\begin{abstract}
In this article we investigate a type of totally geodesic map which has its image being a geodesic in an non-isotropic Riemannian manifold. We consider its nonlinear stability in the family of wave maps. We first establish the factorization property. Then with a specially constructed chart of geodesic normal coordinates, we formulate this stability problem into a Cauchy problem associate to a wave-Klein-Gordon system with small initial data. Compared with the isotropic cases (Abbrescia-Chen (2021), Duan-Ma (2022) and Dong-Wyatt (2021)), this system contains some new nonlinearities. Furthermore, we remove the restriction on the support of initial data. Our strategy relies on a generalization of the hyperboloidal foliation, which smoothly foliates the entire (future) space time. A conformal energy estimate on these hypersurfaces and a type of normal form transform are also developed in order to treat the newly appeared nonlinear terms.  
\end{abstract}
%\tableofcontents
\section{Introduction}
\subsection{Backgrounds and objectives}
Let $(M,g)$ be a Lorentzian manifold and $(N,h)$ be a Riemannian manifold. A sufficiently regular map $\phi: (M,g)\mapsto (N,h)$ is called a {\it wave map}, if it is formally a critical point of the following Lagrangian action functional
\begin{equation}\label{eq1-09-02-2022}
S[\phi] = \int_{M}(\phi_*, \phi_* )_{T^*M\otimes \phi^*(TN)}\, d\text{V}_g.
\end{equation}
The research on wave maps has been a attractive theme for a long time. One may see \cite{Shatah-1998,Kri07} for a general review. In the present article we discuss a special type of such maps, called the {\it totally geodesic wave maps}. They are wave maps which preserve geodesics. More precisely, a map $\phi: M\mapsto N$ is called {\sl totally geodesic}, if $\phi\circ\gamma$ being a geodesic of $N$ provided that $\gamma: (-\vep,\vep)\mapsto M$ is a geodesic. 

A wave map is not necessarily totally geodesic, while it can be easily checked that a totally geodesic map is always a wave map (see in detail below \eqref{eq-main-geo}). Then a natural question arises: are the totally geodesic maps stable in the family of wave maps? 

This problem turn out to be complicated in general case. However, when $(N,h)$ is a space form (simply connected with constant sectional curvature) and $M = \RR^{1+d}$ the standard Minkowski space with $\phi(M)$ being the image of a geodesic of $N$ (which are called {\it rank-one} maps, see Remark \ref{rk1-11-04-2022-M} for detail.), some results are obtained. To be more precise, we formulate this problem as follows.  Given a totally geodesic map defined in $\RR^{1+d}$ and perturb it on $\{t=0\}$. Does there exist a wave map defined in $\RR^{1+d}$ (or at least locally for $t\leq T$) such that the restriction of this wave map coincides with the perturbed map on $\{t=0\}$? 

As formulated in \cite{Ab-2019} with the {\sl geodesic normal coordinates} introduced in \cite{Gray-2003}, the perturbed wave map, if exists, satisfies a hyperbolic partial differential system composed by one wave equation and $d$ Klein--Gordon equations. The geometric stability problem then reduces to a global existence problem of the associate PDE system with small initial data. With the techniques of vector fields and the application of hyperbolic variables / hyperboloidal foliation, the case $d\geq 3$ was established in \cite{Ab-2019} and the case $d=2$ was established in \cite{Duan-Ma-2020}. In \cite{Dong-Wyatt-2021}, the demand on the initial regularity is greatly relaxed via a much simpler technique. All the above results demand that the initial perturbation should be of compact support.  

In the present article, we first give a complete classification on the rank-one totally geodesic maps when the departure is $\RR^{1+d}$. Then we concentrate on the critical case $d=2$ and establish the nonlinear stability result in a more general context. More precisely, the target manifold $N$ need not to be isotropic, and the initial perturbation need not to be compactly supported.

Form a PDE point of view, the totally geodesic wave maps belong to a particular class of solutions to the wave map equation \eqref{eq-main-geo} or \eqref{eq1-18-11-2021}. The above stability problem then reduces to the stability problem of these particular solutions. For this reason, we mention particularly the results of global well-posedness of the wave maps \cite{Chrsitodoulou-Shadi-1993-1,Chrsitodoulou-Shadi-1993-2,Muller-Struwe-1996,Freire-1997,Tao-2000,Klainerman-Selberg-2001,Tataru-2001,Kri-2004,Tataru-2005,Sterbenz-Tataru-2009,Kir-2009,Sterbenz-Tataru-2010,Wang-Yu-2012,Lawrie-Oh-2016,Gavrus-Jiao-Tataru-2021} etc. in the critical dimension $2+1$. We expect that some techniques or ideas may be shared between these two closely related themes of research.

\subsection{Notation and main result}

In Section \ref{sec1-09-02-2022} we will show that a totally geodesic map $\varphi: M\mapsto N$ is of constant rank, i.e., the dimension of the image of the tangent map $\varphi_*$ is constant (Remark \ref{rk1-11-04-2022-M}). Furthermore, a totally geodesic map $\varphi: \RR^{1+d}\mapsto N$ with rank one can be factorized as following (see in detail in Proposition \ref{prop1-09-02-2022}):
\begin{equation}\label{eq2-09-03-2022-M}
\varphi:\RR^{1+d}\stackrel{\varphi_S}{\longrightarrow}\RR\stackrel{\varphi_I}{\longrightarrow} N,
\end{equation}
where $\varphi_S$ is affine and $\varphi_I$ is totally geodesic. We will show that $\m(d\varphi_S,d\varphi_S)$ is constant in $\RR^{1+d}$ (see the discussion at the end of Subsection \ref{subsec1-14-04-2022-M}). The map $\varphi$ is said to be {\it space-like / null /  time-like}, if $d\varphi_{S}$ is space-like / null / time-like. Remark that in this case, $\varphi$ is of {\sl infinite energy} in sens of \eqref{eq1-09-02-2022}. This is different form \cite{Grigoryan2010,Sideris1989} where $\varphi_S$ an arbitrary {\it finite energy solution} to a wave equation, which leads to the fact that $\varphi$ is not totally geodesic (see \cite{Ab-2019}).

It is clear that $\varphi_I(\RR)$ is a geodesic of $N$. Its geometric properties influence essentially the nonlinear stability of $\varphi$. For our purpose we introduce the following geometric descriptions on a geodesic of $N$. 
\begin{definition}\label{def2-09-03-2022-M}	
Let $\gamma$ be a complete geodesic of a Riemannian manifold $(N,h)$. 
\\
$\bullet$ The manifold $N$ is said to be locally symmetric along $\gamma$, if 
\begin{equation}\label{eq1-17-02-2022-M}
\nabla_{\dot\gamma}\nabla^j R\big|_{\gamma} = 0,\quad \forall j\in\mathbb{N},
\end{equation}
where $\nabla$ is the associate metric connection and $R$ is the Riemann curvature tensor. 
\\
$\bullet$ Let $\kappa(\cdot,\cdot)$ be the symmetric quadratic form acting on $\dot\gamma^{\perp}$ defined by
\begin{equation}\label{eq1-09-03-2022-M}
\kappa(v,w) = \big(R(v,\dot\gamma)\dot\gamma,w\big)_h\big|_{\gamma},	
\end{equation}
then $\gamma$ is said to be:
\\
1. {\sl time-like linear stable}, if $\kappa(\cdot,\cdot)$ is  {positive} defined along $\gamma$; 
\\
2. {\sl space-like linear stable}, if $\kappa(\cdot,\cdot)$ is  {negative} defined along $\gamma$.
\\
$\bullet$ Let $\lambda_{\ih}$ be the  eigenvalues of $\kappa$ with $\lambda_{\ih}\neq 0$. Then $\gamma$ is said to be {\it non-resonant}, if at each point of $\gamma$,
\begin{equation}\label{eq2-09-03-2022_M}
|\lambda_{\ih}|^{1/2} + |\lambda_{\jh}|^{1/2}\neq |\lambda_{\kh}|^{1/2}.
\end{equation}
\end{definition} 
As we will see through the PDE system formulated in \eqref{eq5-20-11-2021}, the case with $d\geq 3$ is relatively easy and we do not need the non-resonant assumption. In the present article we concentrate on the two-dimensional case with $\varphi_I(\RR)$ satisfying the above three assumptions. A typical example in our case is the
$2$-torus embedded in $\RR^3$ with its two equators: one is time-like linear stable and the other is space-like linear stable. In Section \ref{sec1-13-05-2022-M} we will present more examples in detail.

From \eqref{eq2-09-03-2022-M}, $\varphi(\RR^{1+2})$ is the image of the geodesic $\varphi_I(\RR)$. Then we state the main result.
\begin{theorem}[Main result, rough version]
Let $\varphi$ be a rank-one totally geodesic wave map from $\RR^{1+2}$ to a Riemannian manifold $(N,h)$. Suppose that $\varphi(\RR^{1+2})$ is the image of a non-resonant geodesic of $N$, and $N$ is locally symmetric along $\varphi(\RR^{1+2})$. Then $\varphi$ is nonlinear stable provided that $\varphi$ is time-like / space-like and $\varphi_I(\RR)$ time-like / space-like linear stable.
\end{theorem}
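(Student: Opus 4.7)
The plan is to reduce the geometric stability problem to the global existence of a coupled wave--Klein--Gordon (WKG) system and then to close a bootstrap argument using a generalized hyperboloidal foliation. By the factorization \eqref{eq2-09-03-2022-M}, $\varphi = \varphi_I\circ\varphi_S$, so the image of $\varphi$ lies on a single geodesic $\gamma = \varphi_I(\RR)$ of $N$. I would set up a Fermi/geodesic normal chart $(s, y^{\hat 1}, y^{\hat 2})$ adapted to $\gamma$, with $s$ the arclength parameter along $\gamma$ and $y^{\hat i}$ normal coordinates in the orthogonal directions. A wave map close to $\varphi$ is then encoded by one scalar $u^0$ (tangential perturbation) and $d=2$ scalars $u^{\hat i}$ (normal perturbations). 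Expanding the Euler--Lagrange equation of \eqref{eq1-09-02-2022} in this chart produces one wave equation for $u^0$ and two Klein--Gordon equations for $u^{\hat i}$, whose masses are $|\lambda_{\hat i}|^{1/2}$ with $\lambda_{\hat i}$ the eigenvalues of $\kappa$ from \eqref{eq1-09-03-2022-M}. The linear stability hypothesis (definiteness of $\kappa$) ensures these masses are positive real, and the locally symmetric assumption \eqref{eq1-17-02-2022-M} guarantees that the coefficients of the resulting system do not depend on $s$ (all iterated $s$--derivatives of $R$ along $\gamma$ vanish), so the system has constant-coefficient principal part and all nonlinearities take a polynomial form in $(u,\del u)$.

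Next, to remove the compact-support restriction of \cite{Ab-2019,Duan-Ma-2020,Dong-Wyatt-2021}, I would replace the classical hyperboloidal foliation by a smooth family $\{\Hcal_s\}$ of spacelike hypersurfaces that coincide with truncated hyperboloids $\{t^2-|x|^2 = s^2\}$ inside a light cone and glue smoothly to flat slices $\{t=\text{const}\}$ in the exterior region. One then defines an adapted set of admissible vector fields (translations everywhere, boosts and rotations weighted by cut-offs near the junction) and weighted Sobolev energies on each $\Hcal_s$. The first analytic ingredient is an $L^2$-type energy estimate for both the wave and Klein--Gordon components along this foliation; the second is a \emph{conformal energy estimate} on the $\Hcal_s$'s, providing enhanced $L^2$ control on the wave component $u^0$ and hence the sharp pointwise decay needed in the critical dimension $d=2$.

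The third and most delicate ingredient is the treatment of the new nonlinearities produced by non-isotropy. Compared with the isotropic case, the right-hand side of the Klein--Gordon equations now contains quadratic self-interactions of the form $c_{\hat i\hat j\hat k}\,u^{\hat i}u^{\hat j}$ in the $u^{\hat k}$-equation, which are generically resonant under a free KG evolution and cannot be handled by direct decay estimates at the critical dimension. To deal with them I would construct a quadratic normal form transform $u^{\hat k}\mapsto u^{\hat k} + Q_{\hat k}(u,u)$ where $Q_{\hat k}$ is a bilinear Fourier/symbolic multiplier; the non-resonance hypothesis \eqref{eq2-09-03-2022_M} is exactly the assumption that the denominator appearing in the solution of the algebraic resonance equation $m_{\hat k}^2 \neq (m_{\hat i}\pm m_{\hat j})^2$ is uniformly bounded away from zero, so that $Q_{\hat k}$ is a bounded operator compatible with the generalized foliation. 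After this change of variables the troublesome quadratic terms are absorbed into cubic ones, which are subcritical in $d=2$.

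The proof then closes by a standard continuity/bootstrap scheme: on a maximal interval on which refined energy and pointwise bounds hold with constant $C\vep$, use the KG decay, the conformal estimate for the wave part, and the normal-form-reduced cubic structure to strictly improve each bound. The main obstacle, in my view, is not any single one of these ingredients in isolation but their \emph{compatibility}: one must verify that the normal form transform, the admissible vector fields, and the conformal multiplier all behave well across the gluing region of $\{\Hcal_s\}$, where the geometry transitions from hyperboloidal to flat, and that no dangerous commutator arises there. A secondary difficulty is the quasilinear nature of the top-order terms coming from $\varphi_S$ being null or borderline, which forces one to track the null structure of the wave--wave and wave--KG interactions carefully throughout the energy hierarchy.
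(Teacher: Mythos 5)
Your overall architecture does coincide with the paper's: geodesic normal (principal) coordinates along $\gamma=\varphi_I(\RR)$, with local symmetry forcing constant coefficients; reduction to one wave equation plus Klein--Gordon equations whose positive masses come from the linear stability (sign) hypothesis on $\kappa$; a foliation that is hyperboloidal inside the light cone and flat outside to dispense with compactly supported data; a conformal energy estimate on these slices; a normal form for the KG--KG quadratics under the non-resonance condition; and a bootstrap. However, there are two genuine gaps. The first concerns the quadratic Klein--Gordon source in the \emph{wave} equation: after reduction it reads $\Box\phi^1 = A^{\alpha}_{\jh}\del_{\alpha}(|\phi^{\jh}|^2)+\text{cubic}$, and in $\RR^{1+2}$ this is borderline. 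If you apply the conformal energy estimate directly to the wave component, the source contributes an integrand of size $\tau\,\|v^2\|_{L^2}\sim\vep^2$, so the conformal energy grows linearly in $s$ and you lose exactly the sharp decay of the wave component that the KG energy estimates require. The paper's resolution is to exploit the divergence structure by passing to an auxiliary system: primitives $w_{\jh}$ with $\Box w_{\jh}=v_{\jh}^2$ and $\phi^1=w_0+A^{\alpha\jh}\del_{\alpha}w_{\jh}$, so the conformal estimate is only applied to $w_0$, whose source is cubic with at least one KG factor, while what re-enters the KG equations is the Hessian $\del\del w_{\jh}$, which enjoys improved $L^2$ and pointwise decay. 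Your proposal never explains how $\del_{\alpha}(v^2)$ is absorbed, and without this device (or an equivalent one) the bootstrap does not close.

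The second gap is the normal form itself. You propose a bilinear Fourier/symbolic multiplier and \emph{assert} its compatibility with the generalized foliation, but that compatibility is precisely the unresolved difficulty: Fourier-space transforms do not restrict naturally to the Euclidean-hyperboloidal slices, nor to the region-dependent weighted energies and ghost-weight space-time integrals used to treat non-compact data. The paper instead builds an explicit physical-space transform, with correctors formed from $v_jv_k$, $v_j\del_tv_k$ and $\frac{t^2-r^2}{t^2}\del_tv_j\del_tv_k$ (the non-resonance condition $(c_i^2-c_j^2-c_k^2)^2-4c_j^2c_k^2\neq 0$ inverting the resulting algebraic system), localized to $\{r\leq 3t\}$ by a conical cutoff, and estimates the remainders directly in the foliation's norms; naming the obstacle is not the same as resolving it. Two smaller corrections: the number of Klein--Gordon components is $n-1=\dim N-1$, not the spatial dimension $2$; and the reduced system is semilinear, while the null case $\m(d\varphi_S,d\varphi_S)=0$ is excluded by the hypotheses (it would annihilate the KG masses), so no ``quasilinear top-order'' or ``borderline null $\varphi_S$'' analysis is actually involved.
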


\subsection{Main difficulties and strategy of proof}
%\noindent \textcolor{blue}{\tt
%1. Geometric formulation. Interpretation of the local symmetry;
%\\
%2. Remove the restriction on compact support of initial data; -- Euclidean-hyperboloidal slices
%\\
%3. Pure Klein-Gordon quadratics -- Normal form transform realized in physical space, without hyperboloidal foliation/ hyperbolic variables
%\\
%4. Weak weight demanded on the Klein-Gordon initial data.
%\\
%5. Conformal energy estimate on Euclidean hyperboloidal slices, for low regularity.
%}
We first explain the roles of the assumptions made on $\varphi(\RR^{1+2})$. If we follow the formulation in \cite{Ab-2019} and carefully avoid the simplifications based on the fact that $N$ is a space form, we arrive at \eqref{eq1-19-11-2021}. However, this system is too genera to be handle.

The first difficulty comes from the coefficients of this system, which are derivatives of the Christoffel symbols. Without the assumption of being space form, these coefficients (and their derivatives) are not even bounded. That is why we need some type of symmetry assumption along the geodesic. The most typical choice is the local symmetry along $\varphi(\RR^{1+2})$ announced in Definition \ref{def2-09-03-2022-M}. It implies that the coefficients of \eqref{eq1-19-11-2021} are constants (Proposition \ref{prop1-19-02-2022-M}). In fact our mechanism can handle the situations ``not far from'' it. For example we accept the following weaker assumption:
\begin{equation}\label{eq1-10-03-2022-M}
	\big|\nabla_{\dot\gamma}^k\nabla^jR|_{\gamma}\big|\leq C\ell^{-k},
\end{equation}
where $\ell$ is a fixed arc-length parameter. This assumption is clearly verified in the isotropic case by the homogeneity of the space forms.

It is nature to analyze the linearized system before regarding the fully nonlinear one. Due to the non-isotropy of the target manifold, the mass of each Klein-Gordon equation may differ from one to another. We need to guarantee the positivity of all these masses. This leads to the assumption of linear stability.  On the other hand, $\kappa(\vec{e}_{\ih},\vec{e}_{\ih}) = -\frac{R(\vec{e}_{\ih},\vec{e}_1,\vec{e}_{\ih},\vec{e}_1)}{\|\vec{e}_{\ih}\wedge \vec{e}_1\|^2}$ is a component of the sectional curvature. This assumption is essentially a sign condition on the sectional curvature. In the isotropic case analyzed in \cite{Ab-2019}, $N$ is time-like linear stable provided that $ {\kappa>0}$ and space-like linear stable if $ {\kappa<0}$. 

The fact that the Klein-Gordon equations may have different mass may bring us a very important and inconvenient phenomena called the {\it mass resonance}. This was revealed by H. Sunagawa in \cite{Sunagawa-2003}. More precisely, in lower dimensional spacetime, the asymptotic behavior of a system composed by Klein-Gordon equations may depend on the ratio of the masses of the components. For example (see \cite{Tsutsumi-2003-2,Katayama-Ozawa-Sunagawa-2012,Dfx,KS-2011}), a Klein-Gordon system composed by two components with quadratic nonlinear terms in 1+2 dimensional spacetime always enjoys global existence with small localized initial data, provided that their masses {\it do not} satisfy the relation $m_1 = 2m_2$. And when it occurs, additional structural conditions are necessary for global existence. 

In the present case, due to the lack of homogeneity of the target manifold, the unpleasant Klein-Gordon quadratic terms appear on the right-hand side of the Klein-Gordon equations of \eqref{def2-09-03-2022-M}. Together with the fact that we have different masses, we face the risk of mass resonance. In this initial step for non-isotropic target manifold, we concentrate on the {non-resonant} case, i.e., the assumption \eqref{eq2-09-03-2022_M} is made.

%In the present article we have excluded three degenerate cases:
%\\
%$\bullet$ $\varphi$ is null, i.e., $\m(d\varphi_S,d\varphi_S) = 0$;
%\\
%$\bullet$ $\kappa$ is semi-positive defined / negative defined;
%\\
%$\bullet$ $\varphi_I$ is resonant;
%\\
%Each of them leads to a more difficult PDE system. The analysis on these cases constitutes the next step of research. 

Taking the above assumptions, the main PDE system reduces to \eqref{eq5-20-11-2021}. Regarding this system we have two main challenges. The first is to remove the restriction on the support of initial data. A natural idea is to combine the hyperboloidal foliation in the light cone with the flat foliation outside. See \cite{K-W-Y-2018} for an analysis on the  massive Maxwell-Klein-Gordon system in the complement of a fixed light cone, and more recently,  \cite{Stingo-Huneau-2021} on a quasilinear wave system in the entire space time. 
Our approach relies on a generalized hyperboloidal foliation introduced in \cite{M-2018} and \cite{LM-2022}(which is called  ``Euclidean-hyperboloidal foliation'' therein) . This is a smooth combination of hyperboloidal foliation in the interior of the light cone $\{r\leq t-1\}$ and flat foliation outside of the light cone $\{r\geq t\}$. The global solution will be constructed simultaneously in and out of the light cone $\{t=r-1\}$. In the present article we reformulate this method in $\RR^{1+2}$ (Sections \ref{sec1-11-04-2022-M} -- \ref{sec1-22-10-2021}). Then we make some necessary improvements on Soblev decay estimates in Section \ref{sec1-10-04-2022-M} in order to be adapted to this lower dimensional case. 
For other type of techniques aimed at wave-Klein-Gordon system with non-compactly supported initial data, the reader is referred to \cite{Stingo-2015,Ionescu-Pausader-2017,Stingo-2018,Stingo-2019,Dong-2020,Dong-2021,Dong-Ma-2021,Ionescu-Pausader-2022,Dong-Ma-Yuan-2022-1}. 

As showed in \cite{Duan-Ma-2020, Dong-Wyatt-2021}, the application of divergence structure  enjoyed by the pure Klein-Gordon quadratic terms coupled in the wave equation of \eqref{eq5-20-11-2021} demands the application of conformal energy estimate on hyperboloids (see also \cite{Dong-Wyatt-2021-2,Dong-Wyatt-2020} and \cite{Kubota-Yokoyama-2001,Ka} for an alternative treatment for this divergence structure). This is because we need the fact that a cubic wave equation enjoys much better decay compared with the energy-Sobolev decay.  In the present case, with sufficient decay assumptions at spatial infinity on the initial data, one will see that this property still holds through a conformal energy estimate on the Euclidean-hyperboloidal slices. This will be established in Section \ref{sec2-11-04-2022-M}. 

A second difficulty comes from the pure Klein-Gordon quadratic terms coupled in the Klein-Gordon equations. Traditionally they can be handled by a normal form transform method which dates back to the pioneer work \cite{Shatah85} in $\RR^{1+3}$. In \cite{Ozawa-1996,Dfx} this idea was applied in on Klein-Gordon systems in $\RR^{1+2}$. However, the application of the hyperbolic variables in \cite{Dfx} demands the compactness of the support of the initial data. On the other hand, the normal form techniques realized in Fourier space such as \cite{Shatah85,Ozawa-1996} can not be easily integrated in the framework of Euclidean-hyperboloidal foliation. In the present article, following a similar idea in \cite{Yagi-1994,Moriyama-1997,Katayama-1999}, we introduce an alternative formulation purely in physical space and independent of the hyperbolic variables. This will be presented in Section \ref{sec1-03-04-2022-M}. This formulation, combined with the Euclidean-hyperbolidal foliation, can be applied on general Klein-Gordon systems with non-compactly supported initial data. Furthermore, based on this technique, one demands less decay at spatial infinity on the initial data for linear decay rate (say, compared with the famous result \cite{Georgiev-1992}, where one needs a initial weight $\la r\ra^{-1-\delta}$, and we only need $\la r\ra^{-1/2-\delta}$).

\subsection{Organization of the article}
This article is composed by three parts. The first part aims at the geometric preparations including two aspects. The first is the factorization property of rank one totally geodesic maps from $\RR^{1+d}$ to a general Riemannian manifold. This is contained in Section \ref{sec1-09-02-2022} . The second is the formulation of the PDE system based on the geodesic normal coordinates and the assumptions introduced in Definition \ref{def2-09-03-2022-M}. This will be done in Section \ref{sec1-19-02-2022-M} and \ref{sec1-08-02-2022}.

Part II is devoted to the general framework of the Euclidean-hyperboloidal foliation in $\RR^{1+2}$. The structure of this part has been explained in the previous subsection.

In Part III, we firstly recall the auxiliary system in Section \ref{sec3-11-04-2022-M}. Then we apply the techniques in Part II on the system \eqref{eq5-20-11-2021} formulated in Part I. Our strategy is the standard bootstrap argument, which is initialized in Section \ref{sec4-11-04-2022-M}. For the convenience of the reader, some details included in \cite{M-2018} and \cite{LM-2022} about the Eulidean-hyperboloidal foliation are sketched in Appendix.

\subsection{Convention}
In $\RR^{1+2}$, Greek indices $\alpha,\beta,\gamma,\cdots$ take values in $\{0,1,2\}$ for spacetime components while Latin indices $a,b,c$ take values in $\{1,2\}$ for space components. 
In the target manifold $(N,h)$, the Latin indices $i,j,k,l$ take values in $\{1,2,\cdots,n\}$, and $\ih, \jh, \kh, \lh$ take values in $\{2,3,\cdots, n\}$ for ``transversal components''(which will be explained latter). The Einstein convention on summation is applied unless otherwise specified.  When repeating $\alpha,\beta,\gamma,\cdots$, the sum is taken over $\{0,1,2,3\}$ and when repeating $\ih,\jh,\kh,\lh,\cdots$, the sum is taken over $\{2,3,\cdots,n\}$, etc.
\subsection*{Acknowledgement}
The present work belongs to a research project ``Global stability of quasilinear wave-Klein--Gordon system in $2 + 1$ space-time dimension'' (11601414), supported by NSFC.

\part{Geometric aspects of the totally geodesic wave maps.}
\section{Factorization properties of totally geodesic maps}\label{sec1-09-02-2022}
\subsection{Affine maps and totally geodesic maps}\label{subsec1-18-11-2021}
We consider two pseudo-Riemannian manifolds $(M,g)$ and $(N,h)$, and a $C^{\infty}$ map $\phi: M\mapsto N$. We denote by $\phi_*: TM\mapsto TN$ the tangent map of $\phi$ and $\phi^*(TN)$ the pull-back bundle on $M$. Let $\nabla$ and $\nablab$ be the metric connections on $M,N$ associate to $g,h$ respectively. Then $\phi_*$ becomes a linear map from $TM$ to $\phi^*(TN)$. It is then a section of the bundle $L(TM,\phi^*(TN))$. We introduce the connection $D$ on $L(TM,\phi^*(TN))$ as follows:
\begin{equation}\label{eq3-24-02-2022-M}
D_vA(w) := \nablab_{\phi_*(v)}A(w) - A(\nabla_v w),
\end{equation}
where $A\in L(TM,\phi^*(TN))$.
%\textcolor{blue}{I think here $A\in L(TM,\phi^*(TN))$ then 
%$$
%D:L(TM,\phi^*(TN))\times TM\rightarrow L(TM,\phi^*(TN))
%$$
%$$
%D(v,A)(w)=D_v Aw := \phi^*(\nablab_{\phi_*(v)} Aw) - \phi^*(A(\nabla_v w))
%$$
%} 
$D\phi_*$ can be regarded as a quadratic form defined in $TM$ (and taking value in $\phi^{*}(TN)$)):
$$
\beta(\phi)(v,w) := D_v\phi_*(w) = \nablab_{\phi_*(v)}\phi_*(w) - \phi_*(\nabla_vw).
$$
Recalling that $\nablab,\nabla$ are torsion-free, one has (thanks to Lemma 1.4 of \cite{Vil70}), $\beta(\phi)(v,w) = \beta(\phi)(w,v)$. 

The map $\phi$ is called {\sl affine}, if it preserves connections, i.e., $D\phi_* = 0$. $\phi$ is called {\sl totally geodesic}, if it maps a geodesic of $M$ to a geodesic of $N$, i.e.,
$$
\phi\circ\gamma: (a,b)\mapsto N \text{ being a geodesic, provided that } \gamma: (a,b)\mapsto M \text{ is a geodesic},
$$
or equivalently (see Lemma 1.4 of \cite{Vil70}),  
\begin{equation}\label{eq4-24-02-2022-M}
\beta(\phi)(v,v) = D_v\phi_*(v) = 0\ \Leftrightarrow\ \nablab_{\phi_*(v)}\phi_*(v) = \phi_*(\nabla_vv).
\end{equation}
In our context, $\beta(\phi)$ is symmetric. Thus $\phi$ is affine iff $\phi$ is totally geodesic. 

\subsection{Factorization property of totally geodesic maps}
We first recall the following notation. Let $(M,g)$ be a $d-$dimensional manifold. Let $m\in M$. A proper subspace $T_mM^1$ of $T_mM$ is called {\sl non-degenerate}, if the restriction of $g$ on $T_mM^1$ is non-degenerate. The holonomy group $\Phi_m$ of $M$ at $m$ is said to be {\sl non-degenerately reducible}, if it preserves a non-degenerate proper subspace of $T_mM$. We say $M$ is complete in the sens that all geodesics can be infinitely extended in terms of the parameter.  An affine map $\phi: (M,g)\mapsto (N,h)$ is called {\sl non-degenerate} if $\exists m\in M$, $\ker(\phi_*)_m\subset T_mM$ is a non-degenerate subspace.

From now on we restrict ourselves to the case where $(M,g)$ is complete pseudo-Riemannian, simply connected, and $(N,h)$ is Riemannian.  We will establish the following result. 
\begin{proposition}\label{thm1-16-11-2021}
Let $(M,g)$ be a complete simply connected pseudo-Riemannian manifold and $(N,h)$ a Riemannian manifold. Let $\phi: M\mapsto N$ be a non-degenerate totally geodesic map. Then $\phi$ factors into a totally geodesic  submersion followed by a totally geodesic immersion.
\end{proposition}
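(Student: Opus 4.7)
The plan is to prove this via the de Rham--Wu decomposition applied to a parallel distribution on $M$ associated to $\phi$. The first and main geometric input is that $\ker\phi_*$ defines a smooth \emph{parallel} distribution on $M$. Indeed, let $V$ be a local section of $\ker\phi_*$ and $W$ an arbitrary vector field. Then $\phi_*V\equiv 0$ as a section of $\phi^*(TN)$, so $\nablab_{\phi_*W}\phi_*V=0$, and since $\beta(\phi)=D\phi_*=0$ by \eqref{eq4-24-02-2022-M} and the symmetry of $\beta(\phi)$ established in Subsection \ref{subsec1-18-11-2021}, we conclude
\[
\phi_*(\nabla_WV)=\nablab_{\phi_*W}\phi_*V-\beta(\phi)(W,V)=0,
\]
i.e.\ $\nabla_WV\in\ker\phi_*$. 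Hence $\ker\phi_*$ is parallel, and in particular its rank is constant; this already shows $\phi$ has constant rank. Non-degeneracy of $\phi$ at one point combined with parallel transport (which is a $g$-isometry) then ensures $\ker\phi_*$ is a non-degenerate parallel subbundle at \emph{every} point of $M$.

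Next I would form the $g$-orthogonal complement $\mathcal{D}:=(\ker\phi_*)^{\perp}$. Because $\nabla$ is metric and $\ker\phi_*$ is parallel and non-degenerate, $\mathcal{D}$ is also a smooth non-degenerate parallel distribution, and $TM=\ker\phi_*\oplus\mathcal{D}$ as an orthogonal direct sum of parallel distributions. Since $M$ is complete, simply connected, and pseudo-Riemannian, the de Rham--Wu decomposition theorem (the pseudo-Riemannian generalization of de Rham's theorem, valid precisely because the parallel distributions are non-degenerate) provides a global isometric splitting
\[
(M,g)\ \cong\ (M_1,g_1)\times(M_2,g_2),
\]
with $TM_1=\ker\phi_*$ and $TM_2=\mathcal{D}$.

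With this splitting in hand, I would conclude by defining the factorization. Let $\pi:M_1\times M_2\to M_2$ be the canonical projection; this is a pseudo-Riemannian submersion and its fibers $M_1\times\{q\}$ are totally geodesic submanifolds of $M$, so $\pi$ itself is a totally geodesic submersion. Because $TM_1=\ker\phi_*$, the map $\phi$ is constant on each fiber $M_1\times\{q\}$ (any two points in a fiber are joined by a curve tangent to $\ker\phi_*$, along which $\phi$ is locally constant), hence descends to a well-defined smooth map $\psi:M_2\to N$ with $\phi=\psi\circ\pi$. On $TM_2=\mathcal{D}=(\ker\phi_*)^{\perp}$ the tangent map $\phi_*$ is injective, so $\psi_*$ is injective and $\psi$ is an immersion. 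Finally $\psi$ is totally geodesic: any geodesic $\gamma$ in $M_2$ lifts to a geodesic $t\mapsto(m_0,\gamma(t))$ in the product $M$, and its image under $\phi$ equals $\psi\circ\gamma$, which is therefore a geodesic of $N$ since $\phi$ is totally geodesic.

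The main obstacle is the second step: invoking the de Rham splitting in the pseudo-Riemannian setting is subtler than in the Riemannian case, because holonomy-invariant subspaces of $T_mM$ need not admit an invariant complement unless one assumes non-degeneracy. This is precisely why the hypothesis of \emph{non-degenerate} $\ker\phi_*$ is built into the statement, and it is the reason I would explicitly appeal to Wu's version of de Rham's theorem rather than the classical one. The remaining verifications (parallelism of $\ker\phi_*$, identification of the factors, totally geodesic character of $\pi$ and $\psi$) are formal consequences, given the symmetric--second-fundamental-form computation from Subsection \ref{subsec1-18-11-2021}.
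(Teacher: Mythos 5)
Your proposal follows essentially the same route as the paper's proof: use $\beta(\phi)=D\phi_*=0$ to show that $\ker\phi_*$ is a non-degenerate parallel (holonomy-invariant) distribution, invoke the de Rham--Wu splitting with $T_1=\ker\phi_*$ and $T_2=(\ker\phi_*)^{\perp}$, factor $\phi$ through the projection onto the second factor, and use non-degeneracy ($\ker\phi_*\cap(\ker\phi_*)^{\perp}=\{0\}$) to get the immersion. Two small remarks. First, your parallelism argument is phrased in terms of local sections $V$ of $\ker\phi_*$, which tacitly presupposes that $\ker\phi_*$ is already a smooth subbundle of constant rank --- the very fact you then claim to deduce; the paper avoids this by taking a single vector $v_0\in\ker(\phi_*)_m$, parallel-transporting it along a curve, and using $\nablab_{\phi_*(\dot\gamma)}\phi_*(v)=\phi_*(\nabla_{\dot\gamma}v)=0$ to conclude that $\phi_*(v)$ stays zero, which gives holonomy-invariance and constant rank without circularity; your identity is the right one, so the repair is immediate. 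Second, your verification that the induced map on $M_2$ is totally geodesic --- lifting a geodesic of $M_2$ to the totally geodesic slice $\{m_0\}\times M_2$ of the product and pushing it forward by the totally geodesic map $\phi$ --- is a correct, slightly more direct variant of the paper's computation that $\beta(\varphi_I)=0$ via the product connection $\nabla^{M_1}\oplus\nabla^{M_2}$; both arguments are equivalent in content.
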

This result is {\it not} really necessary for the factorization result Proposition \ref{prop1-09-02-2022} from $\RR^{1+d}$ to a Riemannian manifold (for the proof one may rely on a Riemannian version). However, we would like to present it here for our further work in more general cases. The proof is relied on the following result established in \cite{Wu-1962}:
\begin{theorem}[The Decomposition Theorem of de Rham -- Wu]\label{thm-wu}
	Let $(M,g)$ be a complete simply connected pseudo-Riemannian manifold. Suppose that the holonomy group $\Phi_m$ of $M$ at $m\in M$ is {\sl non-degenerately reducible}; let $T_mM^1$ and $T_mM^2 = (T_mM^1)^{\perp}$ be the subspaces of $T_mM$ left invariant by $\Phi_m$. Then $M$ is naturally isometric to the direct product of the maximal integral manifolds of distributions $T_1$ and $T_2$, obtained by parallel transport of $T_mM^1$ and $T_mM^2$ over $M$. More precisely, let $M_i$ be the integral manifold of $T_i$ through $m$, $i=1,2$. Then there exists an isometry $\psi: M_1\times M_2\mapsto M$ which maps $(M_1,m)$ identically onto $M_1$ and $(m, M_2)$ identically onto $M_2$.
\end{theorem}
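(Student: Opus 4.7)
My plan is to reconstruct the classical de Rham--Wu argument in three stages: build a pair of global parallel integrable distributions out of $T_mM^1$ and $T_mM^2$, define a candidate isometry $\psi$ via a commuting parallel-transport construction, and verify it is a global isometry using completeness and simple connectedness.

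First I would promote the infinitesimal splitting $T_mM = T_mM^1 \oplus T_mM^2$ --- which is a genuine orthogonal direct sum because the non-degeneracy hypothesis forces $T_mM^1 \cap (T_mM^1)^\perp = \{0\}$ --- into global smooth parallel distributions $T_1, T_2$ on $M$. Since $M$ is simply connected, the holonomy $\Phi_m$ equals the restricted holonomy, so parallel transport along any loop at $m$ preserves each $T_mM^i$. Consequently, for any $p \in M$ the subspace $P_\gamma(T_mM^i) \subset T_pM$ obtained by parallel transport along a path $\gamma$ from $m$ to $p$ is path-independent, which defines $T_i$. Parallel distributions are automatically involutive: for $X, Y$ sections of $T_i$ one has $\nabla_XY \in T_i$, and the torsion-free identity $[X,Y] = \nabla_XY - \nabla_YX$ yields $[X,Y] \in T_i$. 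Frobenius then produces leaves, and these leaves are totally geodesic because a geodesic initially tangent to $T_i$ stays tangent by parallelism. Let $M_i$ be the maximal leaf of $T_i$ through $m$; it is geodesically complete since any of its geodesics is also a geodesic of $M$, and $M$ is complete by hypothesis.

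Second I would define $\psi: M_1 \times M_2 \to M$ by a geodesic-rectangle construction. Given $(p,q) \in M_1 \times M_2$, choose smooth paths $\alpha$ in $M_1$ from $m$ to $p$ and $\beta$ in $M_2$ from $m$ to $q$. Parallel transport along $\alpha|_{[0,t]}$ carries the $T_2$-leaf through $m$ onto the $T_2$-leaf through $\alpha(t)$; lifting $\beta$ through this $t$-family of leaves gives a two-parameter map whose corner value I set to be $\psi(p,q)$. The crucial point is that this rectangle closes up, i.e. the answer is the same if one first transports $q$ and then $p$: this reduces to the vanishing of the mixed curvature $R(X,Y)$ when $X \in T_1$ and $Y \in T_2$, which itself is a consequence of both distributions being parallel and $\Phi_m$-invariant. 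Combined with the simple connectedness of $M_1$ and $M_2$ (which one obtains after the fact from the product structure, or directly by a covering argument using the totally geodesic embedding), one gets independence of the chosen paths $\alpha, \beta$, so $\psi$ is well defined globally.

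Third, the tangent map $d\psi_{(p,q)}$ sends $T_pM_1 \oplus T_qM_2$ to $T_1|_{\psi(p,q)} \oplus T_2|_{\psi(p,q)}$ via parallel transport along $\alpha$ and $\beta$, hence preserves the metric; so $\psi$ is a local isometry. Since $M_1 \times M_2$ carries the product metric and is geodesically complete, and since $M$ is simply connected, standard pseudo-Riemannian covering-space arguments upgrade $\psi$ to a diffeomorphism, and therefore to the required global isometry; the normalizations on $(M_1,m)$ and $(m,M_2)$ are built into the construction. The main obstacle --- and precisely the step where Wu's contribution departs from de Rham's Riemannian proof --- is propagating the non-degeneracy assumption throughout. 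One must verify that every leaf of $T_i$ carries a non-degenerate induced metric, that the pointwise splitting $T_1 \oplus T_2 = TM$ remains an \emph{orthogonal} direct sum rather than merely a direct sum, and that the mixed curvature components truly vanish when one of the factors is indefinite. Once these signature-insensitive incarnations of the Riemannian facts are secured, Frobenius, parallel transport, and the pseudo-Riemannian covering theory proceed verbatim.
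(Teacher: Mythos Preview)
The paper does not give its own proof of this theorem: it is quoted verbatim as a result ``established in \cite{Wu-1962}'' and is used as a black box in the proof of Proposition~\ref{thm1-16-11-2021}. There is therefore no in-paper argument to compare your proposal against.

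Your outline is a reasonable high-level reconstruction of the de Rham--Wu strategy, but two points deserve tightening. First, you invoke simple connectedness of the leaves $M_1, M_2$ to make $\psi$ well defined, yet you say this is obtained ``after the fact from the product structure''; that is circular as written. The usual fix is to work instead with the universal covers of the leaves, or to build $\psi$ via a development/monodromy argument that only uses simple connectedness of $M$ itself. Second, the appeal to ``standard pseudo-Riemannian covering-space arguments'' to promote a local isometry to a global diffeomorphism is genuinely delicate: in indefinite signature a local isometry from a geodesically complete manifold is not automatically a covering map (the Riemannian proof uses that geodesic balls are metric balls), so one must argue more carefully---this is exactly where Wu's paper does nontrivial work beyond de Rham. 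Your closing paragraph correctly flags non-degeneracy propagation as the heart of the matter, but the covering step is an equally real obstacle that your sketch does not yet address.
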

In order to apply the above result, we need the following lemma.
\begin{lemma}\label{lem1-11-04-2022-M}
Suppose that $\phi$ is totally geodesic (thus affine). Then $\ker(\phi_*)$ is holonomy-invariant. Furthermore, if $\ker(\phi_*)$ is non-degenerate at $m\in M$, then it is non-degenerate everywhere.
\end{lemma}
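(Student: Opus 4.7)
The plan is to exploit the affine identity $D\phi_{*}=0$, which in the present context reads $\nablab_{\phi_{*}(v)}\phi_{*}(w)=\phi_{*}(\nabla_{v}w)$, to show that $\ker(\phi_{*})$ is preserved under parallel transport in $M$. Concretely, I would pick any smooth curve $\gamma:[0,1]\to M$ with $\gamma(0)=m$, let $w\in\ker(\phi_{*})_{m}$, and consider the parallel transport $w(t)$ of $w$ along $\gamma$, which satisfies $\nabla_{\dot\gamma}w=0$. Applying the affine identity along $\gamma$ yields
\begin{equation*}
\nablab_{\phi_{*}(\dot\gamma)}\phi_{*}(w)=\phi_{*}(\nabla_{\dot\gamma}w)=0,
\end{equation*}
so $\phi_{*}(w)$ is a covariantly constant section of $\phi^{*}(TN)$ along $\gamma$. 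Since its initial value vanishes, $\phi_{*}(w(t))=0$ for all $t$, i.e. $w(t)\in\ker(\phi_{*})_{\gamma(t)}$.

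Specializing to loops based at $m$, this immediately gives that the holonomy representation $\Phi_{m}$ maps $\ker(\phi_{*})_{m}$ into itself, which is the first assertion. For the second assertion I would use the fact (a standard consequence of constant rank of affine/totally-geodesic maps, cf.\ Remark \ref{rk1-11-04-2022-M}) that $\dim\ker(\phi_{*})$ is constant on $M$. Pick a basis $e_{1},\dots,e_{k}$ of $\ker(\phi_{*})_{m}$ and parallel-transport it along any curve $\gamma$ to a given point $m'\in M$ to obtain vectors $e_{1}(t),\dots,e_{k}(t)$. By the previous step they all lie in $\ker(\phi_{*})_{\gamma(t)}$, and by the constant-rank property together with the invertibility of parallel transport, they form a basis of $\ker(\phi_{*})_{\gamma(t)}$ at each $t$.

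Finally, because $\nabla$ is the Levi-Civita connection, parallel transport preserves $g$; hence
\begin{equation*}
g\bigl(e_{i}(t),e_{j}(t)\bigr)=g(e_{i},e_{j}),\qquad 1\leq i,j\leq k,
\end{equation*}
is constant along $\gamma$. Non-degeneracy of $g\big|_{\ker(\phi_{*})_{m}}$ is equivalent to invertibility of this Gram matrix, so non-degeneracy at $m$ forces non-degeneracy at the endpoint $m'$; connectedness of $M$ (implied by the simple-connectedness assumption in the surrounding hypotheses) then yields non-degeneracy everywhere.

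The only mildly delicate point I foresee is justifying that $e_{1}(t),\dots,e_{k}(t)$ actually span $\ker(\phi_{*})_{\gamma(t)}$; this relies on the constant-rank property of totally geodesic maps, which is treated separately in the paper. Once that is in hand the rest is purely linear-algebraic bookkeeping.
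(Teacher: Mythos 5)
Your argument is essentially the paper's: both rest on the affine identity $\nablab_{\phi_*(\dot\gamma)}\phi_*(w)=\phi_*(\nabla_{\dot\gamma}w)$ applied to a parallel field $w$ along a curve, giving invariance of $\ker(\phi_*)$ under parallel transport, and then on the fact that parallel transport is a $g$-isometry to propagate non-degeneracy. The only variation is in the first step: the paper differentiates $(\phi_*(w),\phi_*(w))_h$ and uses that this norm is constant (hence zero once zero, using that $h$ is Riemannian), whereas you observe that $\phi_*(w)$ is a parallel section of $\phi^*(TN)$ with zero initial value and invoke ODE uniqueness; both are valid, and yours does not even need definiteness of $h$.

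One wrinkle to repair: for the spanning claim you appeal to the constant-rank property "treated separately in the paper," but in the paper's logical order constant rank (Remark \ref{rk1-11-04-2022-M}) is a \emph{consequence} of Lemma \ref{lem1-11-04-2022-M}, so citing it here is circular. The fix is already inside your own first step: applying the same parallel-invariance argument along the reversed curve shows that the inverse parallel transport maps $\ker(\phi_*)_{\gamma(t)}$ into $\ker(\phi_*)_m$; since parallel transport is a linear isomorphism, it therefore restricts to an isomorphism $\ker(\phi_*)_m\to\ker(\phi_*)_{\gamma(t)}$, which gives both the spanning of your transported basis and, as a by-product, constant rank. With that substitution the Gram-matrix bookkeeping and the connectedness remark go through as you wrote them.
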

\begin{proof}
This is essentially the Proposition 1.3 of \cite{Vil70}. In fact we prove that, for $v\in \Gamma(M,TM)$ a vector field parallel along a curve $\gamma:(a,b)\mapsto M$, then $(\phi_*(v),\phi_*(v))_h$ is constant along $\gamma$. To see this, we remark that
$$
\frac{d}{dt}(\phi_*(v),\phi_*(v))_h = 2\big(\nablab_{\phi_*(\dot\gamma)}\phi_*(v),\phi_*(v)\big)_h = 2(\phi_*(\nabla_{\dot\gamma}v),\phi_*(v))_h = 0.
$$
Then if at one point $m\in M$, $\phi_*(v(m)) = 0$, then $\phi_*(v) = 0$ along the curve. That is, $\ker(\phi_*)$ is invariant under parallel transport thus holonomy-invariant.

For the second point, we only need to remark that the parallel transport is an isometry from one tangent space to another.
\end{proof}
\begin{remark}\label{rk1-11-04-2022-M}
The fact that $\ker(\phi_*)$ is invariant under parallel transport leads to the fact that the dimension of $\ker(\phi_*)$ is constant, and thus $\phi$ is of constant rank.
\end{remark}

\begin{proof}[Proof of Proposition \ref{thm1-16-11-2021}]
According to the notation of Theorem \ref{thm-wu}, we take 
$$
T_1 = \ker(\phi_*),\quad T_2 = \ker(\phi_*)^{\perp},
$$ 
which coincide with the distributions obtained by parallel transport of $\ker(\phi_*)_m$ and $\ker(\phi_*)_m^{\perp}$ (Lemma \ref{lem1-11-04-2022-M}). Recalling that $\ker(\phi_*)_m$ is non-degenerate. Then there exists an isometry $\psi: M \mapsto M_1\times M_2$ where $M_1$ is a maximal integral manifold of $\ker(\phi_*)$. Recalling that on each slice $M_1\times \{y\} $, $\phi\circ\psi^{-1}$ is constant. Then $\phi$ factors as follows:
$$
\begin{tikzcd}
	M \arrow[d, "\psi"] \arrow[rr, "\phi"] \arrow[rrd, "\varphi_S"] &  & N                          \\
	M_1\times M_2 \arrow[rr, "\pi_2"]                               &  & M_2 \arrow[u, "\varphi_I"]
\end{tikzcd}
$$
where $\varphi_S = \pi_2\circ\psi$ is obviously a submersion and totally geodesic (thus affine). 

To show $\varphi_I$ is an immersion, we consider a point $(x,y)\in M_1\times M_2$ with $\psi^{-1}(x,y) = m\in M$. Let $v\in T_yM_2$ and $\gamma:(-\vep,\vep)\mapsto M_2$ a $C^{\infty}$ curve through $y$ with $\gamma (0) = y$, $\dot\gamma(0) = v$. Then 
$$
\tilde{\gamma}:(-\vep,\vep)\mapsto M,
\qquad
t\mapsto \psi^{-1}(x,\gamma(t))
$$
is a $C^{\infty}$ curve with $\tilde\gamma(0) = m$ and $\dot{\tilde{\gamma}}(0) = \psi_*^{-1}(0,v)$. Remark that
$\bar{\gamma}:=\varphi_I\circ \gamma = \phi\circ\tilde{\gamma}$ is a $C^{\infty}$ curve in $N$ with $\bar{\gamma}(0) = \phi(m)$. Then 
\begin{equation}\label{eq1-17-11-2021}
\dot{\bar{\gamma}}(t) = (\varphi_I)_*(v) = \phi_*\circ\psi^{-1}_*(0,v).
\end{equation}
When $v\in\ker(\varphi_I)_*$, one has
$\phi_*\circ\psi^{-1}_*(0,v) = 0$, thus $\psi^{-1}_*(0,v)\in\ker (\phi_*)$. On the other hand, by construction $\psi^{-1}(0,v)\in \ker(\phi_*)^{\perp}$. Then due to the fact that $\ker(\phi_*)$ is non-degenerate, one has $\ker(\phi_*)^{\perp}\cap \ker(\phi_*) = \{0\}$, thus $v = 0$. Then $\ker(\varphi_I) = \{0\}$, i.e., $\varphi_I$ is immersion.

Finally, we show that $\varphi_I$ is affine. For this purpose we calculate $\beta(\varphi_I)$. Remark that $\eta:=\phi\circ\psi^{-1}$ is affine from $M_1\times M_2$ to $N$. We denote by $\nabla^P$ the metric connection defined in $M_1\times M_2$. Then $\nabla^P = \nabla^{M_1}\oplus \nabla^{M_2}$. On the other hand, by \eqref{eq1-17-11-2021} one has $(\varphi_I)_*(v) = \eta_*(0,v)$.  Then
$$
\aligned
(\varphi_I)_*\big(\nabla^{M_2}_vw\big) &= \eta_*(0,\nabla^{M_2}_vw) = \eta_*(\nabla^P_{(0,v)}(0,w)) = \nablab_{\eta_*(0,v)}\eta_*(0,w) 
\\
&= \nablab_{(\varphi_I)_*(v)}(\varphi_I)_*(w).
\endaligned
$$
This complete the proof.
\end{proof}

\subsection{Rank-one totally geodesic maps defined in Minkowski space}\label{subsec1-14-04-2022-M}
From now on we consider totally geodesic maps from Minkowski $(\RR^{1+d},\m)$ to a Riemannian manifold $(N,h)$. We establish the following result.
\begin{proposition}\label{prop1-09-02-2022}
Let $\varphi: (\RR^{1+d},m)\mapsto N$ a totally geodesic map and $\rank(\varphi_*) = 1$. Then $\varphi$ factors as follows:
\begin{equation}
\varphi:\RR^{1+d}\stackrel{\varphi_S}{\longrightarrow}\RR\stackrel{\varphi_I}{\longrightarrow} N
\end{equation}
with $\varphi_S$ totally geodesic submersion and $\varphi_I$ totally geodesic immersion.
\end{proposition}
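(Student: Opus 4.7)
The plan is to bypass Proposition \ref{thm1-16-11-2021} (which requires a non-degenerate kernel, a condition that fails in the null case where $d\varphi$ takes values in a null direction of Minkowski space) and instead exploit directly the flatness of $\RR^{1+d}$, which trivializes parallel transport and makes the factorization explicit.

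First, by Lemma \ref{lem1-11-04-2022-M}, $\ker(\varphi_*)$ is invariant under parallel transport, and since parallel transport in $\RR^{1+d}$ is trivial, $\ker(\varphi_*)$ is identified with a fixed $d$-dimensional linear subspace $V\subset\RR^{1+d}$ (here the rank-one assumption forces $\dim V = d$). Next I would show that $\varphi$ is constant on every affine translate $p+V$: given $p$ and $q = p+v$ with $v\in V$, the straight line $\gamma(t) = p + tv$ is a geodesic of $\RR^{1+d}$ whose tangent $v$ lies in $\ker(\varphi_*)$, so $(\varphi\circ\gamma)'(t) = \varphi_*(v) = 0$ and hence $\varphi(p) = \varphi(q)$.

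Second, choose a nonzero linear functional $\ell:\RR^{1+d}\to\RR$ annihilating $V$ and set $\varphi_S(x) := \ell(x)$. This is an affine map with nowhere-vanishing differential, so it is a totally geodesic submersion. Its level sets are exactly the leaves $p+V$, so by the previous step $\varphi$ descends uniquely to a map $\varphi_I:\RR\to N$ with $\varphi = \varphi_I\circ\varphi_S$; smoothness of $\varphi_I$ follows by precomposing $\varphi$ with any linear section of $\varphi_S$ (for instance $s\mapsto s\xi$ with $\ell(\xi) = 1$). As a bonus, $d\varphi_S = \ell$ is a constant $1$-form, which immediately yields the constancy of $\m(d\varphi_S,d\varphi_S)$ advertised in the surrounding discussion.

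Third, I would verify $\varphi_I$ is a totally geodesic immersion. For any vector $\xi\notin V$, the straight line $t\mapsto p+t\xi$ is mapped by $\varphi_S$ to the affine parametrization $t\mapsto\ell(p)+t\ell(\xi)$ of $\RR$ and by $\varphi$ to a geodesic in $N$; since $\ell(\xi)\neq 0$, this exhibits $\varphi_I$ restricted to any affine line in $\RR$ as a geodesic of $N$, so $\varphi_I$ is totally geodesic. The immersion property follows from writing $\varphi_* = (\varphi_I)_*\circ(\varphi_S)_*$: since $\varphi_*(\xi)\neq 0$ while $(\varphi_S)_*(\xi) = \ell(\xi)\neq 0$, we conclude $(\varphi_I)_*$ is nonzero on $T\RR$.

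The point that requires the most care is the null case, where $V$ is degenerate under $\m$ and the de Rham--Wu machinery is unavailable; the whole reason the direct construction above works is that flatness makes $\ker(\varphi_*)$ a genuine fixed linear subspace rather than only a parallel distribution, so the quotient map $\varphi_S$ can be written down without invoking any holonomy decomposition. Everything else is routine once this observation is in place.
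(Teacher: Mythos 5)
Your proof is correct, and it takes a genuinely different route from the paper. The paper also avoids the degeneracy issue you worry about, but by a different device: it observes that the Minkowski and Euclidean metrics on $\RR^{1+d}$ share the same (flat) Levi--Civita connection, so $\varphi$ is still totally geodesic when the domain is regarded as Euclidean space; then every subspace of the tangent space is non-degenerate, Proposition \ref{thm1-16-11-2021} (via the de Rham--Wu decomposition, Theorem \ref{thm-wu}) applies, and one only has to rule out $M_2=\mathbb{S}^1$ by the trivial topology of $\RR^{1+d}$. Your argument instead exploits flatness directly: since parallel transport is trivial, Lemma \ref{lem1-11-04-2022-M} identifies $\ker(\varphi_*)$ with a fixed $d$-dimensional linear subspace $V$, constancy of $\varphi$ along the affine leaves $p+V$ follows by integrating along straight lines with tangents in $V$, and the quotient map is realized explicitly as a linear functional $\ell$ annihilating $V$; total geodesy of $\varphi_I$ and the immersion property then follow by pushing straight lines transverse to $V$ through the factorization, exactly as you write. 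What your approach buys is elementarity and explicitness: no holonomy decomposition is needed, $\varphi_S=\ell$ is manifestly affine, and the constancy of $\m(d\varphi_S,d\varphi_S)$ (hence the time-like/null/space-like trichotomy discussed after Proposition \ref{prop1-09-02-2022}) is immediate rather than derived afterwards from parallelism of the fibres. What the paper's route buys is generality: the Euclidean-metric trick plus Proposition \ref{thm1-16-11-2021} is set up so the same machinery can serve domains other than $\RR^{1+d}$, which the authors announce as the reason for including it; your construction is tied to the flat, simply connected domain, which is all this proposition needs.
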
 
\begin{remark}
	When $\varphi$ totally geodesic (thus affine), the rank of $\varphi_*$ is constant. Thus $\rank(\varphi_*)=1$ can be replaced by $\rank(\varphi_*)=1$ at any point of $\RR^{1+d}$.
\end{remark}
\begin{proof}
We consider the identity map $i:(\RR^{1+d},\m) \mapsto (\RR^{1+d},e)$ with $e$ the standard Euclidean metric and $\m$ the standard Minkowski metric. Obviously $i$ is affine (with respect to the two metric connections). Then $\phi := \varphi\circ i^{-1}$ is totally geodesic from $(\RR^{1+d},e)$ to $(N,h)$. Recalling that $(\RR^{1+d},e)$ is a complete Riemannian manifold and simply connected, every subspace of its tangent space is non-degenerate. By the above Proposition \ref{thm1-16-11-2021}, one obtains the isometry
	$$
	\psi:\RR^{1+d}\mapsto M_1\times M_2
	$$
	and $\varphi$ factors as $\varphi = \varphi_S\circ \varphi_I$. This is showed in the following diagram:
	$$
	\begin{tikzcd}
		&  &  &                            & {(\mathbb{R}^{1+d},\m)} \arrow[lllld, "i"'] \arrow[ldd, "\varphi_S = \tilde{\varphi}_S\circ i "] \arrow[ld, "\varphi"'] \\
		{(\mathbb{R}^{1+d},e)} \arrow[rrr, "\phi = \varphi\circ i^{-1}" description] \arrow[d, "\psi"] \arrow[rrrd, "\tilde{\varphi}_S"] &  &  & {(N,h)}                    &                                                                                                                        \\
		M_1\times M_2 \arrow[rrr, "\pi_2"]                                                                                               &  &  & M_2 \arrow[u, "\varphi_I"] &                                                                                                                       
	\end{tikzcd}
	$$
	Here $M_1, M_2$ are the maximal integral manifolds of $\ker(\phi_*)$ and $\ker(\phi_*)^{\perp}$ respectively (in $(\RR^{1+d},e)$, i.e., with respect to Euclidean metric). $M_2$ is a one-dimensional manifold. Thus $M_2 = \RR$ or $M_2 = \mathbb{S}^1$. It is clear that $M_2$ cannot be $\mathbb{S}^1$ due to the trivial topology of $\RR^{1+d}$. 
\end{proof}

Let us concentrate on $\varphi_S$. Let $x\in M_1$, we consider $\RR_x := \psi^{-1}(\{x\}\times \RR)$ which is an embedding into $(\RR^{1+d},e)$. If we parameterize $\RR$ by arc-length, we obtain a curve $\gamma_x: \RR\mapsto \RR^{1+d}, t\mapsto \psi^{-1}(x,t)$ which is also parameterized by arc-length  (modulo a nonzero constant factor) in $(\RR^{1+d},e)$. It is showed that the tangent space of $\gamma_x$ ($ =\ker(\phi_*)^{\perp}$, with respect to the Euclidean metric) is invariant by parallel transport. Along $\gamma_x$ itself, this leads to $\nabla_{\dot\gamma_x(t)}\dot\gamma_x(t) = 0$. Thus $\gamma_x$ is a geodesic of $\RR^{1+d}$, i.e., a straight line. Furthermore, also due to the fact that $T_x\gamma = \ker(\phi_*)^{\perp}$ is invariant by parallel transport,  these strait lines are parallel. 
%By an appropriate affine re-parametrization of $\RR$, we can suppose that $\varphi_S$ is a linear map and, we are led to three cases:
Then $\m(d\varphi_S,d\varphi_S) = |\dot\gamma_x|^2_{\m}$ is a constant. Then there are three cases:
$$
\aligned
&\bullet\quad\gamma_x \text{ are time-like in }(\RR^{1+d},m) && \Leftrightarrow \m(d\varphi_S,d\varphi_S) < 0. \text{ We say }\varphi\text{ is {\sl time-like}}.
\\
&\bullet\quad\gamma_x\text{ are null in }(\RR^{1+d},m) &&  \Leftrightarrow \m(d\varphi_S,d\varphi_S) = 0.\text{ In this case }\varphi\text{ is said {\sl null}}.
\\
&\bullet\quad \gamma_x\text{ are space-like in }(\RR^{1+d},m) &&  \Leftrightarrow \m(d\varphi_S,d\varphi_S) > 0.\text{ In this case }\varphi\text{ is said {\sl space-like}}.
\endaligned
$$
On the other hand, remark that at each point $\psi^{-1}(x,y)$, $\gamma_x\perp \psi^{-1}(M_1\times\{y\})$. Therefore $\psi^{-1}(M_1\times \{y\})$ are $d-$dimensional hyperplanes which are parallel.

Remark that $\varphi_I: \RR\mapsto (N,h)$ is immersion and totally geodesic. Thus $\gamma: t\rightarrow \varphi(t)$ is a geodesic in $(N,h)$.

\section{Local symmetry along a geodesic}\label{sec1-19-02-2022-M}
\subsection{Geodesic normal coordinates}\label{subsec1-24-02-2022-M}
We firstly recall the geodesic normal  coordinates introduced in \cite{Gray-2003}. It permits one to parameterize a tubular neighborhood of an arbitrary geodesic, in which the Christoffel symbols vanish along the geodesic. The following outline is based on the Section 2 and 3 of \cite{Ab-2019}. Let $(N,h)$ be a complete Riemannian manifold and $\gamma:\RR\rightarrow N$ be a fixed geodesic. We parameterize it with arc-length. At $\gamma(0)$, let $\vec{e}_1 = \dot\gamma(0)$ and
$$
e^{\perp} := (\vec{e}_2,\cdots, \vec{e}_n),\quad \vec{e}_{\ih}\perp \vec{e}_{\jh},\quad |\vec{e}_{\ih}|=1.
$$
For $x^1\in \RR$, define $\vec{e}_{\ih}$ by parallel transporting along $\gamma$. This forms a normalized orthogonal frame along $\gamma$.  Let $\exp_{\gamma(x^1)}(t\vec{v})$ be the geodesic satisfying
$$
\frac{d}{dt}\exp_{\gamma(x^1)}(t\vec{v})\big|_{t=0} = \vec{v},\quad \exp_{\gamma(x^1)}(t\vec{v})\big|_{t=0} = \gamma(x^1)
$$ 
with $\vec{v}\in \dot\gamma(x^1)^{\perp}$ and for $(x^1,\xh) = (x^1,x^2,\cdots x^n)$ with $|\xh|$ sufficiently small,
\begin{equation}\label{eq1-22-02-2022-M}
\sigma: (x^1,\xh)\rightarrow \exp_{\gamma(x^1)}(x^{\jh}\vec{e}_{\jh})
\end{equation}
gives a parameterization of a tubular neighborhood of $\gamma$. Here remark that the sum over $\jh$ is taken over the range $\jh=2,3,\cdots, n$. This is called the {\sl geodesic normal coordinates}. We denote by $\{\del_i\}_{i=1,2,\dots, n}$ the associate natural frame. The coordinates $x^2,x^3,\cdots, x^n$ are called {\sl transversal coordinates} and $\del_{\ih}$ are called {\sl transversal directions}. Other terms such as {\sl transversal components} of a tensor are understood in an natural way. 
\begin{remark}
Let $r_{\text{foc}(x^1)}$ be the {\it focal radius} of $\exp_{\gamma(x^1)}$, i.e., the maximal radius such that the exponential map is non-critical. Remark that $r_{\text{foc}(x^1)}$ is determined by the Jacobi fields along each geodesic $\exp_{\gamma(x^1)}(x^{\jh}\vec{e}_{\jh})$.
The following Proposition \ref{prop1-19-02-2022-M} shows that the Riemann curvature together with all its derivatives are constants along $\gamma$, thus uniformly bounded. Recalling that the Jacobi field equations are ODEs  with coefficients determined by the sectional curvature. 
This leads to a uniform lower bound on $r_{\text{foc}(x^1)}$ along $\gamma$. Thus the geodesic normal coordinates are well defined in a tubular neighborhood $\RR\times\{|\xh|<\delta\}$.
\end{remark}
Then we recall several basic properties of this system of coordinates, which are established in \cite{Ab-2019}.
\begin{lemma}
In the geodesic normal coordinates,  $g\big|_{\gamma} = \delta_{ij}dx^idx^j$. Furthermore, 
\begin{equation}\label{eq16-07-10-2020}
\del_1^q\Gamma_{jk}^i\big|_{\gamma} = 0,\quad q=0,1,2,\cdots,n,
\end{equation}
\begin{equation}\label{eq2-21-11-2021}
\del_m\Gamma_{1k}^i\big|_{\gamma} = R_{m1k}^i\big|_{\gamma}.
\end{equation}
where $\nabla_i\nabla_j\del_k - \nabla_j\nabla_i\del_k = R_{ijk}^l\del_l$ is the Riemann curvature.
\end{lemma}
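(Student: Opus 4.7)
The three assertions split cleanly according to the depth of information about the coordinate chart that they require.

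For $g|_\gamma = \delta_{ij}dx^idx^j$, the plan is to identify the coordinate frame along $\gamma$ with the parallel orthonormal frame $\{\vec{e}_i\}$ used to construct $\sigma$ in \eqref{eq1-22-02-2022-M}. Since $\sigma(x^1, 0) = \gamma(x^1)$, one has $\del_1|_\gamma = \dot\gamma = \vec{e}_1$; and for $\ih \geq 2$, the curve $t \mapsto \sigma(x^1, t\vec{e}_{\ih})$ is by definition the geodesic $\exp_{\gamma(x^1)}(t\vec{e}_{\ih})$, whose tangent at $t=0$ is $\vec{e}_{\ih}$, so $\del_{\ih}|_{(x^1,0)} = \vec{e}_{\ih}$. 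Orthonormality of $\{\vec{e}_i\}$ then yields the first claim.

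For $\del_1^q \Gamma^i_{jk}|_\gamma = 0$, I would first establish the $q=0$ case by a case analysis on the indices. The case $\Gamma^i_{11}|_\gamma = 0$ is the geodesic equation for $\gamma$ itself, and $\Gamma^i_{1\ih}|_\gamma = \Gamma^i_{\ih 1}|_\gamma = 0$ follows from the parallel transport relation $\nabla_{\del_1}\del_{\ih}|_\gamma = 0$ used to propagate the frame. For purely transversal indices, the construction of $\sigma$ ensures that for every $\vec{v} = v^{\jh}\vec{e}_{\jh} \in \dot\gamma(x^1)^{\perp}$ the radial coordinate line $t\mapsto(x^1, tv^2, \ldots, tv^n)$ is sent by $\sigma$ to the geodesic $\exp_{\gamma(x^1)}(t\vec{v})$; the geodesic equation at $t=0$ then gives $v^{\jh}v^{\kh}\Gamma^i_{\jh\kh}(x^1, 0) = 0$ for every $\vec{v}$, and polarization together with the symmetry $\Gamma^i_{\jh\kh} = \Gamma^i_{\kh\jh}$ forces $\Gamma^i_{\jh\kh}|_\gamma = 0$. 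The extension to arbitrary $q$ is then automatic: $\Gamma^i_{jk}$ vanishes identically on the curve $\gamma$ and $\del_1$ is tangent to this curve, so every iterated tangential derivative vanishes along $\gamma$.

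For $\del_m \Gamma^i_{1k}|_\gamma = R^i_{m1k}|_\gamma$, I would invoke the coordinate formula
\[
R^i_{jkl} = \del_j\Gamma^i_{kl} - \del_k\Gamma^i_{jl} + \Gamma^i_{jp}\Gamma^p_{kl} - \Gamma^i_{kp}\Gamma^p_{jl},
\]
coming directly from the convention $\nabla_i\nabla_j\del_k - \nabla_j\nabla_i\del_k = R^l_{ijk}\del_l$. Restricting to $\gamma$, the quadratic $\Gamma\Gamma$ terms vanish by the previous step, and for the choice $(j,k,l) = (m,1,k)$ the formula reduces to $R^i_{m1k}|_\gamma = \del_m\Gamma^i_{1k}|_\gamma - \del_1\Gamma^i_{mk}|_\gamma$, whose last term is again zero as a $\del_1$-derivative of a function vanishing identically along $\gamma$. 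The only step requiring more than a direct reading of the construction is the polarization argument that upgrades the quadratic identity $v^{\jh}v^{\kh}\Gamma^i_{\jh\kh}|_\gamma = 0$ into pointwise vanishing of $\Gamma^i_{\jh\kh}|_\gamma$; this is where the geodesic exponential nature of the transversal coordinates is actually used, and it is the only genuinely nontrivial ingredient in the lemma.
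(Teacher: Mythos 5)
Your proof is correct and complete. Note, however, that the paper itself gives no proof of this lemma: it is quoted as established in the reference on geodesic normal (Fermi) coordinates, so there is no in-paper argument to compare against; your write-up supplies exactly the standard one. Your ingredients are in fact the same ones the paper uses later: the radial-geodesic/polarization step for $\Gamma^i_{\jh\kh}\big|_{\gamma}=0$ reappears (in integrated form) as Lemma \ref{lem1-22-02-2022-M}, i.e.\ $\Gamma_{\jh\kh}^ix^{\jh}x^{\kh}=0$, and the identity \eqref{eq2-21-11-2021} is re-derived inside the proof of Proposition \ref{prop2-27-11-2021} by writing $\del_j\Gamma_{1k}^i=\del_j\la\theta^i,\nabla_1\del_k\ra$ and commuting covariant derivatives, whereas you obtain it from the coordinate formula \eqref{eq6-22-02-2022-M} for $R^i_{jkl}$ together with $\del_1\Gamma^i_{mk}\big|_{\gamma}=0$; the two routes are equivalent once \eqref{eq16-07-10-2020} is in hand, and yours is the more direct for this isolated statement. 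The only step that genuinely needs care, the polarization argument using symmetry of $\Gamma^i_{\jh\kh}$ in the lower indices, is handled correctly, as is the observation that $\nabla_{\del_1}\del_{\ih}\big|_{\gamma}$ depends only on the restriction $\del_{\ih}\big|_{\gamma}=\vec{e}_{\ih}$, which is parallel.
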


The main objective of this section is to establish the following result.
\begin{proposition}\label{prop1-19-02-2022-M}
Let $\gamma$ be a geodesic of $M$. Suppose that $M$ locally symmetric along $\gamma$. Then the following equations holds in geodesic normal coordinates around $\gamma$:
\begin{equation}\label{eq5-15-02-2022-M}
\del_1\del^I\Gamma_{ij}^k\big|_{\gamma} \equiv 0, 
\end{equation}
\begin{equation}\label{eq6-24-02-2022-M}
\del_1\del^IR_{jkl}^i\big|_{\gamma} \equiv 0,
\end{equation}
for all $I = (i_1,i_2,\cdots,i_m),\quad m\in \mathbb{N}$.
\end{proposition}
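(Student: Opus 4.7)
I would proceed by strong induction on $|I|$, establishing \eqref{eq5-15-02-2022-M} and \eqref{eq6-24-02-2022-M} simultaneously.

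\emph{Base case} ($|I|=0$). Identity \eqref{eq5-15-02-2022-M} reads $\del_1\Gamma_{ij}^k|_\gamma = 0$, which is the $q=1$ instance of \eqref{eq16-07-10-2020}. For \eqref{eq6-24-02-2022-M} I use the $j=0$ case of the local symmetry hypothesis \eqref{eq1-17-02-2022-M}, namely $\nabla_{\dot\gamma}R|_\gamma = 0$. Expanding $\nabla_{\del_1}R$ in the coordinate frame yields $\del_1 R_{jkl}^i$ plus $\Gamma\cdot R$ correction terms; since $\Gamma|_\gamma = 0$, these corrections drop out and $\del_1 R_{jkl}^i|_\gamma = 0$.

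\emph{Inductive step for $R$.} Assume \eqref{eq5-15-02-2022-M}--\eqref{eq6-24-02-2022-M} hold for every multi-index of length $\le m-1$, and rewrite the local symmetry hypothesis as $\nabla_{\del_1}\nabla_{\del_{i_1}}\cdots\nabla_{\del_{i_m}} R|_\gamma = 0$. Expanding via $\nabla = \del + \Gamma$ produces the leading term $\del_1\del^I R_{jkl}^i$ together with a finite sum of correction terms of the schematic shape $\big(\prod_s \del^{K_s}\Gamma\big)\cdot\big(\del^L R\big)$, each carrying at least one $\Gamma$-factor ($S\ge 1$) and with total derivative weight $\sum_s(|K_s|+1)+|L|=m+1$. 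For such a term to survive on $\gamma$, every $\Gamma$-factor must already be differentiated ($|K_s|\ge 1$). Moreover, the outer $\del_1$ coming from $\nabla_{\del_1}$ must land on one factor, producing either a factor $\del_1\del^{K_a}\Gamma$ with $|K_a|\le m-1$ or $\del_1\del^L R$ with $|L|\le m-1$; in either case the inductive hypothesis kills this factor on $\gamma$. All corrections therefore vanish and \eqref{eq6-24-02-2022-M} holds at order $m$.

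\emph{Inductive step for $\Gamma$.} The obvious route---differentiating $R_{jkl}^i = \del_k\Gamma_{jl}^i - \del_l\Gamma_{jk}^i + \Gamma\Gamma$ by $\del_1\del^{I'}$ with $|I'|=m-1$---only furnishes antisymmetry of $\del_1\del^{(I',k)}\Gamma_{jl}^i|_\gamma$ in the indices $(k,l)$ (the $\Gamma\Gamma$ contribution being killed by the inductive hypothesis for $\Gamma$), and is therefore insufficient. To close, I would invoke the classical Fermi-type Taylor expansion of the metric in these geodesic normal coordinates (see \cite{Gray-2003}),
\begin{equation*}
g_{ij}(x^1,\hat x) = \delta_{ij} + \sum_{|\alpha|\ge 2}\frac{1}{\alpha!}\,P_{ij}^{(\alpha)}\!\big(R,\nabla R,\nabla^2 R,\ldots\big)\big|_{\gamma(x^1)}\,\hat x^\alpha,
\end{equation*}
in which every coefficient $P_{ij}^{(\alpha)}$ is a universal polynomial in the components of the curvature and its covariant derivatives, measured in the parallel-transported frame along $\gamma$---which by construction (Subsection \ref{subsec1-24-02-2022-M}) coincides with the coordinate frame $\{\del_1,\del_{\hat 2},\ldots,\del_{\hat n}\}$ along $\gamma$. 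Local symmetry \eqref{eq1-17-02-2022-M} states precisely that these components are constant along $\gamma$, whence every $P_{ij}^{(\alpha)}|_{\gamma(x^1)}$ is independent of $x^1$. Consequently $\del_1^q\del^\alpha g_{ij}|_\gamma = 0$ for every $q\ge 1$ and every transversal multi-index $\alpha$; commuting partials upgrades this to
\begin{equation*}
\del_1\del^I g_{ij}\big|_\gamma = 0 \qquad \text{for every multi-index } I.
\end{equation*}
Since $\Gamma_{ij}^k = \tfrac{1}{2}g^{kl}(\del_i g_{jl} + \del_j g_{il} - \del_l g_{ij})$ is polynomial in $g$, $g^{-1}$, and the first partials of $g$, the Leibniz rule propagates the vanishing to $\del_1\del^I\Gamma_{ij}^k|_\gamma = 0$, giving \eqref{eq5-15-02-2022-M}. (Equation \eqref{eq6-24-02-2022-M} may alternatively be rederived in the same manner from the definition of $R$.)

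\emph{Main obstacle.} The induction closes transparently for $R$ via local symmetry, but not for $\Gamma$: the Riemann identity $R = \del\Gamma + \Gamma^2$ only sees the antisymmetric part of $\del\Gamma$ in the differentiation indices, so elementary linear algebra on the $(k,l)$-slots cannot recover all components of $\del_1\del^I\Gamma|_\gamma$. The required extra ingredient is the curvature-polynomial structure of the geodesic-normal-coordinate expansion of $g$, which under local symmetry becomes $x^1$-independent and forces every $\del_1\del^I\Gamma|_\gamma$ to vanish.
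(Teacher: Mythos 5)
Your proposal is correct in outline, and your base case and the inductive step for the curvature coincide with the paper's argument (the paper packages the same bookkeeping into the preparatory identities \eqref{eq9-22-02-2022-M} and \eqref{eq3-23-02-2022-M}, using $\nabla_1\big(dx^j\otimes dx^k\otimes dx^l\otimes\del_i\big)\big|_{\gamma}=0$ together with \eqref{eq16-07-10-2020}). Where you genuinely diverge is the Christoffel step, and you correctly diagnose why the naive route fails: the curvature identity only controls the part of $\del_1\delh^I\Gamma$ antisymmetric in the differentiation indices. The paper closes this gap in-house: for the components $\Gamma^i_{1\jh}$ it uses the commutation identity \eqref{eq1-23-02-2022-M}, and for the purely transversal components it differentiates the radial-geodesic relation $\Gamma^i_{\jh\kh}x^{\jh}x^{\kh}=0$ of Lemma \ref{lem1-22-02-2022-M} to get the symmetrized relation \eqref{eq2-23-02-2022-M}, and then shows via the permutation-group Lemmas \ref{lem2-23-02-2022-M}--\ref{lem1-23-02-2022-M} that symmetrization plus the curvature relations determine every $\delh^I\Gamma^i_{\jh\kh}\big|_{\gamma}$ modulo lower-order data (identity \eqref{eq4-23-02-2022-M}), after which the simultaneous induction closes exactly as in your $R$-step. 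You instead import the all-orders structure of the Fermi-coordinate expansion of $g_{ij}$ from \cite{Gray-2003}: under \eqref{eq1-17-02-2022-M} the parallel-frame components of $\nabla^jR$ are constant along $\gamma$, so the transverse Taylor coefficients of $g$ are $x^1$-independent and $\del_1\del^Ig_{ij}\big|_{\gamma}=0$, whence the claim for $\Gamma$ (and then \eqref{eq6-24-02-2022-M} follows directly, making your simultaneous induction redundant). This is a legitimate and conceptually shorter route, but note two points: (i) the statement you need is the all-orders structural one (every coefficient a universal polynomial in parallel-frame components of $\nabla^mR$, with no second-fundamental-form contribution since $\gamma$ is a geodesic); Gray's explicit expansions are low order, so you should either pin down a precise reference or supply the short Jacobi-field power-series argument along the radial geodesics, and also record the routine step $\del_1\del^Ig^{kl}\big|_{\gamma}=0$ from $gg^{-1}=\mathrm{Id}$; (ii) what the paper's self-contained route buys is that its intermediate identities (e.g.\ \eqref{eq1-23-02-2022-M}, \eqref{eq4-23-02-2022-M}) are reused later, notably in Proposition \ref{prop2-27-11-2022-M} if one tracks labels loosely --- more precisely in Proposition \ref{prop2-27-11-2021} and in the Taylor development \eqref{eq2-14-12-2021} that feeds the PDE reduction, whereas your route would leave those computations still to be done.
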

The rest of this section is devoted to the proof of the above result.

\subsection{Derivatives of Christoffel symbols}
We firstly remark that in an arbitrary natural frame, 
\begin{equation}\label{eq6-22-02-2022-M}
R_{\jh\kh\lh}^i = \del_{\jh}\Gamma_{\kh\lh}^i - \del_{\kh}\Gamma_{\jh\lh}^i + \Gamma_{\jh h}^i\Gamma_{\kh\lh}^h - \Gamma_{\kh h}^i\Gamma_{\jh\lh}^h.
\end{equation}
%This leads to 
%\begin{equation}
%\del_1R_{\jh\kh\lh}^i = \del_1\del_{\jh}\Gamma_{\kh\lh}^i - \del_1\del_{\kh}\Gamma_{\jh\lh}^i 
%+ \del_1\big(\Gamma_{\jh h}^i\Gamma_{\kh\lh}^h - \Gamma_{\kh h}^i\Gamma_{\jh\lh}^h\big).	
%\end{equation}
We then establish the following identity:
\begin{lemma}
In an arbitrary local coordinate chart $\{x^1,x^2,\cdots,x^n\}$ and suppose that $\nabla^I = \nabla_{\ih_1}\nabla_{\ih_2}\cdots\nabla_{\ih_m}$, one has
\begin{equation}\label{eq9-22-02-2022-M}
\aligned
\nabla^I\big(dx^j\otimes dx^k\otimes dx^l\otimes \del_i\big) 
=& \sum_{|J|\leq |I|-1}\!\!\!\!\!\!
\Lambda_{i,Ji'',j'k'l'}^{Ijkl,j''k'',i'}\delh^J\Gamma_{j''k''}^{i''}\,dx^{j'}\otimes dx^{k'}\otimes dx^{l'}\otimes \del_{i'}
\\
&+ P^{I,i'}_{j'k'l'}(\delh^{J'} \Gamma)dx^{j'}\otimes dx^{k'}\otimes dx^{l'}\otimes \del_{i'}
\endaligned
\end{equation}
with $\Lambda_{i,Ji'',j'k'l'}^{Ijkl,j''k'',i'}$ constants and $P^{I,i'}_{j'k'l'}(\delh^{J'} \Gamma)$ constant coefficient polynomials acting on $\delh^{J'}\Gamma_{jk}^i$ with $|J'|\leq |I|-2$. When $|J|\leq 1$ the last term does not exist.
\end{lemma}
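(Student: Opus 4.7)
My plan is to prove \eqref{eq9-22-02-2022-M} by induction on the length $|I|$ of the multi-index. The base case $|I|=1$ is a direct computation: using $\nabla_a dx^j = -\Gamma^j_{ab}\,dx^b$ and $\nabla_a\del_i = \Gamma^b_{ai}\,\del_b$ together with Leibniz for $\nabla$ on tensor products, one produces exactly four terms, each a constant multiple of some $\Gamma^{i''}_{j''k''}$ times a tensor of the claimed form, fitting the RHS with $|J|=0$. Here the polynomial part is vacuously absent since $|J'|\leq |I|-2 = -1$.

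For the inductive step, suppose the identity holds for all multi-indices of length $|I|$ and set $I' = I\hat m$ with $|I'|=|I|+1$. Apply $\nabla_{\hat m}$ to the RHS for $|I|$ using Leibniz. Two kinds of contributions arise. The first comes from differentiating the scalar coefficients: since each $\delh^J\Gamma^{i''}_{j''k''}$ is a function in the chart, $\nabla_{\hat m}$ on it equals $\del_{\hat m}$, producing $\delh^{J\hat m}\Gamma^{i''}_{j''k''}$ of order $|J|+1\leq |I|=|I'|-1$; these are the new linear terms and yield the new constant coefficients $\Lambda^{I'\cdots}$ via tensor factors left untouched. The second kind comes from letting $\nabla_{\hat m}$ hit the tensor factor $dx^{j'}\otimes dx^{k'}\otimes dx^{l'}\otimes \del_{i'}$, which by the base-case computation introduces an extra $\Gamma$ factor, so one obtains products $(\delh^J\Gamma)\cdot\Gamma$ with $|J|\leq |I|-1\leq |I'|-2$; these are quadratic monomials absorbed into the new polynomial piece. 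Differentiating the already-present polynomial term $P^{I,i'}_{j'k'l'}(\delh^{J'}\Gamma)$ yields either a factor $\delh^{J'\hat m}\Gamma$ with $|J'|+1\leq |I|-1 = |I'|-2$ or an extra $\Gamma$ from the tensor factor, both of which remain in a polynomial in $\delh^{K}\Gamma$ with $|K|\leq |I'|-2$, as required.

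The only delicate point is the bookkeeping of separating the strictly linear ``top-order'' term from the nonlinear ``lower-order'' term, and the main obstacle is purely notational, given the length of the multi-indices and the four tensor slots. The key structural observation that makes everything work is that $\Gamma^i_{jk}$ and its partial derivatives are scalar coordinate functions, so $\nabla$ applied to them reduces to $\del$, while the genuinely tensorial character of the identity is carried entirely by $dx^{j'}\otimes dx^{k'}\otimes dx^{l'}\otimes \del_{i'}$, whose covariant derivative introduces exactly one additional $\Gamma$ factor at each step. Iterating this accounting reproduces the stated decomposition with the claimed bounds on derivative orders, and the disappearance of the polynomial piece when $|I|\leq 1$ is immediate from the bound $|J'|\leq |I|-2$.
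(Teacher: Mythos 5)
Your proposal is correct and follows essentially the same route as the paper: induction on $|I|$, with the base case $|I|=1$ given by the Leibniz computation producing four $\Gamma$-terms, and the inductive step splitting $\nabla_{\hh}$ of the right-hand side into the differentiated scalar coefficients (the new linear terms of order $\leq |I'|-1$) and the contributions where $\nabla_{\hh}$ hits the tensor factor or the polynomial piece, which are absorbed into the polynomial part with the bound $|J'|\leq |I'|-2$. The bookkeeping of orders matches the paper's proof exactly, so no further changes are needed.
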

\begin{proof}
This is by induction on $|I|$. We remark that for $|I|=1$, 
\begin{equation}\label{eq10-22-02-2022-M}
\aligned
\nabla_{\ih_1}\big(dx^j\otimes dx^k\otimes dx^l\otimes \del_i\big)
=& -\Gamma_{\ih_1j'}^jdx^{j'}\otimes dx^k\otimes dx^l\otimes\del_i
  -\Gamma_{\ih_1k'}^kdx^j\otimes dx^{k'}\otimes dx^l\otimes\del_i 
\\
 &-\Gamma_{\ih_1l'}^ldx^j\otimes dx^k\otimes dx^{l'}\otimes\del_i 
  +\Gamma_{\ih_1i}^{i'}dx^j\otimes dx^k\otimes dx^l\otimes\del_{i'}.
\endaligned
\end{equation}
Then suppose that \eqref{eq9-22-02-2022-M} holds for $|I|\leq m$. We consider
$$
\aligned
\nabla_{\hh}\nabla^I\big(dx^j\otimes dx^k\otimes dx^l\otimes \del_i\big) 
=&\sum_{|J|\leq |I|-1}\!\!\!\!\!\!
\Lambda_{i,Ji'',j'k'l'}^{Ijkl,j''k'',i'}\del_{\hh}\delh^J\Gamma_{j''k''}^{i''}\,dx^{j'}\otimes dx^{k'}\otimes dx^{l'}\otimes \del_{i'}
\\
&+
\sum_{|J|\leq |I|-1}\!\!\!\!\!\!\Lambda_{i,Ji'',j'k'l'}^{Ijkl,j''k'',i'}\delh^J\Gamma_{j''k''}^{i''}\,\nabla_{\hh}\big(dx^{j'}\otimes dx^{k'}\otimes dx^{l'}\otimes \del_{i'}\big)
\\
&+\del_{\hh}\big( P^{I,i'}_{j'k'l'}(\delh^{J'} \Gamma)\big)\,dx^{j'}\otimes dx^{k'}\otimes dx^{l'}\otimes \del_{i'}
\\
&+ P^{I,i'}_{j'k'l'}(\delh^{J'} \Gamma)\nabla_{\hh}\big(dx^{j'}\otimes dx^{k'}\otimes dx^{l'}\otimes \del_{i'}\big).
\endaligned
$$

We substitute \eqref{eq10-22-02-2022-M} into the second the last term, and remark that these terms becomes constant coefficient polynomials acting on $\delh^{J'}\Gamma$ with $|J'|\leq |I|-1$. The third term, thanks to the assumption of induction, is a polynomial acting on $\delh^{J'}\Gamma$ with $|J'|\leq |I|-2+1 = |I|-1$. The first term is already in the form desired. Thus we conclude by induction.
\end{proof}

Based on the above result, we have the following expression on the covariant derivatives of Riemann curvature.
\begin{lemma}
Suppose that $\nabla^I = \nabla_{\ih_1}\nabla_{\ih_2}\cdots\nabla_{\ih_m}$. In a local coordinate chart, the following expression holds:
\begin{equation}\label{eq3-23-02-2022-M}
\nabla^I R_{jkl}^i = \delh^IR_{jkl}^i + P^{Ii}_{jkl}(\delh^{J'}R,\delh^{J''}\Gamma).
\end{equation}
Here $P^{Ii}_{jkl}(\delh^{J'}R,\delh^{J''}\Gamma)$ is a constant coefficient polynomial acting on the derivatives of the components of Riemann curvature and the Christoffel symbols of order $\leq |I|-1$. 
\end{lemma}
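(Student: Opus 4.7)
My plan is to proceed by induction on $|I|$, leveraging the previous lemma \eqref{eq9-22-02-2022-M} which already expresses the action of $\nabla^I$ on tensor basis elements as a constant-coefficient polynomial in partial derivatives of Christoffel symbols of order $\leq |I|-1$. The base case $|I|=1$ is immediate from the standard coordinate formula for the covariant derivative of a $(1,3)$-tensor:
\begin{equation*}
\nabla_{\ih_1} R_{jkl}^i = \del_{\ih_1} R_{jkl}^i + \Gamma_{\ih_1 m}^i R_{jkl}^m - \Gamma_{\ih_1 j}^m R_{mkl}^i - \Gamma_{\ih_1 k}^m R_{jml}^i - \Gamma_{\ih_1 l}^m R_{jkm}^i,
\end{equation*}
in which the four Christoffel corrections are manifestly a constant-coefficient polynomial in zeroth-order derivatives of $R$ and of $\Gamma$, as required.

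For the inductive step, I expand $R$ in the coordinate basis as $R = R_{jkl}^i \cdot e_{jkli}$ where $e_{jkli} := dx^j \otimes dx^k \otimes dx^l \otimes \del_i$, and then apply the iterated Leibniz rule to $\nabla^I R = \nabla_{\ih_1} \cdots \nabla_{\ih_m}(R_{jkl}^i \cdot e_{jkli})$. For each fixed index tuple, $R_{jkl}^i$ is a scalar function on which every $\nabla_h$ acts as $\del_h$; moreover, whenever a derivation $\nabla_h$ hits a previously-produced scalar factor $\delh^{I'} R_{jkl}^i$, the result is again $\del_h \delh^{I'} R_{jkl}^i$, and partial derivatives of scalars commute. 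Iterating shows that
\begin{equation*}
\nabla^I R \;=\; \sum_{I = I_1 \sqcup I_2} \delh^{I_1} R_{jkl}^i \cdot \nabla^{I_2}\!\big(e_{jkli}\big),
\end{equation*}
the sum being over ordered splittings, each with combinatorial coefficient $1$. The distinguished splitting $I_2 = \emptyset$ contributes exactly $\delh^I R_{jkl}^i \cdot e_{jkli}$, whose $(jkl)^i$ component is the leading term $\delh^I R_{jkl}^i$ of the claim. For every remaining splitting one has $|I_1| \leq |I|-1$ and $|I_2| \geq 1$; applying \eqref{eq9-22-02-2022-M} (or \eqref{eq10-22-02-2022-M} if $|I_2|=1$) rewrites $\nabla^{I_2}(e_{jkli})$ as a constant-coefficient polynomial in $\delh^{J''}\Gamma$ with $|J''| \leq |I_2|-1 \leq |I|-1$, contracted against basis elements $e_{j'k'l'i'}$. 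Extracting the $(jkl)^i$ component from all such contributions assembles them into the desired polynomial $P^{Ii}_{jkl}(\delh^{J'}R,\delh^{J''}\Gamma)$ with all derivative orders bounded by $|I|-1$.

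The main obstacle is not conceptual but one of bookkeeping: one must verify that the iterated Leibniz rule over the non-commuting derivations $\nabla_{\ih_1},\ldots,\nabla_{\ih_m}$, when applied to a scalar--tensor product, genuinely factors as asserted with constant integer coefficients. This works precisely because $\nabla_h$ collapses to $\del_h$ on scalars at \emph{every} stage of the iteration (a newly-produced scalar factor remains scalar), and ordinary partial derivatives commute. Once this clean factorization is in hand, the bound on the order of $P$ follows directly from the corresponding bound supplied by \eqref{eq9-22-02-2022-M}.
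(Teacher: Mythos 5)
Your proposal is correct and follows essentially the same route as the paper: expand $R = R_{jkl}^i\, dx^j\otimes dx^k\otimes dx^l\otimes\del_i$, use the Leibniz rule to split $\nabla^I R$ into $\delh^{I_1}R_{jkl}^i\,\nabla^{I_2}(dx^j\otimes dx^k\otimes dx^l\otimes\del_i)$ over splittings of $I$, isolate the term $I_2=\emptyset$ as the leading term $\delh^I R_{jkl}^i$, and apply \eqref{eq9-22-02-2022-M} to the remaining pieces to get a constant-coefficient polynomial in derivatives of $R$ and $\Gamma$ of order $\leq |I|-1$. The inductive framing and explicit $|I|=1$ base case you add are harmless packaging of the same direct calculation.
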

\begin{proof}
This is by direct calculation. We remark that
$$
\aligned
\nabla^IR =& \nabla^I(R_{jkl}^idx^j\otimes dx^k\otimes dx^l\otimes \del_i) 
\\
=& \delh^IR_{jkl}^i dx^j\otimes dx^k\otimes dx^l\otimes \del_i
 + \sum_{I_1+I_2=I\atop |I_1|\leq |I|-1}\delh^{I_1}R_{jkl}^i \nabla^{I_2}\big(dx^j\otimes dx^k\otimes dx^l\otimes \del_i\big).
\endaligned
$$
Then by \eqref{eq9-22-02-2022-M}, we obtain the desired result.
\end{proof}

Then we concentrate on the derivatives of the Christoffel symbols. For the convenience of expression, we denote by 
$\{\theta^i\}_{i=1,2,\cdots, n}$ the dual frame of $\{\del_i\}_{i=1,2,\cdots,n}$.
\begin{lemma}
Let $I = (\ih_1,\ih_2,\cdots,\ih_m)$. Then in a local coordinate chart, one has
\begin{equation}\label{eq1-23-02-2022-M}
\delh^I\Gamma_{1\jh}^i  = \del_1\delh^{I'}\Gamma_{\ih_m\jh}^i + \delh^{I'}R_{\ih_m1\jh}^i 
+ \delh^{I'}\big( \Gamma_{\ih_m\jh}^j\Gamma_{1j}^i-\Gamma_{\ih_m j}^i\Gamma_{1\jh}^j\big).
\end{equation}
where $I' = (\ih_1,\ih_2,\cdots,\ih_{m-1})$.
\end{lemma}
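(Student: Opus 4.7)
The plan is to read the identity as a straightforward iterated-derivative consequence of the coordinate formula \eqref{eq6-22-02-2022-M} expressing the Riemann curvature in terms of Christoffel symbols. Concretely, specialize that formula with $\jh \leftrightarrow \ih_m$, $\kh \leftrightarrow 1$ and $\lh \leftrightarrow \jh$, which gives
\begin{equation*}
R_{\ih_m 1\jh}^{\,i} \;=\; \del_{\ih_m}\Gamma_{1\jh}^{\,i} \;-\; \del_{1}\Gamma_{\ih_m\jh}^{\,i} \;+\; \Gamma_{\ih_m h}^{\,i}\Gamma_{1\jh}^{\,h} \;-\; \Gamma_{1 h}^{\,i}\Gamma_{\ih_m\jh}^{\,h}.
\end{equation*}
Solving for $\del_{\ih_m}\Gamma_{1\jh}^{\,i}$ and regrouping the quadratic Christoffel terms produces
\begin{equation*}
\del_{\ih_m}\Gamma_{1\jh}^{\,i} \;=\; \del_{1}\Gamma_{\ih_m\jh}^{\,i} \;+\; R_{\ih_m 1\jh}^{\,i} \;+\; \big(\Gamma_{\ih_m\jh}^{\,j}\Gamma_{1j}^{\,i} - \Gamma_{\ih_m j}^{\,i}\Gamma_{1\jh}^{\,j}\big),
\end{equation*}
which is precisely the claim in the special case $|I|=1$ (so that $I' = \emptyset$ and $\delh^{I'}$ is the identity).

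For general $|I|=m$, I would simply apply $\delh^{I'} = \del_{\ih_1}\del_{\ih_2}\cdots\del_{\ih_{m-1}}$ (acting on scalar coordinate-components, so all these partials commute with one another and with $\del_1$) to both sides of the single-derivative identity above. On the left-hand side, $\delh^{I'}\del_{\ih_m}\Gamma_{1\jh}^{\,i} = \delh^{I}\Gamma_{1\jh}^{\,i}$ by the very definition $\delh^{I} = \delh^{I'}\del_{\ih_m}$. On the right-hand side, $\delh^{I'}\del_1\Gamma_{\ih_m\jh}^{\,i} = \del_1\delh^{I'}\Gamma_{\ih_m\jh}^{\,i}$ by commutativity of partials, while the remaining two contributions become $\delh^{I'}R_{\ih_m 1\jh}^{\,i}$ and $\delh^{I'}\bigl(\Gamma_{\ih_m\jh}^{\,j}\Gamma_{1j}^{\,i} - \Gamma_{\ih_m j}^{\,i}\Gamma_{1\jh}^{\,j}\bigr)$ respectively, matching \eqref{eq1-23-02-2022-M} term by term.

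There is essentially no obstacle: the only things to keep track of are the index conventions (so that the contracted Christoffel products come out with the correct summed dummy index $j$ after $\delh^{I'}$ acts) and the observation that, because we are working with coordinate components (scalar functions on the chart), ordinary partial derivatives commute unconditionally — so no ordering subtlety enters. This makes the lemma a purely algebraic rewriting of the curvature formula \eqref{eq6-22-02-2022-M}, preparing the ground for the inductive vanishing argument \eqref{eq5-15-02-2022-M}–\eqref{eq6-24-02-2022-M} of Proposition \ref{prop1-19-02-2022-M}, where the right-hand side terms $\del_1\delh^{I'}\Gamma_{\ih_m\jh}^{\,i}$ and $\delh^{I'}R_{\ih_m 1\jh}^{\,i}$ will be controlled by the hypothesis of local symmetry along $\gamma$ and the identities \eqref{eq16-07-10-2020}–\eqref{eq2-21-11-2021}.
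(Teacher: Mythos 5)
Your proof is correct and takes essentially the same route as the paper: establish the one-derivative identity and then apply $\delh^{I'}$ to it, using that partial derivatives of the scalar components commute. The only cosmetic difference is that you obtain the base case by rearranging the coordinate curvature formula \eqref{eq6-22-02-2022-M} (applied with the index $1$ in the middle slot, which is legitimate since that formula holds for arbitrary indices in a natural frame), whereas the paper re-derives the same identity via $\del_{\ih}\la \theta^i,\nabla_1\del_{\jh}\ra$ and the commutation of covariant derivatives—these are the same computation in different clothing.
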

\begin{proof}
We denote by $\la \omega, v\ra$ the action of a differential from $\omega$ on a vector field $v$. Then
$$
\aligned
\del_{\ih}\Gamma_{1\jh}^i = \del_{\ih}\la \theta^i,\nabla_1\del_{\jh}\ra
=& \la\nabla_{\ih}\theta^i,\nabla_1\del_{\jh}\ra + \la \theta^i,\nabla_{\ih}\nabla_1\del_{\jh} \ra
= \la\nabla_{\ih}\theta^i,\nabla_1\del_{\jh}\ra + \la \theta^i,\nabla_1\nabla_{\ih}\del_{\jh} \ra
+ R_{\ih1\jh}^i 
\\
=& -\Gamma_{\ih j}^i\Gamma_{1\jh}^j + \del_1\Gamma_{\ih\jh}^i + \Gamma_{\ih\jh}^j\Gamma_{1j}^i + R_{\ih1\jh}^i.
%\\
%=& {\xout{ \del_1\Gamma_{\ih\jh}^i - \Gamma_{\ih\kh}^i\Gamma_{1\jh}^{\kh} 
%- \Gamma_{1\ih}^i\Gamma_{1\jh}^1 + \Gamma_{\ih\jh}^{\kh}\Gamma_{1\kh}^i + \Gamma_{\ih\jh}^1\Gamma_{11}^i 
%+ R_{\ih1\jh}^i}}
\endaligned
$$
Then we obtain \eqref{eq1-23-02-2022-M} by differentiating the above identity with respect to $\delh^{I'}$.
%$$
%\delh^I\Gamma_{1\jh}^i = \delh^{I'}\del_{\ih_m}\Gamma_{1\jh}^i 
%= \del_1\delh^{I'}\Gamma_{\ih_m\jh}^i + \delh^{I'}R_{\ih_m1\jh}^i
% + \delh^{I'}\big( \Gamma_{\ih_m\jh}^j\Gamma_{1j}^i-\Gamma_{\ih_m j}^i\Gamma_{1\jh}^j\big).
%$$
\end{proof}

Then we establish the following result in the geodesic normal coordinates.
\begin{lemma}\label{lem1-22-02-2022-M}
	In the geodesic normal coordinates, one has
	\begin{equation}\label{eq1-15-02-2022-M}
		\Gamma_{\jh\kh}^ix^{\jh}x^{\kh} = 0,\qquad \forall x^1,\quad |\xh|\in (-\vep,\vep).
	\end{equation}	
\end{lemma}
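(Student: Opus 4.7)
The plan is to exploit the defining property of the geodesic normal chart $\sigma$ introduced in \eqref{eq1-22-02-2022-M}: for each fixed $x^1\in\RR$ and each transversal vector $\hat v=(v^2,\dots,v^n)$ with $|\hat v|<\vep$, the curve $t\mapsto \sigma(x^1,t\hat v)$ coincides by construction with $\exp_{\gamma(x^1)}(tv^{\jh}\vec e_{\jh})$, hence is a geodesic of $(N,h)$ passing through $\gamma(x^1)$ with initial velocity $v^{\jh}\vec e_{\jh}$. The identity \eqref{eq1-15-02-2022-M} is then extracted by writing the geodesic equation for such a curve and evaluating at $t=1$.

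Concretely, I would fix $x^1$ and $\hat v$, set $c(t):=\sigma(x^1,tv^2,\dots,tv^n)$, and read off its coordinates in the chart $\sigma$: by construction $c^1(t)\equiv x^1$ and $c^{\ih}(t)=tv^{\ih}$ for $\ih=2,\dots,n$. Hence $\dot c^1\equiv 0$, $\dot c^{\ih}\equiv v^{\ih}$ and $\ddot c^i\equiv 0$ for every $i$. Plugging these into the geodesic equation
\begin{equation*}
\ddot c^{\,i}+\Gamma_{jk}^i(c(t))\,\dot c^{\,j}\dot c^{\,k}=0
\end{equation*}
and observing that every term in which the index $1$ appears in $\dot c^{\,j}\dot c^{\,k}$ is annihilated by $\dot c^1=0$, I obtain
\begin{equation*}
\Gamma_{\jh\kh}^i\bigl(c(t)\bigr)\,v^{\jh}v^{\kh}=0\qquad\text{for all admissible }t.
\end{equation*}

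Specializing to $t=1$, the coordinates of $c(1)$ are exactly $(x^1,v^2,\dots,v^n)$, so upon renaming $v^{\jh}=x^{\jh}$ this yields $\Gamma_{\jh\kh}^i(x^1,\hat x)\,x^{\jh}x^{\kh}=0$, which is precisely \eqref{eq1-15-02-2022-M}. Since $\hat v$ was arbitrary subject to $|\hat v|<\vep$ (the range where $\exp_{\gamma(x^1)}$ is non-critical, guaranteed by the uniform lower bound on $r_{\text{foc}(x^1)}$ discussed after Proposition~\ref{prop1-19-02-2022-M}), and $x^1$ was arbitrary, the identity holds throughout the tubular neighborhood.

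There is no real obstacle here: the proof is a one-line consequence of the fact that transversal coordinate rays are geodesics of $N$. The only point that warrants a remark is that the argument only produces the quadratic relation $\Gamma_{\jh\kh}^i x^{\jh}x^{\kh}=0$ (and not $\Gamma_{\jh\kh}^i=0$ on $\gamma$ by itself, which however follows by differentiating twice in $\hat x$ at $\hat x=0$ together with the symmetry $\Gamma_{\jh\kh}^i=\Gamma_{\kh\jh}^i$), so Lemma~\ref{lem1-22-02-2022-M} should be invoked in this polynomial-identity form when it is used later in the section.
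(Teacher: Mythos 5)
Your proof is correct and follows exactly the paper's argument: the transversal radial curves $s\mapsto\sigma(x^1,s\xh_0)$ are geodesics by construction of the chart, and writing out the geodesic equation in the coordinates (where these curves are affine rays with vanishing coordinate acceleration and zero $x^1$-velocity) yields \eqref{eq1-15-02-2022-M}. Your closing remark about extracting $\Gamma_{\jh\kh}^i|_{\gamma}=0$ by differentiating twice at $\xh=0$ is also consistent with how the lemma is exploited later (cf. Lemma \ref{lem2-23-02-2022-M}).
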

\begin{proof}
	We remark that for a fixed $x_0 = (x_0^1,\xh_0)$, 
	$$
	\aligned
	&\eta_{x_0} :(-\vep,\vep)\mapsto M,\qquad &&s\mapsto \sigma(x_0^1,s\xh_0),
	%\\
	%&\gamma:(-\vep,\vep)\mapsto M, \qquad &&t\mapsto \sigma(t,0)
	\endaligned
	$$
	are geodesics. Then from the geodesic equations satisfied by $\eta_{x_0}$, we obtain \eqref{eq1-15-02-2022-M}.
\end{proof}

Finally, we establish the following relation.
\begin{lemma}\label{lem2-23-02-2022-M}
	In geodesic normal coordinates, 
\begin{equation}\label{eq4-23-02-2022-M}
\delh^I\Gamma_{\jh\kh}^i\big|_{\gamma} = P^{iI}_{\jh\kh}(\delh^{J'}R,\delh^{J''}\Gamma)\big|_{\gamma}
\end{equation}
where $P^{iI}_{\jh\kh}(\delh^{J'}R,\delh^{J''}\Gamma)$ is a constant coefficient polynomial acting on the transversal derivatives of components of the Riemann curvature and Christoffel symbols with order $\leq |I|-1$. When $|I| = 0$, $P = 0$. 
\end{lemma}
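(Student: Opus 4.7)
The plan is to induct on $m=|I|$, combining a total-symmetrization constraint coming from Lemma \ref{lem1-22-02-2022-M} with the ``antisymmetrization equals Riemann plus $\Gamma\!\cdot\!\Gamma$'' identity provided by \eqref{eq6-22-02-2022-M}. For the base case $m=0$, I would apply $\del_{\lh}\del_{\mh}$ to the identity $\Gamma_{\jh\kh}^i x^{\jh}x^{\kh}=0$ and evaluate at $\xh=0$. Only the Leibniz term in which one derivative kills $x^{\jh}$ and the other kills $x^{\kh}$ survives at $\gamma$, yielding $\Gamma_{\lh\mh}^i|_\gamma+\Gamma_{\mh\lh}^i|_\gamma=0$; the symmetry of $\Gamma$ in its lower indices then forces $\Gamma_{\lh\mh}^i|_\gamma=0$, which is the case $P=0$.

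For the inductive step, assume the statement for all multi-indices of length $<m$, and set $T^i_{\lh_1\ldots\lh_m;\jh\kh}:=\delh_{\lh_1}\cdots\delh_{\lh_m}\Gamma_{\jh\kh}^i|_\gamma$, which is symmetric in $(\lh_1,\dots,\lh_m)$ and in $(\jh,\kh)$. Ingredient (a) is the total-symmetrization identity: applying $\del_{\lh_1}\cdots\del_{\lh_{m+2}}$ to $\Gamma_{\jh\kh}^i x^{\jh}x^{\kh}=0$ and restricting to $\xh=0$, only the terms with exactly one derivative on each of $x^{\jh},x^{\kh}$ survive, giving
$$
\sum_{\{a,b\}\subset\{1,\dots,m+2\}}\delh^{L\setminus\{\lh_a,\lh_b\}}\Gamma_{\lh_a\lh_b}^i\big|_\gamma=0,\qquad L=\{\lh_1,\dots,\lh_{m+2}\}.
$$
Ingredient (b) is a swap identity extracted from \eqref{eq6-22-02-2022-M}:
$$
\del_{\lh_a}\Gamma_{\jh\kh}^i-\del_{\jh}\Gamma_{\lh_a\kh}^i
= R_{\lh_a\jh\kh}^i-\Gamma_{\lh_a h}^i\Gamma_{\jh\kh}^h+\Gamma_{\jh h}^i\Gamma_{\lh_a\kh}^h.
$$
Differentiating this identity by $\delh^{m-1}$ (in the remaining indices) and restricting to $\gamma$, the right-hand side becomes $\delh^{m-1}R_{\lh_a\jh\kh}^i|_\gamma$ plus, via Leibniz, a sum of products $\delh^{J_1}\Gamma\cdot\delh^{J_2}\Gamma|_\gamma$ with $|J_1|+|J_2|=m-1$; hence it is an explicit polynomial in $\delh^{\le m-1}R$ and $\delh^{\le m-1}\Gamma$. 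The left-hand side is precisely the difference of two tensors of type $T$ obtained by exchanging a single derivative slot with a single Christoffel slot.

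Iterating (b) shows that all $\binom{m+2}{2}$ summands appearing in (a) are equal to the reference tensor $T^i_{\lh_1\ldots\lh_m;\jh\kh}$ modulo $P$-corrections of the admissible order. Substituting back into (a) gives $\binom{m+2}{2}\,T^i_{\lh_1\ldots\lh_m;\jh\kh}=P^{iI}_{\jh\kh}\!\left(\delh^{J'}R,\delh^{J''}\Gamma\right)\big|_\gamma$ with $|J'|,|J''|\le m-1$, and the nonzero numerical factor lets us solve for $T$. The main obstacle will be the bookkeeping of this iteration: one must verify that each swap produces only $\delh^{\le m-1}R$ and $\delh^{\le m-1}\Gamma$ contributions (which is transparent from (b), since differentiating $\Gamma\!\cdot\!\Gamma$ by $\delh^{m-1}$ distributes at most $m-1$ derivatives to each factor), and that the chain of swaps aligning an arbitrary partition of $L\cup\{\jh,\kh\}$ with the reference one never inflates the derivative order beyond $m-1$. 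Once the combinatorics is controlled the conclusion follows.
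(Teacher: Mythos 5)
Your argument is essentially the paper's own proof: the symmetrization identity obtained by differentiating $\Gamma_{\jh\kh}^i x^{\jh}x^{\kh}=0$, combined with the curvature identity \eqref{eq6-22-02-2022-M} used as a "swap" to move indices between derivative and lower slots modulo admissible lower-order corrections, then solving for the reference term. The combinatorial bookkeeping you defer (that iterated swaps, lower-index symmetry, and commuting partials reach every summand) is exactly what the paper formalizes via the equivalence relation $\sim$ and the permutation-group fact $\la S_N,\tau,\eta\ra = S_{N+2}$ in Lemma \ref{lem1-23-02-2022-M}, so the proposal is correct as it stands.
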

\begin{proof}
We differentiate \eqref{eq1-15-02-2022-M} with respect to $\delh^I$ with $|I|\geq 3$, and obtain
$$
0 = x^{\ih}x^{\jh}\delh^I\Gamma_{\ih\jh}^i + 2\sum_{I_p}x^{\ih}\delta_{\ih_p}^{\jh}\delh^{I_p}\Gamma_{\ih\jh}^i
 + 2\sum_{I_{pq},p\neq q}\delta_{\ih_p}^{\ih}\delta_{\ih_q}^{\jh}\delh^{I_{pq}}\Gamma_{\ih\jh}^i
$$
where 
$$
I_p = (\ih_1,\ih_2,\cdots,\ih_{p-1},\ih_{p+1},\cdots\ih_m),
\quad
I_{pq} = (\ih_1,\cdots,\ih_{p-1},\ih_{p+1},\cdots,\ih_{q-1},\ih_{q+1},\cdots,\ih_m).
$$
Thus, evaluating the above relation along $\gamma$, i.e., $\xh = 0$, one obtains
\begin{equation}\label{eq2-23-02-2022-M}
	\sum_{I_{pq},p\neq q}\delh^{I_{pq}}\Gamma_{\ih_p\ih_q}^i\big|_{\gamma} = 0.
\end{equation}
On the other hand, form \eqref{eq6-22-02-2022-M}, we have
\begin{equation}\label{eq3-23-02-2022_M}
\delh^J\del_{\jh}\Gamma_{\kh\lh}^i - \delh^J\del_{\kh}\Gamma_{\jh\lh}^i = \delh^JR_{\jh\kh\lh}^i - \delh^J\big(\Gamma_{\jh h}^i\Gamma_{\kh\lh}^h - \Gamma_{\kh h}^i\Gamma_{\jh\lh}^h\big).
\end{equation}

We denote by $\mathcal{D}_N = \big\{(\ih_1\ih_2,\cdots\ih_N,\ih\jh)|\ih_k,\ih,\jh\in\{1,2,\cdots,n\}\big\}$ the collection of $(N+2)-$tuples. We introduce the following relation $\sim$:
$$
(\ih_1\ih_2\cdots\ih_N,\jh\kh)\sim (\jh_1\jh_2\cdots\jh_N,\jh'\kh'),
$$
if for all $i=1,2,\cdots,n$,
$$
\delh^I\Gamma_{\jh\kh}^i =\delh^J\Gamma_{\jh'\kh'}^i + P(\delh^{J'}R_{i''j''k''}^{l''},\delh^{J''}\Gamma_{j'''k'''}^{i'''}),
$$
with
$$
I = (\ih_1\ih_2\cdots\ih_N),\quad J = (\jh_1\jh_2\cdots\jh_N),
$$
and $P$ is a constant coefficient polynomial acting on $\delh^{J'}R_{i''j''k''}^{l''}$ and $\delh^{J''}\Gamma_{j'''k'''}^{i'''}$ with $|J'|,|J''|\leq N-1$. When $N=0$, this relation is reduced to
$$
(\ih\jh) \sim (\ih'\jh') \quad \Leftrightarrow\quad \Gamma_{\ih\jh}^i = \Gamma_{\ih'\jh'}^i.
$$
It is clear that $\sim$ is a relation of equivalence. From \eqref{eq3-23-02-2022_M}, one has
\begin{subequations}
\begin{equation}\label{eq3a-23-02-2022-M}
(\ih_1\ih_2\cdots\ih,\ih_N\jh)\sim (\ih_1\ih_2\cdots\ih_N,\ih\jh).
\end{equation}
Furthermore, due to the fact that the connection $\nabla$ is torsion free, 
\begin{equation}\label{eq3b-23-02-2022-M}
(\ih_1\ih_2\cdots\ih_N,\ih\jh) \sim (\ih_1\ih_2\cdots\ih_N,\jh\ih).
\end{equation}
At last, recalling that $[\del_{\ih},\del_{\jh}] = 0$, 
\begin{equation}\label{eq3c-23-02-2022-M}
(\ih_1\ih_2\cdots\ih_N,\ih\jh)\sim (\ih_{\sigma(1)}\ih_{\sigma(2)}\cdots\ih_{\sigma(N)},\ih\jh),\quad \forall \sigma \in S_N,
\end{equation}
\end{subequations}
where $S_N$ is the $N-$permutation group. For the simplicity of expression, we denote by
$$
\aligned
&\tau: \mathcal{D}_N\mapsto \mathcal{D}_N, \quad (\ih_1\ih_2...\ih_N,\ih\jh)\mapsto (\ih_1\ih_2...\ih,\ih_N \jh);
\\
&\eta: \mathcal{D}_N\mapsto \mathcal{D}_N, \quad (\ih_1\ih_2...\ih_N,\ih\jh)\mapsto (\ih_1\ih_2...\ih_N,\jh\ih).
\endaligned
$$
Both $\tau$ and $\eta$ can be regarded as permutations in $S_{N+2}$. We denote by $G = \la S_N, \tau,\sigma\ra$ the group generated by $\sigma\in S_N, \tau$ and $\eta$. Then $G$ is a subgroup of $S_{N+2}$. It is clear that $\forall \alpha\in G$, 
$$
(\ih_1\ih_2\cdots\ih_N,\ih\jh)\sim \alpha(\ih_1\ih_2\cdots\ih_N,\ih\jh).
$$

Following a result of elementary theory of permutation groups (see Lemma \ref{lem1-23-02-2022-M} below), one has $G = S_{N+2}$. Then  $\forall I = (\ih_1\ih_2\cdots\ih_N\ih_{N+1}\ih_{N+2})$ and $\forall 1\leq p<q\leq N+2$, all $(I_{pq}, \ih_p\ih_q)$ are equivalent to each other. Then from \eqref{eq2-23-02-2022-M}, we obtain the desired result.
\end{proof}

\begin{lemma}\label{lem1-23-02-2022-M}
	Following the notation in the proof of Lemma \ref{lem2-23-02-2022-M}, $G = \la S_N,\tau,\eta\ra = S_{N+2}$. 
\end{lemma}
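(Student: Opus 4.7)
The plan is to identify $\tau$ and $\eta$ explicitly as adjacent transpositions in $S_{N+2}$ and then invoke the standard fact that adjacent transpositions generate the full symmetric group. Label the positions of an $(N+2)$-tuple by $1, 2, \dots, N+1, N+2$. Regarding $S_N$ as the subgroup of $S_{N+2}$ fixing positions $N+1$ and $N+2$, it already contains every adjacent transposition $(k,k+1)$ for $1 \leq k \leq N-1$.

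Reading the definitions in the proof of Lemma \ref{lem2-23-02-2022-M}, the map $\tau$ sends $(\ih_1\ih_2\cdots\ih_N,\ih\jh)$ to $(\ih_1\ih_2\cdots\ih,\ih_N\jh)$, which exchanges the symbols sitting in positions $N$ and $N+1$ and leaves all other positions fixed; hence $\tau = (N, N+1)$. Similarly, $\eta$ swaps the last two entries, so $\eta = (N+1, N+2)$. Consequently, $G$ contains every adjacent transposition $(k,k+1)$ for $1 \leq k \leq N+1$.

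Thus the proof reduces to the classical observation that $\{(1,2), (2,3), \dots, (N+1, N+2)\}$ is a generating set of $S_{N+2}$. This follows from the fact that any transposition $(i,j)$ with $i<j$ can be written as
\[
(i,j) = (i,i+1)(i+1,i+2)\cdots(j-1,j)\cdots(i+1,i+2)(i,i+1),
\]
and transpositions already generate $S_{N+2}$. Therefore $G = S_{N+2}$.

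There is no real obstacle here; the only thing to be careful about is the bookkeeping identifying $\tau$ and $\eta$ as the specific adjacent transpositions $(N,N+1)$ and $(N+1,N+2)$, after which the statement is a textbook fact about $S_{N+2}$.
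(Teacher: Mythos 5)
Your proof is correct and takes essentially the same route as the paper: both identify $\tau = (N,N+1)$ and $\eta = (N+1,N+2)$ as adjacent transpositions on the last three positions and then reduce to the elementary generation fact for symmetric groups. The paper packages that fact as the recursion $S_{M} = \langle S_{M-1},(M-1,M)\rangle$ applied twice, while you invoke generation by adjacent transpositions directly, but these are the same standard argument.
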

\begin{proof}
	% {\tt Write the proof. Remark that 
		%$$
		%(1,2,3,\cdots, N) (N,N+1) (N+1,N+2) = (1,2,\cdots,N+2)
		%$$
		%This $N-$cycle together with $(N+1,N+2)$ generates $S_{N+2}$.
		%}
	For all $N$, recall that $S_{N}=\la \{(i,k)|i\neq k,\ i,k\in \{1,2,3,\cdots, N\}\}\ra$. Since for any $k<N$, it is obvious that $(k,N)=(k,N-1)(N-1,N)(k,N-1)^{-1}$, therefore $S_N$ is also generated by $\{(i,k),(N-1,N)|i\neq k,\ i,k\in \{1,2,3,\cdots, N-1\}\}$. This is equivalent to say $S_N=\la S_{N-1},(N-1,N)\ra$. Hence $S_{N+2}=\la S_{N+1},(N+1,N+2)\ra = \la S_{N},(N,N+1),(N+1,N+2)\ra$. The desired result can be directly deduced from this relation.   
\end{proof}

\subsection{Proof of Proposition \ref{prop1-19-02-2022-M}}
We proceed by induction on the following set of relations (with respect to $|I|$):
\begin{equation}\label{eq1-24-02-2022-M}
\del_1\delh^IR_{jkl}^i\big|_{\gamma} = 0,\quad \del_1\delh^I\Gamma_{jk}^i\big|_{\gamma} = 0.
\end{equation}
This is true when $|I| = 0$. The first is due to the condition of local symmetry applied on $R$:
$$
0 = \nabla_1R_{jkl}^i = \del_1R_{jkl}^idx^j + \Gamma_{1i'}^iR_{jkl}^{i'}
 - \Gamma_{1j}^{j'} R_{j'kl}^i - \Gamma_{1k}^{k'}R_{jk'l}^i - \Gamma_{1l}^{l'}R_{jkl'}^i .
$$
Evaluating along $\gamma$ and recalling that $\Gamma_{jk}^i\big|_{\gamma} = 0$, we obtain the desired relation. 

Then suppose that \eqref{eq1-24-02-2022-M} holds for $|I|\leq N-1$. We consider the case $|I| = N$. Remark that
\begin{equation}\label{eq2-24-02-2022-M}
\nabla_1\nabla^IR = 0 \Rightarrow \nabla_1\big(\nabla^IR_{jkl}^idx^j\otimes dx^k\otimes dx^l\otimes \del_i\big) = 0.
\end{equation}
Thanks to \eqref{eq10-22-02-2022-M} and \eqref{eq16-07-10-2020}, 
$$
\nabla_1\big(dx^j\otimes dx^k\otimes dx^l\otimes \del_i\big)\big|_{\gamma} = 0.
$$
Thus from \eqref{eq2-24-02-2022-M},
$$
\del_1(\nabla^IR_{jkl}^i)\big|_{\gamma} = 0.
$$
Then thanks to \eqref{eq3-23-02-2022-M}, we obtain
$$
0 = \del_1\delh^IR_{jkl}^i\big|_{\gamma}  + \del_1\big(P^{Ii}_{jkl}(\delh^{J'}R,\delh^{J''}\Gamma)\big)\big|_{\gamma} 
$$
where $|J'|,|J''|\leq N-1$. By the assumption of induction,  one obtains
\begin{equation}\label{eq3-24-02-2022_M}
\del_1\delh^IR_{jkl}^i\big|_{\gamma} = 0.
\end{equation}

On the other hand, we differentiate \eqref{eq1-23-02-2022-M} with respect to $\del_1$ and evaluate along $\gamma$ together with the assumption of induction and obtain
\begin{equation}\label{eq4-24-02-2022_M}
\del_1\delh^I\Gamma_{1k}^i\big|_{\gamma} = 0.	
\end{equation}
Finally, we recall \eqref{eq4-23-02-2022-M} (remark that $\del_1$ tangents to $\gamma$)
$$
\del_1\delh^I\Gamma_{\jh\kh}^i\big|_{\gamma} = \del_1P^{iI}_{\jh\kh}(\delh^{J'}R,\delh^{J''}\Gamma)\big|_{\gamma}
$$
where $|J'|,|J''|\leq N-1$. By the assumption of induction, we obtain
$$
\del_1\delh^I\Gamma_{\jh\kh}^i\big|_{\gamma} = 0.
$$
Then by \eqref{eq3-24-02-2022_M}, \eqref{eq4-24-02-2022_M} together with the above relation, we conclude that \eqref{eq1-24-02-2022-M} holds for all high-order transversal derivatives $\delh^I$. 

Once \eqref{eq1-24-02-2022-M} is established, it is clear that for all $m\in\mathbb{N}$ and all all high-order transversal derivatives $\delh^I$,
$$
\del_1\del_1^m\delh^I\Gamma_{jk}^i\big|_{\gamma} = 0.
$$
This concludes \eqref{eq5-15-02-2022-M}. The equation \eqref{eq6-24-02-2022-M} is established in the same manner, we omit the detail.

\subsection{Special components of the Christoffel symbols}
In this subsection we establish the following result:
\begin{proposition}\label{prop2-27-11-2021}
Suppose that $\gamma$ is a geodesic of $(N,h)$, which is locally symmetric along $\gamma$. Then for $q\geq 0$, 
	\begin{equation}\label{eq1-21-11-2021}
		\del_1^q\del_1\del_j\Gamma_{1k}^i\big|_{\gamma} = \del_1^q\del_1\Gamma_{1k}^i\big|_{\gamma} = 0,
	\end{equation}
	\begin{equation}\label{eq3-21-11-2021}
		\del_1^q\del_j\del_k\Gamma_{11}^1\big|_{\gamma} = \del_1^q\del_k\Gamma_{11}^1\big|_{\gamma} = \del_1^q\Gamma_{11}^1\big|_{\gamma} = 0.
	\end{equation}
\end{proposition}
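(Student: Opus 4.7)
The plan is to reduce the proposition to Proposition~\ref{prop1-19-02-2022-M} and \eqref{eq16-07-10-2020} in as many cases as possible, handling the small number of residual cases by hand using \eqref{eq2-21-11-2021} and the symmetries of the Riemann tensor. The underlying observation is that any quantity of the form $\del_1\del^I\Gamma_{jk}^i\big|_{\gamma}$ vanishes by \eqref{eq5-15-02-2022-M}, while $\del_1^q\Gamma_{jk}^i\big|_{\gamma}=0$ by \eqref{eq16-07-10-2020}; so the task reduces to identifying the expressions that fit neither template and treating them separately.

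The first line \eqref{eq1-21-11-2021} is immediate: both $\del_1^q\del_1\del_j\Gamma_{1k}^i$ and $\del_1^q\del_1\Gamma_{1k}^i$ can be written as $\del_1\del^I\Gamma_{1k}^i$ (with $\del^I=\del_1^q\del_j$ and $\del^I=\del_1^q$, respectively), so both vanish on $\gamma$ by \eqref{eq5-15-02-2022-M}. For the second line \eqref{eq3-21-11-2021}, the identity $\del_1^q\Gamma_{11}^1\big|_{\gamma}=0$ is \eqref{eq16-07-10-2020}. For $\del_1^q\del_k\Gamma_{11}^1\big|_{\gamma}$, the cases $k=1$ or $q\geq 1$ fit the same template; the only case left is $q=0$, $k=\kh$, and by \eqref{eq2-21-11-2021} together with $h\big|_{\gamma}=\delta$ one obtains $\del_{\kh}\Gamma_{11}^1\big|_{\gamma} = R_{\kh 11}^1\big|_{\gamma} = R_{\kh 111}\big|_{\gamma}$, which vanishes by the antisymmetry of $R_{abcd}$ in its last two indices.

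Analogously, $\del_1^q\del_j\del_k\Gamma_{11}^1\big|_{\gamma}=0$ reduces to the previous templates except when $q=0$, $j=\jh$, $k=\kh$. For this remaining case I will use the general identity (derived exactly as in the proof of \eqref{eq1-23-02-2022-M}, now with no restriction on the indices)
\[
\del_{\kh}\Gamma_{11}^1 = -\Gamma_{\kh e}^1\Gamma_{11}^e + \del_1\Gamma_{\kh 1}^1 + \Gamma_{\kh 1}^e\Gamma_{1e}^1 + R_{\kh 11}^1,
\]
apply $\del_{\jh}$, and evaluate on $\gamma$. The two quadratic-in-$\Gamma$ terms vanish by Leibniz since $\Gamma\big|_{\gamma}=0$; the middle term becomes $\del_1\del_{\jh}\Gamma_{\kh 1}^1\big|_{\gamma}$, which vanishes by Proposition~\ref{prop1-19-02-2022-M}; and for $\del_{\jh}R_{\kh 11}^1\big|_{\gamma}$ I will write $R_{\kh 11}^1 = h^{1m}R_{\kh 11m}$ and use $\del_c h_{ab}\big|_{\gamma}=0$ (a standard consequence of $\Gamma\big|_{\gamma}=0$ together with metric compatibility of $\nabla$) to reduce the expression to $\del_{\jh}R_{\kh 111}\big|_{\gamma}$, which vanishes identically because $R_{\kh 111}\equiv 0$ by the antisymmetry of $R_{abcd}$ in its last pair.

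The step I expect to be most delicate is this $(q=0,\jh,\kh)$ case: it is the only one where Proposition~\ref{prop1-19-02-2022-M} gives no direct information, and one must combine the explicit expression of a transverse derivative of a Christoffel symbol in terms of $R$ and $\Gamma$ (valid off $\gamma$) with the component-level antisymmetry that makes $R_{\kh 111}$ identically zero, not merely zero on $\gamma$. Everything else is bookkeeping.
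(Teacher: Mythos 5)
Your argument is correct, but for \eqref{eq3-21-11-2021} it follows a genuinely different route from the paper's. Your treatment of \eqref{eq1-21-11-2021} is a direct reduction to Proposition \ref{prop1-19-02-2022-M}: this is legitimate, since the multi-index $I$ in \eqref{eq5-15-02-2022-M} is unrestricted (the end of that proof covers $\del_1\del_1^m\delh^I\Gamma_{jk}^i\big|_{\gamma}=0$, and partials commute), whereas the paper re-derives the first identity by evaluating $\del_j\Gamma_{1k}^i\big|_{\gamma}=R_{j1k}^i\big|_{\gamma}$ and then invoking \eqref{eq6-24-02-2022-M}; both rest on the same machinery, yours being the more economical citation. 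For \eqref{eq3-21-11-2021}, the paper works with the metric identity $\Gamma_{11}^ih_{i1}=\tfrac{1}{2}\del_1h_{11}$, so that after one or two transversal Leibniz differentiations everything is absorbed into \eqref{eq7-24-02-2022-M} and \eqref{eq8-24-02-2022-M}; you instead isolate the two residual cases and handle $\del_{\kh}\Gamma_{11}^1\big|_{\gamma}$ via \eqref{eq2-21-11-2021} plus the last-pair antisymmetry $R_{\kh 111}\equiv 0$, and $\del_{\jh}\del_{\kh}\Gamma_{11}^1\big|_{\gamma}$ via the unrestricted version of the commutation identity behind \eqref{eq1-23-02-2022-M}, with the quadratic terms killed by $\Gamma\big|_{\gamma}=0$, the mixed term $\del_1\del_{\jh}\Gamma_{\kh 1}^1\big|_{\gamma}$ by Proposition \ref{prop1-19-02-2022-M}, and the curvature term by $\del_ch_{ab}\big|_{\gamma}=0$ together with the pointwise identity $R_{\kh 111}\equiv 0$. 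Both proofs use local symmetry in exactly one place (your mixed term, respectively the paper's \eqref{eq8-24-02-2022-M}); the paper's route is slightly shorter for the two-derivative case because the curvature information is prepackaged in \eqref{eq8-24-02-2022-M}, while yours makes explicit that the geometric input is $\big(R(\cdot,\dot\gamma)\dot\gamma,\dot\gamma\big)_h\equiv 0$, at the cost of the extra lowering step $R_{\kh 11}^1=h^{1m}R_{\kh 11m}$, which you correctly support with the vanishing of the metric derivatives along $\gamma$.
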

Before the proof of this result, we need the following observation.
\begin{lemma}
	In the geodesic normal coordinates, one has
	\begin{equation}\label{eq7-24-02-2022-M}
		\del_kh_{ij}\big|_{\gamma} \equiv 0,
	\end{equation}
where we recall that $h_{ij}$ are the components of the metric $h$.	Furthermore, if the manifold enjoys the local symmetry along $\gamma$, 
	\begin{equation}\label{eq8-24-02-2022-M}
		\del_1\del^Ih_{jk}\big|_{\gamma} \equiv 0.
	\end{equation}
\end{lemma}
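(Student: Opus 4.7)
For the first identity, no symmetry hypothesis is needed. The plan is to start from metric compatibility $\nabla h = 0$, which in coordinates reads $\del_k h_{ij} = h_{lj}\Gamma_{ki}^l + h_{il}\Gamma_{kj}^l$. Evaluating along $\gamma$ and applying $\Gamma_{jk}^i|_\gamma = 0$ (supplied by \eqref{eq16-07-10-2020} with $q=0$), both terms on the right-hand side drop out and $\del_k h_{ij}|_\gamma = 0$ follows immediately.

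For the second identity, the plan proceeds by a two-step reduction. First I would reduce to the purely transversal case: since $\del_1$ is tangent to $\gamma$ and partial derivatives commute, one can reorder to write $\del_1\del^I = \del_1^{p+1}\delh^{\hat J}$ for some $p\geq 0$ and some transversal multi-index $\hat J$. Once the function $\del_1\delh^{\hat J} h_{jk}$ is shown to vanish identically along $\gamma$, every further application of $\del_1$ stays tangent to $\gamma$ and preserves this vanishing; hence it suffices to prove $\del_1\delh^{\hat I} h_{jk}|_\gamma = 0$ for every transversal multi-index $\hat I$. Second, I would establish by induction on $|\hat I|$ the following structural lemma: $\delh^{\hat I} h_{jk}|_\gamma$, viewed as a function of $x^1$, is a polynomial with coefficients independent of $x^1$ (built from Kronecker deltas and binomial factors) in the variables $\{\delh^{\hat L}\Gamma_{ab}^c|_\gamma : |\hat L|\leq |\hat I|-1\}$. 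The inductive step is a direct application of the metric-compatibility identity $\delh_{\hat m} h_{jk} = h_{ki}\Gamma_{\hat m j}^i + h_{ji}\Gamma_{\hat m k}^i$: writing $\hat I = \hat I'+(\hat m)$, applying $\delh^{\hat I'}$ via the Leibniz rule, and combining $h_{ij}|_\gamma = \delta_{ij}$ with the inductive hypothesis applied to each factor of the form $\delh^{\hat I_1} h|_\gamma$ yields the polynomial form at level $|\hat I|$.

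Once the structural lemma is in place, the conclusion follows by differentiating the polynomial identity with respect to $x^1$: each term produced is a product of constants and factors $\del_1\delh^{\hat L}\Gamma_{ab}^c|_\gamma$, and all of these vanish by Proposition \ref{prop1-19-02-2022-M}. The local symmetry assumption thus enters only at this very last step, funneled entirely through Proposition \ref{prop1-19-02-2022-M}. The main obstacle is the bookkeeping inside the structural lemma---verifying that every term produced by the Leibniz expansion involves only derivatives of $\Gamma$ of order at most $|\hat I|-1$ with coefficients independent of $x^1$---which is a routine but careful combinatorial argument.
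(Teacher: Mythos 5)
Your proof is correct and follows essentially the same route as the paper: the first identity is exactly the paper's argument (metric compatibility $\del_kh_{ij}=h_{lj}\Gamma_{ki}^l+h_{li}\Gamma_{kj}^l$ together with $\Gamma_{jk}^i|_{\gamma}=0$ from \eqref{eq16-07-10-2020}), and the second is an induction on the differentiated compatibility identity in which the local symmetry enters only through Proposition \ref{prop1-19-02-2022-M}. The paper's induction acts directly on the vanishing of $\del_1\del^I h_{jk}|_{\gamma}$ rather than passing through your intermediate polynomial-structure lemma and $x^1$-differentiation, but the content is the same.
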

\begin{proof}
	This is because 
	\begin{equation}\label{eq9-24-02-2022-M}
		\del_kh_{ij} = h_{lj}\Gamma_{ki}^l + h_{li}\Gamma_{kj}^l,
	\end{equation}
	which is equivalent to $\del_k(\vec{e}_i,\vec{e}_j)_h = (\nabla_{\vec{e}_k}\vec{e}_i,\vec{e}_j)_h + (\vec{e}_i,\nabla_{\vec{e}_k}\vec{e}_j)$. Then by \eqref{eq16-07-10-2020} we obtain \eqref{eq7-24-02-2022-M}.	Furthermore by \eqref{eq5-15-02-2022-M} and induction on $|I|$, we obtain \eqref{eq8-24-02-2022-M}.
\end{proof}

\begin{proof}[Proof of Proposition \ref{prop2-27-11-2021}]
We remark that
$$
\aligned
\del_j\Gamma_{1k}^i =& \del_j\la \theta^i,\nabla_1\del_k\ra 
= \la \nabla_j\theta^i,\nabla_1\del_k\ra + \la \theta^i,\nabla_j\nabla_1\del_k\ra
\\
=& \la \nabla_j\theta^i,\nabla_1\del_k\ra + \la \theta^i, \nabla_1\nabla_j\del_k \ra + R_{j1k}^i.
\endaligned
$$
Evaluating the above identity along $\varphi_I(\RR)$, we obtain (thanks to \eqref{eq16-07-10-2020} and \eqref{eq2-21-11-2021})
%\textcolor{blue}
%{
%$$
%\nabla_j\theta^i\big|_{\gamma} = -\Gamma_{jk}^i\theta^k\big|_{\gamma} = 0,
%$$
%$$
%\nabla_1\nabla_j\del_k = \nabla_1\big(\Gamma_{jk}^l\del_l\big) = \del_1\Gamma_{jk}^l\del_l + \Gamma_{jk}^l\Gamma_{1l}^m\del_m
%$$
%}
$$
\del_j\Gamma_{1k}^l\big|_{\gamma} = R_{j1k}^i\big|_{\gamma} = R_{j1ki}\big|_{\gamma}. 
$$
Then differentiate the above identity along $\varphi_I$ and thanks to \eqref{eq6-24-02-2022-M}, we obtain 
$$
\del_1\del_j\Gamma_{1k}^i\big|_{\gamma} = 0.
$$
This leads to $\del_1^q\del_1\del_j\Gamma_{1k}^i\big|_{\gamma} = 0$. The second relation in \eqref{eq1-21-11-2021} is a direct result of \eqref{eq16-07-10-2020}.

	For \eqref{eq3-21-11-2021}, we only need to remark that $\Gamma_{11}^ig_{i1} = (\nabla_1\del_1,\del_1)_h = \frac{1}{2}\del_1h_{11}$. Then
	$$
	\del_j(\Gamma_{11}^ig_{i1}) = \frac{1}{2}\del_1\del_jh_{11},\qquad \del_j\del_k(\Gamma_{11}^ih_{i1}) = \frac{1}{2}\del_1\del_j\del_kh_{11} .
	$$
	Here if we evaluate the right-hand side along $\gamma$, we have
	\begin{equation}\label{eq10-24-02-2022-M}
		\del_j(\Gamma_{11}^ih_{i1})\big|_{\gamma} = \del_j\del_k(\Gamma_{11}^ih_{i1})\big|_{\gamma} = 0
	\end{equation}
	by \eqref{eq8-24-02-2022-M}. On the other hand, 
	$$
	\aligned
	\del_j(\Gamma_{11}^ih_{i1}) =& \del_j\Gamma_{11}^ih_{i1} + \Gamma_{11}^i\del_jh_{i1},
	\\
	\del_j\del_k(\Gamma_{11}^ih_{i1}) =& \del_j\del_k\Gamma_{11}^ih_{i1} + \del_j\Gamma_{11}^i\del_kh_{i1} + \del_k\Gamma_{11}^i\del_jh_{i1} + \Gamma_{11}^i\del_j\del_kh_{i1}.
	\endaligned
	$$
	Evaluating the above equation along $\gamma$ and thanks to \eqref{eq7-24-02-2022-M} and \eqref{eq8-24-02-2022-M}, we have
	$$
	\del_j(\Gamma_{11}^ih_{i1})\big|_{\gamma} = \del_j\Gamma_{11}^1\big|_{\gamma},
	\quad
	\del_j\del_k(\Gamma_{11}^ih_{i1})\big|_{\gamma} = \del_j\del_k\Gamma_{11}^1\big|_{\gamma}.
	$$
	Combine this with \eqref{eq10-24-02-2022-M}, we obtain \eqref{eq3-21-11-2021}.
\end{proof}
%\textcolor{blue}{\tt
%	YM: it seems that one has 
%	$$
%	\del_1\del_i\del_j\del_k\Gamma_{11}^1\big|_{\gamma} = 0. 
%	$$
% In fact
%	$$
%	\del_l\del_j\del_k(h_{i1}\Gamma_{11}^i)\big|_{\gamma} = \frac{1}{2}\del_1\del_{ljk}^3h_{11}\big|_{\gamma} = 0.
%	$$
%	On the other hand, 
%	$$
%	\del_{ljk}^3(\Gamma_{11}^ih_{i1}) = ...
%	$$
%		Will it be useful? Seems not interesting. We only worry about the second-order terms.
%}

%==================================================================================================================
\section{Formulation in PDE system}\label{sec1-08-02-2022}
\subsection{Formulation with the geodesic normal coordinates}
We restrict to the case where $(M,g) = (\RR^{1+d},m)$. Recalling the energy functional \eqref{eq1-09-02-2022} and the definition of wave maps, we write the associate Euler-Lagrange equation:
\begin{equation}\label{eq-main-geo}
\text{trace}_g D\phi_* = 0,
\end{equation}
where the connection $D$ acting on the tangent map $\phi_*$ is defined in \eqref{eq3-24-02-2022-M}. It is direct by \eqref{eq4-24-02-2022-M} that all totally geodesic maps are wave maps. 

We recall the stability problem formulated in Introduction. In the following Subsections we will give a quantitative description with the help of the geodesic normal coordinates  when the totally geodesic map is of rank-one.
 
Firstly, in arbitrary local coordinates, \eqref{eq-main-geo} is written as
\begin{equation}\label{eq1-18-11-2021}
\Box_m \phi^i + \Gamma_{jk}^i(\phi)\, \m^{\mu\nu}\del_{\mu}\phi^j\del_{\nu}\phi^k = 0,
\end{equation}
where $\phi^k = r^k\circ\phi$ with $r^k$ the $k$-th coordinate function, $\Gamma_{ij}^k(\phi)$ are the Christoffel symbols of $(N,h)$ evaluated along the image of $\phi$. $\Box_m := -\del_t\del_t + \Delta_{\RR^d}$. Within geodesic normal coordinates, a perturbation of $\varphi$ is described as following (see in detail \cite{Ab-2019}).

Recalling the factorization described in Proposition \ref{prop1-09-02-2022}, a rank-one totally geodesic map $\varphi: \RR^{1+2}\mapsto (N,h)$ is factorized as
$$
\RR^{1+d}\stackrel{\varphi_S}{\longrightarrow}\RR\stackrel{\varphi_I}{\longrightarrow} N.
$$
We construct the geodesic normal coordinates in a tubular neighborhood of $\varphi_I(\RR)$. Then
$$
\varphi: (t,x)\mapsto \sigma\big(\varphi_S(t,x),0\big).
$$
We perturb $\varphi$ as following. Consider 
$$
\tilde{\varphi} : \RR^{1+d}\rightarrow N,\quad (t,x)\rightarrow \sigma\big(\varphi_S(t,x) + \phi^1(t,x),\hat{\phi}(t,x)\big), \quad \hat\phi = (\phi^2,\phi^3,\cdots,\phi^n),
$$ 
and demand that $\tilde{\varphi}$ is again a wave map. This leads to, thanks to \eqref{eq1-18-11-2021} and the fact that $\varphi_S$ being linear,
\begin{equation}\label{eq15-07-10-2020}
	\begin{aligned}
		&\Box_m \phi^1 + \Gamma_{jk}^1(\varphi_S+\phi^1,\hat{\phi})\, \m^{\mu\nu}\del_{\mu} \bar{\phi}^j\del_{\nu} \bar{\phi}^k = 0,
		\\
		&\Box_m \phi^{\ih} + \Gamma_{jk}^{\ih}(\varphi_S+\phi^1,\hat{\phi})\, \m^{\mu\nu}\del_{\mu} \bar{\phi}^j\del_{\nu}\bar{\phi}^k = 0,
		\quad  \ih = 2,\cdots, n
	\end{aligned}
\end{equation}
with $\bar{\phi}^1 = \varphi_S(t,x) + \phi^1, \bar{\phi}^{\ih} = \phi^{\ih}, \ih=2,3,\cdots n$. Then  one develops the nonlinear terms $\Gamma_{jk}^1(\varphi_S+\phi^1,\hat{\phi}) \m^{\mu\nu}\del_{\mu} \bar{\phi}^j\del_{\nu} \bar{\phi}^k$ into Taylor series at each point of $\varphi_I(\RR)$, i.e., at $(\phi^1,\hat{\phi}) = 0$. 
%Since (due to the construction of the geodesic normal coordinates, see in detail \cite{Ab-2019}) $\Gamma_{jk}^i\equiv 0$ along $\varphi_I(\RR)$, 
%\begin{equation}
%	\del_1^q\Gamma_{jk}^i(\varphi_S,0) = 0,\quad q=0,1,2,\cdots.
%\end{equation}
%On the other hand, recalling Lemma 3.4 of \cite{Ab-2019}:
%\begin{lemma}\label{lem1-11-21-2021}
%Let $i,j,k\in\{1,2,\cdots,n\}$. Then, restricted to the geodesic $\varphi_I$, we have\footnote{In the original form in \cite{Ab-2019}, the curvature components are written in the form $R_{ijkl}$. These are in fact the same quantities to $R_{ijk}^l$ because along $\varphi_I$ the natural frame is orthonormal.}
%\begin{equation}
%	\del_m\Gamma_{1k}^i\big|_{\varphi_I} = R_{m1k}^i\big|_{\varphi_I} = -R_{m1i}^k\big|_{\varphi_I}.
%\end{equation}
%where $R_{ijk}^l\del_l = R(\del_i,\del_j)\del_k$ with $\{\del_i\}_{i=1,2,\cdots,n}$ the natural frame of the geodesic normal coordinates.
%\end{lemma} 
Applying \eqref{eq16-07-10-2020} and \eqref{eq2-21-11-2021}, one arrive at the following formulation obtain in \cite{Ab-2019} which is valid for all $(N,h)$ in the geodesic normal coordinates. For the tangent perturbation $\phi^1$,
\begin{subequations}\label{eq1-19-11-2021}
\begin{equation}\label{eq1a-19-11-2021}
\aligned
\Box_m\phi^1 
=&-2 R_{\ih1\jh}^1(\varphi_S,0)\phi^{\ih}\m^{\mu\nu}\del_{\mu}\phi^{\jh}\del_{\nu}\varphi_S
 - \rho\!\!\!\!\sum_{(j,k)\neq(1,1)}\!\!\!\!\del_j\del_k\Gamma_{11}^1(\varphi_S,0)\phi^j\phi^k
\\
&- 2\!\!\!\!\sum_{(j,k)\neq(1,1)}\!\!\!\!
\del_j\del_k\Gamma_{i1}^1(\varphi_S,0)\phi^j\phi^k\m^{\mu\nu}\del_{\mu}\phi^i\del_{\nu}\varphi_S 
-  \del_{\jh}\Gamma_{jk}^1(\varphi_S,0)\phi^{\jh}\,\m^{\mu\nu}\del_{\mu}\phi^j\del_{\nu}\phi^k
\\
&- \rho\!\!\!\!\sum_{(i,j,k)\neq(1,1,1)}\!\!\!\!
\del_i\del_j\del_k\Gamma_{11}^1(\varphi_S,0)\phi^i\phi^j\phi^k 
 + \text{h.o.t.},
\endaligned	
\end{equation}
and for the transversal components $\phi^{\ih}$,
\begin{equation}\label{eq1b-19-11-2021}
\aligned
&\Box_m\phi^{\ih} + \rho R_{\jh11}^{\ih}(\varphi_S,0)\phi^{\jh}
\\
=&- 2  R_{\jh1j}^{\ih}(\varphi_S,0)\phi^{\jh}\m^{\mu\nu}\del_{\mu}\phi^j\del_{\nu}\varphi_S
- \rho\!\!\!\!\!\sum_{(j,k)\neq(1,1)}\!\!\!\!\!\!\!\del_j\del_k\Gamma_{11}^{\ih}(\varphi_S,0)\phi^j\phi^k
\\
&- 2\!\!\!\!\sum_{(j,k)\neq(1,1)}\!\!\!\!\!\!\!\!
\del_j\del_k\Gamma_{i1}^{\ih}(\varphi_S,0)\phi^j\phi^k\m^{\mu\nu}\del_{\mu}\phi^i\del_{\nu}\varphi_S
\\
&-  \del_{\jh}\Gamma_{jk}^{\ih}(\varphi_S,0)\phi^{\jh}\,\m^{\mu\nu}\del_{\mu}\phi^j\del_{\nu}\phi^k 
- \rho\!\!\!\!\sum_{(i,j,k)\neq(1,1,1)}\!\!\!\!\!\!\!\!
\del_i\del_j\del_k\Gamma_{11}^{\ih}(\varphi_S,0)\phi^i\phi^j\phi^k + \text{h.o.t.},
\endaligned	
\end{equation}
\end{subequations}
where $\rho := \m(d\varphi_s,d\varphi_s) = \m^{\mu\nu}\del_{\mu}\varphi_S\del_{\nu}\varphi_S$ and $R_{ijk}^l$ is the components of the Riemann curvature of $(N,h)$. However, in $\RR^{1+2}$, it seems to be difficult to obtain gloabl stability result for \eqref{eq1-19-11-2021}. In the next Subsection this system will be reduced with the condition of local symmetry along $\varphi_I$, more precisely, we will apply Proposition \ref{prop1-19-02-2022-M} and Proposition \ref{prop2-27-11-2021}.
\subsection{Reductions with local symmetry along geodesic}
In this subsection we suppose that $(N,h)$ is symmetric along $\varphi_I(\RR)$. Then we simplify \eqref{eq1-19-11-2021} under this condition. The essential work is to calculate the high-order derivatives of $\Gamma_{jk}^i$ along $\varphi_I(\RR)$. Remark that here we need to develop $\Gamma_{ij}^k$ to third order. This is because in $\RR^{1+2}$, cubic terms may lead to finite time blow-up (see for example \cite{Zhou-Han-2011} in the case of a semi-linear wave equation). We thus need the information on the structure of the cubic terms. Recalling  Proposition \ref{prop1-19-02-2022-M} and Proposition \ref{prop2-27-11-2021}, we obtain
\begin{equation}\label{eq2-14-12-2021}
\aligned
\Gamma_{11}^1(\varphi_S + \phi^1,\phih) =&\, \sum_{\jh,\kh,\lh}C^{\jh\kh\lh}\del_{\jh}\del_{\lh}\del_{\kh}\Gamma_{11}^1(\varphi_S,0)\phi^{\jh}\phi^{\kh}\phi^{\lh}
+ \sum_{|A|\geq 1} C^A
\del^A \del_{\jh}\del_{\kh}\del_{\lh}\Gamma_{11}^1(\varphi_S,0)\phi^A\phi^{\jh}\phi^{\kh}\phi^{\lh},
\\
\Gamma_{11}^{\ih}(\varphi_S + \phi^1,\phih) =&\, \del_{\jh}\Gamma_{11}^{\ih}(\varphi_S,0)\phi^{\jh}
+ \sum_{\jh,\kh}C^{\kh\jh}\del_{\kh}\del_{\jh}\Gamma_{11}^{\ih}(\varphi_S,0)\phi^{\jh}\phi^{\kh}
+ C^{\jh\kh l} \del_l\del_{\kh}\del_{\jh}\Gamma_{11}^{\ih}(\varphi_S,0)\phi^{\jh}\phi^{\kh}\phi^l
\\
&+ \sum_{|A|\geq 1}C^A  \del_l\del_{\kh}\del_{\lh}\Gamma_{11}^{\ih}(\varphi_S,0)\phi^A\phi^{\jh}\phi^{\kh}\phi^l,
%\\
%\Gamma_{1\jh}^{1}(\varphi_S + \phi^1,\phih) =&\, \del_{\kh}\Gamma_{1\jh}^1(\varphi_S,0)\phi^{\kh}
%+ \sum_{\kh,\lh}C^{\kh\lh}\del_{\lh}\del_{\kh}\Gamma_{1\jh}^1(\varphi_S,0)\phi^{\kh}\phi^{\lh}
%+ \sum_{|A|\geq 1}\!\!\! C^A\del^A\del_{\kh}\del_{\lh}\Gamma_{1\jh}^1(\varphi_S,0)\phi^A\phi^{\kh}\phi^{\lh},
\\
\Gamma_{1\jh}^{i}(\varphi_S + \phi^1,\phih) =&\, \del_{\kh}\Gamma_{1\jh}^{i}(\varphi_S,0)\phi^{\kh}
\\
&+ \sum_{\kh,\lh}C^{\lh\kh}\del_{\lh}\del_{\kh}\Gamma_{1\jh}^{i}(\varphi_S,0)\phi^{\lh}\phi^{\kh}
+ \sum_{|A|\geq 1}C^A \del^A\del_i\del_{\kh} \Gamma_{1\jh}^{\ih}(\varphi_S,0)\phi^A\phi^i\phi^{\kh},
\\
\Gamma_{\jh\kh}^i(\varphi_S + \phi^1,\phih) =& \del_{\lh}\Gamma_{\jh\kh}^i(\varphi_S,0)\phi^{\lh}
+\sum_{|A|\geq 1} C^A \del^A\del_{\lh}\Gamma_{\jh\kh}^i(\varphi_S,0)\phi^A\phi^{\lh}.
	\endaligned
\end{equation}
Here $C^A$ are constants (determined by the Taylor's expansion). On the other hand, recall that
$$
\m(d\bar{\phi}^j,d\bar{\phi}^k) = \delta^1_j\delta^1_k\rho + 2\delta^1_j\m(d\phi^j,d\varphi_S) + \m(d\phi^j,d\phi^k).
$$
Then we obtain
\begin{equation}\label{eq1-14-12-2021}
	\aligned
	&\Gamma_{jk}^i(\varphi_S+\phi^1,\phih)\m(d\bar{\phi}^j,d\bar{\phi}^k)
	\\
	=&\rho\Gamma_{11}^i(\varphi_S+\phi^1,\phih) + 2\Gamma_{11}^i(\varphi_S+\phi^1,\phih)\m(d\phi^1,d\varphi_S) 
	+ \Gamma_{11}^i(\varphi_S+\phi^1,\phih)\m(d\phi^1,d\phi^1)
	\\
	&+ 2\Gamma_{1\jh}^i(\varphi_S+\phi^1,\phih)\m(d\phi^{\jh},d\varphi_S) 
	+ 2\Gamma_{1\jh}^i(\varphi_S+\phi^1,\phih)\m(d\phi^{\jh},d\phi^1)
	+ \Gamma_{\jh\kh}^i(\varphi_S+\phi^1,\phih)\m(d\phi^{\jh},d\phi^{\kh}).
	\endaligned
\end{equation}
Substitute \eqref{eq2-14-12-2021} and \eqref{eq1-14-12-2021} into \eqref{eq15-07-10-2020}, we obtain\footnote{here and in the following sections, we take $\Box := \del_t\del_t - \sum_a\del_a\del_a$.}
\begin{subequations}\label{eq5-20-11-2021}
\begin{equation}\label{eq5a-20-11-2021}	
\Box \phi^1= 2\m(d\phi^{\jh},d\varphi_S)\del_{\kh}\Gamma_{1\jh}^1(\varphi_S,0)\phi^{\kh} + T_W[\phi] + S_{W}[\phi],
\end{equation}
with trilinear terms
	$$
	\aligned
	T_W[\phi] :=&\rho\sum_{\jh,\kh,\lh} 
	C^{\jh\kh\lh}\del_{\jh}\del_{\lh}\del_{\kh}\Gamma_{11}^1(\varphi_S,0)\phi^{\jh}\phi^{\kh}\phi^{\lh} 
	+ 2\m(d\phi^1,d\phi^{\jh})\,\del_{\kh}\Gamma_{1\jh}^1(\varphi_S,0)\phi^{\kh}
	\\ 
	&+2\sum_{\kh,\lh}\m(d\phi^{\jh},d\varphi_S)\,\del_{\kh}\del_{\lh}\Gamma_{1\jh}^1(\varphi_S,0)\phi^{\kh}\phi^{\lh}
	+ \m(d\phi^{\jh},d\phi^{\kh})\,\del_{\lh}\Gamma_{\jh\kh}^1(\varphi_S,0)\phi^{\lh}
	\endaligned
	$$
	and higher order terms
	$$
	\aligned
	S_{W}[\phi] =& (2\m(d\phi^1,d\varphi_S) + \m(d\phi^1,d\phi^1))
	\sum_{\jh,\kh,\lh}C^{\jh\kh\lh}\del_{\jh}\del_{\lh}\del_{\kh}\Gamma_{11}^1(\varphi_S,0)\phi^{\jh}\phi^{\kh}\phi^{\lh}
	\\
	& + 2\m(d\phi^1,d\phi^{\jh})\sum_{\kh,\lh}\del_{\kh}\del_{\lh}\Gamma_{1\jh}^1(\varphi_S,0)\phi^{\kh}\phi^{\lh}
	%+ \m(d\phi^{\jh},d\phi^{\kh})\sum_{i,\lh}\del_i\del_{\lh}\Gamma_{\jh\kh}^1(\varphi_S,0)\phi^i\phi^{\lh}
	\\
	& + \m(d(\varphi_S+\phi^1),d(\varphi_S+\phi^1))
	\sum_{|A|\geq 1}C^A\del^A \del_{\jh}\del_{\kh}\del_{\lh}\Gamma_{11}^1(\varphi_S,0)\phi^A\phi^{\jh}\phi^{\kh}\phi^{\lh}
	\\
	& +\m(d\phi^{\jh},d(\varphi_S+\phi^1))
	\sum_{|A|\geq 1}C^A\del^A
	\del_{\kh}\del_{\jh}\del_{\lh}\Gamma_{1\jh}^1(\varphi_S,0)\phi^A\phi^{\kh}\phi^{\lh}
	\\
	& +\m(d\phi^{\jh},d\phi^{\kh}) \sum_{|A|\geq 1}C^A\del^A\del_{\lh}\Gamma_{\jh\kh}^1(\varphi_S,0)\phi^A\phi^{\lh}.
	\endaligned
	$$
	\begin{equation}\label{eq5b-20-11-2021}
		\aligned
		\Box \phi^{\ih} - \rho\del_{\jh}\Gamma_{11}^{\ih}\phi^{\jh}
		=& 2\m(d\varphi_S,d\phi^{\jh})\, \del_{\kh}\Gamma_{1\jh}^{\ih}(\varphi_S,0)\phi^{\kh} 
		+ 2\m(d\varphi_S,d\phi^1) \,\del_{\jh}\Gamma_{11}^{\ih}(\varphi_S,0)\phi^{\jh}
		\\
		&+ \rho \sum_{\jh,\kh} C^{\jh\kh}\del_{\jh}\del_{\kh}\Gamma_{11}^{\ih}(\varphi_S,0)\phi^{\jh} \phi^{\kh}
		+ T_{KG}^{\ih}[\phi] + S_{KG}^{\ih}[\phi],
		\endaligned
	\end{equation}
with trilinear terms,
	$$
	\aligned
	T_{KG}^{\ih}[\phi] :=& \rho\sum_{\jh,\kh,\lh}C^{\jh\kh\lh}	\del_{\jh}\del_{\kh}\del_l\Gamma_{11}^{\ih}(\varphi_S,0)\phi^{\jh}\phi^{\kh}\phi^{\lh}
	+ 2\sum_{\jh,\kh}C^{\jh\kh}\m(d\phi^1,d\varphi_S)\,\del_{\jh}\del_{\kh}\Gamma_{11}^{\ih}(\varphi_S,0)\phi^{\jh}\phi^{\kh}
	\\
	&+\m(d\phi^1,d\phi^1)\, \del_{\jh}\Gamma_{11}^{\ih}\phi^{\jh}
	+ 2\m(d\phi^1,d\phi^{\jh}) \,\del_{\kh}\Gamma_{1\jh}^{\ih}(\varphi_S,0)\phi^{\kh}
	\\
	&+ 2\sum_{\kh,\lh}C^{\kh\lh}\m(d\phi^{\jh},d\varphi_S)\,\del_{\kh}\del_{\lh}\Gamma_{1\jh}^{\ih}\phi^{\kh}\phi^{\lh}
	+ \m(d\phi^{\jh},d\phi^{\kh})\,\del_{\lh}\Gamma_{\jh\kh}^{\ih}\phi^{\lh}
	\endaligned
	$$
	and higher order terms
	$$
	\aligned
	S_{KG}^{\ih}[\phi] 
	=&(2\m(d\phi^1,d\varphi_S) + \m(d\phi^1,d\phi^1)) C^{\jh\kh l}\del_{\jh}\del_{\kh}\del_l \Gamma_{11}^{\ih}\phi^{\jh}\phi^{\kh}\phi^l
	\\
	&+ \m(d\phi^1,d\phi^1)\del_{\jh}\del_{\kh}\Gamma_{11}^{\ih}(\varphi_S,0)\phi^{\jh}\phi^{\kh}
	+2\m(d\phi^{\jh},d\phi^1)\del_{\lh}\del_{\kh}\Gamma_{1\jh}^1\phi^{\kh}\phi^{\lh}
	\\
	&+\m(d(\varphi_S+\phi^1),d(\varphi_S+\phi^1)) \sum_{|A|\geq 1}C^A  \del_{\jh}\del_{\kh}\del_{\lh}\Gamma_{11}^{\ih}(\varphi_S,0)\phi^A\phi^{\jh}\phi^{\kh}\phi^{\lh}
	\\
	&+ \m(d(\varphi_S+\phi^1),d\phi^{\jh})\sum_{|A|\geq 1}C^A \del^A\del_{\kh}\del_{\lh}\Gamma_{1\jh}^{\ih}(\varphi_S,0)\phi^A\phi^{\kh}\phi^{\lh}
	\\
	&+ \m(d\phi^{\jh},d\phi^{\kh})\sum_{|A|\geq 1} C^A \del^A\del_{\lh}\Gamma_{\jh\kh}^i(\varphi_S,0)\phi^A\phi^{\lh}.
	\endaligned
	$$
\end{subequations}

\subsection{Linear stability and geodesic principle coordinates}\label{subsec1-09-04-2022-M}
We regard the linearization of \eqref{eq5-20-11-2021}:
\begin{equation}\label{eq1-26-11-2021}
\Box\phi^1  = 0,
\quad
\Box\phi^{\ih} - \rho \del_{\jh}\Gamma_{11}^{\ih}(\varphi_S,0)\phi^{\jh} = 0.
\end{equation}
Recalling \eqref{eq2-21-11-2021} and \eqref{eq6-24-02-2022-M}, $\del_{\jh}\Gamma_{11}^{\ih}(\varphi_S,0) = R_{\jh11}^{\ih}(\varphi_S,0) = R_{\jh11\ih}(\varphi_S,0)$ are constants along $\varphi_I(\RR)$. Let $v,w$ be two orthogonal vector fields defined along $\varphi_I(\RR)$. We define
$$
\kappa(v,w) := (R(v,\vec{e}_1)\vec{e}_1,w)_h
$$
which is a bilinear form. We remark that at one point $p = \varphi_I(0)$ and in the geodesic normal coordinates, $\kappa(v,w) = R_{\ih11\jh}v^{\ih}w^{\jh}$ and by the symmetry of Riemann curvature, $\kappa(v,w) = \kappa(w,v)$. By the theory of real quadratic form, there exists another orthonormal base $\{\vec{f}_{\ih}\}_{\ih=2,3,\cdots,n}$ of $T_p\varphi_I(\RR)^{\perp}$, such that within this base, 
$$
\kappa_{\ih\jh}(0) := R\big((\vec{f}_{\ih},\vec{e}_1)\vec{e}_1,\vec{f}_{\jh}\big)_h\big|_{\varphi_I(0)}
$$
forms a diagonal matrix, i.e., $\kappa_{\ih\jh} = 0$ if $\ih\neq\jh$. Then with the base $\{\del_1,\vec{f}_{\ih}\}$ of $T_p\varphi_I(\RR)$ we construct another geodesic normal coordinates chart around $\varphi_I(\RR)$. For the simplicity of expression, we suppose that the geodesic normal coordinates introduced in Subsection \ref{subsec1-24-02-2022-M} satisfy the following property:
\begin{equation}\label{eq1-25-02-2022-M}
\kappa_{\ih\jh}(0) = \big(R(\vec{e}_{\ih},\vec{e}_1)\vec{e}_1,\vec{e}_{\jh}\big)_h\big|_{\varphi_I(0)} = 0,\quad \text{provided that } \ih\neq\jh.
\end{equation}
Recalling that $(N,h)$ is locally symmetric along $\varphi_I$, we apply Proposition \ref{prop1-19-02-2022-M} and see that
\begin{equation}
\kappa_{\ih\jh}(t) =  \big(R(\vec{e}_{\ih},\vec{e}_1)\vec{e}_1,\vec{e}_{\jh}\big)_h\big|_{\varphi_I(t)}	= 0,\quad \forall \ih\neq\jh,\quad t\in \RR.
\end{equation}
Furthermore, the eigenvalues $\kappa_{\ih\ih}$ are also constants along $\varphi_I(\RR)$. These special geodesic normal coordinates are called {\sl geodesic principle coordinates}. With these ones, \eqref{eq1-26-11-2021} reduces to a linear system of one wave equation and $(n-1)$ Klein--Gordon equations. 

It is clear that, when $\varphi$ is space-like / time-like and space-like linear stable /  time-like linear stable,
\begin{equation}\label{eq2-25-02-2022-M}
-\rho\kappa_{\ih\ih}\geq 0,\quad \forall\  \ih=2,3,\cdots, n,
\end{equation}
the linear system \eqref{eq1-26-11-2021} is stable. 
From now on we always suppose that $\gamma$ enjoys one of the above linear stability property.

\subsection{Formulation in geodesic principle coordinates}\label{subsec1-15-04-2022-M}
Under the condition of local symmetry and linear stability, we now give the final formulation of \eqref{eq1-19-11-2021} with the geodesic principle coordinates. 
%\textcolor{blue}{
%$$
%\aligned
%\Box \phi^1
%=& 2\m(d\phi^{\jh},d\varphi_S)\del_{\kh}\Gamma_{1\jh}^1(\varphi_S,0)\phi^{\kh} + T_W[\phi] + S_{W}[\phi]
%\\
%=& -2(\m^{\alpha\beta}\del_{\beta}\varphi_S\kappa_{\jh\jh}) \phi^{\jh}\del_{\alpha}\phi^{\jh}  + T_W[\phi] + S_{W}[\phi]
%\\
%=& -(\m^{\alpha\beta}\del_{\beta}\varphi_S\kappa_{\jh\jh})\del_{\alpha}\big(|\phi^{\jh}|^2\big) + T_W[\phi] + S_{W}[\phi]
%\endaligned
%$$
%}
\begin{subequations}\label{eq2-27-02-2022-M}
\begin{equation}\label{eq1-26-02-2022-M}	
\Box \phi^1 = A^{\alpha}_{\jh} \del_{\alpha}(|\phi^{\jh}|^2) + T_W[\phi] + S_W[\phi],
\end{equation}
where $A_{\jh}^{\alpha} = -\kappa_{\jh\jh}\m^{\alpha\beta}\del_{\beta}\varphi_S$.
%\textcolor{blue}{
%$$
%\aligned
%\Box \phi^{\ih} - \rho\del_{\jh}\Gamma_{11}^{\ih}\phi^{\jh}
%=& 2\m(d\varphi_S,d\phi^{\jh})\, \del_{\kh}\Gamma_{1\jh}^{\ih}\phi^{\kh} 
%+ 2\m(d\varphi_S,d\phi^1) \,\del_{\jh}\Gamma_{11}^{\ih}(\varphi_S,0)\phi^{\jh}
%\\
%& + \rho \sum_{\jh,\kh} C^{\jh\kh}\del_{\jh}\del_{\kh}\Gamma_{11}^{\ih}(\varphi_S,0)\phi^{\jh} \phi^{\kh}
%+ T_{KG}^{\ih}[\phi] + S_{KG}^{\ih}[\phi]
%\endaligned
%$$
%$$
%\aligned
%\Box \phi^{\ih} - \rho\kappa_{\ih\ih}\phi^{\ih}
% =& 2\big(\m^{\alpha\beta}\del_{\beta}\varphi_S\del_{\kh}\Gamma_{1\jh}^{\ih}(\varphi_S,0)\big)\phi^{\kh}\del_{\alpha}\phi^{\jh}
%+2(R_{\jh11}^{\ih}(\varphi_S,0)\m^{\alpha\beta}\del_{\beta}\varphi_S)\phi^{\jh}\del_{\alpha}\phi^1
%\\
%& + \rho \sum_{\jh,\kh} C^{\jh\kh}\del_{\jh}\del_{\kh}\Gamma_{11}^{\ih}(\varphi_S,0)\phi^{\jh} \phi^{\kh}
% + T_{KG}^{\ih}[\phi] + S_{KG}^{\ih}[\phi]
%\endaligned
%$$
%}
\begin{equation}\label{eq1-27-02-2022_M}
\big(\Box+ c_{\ih}^2 \big) \phi^{\ih}  
= K^{\ih\alpha}\phi^{\ih}\del_{\alpha}\phi^1 + E_{\jh\kh}^{\ih} \phi^{\jh}\phi^{\kh}
+  F_{\jh\kh}^{\ih\alpha}\phi^{\jh}\del_{\alpha}\phi^{\kh}
+ T_{KG}^{\ih}[\phi] + S_{KG}^{\ih}[\phi] , \quad 2\leq \ih\leq n,
\end{equation}
where $c_{\ih}^2 = -\rho\kappa_{\ih\ih}$, and
$$
\aligned
K^{\ih\alpha} =& 2R_{\ih11}^{\ih}(\varphi_S,0)\m^{\alpha\beta}\del_{\beta}\varphi_S,\quad \text{remark that } R_{\jh11}^{\ih} = 0 \text{ if } \ih\neq \jh,
\\
E^{\ih}_{\jh\kh} =& \rho \sum_{\jh,\kh}C^{\jh\kh}\del_{\jh}\del_{\kh}\Gamma_{11}^{\ih}(\varphi_S,0),\quad
F^{\ih\alpha}_{\jh\kh} = 2\m^{\alpha\beta}\del_{\beta}\varphi_S\del_{\kh}\Gamma_{1\jh}^{\ih}(\varphi_S,0)
= 2R_{\kh1\jh}^{\ih}(\varphi_S,0)\m^{\alpha\beta}\del_{\beta}\varphi_S.
\endaligned
$$
Here we emphasize that $\ih$ is {\it not} a dummy index. Due to the Klein--Gordon self-interaction terms $E^{\ih}_{\jh\kh}\phi^{\kh}\phi^{\jh}$,  $F_{\jh\kh}^{\ih\alpha}\phi^{\jh}\del_{\alpha}\phi^{\kh}$. We have introduced the non-resonant condition in Definition \ref{def2-09-03-2022-M}.
%\begin{equation}\label{eq4-27-02-2022-M}
%c^{\ih} + c^{\jh}\neq c^{\kh},\quad \ih,\jh,\kh = 2,\cdots,n.
%\end{equation}
\end{subequations}
Then we consider the Cauchy problem associate to \eqref{eq2-27-02-2022-M} with the following initial data.
\begin{equation}\label{eq3-27-02-2022-M}
\aligned
&\phi^1(1,x) = \phi^1_0(x),\quad &&\del_t\phi^1(1,x) = \phi_1(x),
\\
&\phi^{\ih}(1,x) = \phi^{\ih}_0(x),\quad &&\del_t\phi^{\ih}(1,x) = \phi^{\ih}_1(x).
\endaligned
\end{equation}
Based on the above formulation, we state our main result in the following form.
\begin{theorem}[Main result, PDE version]
Consider the Cauchy problem associate to \eqref{eq2-27-02-2022-M} together with the initial data \eqref{eq3-27-02-2022-M}.
%\begin{equation}\label{eq1-14-03-2022-M}
%\aligned
%&\phi^1(2,x) = \phi^1_0(x),\quad \del_t\phi^1(2,x) = \phi^1_0(x),\quad 
%\quad
%w_{\ih}(2,x) = \del_tw_{\ih}(2,x) = 0,
%\\
%&v_{\ih}(2,x) = v_{\ih0}(x),\quad \del_tv_{\ih}(2,x) = v_{\ih1}(x).
%\endaligned
%\end{equation}
Then there exists a positive constant $\vep_0$, such that if for $|I|\leq N$, $N\geq 4$, 
\begin{equation}\label{eq1-22-04-2022-M}
\aligned
&\|\la r\ra^{\eta + 1/2 + |I|}\del^I\del_x\phi^1_0\|_{L^2(\RR^2)} 
+ \|\la r\ra^{\eta+1/2+|I|}\del^I \phi^1_1\|_{L^2(\RR^2)}\leq \vep,
\\
&\|\la r\ra^{\eta + |I|}\del^I \phi^{\ih}_0\|_{L^2(\RR^2)} + \|\la r\ra^{\eta + |I|}\del^I\del_x \phi^{\ih}_0\|_{L^2(\RR^2)} 
+\|\la r\ra^{\eta + |I|}\del^I\phi^{\ih}_1\|_{L^2(\RR^2)} \leq \vep
\endaligned
\end{equation}
with $\eta\in(1/2,1)$ and $\vep\leq \vep_0$, the associate local solution extends to time infinity. 
%
% {Furthermore, the Klein-Gordon components scatters to a linear solution:}
%\begin{equation}
%		...
%\end{equation}
\end{theorem}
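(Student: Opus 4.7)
The plan is to run a bootstrap argument on a family of weighted high-order energies measured over the Euclidean-hyperboloidal foliation $\Hcal_s$ reviewed in Sections~\ref{sec1-11-04-2022-M}--\ref{sec1-22-10-2021}. These hypersurfaces smoothly interpolate between truncated hyperboloids inside the light cone $\{r\le t-1\}$, where the hyperbolic frame produces good decay, and asymptotically flat slices outside $\{r\ge t\}$, where the weighted $L^2$ smallness assumed in \eqref{eq1-22-04-2022-M} is preserved. Local existence to some $s_0>1$ being standard, it suffices to propagate the smallness of the energies to arbitrarily large $s$. For each admissible Lorentz-type operator $Z^I$ with $|I|\le N$, the bootstrap assumptions will control a standard $L^2$ energy for the Klein--Gordon components $\phi^{\ih}$ incorporating the mass $c_{\ih}$, a standard $L^2$ energy plus a conformal energy for the wave component $\phi^1$, together with the weighted spatial norms inherited from the data. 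Standard energy estimates on $\Hcal_s$ furnish the differential inequalities for the usual energies, while the conformal energy estimate developed in Section~\ref{sec2-11-04-2022-M} supplies the sharper $s^{-1}$-type decay for $\phi^1$ that will be needed to absorb wave-type cubic contributions.

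For the wave equation \eqref{eq1-26-02-2022-M}, the apparently dangerous Klein--Gordon quadratic source $A^\alpha_{\jh}\del_\alpha(|\phi^{\jh}|^2)$ is a total divergence: integration by parts against the conformal multiplier converts it into a genuinely cubic commutator controllable by the bootstrap, in the spirit of \cite{Duan-Ma-2020,Dong-Wyatt-2021}. The remaining trilinear piece $T_W[\phi]$ and higher-order piece $S_W[\phi]$ are handled by the usual $L^2$--$L^\infty$ split combined with the improved Sobolev decay tailored to $\RR^{1+2}$ in Section~\ref{sec1-10-04-2022-M}. For the Klein--Gordon equations \eqref{eq1-27-02-2022_M}, the wave--KG couplings $K^{\ih\alpha}\phi^{\ih}\del_\alpha\phi^1$ are benign since $\del\phi^1$ inherits the conformal gain, while the derivative self-interactions $F^{\ih\alpha}_{\jh\kh}\phi^{\jh}\del_\alpha\phi^{\kh}$ admit sufficient Klein--Gordon pointwise decay. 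The genuinely problematic terms are the pure quadratic KG self-interactions $E^{\ih}_{\jh\kh}\phi^{\jh}\phi^{\kh}$, which are critical in $1+2$ dimensions and, because the masses $c_{\ih}$ differ across components, may in principle suffer mass resonance.

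The heart of the argument, and where the main obstacle lies, is the treatment of these KG self-interactions in conjunction with the non-compact support of the data. Under the non-resonance condition \eqref{eq2-09-03-2022_M}, I would apply the physical-space normal form transform developed in Section~\ref{sec1-03-04-2022-M}: setting $\psi^{\ih}=\phi^{\ih}+Q^{\ih}_{\jh\kh}(\phi^{\jh},\phi^{\kh})$ for a suitable bilinear quasi-inverse of the Klein--Gordon operator, one rewrites \eqref{eq1-27-02-2022_M} as a system for $\psi^{\ih}$ in which the pure quadratic KG self-interactions are replaced by cubic and higher-order terms carrying an extra derivative. The non-resonance assumption guarantees that the associated bilinear multipliers are non-singular, so that $\psi^{\ih}$ and $\phi^{\ih}$ are equivalent up to controllable quadratic remainders, and the reformulated system closes through the same bootstrap energies. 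Compared with \cite{Dfx}, which works in Fourier space with compactly supported data, the key point is that this physical-space formulation commutes compatibly with the Euclidean-hyperboloidal vector fields and does not destroy the weighted regularity at spatial infinity imposed by \eqref{eq1-22-04-2022-M}. Once this transformation is combined with the conformal energy estimate and the sharpened Sobolev decay, bootstrapping the assumed smallness through the transformed system yields global existence.
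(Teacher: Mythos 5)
Your overall architecture (bootstrap on the Euclidean--hyperboloidal foliation, conformal energy for the wave part, physical-space normal form for the pure Klein--Gordon quadratics under the non-resonance condition \eqref{eq2-09-03-2022_M}) coincides with the paper's. The genuine gap is in your one-line treatment of the divergence-structure term $A^{\alpha}_{\jh}\del_{\alpha}(|\phi^{\jh}|^2)$ in the wave equation \eqref{eq1-26-02-2022-M}: you claim that ``integration by parts against the conformal multiplier converts it into a genuinely cubic commutator''. This does not go through as stated. If you integrate by parts to move $\del_{\alpha}$ off $Z(|\phi^{\jh}|^2)$, the derivative lands on the multiplier $(K_2+t)Z\phi^1$ (or on $\del_tZ\phi^1$ in the standard energy), producing second-order derivatives of $\phi^1$ at order $N+1$ with weights of size $t^2$; neither the conformal energy \eqref{eq4-02-02-2022} nor the standard energy controls these at the same order, so at top order you lose a derivative. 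The terms that remain after integrating by parts are moreover not cubic: they are still quadratic in the Klein--Gordon components, and their $L^2(\Fcal_s)$ size is only $O(s^{-1})\times$energy, which by itself gives growing bounds for $\phi^1$ that then feed back into the Klein--Gordon equations through $K^{\ih\alpha}\phi^{\ih}\del_{\alpha}\phi^1$. Note also that the precedent you invoke, \cite{Duan-Ma-2020}, does not proceed by such a commutator argument; it uses the same device as this paper.

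What the paper actually does at this point is pass to the auxiliary system \eqref{eq1-24-11-2021} (Section \ref{sec3-11-04-2022-M}): introduce primitives $w_{\jh}$ with $\Box w_{\jh}=v_{\jh}^2$ and reconstruct $\phi^1=w_0+A^{\alpha\jh}\del_{\alpha}w_{\jh}$ (Proposition \ref{prop1-24-11-2021}). Then the wave unknown enters the Klein--Gordon equations only through $\del w_0$, whose source is cubic and is handled by the conformal energy estimate (Proposition \ref{prop1-22-03-2022-M}, leading to \eqref{eq9-21-03-2022-M} and the sharp decay \eqref{eq1-26-03-2022-M}), and through the Hessian $\del\del w_{\jh}$, which enjoys the improved $L^2$ and pointwise bounds \eqref{eq2-04-04-2022-M}, \eqref{eq1-25-03-2022-M}, \eqref{eq5-31-10-2021} obtained from the wave-equation Hessian estimate \eqref{eq1-28-10-2021} and the integration along time-like hyperbolas (Proposition \ref{prop1-31-10-2021}). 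If you wish to avoid the auxiliary system you would have to substitute a concrete alternative (e.g.\ a normal-form-type modification of the wave unknown or Kubota--Yokoyama-type pointwise estimates, as in the references the paper cites for ``alternative treatments''), but such an argument has to be supplied; as written, the wave-equation step of your proposal does not close.
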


\begin{remark}
We remark that in \eqref{eq1-27-02-2022_M} there is no quadratic terms $\del_{\alpha}v\del_{\beta}v$. This feature permits us to relax the demand on the regularity of the initial data, say $N\geq 4$. However, our mechanism accepts this kind of nonlinearity (with possibly more demand on the initial regularity). 
\end{remark}
\begin{remark}
In the present version, we only need a relatively weak weight $\la r\ra^{\eta}$ on the initial energies of Klein-Gordon components (cf. \cite{Georgiev-1992}).
The weight on wave component can be relaxed. In fact if the following version:
\begin{equation}
\|\la r\ra^{\eta  + |I|}\del^I\del_x\phi_0^1\|_{L^2(\RR^2)} +  \|\la r\ra^{\eta + |I|}\phi^1_1\|_{L^2(\RR^2)}\leq \vep	
\end{equation}
is acceptable, provided that $N\geq 9$. The proof will be a bit different.
\end{remark}

\section{Examples}\label{sec1-13-05-2022-M}
The main system \eqref{eq2-27-02-2022-M} covers the most general case. In this section we show several concrete examples which demonstrate that all these nonlinear terms do appear in general case, although in they may vanish in some particular cases.

\subsection{$2-$surface of revolution in $\RR^3$}
We consider a $2-$surface of revolution in $\RR^3$:
\begin{equation}\label{eq3-10-05-2022-M}
\Sigma: (x^1,x^2)\mapsto (A(x^2)\cos x^1,A(x^2)\sin x^1, B(x^2))
\end{equation}
with $A,B$ sufficiently regular functions defined on an interval $(-\vep,\vep)$. Here for simplicity we suppose that $|A'|^2+|B'|^2\equiv 1$, i.e., $x^2$ is the arc-length parameter of the curve $(A(x^2), B(x^2))$. We denote by
$$
\del_1 = \del_{x^1} = (-A(x^2)\sin x^1, A(x^2)\cos x^1, 0),\quad \del_2 = \del_{x^2} = (A'(x^2)\cos x^1, A'(x^2)\sin x^1, B'(x^2)).
$$
We firstly show that, if $A(0) = 1, A'(0) = 0$, then the following curve
$$
\gamma: (-\infty,+\infty)\mapsto \Sigma \subset \RR^3,\quad x^1\mapsto (\cos x^1,\sin x^1, 0)
$$
is a geodesic of $\Sigma$. In fact we remark that in $\RR^3$, 
$$
\dot\gamma = (-\sin x^1, \cos x^1, 0),\quad \ddot \gamma  = (-\cos x^1, -\sin x^1, 0).
$$
Remark that $|\dot\gamma| = 1$ and $\ddot\gamma\perp \Sigma$.  This leads to
\begin{equation}\label{eq2-10-05-2022-M}
\nabla_{\dot\gamma}\dot\gamma = \ddot\gamma^{\perp} = 0,
\end{equation}
where $\nabla$ is the connection of $\Sigma$ associate to the induced metric from $\RR^3$, and $\ddot\gamma^{\perp}$ is the orthogonal projection of $\ddot \gamma$ on $T\Sigma$. Thus $\gamma$ is a geodesic of $\Sigma$. On the other hand, the curve
$$
\eta_{x^1_0}:(-\vep,\vep)\mapsto \Sigma,\quad x^2\mapsto (A(x^2)\cos x^1_0,A(x^2)\sin x^1_0, B(x^2))
$$
is also a geodesic. This can be checked similarly. We remark that
$$
\dot\eta_{x^1_0} = (A'(x^2)\cos x^1_0,A'(x^2)\sin x^1_0, B'(x^2)),\quad
\ddot\eta_{x^1_0} = (A''(x^2)\cos x^1_0, A''(x^2)\sin x^1_0, B''(x^2)).
$$
Then $\dot\eta_{x^1_0}\perp \ddot \eta_{x^1_0}$  due to the fact $|A'|^2+|B'|^2\equiv 1$. On the other hand, 
$$
\del_1|_{\eta_{x^1_0}} = \del_{x^1}|_{\eta_{x^1_0}} = (-A(x^2)\sin x^1_0, A(x^2)\cos x^1_0,0)
$$ 
which is also perpendicular to $\ddot \eta_{x^1_0}$. Thus $\ddot\eta_{x^1_0}(x^2)\perp \Sigma$ which leads to the fact that $\eta_{x^1_0}$ is also a geodesic of $\Sigma$. Remark that $\dot\eta_{x_0^1}(0) = \del_2$. Thus 
$$
\exp_{x^1}(x^2\del_2) = \eta_{x^1}(x^2) = (A(x^2)\cos x^1,A(x^2)\sin x^1, B(x^2)).
$$
This shows that the parameterization \eqref{eq3-10-05-2022-M} is a chart of geodesic normal coordinates around $\gamma$. We then calculate the Christoffel symbols and Riemann curvature. Remark that the metric components are written in the following form:
$$
\big(g_{ij}\big) = \left(
\begin{array}{cc}
|A(x^2)|^2&0
\\
0&1
\end{array}
\right),
\quad
(g^{ij}) = \left(
\begin{array}{cc}
	|A(x^2)|^{-2}&0
	\\
	0&1
\end{array}
\right).
$$
Then the Christoffel symbols are
\begin{equation}\label{eq1-11-05-2022-M}
\begin{aligned}
&\Gamma_{11}^1 = \Gamma_{22}^1 = 0,\quad &&\Gamma_{12}^1 = \Gamma_{21}^1 = A'(x^2)/A(x^2),
\\
&\Gamma_{22}^2 = \Gamma_{12}^2 = \Gamma_{21}^2 = 0,\quad &&\Gamma_{11}^2 = -A'(x^2)A(x^2),
\end{aligned}
\end{equation}
and the nonzero components are the Riemann curvature are
\begin{equation}\label{eq2-11-05-2022-M}
\begin{aligned}
&R_{122}^1 = -R_{212}^1 = -A''(x^2)/A(x^2),\quad
R_{121}^2 = - R_{211}^2 = A''(x^2)A(x^2),
\\
&R_{1221}|_{\gamma} = R_{2112}|_{\gamma} = -R_{1212}|_{\gamma} = -R_{2121}|_{\gamma} = - A''(0).
\end{aligned}
\end{equation}
Obviously, the Christoffel symbols and the Riemann curvature tensor are independent of $x^1$. Thus $\Sigma$ is locally symmetric along $\gamma$.   On the other hand, we recall \eqref{eq1-09-03-2022-M} and obtain
$$
\kappa_{22} = R_{2112}|_{\gamma_-} = -A''(0).
$$
That is, following the terminology of Definition \ref{def2-09-03-2022-M}, $\gamma$ is space-like linear stable, provided that $A''(0)>0$ and time-like linear stable provided that $A''(0)<0$. Furthermore, there is only one transversal direction, thus there is no risk of mass resonance. Then our result applies on the above example. Furthermore, recalling \eqref{eq1-27-02-2022_M},
$$
\aligned
K^{\ih\alpha} =&\, 2R_{211}^{2}|_{\gamma}\m^{\alpha\beta}\del_{\beta}\varphi_S = -A''(0)\m^{\alpha\beta}\del_{\beta}\varphi_S,
\\
E_{\jh\kh}^{\ih} =&\,\rho C^{22}\del_2\del_2\Gamma_{11}^2|_{\gamma} = - \rho C^{22}A'''(0),\quad
F_{\jh\kh}^{\ih\alpha}=2R_{212}^2\m^{\alpha\beta}\del_{\beta}\varphi_S|_{\gamma}=0.
\endaligned
$$
That is, when $A''(0), A'''(0)\neq 0$, the coefficients $K^{\ih\alpha}$ and $E_{\jh\kh}^{\ih}$ are in general nonzero. 

\subsection{$\mathbb{T}^2$ embedded in $\RR^3$} In the above example, if we take $A(x^2) = 2+\cos (x^2+\pi)$ and $B(x^2) = \sin (x^2+\pi)$, the above surface reduces to the $2-$torus:
$$
\mathbb{T}^2: (x^1,x^2)\mapsto \big((2+\cos(x^2+\pi))\cos x^1, (2+\cos(x^2+\pi))\sin x^1, \sin (x^2+\pi)\big)
$$
and $\gamma$ is the inner equator of the torus. The (nonzero components of) Christoffel symbols and Riemann curvature are calculated via \eqref{eq1-11-05-2022-M} and \eqref{eq2-11-05-2022-M}:
$$
\begin{aligned}
&\Gamma_{12}^1 = \Gamma_{21}^1 = \frac{\sin x^2}{2-\cos x^2},\quad
\Gamma_{11}^2 = -\sin x^2(2-\cos x^2),
\\
&R_{122}^1 = -R_{212}^1 = -\frac{\cos x^2}{2-\cos x^2},\quad
R_{121}^2 = - R_{211}^2 = \cos x^2(2-\cos x^2),
\\
&R_{1221} = R_{2112} = -R_{1212} = -R_{2121} = - \cos x^2(2-\cos x^2).
\end{aligned}
$$
In this case, $\kappa_{22} = -A''(0) = -1$. That is,  $\gamma$ is space-like linear stable. However, in this case $A'''(0) = 0$ which leads to $E_{\jh\kh}^{\ih} = 0$. That is, there is no quadratic terms acting on Klein-Gordon component. The system then reduces to a case similar to the isotropic case. A similar discussion shows that the outer equator is also a geodesic with the property of local symmetry. Contrary to the inner equator, it is {\it time-like linear stable}. 
\subsection{An example in $\RR^4$}
Our purpose in this subsection is to find an example such that the coefficients $F_{\jh\kh}^{\ih\alpha}$ do not vanish. We first remark that when $n=2$, there is only one transversal direction thus the coefficient $F_{\jh\kh}^{\ih\alpha} = 2R_{212}^2\m^{\alpha\beta}\del_{\beta}\varphi_S = 0$. We thus turn to the $3-$dimensional manifolds. We construct the following metric in the $3-$manifold $\Sigma := \RR\times (-\vep,\vep)\times(-\vep,\vep)$:
$$
\big(g_{ij}\big) = 
\left(
\begin{array}{ccc}
A^2&0&B
\\
0&1&0
\\
B&0&1	
\end{array}
\right),
\quad
\big(g^{ij}\big) = 
\frac{1}{A^2-B^2}\left(
\begin{array}{ccc}
1&0&-B
\\
0&1&0
\\
-B&0&A^2	
\end{array}
\right)
$$
where $A,B$ are sufficiently regular function independent of $x^1$. We suppose that $A(0)=1$, $B(0) = 0$ and $\del_iA(0) = \del_iB(0) = 0$. Let
$$
\gamma: (-\infty,+\infty)\mapsto \Sigma,\quad x^1\mapsto (x^1,0,0).
$$
Remark that along $\gamma$, $g_{ij} = g^{ij} = \delta_{ij}$ and $\del_ig_{jk} = \del_ig^{jk} = 0$. Then $\Gamma_{ij}^k|_{\gamma} = 0$. This leads to the fact that $\gamma$ is a geodesic. Then we denote by 
$$
\aligned
\eta_{x^1_0} : (-\vep,\vep)\mapsto \Sigma,\quad x^2\mapsto (x^1_0,x^2,0),
\\
\zeta_{x^1_0}:  (-\vep,\vep)\mapsto \Sigma,\quad x^2\mapsto (x^1_0,0,x^3).
\endaligned
$$
We remark that
$$
\aligned
\Gamma_{22}^1 =&\frac{1}{2}g^{1\delta}(2\del_2g_{2\delta}-\del_{\delta}g_{22}) = 0,\quad
\Gamma_{33}^1 =\frac{1}{2}g^{1\delta}(2\del_3g_{3\delta} - \del_{\delta}g_{33}) = \frac{1}{2}g^{11}\del_3g_{31} = \frac{\del_3B}{2(A^2-B^2)},
\\ 
\Gamma_{22}^2 =& \frac{1}{2}g^{2\delta}(2\del_2g_{2\delta}-\del_{\delta}g_{22}) = 0,\quad
\Gamma_{33}^2 =\frac{1}{2}g^{2\delta}(2\del_3g_{3\delta} - \del_{\delta}g_{33}) = 0,
\\
\Gamma_{22}^3 =& \frac{1}{2}g^{3\delta}(2\del_2g_{2\delta} + \del_{\delta}g_{22}) = 0,\quad  
\Gamma_{33}^3 = \frac{1}{2}g^{3\delta}(2\del_3g_{3\delta}-\del_{\delta}g_{33}) = g^{31}\del_3g_{31} = \frac{-B\del_3B}{A^2-B^2}.
\endaligned
$$
Here if we chose $\del_3B = 0$, we have $\Gamma_{22}^i = \Gamma_{33}^i = 0$. In this case, both $\eta_{x^1_0}$ and $\zeta_{x^1_0}$ are geodesics of $\Sigma$. Thus the natural parameterization of $\Sigma$ forms a chart of geodesic normal coordinates around $\gamma$. Remark that in this case the Christoffel symbols and the Riemann curvature are independent of $x^1$. Thus $\Sigma$ is locally symmetric along $\gamma$.

On the other hand, recalling \eqref{eq2-21-11-2021}, in the geodesic normal coordinates one has
$$
R_{212}^3|_{\gamma} = \del_2\Gamma_{12}^3|_{\gamma},\quad 
\kappa_{22} = R_{211}^2|_{\gamma} = \del_2\Gamma_{11}^2|_{\gamma},\quad 
\kappa_{33} = R_{311}^3|_{\gamma} = \del_3\Gamma_{11}^3|_{\gamma}.
$$
Direct calculation shows that
$$
\Gamma_{12}^3 = \frac{1}{2}g^{31}\del_2g_{11} + \frac{1}{2}g^{33}\del_2g_{13},\quad
\Gamma_{11}^2 = -\frac{1}{2}\del_2g_{11},\quad \Gamma_{11}^3 = -\frac{1}{2}g^{33}\del_3g_{11}.
$$
Then
$$
R_{212}^3|_{\gamma} = \frac{1}{2}\del_2\del_2B(0),\quad
R_{211}^2|_{\gamma} = -\del_2\del_2A(0),\quad
R_{311}^3|_{\gamma} = -\del_3\del_3A(0).
$$
We can fix for example $B = |x^2|^2$ and $A = \pm\frac{1}{2}(|x^2|^2+\lambda|x^3|^2)+1$. Then
$$
R_{212}^3|_{\gamma} = 1,\quad R_{211}^2|_{\gamma} = \mp 1,  \quad R_{311}^3|_{\gamma} = \mp\lambda.
$$
That is, $\gamma$ is space-like / time-like linear stable and $\Sigma$ is locally symmetric along it. When $\lambda \notin\{ 2, 1/2\}$, $\gamma$ is non-resonant. In this case, 
$F_{22}^{3\alpha} = 2R_{212}^3|_{\gamma}\m^{\alpha\beta}\del_{\beta}\varphi_S = \m^{\alpha\beta}\del_{\beta}\varphi_S$ do not vanish in general.
%===============================================================================================================
\part{Euclidean-hyperboloidal foliation in $\RR^{1+2}$.}
\section{Construction of the Euclidean-hyperboloidal foliation}
\label{sec1-11-04-2022-M}
\subsection{Objective}
In this part we reformulate the Euclidean-hyperboloidal foliation in $\RR^{1+2}$. In Sections \ref{sec1-11-04-2022-M} -- \ref{sec1-22-10-2021} we recall the basic results. In Section \ref{sec1-10-04-2022-M} we give an improvement on the Sobolev decay estimates. The last two sections are devoted two new ingredients: the conformal estimate on Euclidean-hyperboloidal slices and the adapted normal form transform. For the parallel tools and result we will not give detailed proof but only translate the statement into the present system of notation.

\subsection{Basic notation}
We use the expression $A \lesssim B$ for $A\leq CB$ with $C$ a universal constant, and $A\cong B$ for $A\lesssim B$ and $B\lesssim A$.  We are working in $\RR^{1+2} = \{(t,x) = (x^0,x^1,x^2)\}$ equipped with the standard Minkowski metric
$$
\m_{\alpha\beta}dx^{\alpha}dx^{\beta} = dt^2  - d(x^1)^2 - d(x^2)^2.
$$
The D'Alembert operator is defined as $\Box = \m^{\alpha\beta}\del_{\alpha}\del_{\beta} = \del_t\del_t - \del_1\del_1 - \del_2\del_2$. 

Let $\chi:\RR\rightarrow \RR$ be a smooth cut-off function. It satisfies the following properties:
%\\
% {\tt $\rho\rightarrow \chi$.}
%\\
\begin{subequations}
\begin{equation}\label{eq1-03-10-2021}
\chi(x) = 
\left\{
\aligned
&1,\quad &x\geq 1,
\\
&0,\quad &x\leq 0.
\endaligned
\right. 
\end{equation} 
Furthermore, on the interval $(0,1)$ we demand that
\begin{equation}\label{eq6-19-10-2021}
\chi'(x)>0, \quad |\chi'(x)| + |\chi''(x)|\lesssim \chi(x)^{1/2}.
\end{equation}
\end{subequations}
In \cite{M-2018} an explicit example of such $\chi$ is given. 
% {In Appendix \ref{app1-05-10-2021} we will give another one. \tt you have not}

\subsection{Construction of the foliation}
We briefly recall the construction of the Euclidean-hyperboloidal slice. The following results are parallel to those established in \cite{M-2018} for $\RR^{1+1}$ case. Recall the cut-off function 
\begin{equation}\label{eq2-03-10-2021}
\xi(s,r): = 1 - \chi(r - \rhoH(s))\quad \text{where}\quad \rhoH(s) := (s^2-1)/2.
\end{equation}
Then $\xi(s,\cdot): \RR\rightarrow \RR$ is a smooth cut-off function. 
\begin{equation}
\xi(s,r) = \left\{
\aligned
& 1, \quad && r\leq \rhoH(s),
\\
& 0,\quad && r\geq \rhoH(s)+1.
\endaligned
\right.
\end{equation}
This function is strictly decreasing for $r\in (\rhoH(s),\rhoH(s)+1)$.

We then introduce the time function by the following ODE for $s\geq 2$:
\begin{equation}\label{eq1-04-10-2021}
\del_r T(s,r) = \frac{\xi(s,r)r}{(s^2+r^2)^{1/2}},\quad T(s,0) = s.
\end{equation}
Direct calculation shows that ($r:=|x|$)
\begin{equation}\label{eq3-03-10-2021}
T(s,x) = \left\{
\aligned
&(r^2+s^2)^{1/2},&&0\leq r\leq \rhoH(s),
\\
& s + \int_0^r\frac{\xi(s,\rho)\rho}{\sqrt{s^2+\rho ^2}}d\rho, &&\rhoH(s) < r < \rhoH(s)+1,
\\
&T(s) = s + \int_0^{\rhoH(s)+1}\frac{\xi(s,\rho)\rho}{\sqrt{s^2+\rho^2}}d\rho, &&\rhoH(s)+1\leq r <\infty.
\endaligned
\right.
\end{equation}
Then we define the Euclidean-hyperboloidal slice by
$$
\Fcal_s := \{(t,x)| t = T(s, r)\}
$$
which will take the role of the space-like hyperboloids $\Hcal_s$ in the hyperboloidal foliation framework.
\begin{figure}[ht]
\includegraphics[height=7cm]{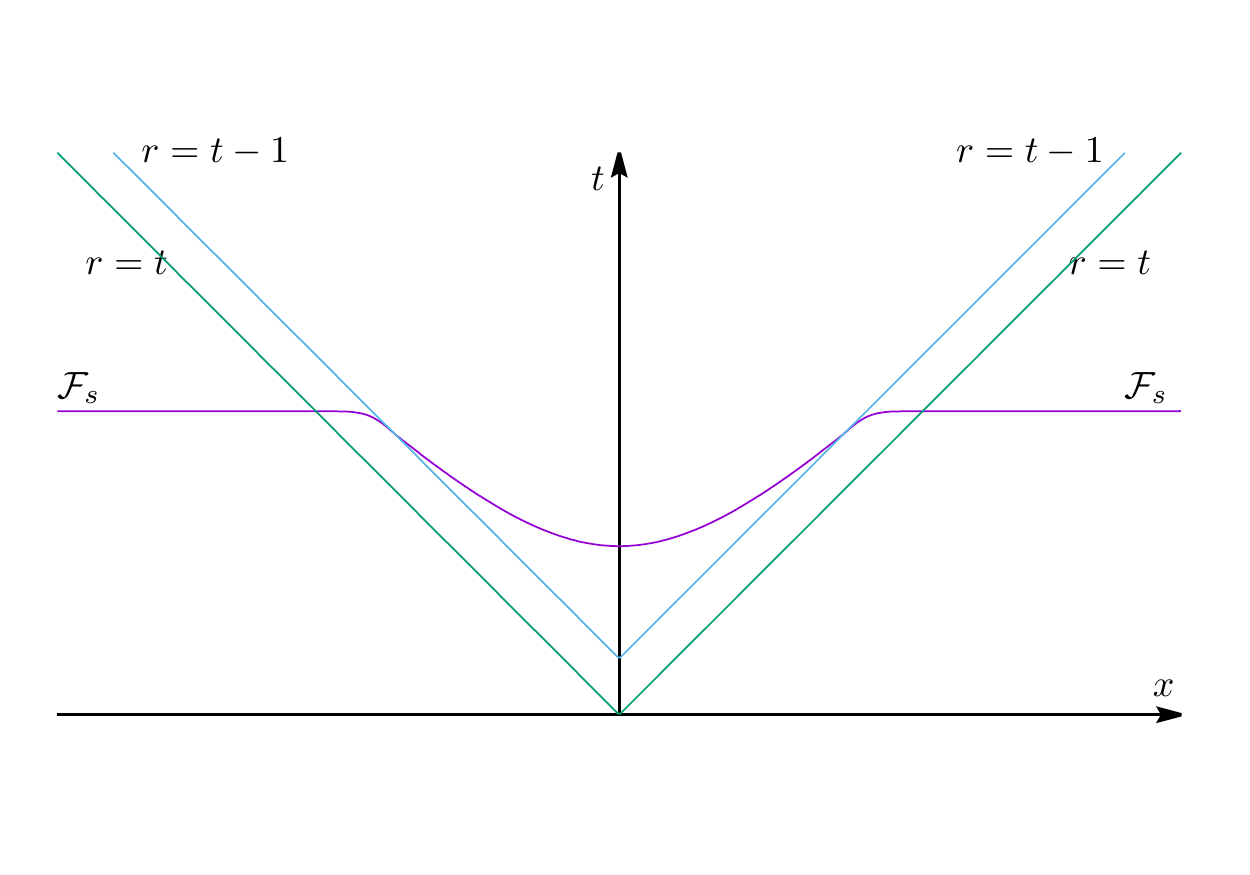}
\centering
\caption{}
\label{fig:picture}
\end{figure}
The figure \ref{fig:picture} shows approximately the form of such slices (in $\RR^{1+1}$). For the convenience of discussion, we introduce the following notation:
$$
\Fcal_{[s_0,s_1]} = \{(t,x)|T(s_0,x)\leq t\leq T(s_1,x)\}, \quad \Fcal_{[s_0,+\infty)} = \{(t,x)|T(s_0,x)\leq t\}
$$
the region limited by one or two such surfaces. We write
$$
\Fcal_{[2,+\infty)} = \bigcup_{s\geq 2}\Fcal_s
$$
which is an one-parameter foliation of the region $\Fcal_{[2,+\infty)}\subset\RR^{1+2}$,  called the {\sl Euclidean-hyperboloidal foliation}.

\subsection{Geometry of the Euclidean-hyperboloidal foliation}
We recall the results  established in \cite{M-2018} on the geometry of $\Fcal_s$.
\begin{proposition}\label{prop1-07-10-2021}
	$\Fcal_s$ are $C^{\infty}$ surfaces. For $r\leq\rhoH(s)$,
	$$
	T(s,x) = (s^2+r^2)^{1/2}.
	$$
	and $\Fcal_s$ coincides with $\Hcal_s$.
	For $r\geq \rhoH(s)+1$, $T(s,x)$ is constant with respect to $r$ and
	\begin{equation}\label{eq9-07-10-2021}
	\frac{s^2+1}{2}\leq T(s,x)\leq \frac{\sqrt{s^4+6s^2+1}}{2}.
	\end{equation}
\end{proposition}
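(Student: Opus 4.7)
The plan is to exploit the piecewise nature of the cut-off $\xi(s,r) = 1 - \chi(r - \rhoH(s))$: on $\{r \leq \rhoH(s)\}$ one has $\xi \equiv 1$, on $\{r \geq \rhoH(s)+1\}$ one has $\xi \equiv 0$, and on the transition strip $\xi \in (0,1)$ is smooth in $r$. Smoothness of $T(s,\cdot)$ as a function of $r$ then follows at once from the smoothness of $\chi$ together with the integral representation \eqref{eq3-03-10-2021}. To promote this to smoothness of the surface $\Fcal_s = \{t = T(s,|x|)\}$ in $\RR^{1+2}$, note that $T$ depends on $x$ only through $r = |x|$, so the only issue is the origin; but near $r = 0$ we are in the innermost regime where $T = (s^2 + |x|^2)^{1/2}$, which is manifestly $C^{\infty}$ in $x$.

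For the regime $r \leq \rhoH(s)$, since $\chi$ vanishes on $(-\infty,0]$ by \eqref{eq1-03-10-2021}, we have $\xi \equiv 1$, so the defining ODE \eqref{eq1-04-10-2021} reduces to $\partial_r T = r/\sqrt{s^2+r^2}$ with $T(s,0) = s$. Integrating yields $T(s,r) = \sqrt{s^2+r^2}$, which is precisely the parameterization of the hyperboloid $\Hcal_s$. For the regime $r \geq \rhoH(s)+1$, since $\chi \equiv 1$ on $[1,\infty)$, we have $\xi \equiv 0$, hence $\partial_r T \equiv 0$ and $T$ is constant in $r$ on this range.

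The bounds in \eqref{eq9-07-10-2021} will come from monotonicity of $T$ in $r$ (immediate from $\partial_r T \geq 0$) combined with explicit evaluation at the endpoint $r = \rhoH(s)$. The first regime gives $T(s,\rhoH(s)) = \sqrt{s^2 + \rhoH(s)^2}$, and a direct computation shows $s^2 + \rhoH(s)^2 = s^2 + (s^2-1)^2/4 = (s^2+1)^2/4$, hence $T(s,\rhoH(s)) = (s^2+1)/2$, which is the lower bound since $T(s) \geq T(s,\rhoH(s))$. For the upper bound, the estimate $\xi \leq 1$ gives
\begin{equation*}
T(s) - s \;\leq\; \int_0^{\rhoH(s)+1} \frac{\rho}{\sqrt{s^2+\rho^2}}\, d\rho \;=\; \sqrt{s^2 + (\rhoH(s)+1)^2} - s,
\end{equation*}
and using $(\rhoH(s)+1)^2 = (s^2+1)^2/4$ one has $s^2 + (\rhoH(s)+1)^2 = (s^4 + 6s^2 + 1)/4$, yielding $T(s) \leq \sqrt{s^4+6s^2+1}/2$.

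The proof is essentially a sequence of explicit computations; the only genuinely delicate point is verifying $C^{\infty}$ matching at the two transition radii $r = \rhoH(s)$ and $r = \rhoH(s)+1$, but this is built into the construction through the smooth-contact properties of $\chi$ prescribed by \eqref{eq1-03-10-2021}--\eqref{eq6-19-10-2021}, so no further work is required beyond invoking smoothness of $\chi$.
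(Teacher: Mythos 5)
Your proof is correct and follows essentially the same route as the paper (which simply records the explicit formula \eqref{eq3-03-10-2021} and defers details to \cite{M-2018}): integrate the ODE \eqref{eq1-04-10-2021} using that $\xi\equiv 1$ for $r\le\rhoH(s)$ and $\xi\equiv 0$ for $r\ge\rhoH(s)+1$, note smoothness of $T(s,\cdot)$ from smoothness of $\chi$ (with the origin handled by the explicit hyperboloidal form), and get the two bounds from monotonicity at $r=\rhoH(s)$ and from $\xi\le 1$ on $[0,\rhoH(s)+1]$. The algebra $s^2+\rhoH(s)^2=(s^2+1)^2/4$ and $s^2+(\rhoH(s)+1)^2=(s^4+6s^2+1)/4$ checks out, so nothing further is needed.
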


We list out some geometric facts about $\Fcal_s$. The normal vector (with respect to Euclidian metric) of $\Fcal_s$ is
\begin{equation}\label{eq10-07-10-2021}
\vec{n} =
\left\{
\aligned
&(t^2+r^2)^{-1/2}(t,-x) = \Big(\frac{s^2+r^2}{s^2+2r^2}\Big)^{1/2}\Big(1,\frac{-x}{(s^2+r^2)^{1/2}}\Big),\quad &&\text{in }\Hcal^*_s,
\\
&\Big(\frac{s^2+r^2}{s^2+(1+\xi(s,r))r^2}\Big)^{1/2}\Big(1,\frac{-\xi(s,r)x}{(s^2+r^2)^{1/2}}\Big),\quad &&\text{in } \Tcal_s,
\\
&(1,0),\quad  &&\text{in } \Pcal_s.
\endaligned
\right.
\end{equation}
The volume element of $\Fcal_s$ (with respect to the Euclidean metric) is
\begin{equation}\label{eq11-07-10-2021}
d\sigma = \sqrt{1+|\del_r T|^2} dx =
\left\{
\aligned
& \frac{\sqrt{x^2+t^2}}{t} =\frac{\sqrt{s^2+2x^2}}{\sqrt{s^2+x^2}},\quad && \text{in } \Hcal^*_s,
\\
& \Big(\frac{s^2+(1+\xi^2(s,r))r^2}{s^2+r^2}\Big)^{1/2}, \quad &&\text{in } \Tcal_s,
\\
& 1,\quad  &&\text{in }\Pcal_s.
\endaligned
\right.
\end{equation}

For the convenience of discussion, the slice $\Fcal_s$ is divided into three pieces:
$$
\aligned
&\text{The hyperbolic part}\quad &&\Hcal^*_s := \big\{(T(s,r),x)\big|0\leq r\leq \rhoH(s)\big\},
\\
&\text{The transition part} \quad &&\Tcal_s: = \big\{(T(s,r),x)\big|\rhoH(s)\leq r\leq \rhoH(s)+1\big\},
\\
&\text{The flat part} \quad &&\Pcal_s:=\big\{(T(s,r),x)\big|r\geq \rhoH(s)+1\big\}.
\endaligned
$$
Remark that the part $\Hcal^*_s$ is a part of the hyperboloid with radius $s$, and $\Pcal_s$ is part of the plane $\{t=T(s) = T(s,\rhoH(s)+1)\}$. The part $\Tcal_s$ joints the above two pars together in a smooth manner. We also introduce the following partition of spacetime:
$$
\aligned
&\text{Hyperbolic region}\quad&&\Hcal^*_{[s_0,\infty)} : = \big\{(t,x)\big|T(s_0,x)\leq t, r\leq \rhoH(s)\big\},
\\
&\text{Transition region}\quad&&\Tcal_{[s_0,\infty)} : = \big\{(t,x)\big|T(s_0,x)\leq t , \rhoH(s)\leq r\leq \rhoH(s)+1\big\},
\\
&\text{Flat region}\quad&&\Pcal_{[s_0,\infty)} : = \big\{ T(s_0,x)\leq t\big| r\geq \rhoH(s)+1 \big\}.
\endaligned
$$
We also denote by
$$
\aligned
&\Hcal^*_{[s_0,s_1]} : = \big\{(t,x)\big|T(s_0,x)\leq t\leq T(s_1,x), r\leq \rhoH(s)\big\},
\\
&\Tcal_{[s_0,s_1]} : = \big\{(t,x)\big|T(s_0,x)\leq t\leq T(s_1,x), \rhoH(s)\leq r\leq \rhoH(s)+1\big\},
\\
&\Pcal_{[s_0,s_1]} : = \big\{ (t,x)\big| T(s_0,x)\leq t\leq T(s_1,x), r\geq \rhoH(s)+1\big\}.
\endaligned
$$

For the convenience of discussion, we also denote by
$$
\TPcal_s := \Tcal_s\cup \Pcal_s,\quad 
\TPcal_{[s_0,\infty)}:= \Tcal_{[s_0,\infty)}\cup\Pcal_{[s_0,\infty)}, \quad 
\TPcal_{[s_0,s_1]} := \Tcal_{[s_0,s_1]}\cup\Pcal_{[s_0,s_1]}
$$
which are called ``exterior part'' and ``exterior regions''. The frontier between exterior region and hyperbolic region
$$
\aligned
&\Kcal_s := \{(t,x)| r = \rhoH(s), t = \rhoH(s)+1\},
\\
&\Kcal_{[s_0,\infty)} :=\bigcup_{s_0\leq s}\Kcal_s = \{(t,x)|r=t-1,\rhoH(s_0)+1\leq t \},
\\
&\Kcal_{[s_0,s_1]} := \bigcup_{s_0\leq s\leq s_1}\Kcal_s = \{(t,x)|r=t-1,\rhoH(s_0)+1\leq t\leq \rhoH(s_1)+1 \},
\endaligned
$$

In the above regions, the following bounds hold:
\begin{lemma}\label{lem1-07-10-2021}
	Let $(t,x)\in \Fcal_{[s_0,s_1]}$ and $s_0\geq 2$, then
	\begin{equation}\label{eq6-07-10-2021}
	r\,\left\{
	\aligned
	\leq& t-1, \quad&&(t,x)\in \Hcal^*_{[s_0,s_1]},
	\\
	\in& [t-1,t - c(s)],\quad &&(t,x)\in\Tcal_{[s_0,s_1]},
	\\
	\geq& t - c(s),\quad &&(t,x)\in \Pcal_{[s_0,s_1]}.
	\endaligned
	\right.
	\end{equation}
	where $1>c(s)>0$ is determined by the function $\chi$.
\end{lemma}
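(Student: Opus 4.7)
The plan is to handle each of the three regions by a direct computation using the explicit formulas for $T(s,r)$ from Proposition \ref{prop1-07-10-2021}, with the single unifying observation that
$$
\del_r\bigl(T(s,r) - r\bigr) \;=\; \frac{\xi(s,r)\, r}{\sqrt{s^2+r^2}} \,-\, 1 \;<\; 0 \quad \text{for } r\in(0,\rhoH(s)+1),
$$
since $\xi\le 1$ and $r<\sqrt{s^2+r^2}$. Consequently $r\mapsto T(s,r)-r$ is strictly decreasing on $[0,\rhoH(s)+1]$ and constant on $[\rhoH(s)+1,\infty)$, and the three cases will be settled by computing its boundary values.

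For the hyperbolic part $\Hcal^*_{[s_0,s_1]}$, I would simply plug $r=\rhoH(s)=(s^2-1)/2$ into $T(s,r)=\sqrt{s^2+r^2}$, obtaining $T(s,\rhoH(s))-\rhoH(s)=(s^2+1)/2-(s^2-1)/2=1$. Monotonicity then forces $t-r=T(s,r)-r\ge 1$ for every $r\le\rhoH(s)$, which is the stated bound.

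For the flat part $\Pcal_{[s_0,s_1]}$, I would set $c(s):=T(s)-(\rhoH(s)+1)$, so that $t-r = T(s)-r \le c(s)$ trivially for $r\ge\rhoH(s)+1$. To identify $c(s)$ concretely, I would split the integral in the definition of $T(s)$ at $\rho=\rhoH(s)$ and evaluate the inner piece, where $\xi\equiv 1$, as $\sqrt{s^2+\rhoH(s)^2}-s=(s^2+1)/2 - s = (s-1)^2/2$. A short arithmetic simplification using $\rhoH(s)+1=(s^2+1)/2$ gives the clean form
$$
c(s) \;=\; \int_{\rhoH(s)}^{\rhoH(s)+1} \frac{\xi(s,\rho)\,\rho}{\sqrt{s^2+\rho^2}}\, d\rho.
$$
The strict positivity $c(s)>0$ comes from the fact that on the open subinterval $\bigl(\rhoH(s),\rhoH(s)+1\bigr)$ the function $\chi(\cdot-\rhoH(s))$ is strictly less than $1$ (since $\chi$ is strictly increasing from $0$ to $1$ on $(0,1)$ by \eqref{eq6-19-10-2021}), so that $\xi>0$ there; the strict upper bound $c(s)<1$ comes from the pointwise estimate $\xi(s,\rho)\,\rho/\sqrt{s^2+\rho^2}<1$ valid for $s>0$, integrated over a length-one interval.

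For the transition part $\Tcal_{[s_0,s_1]}$, the strict monotonicity established at the outset sends the interval $r\in[\rhoH(s),\rhoH(s)+1]$ onto the interval $T(s,r)-r\in[c(s),1]$, where the endpoint value $T(s,\rhoH(s)+1)-(\rhoH(s)+1)=c(s)$ matches the flat-region constant by continuity of $T(s,\cdot)$ at $r=\rhoH(s)+1$. Equivalently, $r\in[t-1,t-c(s)]$, which finishes the proof. The argument is essentially soft analysis of a monotone time function; the only mild point requiring care is the explicit identification of the matching constant $c(s)$ across the transition/flat interface, but this is forced by the construction and the continuity of $T(s,\cdot)$.
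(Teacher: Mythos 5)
Your argument is correct and is the natural direct computation that the paper itself omits, deferring to \cite{M-2018} (Lemma 2.2): monotonicity of $r\mapsto T(s,r)-r$ together with its boundary values, namely $1$ at $r=\rhoH(s)$ and $c(s)=\int_{\rhoH(s)}^{\rhoH(s)+1}\frac{\xi(s,\rho)\,\rho}{\sqrt{s^2+\rho^2}}\,d\rho\in(0,1)$ at $r=\rhoH(s)+1$. One harmless slip: on $[\rhoH(s)+1,\infty)$ it is $T(s,\cdot)$ that is constant, so $T(s,r)-r$ there decreases with slope $-1$ rather than being constant; you never use the claimed constancy, and your flat-region bound $t-r=T(s)-r\le T(s)-(\rhoH(s)+1)=c(s)$ is argued directly, so nothing breaks.
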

This result only depends on the time function $T$. We recall \cite{M-2018} (Lemma 2.2) for detailed proof. For the convenience of discussion, we introduce
$$
\TPcal_s^{\near}:= \TPcal_s\cap \{t-1\leq r\leq 3t\},\quad  \TPcal_s^{\far} := \TPcal_s\cap \{r\geq 2t\},
$$
which are referred to as ``near-light-cone region'' and ``away-from-light-cone region''.

Remark that the Euclidean-hyperboloidal foliation also defines a parameterization $\Fcal_{[s_0,\infty)}$ by $(s,x)$. We now recall some key features of this parameters established in \cite{M-2018}. First, recalling the relation
$$
t = T(s,r), \quad x^a=x^a,
$$
the Jacobian between parameterizations $(s,x)$ and $(t,x)$ is 
\begin{equation}\label{eq3-07-10-2021}
J = \det\bigg(\frac{\del(t,x)}{\del(s,x)}\bigg) = \del_sT.
\end{equation}
Then we recall the following estimates on $\del_sT$.
\begin{lemma}\label{lem2-07-10-2021}
Let $T$ be defined as in \eqref{eq1-04-10-2021}. Then in $\Fcal_{[s_0,\infty)}$ with $s_0\geq 2$,
\begin{equation}\label{eq7-07-10-2021}
0<(1-\xi(s,r))s + \frac{\xi(s,r)s}{\sqrt{s^2+x^2}}\leq
\del_sT(s,r)
\leq \frac{\xi(s,r)s}{\sqrt{s^2+r^2}} + 2(1-\xi(s,r))s.
\end{equation}
Especially, 
\begin{equation}\label{eq8-07-10-2021}
\aligned
\del_sT(s,r) =& \frac{s}{(s^2+r^2)^{1/2}} = s/t,\quad &&\text{in}\quad \Hcal^*_{[s_0,\infty)},
\\
s\leq \del_sT(s,r)\leq& s(1 + 2(s^2-1)^{-1}),\quad &&\text{in}\quad \Pcal_{[s_0,\infty)}.
\endaligned
\end{equation}
\end{lemma}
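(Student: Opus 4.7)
The plan is to derive a single closed-form expression for $\del_s T(s,r)$ by differentiating under the integral sign and integrating by parts, and then to read off all the required bounds from a clean algebraic identity.

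First I would use the unified formula $T(s,r) = s + \int_0^r \xi(s,\rho)\rho(s^2+\rho^2)^{-1/2}\,d\rho$, which is valid across all three pieces of $\Fcal_s$ since $\xi(s,\cdot)$ is supported in $[0,\rhoH(s)+1]$. Using $\rhoH'(s)=s$ one has $\del_s \xi(s,\rho) = s\,\chi'(\rho - \rhoH(s))$, so differentiating under the integral produces
$$\del_s T(s,r) = 1 + \int_0^r \frac{s\,\chi'(\rho-\rhoH(s))\,\rho}{\sqrt{s^2+\rho^2}}\,d\rho - \int_0^r \frac{\xi(s,\rho)\,\rho\, s}{(s^2+\rho^2)^{3/2}}\,d\rho.$$
Rewriting the last integrand as $-\xi(s,\rho)\,\tfrac{d}{d\rho}\bigl(s/\sqrt{s^2+\rho^2}\bigr)$ and integrating by parts, using $\xi(s,0)=1$ and $\del_\rho \xi = -\chi'(\rho-\rhoH(s))$, the boundary contribution at $\rho=0$ exactly cancels the leading $+1$, leaving the identity
$$\del_s T(s,r) = \frac{\xi(s,r)\,s}{\sqrt{s^2+r^2}} + I(s,r), \qquad I(s,r) := \int_0^r \chi'(\rho-\rhoH(s))\,\frac{s(\rho+1)}{\sqrt{s^2+\rho^2}}\,d\rho.$$

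Next I would estimate $I(s,r)$. Since $\chi'$ is supported on $[0,1]$, the integrand is supported in $\rho\in[\rhoH(s),\rhoH(s)+1]$. On this interval, with $u := \rho - \rhoH(s)\in[0,1]$ and using $2\rhoH(s) = s^2-1$, the elementary identity
$$(\rho+1)^2 - (s^2+\rho^2) = 2\rho + 1 - s^2 = 2u$$
gives $(\rho+1)/\sqrt{s^2+\rho^2} = \sqrt{1+2u/(s^2+\rho^2)} \in \bigl[1,\sqrt{1+2/s^2}\bigr]$. Changing variables to $u$ and noting that the upper limit of the $u$-integral is $\min(1,(r-\rhoH(s))_+)$ with $\int_0^v \chi'(u)\,du = \chi(v) = 1-\xi(s,r)$ in all three regimes, I obtain
$$(1-\xi(s,r))\,s \;\leq\; I(s,r) \;\leq\; (1-\xi(s,r))\,s\,\sqrt{1+2/s^2}.$$
Since $\sqrt{1+2/s^2}\leq 2$ for $s\geq 2$, combining this with the previous display yields the two-sided bound \eqref{eq7-07-10-2021}; strict positivity is immediate because the two summands in the lower bound are nonnegative and cannot both vanish, as $\xi+(1-\xi)=1$.

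The special cases \eqref{eq8-07-10-2021} then read off directly. In $\Hcal^*_{[s_0,\infty)}$ one has $\xi\equiv 1$ on $[0,r]$, so $I\equiv 0$ and $\del_s T = s/\sqrt{s^2+r^2} = s/t$; in $\Pcal_{[s_0,\infty)}$ one has $\xi(s,r)=0$ so $\del_s T = I(s,r) \in [s,\,s\sqrt{1+2/s^2}]$, and the upper endpoint satisfies $s\sqrt{1+2/s^2}\leq s(1+1/s^2) \leq s(1+2/(s^2-1))$ for $s\geq 2$. I do not expect any essential obstacle: the only care needed is in justifying the integration by parts (clean thanks to the compact support of $\chi'$) and in verifying the identity $(\rho+1)^2 - (s^2+\rho^2) = 2u$, which is the real engine producing such tight constants.
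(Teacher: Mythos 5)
Your proof is correct, and every step checks out: differentiation under the integral is legitimate, the boundary term $\xi(s,0)\cdot s/\sqrt{s^2+0}=1$ indeed cancels the leading $1$, the combined identity $\del_sT=\xi(s,r)s(s^2+r^2)^{-1/2}+\int_0^r\chi'(\rho-\rhoH(s))\,s(\rho+1)(s^2+\rho^2)^{-1/2}d\rho$ is exact, and the algebraic identity $(\rho+1)^2-(s^2+\rho^2)=2(\rho-\rhoH(s))$ on the support of $\chi'$ (using $2\rhoH(s)=s^2-1$) together with $\chi'\geq 0$ (guaranteed by \eqref{eq6-19-10-2021}) and $\int_0^r\chi'(\rho-\rhoH(s))d\rho=1-\xi(s,r)$ yields \eqref{eq7-07-10-2021}, with $\sqrt{1+2/s^2}\leq 2$ for $s\geq 2$; the specializations \eqref{eq8-07-10-2021} follow as you state, including $s\sqrt{1+2/s^2}\leq s(1+2/(s^2-1))$. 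The paper itself gives no proof here, deferring to Lemma 2.3 of the $\RR^{1+1}$ reference \cite{M-2018}, where the bound is obtained by directly estimating the $s$-derivative of the defining integral; your variant, which first integrates by parts to produce a single closed-form expression for $\del_sT$ and then reads off both inequalities from one pointwise bound on $(\rho+1)/\sqrt{s^2+\rho^2}$, is a self-contained and arguably cleaner route to the same constants, and in particular makes transparent why the exact value $s/t$ appears in $\Hcal^*$ and why the factor $2$ suffices once $s_0\geq 2$.
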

This was established in \cite{M-2018} Lemma 2.3.

\subsection{Frames and vector fields}
Parallel to the case of $\RR^{1+1}$ in \cite{M-2018} and $\RR^{3+1}$ in \cite{LM-2022}, we introduce in $\Fcal_{[s_0,+\infty)}$ the following vector fields:
$$
\del_0 = \del_t,\quad \del_a = \del_{x^a}.
$$
We introduce the following vector filed:
$$
L_a := x^a\del_t+t\del_a
$$
which is called the {\sl Lorentzian boosts}.
We also denote by
$$
\delu_a := (x^a/t)\del_t + \del_a.
$$

In $\Fcal_{[s_0,\infty)}$ we introduce the following semi-hyperboloidal frame ({\sl SHF} for short) :
$$
\delu_0 := \del_t,\quad \delu_a = \delu_a.
$$
The transition matrices between SHF and the canonical frame $\{\del_t,\del_a\}$ are:
$$
\Phiu_{\alpha}^{\beta} = \left(
\begin{array}{ccc}
1 &0 &0
\\
x^1/t &1 &0
\\
x^2/t &0 &1
\end{array}
\right),
\quad
\Psiu_{\alpha}^{\beta} = \left(
\begin{array}{ccc}
1 & 0 &0
\\
-x^1/t &1 &0
\\
-x^2/t &0 &1
\end{array}
\right)
$$
with
$$
\delu_\alpha = \Phiu_{\alpha}^{\beta}\del_\beta,\quad \del_{\alpha} = \Psiu_{\alpha}^{\beta}\delu_{\beta}.
$$

We also introduce the following {\sl semi-null} frame ({\sl SNF} for short)\footnote{The word ``null'' stands for the fact that $\delt_a$ are tangent to the null cones $\{r = t+C\}$} in the region $\Fcal_{[s_0,\infty)}\cap \{r > t/2\}$, defined as following:
$$
\delt_0:= \del_t,\quad \delt_a = (x^a/r)\del_t + \del_a.
$$
The transition matrices between SNF and the canonical frame are:
$$
\Phit_{\alpha}^{\beta} = \left(
\begin{array}{ccc}
1 &0 &0
\\
x^1/r &1 &0
\\
x^2/r &0 &1
\end{array}
\right),
\quad
\Psit_{\alpha}^{\beta} = \left(
\begin{array}{ccc}
1 & 0 & 0
\\
-x^1/r &1 &0
\\
-x^2/r &0 &1
\end{array}
\right)
$$
with
$$
\delt_\alpha = \Phit_{\alpha}^{\beta}\del_\beta,\quad \del_{\alpha} = \Psit_{\alpha}^{\beta}\delt_{\beta}.
$$

In $\Fcal_{[2,+\infty)}$, we define the following frame (called the {\sl tangent frame}, or {\sl TF} for short):
$$
\delb_0 := \del_t ,\quad \delb_a := \delb_a = \frac{\xi(s,r)x^a}{(s^2+r^2)^{1/2}}\del_t + \del_a.
$$
The transition matrices between TF and the canonical frame are
$$
\Phib_{\alpha}^{\beta} = \left(
\begin{array}{ccc}
1 &0 &0
\\
\frac{\xi(s,r)x^1}{(s^2+r^2)^{1/2}} &1 &0
\\
\frac{\xi(s,r)x^2}{(s^2+r^2)^{1/2}} &0 &1
\end{array}
\right),
\quad
\Psib_{\alpha}^{\beta} = \left(
\begin{array}{ccc}
1 &0 &0
\\
\frac{-\xi(s,r)x^1}{\sqrt{s^2+x^2}} &1 &0
\\
\frac{-\xi(s,r)x^2}{\sqrt{s^2+x^2}} &0 &1
\end{array}
\right),
$$
with
$$
\delb_{\alpha} = \Phib_{\alpha}^{\beta}\del_{\beta}u, \quad \del_{\alpha} = \Psib_{\alpha}^{\beta}\delb_{\beta}.
$$

Let $T$ be a two tensor defined in $\Fcal_{[2,+\infty)}$. Then it can be written in different frame as following:
$$
T = T^{\alpha\beta}\del_{\alpha}\otimes\del_{\beta} = \Tu^{\alpha\beta}\delu_{\alpha}\otimes\delu_{\beta} = \Tb^{\alpha\beta}\delb_{\alpha}\otimes\delb_{\beta} = \Tt^{\alpha\beta}\delt_{\alpha}\otimes\delt_{\beta}.
$$
For a three-tensor $Q$, the same relation holds:
$$
Q = Q^{\alpha\beta\gamma}\del_{\alpha}\otimes\del_{\beta}\otimes\del_{\gamma} =
\Qu^{\alpha\beta\gamma}\delu_{\alpha}\otimes\delu_{\beta}\otimes\delu_{\gamma} =
\Qb^{\alpha\beta\gamma}\delb_{\alpha}\otimes\delb_{\beta}\otimes\delb_{\gamma} = \Qt^{\alpha\beta\gamma}\delt_{\alpha}\otimes\delt_{\beta}\otimes\delt_{\gamma}.
$$
We remark the following relation:
$$
\aligned
&\Tu^{\alpha\beta} = \Psiu_{\alpha'}^{\alpha}\Psiu_{\beta'}^{\beta}T^{\alpha'\beta'},\quad 
\quad
&&\Tb^{\alpha\beta} = \Psib_{\alpha'}^{\alpha}\Psib_{\beta'}^{\beta}T^{\alpha'\beta'},
\quad
&&\Tt^{\alpha\beta} = \Psit_{\alpha'}^{\alpha}\Psit_{\beta'}^{\beta}T^{\alpha'\beta'},
\\
&\Qu^{\alpha\beta\gamma} = 
\Psiu_{\alpha'}^{\alpha}\Psiu_{\beta'}^{\beta}\Psiu_{\gamma'}^{\gamma}Q^{\alpha'\beta'\gamma'},
\quad 
&&\Qb^{\alpha\beta\gamma} = \Psib_{\alpha'}^{\alpha}\Psib_{\beta'}^{\beta}\Psib_{\gamma'}^{\gamma}Q^{\alpha'\beta'\gamma'},
\quad
&&\Qt^{\alpha\beta\gamma} = \Psit_{\alpha'}^{\alpha}\Psit_{\beta'}^{\beta}\Psit_{\gamma'}^{\gamma}Q^{\alpha'\beta'\gamma'}.
\endaligned
$$

\section{Energy estimate with Euclidean-hyperboloidal foliation}
This section is devoted to the discussion on the energy estimate within Euclidean-hyperboloidal foliation. We firstly introduced / recall some frequently used norms and functional spaces in the first subsection. The energy estimate is established in the second subsection. In the last subsection we we give a detailed analysis on the structure of the energy on Euclidean-hyperboloidal slices. 
\subsection{Functional spaces}
We are working in $\Fcal_{[s_0,s_1]}$. Let $\Fcal_{(s_0,s_1)} = \{(t,x)| T(s_0,r)<t<T(s_1,r)\}$. We denote by $C_c^{\infty}(D)$ the smooth functions defined in $\RR^d$ with supports contained in $D$. Then let
$\Scal_{[s_0,s_1]} := \{u\in C_c^{\infty}(\Fcal_{(s_0-1,s_1+1)})\}$. Let $u\in \Scal_{(s_0,s_1)}$ and for $s\in(s_0,s_1)$, we denote by 
$$
u_s : = u(T(s,r),x)
$$
the restriction of $u$ on $\Fcal_s$. Then for $1\leq p<\infty$, 
$$
\|u\|_{L^p(\Fcal_s)}^p := \int_{\RR^2} \big|u(T(s,r),x)\big|^pdx,
\quad
\|u\|_{L^{\infty}(\Fcal_s)} := \sup_{x\in\RR^2}|u_s(x)|.
$$
Furthermore, 
$$
\|u\|_{L_p^\infty([s_0,s_1])} := \sup_{s\in[s_0,s_1]}\big\{\|u\|_{L^p(\Fcal_s)}\big\},
\quad
\|u\|_{L_p^q([s_0,s_1])} := \Big(\int_{[s_0,s_1]}\|w_s\|_{L^p(\RR)}^q\ ds\Big)^{1/q},\quad 1\leq q<\infty.
$$
We denote by $L^q_p([s_0,s_1])$ the completion of $\mathcal{S}_{[s_0,s_1]}$ with respect to the norm $\|\cdot \|_{L_p^q([s_0,s_1])}$. In the following discussion, almost all functions under discussion are in $L^{\infty}_2([s_0,s_1])$.

\subsection{Weight function}
For our purpose we need to introduce a special weight function. Let $\aleph: \RR\rightarrow \RR$ such that
$$
\aleph(r) = 
\left\{
\aligned
&0, \quad&& r\leq 0,
\\
&r+1,\quad &&r\geq 1
\endaligned
\right.
$$
and $2\geq \aleph^{\prime}\geq 0$. 
%The existence of such $\aleph$ is established in Appendix \ref{app1-05-10-2021}. 
Then we introduce the weight function
$$
\omega(t,r) := 1 + \aleph(2+r-t)
=\left\{
\aligned
&1,\quad && r-t\leq -2,
\\
&2+r-t,&& r-t\geq -1.
\endaligned
\right.
$$
which is non-trivial only in the flat region. Furthermore,
\begin{equation}\label{eq2-05-10-2021}
\del_a\omega = (x^a/r)\aleph^{\prime}(2+r-t),
\quad
\del_t\omega = -\aleph^{\prime}(2+r-t).
\end{equation}
Here remark that $\aleph^{\prime}$ is bounded. Thus
$$
|\del_a\omega| + |\del_t\omega|\leq C(\aleph).
$$

\subsection{Energy estimate}\label{subsec1-24-11-2021}
We apply the multiplier $2\omega^{2\eta}\del_t u$ and obtain the following identity:
\begin{equation}\label{eq1-06-10-2021}
2\omega^{2\eta}\del_tu\, \big(\Box u + c^2u\big) = \text{div} V_{\eta,c}[u]
 + 2\eta \omega^{2\eta-1}\aleph^{\prime}(2+r-t) \Big(\sum_a|\delt_a u|^2  + c^2u^2\Big)
\end{equation}
with
$$
V_{\eta,c}[u] := 
\Big(\omega^{2\eta}(|\del_t u|^2 + \sum_a|\del u|^2 + c^2u^2),\, -2\omega^{2\eta}\del_tu\del_au\Big).
$$
Then we integrate \eqref{eq1-06-10-2021} in $\Fcal_{[s_0,s_1]}$ and apply the Stokes' formula (with respect to Euclidean metric). Here we need to remark that
\begin{equation}\label{eq3-26-01-2022}
\vec{n} d\sigma = \Big(1,-\frac{\xi(s,r)x^a}{(s^2+r^2)^{1/2}}\Big).
\end{equation}
Then we obtain
\begin{equation}\label{eq1-07-10-2021}
\aligned
\Ebf_{\eta,c}(s_1,u) - \Ebf_{\eta,c}(s_0,u) 
+& 2\eta\int_{\Fcal_{[s_0,s_1]}}\!\!\!\!\!\!\!\!\omega^{2\eta-1}\aleph^{\prime}(2+r-t)\Big(\sum_a|\delt_au|^2 +c^2u^2 \Big)dxdt 
\\
=& 2\int_{\Fcal_{[s_0,s_1]}}\!\!\!\!\!\!\!\!\omega^{2\eta}\del_t u\big(\Box u + c^2u\big)dxdt.
\endaligned
\end{equation}
Here 
\begin{equation}\label{eq2-07-10-2021}
\aligned
\Ebf_{\eta,c}(s,u) =& \int_{\Fcal_s} \omega^{2\eta}\Big(|\del_t u|^2 + \sum_a|\del_au|^2 + \frac{\xi(s,r)x^a}{(s^2+r^2)^{1/2}}\del_au\del_tu + c^2u^2\Big)dx
\\
=&\int_{\Fcal_s} \omega^{2\eta}\Big(|\zeta \del_t u|^2 + \sum_a|\delb_au|^2 + c^2u^2\Big) dx
\\
=& \int_{\Fcal_s} \omega^{2\eta}\Big(\sum_a |\zeta\del_a u|^2 + |\delb_{\perp}u|^2 + \frac{\xi^2(s,r)r^2}{s^2+r^2}|r^{-1}\Omega u|^2 + c^2u^2\Big)dx
\endaligned
\end{equation}
where
\begin{equation}\label{eq5-23-01-2022}
\zeta^2 = 1 - \frac{\xi^2(s,r)r^2}{s^2+r^2} ,\quad \delb_{\perp} := \del_t + \frac{\xi(s,r)x^a}{(s^2+r^2)^{1/2}}\del_a
\end{equation}
and $\Omega = x^1\del_2 - x^2\del_1$ the rotation vector.
Here we remark that the derivative $\delb_{\perp}$ is orthogonal to $\Fcal_s$ with respect the Minkowski metric.       
Then recall \eqref{eq3-07-10-2021} and the relation $dtdx = Jdsdx$, \eqref{eq1-07-10-2021} is written as
$$
\aligned
\Ebf_{\eta,c}(s_1,u)  +& 2\eta\int_{s_0}^{s_1}\int_{\Fcal_s} \omega^{2\eta-1}\aleph^{\prime}(2+r-t)\Big(\sum_a|\delt_a u|^2 + c^2u^2\Big)\,Jdxds
\\
& =\Ebf_{\eta,c}(s_0,u) + 2\int_{s_0}^{s_1}\int_{\Fcal_s} \omega^{2\eta}\del_tu\big(\Box u + c^2 u^2\big)\,Jdxds
\endaligned
$$
%Differentiate the above identity with respect to $s$, we obtain, for $s\in[s_0,s_1]$,
%\begin{equation}\label{eq4-07-10-2021}
%\aligned
%\frac{d}{ds}\Ebf_{\eta,c}(s,u) 
%+ 2\eta\int_{\Fcal_s}
%\omega^{2\eta-1}\aleph^{\prime}(2+r-t) \Big(\sum_a|\delt_a u|^2  {+ c^2u^2}\Big)\,Jdx
%=2\int_{\Fcal_s} \omega^{2\eta}\del_tu\Big(\Box u + c^2 u^2\Big)\,Jdx.
%\endaligned
%\end{equation}
%Here remark that $\omega^{2\eta-1}\aleph^{\prime}(2+r-t)|\delt_a u|^2\geq 0$. Then 
%$$
%\frac{d}{ds}\Ebf_{\eta,c}(s,u) \leq 2\int_{\Fcal_s} \omega^{2\eta}\del_tu\Big(\Box u + c^2 u^2\Big)\,Jdx
%\\
%\leq 2\|\omega^{\eta}\zeta \del_t u\|_{L^2(\Fcal_s)}\|J\zeta^{-1}(\Box u + c^2u) \|_{L^2(\Fcal_s)}.
%$$
%Remark that $\|\omega^{\eta}\|_{L^2(\Fcal_s)}\leq \Ebf_{\eta,c}(s,u)^{1/2}$, we obtain
%$$
%2\Ebf_{\eta,c}(s,u)^{1/2}\frac{d}{ds}\Ebf_{\eta,c}(s,u)^{1/2}\leq 2\|\omega^{\eta}\zeta \del_t u\|_{L^2(\Fcal_s)}\|J\zeta^{-1}(\Box u + c^2u) \|_{L^2(\Fcal_s)}
%$$
%which leads to
%\begin{equation}\label{eq7-30-10-2021}
%\frac{d}{ds}\Ebf_{\eta,c}(s,u)^{1/2}\leq\|J\zeta^{-1}(\Box u + c^2u) \|_{L^2(\Fcal_s)} {.}
%\end{equation}

For the convenience of discussion, we denote by 
$$
\ebf_c[u] := |\del_t u|^2 + \sum_a|\del_au|^2 + \frac{\xi(s,r)x^a}{(s^2+r^2)^{1/2}}\del_au\del_tu + c^2u^2
$$
the (unweighted) energy density. We use the abbreviation $\ebf[u] = \ebf_0[u]$.  Then we decompose the energy in three pieces:
\begin{equation}
	\aligned
	\Ebf_{\eta,c}(s,u)
	=\int_{\Hcal^*_s} + \int_{\Tcal_s}  + \int_{\Pcal_s}\omega^{2\eta}\ebf_c[u]dx
	=: \Ebf^{\Hcal}_{\eta,c}(s,u) + \Ebf^{\Tcal}_{\eta,c}(s,u) + \Ebf^{\Pcal}_{\eta,c}(s,u).
	\endaligned
\end{equation} 
In the following discussion we often use the notation $\Ebf^{\TPcal}_{\eta,c}(s,u) := \Ebf^{\Tcal}_{\eta,c}(s,u) + \Ebf^{\Pcal}_{\eta,c}(s,u)$. 

Then we can also integrate \eqref{eq1-06-10-2021} in $\TPcal_{[s_0,s_1]}$ and obtain an exterior energy estimate, as following. We denote by $\del\Kcal_{[s_0,s_1]} = \{r = t-1| (s_0^2+1)/2\leq  t\leq (s_1^2+1)/2\}$ which is the frontier between $\Hcal^*_{[s_0,s_1]}$ and $\TPcal_{[s_0,s_1]}$. Its normal vector and volume element (with respect to Euclidean metric) is
\begin{equation}
\vec{n} = 2^{-1/2}(1,-x^a/r),\quad d\sigma = 2^{1/2}dx.
\end{equation}
Then we integrate  \eqref{eq1-06-10-2021} in $\TPcal_{[s_0,s_1]}$,
\begin{equation}\label{eq5-30-10-2021}
\aligned
 &\Ebf_{\eta,c}^{\TPcal}(s_1,u) - \Ebf_{\eta,c}^{\TPcal}(s_0,u) 
+ \int_{\del\Kcal_{[s_0,s_1]}}\!\!\!\!\!\!\!\!\!\!\!\!V_{\eta,c}[u]\cdot \vec{n}d\sigma 
\\
&+ 2\eta\int_{\TPcal_{[s_0,s_1]}}\!\!\!\!\!\!\!\!\!\!\!\!
\omega^{2\eta-1}
\aleph^\prime(2+r-t)
|\delt_a u|^2\,Jdx ds
= \int_{\TPcal_{[s_0,s_1]}}\!\!\!\!\!\!\!\!\!\!\!\!
\omega^{2\eta}\del_tu \big(\Box u + c^2u\big)\,Jdxds.
\endaligned
\end{equation}
We remark that
$$
\int_{\del\Kcal_{[s_0,s_1]}}\!\!\!\!\!\!\!\!\!\!\!\!V_{\eta,c}\cdot \vec{n}d\sigma 
= \int_{s_0}^{s_1}\int_{\{|x| = \rhoH(s)\}}\!\!\!\!\!\!\!\! c^2u^2 + \sum_a|\delt_au|^2\,J d\phi \geq 0,
$$
where $d\phi$ is the volume element of the $1$-sphere $\{t = \rhoH(s)+1, |x| = \rhoH(s)\}$. 
%Differentiate \eqref{eq5-30-10-2021} and we obtain
%$$
%\aligned
%\frac{d}{ds}\Ebf_{\eta,c}^{\TPcal}(s,u)
%&+ \int_{\TPcal_s}\omega^{2\eta-1}\aleph^{\prime}(2+r-t) \Big(\sum_a|\delt_a u|^2  {+ c^2u^2}\Big)\,Jdx
%\\
%&+ \int_{\{|x| = \rhoH(s)\}}\!\!\!\!\!\!\!\!\!\!\!\!c^2u^2 + \sum_a|\delt_au|^2\,J d\phi
%=2\int_{\TPcal_s}\omega^{2\eta}\del_tu\Big(\Box u + c^2u\Big)\,Jdx,
%\endaligned
%$$
%which leads to (similar to \eqref{eq7-30-10-2021}), 
%\begin{equation}\label{eq8-30-10-2021}
%\frac{d}{ds}\Ebf_{\eta,c}^{\TPcal}(s,u)^{1/2} \leq \|J\zeta^{-1}(\Box u + c^2u) \|_{L^2(\TPcal_s)}
%\end{equation}
%Integrate \eqref{eq7-30-10-2021} and \eqref{eq8-30-10-2021}, we obtain the following results.
Then we conclude by the following result.
\begin{proposition}\label{prop2-24-11-2021}
Let $u$ be a $C^2$ solution to the following wave / Klein--Gordon equation
\begin{equation}
\Box u + c^2u = f
\end{equation}
and vanishes sufficiently fast at spatial infinity. Then when $\eta>0$,
%\begin{equation}
%\Ebf_{\eta,c}(s_1,u)^{1/2} \leq \Ebf_{\eta,c}(s_0,u)^{1/2} + \int_{s_0}^{s_1}\big\|\omega^{\eta}J\zeta^{-1} f\big\|_{L^2(\Fcal_s)}ds.
%\end{equation}
%  {And}, 
%when $\eta>0$, 
\begin{equation}\label{eq7-22-03-2022-M}
\aligned
&\Ebf_{\eta,c}(s,u) 
+2\eta\int_{s_0}^{s_1}\int_{\Fcal_s}\omega^{2\eta-1}\aleph^{\prime}(2+r-t)\Big(\sum_a|\delt_au|^2+c^2u^2\Big)\,Jdxds 
\\
&\qquad \leq\, \Ebf_{\eta,c}(s_0,u) + 2\int_{s_0}^{s_1}\int_{\Fcal_s}|\omega^{2\eta}f\del_t u|\,Jdxds .
\endaligned
\end{equation}
Furthermore,
%\begin{equation}\label{eq6-14-03-2022-M}
%\Ebf_{\eta,c}^{\TPcal}(s,u)^{1/2}\leq \Ebf_{\eta,c}^{\TPcal}(s_0,u)^{1/2}
%+ \int_{s_0}^{s_1}\big\|\omega^{\eta}J\zeta^{-1} f\big\|_{L^2(\TPcal_s)}dx.
%\end{equation}
\begin{equation}\label{eq5-14-03-2022-M}
\aligned
&\Ebf_{\eta,c}^{\TPcal}(s,u) + 2\eta\int_{s_0}^{s_1}\int_{\TPcal_s}\omega^{2\eta-1}\Big(\sum_a|\delt_au|^2 + c^2u^2\Big)\,Jdxds 
\\
&\qquad \leq \Ebf_{\eta,c}^{\TPcal}(s_0,u) + 2\int_{s_0}^{s_1}\int_{\TPcal_s} |\omega^{2\eta}f\del_t u|\,Jdxds.
\endaligned
\end{equation}
\end{proposition}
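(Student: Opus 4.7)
The plan is to derive both bounds directly from the pointwise multiplier identity \eqref{eq1-06-10-2021} already exhibited at the start of Subsection~\ref{subsec1-24-11-2021}. First I would verify \eqref{eq1-06-10-2021} by a pointwise calculation: expand $2\omega^{2\eta}\del_t u\,(\Box u + c^2 u)$, collect the three standard energy-type quadratic terms $(\del_t u)^2$, $\sum_a(\del_a u)^2$ and $c^2 u^2$ into a spacetime divergence whose components match the definition of $V_{\eta,c}[u]$, and use \eqref{eq2-05-10-2021}, namely $\del_t\omega = -\aleph'(2+r-t)$ and $\del_a\omega = (x^a/r)\aleph'(2+r-t)$, to recognise the leftover bulk as $2\eta\omega^{2\eta-1}\aleph'(2+r-t)$ times $(\del_t u)^2 + \sum_a(\del_a u)^2 + 2(x^a/r)\del_t u\,\del_a u + c^2 u^2$. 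Using $\sum_a(x^a/r)^2 = 1$, this last bracket collapses to $\sum_a(\delt_a u)^2 + c^2 u^2$, which is exactly the bulk density appearing in the statement.

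Second, I would integrate \eqref{eq1-06-10-2021} over the sandwiched region $\Fcal_{[s_0,s_1]}$ and apply the Euclidean divergence theorem. On the top and bottom pieces $\Fcal_{s_1}$ and $\Fcal_{s_0}$ the outward conormal is $\vec{n}\,d\sigma = \bigl(1,\,-\xi(s,r)x^a/(s^2+r^2)^{1/2}\bigr)dx$ by \eqref{eq3-26-01-2022}; pairing this with $V_{\eta,c}[u]$ reproduces precisely the integrand $\omega^{2\eta}\ebf_c[u]$ of $\Ebf_{\eta,c}(s,u)$ given in \eqref{eq2-07-10-2021}. The lateral contribution at spatial infinity vanishes under the stated decay of $u$. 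Converting the bulk integral to the $(s,x)$-parameterisation via $dt\,dx = J\,ds\,dx$ (see \eqref{eq3-07-10-2021}) produces the equality \eqref{eq1-07-10-2021} with $\Box u + c^2 u$ replaced by $f$; the inequality \eqref{eq7-22-03-2022-M} then follows at once by bounding $2\int \omega^{2\eta} f\,\del_t u$ by $2\int |\omega^{2\eta} f\,\del_t u|$.

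For the exterior estimate \eqref{eq5-14-03-2022-M} I would repeat the integration over $\TPcal_{[s_0,s_1]}$ in place of $\Fcal_{[s_0,s_1]}$. The boundary now contains an additional internal piece $\del\Kcal_{[s_0,s_1]} = \{r=t-1\}$, on which the outward conormal is $\vec{n}\,d\sigma = (1,-x^a/r)dx$; pairing with $V_{\eta,c}[u]$ and applying the same algebraic collapse as above gives
\[
V_{\eta,c}[u]\cdot \vec{n}\,d\sigma = \omega^{2\eta}\Big(\sum_a |\delt_a u|^2 + c^2 u^2\Big)dx \geq 0,
\]
and moreover $\omega \equiv 1$ on $\{r=t-1\}$ by the piecewise definition of $\omega$. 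Because this extra boundary flux enters with the favourable sign, it can be discarded when passing from the equality analogous to \eqref{eq5-30-10-2021} to the desired inequality.

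The only real obstacle is geometric bookkeeping: orienting each piece of $\del\Fcal_{[s_0,s_1]}$ and $\del\TPcal_{[s_0,s_1]}$ correctly, tracking all conormal factors and volume elements of the Euclidean divergence theorem, and verifying the algebraic collapse of the cross term $2(x^a/r)\del_t u\,\del_a u$ into the semi-null density $\sum_a|\delt_a u|^2$ both on $\Fcal_s$ (where $\xi(s,r)$ damps the flux) and on $\del\Kcal$ (where $\xi \equiv 1$). Once this geometric setup is executed cleanly, both \eqref{eq7-22-03-2022-M} and \eqref{eq5-14-03-2022-M} are immediate consequences of Stokes' theorem.
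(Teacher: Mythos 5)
Your proposal is correct and is essentially the paper's own proof: one integrates the multiplier identity \eqref{eq1-06-10-2021} over $\Fcal_{[s_0,s_1]}$ (resp. $\TPcal_{[s_0,s_1]}$) with the conormal data \eqref{eq3-26-01-2022} (resp. the cone conormal on $\del\Kcal_{[s_0,s_1]}$), obtaining \eqref{eq1-07-10-2021} and \eqref{eq5-30-10-2021} after the change of variables $dt\,dx=J\,ds\,dx$, and then discards the nonnegative flux through $\del\Kcal_{[s_0,s_1]}$. The only bookkeeping worth adding is that on $\TPcal_{[s_0,s_1]}$ one has $r\geq t-1$, hence $2+r-t\geq 1$ and $\aleph'(2+r-t)\equiv 1$, which is why the exterior bulk term in \eqref{eq5-14-03-2022-M} carries no $\aleph'$ factor; note also that your flux pairing (with coefficient $2$ on the cross term $\del_tu\,\del_au$) is the computation consistent with the $\zeta$, $\delb_a$ form of $\Ebf_{\eta,c}$ in \eqref{eq2-07-10-2021}.
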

\begin{remark}
This is the flat version of Propositions 3.6 and 3.7 of \cite{LM-2022}. However, different from the case in $\RR^{1+3}$, here the space-time integration controlled on the left-hand side will play essential roles in the following discussion. For the Klein-Gordon equation ($c>0$), this is essentially the ghost weight energy introduced in \cite{Dong-2021,Dong-2020-2}.
\end{remark}
\begin{remark}
In Subsection \ref{subsec1-22-04-2022-M} when we construct the initial data from $\{t=2\}$ to $\Fcal_2$, we need to control the energy $\Ebf_{\eta,c}(2,u)$ by the flat energy on $\{t=2\}$. This is done by integrating \eqref{eq1-06-10-2021} in the region $\Fcal^{\flat}_2 := \{(t,x)|2\leq t\leq T(2,|x|)\}$. We denote by 
$$
\Ebf^{\flat}_{\eta,c}(t,u) : = \int_{\RR^2} \omega^{2\eta}(|\del_t u|^2 + \sum_a|\del u|^2 + c^2u^2)(t,\cdot)\, dx
$$
the weighted flat energy. Then
\begin{equation}\label{eq2-22-04-2022-M}
\Ebf_{\eta,c}(2,u) = \Ebf^{\flat}_{\eta,c}(2,u) + 2\int_{\Fcal^{\flat}_2}\omega^{2\eta}\del_tu\, \big(\Box u + c^2u\big)dxdt.
\end{equation}
\end{remark}

\subsection{Analysis on the energy}
For further discussion, we need a detailed analysis on the weight function $\zeta$ appearing naturally in the expression of $\Ebf_{\eta,c}(s,u)$. We firstly write (see also in \cite{M-2018} and \cite{LM-2022})
\begin{equation}
\zeta(s,x) = \Big(1-\frac{\xi(s,r)r^2}{s^2+r^2}\Big)^{1/2} 
=\left\{
\aligned
&\frac{s}{(s^2+r^2)^{1/2}} = s/t,\quad && \text{in } \Hcal^*_s,
\\
&\Big(1-\frac{\xi(s,r)r^2}{s^2+r^2}\Big)^{1/2} ,\quad && \text{in } \Tcal_s,
\\
&1,\quad &&\text{in } \Pcal_s.
\endaligned
\right. 
\end{equation}

Then

On $\Hcal_s^*$, remark that $\xi(s,r)\equiv 1, \zeta = (s/t)$ and $\omega\simeq 1$. Then
$$
\aligned
\Ebf^{\Hcal}_{\eta,c}(s,u) \simeq \Ebf_c^{\Hcal}(s,u):= & \int_{\Hcal^*_s}\Big(\big|(s/t)\del_tu\big|^2 + \sum_a|\delu_a u|^2 + c^2u^2\Big)\, dx
\\
=& \int_{\Hcal^*_s}\Big(\big|\delu_{\perp}u\big|^2 + \sum_a|(s/t)\del_a u|^2 + |t^{-1}\Omega u|^2 + c^2u^2\Big)\, dx
\endaligned
$$
which is the standard energy on hyperboloids. The we turn to the exterior region.  On $\Pcal_s$, remark that $\xi(s,r) \equiv 0, \zeta\equiv 1$. Then 
$$
\Ebf^{\Pcal}_{\eta,c}(s,u) = \int_{\Pcal_s} \omega^{2\eta} \Big(\sum_{\alpha}|\del_{\alpha} u|^2 + c^2u^2\Big)dx.
$$
It is clear that $\|\omega^{\eta}\delt_\alpha u\|_{L^2(\Pcal_s)}$ is bounded by $\Ebf^{\Pcal}_{\eta,c}(s,u)$. 

Now it is the turn of $\Ebf_{\eta,c}^{\Tcal}(s,u)$. We have the following result which is parallel to Lemma 2.5 of \cite{M-2018}.
\begin{lemma}\label{lem1-26-10-2021}
Let $u$ be a sufficiently regular function defined in $\Tcal_{[s_0,s_1]}$. Then the following quantities:
\begin{equation}\label{eq6-26-10-2021}
|\delb_a u|,\quad  \zeta|\del_{\alpha}u|,\quad \big(s^{-1}\xi + (1-\xi)^{1/2}\big)|\del_{\alpha}u|,\quad \la r-t\ra t^{-1}|\del_{\alpha}u|,\quad |\delt_a u|
\end{equation}
are bounded by $(\ebf[u])^{1/2}$ in $\Tcal_{[s_0,s_1]}$. Here $\la \cdot\ra$ represents the Japaneses bracket, say,
$\la \rho\ra := (1+\rho^2)^{1/2}$.
\end{lemma}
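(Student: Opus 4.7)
The plan is to derive all five bounds from the factorized form $\ebf[u] = |\zeta\del_t u|^2 + \sum_a|\delb_a u|^2$ supplied by \eqref{eq2-07-10-2021}, after first establishing three elementary geometric facts in $\Tcal_{[s_0,s_1]}$: (i) $\zeta \gtrsim s^{-1}$, (ii) $1-\xi \lesssim \zeta^2$, and (iii) $|t-r|$ is bounded while $t\gtrsim s^2$. All three follow directly from $\rhoH(s) = (s^2-1)/2$, $r\in[\rhoH(s),\rhoH(s)+1]$, the explicit formula $\zeta^2 = (s^2+(1-\xi^2)r^2)/(s^2+r^2)$, and the $T$--bounds of Proposition \ref{prop1-07-10-2021}. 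The one small computation is that $(s^2+r^2)/r^2 = 1 + s^2/r^2$ is uniformly bounded in $\Tcal_s$ since $r\gtrsim s^2$; this is what feeds $1-\xi \leq 1-\xi^2 = \zeta^2(s^2+r^2)/r^2\cdot\zeta^{-2}\zeta^2 \lesssim \zeta^2$ and hence (ii).

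Once (i)--(iii) are in hand I would dispatch the bounds in \eqref{eq6-26-10-2021} in turn. The bound on $|\delb_au|$ is immediate from the factorization, and so is $\zeta|\del_t u|$. For $\zeta|\del_au|$ I would use the decomposition $\del_a = \delb_a - \tfrac{\xi x^a}{(s^2+r^2)^{1/2}}\del_t$, whose scalar coefficient is bounded by $1$, giving $\zeta|\del_au|\leq \zeta|\delb_au|+\zeta|\del_tu|\leq 2(\ebf[u])^{1/2}$. The bound on $(s^{-1}\xi+(1-\xi)^{1/2})|\del_\alpha u|$ then follows because (i) gives $s^{-1}\xi\lesssim\zeta$ and (ii) gives $(1-\xi)^{1/2}\lesssim\zeta$, so the whole coefficient is absorbed into $\zeta|\del_\alpha u|$. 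Similarly, (iii) yields $\la r-t\ra t^{-1}\lesssim s^{-2}\lesssim \zeta^2\leq \zeta$, so $\la r-t\ra t^{-1}|\del_\alpha u|$ reduces to $\zeta|\del_\alpha u|$.

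The only piece that is not pure algebraic reshuffling is $|\delt_au|$. Here I would exploit the factorization
$$
\Bigl(1-\tfrac{r\xi}{(s^2+r^2)^{1/2}}\Bigr)\Bigl(1+\tfrac{r\xi}{(s^2+r^2)^{1/2}}\Bigr) = \zeta^2,
$$
together with $\delt_au - \delb_au = \tfrac{x^a}{r}\bigl(1-\tfrac{r\xi}{(s^2+r^2)^{1/2}}\bigr)\del_tu$ and $|x^a|/r\leq 1$, to obtain $|\delt_au - \delb_au|\leq \zeta^2|\del_tu| = \zeta\cdot\zeta|\del_tu|\leq (\ebf[u])^{1/2}$, whence $|\delt_au|\leq 2(\ebf[u])^{1/2}$.

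I expect no real obstacle: the content of the lemma is the reduction of every canonical, semi-hyperboloidal and semi-null spatial derivative to the pair $(\delb_au,\zeta\del_tu)$ already controlled by $\ebf[u]$, and the estimates on $\zeta$, $\xi$ and $t-r$ in the transition region are essentially algebraic consequences of the explicit forms of $T(s,r)$ and $\rhoH(s)$. The mildly subtle point to flag is (ii), which is where the specific shape of $\xi$ in $\Tcal_s$ enters: it is vital that $1-\xi^2$ is not merely bounded but actually controlled by $\zeta^2$, otherwise the third quantity in \eqref{eq6-26-10-2021} could not be recovered from the energy density.
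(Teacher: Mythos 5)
Your argument is correct and follows essentially the same route as the paper: you reduce the first four quantities to the pair $(\delb_a u,\ \zeta\del_t u)$ controlled by the factorized energy density, using exactly the transition-region bounds $s^{-1}\lesssim\zeta$, $(1-\xi)^{1/2}\leq\zeta$ and $\la r-t\ra t^{-1}\lesssim \zeta^2$ that the paper records in \eqref{eq12-07-10-2021} and \eqref{eq6-23-01-2022}, and your treatment of $\delt_a u$ is precisely the paper's identity \eqref{eq10-11-03-2022-M} based on $\big(1-\delb_r t\big)\big(1+\delb_r t\big)=\zeta^2$ with a bounded prefactor. The only cosmetic remark is that your step (ii) follows more directly from $\zeta^2 = 1-\xi^2 r^2/(s^2+r^2)\geq 1-\xi^2\geq 1-\xi$, with no need for $r\gtrsim s^2$ there (and the displayed equality in your chain should be an inequality), but this does not affect the validity of the proof.
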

\begin{proof}
The first two are trivial. For the third one, we apply \eqref{eq12-07-10-2021}. The forth term is by \eqref{eq6-23-01-2022}.

For the last one, remark that
\begin{equation}\label{eq10-11-03-2022-M}
\aligned
\delt_a u =& \frac{x^a}{r}\del_t u + \del_au 
= \Big(\frac{\xi(s,r)x^a}{(s^2+r^2)^{1/2}}\del_tu + \del_a u\Big) 
+ \frac{x^a}{r}\Big(1-\frac{\xi(s,r)r}{(s^2+r^2)^{1/2}}\Big)\del_t u
\\
=& \delb_au + \frac{x^a}{r}\Big(1+\frac{\xi(s,r)r}{(s^2+r^2)^{1/2}}\Big)^{-1}\zeta^2 \del_t u.
\endaligned
\end{equation}
The first term on the right-hand side is bounded by the energy density. For the second term, we only need to remark that on $\Tcal_s$, 
$$
\Big|\frac{x^a}{r}\Big(1+\frac{\xi(s,r)r}{(s^2+r^2)^{1/2}}\Big)^{-1}\Big|\lesssim 1.
$$
\end{proof}

Now we conclude the above results by the following Proposition.
\begin{proposition}\label{prop1-26-10-2021}
	Let $u$ be a sufficiently regular function defined in $\Fcal_{[s_0,s_1]}$. Then for $s_0\leq s\leq s_1$, the following quantities:
	\begin{equation}\label{eq5-07-10-2021}
	\aligned
	&\|\omega^{\eta}\delb_au\|_{L^2(\Fcal_s)},\quad\|\omega^{\eta}\zeta\del_{\alpha}u\|_{L^2(\Fcal_s)},
	\\
	&\|\omega^{\eta}(s^{-1}\xi + (1-\xi)^{1/2})\del_\alpha u\|_{L^2(\Fcal_s)},\quad
	\|\la r+t\ra^{-1}\la r-t\ra\del_{\alpha} u\|_{L^2(\Fcal_s)},
	\quad
	\|\omega^{\eta}\delt_au\|_{L^2(\TPcal_s)}
	\endaligned
	\end{equation}
	are bounded by $\Ebf_{\eta,c}(s,u)^{1/2}$.
\end{proposition}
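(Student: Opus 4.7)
The plan is to decompose $\Fcal_s = \Hcal^*_s \cup \Tcal_s \cup \Pcal_s$ and verify each of the five inequalities by a pointwise bound on the integrand by (a constant multiple of) $\omega^{2\eta}\ebf_c[u]$ on each piece. Integrating against $dx$ over $\Fcal_s$ and recalling that $\Ebf_{\eta,c}(s,u)$ is the integral of $\omega^{2\eta}\ebf_c[u]$ in the form displayed in \eqref{eq2-07-10-2021} then yields the proposition.

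On $\Hcal^*_s$ one has $\xi \equiv 1$, $\zeta = s/t$, and $\omega \simeq 1$; the energy density reduces to the standard hyperboloidal form, which directly controls $\delb_a u = \delu_a u$, $\zeta\del_\alpha u = (s/t)\del_\alpha u$, and $cu$. For the weight $(s^{-1}\xi + (1-\xi)^{1/2}) = s^{-1}$ I use $r \leq \rhoH(s) = (s^2-1)/2$ together with $s \geq 2$ to get $t = \sqrt{s^2+r^2} \leq s^2$, hence $s^{-1} \leq s/t = \zeta$. For the last weight, the identity $(t-r)(t+r) = s^2$ on $\Hcal^*_s$ gives $|t-r|/(t+r) \leq s^2/t^2 = \zeta^2$, so $\la r-t\ra \la r+t\ra^{-1} \lesssim \zeta$.

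On $\Pcal_s$ one has $\xi \equiv 0$, $\zeta \equiv 1$, $\omega \geq 1$, so $\delb_a = \del_a$, $s^{-1}\xi + (1-\xi)^{1/2} = 1$, and $\la r-t\ra \la r+t\ra^{-1} \leq 1 \leq \omega^\eta$. Each term thus reduces to $|\del_\alpha u|$ (or $|cu|$), which is pointwise $\lesssim (\ebf_c[u])^{1/2}$, and the weight $\omega^\eta$ is already present in the energy density. On the thin strip $\Tcal_s$ the five pointwise bounds listed in Lemma \ref{lem1-26-10-2021} are exactly what is needed; the weight $\omega$ is comparable to a constant there so multiplying by $\omega^\eta$ is harmless.

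Finally, the bound on $\|\omega^\eta \delt_a u\|_{L^2(\TPcal_s)}$ is handled on $\Tcal_s$ by the last item of Lemma \ref{lem1-26-10-2021}, while on $\Pcal_s$ one simply notes $|\delt_a u| \leq |\del_t u| + |\del_a u| \lesssim (\ebf_c[u])^{1/2}$. Summing the region-wise $L^2$ inequalities yields the proposition. No step here is genuinely difficult: the content is essentially the pointwise analysis already performed in Lemma \ref{lem1-26-10-2021}, so the only real task is the careful region-by-region comparison of the weights $\zeta$, $s^{-1}\xi + (1-\xi)^{1/2}$, $\la r-t\ra \la r+t\ra^{-1}$ with the scales appearing in the energy density.
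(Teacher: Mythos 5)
Your proof is correct and follows essentially the same route as the paper: the statement is obtained by splitting $\Fcal_s$ into $\Hcal^*_s$, $\Tcal_s$, $\Pcal_s$, using the explicit form of the energy density (standard hyperboloidal energy with $\zeta=s/t$, $\omega\simeq 1$ on $\Hcal^*_s$; flat weighted energy on $\Pcal_s$) and Lemma \ref{lem1-26-10-2021} on $\Tcal_s$, then integrating the pointwise bounds. Your explicit checks of the weights $s^{-1}\leq \zeta$ and $\la r-t\ra\la r+t\ra^{-1}\lesssim \zeta$ on $\Hcal^*_s$ are exactly the small verifications the paper leaves implicit, so nothing is missing.
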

%\begin{proof}
%We only need to remark that in $\Hcal^*_{[s_0,\infty)}$, $\zeta = (s/t)$, $\delb_a = \delu_a$ and $\omega\lesssim 1$. Furthermore, in $\Hcal^*_{[s_0,\infty)}$ one has $s^{-1}\lesssim (s/t)$.
%
%On the other hand, in $\Pcal_{[s_0,\infty)}$, $\delb_a = \del_a$, $\zeta = 1$ and $\xi =0$. 
%
%In $\Tcal_{[s_0,\infty)}$, one recall that $\omega^\eta\leq C(\eta)$. Thus Lemma \ref{lem1-26-10-2021} guarantees the desired bounds.
%\end{proof}

%Then the energy estimate can be written in the following form.
%\begin{equation}\label{eq3-27-10-2021}
%\aligned
%&\Ebf_{\eta,c}(s_1,u)^{1/2} \leq \Ebf_{\eta,c}(s_0,u)^{1/2}
%+ \int_{s_0}^{s_1}s\big\|\omega^{\eta}\zeta  f\big\|_{L^2(\Fcal_s)}dx,
%\\
%&\Ebf_{\eta,c}^{\TPcal}(s_1,u)^{1/2} \leq \Ebf_{\eta,c}^{\TPcal}(s_0,u)^{1/2}
%+ \int_{s_0}^{s_1}s\big\|\omega^{\eta}\zeta  f\big\|_{L^2(\TPcal_s)}dx.
%\endaligned
%\end{equation}

\section{Bounds on high-order derivatives}\label{sec1-22-10-2021}
\subsection{Basic notations}
In practical we need to bound high-order derivatives of the solution with respect to $\{\del_{\alpha}, L_a,\Omega\}$. We recall the notation and results introduced in \cite{M-2020-strong} and \cite{LM-2022}. Let $Z$ be a $p$ order derivative composed by $\{\del_{\alpha}, L_a,\Omega\}$ which contains $k$ Lorentzian boosts and rotations. Then we denote by $\ord(Z) = p$ and $\rank(Z) = k$. 
Let $\mathcal{I}_{p,k}$ be the set composed by high-order derivatives composed by $\{\del_{\alpha},L_a,\Omega\}$ with order $\leq p$ and rank $\leq k$.
$$
\aligned
&|u|_{p,k} :=  \max_{Z\in\mathcal{I}_{p,k}}|Z u|,\quad 
&&|\del u|_{p,k} :=\max_{\alpha=0,1,2,3}|\del_{\alpha} u|_{p,k},\quad
&&|\del^m u|_{p,k}:= \max_{|I|= m}|\del^I u|_{p,k},\quad
\\
&|u|_p := \max_{0\leq k\leq p}|u|_{p,k},\quad
&&|\del u|_p :=  \max_{0\leq k\leq p}|\del u|_{p,k},\quad
&&|\del^m u|_{p,k}:= \max_{0\leq k\leq p}|\del^m u|_{p,k}.
\endaligned
$$
%$$
%\aligned
%&|u|_{p,k} =  \max_{Z\in\mathcal{I}_{p,k}}|Z u|,\quad 
%&&|u|_p = \max_{0\leq k\leq p}|u|_{p,k},
%\\
%&|\del u|_{p,k} =\max_{\alpha=0,1,2,3}|\del_{\alpha} u|_{p,k},\quad
%&&|\del u|_p :=  \max_{0\leq k\leq p}|\del u|_{p,k},
%\\
%&|\del^m u|_{p,k}:= \max_{|I|= m}|\del^I u|_{p,k},\quad
%&&|\del^m u|_{p,k}:= \max_{0\leq k\leq p}|\del^m u|_{p,k}.
%\endaligned
%$$ 
We also recall the notation
$$
\ebf_c^{p,k}[u]:= \sum_{Z\in\mathcal{I}_{p,k}}\ebf_c[Z u], \quad
\Ebf_{\eta,c}^{p,k}(s,u) := \sum_{Z\in \mathcal{I}_{p,k}}\Ebf_{\eta,c}(s,Z u) 
$$
for high-order energy densities and energies. Similarly, one also has
$$
\Ebf_{\eta,c}^{\Hcal,p,k}(s,u),\quad \Ebf_{\eta,c}^{\Tcal,p,k}(s,u),\quad \Ebf_{\eta,c}^{\Pcal,p,k}(s,u),\quad
\Ebf_{\eta,c}^{\TPcal,p,k}(s,u)
$$
for high-order energies restricted in the corresponding regions. 

The main task of this section is to bound the high-order derivatives of divers of quantities associated to the solution by the above energy densities with appropriate weights. This is done separately in different regions: the away-from-light-cone region $\TPcal^{\far}_{[s_0,\infty)}$, the near-light-cone region $\TPcal^{\near}_{[s_0,\infty)}$, and the hyperbolic region $\Hcal^*_{[s_0,\infty)}$. 
%In each region we concentrate on different quantities and they will be bounded differently. 

\subsection{Controlling high-order derivatives in $\TPcal^{\far}_{[s_0,\infty)}$}
This is the simplest region. We will establish the following results.
\begin{proposition}
Let $u$ be a sufficiently regular function defined in $\TPcal^{\far}_{[s_0,s_1]}$. Then:
\begin{equation}
|\del u|_{p,k}\leq C(p)\big(\ebf^{p,k}[u]\big)^{1/2}
\end{equation}	
where $C(p)$ is a constant determined by $p$.
\end{proposition}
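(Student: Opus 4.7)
The plan is to exploit that the ``far'' region collapses to a flat one. By Lemma \ref{lem1-07-10-2021} we have $\Tcal_s\subset\{r\leq t\}$, so the defining condition $r\geq 2t$ forces $\TPcal^{\far}_{[s_0,s_1]}\subset\Pcal_{[s_0,s_1]}$. On $\Pcal_s$ the cut-off $\xi(s,r)$ vanishes identically, and the energy density in \eqref{eq2-07-10-2021} reduces to the flat one, $\ebf[v] = |\del_t v|^2 + \sum_a |\del_a v|^2$. In particular, throughout $\TPcal^{\far}_{[s_0,s_1]}$ one has the pointwise bound $|\del_\alpha v|\leq (\ebf[v])^{1/2}$ for every $v$ and every index $\alpha$.

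It therefore suffices to show that, for every $Z\in\mathcal{I}_{p,k}$ and every index $\alpha$, the quantity $Z\del_\alpha u$ admits a finite decomposition $Z\del_\alpha u = \sum_i c_i\,\del_{\alpha_i}Z_i u$ with $Z_i\in\mathcal{I}_{p,k}$ and constants $c_i$ depending only on $p$. Granted this, applying the pointwise bound above term-by-term gives $|Z\del_\alpha u|\leq C(p)\max_{Z'\in\mathcal{I}_{p,k}}(\ebf[Z' u])^{1/2}\leq C(p)(\ebf^{p,k}[u])^{1/2}$, which is exactly the claimed estimate after taking the outer maxima over $Z$ and $\alpha$.

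The decomposition is a direct commutator calculation carried out by induction on $\ord(Z)$. The basic identities are $[\del_\alpha,\del_\beta]=0$, $[\del_a,L_b]=\delta_{ab}\del_t$, $[\del_t,L_b]=\del_b$, and $[\del_\alpha,\Omega]\in\{0,\pm\del_1,\pm\del_2\}$; each commutator is a constant-coefficient derivative. Writing $Z = W\cdot Y$ with $Y\in\{\del_\beta,L_b,\Omega\}$, one has $Y\del_\alpha = \del_\alpha Y + [Y,\del_\alpha]$, and therefore $Z\del_\alpha u = W\del_\alpha (Yu) + W\del_\beta u$ for some index $\beta$; since $W$ has order $\ord(Z)-1$ while both $Yu$ and $u$ are acted on only by elements of $\mathcal{I}_{p,k}$, the inductive hypothesis applies to each summand and closes the step without raising either order or rank. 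The only real obstacle is the bookkeeping of these commutators, which is standard; no geometric input beyond the inclusion $\TPcal^{\far}_{[s_0,s_1]}\subset\Pcal_{[s_0,s_1]}$ is required.
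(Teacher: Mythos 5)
Your proof is correct and follows essentially the same route as the paper: reduce $Z\del_\alpha u$ to a constant-coefficient combination of terms $\del_{\alpha'}Z'u$ with $Z'\in\mathcal{I}_{p,k}$, and then use that $\TPcal^{\far}_{[s_0,s_1]}\subset\Pcal_{[s_0,s_1]}$, where the energy density is the flat one, so each such term is bounded pointwise by $\big(\ebf^{p,k}[u]\big)^{1/2}$. The only difference is cosmetic: the paper gets the decomposition by invoking the ordering property of Proposition \ref{prop1-23-10-2021} (viewing $Z\del_\alpha$ as an operator of order $p+1$ and rank $k$ and peeling one translation off the front), whereas you re-derive the same decomposition by a direct commutator induction, and you make explicit the inclusion $\TPcal^{\far}\subset\Pcal$ that the paper leaves implicit.
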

\begin{proof}[Sketch of Proof]
This is a direct result of Proposition \ref{prop1-23-10-2021}. Remark that for all $Z$ satisfying $\ord(Z)\leq p$ and $\rank(Z)\leq k$, $Z\del_{\alpha}$ is a operator with order $p+1$ and rank $k$. Then
$$
Z\del_{\alpha} u 
= \sum_{1\leq |I|\leq p+1-k\atop |J|+l\leq k}\!\!\!\!\Gamma(Z)_{IJl}\del^IL^J\Omega^l u 
= \sum_{1\leq |I'|\leq p-k\atop |J|+l\leq k}\!\!\!\!\Gamma(Z)_{\alpha IJl}\del_{\alpha}\del^{I'}L^J\Omega^l u
$$
where $\Gamma(Z)_{\alpha IJl}$ are constants. Then regarding the expression of $\ebf^{p,k}[u]$, the desired result is established.
\end{proof}

\subsection{Controlling high-order derivatives in $\TPcal^{\near}_{[s_0,\infty)}$}
When near the light cone we need to do semi-null decomposition. So we need to concentrate on quantities associated to $\delt_a$. So we introduced the following notation in $\TPcal_{[s_0,\infty)}$:
$$
\aligned
&|\delt u|_{p,k} := \max_{a=1,2}|\delt_a u|_{p,k},\quad
&&|\del\delt u|_{p,k}:=\max_{\alpha = 0,1,2\atop b=1,2}\big\{|\del_{\alpha}\delt_bu|,|\delt_b\del_{\alpha}u|\big\},\quad
&&|\delt\delt u|_{p,k}:=\max_{a,b=1,2}|\delt_a\delt_bu|,\quad
\\
&|\delt u|_p := \max_{0\leq k\leq p}|\delu u|_{p,k},\quad
&&|\del\delt u|_{p,k}:=\max_{0\leq k\leq p}|\del\delt u|_{p,k},\quad
&&|\del\delt u|_{p,k}:=\max_{0\leq k\leq p}|\delt\delt u|_{p,k}.
\endaligned
$$
%$$
%\aligned
%&|\delt u|_{p,k} := \max_{a=1,2}|\delt_a u|_{p,k},\quad  &&|\delt u|_p := \max_{0\leq k\leq p}|\delu u|_{p,k},
%\\
%&|\del\delt u|_{p,k}:=\max_{\alpha = 0,1,2\atop b=1,2}\big\{|\del_{\alpha}\delt_bu|,|\delt_b\del_{\alpha}u|\big\},\quad
%&&|\del\delt u|_{p,k}:=\max_{0\leq k\leq p}|\del\delt u|_{p,k},
%\\
%&|\delt\delt u|_{p,k}:=\max_{a,b=1,2}|\delt_a\delt_bu|,\quad
%&&|\del\delt u|_{p,k}:=\max_{0\leq k\leq p}|\delt\delt u|_{p,k},
%\endaligned
%$$
%In the transition-Euclidean region we also need bounds parallel to Proposition \ref{prop1-20-10-2021}. 
The following result is the two-dimensional version of Section 4 of \cite{M-2018} in $\RR^{1+1}$ and Section 6 of \cite{LM-2022} in $\RR^{3+1}$. The proof is nearly the same to the Proof of Proposition 6.13 of \cite{LM-2022}. For the convenience of the reader we give a sketch in Appendix \ref{sec2-22-10-2021}.

\begin{proposition}\label{prop2-23-10-2021}
	Let $u$ be a sufficiently regular function defined in $\TPcal^{\near}_{[s_0,s1]}$. Then 
	\begin{subequations}
	\begin{equation}\label{eq2-26-10-2021}
	\zeta| \del u|_{p,k} + |\delt u|_{p,k}\leq C(p) \big(\ebf^{p,k}[u]\big)^{1/2},
	\end{equation}
%	\begin{equation}\label{eq3-26-10-2021}
%	\zeta|\del\delt u|_{p,k}\leq C(p) t^{-1}\la r-t\ra |\del\del u|_{p,k} + t^{-1}\big(\ebf^{p+1,k+1}[u]\big)^{1/2},
%	 {\text{\tt maybe not necessary}}
%	\end{equation}
%	\begin{equation}\label{eq4-26-10-2021}
%	|\delt\delt u|_{p,k}\leq C(p) t^{-2}\la r-t\ra^2 |\del\del u|_{p,k} + t^{-1}\big(\ebf^{p+1,k+1}[u]\big)^{1/2},
%	 {\text{\tt maybe not necessary}}
%	\end{equation}
	\begin{equation}\label{eq1-27-10-2021}
	c|u|_{p,k}\leq C(p)\big(\ebf_c^{p,k}[u]\big)^{1/2},\quad c|\del u|_{p,k}\leq C(p)\big(\ebf_c^{p+1,k}[u]\big)^{1/2},
	\end{equation}
%	\begin{equation}\label{eq2-27-10-2021}
%	 {c|\delt u|_{p,k}\leq C(p) \big(\ebf^{p,k}[u]\big)^{1/2} + C(p)t^{-1}\big(\ebf_c^{p+1,k+1}[u]\big)^{1/2},
%		\text{\tt May be not necessary}}
%	\end{equation}
	where $C(p)$ is a constant determined by $p$.
	\end{subequations}
\end{proposition}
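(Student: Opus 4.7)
The plan is to follow the inductive scheme of Proposition 6.13 of \cite{LM-2022} for $\RR^{1+3}$, adapted to $\RR^{1+2}$ where the only rotation is $\Omega = x^1\del_2 - x^2\del_1$. The base case $p=0$ of \eqref{eq2-26-10-2021} is exactly Lemma \ref{lem1-26-10-2021}, and the base case of \eqref{eq1-27-10-2021} is the trivial inequality $c^2 u^2 \leq \ebf_c[u]$. The induction on $p$ then propagates these pointwise bounds under the action of $Z \in \mathcal{I}_{p,k}$.

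The first step is to record the commutator identities
\begin{equation*}
[\del_\alpha, \delt_a] = A_\alpha^a(x)\,\del_t,\quad [\Omega, \delt_a] = B_a^b(x)\,\delt_b,\quad [L_b, \delt_a] = \tfrac{t-r}{r}\,D_{a}^{b}(x)\,\del_t - (x^b/r)\,\delt_a,
\end{equation*}
where the coefficients $A,B,D$ are smooth functions of $x^a/r$ whose iterated derivatives under $\{\del_\alpha, L_b, \Omega\}$ remain uniformly bounded on $\TPcal^{\near}_{[s_0,s_1]}$, thanks to $r\geq t-1\geq 1$, $t/r\lesssim 1$, and $|t-r|/r\lesssim 1$ in this region (Lemma \ref{lem1-07-10-2021} together with $r\leq 3t$). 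For $Z\in\mathcal{I}_{p,k}$ one writes $Z\delt_a u = \delt_a Zu + [Z,\delt_a]u$. The first term is bounded by $C(\ebf[Zu])^{1/2}\leq C(\ebf^{p,k}[u])^{1/2}$ via the base case, while $[Z,\delt_a]$ is expanded iteratively using Leibniz and the identities above into a finite combination of operators $Y\del_t + Y'\delt_b$ with $Y,Y'\in\mathcal{I}_{p-1,k}$ and bounded coefficients. The $\delt_b Y'u$ pieces close by the inductive hypothesis; the $\del_t Yu$ pieces are handled by the identity $\zeta^2\del_t v = (1+\xi r/\sqrt{s^2+r^2})\sum_a (x^a/r)(\delt_a v - \delb_a v)$ derived from \eqref{eq10-11-03-2022-M}, since the prefactor $(t-r)/r$ emerging from $[L_b,\delt_a]$ exactly balances the $\zeta^{-2}$ needed to solve for $\del_t Yu$: a direct computation gives $|t-r|/r\cdot\zeta^{-2}\lesssim 1$ throughout $\TPcal^{\near}$, using $\zeta^2\geq s^2/(s^2+r^2)\gtrsim s^{-2}$ together with $|t-r|\lesssim 1$ on $\Tcal_s$, and $\zeta = 1$ with $|(t-r)/r|\lesssim 1$ on $\Pcal_s\cap\{r\geq t-1\}$.

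The bound $\zeta|\del u|_{p,k}\leq C(\ebf^{p,k}[u])^{1/2}$ is obtained in parallel: $\zeta Z\del_\alpha u = \zeta\del_\alpha Zu + \zeta[Z,\del_\alpha]u$; the first term is controlled by the base case applied to $Zu$, and $[Z,\del_\alpha]$ is a standard rank-preserving combination of $\del_\beta Z'$ with $Z'\in\mathcal{I}_{p-1,k}$ to which the inductive hypothesis applies once multiplied by $\zeta$. For \eqref{eq1-27-10-2021}, $c|u|_{p,k}\leq C(\ebf_c^{p,k}[u])^{1/2}$ follows directly by summing $c^2|Zu|^2\leq \ebf_c[Zu]$ over $Z\in\mathcal{I}_{p,k}$, while $c|\del u|_{p,k}\leq C(\ebf_c^{p+1,k}[u])^{1/2}$ reduces to the observation that $Z\del_\alpha\in\mathcal{I}_{p+1,k}$ whenever $Z\in\mathcal{I}_{p,k}$.

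The principal obstacle I anticipate is ensuring that iterated commutators never produce an uncompensated negative power of $\zeta$, $s/t$, or $r^{-1}$; in particular the transition part $\Tcal_s$, where $\zeta$ may be as small as $O(s^{-1})$, requires care. I would isolate this in an auxiliary lemma asserting $|Z(x^a/r)| + |Z\zeta| + |Z\xi| \leq C(p)$ uniformly on $\TPcal^{\near}_{[s_0,s_1]}$ for all $Z\in\mathcal{I}_{p,k}$. The $\xi$ part uses the quantitative bound $|\chi'|+|\chi''|\lesssim \chi^{1/2}$ from \eqref{eq6-19-10-2021} as in \cite{M-2018}; the $x^a/r$ and $\zeta$ parts reduce, as in \cite{LM-2022}, to direct computation exploiting the scaling $r\sim t\sim s^2$ available on $\TPcal^{\near}$.
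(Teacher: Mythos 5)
Your overall strategy is the one the paper actually follows (its proof defers to Proposition 6.13 of \cite{LM-2022} and is sketched in Appendix \ref{sec2-22-10-2021}): reduce $Z\in\mathcal{I}_{p,k}$ to ordered operators (Proposition \ref{prop1-23-10-2021}), exploit the homogeneity of the coefficients $t^{-1}$, $x^a/r$ and the bound on $\del^IL^J(r-t)$ (Lemma \ref{lem1-31-05-2021}), and close with the $p=0$ energy-density Lemma \ref{lem1-26-10-2021} applied to $Zu$; the paper does this by rewriting $\delt_a u = t^{-1}L_au - (x^a/r)\tfrac{r-t}{t}\del_tu$ and proving the intermediate estimate \eqref{eq3-23-10-2021}, whereas you run the equivalent computation as a commutator induction. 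The treatment of $\zeta|\del u|_{p,k}$ and of \eqref{eq1-27-10-2021} is the same as the paper's.

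One step in your write-up needs tightening, because as stated it would fail. The coefficient in $[\del_b,\delt_a]=\del_b(x^a/r)\,\del_t$ is homogeneous of degree $-1$, i.e.\ of size $r^{-1}$, not a degree-zero ``smooth function of $x^a/r$''; likewise, Leibniz derivatives falling on the coefficient $\tfrac{t-r}{r}D_a^b$ produce prefactors of size $\la r-t\ra/r$ or $r^{-1}$. In your argument the $\del_t Yu$ pieces are absorbed only through the $(t-r)/r$ prefactor coming from $[L_b,\delt_a]$; if the remaining $\del_t$-pieces are treated as having merely bounded coefficients, they cannot be closed, since the energy density controls only $\zeta|\del_t Yu|$ and $\zeta^{-1}\simeq s\simeq r^{1/2}$ is unbounded on $\Tcal_s$. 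The repair is immediate and is exactly \eqref{eq6-23-01-2022}: $r^{-1}\le \frac{|r-t|+1}{r}\lesssim\zeta^2$ on $\TPcal_{[s_0,\infty)}$, so every $\del_t$-prefactor you generate is $\lesssim\zeta^2$ and the same balancing you invoke for $(t-r)/r$ (or, more simply, the fourth item of Lemma \ref{lem1-26-10-2021}) applies; this is precisely why the paper's estimate \eqref{eq3-23-10-2021} carries the single weight $\frac{|r-t|+1}{r}$ in front of $|\del_\alpha Zu|$. Finally, note that Lemma \ref{lem1-26-10-2021} is stated on $\Tcal_{[s_0,s_1]}$; on $\Pcal_s$ your base case holds trivially because $\zeta=1$ there, which is worth one sentence in the proof.
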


\subsection{Controlling high-order derivatives in $\Hcal^*_{[s_0,\infty)}$}
We introduce the following notation.
$$
\aligned
&|\delu u|_{p,k} := \max_{a=1,2}|\delu_a u|_{p,k},\quad
&&|\del\delu u|_{p,k}:=\max_{\alpha = 0,1,2\atop b=1,2}\big\{|\del_{\alpha}\delu_bu|,|\delu_b\del_{\alpha}u|\big\},\quad
&&|\delu\delu u|_{p,k}:=\max_{a,b=1,2}|\delu_a\delu_bu|,\quad
\\
&|\delu u|_p := \max_{0\leq k\leq p}|\delu u|_{p,k},\quad
&&|\del\delu u|_{p,k}:=\max_{0\leq k\leq p}|\del\delu u|_{p,k},\quad
&&|\delu\delu u|_{p,k}:=\max_{0\leq k\leq p}|\delu\delu u|_{p,k}.
\endaligned
$$
%$$
%\aligned
%&|\delu u|_{p,k} := \max_{a=1,2}|\delu_a u|_{p,k},\quad &&|\delu u|_p := \max_{0\leq k\leq p}|\delu u|_{p,k},
%\\
%&|\del\delu u|_{p,k}:=\max_{\alpha = 0,1,2\atop b=1,2}\big\{|\del_{\alpha}\delu_bu|,|\delu_b\del_{\alpha}u|\big\},\quad
%&&|\del\delu u|_{p,k}:=\max_{0\leq k\leq p}|\del\delu u|_{p,k},
%\\
%&|\delu\delu u|_{p,k}:=\max_{a,b=1,2}|\delu_a\delu_bu|,\quad
%&&|\delu\delu u|_{p,k}:=\max_{0\leq k\leq p}|\delu\delu u|_{p,k}.
%\endaligned
%$$
Then we state the following estimates.
\begin{proposition}\label{prop1-20-10-2021}
	Let $u$ be a sufficiently regular function defined in $\Hcal^*_{[s_0,s_1]}$. Then
\begin{subequations}
\begin{equation}\label{eq5-21-01-2022}
|\delu u|_{p,k} + |(s/t)\del u|_{p,k}\leq C(p) \big(\ebf^{p,k}[u]\big)^{1/2},
\end{equation}
\begin{equation}
t|\del\delu u|_{p,k} + (s/t^2)|\delu\delu u|_{p,k} \leq C(p) \big(\ebf^{p+1,k+1}[u]\big)^{1/2},
\end{equation}
\end{subequations}
\begin{subequations}
\begin{equation}
c|u|_{p,k} \leq C(p) \big(\ebf_c^{p,k}[u]\big)^{1/2},\quad c|\del u|_{p,k}\leq C(p) \big(\ebf_c^{p+1,k}[u]\big)^{1/2} ,
\end{equation}
\begin{equation}
ct|\delu u|_{p,k}\leq C(p) \big(\ebf_c^{p+1,k+1}[u]\big)^{1/2},\quad ct^2|\delu\delu u|_{p,k}\leq C(p) \big(\ebf_c^{p+2,k+2}[u]\big)^{1/2}.
\end{equation}
\end{subequations}
Here $C(p)$ is a constant determined by $p$.
\end{proposition}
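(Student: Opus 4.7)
I propose an induction on the order $p$ (and rank $k$), built around the identity
\[
\delu_a = t^{-1}L_a,
\]
valid in the future cone and in particular throughout $\Hcal^*_{[s_0,\infty)}$. This is the engine of the argument: it converts a tangential derivative $\delu_a$ into a boost $L_a$ at the cost of a factor $1/t$ and an increase of one in both order and rank, which matches the bookkeeping of the energy $\ebf^{p,k}$.

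\textbf{Base case ($p=k=0$).} In $\Hcal^*_s$ the weight $\zeta$ reduces to $s/t$, so the energy density takes the manifest form
\[
\ebf_c[u] \simeq |(s/t)\del_tu|^2 + \sum_a|\delu_au|^2 + c^2u^2.
\]
From this, $|\delu u| \leq (\ebf[u])^{1/2}$ and $c|u| \leq (\ebf_c[u])^{1/2}$ are immediate, and the weighted bound on $(s/t)|\del u|$ follows from the decomposition $\del_au = \delu_au - (x^a/t)\del_tu$ combined with $|x^a/t|\le 1$ in $\Hcal^*_s$. For the second-tier inequalities, I start from the algebraic identities obtained by expanding $\delu_b = t^{-1}L_b$ with the Leibniz rule:
\[
t\,\del_\alpha\delu_b u = \del_\alpha(L_bu) - \delta^0_\alpha\,\delu_bu,
\qquad
t^2\,\delu_a\delu_b u = L_aL_bu - (x^a/t)\,L_bu.
\]
These reduce the task to bounding $\del_\alpha(L_bu)$ and $L_aL_bu$; applying the first-tier estimates with $u$ replaced by $L_bu$ (or $L_aL_bu$) and using $\ebf[L_bu]\le \ebf^{1,1}[u]$, $\ebf[L_aL_bu]\le \ebf^{2,2}[u]$ delivers the claimed bounds.

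\textbf{Inductive step.} For $Z\in\mathcal{I}_{p,k}$ I commute $Z$ past the outer $\delu$, $\del\delu$ or $\delu\delu$ using the standard commutator relations
\[
[\del_\alpha,L_a]\in\{0,\pm\del_\beta\},\quad [L_a,L_b]=\pm\Omega,\quad [\Omega,L_a]\in\{\pm L_b\},\quad [\del_\alpha,\Omega]\in\{0,\pm\del_\beta\},
\]
each of which strictly decreases the order while preserving the structure. The principal piece has the outer $\delu$, $\del\delu$ or $\delu\delu$ acting on $Zu$, to which the base case applies; the remainders are of order strictly less than $p$ (and rank $\le k$), and so are absorbed by the inductive hypothesis.

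\textbf{Mass-dependent bounds.} The same induction applies, with the additional ingredient $c|v|_{0,0}\le (\ebf_c[v])^{1/2}$. At the base level this gives $ct|\delu_au|=c|L_au|\le (\ebf_c^{1,1}[u])^{1/2}$ directly, and iterating on $L_aL_b$ via $t^2\delu_a\delu_b u = L_aL_bu - (x^a/t)L_bu$ yields $ct^2|\delu\delu u|_{0,0}\le C(\ebf_c^{2,2}[u])^{1/2}$. The high-order versions then combine these base cases with the commutator induction above.

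\textbf{Main obstacle.} The lengthiest and most delicate step is the commutator bookkeeping: at every level one must verify that the order-rank accounting matches what $\ebf^{p,k}$ provides, that the $1/t$ factors produced by $\delu_a=t^{-1}L_a$ are exactly absorbed by the weights on the left-hand side, and that the constants depend only on $p$. This algebra was carried out in detail in Section~6 of \cite{LM-2022} in the $\RR^{3+1}$ setting, and the present $\RR^{2+1}$ version is analogous; I expect no new phenomenon beyond careful case-splitting over the indices of the commutators.
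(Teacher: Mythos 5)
Your overall strategy --- exploiting $\delu_a=t^{-1}L_a$ together with the commutator algebra of $\{\del_\alpha,L_a,\Omega\}$ and an induction on order and rank --- is exactly the route of the references the paper cites for this proposition (the paper itself gives no proof, only the citation to \cite{LM1} and \cite{M-2020-strong}), and it does establish the first display and both mass-weighted displays: your base cases are correct (e.g. $ct|\delu_a u|=c|L_au|\le(\ebf_c^{1,1}[u])^{1/2}$ and $ct^2|\delu_a\delu_b u|\le c|L_aL_bu|+c|L_bu|\le C(\ebf_c^{2,2}[u])^{1/2}$), and the commutator bookkeeping is standard.

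The gap is in the second (non-mass) display. From $t\,\del_\alpha\delu_bu=\del_\alpha(L_bu)-\delta_\alpha^0\delu_bu$, ``applying the first-tier estimate with $u$ replaced by $L_bu$'' only controls $(s/t)|\del_\alpha(L_bu)|$ by $(\ebf^{1,1}[u])^{1/2}$; it does not control $|\del_\alpha(L_bu)|$ itself. What your identities therefore deliver is $s|\del\delu u|_{p,k}\le C(p)\big(\ebf^{p+1,k+1}[u]\big)^{1/2}$, not the stated $t|\del\delu u|_{p,k}$, and this loss cannot be repaired, because the $t$-weighted inequality fails pointwise for general functions. Take $u=h(t-x^1)$ and evaluate at the point $(t,\,t-1,\,0)\in\Hcal^*_{[s_0,s_1]}$ with $h'(1)=0$ and $|h''(1)|=M$: then $t|\delu_1\del_t u|=M$, while every term of $\ebf^{1,1}[u]$ at that point (good derivatives and $(s/t)$-weighted ones of $u$, $\del u$, $L_au$, $\Omega u$) is of size $O(t^{-1}M^2)$, so no constant $C(p)$ works for $t$ large. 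In other words, the statement you are asked to prove is itself misstated there (the provable versions are the $s$-weighted one above, or a bound with $|\del u|_{p+1,k+1}$ on the right); your argument proves those but cannot prove the line as written.

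A second, smaller inaccuracy concerns $t^2\delu_a\delu_bu=L_aL_bu-(x^a/t)L_bu$: you cannot bound $|L_aL_bu|$ by ``the first-tier estimate applied to $L_aL_bu$'' together with $\ebf[L_aL_bu]\le\ebf^{2,2}[u]$, since energy densities control derivatives of $L_aL_bu$, not $L_aL_bu$ itself (unless multiplied by the mass $c$). The correct step is $|L_aL_bu|=t|\delu_a(L_bu)|\le t\big(\ebf^{1,1}[u]\big)^{1/2}$, which yields the stronger bound $t|\delu\delu u|_{p,k}\le C(p)\big(\ebf^{p+1,k+1}[u]\big)^{1/2}$ and hence, a fortiori, the stated $(s/t^2)$-weighted bound; with the mass, the direct bound $c|L_aL_bu|\le(\ebf_c^{2,2}[u])^{1/2}$ gives $ct^2|\delu\delu u|_{p,k}$ as you correctly state.
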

These are also established in \cite{LM1}. The present version is stated in \cite{M-2020-strong} ((2.21) -- (2.24)) and briefly proved in Appendix B therein. 

%In the following discussion, we occasionally use the notion 
%$$
%\dels := 
%\left\{
%\aligned
%&\delu,\quad &&\text{in}\quad \Hcal^*_{[s_0,\infty)},
%\\
%&\delt,\quad &&\text{in}\quad \TPcal_{[s_0,\infty)}.
%\endaligned
%\right.
%$$
%for the notion of ``good derivatives''. The notation such as $|\del\dels u|_{p,k}$ are defined in the same manner.
%
%\subsection{Null multi-linear forms}
% {\tt Here added a subsection which give the control on null forms.}
%We recall the classical null condition ...

\section{Global Sobolev inequalities and decay estimates}
\label{sec1-10-04-2022-M}
This section is devoted to the global Sobolev decay on $\Fcal_s$. Due to the fact that $\Hcal^*_s$ and $\TPcal_s$ have very different nature, we need to distinguish between the hyperbolic domain and the transition-Euclidean domain. However, in this case we need to restrict a function originally defined on $\Fcal_s$ into $\Hcal^*_s$ and $\TPcal_s$. These restrictions can not be approximated by compactly supported smooth functions defined in $\Hcal^*_s$ or $\TPcal_s$. In \cite{LM-2022} we have established the Sobolev inequalities on the positive cone $\RR^3_+ = \{(x^1,x^2,x^3)| x^a\geq 0\}$. In the present 2-D case, we preform the same strategy and obtain the following results.

\subsection{Global Sobolev inequalities}
We recall the following two results.
\begin{proposition}\label{prop1-21-10-2021}
	Let $u$ be a $C^2$ function defined in $\Hcal^*_{[s_0,s_1]}$, sufficiently regular. Then 
	\begin{equation}\label{eq4-17-10-2021}
	|u(t,x)|\lesssim t^{-1}\sum_{|J|\leq 2}\|L^J u\|_{L^2(\Hcal_s^*)}.
	\end{equation}
\end{proposition}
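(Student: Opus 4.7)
The plan is to prove this as a two-dimensional Klainerman--Sobolev inequality on hyperboloids by pulling $u$ back to $\RR^2$ via the flat parametrization $\psi(x):=(\sqrt{s^2+|x|^2},x)$ of $\Hcal_s$ and applying the scaling form of the standard $\RR^2$ Sobolev embedding $H^2\hookrightarrow L^{\infty}$. The key algebraic observation is that, setting $v(x):=u(\psi(x))$, one has $\del_{x^a}v=\delu_a u = L_au/t$, so spatial derivatives of $v$ correspond to Lorentzian boosts applied to $u$ and weighted by $t^{-1}$. This is precisely what will produce the $t^{-1}$ factor in the claim.

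\textbf{Main steps.}
Fix $(t_0,x_0)\in\Hcal_s^*$ and a radius $R>0$ with $B(x_0,R)\subset\{|x|\leq \rhoH(s)\}$. The scaled Sobolev inequality in $\RR^2$ gives
$$
|u(t_0,x_0)|^2 = |v(x_0)|^2 \lesssim R^{-2}\|v\|_{L^2(B(x_0,R))}^2 + \|\nabla v\|_{L^2(B(x_0,R))}^2 + R^2\|\nabla^2 v\|_{L^2(B(x_0,R))}^2.
$$
Choose $R=c\,t_0$ for a suitable constant $c>0$; on the resulting ball, $t\simeq t_0$. Using $\nabla v=\delu u=Lu/t$ and the commutator identity $\delu_a\delu_b = t^{-1}\delu_a L_b - (x^a/t^2)\delu_b$, a direct computation yields $|\nabla^2 v|\lesssim t^{-2}(|L^2u|+|Lu|)$. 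Substituting $R=c\,t_0$ and $t\simeq t_0$ on $B(x_0,ct_0)$, the three terms collapse to the same $t_0^{-2}$ scaling:
$$
|u(t_0,x_0)|^2 \lesssim t_0^{-2}\Bigl(\|u\|_{L^2(B(x_0,ct_0))}^2 + \|Lu\|_{L^2(B(x_0,ct_0))}^2 + \|L^2u\|_{L^2(B(x_0,ct_0))}^2\Bigr) \leq t_0^{-2}\sum_{|J|\leq 2}\|L^Ju\|_{L^2(\Hcal_s^*)}^2.
$$
Taking square roots yields the desired pointwise bound.

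\textbf{Main obstacle.}
The hypothesis $B(x_0,ct_0)\subset\{|x|\leq\rhoH(s)\}$ is the delicate point: it holds when $(t_0,x_0)$ is well inside $\Hcal_s^*$, but fails for points close to the boundary $\{|x|=\rhoH(s)\}$, since there $t_0$ can be as large as $(s^2+1)/2$. The remedy I would use is a Lorentz-boost transport: pick $\Lambda$ in the identity component of the Lorentz group with $\Lambda(s,\mathbf{0})=(t_0,x_0)$, which is an isometry of $(\Hcal_s,(s/t)dx)$, use it to translate the evaluation point to the apex where the scaled ball trivially fits inside $\Hcal_s^*$, and then track the pushforward of the $L_a$-frame together with the Jacobian of $\Lambda\circ\psi$ (which is $t(x)/t(y)$ by the isometry property) to recover the same $t_0^{-1}$ scaling. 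An alternative is to insert a smooth cutoff localizing to the admissible sub-ball of $\Hcal_s^*$, so that the lower order commutator terms are absorbed by the same $L^J u$-norms after integration by parts. Either way, matching the boundary case with the interior estimate is the technical heart of the proposition.
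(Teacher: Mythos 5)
Your interior computation is sound and is essentially the paper's own idea: pull $u$ back by the flat parametrization of $\Hcal_s$, use $\del_a v=\delu_a u=t^{-1}L_a u$ (and your identity $\delu_a\delu_b u=t^{-1}\delu_aL_bu-(x^a/t^2)\delu_bu$), and apply a Sobolev embedding scaled to a region of diameter $\simeq t_0$, which is what produces the factor $t^{-1}$. The gap is precisely at the point you flag, and neither of your two remedies repairs it. The Lorentz-boost transport fails because $\Hcal^*_s$ is not the full hyperboloid but the truncated piece $\{r\le \rhoH(s)\}$ of it, and this truncation is not boost-invariant: $u$ is only assumed defined on $\Hcal^*_{[s_0,s_1]}$ and the right-hand side only integrates over $\Hcal^*_s$, so after boosting $(t_0,x_0)$ to the apex the domain of $u\circ\Lambda$ is $\Lambda^{-1}(\Hcal^*_s)$, and when $r_0$ is at or near $\rhoH(s)$ roughly half of any apex ball of radius $\simeq s$ lies outside this set. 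The ball does not ``trivially fit''; the boost merely transports the boundary along with the point. (A secondary issue: conjugating $L_a$ by a boost produces rotation components, which are absent from the right-hand side; this is repairable via $\Omega=t^{-1}(x^1L_2-x^2L_1)$ in $\Hcal^*$, but it is an extra step you would have to make.) The cutoff remedy fails quantitatively: a cutoff equal to $1$ at $x_0$ and supported in $\{|x|\le\rhoH(s)\}$ must have gradient of size comparable to $d^{-1}$, where $d$ is the distance from $x_0$ to the boundary, and $d$ can be arbitrarily small (even zero); the constant you get is then $d^{-1}$, not $t_0^{-1}$, so the uniform bound near the boundary is lost.

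The missing ingredient is a Sobolev/Morrey-type inequality on a one-sided (corner) domain with the evaluation point at the corner, so that the neighborhood can be chosen pointing \emph{inward} and therefore stays inside $\Hcal^*_s$ even when $x_0$ lies on its boundary. This is exactly how the paper proceeds: it first proves $|u(0)|\le C(\rho)\sum_{|I|\le 2}\|\del^I u\|_{L^2(C_{0,\rho})}$ on the positive quadrant $C_{0,\rho}$ (Proposition \ref{prop1-16-10-2021}), and then, after the same pull-back, rescales this quadrant by $r_0$ when $r_0\ge s/\sqrt{3}$ (point near the boundary: the quadrant based at $x_0=2^{-1/2}(-r_0,-r_0)$ points toward the origin, hence remains in $\{|x|\le\rhoH(s)\}$, and there $t_0\simeq r_0$) and by $s$ when $r_0\le s/\sqrt{3}$ (there $t_0\simeq s$). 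In both cases the scaling length is comparable to $t_0$, giving the uniform $t^{-1}$. If you replace your ball-based step by such a corner Sobolev inequality (or an equivalent argument on inward cones, uniform in the geometry), your proof closes; as written, the boundary case --- which you rightly call the heart of the proposition --- is not established.
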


\begin{proposition}\label{prop2-21-10-2021}
	Let $u$ be a $C^2$ function defined in $\TPcal_{[s_0,s_1]}$ and vanishes sufficiently fast at the spatial infinity. Then for $\eta\geq 0$, and $(t,x)\in\TPcal_s$,
	\begin{equation}\label{eq5-17-10-2021}
	\omega^\eta(t,r)\la r\ra^{1/2} |u(t,x)|
	\lesssim (1+\eta)\sum_{i+j\leq 2}\|\omega^{\eta}\delb_r^i\Omega^j u\|_{L^2(\TPcal_s)},
	\end{equation}	
	here $\delb_r : = (x^a/r)\delb_a = \frac{\xi(s,r)r}{(s^2+r^2)^{1/2}}\del_t + (x^a/r)\del_a$. For $(t,x)\in\Pcal_s$,
\begin{equation}\label{eq5-22-01-2022}
\omega^\eta(t,r)\la r\ra^{1/2} |u(t,x)|
\lesssim (1+\eta)\sum_{i+j\leq 2}\|\omega^{\eta}\del_r^i\Omega^j u\|_{L^2(\Pcal_s)}.
\end{equation}

\end{proposition}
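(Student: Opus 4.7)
The plan is to adapt the classical polar-coordinate 2D Sobolev inequality to the slice $\TPcal_s$. I would parametrize $\TPcal_s$ as $[\rho^H(s),\infty)\times\mathbb{S}^1$ via the map $(\rho,\theta)\mapsto (T(s,\rho),\rho\cos\theta,\rho\sin\theta)$. Under this parametrization, the intrinsic radial derivative at fixed $\theta$ equals $\delb_r=\frac{\xi(s,\rho)\rho}{\sqrt{s^2+\rho^2}}\del_t+(x^a/\rho)\del_a$, while $\Omega$ is the angular derivative $\del_\theta$. The area element of $\Fcal_s$ is $\sqrt{1+|\del_r T|^2}\,\rho\,d\rho\,d\theta$, uniformly equivalent to $\rho\,d\rho\,d\theta$ thanks to \eqref{eq11-07-10-2021}. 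In particular $L^2(\TPcal_s)$ is equivalent to the weighted $L^2$ with measure $\rho\,d\rho\,d\theta$.

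Fix $(r_0,\theta_0)$. Since $u$ vanishes sufficiently at spatial infinity, $\rho|u(\rho,\theta_0)|^2\to 0$, and integration by parts in $\rho$ gives
$$
r_0\,|u(r_0,\theta_0)|^2 \leq 2\int_{r_0}^\infty \rho\,|u(\rho,\theta_0)|\,|\delb_r u(\rho,\theta_0)|\,d\rho + \int_{r_0}^\infty |u(\rho,\theta_0)|^2\,d\rho,
$$
where the last term is controlled by the first after Cauchy--Schwarz and absorption (using $\rho\geq \rho^H(s)\geq 3/2$). Combining with $2ab\leq a^2+b^2$, I obtain a bound by $\int_{r_0}^\infty\rho(|u|^2+|\delb_r u|^2)(\rho,\theta_0)\,d\rho$. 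I then apply the 1D Sobolev embedding $|f(\theta_0)|^2\lesssim\int_{\mathbb{S}^1}(|f|^2+|\Omega f|^2)d\theta$ to each of $f=u(\rho,\cdot)$ and $f=\delb_r u(\rho,\cdot)$, yielding
$$
r_0\,|u(r_0,\theta_0)|^2 \lesssim \sum_{i+j\leq 2}\int_{r_0}^\infty\!\!\int_{\mathbb{S}^1}\rho\,|\delb_r^{\,i}\Omega^j u|^2\,d\theta\,d\rho.
$$

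The weight $\omega^\eta$ is inserted by monotonicity: along $\Fcal_s$ one has $\del_\rho T(s,\rho)=\xi(s,\rho)\rho/\sqrt{s^2+\rho^2}<1$ on $\Tcal_s$ and $T(s,\rho)$ constant on $\Pcal_s$, so $\rho-T(s,\rho)$ is non-decreasing in $\rho$ and hence $\omega(T(s,\rho),\rho)\geq \omega(T(s,r_0),r_0)$ for $\rho\geq r_0$. Multiplying the last display by $\omega^{2\eta}(r_0)$ and moving it inside the integral against the larger $\omega^{2\eta}(\rho)$ gives \eqref{eq5-17-10-2021} after using $\la r_0\ra\sim r_0$. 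For a point of $\Pcal_s$, $\xi(s,\rho)\equiv 0$ so $\delb_r=\del_r$ on $\Pcal_s$, and the radial ray $\{\rho\geq r_0\}$ stays entirely inside $\Pcal_s$; repeating the same derivation (with $\del_r$ replacing $\delb_r$) produces \eqref{eq5-22-01-2022}. The only real point demanding care is keeping track of the $(1+\eta)$ prefactor: the cleanest way is to work directly with $v=\omega^\eta u$ and use $|\delb_r\omega|\leq 2$ (from \eqref{eq2-05-10-2021} and the bound on $\aleph^\prime$), so $\delb_r v=\omega^\eta\delb_r u+O(\eta)\omega^{\eta-1}u$, and an elementary absorption on the low-order term yields the linear dependence on $1+\eta$ in the final constant.
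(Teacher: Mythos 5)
Your argument is correct, but it is a genuinely different route from the paper's. The paper proves \eqref{eq5-17-10-2021} \emph{locally}: it introduces the coordinates $(\rho,\theta)\mapsto u\big(T(s,|x|),x\big)$ with $x=\big((r_0+\rho)\cos(\theta+\pi/4),(r_0+\rho)\sin(\theta+\pi/4)\big)$ on the small square $[0,\pi/4]^2$, notes $\del_\rho w_s=\delb_r u$, $\del_\theta w_s=\Omega u$, and then invokes a Sobolev inequality at the corner of a positive cone (Proposition \ref{prop1-16-10-2021}), so that only the bounded sector $r_0\leq r\leq r_0+\pi/4$ enters; the weight is then inserted by applying the unweighted bound to $\omega^\eta u$ together with $|\delb_r\omega^\eta|\leq\eta\omega^{\eta-1}\zeta\leq\eta\omega^\eta$, which is where the $(1+\eta)$ comes from. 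You instead integrate $\del_\rho(\rho u^2)$ radially out to spatial infinity (this is where you genuinely need the decay hypothesis, which the paper's local argument never uses), absorb $\int|u|^2d\rho$ using $\rho\geq\rho^{\Hcal}(s)\geq 3/2$, and then apply the one-dimensional Sobolev embedding on $\mathbb{S}^1$; the commutation $\del_\rho\del_\theta=\del_\theta\del_\rho$ on the slice makes $\Omega\delb_r u=\delb_r\Omega u$, so your right-hand side involves only $\delb_r^i\Omega^j u$ with $i,j\leq 1$, a slightly smaller family than $i+j\leq 2$. Your treatment of the weight by monotonicity of $\rho-T(s,\rho)$ along the slice (valid since $\del_\rho T=\xi\rho/\sqrt{s^2+\rho^2}\leq 1$ and $\aleph'\geq 0$) is neat and in fact gives a constant independent of $\eta$, strictly better than the stated $(1+\eta)$; your fallback bookkeeping via $v=\omega^\eta u$ reproduces the paper's mechanism. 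Two small remarks: the paper's $L^2(\Fcal_s)$ norm is defined with the flat measure $dx$, so your aside about the induced surface measure is unnecessary (though harmless, as the two are uniformly equivalent on $\TPcal_s$); and the chief advantage of the paper's corner-Sobolev route is that it is purely local near the inner boundary $r=\rho^{\Hcal}(s)$ and requires no behaviour at infinity, whereas yours is more elementary and avoids Proposition \ref{prop1-16-10-2021} altogether at the price of using the decay assumption.
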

These are parallel cases of Lemma 3.2, Lemma 3.3 and Proposition 3.4 in \cite{M-2018} in $\RR^{1+1}$. Their proof are similar to the case in $\RR^{1+3}$, which are detailed in \cite{LM-2022} Section 4. For the convenience of reader,  we give a sketch in Appendix \ref{sec1-23-10-2021}.

\subsection{Sobolev decay estimates I}

In the following application, we need to bound the right-hand side of \eqref{eq4-17-10-2021} and \eqref{eq5-17-10-2021} by energies defined in the corresponding region. 
\begin{proposition}\label{prop1-21-01-2022}
Let $u$ be a function defined in $\Hcal^*_{[s_0,s_1]}$, sufficiently regular. Then
\begin{equation}\label{eq2-21-01-2022}
s|\del_{\alpha}u|_{p,k} + t|\delu_a u|_{p,k}\leq C(p) \Ebf^{\Hcal,p+2,k+2}(s,u)^{1/2},
\end{equation}
\begin{equation}\label{eq4-21-01-2022}
t|cu|_{p,k}\leq C(p) \Ebf_{c}^{\Hcal, p+2,k+2}(s,u)^{1/2}.
\end{equation}
\end{proposition}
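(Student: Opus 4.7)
The strategy is to combine the hyperboloidal Sobolev embedding of Proposition \ref{prop1-21-10-2021} with the pointwise energy-density bounds from Proposition \ref{prop1-20-10-2021} and the integrated version in Proposition \ref{prop1-26-10-2021}. The essential structural input is that $s$ is constant on each slice $\Hcal^*_s$ while $(s/t) = \sqrt{1-r^2/t^2}$ is a smooth bounded function whose Lorentz-boost derivatives can be computed explicitly and uniformly controlled by $(s/t)$ itself.

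For the estimate on $s|\del_\alpha u|_{p,k}$, I fix $Z\in\mathcal{I}_{p,k}$ and apply Proposition \ref{prop1-21-10-2021} to the auxiliary function $v := (s/t)\del_\alpha Zu$ on $\Hcal^*_s$. Since $s$ is constant along the slice, the quantity $t|v(t,x)|$ on the left-hand side evaluates to exactly $s|\del_\alpha Zu(t,x)|$. The right-hand side is then expanded by Leibniz: $L^J[(s/t)\del_\alpha Zu] = \sum_{J_1+J_2=J} L^{J_1}(s/t)\cdot L^{J_2}\del_\alpha Zu$. A short explicit calculation (for instance $L_a(s/t) = -sx^a/t^2$ and $L_bL_a(s/t) = 2sx^ax^b/t^3 - s\delta_{ab}/t$) shows $|L^{J_1}(s/t)|\lesssim s/t$ uniformly on $\Hcal^*_s$ for $|J_1|\leq 2$. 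Standard commutator identities for $[L_a,\del_\beta]$ then rewrite $L^{J_2}\del_\alpha Z$ (modulo lower-order terms of the same type) as a linear combination of operators of the form $\del_\beta\tilde Z$ with $\tilde Z\in\mathcal{I}_{p+2,k+2}$, and the resulting $L^2$-norms $\|(s/t)\del_\beta\tilde Zu\|_{L^2(\Hcal^*_s)}$ are controlled by $\Ebf^{\Hcal,p+2,k+2}(s,u)^{1/2}$ via Proposition \ref{prop1-26-10-2021}, since $\ebf[\tilde Zu]\simeq |(s/t)\del\tilde Zu|^2 + \sum_a|\delu_a\tilde Zu|^2$ on $\Hcal^*_s$.

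The bound on $t|\delu_a u|_{p,k}$ proceeds analogously by applying Proposition \ref{prop1-21-10-2021} directly to $\delu_a Zu$. Here the key input is the commutator $[L_a,\delu_b] = -(x^b/t)\delu_a$, whose coefficient is uniformly bounded on $\Hcal^*_s$. Iterating this identity rewrites $L^J\delu_a$ as a sum $\sum f_I\,\delu_c L^{J'}$ with $|J'|\leq |J|$ and each $f_I$ bounded, so that the task reduces to controlling $\sum\|\delu_c\tilde Zu\|_{L^2(\Hcal^*_s)}$ with $\tilde Z\in\mathcal{I}_{p+2,k+2}$, which is again bounded by $\Ebf^{\Hcal,p+2,k+2}(s,u)^{1/2}$. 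The second inequality $t|cu|_{p,k}\leq C(p)\Ebf_c^{\Hcal,p+2,k+2}(s,u)^{1/2}$ is essentially immediate: applying Proposition \ref{prop1-21-10-2021} to $cZu$ and using that $c$ is a constant gives $t|cZu(t,x)|\lesssim \sum_{|J|\leq 2}\|cL^JZu\|_{L^2(\Hcal^*_s)}$, and each such norm is controlled by $\Ebf_c^{\Hcal}(s,L^JZu)^{1/2}\leq \Ebf_c^{\Hcal,p+2,k+2}(s,u)^{1/2}$.

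The principal obstacle I anticipate is purely one of bookkeeping: verifying the stability of the weight $(s/t)$ under iterated boosts (so that the $|J|$-independent bound $|L^{J_1}(s/t)|\lesssim s/t$ genuinely holds on the entire slice, including near the boundary $r=\rhoH(s)$ where $s/t$ is smallest), and ensuring the commutator calculus for $[L,\del]$ and $[L,\delu]$ produces operators whose order and rank respect the exact budget $(p+2,k+2)$ on the right. All constants depend only on $p$ because only finitely many boost-commutators need to be expanded for $|J|\leq 2$.
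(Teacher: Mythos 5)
Your proposal is correct and follows essentially the same route as the paper: apply the hyperboloidal Sobolev inequality of Proposition \ref{prop1-21-10-2021} to $(s/t)\del_\alpha Zu$, $\delu_a Zu$ and $cZu$, use the boost-stability bound $|L^J(s/t)|\lesssim s/t$ on $\Hcal^*_s$, and control the resulting $L^2$ norms through the energy-density bounds \eqref{eq5-21-01-2022} with the $(p+2,k+2)$ budget. The only difference is that you spell out the Leibniz/commutator bookkeeping which the paper delegates to \cite{LM1} and to Proposition \ref{prop1-20-10-2021}.
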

The bounds in $\Hcal^*_{[s_0,s_1]}$ are essentially established in \cite{LM1}. We only give a sketch. 
\begin{proof}
	Remark that \eqref{eq4-21-01-2022} is by \eqref{eq4-17-10-2021}. For \eqref{eq2-21-01-2022}, let $|J|\leq 2$ and  $Z$ satisfying $\ord(Z)\leq p$ and $\rank(Z)\leq k$. Then in $\Hcal^*_{[s_0,s_1]}$ the following estimate holds:
	$$
	\big|L^J\big((s/t)Z\del_{\alpha}u\big)\big| \lesssim (s/t)\sum_{|J'|\leq 2}|L^{J'}Z\del_{\alpha}u|\leq C(p) \big(\ebf^{p+2,k+2}[u]\big)^{1/2}
	$$
	where for the first inequality, the bound $L^J(s/t)\leq C(|J|)(s/t)$ in $\Hcal^*_{[s_0,\infty)}$ is applied. This bounded is proved in \cite{LM1} and can be checked directly by the induction on $|J|$. For the second inequality we have applied \eqref{eq5-21-01-2022}. Then direct application of \eqref{eq4-17-10-2021} leads to \eqref{eq2-21-01-2022}. 
\end{proof}

\begin{proposition}\label{prop1-23-01-2022}
Let $u$ be a sufficiently regular function defined in $\TPcal_{[s_0,s_1]}$ and vanishes sufficiently fast at the spatial infinity. Then for $\eta\geq 0$,
\begin{equation}\label{eq1-21-01-2022}
 \omega^{1/2+\eta}|\del_{\alpha} u|+ \omega^{\eta}\la r\ra^{1/2}|\delt_a u|
\lesssim (1+\eta)\Ebf_{\eta}^{2,\TPcal}(s,u)^{1/2},
\end{equation}	
\begin{equation}\label{eq2-23-01-2022}
\omega^{\eta}\la r\ra^{1/2}|\del_{\alpha} u|\lesssim (1+\eta)\Ebf_{\eta}^{2,\Pcal}(s,u)^{1/2},\qquad \text{in } \Pcal_{[s_0,s_1]}, 
\end{equation}
\begin{equation}\label{eq3-21-01-2022}
\omega^{\eta}\la r\ra^{1/2}|cu|\lesssim \Ebf_{\eta,c}^{2,\TPcal}(s,u)^{1/2}.
\end{equation}
\end{proposition}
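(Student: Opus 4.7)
The plan is to apply the global Sobolev inequality on Euclidean-hyperboloidal slices (Proposition \ref{prop2-21-10-2021}) to a suitably chosen auxiliary function for each of the three estimates, and then to convert the resulting weighted $L^2$ norms into the second-order energy via Proposition \ref{prop1-26-10-2021} and commutator arguments. The same three-step template — Sobolev on $\TPcal_s$ (or $\Pcal_s$), commutator reduction, and energy identification — handles all three inequalities, differing only in the choice of the test function.

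For \eqref{eq3-21-01-2022}, I would apply \eqref{eq5-17-10-2021} to $v = cu$; since $c$ is constant and commutes with $\delb_r$ and $\Omega$, the right-hand side is a sum of terms $\|\omega^\eta c\, Z u\|_{L^2(\TPcal_s)}$ where $Z$ is a product of at most two operators chosen from $\{\delb_r,\Omega\}$. Each such $Z$ can be expanded in the Cartesian frame with uniformly bounded coefficients on $\TPcal_{[s_0,s_1]}$, and hence is controlled by the mass portion of $\Ebf_{\eta,c}^{2,\TPcal}(s,u)$. For \eqref{eq2-23-01-2022}, I would apply \eqref{eq5-22-01-2022} to $v = \del_\alpha u$ in $\Pcal_s$; there $\del_r$, $\Omega$ and $\del_\alpha$ are all purely Cartesian operators, so the commutators generate only lower-order derivatives with bounded coefficients and the $L^2$ norms reduce immediately to components of $\Ebf_\eta^{2,\Pcal}(s,u)$ via Proposition \ref{prop1-26-10-2021}.

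The two-part estimate \eqref{eq1-21-01-2022} is the most involved. The $\delt_a u$ half is obtained by applying \eqref{eq5-17-10-2021} to $v = \delt_a u$ and invoking the bound $\|\omega^\eta \delt_a u\|_{L^2(\TPcal_s)}\lesssim \Ebf^{\TPcal}_\eta(s,u)^{1/2}$ from Proposition \ref{prop1-26-10-2021}, together with the commutators $[\delb_r,\delt_a]$ and $[\Omega,\delt_a]$, which contribute only bounded-coefficient lower-order terms. For the $\omega^{1/2+\eta}|\del_\alpha u|$ half, I would apply \eqref{eq5-17-10-2021} to $v = \del_\alpha u$ to obtain
$$\omega^\eta\la r\ra^{1/2}|\del_\alpha u|\lesssim (1+\eta)\sum_{i+j\leq 2}\|\omega^\eta\delb_r^i\Omega^j\del_\alpha u\|_{L^2(\TPcal_s)},$$
and then use the elementary inequality $\omega^{1/2}\lesssim \la r\ra^{1/2}$ throughout $\TPcal_{[s_0,s_1]}$ (valid since $r\geq t-1$ there, so $\omega\sim 1+(r-t)_+\lesssim \la r\ra$) to transfer the weight to the desired form on the left-hand side.

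The principal technical obstacle will be executing the commutator reductions in the last two steps — unwinding each product $\delb_r^i\Omega^j\del_\alpha$ or $\delb_r^i\Omega^j\delt_a$ as a sum of operators of the form $Z\,\del_\beta$ with $Z$ of order at most one built from $\{\del_\gamma, L_b,\Omega\}$, and verifying that all coefficient functions arising from $\xi(s,r)$, $x^a/r$, and $(s^2+r^2)^{-1/2}$ are uniformly bounded on $\TPcal_{[s_0,s_1]}$ together with their first derivatives. This is where the construction of the Euclidean-hyperboloidal foliation in Section \ref{sec1-11-04-2022-M} and the bounds on $\xi$ enter in an essential way, but otherwise this step is a routine if tedious computation.
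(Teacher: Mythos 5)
Your handling of \eqref{eq2-23-01-2022}, of \eqref{eq3-21-01-2022}, and of the $\delt_a u$ half of \eqref{eq1-21-01-2022} follows essentially the paper's own route: apply \eqref{eq5-17-10-2021} (resp. \eqref{eq5-22-01-2022}) to $cu$, $\del_\alpha u$ on $\Pcal_s$, or $\delt_a u$, and reduce the $\delb_r^i\Omega^j$ derivatives to energy-controlled quantities. Note, though, that the steps you defer as "routine bounded-coefficient commutators" are exactly where care is needed: a merely bounded coefficient in front of a bare $\del_t$-derivative is \emph{not} acceptable on $\Tcal_s$, since the energy only controls $\zeta\del_t(\cdot)$ there; what saves the $\delt_a$ case in the paper is precisely that $\delb_r(x^a/r)\equiv 0$ and that $\delb_r\big(\xi(s,r)r(s^2+r^2)^{-1/2}\big)\lesssim\zeta$ (via \eqref{eq6-19-10-2021}), and that $\delb_r$ of a controlled quantity is handled as $(x^b/r)\delb_b$ rather than expanded into Cartesian derivatives.

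The first half of \eqref{eq1-21-01-2022} contains a genuine gap. Applying \eqref{eq5-17-10-2021} directly to $v=\del_\alpha u$ requires the norms $\|\omega^{\eta}\delb_r^i\Omega^j\del_\alpha u\|_{L^2(\TPcal_s)}$, and these are \emph{not} bounded by $\Ebf_{\eta}^{2,\TPcal}(s,u)^{1/2}$: already for $i=j=0$ and $\alpha=0$, Proposition \ref{prop1-26-10-2021} only controls $\|\omega^{\eta}\zeta\del_t u\|_{L^2}$, and in the transition region $\Tcal_s$ one has $\zeta\simeq s/t\simeq s^{-1}$, so the degenerate factor cannot be dropped. (If your reduction did go through, it would yield the stronger bound $\omega^{\eta}\la r\ra^{1/2}|\del_\alpha u|\lesssim\Ebf_{\eta}^{2,\TPcal}(s,u)^{1/2}$, i.e. the sharper estimate of Proposition \ref{prop1-29-10-2021} with one derivative fewer; the paper emphasizes that this sharper weight costs exactly one extra order of regularity.) The paper's proof instead applies \eqref{eq5-17-10-2021} to the modified function $\big(\tfrac{r-t+3}{r}\big)^{1/2}\del_\alpha u$: by Lemma \ref{lem2-23-01-2022} together with \eqref{eq6-23-01-2022}, this factor and its first two $\delb_r$-derivatives are $\lesssim\zeta$, so the resulting $L^2$ norms are energy-controlled (see \eqref{eq4-22-01-2022}); and since $\la r\ra^{1/2}\big(\tfrac{r-t+3}{r}\big)^{1/2}\gtrsim\omega^{1/2}$ in $\TPcal_{[s_0,\infty)}$, one lands precisely on $\omega^{1/2+\eta}|\del_\alpha u|\lesssim\Ebf_{\eta}^{2,\TPcal}(s,u)^{1/2}$ — which is also the reason this term carries the weaker weight $\omega^{1/2}$ rather than $\la r\ra^{1/2}$ in the statement. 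Your final "weight transfer" $\omega^{1/2}\lesssim\la r\ra^{1/2}$ is harmless, but the $L^2$ input it is meant to act on is simply not available at this level of regularity.
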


Before the proof of this proposition, we need some technical preparations. 
\begin{lemma}\label{lem1-23-01-2022}
Let $u$ be a $C^2$ function defined in $\TPcal_{[s_0,s_1]}$. Then
\begin{equation}\label{eq1-22-01-2022}
|\delb_r\delb_ru| \lesssim \big(\ebf^{1,0}[u]\big)^{1/2},
\end{equation}
\begin{equation}\label{eq1-23-01-2022}
|\delb_r\delb_r\delb_r u|\lesssim \sum_{|I|\leq 2,a}|\delb_a\del^I u| + \zeta|\del u|_{1,0}
\end{equation}
in $\TPcal_{[s_0,s_1]}$.
\end{lemma}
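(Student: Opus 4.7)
The plan is to exploit two structural features. First, a direct computation using the tangency of $\delb_a$ to $\Fcal_s$ (so that $\delb_a s = 0$) together with $\delb_a r = x^a/r$ gives $\delb_a(x^b/r) = \delta_{ab}/r - x^a x^b/r^3$, whence the cancellation $(x^a/r)\delb_a(x^b/r) = 0$. Consequently
$\delb_r\delb_r u = (x^a x^b/r^2)\delb_a\delb_b u$, and likewise $\delb_r\delb_r\delb_r u = (x^a x^b/r^2)\delb_r(\delb_a\delb_b u)$ because $\delb_r(x^a x^b/r^2) = 0$ by the same argument. Second, the structural assumption \eqref{eq6-19-10-2021} yields $|\chi'| + |\chi''| \lesssim \chi^{1/2}(r-\rhoH(s)) = (1-\xi(s,r))^{1/2}$, which in $\TPcal$ is at most $\zeta$ (since $\zeta^2 = 1 - \xi^2 r^2/(s^2+r^2) \ge 1 - \xi \ge (1-\xi)$).

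For \eqref{eq1-22-01-2022} I expand $\delb_a\delb_b u = \bigl[\delb_a(\xi x^b/(s^2+r^2)^{1/2})\bigr]\del_t u + [\xi x^b/(s^2+r^2)^{1/2}]\delb_a\del_t u + \delb_a\del_b u$ and contract with $x^a x^b/r^2$. The last two pieces have coefficients of size $\le 1$ multiplying $\delb_a\del_\alpha u$, each dominated by $(\ebf[\del_\alpha u])^{1/2} \le (\ebf^{1,0}[u])^{1/2}$ via Lemma \ref{lem1-26-10-2021}. Computing the first coefficient after contraction yields $\bigl[-\chi' r(s^2+r^2) + \xi s^2\bigr]/(s^2+r^2)^{3/2}$ multiplied by $\del_t u$. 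The $\xi s^2$ piece is of size $\lesssim 1/s$, which combined with the bound $\zeta \ge s(s^2+r^2)^{-1/2} \gtrsim 1/s$ on $\TPcal$ and $\zeta|\del_t u| \lesssim (\ebf[u])^{1/2}$ gives an $O(1)$ multiple of $(\ebf[u])^{1/2}$. The $\chi'$ piece is handled via $|\chi'| r/(s^2+r^2)^{1/2} \lesssim (1-\xi)^{1/2}$ together with the mixed coercivity $(1-\xi)^{1/2}|\del_t u| \lesssim (\ebf[u])^{1/2}$ from Lemma \ref{lem1-26-10-2021}.

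For the cubic estimate \eqref{eq1-23-01-2022} I apply the same cancellation to obtain $\delb_r\delb_r\delb_r u = (x^a x^b/r^2)\delb_r(\delb_a\delb_b u)$, then differentiate term-wise. The outcome splits into: (i) terms where $\delb_r$ hits a coefficient, producing factors of $\chi''$ or of $\delb_r[\xi/(s^2+r^2)^{1/2}]$ and $\delb_r[\xi x^b/(s^2+r^2)^{1/2}]$; each has size $\lesssim (1-\xi)^{1/2} + 1/s \lesssim \zeta$, so that multiplying by $\del_t u$ gives a contribution $\lesssim \zeta|\del u|_{1,0} + (\ebf[u])^{1/2}$, and the second summand is itself $\lesssim \sum_a |\delb_a u| + \zeta|\del u|_{1,0}$; (ii) terms of the form $[\text{bounded}]\cdot\delb_r\del_\alpha u$, which upon writing $\delb_r = (x^c/r)\delb_c$ unfold into a sum of $|\delb_c\del_\alpha u|$ already contained in the right-hand side; (iii) a term $[\text{bounded}]\cdot \delb_r\delb_a\del_\alpha u$ and a term $\delb_r\delb_a\del_b u$, which unfold into $\delb_c\delb_a\del_\alpha u$ and, by a further expansion of one $\delb$ using $\delb_a = [\xi x^a/(s^2+r^2)^{1/2}]\del_t + \del_a$, reduce to $\delb_c\del^I u$ with $|I|\le 2$ together with an error whose coefficient carries the factor $\xi x^a/(s^2+r^2)^{1/2}$ or its $\delb$-derivative; the former is $\le 1$ and multiplies a $\delb_c\del^2 u$, while the latter is $\lesssim (1-\xi)^{1/2} \le \zeta$ and multiplies $\del^2 u \subset |\del u|_{1,0}$.

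The main obstacle in both estimates is the appearance of $\del_t u$ (or $\del_t\del_t u$) against an $O(1)$ coefficient, whose naive bound $\zeta^{-1}(\ebf[u])^{1/2}$ would lose a factor of $s$. The resolution is the observation that every such $O(1)$ coefficient necessarily carries a factor of $\chi'$ (or $\chi''$), and the structural hypothesis \eqref{eq6-19-10-2021} produces the hidden $(1-\xi)^{1/2}$-smallness which is precisely absorbed by the mixed coercivity $(s^{-1}\xi + (1-\xi)^{1/2})|\del_\alpha u| \lesssim (\ebf[u])^{1/2}$ of Lemma \ref{lem1-26-10-2021}. The cubic case requires somewhat more careful Leibniz bookkeeping, but the mechanism in every branch reduces to this pairing together with the cancellation $(x^a/r)\delb_a(x^b/r) = 0$.
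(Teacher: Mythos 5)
Your proof is correct and follows essentially the same route as the paper's: a Leibniz expansion of $\delb_r$ applied to $\delb_r u$ (your preliminary cancellation $\delb_r(x^a/r)=0$ merely repackages this, and your combined coefficient $\big[-\chi'\,r(s^2+r^2)+\xi s^2\big](s^2+r^2)^{-3/2}$ is exactly the sum of the paper's three pieces), with the coefficient-derivative terms absorbed through $|\chi'|,|\chi''|\lesssim \chi^{1/2}\lesssim\zeta$ and $1/s\lesssim\zeta$ paired with $\zeta|\del_t u|\lesssim(\ebf[u])^{1/2}$, and the remaining terms recognized as combinations of $\delb_a\del^I u$. The paper proves only the quadratic bound in detail and dismisses \eqref{eq1-23-01-2022} as ``the same manner''; your branch-by-branch bookkeeping supplies precisely that omitted computation and lands every term in the stated right-hand side.
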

\begin{proof}
We remark that
$$
\aligned
\delb_r\delb_r u = \delb_r \Big(\frac{\xi(s,r)r}{(s^2+r^2)^2}\del_t u + \del_a u\Big) 
= \delb_r\Big(\frac{\xi(s,r)r}{(s^2+r^2)^{1/2}}\Big)\del_tu + \frac{\xi(s,r)r}{(s^2+r^2)^{1/2}}\delb_r\del_tu + \delb_r\del_au.
\endaligned
$$
The (squares of) last two terms are bounded by $|\delb_a\del_{\alpha} u|^2$ thus bounded by $\ebf^{1,0}[u]$ in $\TPcal_s$.  For the first term, we write it into the following from:
$$
\delb_r\Big(\frac{\xi(s,r)r}{(s^2+r^2)^{1/2}}\Big)\del_t u
= \frac{r\del_r\xi(s,r)}{(s^2+r^2)^{1/2}}\del_t u + \frac{\xi(s,r)}{(s^2+r^2)^{1/2}}\del_t u 
- \frac{\xi(s,r)r^2}{(s^2+r^2)^{3/2}}\del_t u.
$$
Again, the last two are bounded by $\ebf[u]$ because of \eqref{eq12-07-10-2021} and the fact that
$$
\Big|\frac{\xi(s,r)}{(s^2+r^2)^{1/2}}\Big| + \Big|\frac{\xi(s,r)r^2}{(s^2+r^2)^{3/2}}\Big|\lesssim 1/s.
$$
For the first term, recall that $\del_r\xi(s,r) = -\chi'(r-\rhoH(s))$. Recall \eqref{eq6-19-10-2021}, 
$$
|\del_r\xi(s,r)|\lesssim \big(1-\xi(s,r)\big)^{1/2}.
$$
Then by \eqref{eq12-07-10-2021},
$$
\Big| \frac{r\del_r\xi(s,r)}{(s^2+r^2)^{1/2}}\Big|\lesssim (1-\xi(s,r))^{1/2}\lesssim \zeta.
$$
This leads to the bound in $\delb_r\delb_r u$. In the same manner, we can show the bound on $|\delb_r\delb_r\delb_r u|$ in $\Tcal_{[s_0,s_1]}$. This is also by direct calculation and the bound $|\chi''(\rho)|\lesssim \chi(\rho)^{1/2}$ supplied \eqref{eq6-19-10-2021}. We omit the detail. 
\end{proof}

\begin{proof}[Proof of Proposition \ref{prop1-23-01-2022}]
We get started with \eqref{eq2-23-01-2022}. This is by direct application of \eqref{eq5-22-01-2022} and the fact that
$$
|\del_r^i\Omega^j\del_{\alpha}u|\lesssim \ebf^{2}[u],\qquad \text{in } \Pcal_{[s_0,s_1]}.
$$

Then we turn to \eqref{eq3-21-01-2022}. Remark that
\begin{equation}\label{eq2-22-01-2022}
|\delb_r^i\Omega^j u|\lesssim |\Omega^ju|_{i,0},\quad i\leq 2.
\end{equation}
This is because, when $i=0$ it is trivial. When $i=1$, we only need to remark that that $|\delb_ru|\lesssim |\del u|$. When $i=2$, we recall \eqref{eq1-22-01-2022} and the fact that $|\zeta|\leq 1$. Then we apply \eqref{eq5-17-10-2021}.

Then we turn to \eqref{eq1-21-01-2022}. We firstly remark that, by \eqref{eq2-26-10-2021},
$$
|\Omega^j\delt_au|^2\lesssim \ebf^j[u].
$$
For $\delb_r\Omega^j\delt_a u, j\leq 1$, we recall that
$$
\aligned
\delb_r\Omega^j \delt_au 
=& \,
\Omega^j\delb_r\big((x^a/t)\del_t + \del_a\big)u 
= \Omega^j\big((x^a/r)\delb_r\del_t u + \delb_r(x^a/r)\del_tu + \delb_r\del_au\big)
\\
=&\, \sum_{j_1+j_2=j}\Omega^{j_1}(x^a/r)\,\delb_r\Omega^{j_2}\del_tu
+ \sum_{j_1+j_2 = j}\Omega^{j_1}\delb_r(x^a/r)\,\Omega^{j_2}\del_t u + \delb_r\Omega^j\del_a u.
\endaligned
$$
Then the three terms on the right-hand side are bounded by $(\ebf^{2,1}[u])^{1/2}$. For the first term the coefficients $\Omega^{j_1}(x^a/r)$ is uniformly  bounded in $\TPcal_{[s_0,s_1]}$ due to the homogeneity. The second term vanishes because $\delb_r(x^a/r)\equiv 0$. 

For the term $\delb_r^2\delt_au$, one only needs to remark that
$$
\delb_r\delb_r\delt_au = \delb_r\delb_r\big((x^a/t)\del_tu\big) + \delb_r\delb_r\del_au, 
$$
where thanks to \eqref{eq1-22-01-2022} and Lemma \ref{lem1-26-10-2021}, the second term on the right-hand side is bounded by $(\ebf^{2,0}[u])^{1/2}$. For the first term, 
$$
\delb_r\delb_r\big((x^a/t)\del_tu\big) = (x^a/r)\delb_r\delb_r\del_tu + 2\delb_r(x^a/r)\delb_r\del_tu + \delb_r\delb_r(x^a/r)\,\del_tu.
$$
By the relation $\delb_r(x^a/r) \equiv 0$, the last two terms vanish. Thus by \eqref{eq1-22-01-2022} and Lemma \ref{lem1-26-10-2021}, these terms are bounded by $(\ebf^{2,0}[u])^{1/2}$. Then we conclude that 
$$
|\delt_a u|\lesssim \omega^{\eta}\la r\ra^{-1/2}\Ebf_{\eta}^{2,\TPcal}(s,u)^{1/2}.
$$
%which leads to, 
%\begin{equation}\label{eq3-22-01-2022}
%\omega^{\eta}\la r\ra^{1/2}|\delt_a u|\leq C(p) \Ebf_{\eta}^{2,\TPcal}(s,u)^{1/2}.
%\end{equation}
On the other hand, we recall \eqref{eq3-23-01-2022} and obtain, for $i+j\leq 2$,
\begin{equation}\label{eq4-22-01-2022}
\aligned
\Big|\delb_r^i\Omega^j\Big(\Big(\frac{r-t+3}{r}\Big)^{1/2}\del_{\alpha}u\Big)\Big|
\lesssim \Big(\frac{\la r-t\ra}{r}\Big)^{1/2}|\del u|_2 \lesssim \ebf^{2}[u].
\endaligned
\end{equation}
Then by \eqref{eq5-17-10-2021}, we obtain, remark that in $\TPcal_{[s_0,\infty)}$ one has $|r-t|+1\leq r-t+3$,
\begin{equation}
\omega^{1/2 + \eta}|\del_{\alpha} u|\leq C\Ebf_{\eta}^{2,\TPcal}(s,u)^{1/2}.
\end{equation}
This concludes \eqref{eq1-21-01-2022}.

%For the bound on $\zeta^2|\del u|$, we need to the following observation. Firstly, thanks to \eqref{eq1-22-01-2022} and \eqref{eq1-23-01-2022}, 
%$$
%\|\omega^{\eta}\delb_r^i\Omega^j\delb_ru\|_{L^2(\TPcal_s)}\lesssim \Ebf_{\eta}^{2,\TPcal}(s,u)^{1/2}.
%$$
%\textcolor{blue}
%{When $i=0$ trivial. When $i=1$, we apply \eqref{eq1-22-01-2022} on 
%$$
%\delb_r\Omega^j\delb_ru = \delb_r\delb_r\Omega^ju.
%$$
%When $i=2$, we apply \eqref{eq1-23-01-2022} on
%$
%\delb_r\delb_r\delb_ru.
%$
%}
%And thus we obtain
%\begin{equation}\label{eq2-23-01-2022}
%\omega^{\eta}\la r\ra^{1/2}|\delb_r u|\leq C(p)\Ebf_{\eta}^{2,\TPcal}(s,u)^{1/2}.
%\end{equation}
%Remark that
%$$
%(x^a/r)\delt_au - \delb_ru = \Big(1-\frac{\xi(s,r)r}{\sqrt{s^2+r^2}}\Big)\del_t u = \frac{\zeta^2}{1+\frac{\xi(s,r)^2r^2}{s^2+r^2}}
%$$
%Thus
%$$
%\zeta^2|\del_t u|\lesssim \sum_a(|\delt_a u| + |\delb_a u|).
%$$
%which, combined with \eqref{eq2-23-01-2022}, leads to the bound on $\zeta^2|\del_t u|$. On the other hand, the bound on $\del_a u$ is obtained by the relation
%$$
%\del_a = \delt_a - \frac{\xi(s,r)x^a}{\sqrt{s^2+r^2}}\del_t.
%$$
%Then we obtain the bounds on $\zeta^2|\del u|$ and thus \eqref{eq1-21-01-2022} is obtained.
\end{proof}
We also write the high-order version of Proposition \ref{prop1-23-01-2022}.
\begin{proposition}
Let $u$ be a sufficiently regular function defined in $\TPcal_{[s_0,s_1]}$ and vanishes sufficiently fast at the spatial infinity. Then for $\eta\geq 0$,
\begin{equation}\label{eq2-15-03-2022-M}
\omega^{1/2+\eta}|\del u|_{p,k} + \omega^{\eta}\la r\ra^{1/2}|\delt u|_{p,k}
\lesssim C(p) (1+\eta)\Ebf_{\eta}^{p+2,k+2,\TPcal}(s,u)^{1/2},
\end{equation}	
\begin{equation}\label{eq3-15-03-2022-M}
\omega^{\eta}\la r\ra^{1/2}|\del u|_{p,k}\lesssim C(p) (1+\eta)\Ebf_{\eta}^{p+2,k+2,\Pcal}(s,u)^{1/2},\qquad \text{in } \Pcal_{[s_0,s_1]}, 
\end{equation}
\begin{equation}\label{eq4-15-03-2022-M}
\omega^{\eta}\la r\ra^{1/2}|cu|_{p,k}\lesssim C(p) \Ebf_{\eta,c}^{p+2,k+2,\TPcal}(s,u)^{1/2}.
\end{equation}	
where $C(p)$ is a constant determined by $p$.
\end{proposition}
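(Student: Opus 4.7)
The plan is to reduce to the low-order version, Proposition~\ref{prop1-23-01-2022}, by applying it to $Zu$ for each admissible vector field $Z\in\mathcal{I}_{p,k}$. Concretely, fix $Z\in\mathcal{I}_{p,k}$ and set $v=Zu$. Proposition~\ref{prop1-23-01-2022} applied to $v$ yields
\begin{equation*}
\omega^{1/2+\eta}|\del_\alpha Zu| + \omega^\eta \la r\ra^{1/2}|\delt_a Zu| \lesssim (1+\eta)\Ebf^{2,\TPcal}_\eta(s,Zu)^{1/2},
\end{equation*}
together with the Klein--Gordon counterpart in $\TPcal_{[s_0,s_1]}$ and the refined $\del_\alpha$-bound in $\Pcal_{[s_0,s_1]}$. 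The higher-order energies are then majorized directly:
\begin{equation*}
\Ebf^{2,\TPcal}_\eta(s,Zu)\lesssim \Ebf^{p+2,k+2,\TPcal}_\eta(s,u),\qquad \Ebf^{2,\Pcal}_\eta(s,Zu)\lesssim \Ebf^{p+2,k+2,\Pcal}_\eta(s,u),
\end{equation*}
since $\Ebf^{2,\TPcal}_\eta(\cdot,Zu)$ involves two further derivatives drawn from $\{\del_\alpha,L_a,\Omega\}$ on top of $Z$, and the composition still lies in $\mathcal{I}_{p+2,k+2}$.

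The remaining step is commutation, needed to convert the quantities $|\del_\alpha Zu|$ and $|\delt_a Zu|$ into the norms $|\del u|_{p,k}$ and $|\delt u|_{p,k}$ as defined in Section~\ref{sec1-22-10-2021}. For $|\del u|_{p,k}$, the standard commutation identities
\begin{equation*}
[\del_\alpha,L_b] = \delta_\alpha^{\,0}\del_b + \delta_\alpha^{\,b}\del_t,\qquad [\del_\alpha,\Omega]\in \mathrm{span}_{\Rbb}\{\del_\beta\},
\end{equation*}
rewrite $Z\del_\alpha u$ as a constant-coefficient sum $\sum c_{\beta,Z'}\del_\beta Z'u$ with $Z'\in\mathcal{I}_{\ord Z,\rank Z}$; an easy induction then gives the equivalence $|\del u|_{p,k}\cong\max_{Z\in\mathcal{I}_{p,k}}\max_{\alpha}|\del_\alpha Zu|$ up to a constant $C(p)$. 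For $|\delt u|_{p,k}$, the decomposition $\delt_a=(x^a/r)\del_t+\del_a$, the fact that $\Omega(x^a/r)$ is a bounded degree-zero homogeneous function, and the identity $L_b(x^a/r)=(t/r)(\delta_{ab}-x^ax^b/r^2)$ together show that each commutator $[Z,\delt_a]u$ reduces to a constant-coefficient linear combination of $\delt_b Z'u$ and of $\del_\beta Z''u$ with $Z',Z''\in\mathcal{I}_{\ord Z,\rank Z}$; induction on $\ord Z$ then closes the estimate. The Klein--Gordon bound \eqref{eq4-15-03-2022-M} is immediate since $c$ is a constant and commutes with $Z$.

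The main obstacle is precisely the commutation with $\delt_a$: one must verify that the commutator terms do not break the weight $\la r\ra^{1/2}$ on the left-hand side. The delicate commutator is $[L_b,\delt_a]$, which produces a factor $t/r$; on $\TPcal_{[s_0,\infty)}$ we have $r\geq \rhoH(s)$ and $t\leq T(s)$, so by \eqref{eq6-07-10-2021} and Proposition~\ref{prop1-07-10-2021} the ratio $t/r$ remains uniformly bounded (it is $\cong 1$ in $\TPcal^{\near}$ and $\leq 1/2$ in $\TPcal^{\far}$), and no weight is lost. Once this closed system of commutator bounds is in place, taking the maximum over $Z\in\mathcal{I}_{p,k}$ in the three inequalities displayed above yields \eqref{eq2-15-03-2022-M}--\eqref{eq4-15-03-2022-M}, with the constant $C(p)$ accumulated from the commutator expansions.
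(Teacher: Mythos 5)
Your reduction to the order-zero case is the right skeleton and, for three of the four ingredients, it is essentially the paper's own argument: the bound for $|\del u|_{p,k}$ via the ordering of operators (Proposition \ref{prop1-23-10-2021}) plus \eqref{eq1-21-01-2022}, the bound \eqref{eq3-15-03-2022-M} in $\Pcal_{[s_0,s_1]}$ via \eqref{eq2-23-01-2022}, and the Klein--Gordon bound \eqref{eq4-15-03-2022-M} via \eqref{eq3-21-01-2022} all go through exactly as you say, since $\Ebf_{\eta}^{2,\TPcal}(s,Zu)\lesssim\Ebf_{\eta}^{p+2,k+2,\TPcal}(s,u)$.

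The gap is in your treatment of $[Z,\delt_a]$. First, the commutators are not constant-coefficient, but the real problem is your resolution of the weight issue: boundedness of $t/r$ is not the relevant point. If you bound the commutator crudely, $|[L_b,\delt_a]u|\lesssim|\del u|$ with $O(1)$ coefficients (which is all that ``$L_b(x^a/r)=(t/r)(\delta_{ab}-x^ax^b/r^2)$ is bounded'' gives), you are left having to control $\omega^{\eta}\la r\ra^{1/2}|\del_\beta Z''u|$, and this is \emph{not} available from $\Ebf_{\eta}^{p+2,k+2,\TPcal}$ in the transition region $\Tcal_s$: there $\omega\simeq\la r-t\ra\simeq 1$ while $\la r\ra\simeq s^2$, and the only gradient bound valid in $\Tcal_s$ at this level of regularity is \eqref{eq1-21-01-2022}, which carries the weaker weight $\omega^{1/2+\eta}$ (the stronger $\la r\ra^{1/2}$-weighted bound \eqref{eq2-23-01-2022} is restricted to $\Pcal_s$, and Proposition \ref{prop1-29-10-2021} costs an extra derivative and requires $\eta\in(0,1)$). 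What saves the argument is a cancellation you must exhibit: combining $L_b(x^a/r)\del_t u$ with $[\del_a,L_b]u=\delta_{ab}\del_t u$ and rewriting $(x^a/r)\del_b u=(x^a/r)\delt_b u-(x^ax^b/r^2)\del_t u$ gives
\begin{equation*}
[L_b,\delt_a]u=-(x^a/r)\,\delt_b u+\frac{r-t}{r}\Big(\frac{x^ax^b}{r^2}-\delta_{ab}\Big)\del_t u ,
\end{equation*}
so every pure $\del_\alpha$-term in the commutator comes with the damping factor $\la r-t\ra/r$. It is precisely this factor (this is the content of \eqref{eq3-23-10-2021}, proved in Appendix \ref{sec2-22-10-2021}) that lets you trade $\la r\ra^{1/2}\cdot\frac{\la r-t\ra}{r}\lesssim\la r-t\ra^{1/2}$, i.e.\ convert the $\la r\ra^{1/2}$ weight into the $\omega^{1/2}$ weight for which \eqref{eq1-21-01-2022} applies; your induction must propagate this structure, not just boundedness of the coefficients. (In $\TPcal^{\far}$ no structure is needed since $\la r-t\ra\simeq\la r\ra$ there, which is how the paper dispenses with that region.)
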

\begin{proof}
For the bounds on $|\del u|_{p,k}$, we only need to recall \eqref{prop1-23-10-2021} together with the corresponding bounds in Proposition \ref{prop1-23-01-2022}. For the bounds on $|\delt_a u|_{p,k}$, we recall \eqref{eq3-23-10-2021} in $\TPcal^{\near}_{[s_0,s_1]}$. When in the region $\TPcal^{\far}_{[s_0,s_1]}$, one remark that $\delt_a u = (x^a/r)\del_t u + \del_au$ with $(x^a/r)$ homogeneous of degree zero. Thus it enjoys the same decay of $|\del u|_{p,k}$ in $\TPcal^{\far}_{[s_0,s_1]}$. However in this region $\la r-t\ra\simeq \la r\ra$. We thus obtain the desired result.
\end{proof}

\subsection{Sobolev decay estimates II}
In $\TPcal_s$, the bound \eqref{eq1-21-01-2022} is not sufficient. In this section we will establish a sharper decay estimate --  the cost is one order regularity. 
\begin{proposition}\label{prop1-29-10-2021}
Let $u$ be a regular function defined in $\TPcal_{[s_0,s_1]}$ and vanishes sufficiently fast at spatial infinity. Then for $\eta\in(0,1)$, 
\begin{equation}\label{eq1-29-10-2021}
\la r\ra^{1/2}\omega^{\eta}|\del_{\alpha}u| + \la r\ra^{1/2+\eta}|\delt_au|\lesssim (1-\eta)^{-1} \Ecal_{\eta}^{\TPcal,3}(s,u)^{1/2}.
	\end{equation}
\end{proposition}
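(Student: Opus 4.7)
The plan is to upgrade the weight in the Sobolev decay estimate \eqref{eq1-21-01-2022} from $\omega$ to $\la r\ra$ in the factor multiplying $|\del u|$ and $|\delt u|$, at the cost of one additional derivative in the energy on the right-hand side. The improvement is substantive only in the near-cone region $\TPcal_s\cap\{|r-t|\lesssim 1\}$, where $\omega\simeq 1$ while $\la r\ra$ is comparable to $t$; in the far region $\{r\geq 2t\}$, $\omega\simeq\la r\ra$ and the inequality follows directly from \eqref{eq1-21-01-2022}. I will first handle $|\delt_a u|$ and then reduce the bound on $|\del_\alpha u|$ to it via the semi-null decomposition $\del_\alpha=\Psit_\alpha^\beta\delt_\beta$ together with the identity $L_au=t\delt_au+(x^a/r)(r-t)\del_tu$, which expresses the bad derivative $\del_tu$ near the cone in terms of $t^{-1}L_au$ and $\delt_au$.

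For the core bound on $\la r\ra^{1/2+\eta}|\delt_au|$, the idea is to integrate radially from infinity, using that $u$ (and hence $\delt_au$) decays sufficiently fast at spatial infinity. After treating the angular direction by the one-dimensional Sobolev embedding $H^1(S^1)\hookrightarrow L^\infty$ (which consumes one $\Omega$ derivative), the identity
\begin{equation*}
r^{2\eta+1}|\delt_au|^2(t,r\sigma) = -\int_r^\infty\del_\rho\bigl(\rho^{2\eta+1}|\delt_au|^2(t,\rho\sigma)\bigr)\,d\rho
\end{equation*}
leads, after Cauchy--Schwarz, to an estimate of $r^{2\eta+1}|\delt_au|^2$ by a combination of $\int_r^\infty\rho^{2\eta-1}|\delt_au|^2\rho\,d\rho$ and $\int_r^\infty\rho^{2\eta+1}|\del_\rho\delt_au|^2\rho\,d\rho$. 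The weighted radial Hardy inequality
\begin{equation*}
\int_0^\infty\rho^{2\eta-1}|f|^2\,d\rho \leq \frac{1}{(1-\eta)^2}\int_0^\infty\rho^{2\eta+1}|\del_\rho f|^2\,d\rho,\qquad \eta\in(0,1),
\end{equation*}
valid for $f(\infty)=0$, then produces the announced constant $(1-\eta)^{-1}$. Together, the $S^1$ Sobolev step, the radial integration from infinity, and the Hardy inequality consume three orders of derivatives of $u$, matching $\Ecal_\eta^{\TPcal,3}$.

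The main obstacle is reconciling the pure radial weights $\rho^{2\eta\pm 1}$ appearing on the right-hand side with the $\omega^{2\eta}$-weighted $L^2$ norms actually controlled by the energy. I will resolve this by splitting the $r$-integration region: on $\{\rho\geq 2t\}$ one has $\omega(t,\rho)\simeq\rho$ so the two weights are interchangeable and the argument is routine; on the near-cone zone $\{t-1\leq\rho\leq 2t\}$ one exploits the further decomposition of $\delt_a$ into the outgoing null derivative $(x^a/r)(\del_t+\del_r)$ and an angular piece proportional to $r^{-1}\Omega$, the latter providing an automatic $r^{-1}$ gain and the former being a good null derivative whose $L^2$ norm is controlled by $\|\omega^\eta\delb_a u\|$ and $\|\omega^\eta\zeta\del u\|$ from Proposition \ref{prop1-26-10-2021}. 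The discrepancy factor $(\la r\ra/\omega)^\eta$ between the target $\la r\ra$-weight and the available $\omega$-weight is exactly what the Hardy constant $(1-\eta)^{-1}$ absorbs, which also explains why the estimate degenerates as $\eta\to 1$. Finally, the pointwise bound on $\la r\ra^{1/2}\omega^\eta|\del_\alpha u|$ is obtained by combining the $|\delt_a u|$ estimate with the above reduction for $\del_tu$ and the already-known bound \eqref{eq1-21-01-2022} applied to $L_au$, whose energy is contained in $\Ecal_\eta^{\TPcal,3}$.
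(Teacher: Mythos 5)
There is a genuine gap, and it sits exactly where you park it: the weight mismatch near the light cone. Your scheme needs, after the angular Sobolev step and the radial integration from infinity, the quantities $\int \rho^{2\eta-1}|\delt_a u|^2\,\rho\,d\rho$ and $\int \rho^{2\eta+1}|\delb_r(\delt_a u)|^2\,d\rho$ with the \emph{pure radial} weight $\rho^{2\eta}$ in two-dimensional measure, whereas $\Ebf_{\eta}^{\TPcal,3}(s,u)$ only controls $\omega^{2\eta}$-weighted norms of good derivatives (and $\zeta$-degenerate norms of all derivatives). In the zone $t-1\leq \rho\leq 2t$ one has $\omega\simeq \la \rho-t\ra\simeq 1$ while $\rho\simeq t$, so the discrepancy $(\la r\ra/\omega)^{2\eta}\simeq t^{2\eta}$ is \emph{unbounded along the foliation}; no fixed multiplicative constant, Hardy or otherwise, can ``absorb'' it, and your final sentence to that effect is not an argument. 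Splitting off the angular piece $r^{-1}\Omega u$ does not help either, because the energy contains no zeroth-order term for a wave-type $u$, so $|\Omega u|$ is not pointwise controlled without a further device; and the null piece still requires $\rho^{2\eta}$-weighted $L^2$ control of its $\delb_r$-derivative near the cone, which is precisely what is unavailable. A secondary but telling error: for the weights $\rho^{2\eta\pm1}$ and $f(\infty)=0$ the sharp one-dimensional Hardy constant is $\eta^{-2}$, not $(1-\eta)^{-2}$, so your proposed mechanism would not even produce the $(1-\eta)^{-1}$ dependence in the statement.

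The missing idea — and the route the paper takes — is structural rather than weight-juggling: write $\delt_a u = t^{-1}L_a u + (x^a/r)(1-r/t)\del_t u$, so that the explicit factors $t^{-1}$ and $(1-r/t)\simeq \la r-t\ra/\la r\ra$ supply the improved $\la r\ra$-decay, and then bound $L_a u$ \emph{pointwise} by integrating $\delb_r L_a u$ along the slice from an anchor point on $\TPcal_s^{\far}\cap\TPcal_s^{\near}$, where $\omega\simeq r$ makes \eqref{eq2-23-01-2022} give $|L_a u(t_0,x_0)|\lesssim t_0|\del u|\lesssim \la r_0\ra^{1/2-\eta}\Ebf_{\eta}^{\TPcal,2}(s,u)^{1/2}$. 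Along the segment one uses \eqref{eq5-20-10-2021} applied to $L_a u$ (this is where the third derivative is spent), $|\delb_r L_a u|\lesssim \la r\ra^{-1/2}\la r-t\ra^{-\eta}\Ebf_{\eta}^{\TPcal,3}(s,u)^{1/2}$, and the factor $(1-\eta)^{-1}$ arises from $\int (2+\rho-t)^{-\eta}\,d\rho\lesssim (1-\eta)^{-1}\la r\ra^{1-\eta}$ — a completely different origin from Hardy. Your radial integration from infinity of $\rho^{2\eta+1}|\delt_a u|^2$ bypasses the boost field $L_a$ entirely, and without it (or an equivalent weighted vector field plus anchor-point argument) the improved weight $\la r\ra^{1/2+\eta}$ on $|\delt_a u|$ cannot be extracted from the $\omega$-weighted energies. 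The first term of \eqref{eq1-29-10-2021} is also not a ``reduction'' from the second: solving the boost identity for $\del_t u$ would require dividing by $r-t$, which vanishes on the cone; the paper instead gets it for free on $\Pcal_s$ from \eqref{eq2-23-01-2022} and by a short radial integration of $\delb_r\del_{\alpha}u$ across the width-one transition zone $\Tcal_s$.
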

%This is a parallel to Lemma 6.4 of \cite{M-2018} and Proposition 7.4 of \cite{}(PLF-YM-2021). For the convenience of the reader, we sketch the proof. 
\begin{lemma}
Let $u$ be a $C^3$ function defined in $\TPcal_{[s_0,s_1]}$. Then
\begin{equation}\label{eq5-20-10-2021}
\omega^{\eta}\la r\ra^{1/2}|\delb_r u(t,x)|\lesssim \Ebf_{\eta}^{\TPcal,2}(s,u)^{1/2}.
\end{equation}
\end{lemma}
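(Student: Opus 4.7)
The plan is to apply the Sobolev inequality \eqref{eq5-17-10-2021} of Proposition \ref{prop2-21-10-2021} with $\delb_r u$ in place of $u$, which gives
\begin{equation*}
\omega^\eta(t,r)\la r\ra^{1/2}|\delb_r u(t,x)| \lesssim (1+\eta)\sum_{i+j\leq 2}\|\omega^\eta \delb_r^i\Omega^j (\delb_r u)\|_{L^2(\TPcal_s)}.
\end{equation*}
The key algebraic simplification is that $[\Omega,\delb_r]=0$: the coefficient $\xi(s,r)r/(s^2+r^2)^{1/2}$ of $\del_t$ in $\delb_r$ depends only on $(s,r)$ and is therefore annihilated by $\Omega$, while $(x^a/r)\del_a$ is the radial derivative and commutes with the planar rotation $\Omega$. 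Consequently each summand on the right equals $\|\omega^\eta \delb_r^{i+1}\Omega^j u\|_{L^2(\TPcal_s)}$ with $i+j\leq 2$, and the remaining task is to bound $\|\omega^\eta \delb_r^k\Omega^j u\|_{L^2(\TPcal_s)}$ by $\Ebf_\eta^{\TPcal,2}(s,u)^{1/2}$ for every $(k,j)$ satisfying $k\geq 1$, $k+j\leq 3$.

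I would treat three maximal cases separately. When $k=1$ (so $j\leq 2$), the bound $|\delb_r v|\leq\sum_a|\delb_a v|$ combined with the first estimate of \eqref{eq5-07-10-2021} applied to $v=\Omega^j u$ produces the required control by $\Ebf_\eta(s,\Omega^j u)^{1/2}\lesssim\Ebf_\eta^{\TPcal,2}(s,u)^{1/2}$. When $k=2$ (so $j\leq 1$), the pointwise bound \eqref{eq1-22-01-2022} of Lemma \ref{lem1-23-01-2022} yields $|\delb_r^2\Omega^j u|\lesssim (\ebf^{1,0}[\Omega^j u])^{1/2}$, whose $\omega^\eta$-weighted $L^2$ norm on $\TPcal_s$ is dominated by $\Ebf_\eta^{1,0}(s,\Omega^j u)^{1/2}\lesssim\Ebf_\eta^{\TPcal,2}(s,u)^{1/2}$.

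The case $k=3$, $j=0$ is the delicate one, and I expect this to be the main obstacle: one must invoke \eqref{eq1-23-01-2022}, namely
\begin{equation*}
|\delb_r^3 u|\lesssim \sum_{|I|\leq 2,\,a}|\delb_a\del^I u| + \zeta|\del u|_{1,0}.
\end{equation*}
Proposition \ref{prop1-26-10-2021} then bounds the $\omega^\eta$-weighted $L^2$ norm of $\delb_a\del^I u$ by $\Ebf_\eta(s,\del^I u)^{1/2}$ for $|I|\leq 2$, and that of $\zeta|\del u|_{1,0}$ by $\Ebf_\eta^{1,0}(s,u)^{1/2}$; all these quantities are controlled by $\Ebf_\eta^{\TPcal,2}(s,u)^{1/2}$, and combining with Sobolev delivers \eqref{eq5-20-10-2021}. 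The crux is that the derivation of \eqref{eq1-23-01-2022} in Lemma \ref{lem1-23-01-2022} consumes the structural property $|\chi'|+|\chi''|\lesssim\chi^{1/2}$ from \eqref{eq6-19-10-2021}, which is precisely what converts the transition-zone derivatives of the cut-off $\xi(s,r)$ into factors comparable with $\zeta$ and thereby controllable by the energy.
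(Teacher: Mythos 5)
Your argument is correct and is essentially the paper's own proof: apply \eqref{eq5-17-10-2021} to $\delb_r u$, use $[\Omega,\delb_r]=0$ to rewrite each term on the right-hand side as $\|\omega^{\eta}\delb_r^{\,i+1}\Omega^{j}u\|_{L^2(\TPcal_s)}$, and bound these norms via Lemma \ref{lem1-23-01-2022} together with the energy expression (Proposition \ref{prop1-26-10-2021}); the paper compresses your three cases $k=1,2,3$ into a single line. The only caveat is that your intermediate quantities such as $\Ebf_{\eta}(s,\Omega^{j}u)^{1/2}$ and $\Ebf_{\eta}^{1,0}(s,\Omega^{j}u)^{1/2}$ should be read as energies restricted to $\TPcal_s$, which is legitimate here because every estimate you invoke is a pointwise bound by the energy density, so the conclusion matches the stated bound by $\Ebf_{\eta}^{\TPcal,2}(s,u)^{1/2}$.
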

\begin{proof}
This is a direct result of Lemma \ref{lem1-23-01-2022}. We remark that
$$
\|\omega^{\eta}\delb_r^i\Omega^j\delb_ru\|_{L^2(\TPcal_s)} = \|\omega^{\eta}\delb_r^{i+1}\Omega^ju\|_{L^2(\TPcal_s)} \lesssim \Ebf_{\eta}^{2,\TPcal}(s,u)^{1/2}.
$$
Then we apply \eqref{eq5-17-10-2021}.
\end{proof}

\begin{proof}[Proof of Proposition \ref{prop1-29-10-2021}]
In $\TPcal_s^{far}$, the estimate \eqref{eq2-23-01-2022} covers \eqref{eq1-29-10-2021} because in this case $r\lesssim \omega$. We thus concentrate on $\TPcal_s^{\near}$. We apply \eqref{eq5-20-10-2021} on $L_a u$ and obtain
	$$
	|\delb_rL_au(t,x)|\lesssim (1+r)^{-1/2}(2+r-t)^{-\eta}\Ebf^{\TPcal,2}_{\eta}(s,L_au)^{1/2}.
	$$
	For $(t,x)\in\TPcal_s^{\near}$, let $(t_0,x_0)\in \TPcal_s^{\far}\cap\TPcal_s^{\near}$ with $x/|x| = x_0/|x_0|$. Then we integrate $\delb_r L_au$ along the radial direction on $\TPcal_s$ and obtain
	\begin{equation}\label{eq6-29-10-2021}
	L_au(t,x) = L_au(t_0,x_0) - \int_{|x|}^{|x_0|}\delb_rL_au(T(\rho,s),\rho)d\rho.
	\end{equation}
	Remark that \eqref{eq2-23-01-2022} leads us to
	$$
	|L_au(t_0,x_0)|\lesssim t_0|\del u|\lesssim |x_0|^{1/2-\eta}\Ebf_{\eta}^{\TPcal,2}(s,u).
	$$ 
	Then one has
	$$
	\aligned
	|L_au(t,x)|\lesssim |x_0|^{1/2-\eta}\Ebf_{\eta}^{\TPcal,2}(s,u) 
	+ \Ebf^{\TPcal,2}_{\eta}(s,L_au)^{1/2}\int_{|x|}^{|x_0|}(1+\rho)^{-1/2}(2+\rho - T(\rho,s))^{-\eta}d\rho.
	\endaligned
	$$
	When $(t,x)\in\Pcal_s\cap \TPcal^{\near}_s$, one has $(t_0,x_0)\in \Pcal_s$. In this case $|x|/3\leq T(s) = T(\rhoH(s),s) = t = t_0 \leq |x_0|/2$, and $r_0\leq 3t_0 = 3t$ Thus
	$$
	\aligned
	|L_au(t,x)|\lesssim& \la r\ra^{1/2-\eta}\Ebf_{\eta}^{\TPcal,2}(s,u)^{1/2}
	+ \la r\ra^{-1/2}\Ebf_{\eta}^{\TPcal,3}(s,u)^{1/2}\int_{|x|}^{|x_0|}\!\!\!\!(2+\rho-t)^{-\eta}d\rho
	\\
	\lesssim&\la r\ra^{1/2-\eta}\Ebf_{\eta}^{\TPcal,2}(s,u)^{1/2} 
	+ (1-\eta)^{-1}\la r\ra^{1/2-\eta}\Ebf_{\eta}^{\TPcal,3}(s,u)^{1/2}.
	\endaligned
	$$
	Now remark that $\delt_au = t^{-1}L_au + (x^a/r)(1-r/t)\del_t u$ and the bound 
	$$
	|\del u|\lesssim \la r\ra^{-1/2}\la r-t\ra^{-\eta}\Ebf_{\eta}^{\TPcal,2}(s,u)^{1/2},
	$$
	we obtain
	\begin{equation}\label{eq5-29-10-2021}
	|\delt_au(t,x)|\lesssim (1-\eta)^{-1}\la r\ra^{-1/2-\eta}\Ebf_{\eta}^{\TPcal,3}(s,u)^{1/2}
	\end{equation}
	which is the bound for $\delt_au$ in $\Pcal_s$. For the bound in $\Tcal_s$, we take \eqref{eq6-29-10-2021} with $(t,x)\in \Tcal_s$ and $(t_0,x_0)\in\Pcal_s\cap \Tcal_s$, $x/|x| = x_0/|x_0|$. Then remark that in this case $1\geq|x_0|-|x|\geq 0$. Thus
	$$
	\aligned
	&|L_au(t,x)|
	\\
	\lesssim& |L_au(t_0,x_0)| + \sup_{\Tcal_s}\{|\delb_rL_au|\}
	\lesssim (1-\eta)^{-1}\la r_0\ra^{1/2-\eta}\Ebf_{\eta}^{\TPcal,3}(s,u)^{1/2} 
	+ \la r_0-1\ra^{-1/2} \Ebf_{\eta}^{\TPcal,3}(s,u)^{1/2} 
	\\
	\lesssim& (1-\eta)^{-1}\la r\ra^{1/2-\eta}\Ebf_{\eta}^{\TPcal,3}(s,u)^{1/2}.
	\endaligned
	$$
	Here we recall that $s_0\geq 2$ leads to $r_0\geq 3/2$. The last inequality is because of Lemma \ref{lem1-07-10-2021} which tell us that for $(t_0,x_0)\in \Tcal_s\cap\Pcal_s$, $t_0- 1\leq |x_0|\leq t_0 = T(s)\simeq s^2$, and for $(t,x)\in \Tcal_s$, $|x|\geq \rhoH(s)\simeq s^2$. Then also by the relation $\delt_au = t^{-1}L_au + (x^a/r)(1-r/t)\del_t$, we extend \eqref{eq5-29-10-2021} in $\TPcal_s$. Thus the bound on $|\delt_au|$ in \eqref{eq1-29-10-2021} is established.
	 
	 For the bound on $\del_{\alpha}u$, we remark that on $\Pcal_s$, \eqref{eq2-23-01-2022} already guarantees the desired bound because $\zeta\equiv 1$. For $(t,x)\in \Tcal_s$, we only need to integrate (thanks to \eqref{eq5-20-10-2021})
	 $$
	 |\delb_r\del_{\alpha}u|\lesssim \la r\ra^{-1/2}\la r-t\ra^{-\eta}\Ebf_{\eta}^{\TPcal,3}(s,u)^{1/2}
	 $$
	 on $\Tcal_s$ along the radial direction from $\Tcal_s\cap \Pcal_s$. 
\end{proof}
We now concludes these two subsections with the following decay estimate on high-order derivatives:
\begin{proposition}
Let $u,v$ be sufficiently regular functions defined in $\TPcal_{[s_0,s_1]}$ and vanish sufficiently fast at the spatial infinity. Then for $\eta\geq 0$,	
\begin{equation}\label{eq1-30-10-2021}
\|\la r\ra^{1/2}\omega^{\eta}|\del u|_{p,k}\|_{L^\infty(\TPcal_s)} 
+\|\la r\ra^{1/2+\eta}|\delt u|_{p,k}\|_{L^{\infty}(\TPcal_s)} \leq C(p)\Ebf_{\eta}^{\TPcal,p+3}(s,u)^{1/2},
\end{equation}
\begin{equation}\label{eq2-30-10-2021}
\|\la r\ra^{1/2}\omega^{\eta}|\del v|_{p,k}\|_{L^\infty(\TPcal_s)} 
+\|\la r\ra^{1/2+\eta}|\delt v|_{p,k}\|_{L^{\infty}(\TPcal_s)} 
\leq C(p)\Ebf_{\eta,c}^{\TPcal,p+3}(s,v)^{1/2},
\end{equation}
\begin{equation}\label{eq3-30-10-2021}
c\|\la r\ra^{1/2}\omega^{\eta}|v|_{p,k}\|_{L^{\infty}(\TPcal_s)} \leq C(p)\Ebf_{\eta,c}^{p+2}(s,v)^{1/2}.
\end{equation}
Here $C(p)$ is a constant determined by $p$.
\end{proposition}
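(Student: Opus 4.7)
The plan is to promote the base Sobolev decay of Proposition \ref{prop1-29-10-2021} from order zero to arbitrary order by applying it to $Zu$ (resp.\ $Zv$) for every admissible $Z\in\mathcal{I}_{p,k}$, and then interchanging the order of $Z$ with $\del_\alpha$ or $\delt_a$ via the commutator structure already invoked in the proof of \eqref{eq2-15-03-2022-M}. Compared with the softer bound \eqref{eq2-15-03-2022-M}, replacing $\omega^{1/2+\eta}$ by $\la r\ra^{1/2}\omega^{\eta}$ on the $|\del u|_{p,k}$-term is a strict improvement only inside $\TPcal^{\near}_{[s_0,s_1]}$, where $\omega$ is bounded and $\la r\ra\simeq t$; this forces us to use an order-three Sobolev inequality rather than an order-two one, which accounts for the $p+3$ regularity budget in place of $p+2$.

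For the $|\del u|_{p,k}$ part, fix $Z\in\mathcal{I}_{p,k}$ and apply Proposition \ref{prop1-29-10-2021} to $Zu$, obtaining
\begin{equation*}
\la r\ra^{1/2}\omega^{\eta}\,|\del_\alpha Zu| + \la r\ra^{1/2+\eta}\,|\delt_a Zu| \lesssim \Ebf_{\eta}^{\TPcal,3}(s,Zu)^{1/2}.
\end{equation*}
Since $Z'Z\in\mathcal{I}_{p+3,k+3}\subset\mathcal{I}_{p+3}$ for every $Z'\in\mathcal{I}_{3,3}$, the right-hand side is controlled by $\Ebf_{\eta}^{\TPcal,p+3}(s,u)^{1/2}$. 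Because $[\del_\alpha,L_b]$ and $[\del_\alpha,\Omega]$ are constant-coefficient combinations of $\del_\beta$'s, an easy induction (the reverse of Proposition \ref{prop1-23-10-2021}) shows that $|\del_\alpha Zu|$ differs from $|Z\del_\alpha u|$ only by terms of the shape $|\del_\beta Z'u|$ with $Z'\in\mathcal{I}_{p-1,k}$, absorbed at the previous level of induction. This yields the claimed estimate on $|\del u|_{p,k}$.

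For $|\delt u|_{p,k}$ we split by region. In the near-light-cone part $\TPcal^{\near}_s$, Proposition \ref{prop1-29-10-2021} already delivers the sharp $\la r\ra^{-1/2-\eta}$ decay on $\delt_a Zu$; the commutators $[\delt_a,L_b]$ and $[\delt_a,\Omega]$ are evaluated by expanding $\delt_a=(x^a/r)\del_t+\del_a$, and since $x^a/r$ is homogeneous of degree zero they produce only terms of the form $\delt_{a'}Z'u$ or $(x^a/r)\del_\beta Z'u$ with $Z'\in\mathcal{I}_{p-1,k}$, absorbed at the previous order. In the away-from-light-cone part $\TPcal^{\far}_s$ one has $\la r-t\ra\simeq\omega\simeq\la r\ra$, so $\la r\ra^{1/2+\eta}$ and $\la r\ra^{1/2}\omega^{\eta}$ are equivalent; the identity $\delt_a=(x^a/r)\del_t+\del_a$ then reduces the bound on $\delt_a$ to the one just proved for $\del$.

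The Klein--Gordon version for $v$ is entirely parallel: Proposition \ref{prop1-29-10-2021} is applied to $Zv$ and the resulting free energy $\Ebf_{\eta}^{\TPcal,3}(s,Zv)^{1/2}$ is majorised by $\Ebf_{\eta,c}^{\TPcal,p+3}(s,v)^{1/2}$. The third estimate \eqref{eq3-30-10-2021} is obtained at once by applying \eqref{eq4-15-03-2022-M} to $Zv$ for each $Z\in\mathcal{I}_{p,k}$, costing only two extra orders. I expect the main obstacle, such as it is, to be the commutator bookkeeping for $\delt_a$ in the near-cone regime: unlike $\del_\alpha$, the operator $\delt_a$ is not translation-invariant and its commutators with $L_b$ and $\Omega$ involve $x^a/r$, whose iterated rotational derivatives must be tracked; the homogeneity of degree zero of $x^a/r$ ensures that all such coefficients stay uniformly bounded on $\TPcal^{\near}_s$, so the decay rate is not degraded.
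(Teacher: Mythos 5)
Your overall route is exactly the paper's: apply the third-order sharp decay estimate \eqref{eq1-29-10-2021} to $Zu$ (resp. $Zv$) for $Z\in\mathcal{I}_{p,k}$, reduce $Z\del_\alpha u$ to $\del_\alpha Z'u$ via the ordering property of Proposition \ref{prop1-23-10-2021}, treat $\TPcal^{\far}_s$ by $\la r-t\ra\simeq\omega\simeq\la r\ra$, and get \eqref{eq3-30-10-2021} from \eqref{eq4-15-03-2022-M}. Those parts, and the $p+3$ bookkeeping, are fine.

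The gap is in the commutator step for $\delt_a$ in $\TPcal^{\near}_s$, which you yourself flag as the main obstacle and then dismiss by homogeneity. You claim $[Z,\delt_a]u$ produces only terms $\delt_{a'}Z'u$ or (bounded coefficient)$\,\times\,\del_\beta Z'u$, and that boundedness of $x^a/r$ suffices. Boundedness is not enough for the $\la r\ra^{1/2+\eta}$-weighted bound: near the cone one only has $\la r\ra^{1/2}\omega^{\eta}|\del_\beta Z'u|\lesssim \Ebf_{\eta}^{\TPcal,\cdot}(s,u)^{1/2}$, and since $\omega\simeq\la r-t\ra$ can be $O(1)$ while $\la r\ra\sim t$ is large, a remainder of the form $c(x)\del_\beta Z'u$ with merely bounded $c$ would cost an uncontrolled factor $(\la r\ra/\la r-t\ra)^{\eta}$. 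What saves the argument — and what your accounting misses — is that the non-$\delt$ remainders actually carry the extra smallness $\frac{|r-t|+1}{r}$: e.g.
\begin{equation*}
[L_b,\delt_a]=-\frac{x^a}{r}\,\delt_b+\Big(\delta_{ab}-\frac{x^ax^b}{r^2}\Big)\frac{t-r}{r}\,\del_t ,
\end{equation*}
and more generally this is the content of \eqref{eq3-23-10-2021}, which bounds $|\delt u|_{p,k}$ by $\sum_a|\delt_a Zu|+\frac{|r-t|+1}{r}\sum_\alpha|\del_\alpha Zu|$. With that factor the offending term is controlled by $\la r\ra^{1/2+\eta}\frac{\la r-t\ra}{\la r\ra}\la r\ra^{-1/2}\omega^{-\eta}\Ebf^{1/2}\lesssim\la r\ra^{\eta-1}\la r-t\ra^{1-\eta}\Ebf^{1/2}\lesssim\Ebf^{1/2}$; without it the induction does not close. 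So you need to replace the "coefficients stay uniformly bounded" step by the quantitative commutator structure \eqref{eq3-23-10-2021} (or reprove the identity above and the analogous one for $\del^I$, noting $[\Omega,\delt_a]$ is purely a $\delt$), after which the proof is complete and coincides with the paper's.
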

\begin{proof}
For \eqref{eq1-30-10-2021} and \eqref{eq2-30-10-2021}, we recall \eqref{prop1-23-10-2021} for the bound on $|Z\del u|$ and \eqref{eq3-23-10-2021} for $|Z\delt_au|$. Then we apply \eqref{eq1-29-10-2021} on $Zu$ with $\ord(Z)\leq p, \rank(Z)\leq k$. For \eqref{eq3-30-10-2021}, we apply \eqref{eq4-21-01-2022} on $Zu$ with $\ord(Z)\leq p,\rank(Z)\leq k$.
\end{proof}

%================================================================================================================
\section{Decay estimates on wave  and Klein--Gordon equations}
\label{sec2-11-04-2022-M}
In this section we recall some decay estimates on wave and Klein--Gordon equations. These results depend on the linear structure of the equations.
\subsection{Estimates on Hessian of wave equations}
The following result is essentially established in \cite{LM1}. Here is a version formulated in \cite{M-2020-strong} (Proposition 2.4 therein).
\begin{proposition}
	For any function $u$ defined in $\Hcal^*_{[s_0,\infty)}$, 
	\begin{equation}\label{eq1-28-10-2021}
		(s/t)^2|\del\del u|_{p,k}\lesssim C(p)\big(|\Box u|_{p,k} +  t^{-1}|\del u|_{p+1,k+1}\big).
	\end{equation}	
%	For any function $u$ defined in $\TPcal^{\near}_{ [s_0,s_1]} $ one has 
%	\begin{equation}\label{eq4-27-10-2021}
%		t^{-1}\la r-t\ra |\del\del u|_{p,k} \leq C(p) \big(|\Box u|_{p,k} + t^{-1} |\del u|_{p+1,k+1}\big).
%		 {\text{\tt maybe not necessary}}
%	\end{equation}
%	Let $u$ be a function defined in $\TPcal^{\far}_{[s_0,s_1]}$, one has
%	\begin{equation}\label{eq5-27-10-2021}
%		|\del\del u|_{p,k}  \leq C(p) \big(|\Box u|_{p,k} + t^{-1} |\del u|_{p+1,k+1}\big).
%		 {\text{\tt maybe not necessary}}
%	\end{equation}
\end{proposition}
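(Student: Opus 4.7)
The strategy is to first establish the base case $p=k=0$ by an algebraic decomposition of $\Box$ in the semi-hyperboloidal frame, and then upgrade to arbitrary $(p,k)$ by commuting $Z\in\mathcal{I}_{p,k}$ past $\Box$.

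\emph{Base case.} On $\Hcal^*_{[s_0,\infty)}$ one has $t^2-r^2=s^2$. Substituting $\del_a=\delu_a-(x^a/t)\del_t$ into $\Box u=\del_t^2 u-\sum_a \del_a^2 u$ and expanding yields the identity
\[
(s/t)^2\, \del_t^2 u \;=\; \Box u \;+\; \sum_a \delu_a \delu_a u \;-\; 2\sum_a (x^a/t)\,\delu_a\del_t u \;+\; c(t,x)\,\del_t u,
\]
with $c=-(2t^2-r^2)/t^3 = O(1/t)$ on $\Hcal^*_s$. Every term on the right besides $\Box u$ is good: using $\delu_a=t^{-1}L_a$, a single application to a translation derivative gives $|\delu_a\del_\beta u|=t^{-1}|L_a\del_\beta u|\lesssim t^{-1}|\del u|_{1,1}$, and iterating via $\delu_a\delu_a u = \delu_a\!\big((x^a/t)\del_t u+\del_a u\big)$, where $|\delu_a(x^a/t)|=O(1/t)$ on $\Hcal^*_s$, produces only objects of this same good type. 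For the mixed and purely spatial Hessian entries one applies $\del_a=\delu_a-(x^a/t)\del_t$ once more: this reduces them to good $\delu_a\del_\beta u$ terms plus $(x^a/t)\del_t\del_\beta u$, the latter absorbed by the bound just obtained for the time Hessian since $|x^a/t|\leq 1$ on $\Hcal^*_s$.

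\emph{Induction on $(p,k)$.} Apply the base estimate to $Zu$ for $Z\in\mathcal{I}_{p,k}$:
\[
(s/t)^2\, |\del\del(Zu)| \;\lesssim\; |\Box(Zu)| \;+\; t^{-1}|\del(Zu)|_{1,1}.
\]
Since the boosts $L_a$, the rotation $\Omega$, and the translations $\del_\alpha$ are Killing or commute trivially, one has $\Box(Zu)=Z\Box u$, so $|\Box(Zu)|\leq |\Box u|_{p,k}$. Commuting the outer $\del\del$ past $Z$ costs at most one extra translation per boost (via $[\del_\alpha,L_b]=\pm\del_\bullet$ and $[\del_\alpha,\Omega]=\pm\del_\bullet$), whence $|\del\del(Zu)|$ controls $|\del\del u|_{p,k}$ modulo lower-order terms already on the right-hand side, and $t^{-1}|\del(Zu)|_{1,1}\lesssim t^{-1}|\del u|_{p+1,k+1}$.

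\emph{Main obstacle.} The only delicate point is the handling of $\delu_a\delu_a u$: a crude estimate through $\delu_a\delu_a u = t^{-2}L_a^2 u + O(t^{-3})L_a u$ only yields $|\delu_a\delu_a u|\lesssim|\del u|_{1,1}$, falling short of the required $t^{-1}|\del u|_{1,1}$. The extra $t^{-1}$ is recovered by first viewing $\delu_a u$ as a first-order derivative of $u$ and then applying $\delu_a$ to it, which always carries a factor $t^{-1}$ via $\delu_a=t^{-1}L_a$. Performing this bookkeeping uniformly through the iterated action of $Z\in\mathcal{I}_{p,k}$, together with the standard commutator identities and the homogeneity relations $L_a s=0$, $L_a t=x^a$, closes the estimate at every order.
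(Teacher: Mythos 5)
Your proposal is correct and follows essentially the same route as the paper: your base identity is just a rearrangement of the paper's decomposition $\Box u = \big(1-(r/t)^2\big)\del_t\del_t u - t^{-1}\big(L_a\del_a-(x^a/t)L_a\del_t\big)u$, your reduction of the mixed and spatial Hessian entries via $\del_a=\delu_a-(x^a/t)\del_t$ reproduces the identities $\del_t\del_a = t^{-1}L_a\del_t-(x^a/t)\del_t\del_t$ and $\del_a\del_b = t^{-1}L_a\del_b-(x^a/t^2)L_b\del_t+(x^ax^b/t^2)\del_t\del_t$, and the passage to general $(p,k)$ by commuting $Z$ with $\Box$ and the translations is the standard step the paper leaves implicit. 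No gaps; the handling of $\delu_a\delu_a u$ through $\delu_a=t^{-1}L_a$ correctly recovers the needed factor $t^{-1}$.
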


\begin{proof}[Sketch of proof]
	The first two bounds are direct results of 
	\begin{equation}\label{eq6-27-10-2021}
		\Box u = \big(1-(r/t)^2\big)\del_t\del_tu - t^{-1}\big(L_a\del_a - (x^a/t)L_a\del_t\big)u
	\end{equation}
	Together with the relations
	\begin{subequations}
		\begin{equation}\label{eq1a-29-10-2021}
			\del_t\del_a = \del_a\del_t = t^{-1}L_a\del_t - (x^a/t)\del_t\del_t,
		\end{equation}
		\begin{equation}\label{eq1b-29-10-2021}
			\del_a\del_b = t^{-1}L_a\del_b - (x^a/t^2)L_b\del_t + (x^ax^b/t^2)\del_t\del_t.
		\end{equation}
	\end{subequations}
	These identities are based on the expression $\del_a = t^{-1}L_a - (x^a/t)\del_t$.

\end{proof}

\subsection{Decay estimates on wave equation based on integration along time-like hyperbolas}
We recall Proposition 4.1 of \cite{M-2020-strong}. Here we present a slightly modified version with which we do not need the assumption on the support of initial data. We firstly recall the following notation. Let $\mathcal{J} := \del_t + \frac{2tx^a}{t^2+r^2}\del_a$ which is a smooth vector field defined in $\Fcal_{[s_0,\infty)}$. Let $(t,x)\in \Hcal^*_{[s_0,\infty)}$. We denote by $\gamma_{t,x}$ the integral curve passing $(t,x)$ with $\gamma_{t,x}(t) = (t,x)$. In fact we can calculate explicitly:
$$
\aligned
\gamma_{t,x}: [t_0,\infty) &\mapsto \Hcal^*_{[s_0,\infty)}
\\
\tau &\mapsto \Big(\tau, \frac{x^a}{|x|}\sqrt{\tau^2+\frac{1}{4}C(t,x)^2} - \frac{1}{2}C(t,x)\Big)
\endaligned
$$	
with $C(t,x) := (t^2-|x^2|)/|x|$. Then we state the following estimate:
\begin{proposition}\label{prop1-31-10-2021}
	Let $u$ be a sufficiently regular function defined in $\Hcal^*_{[s_0,s_1]}$. Then for $(t,x)\in \Hcal^*_{[s_0,s_1]}$ and $s = \sqrt{t^2-|x|^2}$, 
	\begin{equation}\label{eq1-31-10-2021}
		|t^{1/2}\del_t u|\lesssim 
		\sup_{\Hcal^*_{s_0}\cup \del\Kcal_{[s_0,s]}}\{t^{1/2}|\del_t u|\} 
		+ \int_{s_0}^tW_{t,x}[u](\tau)e^{-\int_{\tau}^tP_{t,x}(\eta)d\eta}\,d\tau,
	\end{equation}
	where
	$$	
	\aligned
	W_{t,x}[u](\tau) :=& \frac{t^2}{t^2+r^2}t^{1/2}\big(\Box u + \sum_a\delu_a\delu_a u \big)\Big|_{\gamma_{t,x}(\tau)},
	\\
	P_{t,x}(\tau):=& \frac{3t^2}{2(t^2+r^2)}(s/t)^2t^{-1}\geq \frac{3}{4}(s/t)^2t^{-1}\Big|_{\gamma_{t,x}(\tau)}.
	\endaligned
	$$
\end{proposition}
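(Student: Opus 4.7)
The strategy is to derive an elementary first-order linear ODE for the quantity $F(\tau):=\tau^{1/2}\del_t u\big|_{\gamma_{t,x}(\tau)}$ along the integral curves of $\Jcal$, and then solve it by Duhamel's formula with an integrating factor. First I would establish the key algebraic identity
\begin{equation*}
\frac{t^{2}}{t^{2}+r^{2}}\Big(\Box u+\sum_{a}\delu_{a}\delu_{a}u\Big)
=\Jcal(\del_{t}u)+\frac{1}{2t}\del_{t}u+P_{t,x}(t)\,\del_{t}u.
\end{equation*}
Using $\delu_{a}=(x^{a}/t)\del_{t}+\del_{a}$ and expanding (with $d=2$) gives
\begin{equation*}
\Box u+\sum_{a}\delu_{a}\delu_{a}u=\frac{t^{2}+r^{2}}{t^{2}}\del_{t}\del_{t}u+\frac{2}{t}x^{a}\del_{t}\del_{a}u+\frac{2t^{2}-r^{2}}{t^{3}}\del_{t}u,
\end{equation*}
which equals $\frac{t^{2}+r^{2}}{t^{2}}\Jcal(\del_{t}u)+\frac{2t^{2}-r^{2}}{t^{3}}\del_{t}u$ by the definition of $\Jcal$. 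Multiplying by $\frac{t^{2}}{t^{2}+r^{2}}$ and splitting the scalar coefficient
$\frac{2t^{2}-r^{2}}{t(t^{2}+r^{2})}=\frac{1}{2t}+\frac{3s^{2}}{2t(t^{2}+r^{2})}=\frac{1}{2t}+P_{t,x}(t)$
(using $s^{2}=t^{2}-r^{2}$) produces the identity above.

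Next I would exploit the fact that, along $\gamma_{t,x}$, the vector field $\Jcal$ is simply $d/d\tau$, because the $t$-component of $\Jcal$ is $1$. Since $\Jcal(\tau^{1/2}\del_{t}u)=\tau^{1/2}\bigl(\Jcal(\del_{t}u)+\tfrac{1}{2\tau}\del_{t}u\bigr)$, the identity above rewrites along the curve as the linear ODE
\begin{equation*}
\frac{d}{d\tau}F(\tau)+P_{t,x}(\tau)\,F(\tau)=W_{t,x}[u](\tau),\qquad F(\tau):=\tau^{1/2}\del_{t}u\bigl(\gamma_{t,x}(\tau)\bigr).
\end{equation*}
Using the integrating factor $\exp\bigl(\int^{\tau}P_{t,x}\bigr)$ and noting $P_{t,x}\ge 0$ gives the Duhamel representation
\begin{equation*}
F(t)=F(\tau_{0})\,e^{-\int_{\tau_{0}}^{t}P_{t,x}}+\int_{\tau_{0}}^{t}W_{t,x}[u](\tau)\,e^{-\int_{\tau}^{t}P_{t,x}(\eta)\,d\eta}\,d\tau,
\end{equation*}
where $\tau_{0}$ is the time at which the backward curve $\gamma_{t,x}$ first enters the hyperbolic region.

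The last step is to identify $\tau_{0}$ and bound $|F(\tau_0)|$. Since $\Jcal$ preserves spatial rays (one checks $dx^a/d\tau\propto x^a$), the integral curve travels along a fixed ray and the solution of $dr/dt=2tr/(t^{2}+r^{2})$ is given explicitly by the formula for $\gamma_{t,x}$ stated in the proposition, with the conserved quantity $s(\tau)^{2}/r(\tau)=C(t,x)$. Running $\tau$ backward from $t$ the curve must exit the hyperbolic region $\Hcal^{*}_{[s_{0},\infty)}$ either through the bottom $\Hcal^{*}_{s_{0}}$ (if it reaches $s=s_{0}$ first) or through the lateral boundary $\del\Kcal_{[s_{0},s]}=\{r=t-1\}$ (if it reaches the cone first); this case split is precisely what removes the compact-support restriction used in \cite{M-2020-strong}, since initial data are now allowed on $\del\Kcal$. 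In either case $|F(\tau_{0})|\le \sup_{\Hcal^{*}_{s_{0}}\cup\del\Kcal_{[s_{0},s]}}\{t^{1/2}|\del_{t}u|\}$, and combining this with $e^{-\int_{\tau_{0}}^{t}P_{t,x}}\le 1$ yields the stated inequality.

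\textbf{Main obstacle.} The algebraic identity is the decisive step, but it is essentially a bookkeeping calculation; the genuinely delicate issue is checking that the ODE is \emph{dissipative} (i.e.\ $P_{t,x}\ge 0$) after the $1/(2t)$ absorption, and verifying the geometric fact that backward $\Jcal$-trajectories from any $(t,x)\in\Hcal^{*}_{[s_{0},\infty)}$ exit only through $\Hcal^{*}_{s_{0}}\cup\del\Kcal_{[s_{0},s]}$ (and in particular do not spiral or leave through the future), which is the content of the explicit parametrization for $\gamma_{t,x}$.
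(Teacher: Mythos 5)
Your proof is correct and follows essentially the same route as the paper: the identity you derive by expanding $\Box u+\sum_a\delu_a\delu_a u$ and splitting $\tfrac{2t^2-r^2}{t(t^2+r^2)}=\tfrac{1}{2t}+P_{t,x}$ is exactly the decomposition $\Jcal(t^{1/2}\del_t u)+P\,t^{1/2}\del_t u=S^w[u]+\Delta^w[u]$ that the paper cites from \cite{M-2020-strong} (with $\alpha=1/2$, $\beta=0$), and your Duhamel integration along $\gamma_{t,x}$ together with the exit analysis (monotonicity of $s$ and $r/t$ along $\Jcal$, so the backward trajectory leaves only through $\Hcal^*_{s_0}\cup\del\Kcal_{[s_0,s]}$) is the paper's argument. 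The only difference is that you verify the key identity and the conserved quantity $C(t,x)$ by direct computation instead of quoting them, which is fine.
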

\begin{proof}[Sketch of Proof]
	%This is nearly the same to the proof of Proposition 4.1 in \cite{M-2020-strong}. 
	We only need to make the following additional observation. Firstly, remark that $(\tau,(x^a/|x|)\tau)$ are integral curves of $\mathcal{J}$. Thus for any $(t_0,x_0)\in\Hcal^*_{[s_0,\infty)}$, $\gamma_{t_0,x_0}\subset H^*_{[s_0,\infty)}$. So one has $t^2-r^2>0$ along $\gamma_{t_0,x_0}$. Then
	$$
	\mathcal{J}(r/t) = -r^3t^{-2}(t^2+r^2)^{-1}<0,\quad \mathcal{J}s^2 = \frac{2t^3}{t^2+r^2}>0.
	$$
	Thus $\forall (t,x)\in \Hcal^*_{[s_0,\infty)}$ and $0<t-\tau$ sufficiently small, $\gamma_{t,x}(\tau)\in\Hcal^*_{[s_0,s]}$ (with $s^2 = t^2-r^2$). Let 
	$$
	t_0: = t_0(t,x) = \inf\{\lambda| \forall \lambda\leq \tau\leq t, \gamma_{t,x}(\tau)\in \Hcal^*_{[s_0,s]}\}
	$$
	It is clear that $t_0\geq s_0$. On the other hand, we remark that $\gamma_{t,x}(t_0)\in \Hcal^*_{s_0}\cup \del\Kcal_{[s_0,s_1]}$ (because $s$ strictly increases along $\gamma_{t,x}$, $\gamma_{t,x}(t_0)\notin \Hcal^*_s)$). 
	
	On the other hand, we recall the decomposition of wave operator (which is a special case of (4.2) in \cite{M-2020-strong}, with $\alpha=1/2, \beta = 0$)
	$$
	\mathcal{J}(t^{1/2}\del_tu) + P(t,r)t^{1/2}\del_tu = S^w[u] + \Delta^w[u]
	$$
	where
	$$
	S^w[u]:= t^{1/2}\frac{t^2\Box u}{t^2+r^2},\quad \Delta^w[u] := t^{1/2}\frac{t^2\delu_a\delu_a u}{t^2+r^2}.
	$$
	Integrating the above identity along $\gamma_{t,x}$ for $\tau\in [t_0,t]$ and remark that $|t-r|$ remains uniformly bounded on $\Hcal^*_{s_0}\cup \del\Kcal_{[s_0,s]}$, the desired result is established.
\end{proof}

\subsection{Decay estimates on Klein--Gordon equations in transition-exterior region}
Klein--Gordon components enjoy extra decay in the form of $t^{-k}\la r-t\ra^k$ near the light cone. This phenomena was detected in \cite{Ho1,Kl1993}. In \cite{LM1} it was reestablished within the hyperboloidal foliation. The following one is a slight generalization of version established in \cite{LM-2022}.
\begin{proposition}\label{eq4-30-10-2021}
	Let $v$ be a sufficiently regular solution to the equation 
	$$
	\Box v + c^2v  = f
	$$
	in $\TPcal^{\near}_{[s_0,s_1]}$. Then
	$$
	c^2|v|_{p,k} \leq 
	\begin{cases}
		C(p) \la r\ra^{-3/2}\la r-t\ra^{1-\eta}\Ebf_{\eta}^{\TPcal,p+4,k+4}(s,u)^{1/2} + |f|_{p,k},
		\\
		C(p) \la r\ra^{-1}\la r-t\ra^{1/2-\eta}\Ebf_{\eta}^{\TPcal,p+3,k+3}(s,u)^{1/2} + |f|_{p,k}.
	\end{cases}
	$$
\end{proposition}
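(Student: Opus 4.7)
The plan is to rewrite the equation as $c^2 v = f-\Box v$ and extract the sharp near-light-cone decay of $\Box v$ by combining a semi-null decomposition with the algebraic relation between the boosts $L_a$ and the tangential fields $\delt_a$. Since each vector field in $\{\del_\alpha,L_a,\Omega\}$ commutes with $\Box$, commuting $Z$ with $\ord(Z)\leq p$, $\rank(Z)\leq k$ past the equation gives $c^2 Zv = Zf - \Box Zv$, so the task reduces to bounding $|\Box Zv|$ with the claimed weight.

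A direct computation in the semi-null frame, using $\sum_a(x^a/r)^2=1$ and $\sum_a\del_a(x^a/r)=r^{-1}$ in $\RR^2$, gives
\begin{equation*}
\Box w = 2\del_t\delt_r w - \sum_a \delt_a\delt_a w + r^{-1}\del_t w, \qquad \delt_r := (x^a/r)\delt_a = \del_t + \del_r.
\end{equation*}
The central algebraic input is the identity $t\,\delt_a = L_a - (x^a/r)(t-r)\del_t$, which follows immediately from $L_a=x^a\del_t+t\del_a$ and the definition of $\delt_a$. One application produces the pointwise bound $t|\delt Zv|\lesssim |Zv|_{1,1} + |t-r||\del Zv|_{p,k}$. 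Combined with the Sobolev decay estimates \eqref{eq2-30-10-2021} and \eqref{eq3-30-10-2021}, together with $t\simeq \la r\ra$ and $(t-r)\omega^{-\eta}\lesssim \la r-t\ra^{1-\eta}$ in $\TPcal^{\near}$, this yields the second claimed estimate, with regularity loss $p+3,k+3$ and weight $\la r\ra^{-1}\la r-t\ra^{1/2-\eta}$, after an interpolation with the direct bound $|\delt v|_{p,k}\lesssim \la r\ra^{-1/2-\eta}\Ebf_{\eta,c}^{\TPcal,p+3,k+3}(s,v)^{1/2}$ from \eqref{eq2-30-10-2021}.

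To obtain the sharper first bound $\la r\ra^{-3/2}\la r-t\ra^{1-\eta}$, I would iterate the identity once more on $t^2\delt_a\delt_b Zv$. Since $\delt_a(t-r)=0$ and $\delt_a(x^b/r)=O(r^{-1})$, the extra commutators are harmless and one arrives at
\begin{equation*}
t^2|\delt\delt Zv| \lesssim |Zv|_{2,2} + (t-r)|\del Zv|_{1,1} + (t-r)^2|\del\del Zv|_{p,k} + \text{l.o.t.}
\end{equation*}
The main obstacle is the last cross term: in the exterior near-cone region $\TPcal^{\near}$ there is no analogue of the interior coercivity factor $(s/t)^2$ available on second-order Cartesian derivatives. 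I would handle it by substituting back from the equation, namely $\del_t^2 Zv = \Delta Zv + Zf - c^2 Zv$, producing a self-consistent inequality in which the resulting $c^2|v|$ contribution carries the prefactor $(t-r)^2/\la r\ra^2\lesssim 1$ in $\TPcal^{\near}$ and can be absorbed into the left-hand side. Combining this with \eqref{eq2-30-10-2021}, \eqref{eq3-30-10-2021} and the elementary bound $(t-r)^2\omega^{-\eta}\lesssim \la r\ra\la r-t\ra^{1-\eta}$ in $\TPcal^{\near}$ then yields the first claimed estimate at the cost of four orders of regularity.
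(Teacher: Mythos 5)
Your reduction $c^2Zv=Zf-\Box Zv$ and your treatment of the \emph{second} bound are fine: once you rewrite $t\delt_a=L_a+(x^a/r)(t-r)\del_t$ (note the sign; it is $+$, not $-$, though this does not affect the estimates), your semi-null computation collapses to exactly the identity \eqref{eq6-27-10-2021} used in the paper, i.e. $|\Box Zv|\lesssim r^{-1}\la r-t\ra\,|\del v|_{p+1,k}+r^{-1}|\del v|_{p+1,k+1}$, and inserting the cost-two decay $\omega^{1/2+\eta}|\del v|_{p+1}\lesssim \Ebf_{\eta,c}^{\TPcal,p+3,k+3}(s,v)^{1/2}$ from \eqref{eq2-15-03-2022-M} gives $\la r\ra^{-1}\la r-t\ra^{1/2-\eta}$ at cost $p+3$.

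The gap is in your argument for the \emph{first} (sharper) bound. First, the absorption step does not work as stated: $\TPcal^{\near}_{[s_0,s_1]}$ extends out to $r\leq 3t$, where $(t-r)^2/t^2\simeq 4$ and $(t-r)/t\simeq 2$, so the $c^2|Zv|$ contribution produced by substituting $\del_t^2Zv=\Delta Zv+Zf-c^2Zv$ comes back with a coefficient that is only bounded, not uniformly $<1$; ``can be absorbed into the left-hand side'' requires smallness, which you only have in a strictly-near-cone subregion such as $|r-t|\leq t/4$ (away from it you would instead have to invoke the already-proved second bound, using $\la r-t\ra\simeq\la r\ra$ there to convert the weights — a case-splitting your proposal does not contain). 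Second, your iteration only treats $\delt_a\delt_bZv$, but the dominant term of your own decomposition is $2\del_t\delt_rZv$, which carries a single good derivative; after the same substitution $t\delt_r=(x^a/r)L_a+(t-r)\del_t$ it produces the cross term $t^{-1}(t-r)\del_t^2Zv$ with the identical non-small-coefficient problem, and the available one-good-derivative decay $\la r\ra^{-1/2-\eta}$ from \eqref{eq2-30-10-2021} is weaker than the target $\la r\ra^{-3/2}\la r-t\ra^{1-\eta}$ near the cone, so this term cannot be dismissed. The paper avoids all of this: no substitution back into the equation is made; one simply keeps the $\del_t\del_t$ term with its coefficient $|t^2-r^2|/t^2\simeq\la r-t\ra/\la r\ra$, bounds $|\del\del Zv|\leq|\del v|_{p+1,k}$, and applies the \emph{sharper} Sobolev decay \eqref{eq1-29-10-2021}--\eqref{eq1-30-10-2021}, $|\del v|_{p+1}\lesssim\la r\ra^{-1/2}\la r-t\ra^{-\eta}\Ebf_{\eta,c}^{\TPcal,p+4,k+4}(s,v)^{1/2}$, which costs one more derivative (hence $p+4$) and directly yields the weight $\la r\ra^{-3/2}\la r-t\ra^{1-\eta}$.
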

\begin{proof}[Sketch of proof]
	Thanks to \eqref{eq6-27-10-2021}, one has
	\begin{equation}\label{eq5-21-03-2022-M}
		c^2|v|_{p,k}\leq C(p)\big(r^{-1}\la r-t\ra|\del v|_{p+1,k} + r^{-1}|\del v|_{p+1,k+1} + |f|_{p,k}\big). 
	\end{equation}
	Then substituting \eqref{eq1-21-01-2022}, \eqref{eq1-29-10-2021} into the above expression, we obtain the desired results.
\end{proof}

%=================================================================================================================
\section{Conformal energy estimate on Euclidean-hyperboloidal hypersurface}
\label{sec-2-10-04-2022-M}
\subsection{Basic identities}
We recall that for wave equation, there is the well-known conformal energy estimate in the flat foliation (cf. \cite{Al-book1}). In \cite{MH-2017} a parallel estimate was established in the hyperboloidal foliation context (see also \cite{Wong-2017}). Here we make a generalization in the framework of Euclidean-hyperboloidal foliation. 

To get started, we recall the following differential identity (cf. \cite{Al-book1}, Section 6.7). Let $u$ be a $C^2$ function defined in $\Fcal_{[s_0,s_1]}$, then in $\RR^{1+n}$ case,
\begin{equation}\label{eq2-26-01-2022}
	\aligned
	&\big(K_2 + (n-1)t\big)u\,\Box u 
	\\
	=& \frac{1}{2}\del_t\Big((t^2+r^2)\big(|\del_tu|^2 + \sum_a|\del_a u|^2\big) + 4tx^a\del_tu\del_au + 2(n-1)tu\del_t u - (n-1)u^2\Big)
	\\
	& + \del_a \Big(tx^a\big(\sum_b|\del_b u|^2 - |\del_tu|^2\big) - (r^2+t^2)\del_tu\del_au - 2tx^b\del_bu\del_au
	- (n-1)tu\del_a u\Big).
	\endaligned
\end{equation}
Here $K_2 := (t^2+r^2)\del_t + 2tx^a\del_a$. Then we integrate this identity in $\Fcal_{[s_0,s_1]}$. Recalling \eqref{eq3-26-01-2022}, one obtains
\begin{equation}
	\aligned
	\Econ(s_1,u) - \Econ(s_0,u) =& \int_{\Fcal_{[s_0,s_1]}}(K_2 + (n-1)t)u\,\Box u dxdt
	\\
	=& \int_{s_0}^{s_1}\int_{\Fcal_s}(K_2 + (n-1)t)u\,\Box u\, J dx ds
	\endaligned
\end{equation}
with $\Econ(s,u) := \int_{\Fcal_s} \econ[u]\,dx$ and the energy density
\begin{equation}\label{eq4-22-04-2022-M}
\aligned
\econ[u] 
=& \frac{1}{2} \Big(t^2+r^2 + \frac{2\xi(s,r)tr^2}{(s^2+r^2)^{1/2}}\Big)|\del_tu|^2 
+ \frac{1}{2}\Big(t^2+r^2 - \frac{2\xi(s,r)tr^2}{(s^2+r^2)^{1/2}}\Big)\sum_a|\del_au|^2
\\
& + \Big(2tx^a + \frac{\xi(s,r)(t^2+r^2)x^a}{(s^2+r^2)^{1/2}}\Big)\del_tu\del_au + \frac{2t\xi(s,r)|r\del_r u|^2}{(s^2+r^2)^{1/2}}
\\
&+(n-1)tu\Big(\del_t + \frac{\xi(s,r)x^a}{(s^2+r^2)^{1/2}}\del_a\Big)u\, dx - \frac{n-1}{2} u^2.
\endaligned
\end{equation}
In this subsection we first establish the following result.
\begin{lemma}
Suppose that $u$ is a sufficiently regular function defined in $\Fcal_{[s_0,s_1]}$. Then
\begin{equation}\label{eq1-29-01-2022}
\aligned
\econ[u]=&\frac{\zeta^2}{2A} |K_2u + t(n-1)u|^2 + \frac{1}{2}A\sum_{a>b}|r^{-1}\Omega_{ab}u|^2
\\
&+ \frac{(t^2-r^2)^2}{2A} |\delb_r u|^2 
+(n-1)t\Big(\delb_r t - \frac{2\zeta^2 rt}{A}\Big)u\delb_ru -\frac{(n-1)^2\zeta^2 t^2u^2}{2A} - \frac{n-1}{2}u^2.
\endaligned
\end{equation}
Here $\delb_r = (x^a/r)\delb_a$.
\end{lemma}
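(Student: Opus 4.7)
\smallskip

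\textbf{Proof proposal.} This is a purely algebraic identity: the two expressions for $\econ[u]$ are quadratic forms in $(\del_t u, \del_r u, \text{angular derivatives of }u, u)$ with coefficients depending on $t,r,s,\xi(s,r)$, and one must check they agree. My plan is to decompose the original expression in a radial/angular frame, then perform the linear change of variables $(\del_t u,\del_r u)\longmapsto(K_2u,\delb_r u)$ and verify the resulting form matches.

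\textbf{Step 1 (Spatial decomposition).} First I would use the splitting $\del_a=(x^a/r)\del_r+\text{angular}$, giving $\sum_a|\del_a u|^2=|\del_r u|^2+r^{-2}\sum_{a<b}|\Omega_{ab}u|^2$ and $x^a\del_tu\,\del_au=r\,\del_tu\,\del_ru$. Substituting into the expression \eqref{eq4-22-04-2022-M}, the angular derivatives decouple from the radial/time ones, and one sees immediately that the coefficient of $\sum_{a<b}|r^{-1}\Omega_{ab}u|^2$ is exactly $\tfrac12(t^2+r^2-\tfrac{2\xi tr^2}{(s^2+r^2)^{1/2}})=\tfrac12 A$, accounting for the second term in the target expression.

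\textbf{Step 2 (Change of variables).} What remains is a quadratic form in $(\del_tu,\del_ru)$ of the shape $\tfrac{A_+}{2}(|\del_tu|^2+|\del_ru|^2)+B\,\del_tu\,\del_ru$ with $A_+=t^2+r^2+\tfrac{2\xi tr^2}{(s^2+r^2)^{1/2}}$ and $B=2tr+\tfrac{\xi r(t^2+r^2)}{(s^2+r^2)^{1/2}}$, plus linear-in-$u$ terms. Since $K_2u=(t^2+r^2)\del_tu+2tr\,\del_ru$ and $\delb_ru=\tfrac{\xi r}{(s^2+r^2)^{1/2}}\del_tu+\del_ru$, the transformation matrix has determinant $A=t^2+r^2-\tfrac{2\xi tr^2}{(s^2+r^2)^{1/2}}$, which explains the $A$'s in the denominators. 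A direct check shows
\[
\tfrac{\zeta^2}{2A}|K_2u|^2+\tfrac{(t^2-r^2)^2}{2A}|\delb_ru|^2
=\tfrac{A_+}{2}|\del_tu|^2+\tfrac{A_+}{2}|\del_ru|^2+B\,\del_tu\,\del_ru,
\]
the key algebraic fact being the identity $4\zeta^2t^2r^2+(t^2-r^2)^2=A\cdot A_+=(t^2+r^2)^2-\tfrac{4\xi^2t^2r^4}{s^2+r^2}$, which one verifies using $\zeta^2=1-\tfrac{\xi^2 r^2}{s^2+r^2}$. Crucially there is \emph{no} cross term $K_2u\cdot\delb_ru$ in this decomposition.

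\textbf{Step 3 (Handling the linear terms).} The remaining terms in \eqref{eq4-22-04-2022-M} are $(n-1)tu\,\delb_\perp u-\tfrac{n-1}{2}u^2$ where $\delb_\perp u=\del_tu+(\delb_rt)\,\del_ru$. I would absorb this into the squared term $\tfrac{\zeta^2}{2A}|K_2u+t(n-1)u|^2$ and a residual cross term $(n-1)t(\delb_rt-\tfrac{2\zeta^2rt}{A})u\,\delb_ru$. Concretely, expanding the square produces $\tfrac{\zeta^2}{A}(n-1)tu\,K_2u+\tfrac{\zeta^2(n-1)^2t^2}{2A}u^2$; the purely $u^2$ contribution is cancelled by the compensator $-\tfrac{(n-1)^2\zeta^2t^2u^2}{2A}$, leaving only $-\tfrac{n-1}{2}u^2$; and summing the coefficients of $(n-1)tu\,\del_tu$ and $(n-1)tu\,\del_ru$ from the two cross contributions gives, respectively, $\tfrac{\zeta^2(t^2+r^2)}{A}+(\delb_rt-\tfrac{2\zeta^2rt}{A})\tfrac{\xi r}{(s^2+r^2)^{1/2}}=1$ (using $A=t^2+r^2-\tfrac{2\xi tr^2}{(s^2+r^2)^{1/2}}$ and $\delb_rt=\tfrac{\xi r}{(s^2+r^2)^{1/2}}$) and $\tfrac{2\zeta^2rt}{A}+\delb_rt-\tfrac{2\zeta^2rt}{A}=\delb_rt$, matching $\delb_\perp u$ exactly.

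\textbf{Expected obstacle.} The work is almost entirely bookkeeping; the only real identity to check is the factorisation $A\cdot A_+=4\zeta^2t^2r^2+(t^2-r^2)^2$ in Step~2, which underlies the diagonalisation of the quadratic form in the hyperboloidal/radial decomposition. Once this is in hand, the compensator $-\tfrac{(n-1)^2\zeta^2t^2u^2}{2A}$ and the coefficient $\delb_rt-\tfrac{2\zeta^2rt}{A}$ in the mixed term are forced by demanding that the $u^2$ and $(n-1)tu\,\del_\alpha u$ contributions reproduce exactly those of $\econ[u]$.
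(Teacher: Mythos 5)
Your proposal is correct and is essentially the paper's own argument: the paper likewise isolates the angular part with coefficient $\tfrac{A}{2}$, diagonalises the radial quadratic form via the same factorisation $A\,(t^2+r^2+2rt\,\delb_r t)=(t^2-r^2)^2+4\zeta^2t^2r^2$ (there it appears as the computation of the $|\delb_r u|^2$ coefficient after completing the square in $K_2u$), and absorbs the linear-in-$u$ terms with exactly the compensators $-\tfrac{(n-1)^2\zeta^2t^2u^2}{2A}$ and $(n-1)t\big(\delb_r t-\tfrac{2\zeta^2rt}{A}\big)u\,\delb_r u$ that you identify. The only cosmetic difference is that the paper first rewrites the density in the variables $(\del_t u,\delb_r u)$ and then completes the square, whereas you verify the same quadratic-form identity directly in $(\del_t u,\del_r u)$; both hinge on the identical algebra.
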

\begin{proof}
For our purpose, in  the following calculation we rely on the parameterization $(s,x)$ instead of $(t,x)$. The variable $t$ is regarded as a function of $(s,x)$ with $t = T(s,x)$. In this context, the natural frame associate to $(s,x)$ is $\{\delb_s, \delb_a\}$. 
%We denote by $\delb_r := (x^a/r)\delb_a$. 
For the simplicity of expression, we remark that
$$
\delb_r t = (x^a/r) \del_a T = \frac{\xi(s,r)r}{(s^2+r^2)^{1/2}},\quad \zeta^2 = 1-|\delb_rt|^2,
$$ 
and we denote by 
$$
A := r^2+t^2 - 2rt\delb_r t.
$$
Then 
$$
K_2u = (A\del_t + 2rt\delb_r) u,\quad
\del_r u = \delb_ru - \delb_r t\, \del_tu,\quad
|\del_r u|^2 = |\delb_r u|^2 - 2\delb_r t\, \del_tu\delb_ru + (\delb_rt)^2|\del_t u|^2.
$$
Recalling that $\sum_a|\del_a u|^2 = |\del_ru|^2 + \sum_{a<b}|r^{-1}\Omega_{ab}u|^2$, we have the following calculation on the energy density of $\Econ(s,u)$:
\begin{equation}\label{eq1-02-02-2022-M}
\aligned
\econ[u] 
%=&\textcolor{blue}{ \frac{1}{2}\big(t^2+r^2 + 2tr\delb_rt\big)|\del_tu|^2
%+ \frac{1}{2}\big(t^2+r^2 + 2tr\delb_rt\big)|\del_r u|^2 
%+ \frac{1}{2}A\sum_{a<b}|r^{-1}\Omega_{ab}|^2}
%\\
%&\textcolor{blue}{ +\big(2tr + (t^2+r^2)\delb_r t\big)\del_tu\del_ru }
%\\
%&\textcolor{blue}{+(n-1)tu\Big(\del_t  + \frac{\xi(s,r)r}{(s^2+r^2)^{1/2}}\del_r\Big)u - \frac{n-1}{2}u^2}
%\\
=& \frac{1}{2}A\zeta^2|\del_t u|^2 +\frac{1}{2}\big(t^2+r^2 + 2tr\delb_rt\big)|\delb_ru|^2
+\frac{1}{2}A\sum_{a>b}|r^{-1}\Omega_{ab}u|^2
\\
&+2rt\zeta^2\del_tu\delb_r u
+(n-1)tu\big(\del_t  + \delb_r t\del_r\big)u - \frac{n-1}{2}u^2.
\endaligned
\end{equation}
Remark that
$$
\aligned
&\frac{1}{2A}\zeta^2|K_2u|^2 = \frac{\zeta^2}{2A}\big(A\del_t u + 2tr\delb_ru\big)^2
=\frac{A}{2}\zeta^2|\del_tu|^2 + 2\zeta^2rt\del_tu\delb_ru + \frac{2\zeta^2 r^2t^2}{A}|\delb_ru|^2.
\endaligned
$$
Then the energy density is written as
\begin{equation}\label{eq3-02-02-2022}
	\aligned
	\econ[u] =& \frac{\zeta^2}{2A} |K_2u|^2 
	+ \frac{A}{2}\sum_{a>b}|r^{-1}\Omega_{ab}|^2
	+ \frac{(t^2-r^2)^2}{2A} |\delb_r u|^2 
	+(n-1)tu\big(\del_t  +\delb_r t\del_r\big)u - \frac{n-1}{2}u^2.
	\endaligned
\end{equation}
	Here for the coefficient of $|\delb_r u|^2$, we have made the following calculation:
	$$
	\aligned
	&\frac{1}{2}\Big(t^2+r^2+\frac{2\xi(s,r)tr^2}{(s^2+r^2)^{1/2}}\Big) - \frac{2\zeta^2r^2t^2}{t^2+r^2-\frac{2\xi(s,r)tr^2}{(s^2+r^2)^{1/2}}}
	\\
	=&\Big(t^2+r^2-\frac{2\xi(s,r)tr^2}{(s^2+r^2)^{1/2}}\Big)^{-1}
	\Big(\frac{1}{2}(t^2+r^2)^2 - \frac{2tr^2 \xi(s,r)^2r^2}{s^2+r^2} - 2r^2t^2\Big(1-\frac{\xi(s,r)^2r^2}{s^2+r^2}\Big)\Big)
	\\
	=&\frac{1}{2}\Big(t^2+r^2-\frac{2\xi(s,r)tr^2}{(s^2+r^2)^{1/2}}\Big)^{-1}(t^2-r^2)^2.
	\endaligned
	$$
Then we concentrate on the fourth term of \eqref{eq3-02-02-2022}. Firstly we remark that
$$
\aligned
(n-1)tu\big(\del_t  + \delb_r t\,\del_r\big)u 
%- \frac{n-1}{2}u^2
=& (n-1)tu\big(1 - |\delb_rt|^2\big)\del_t u + (n-1)t\delb_rt\,u\delb_r u 
%- \frac{n-1}{2}u^2
\\
=& (n-1)\zeta^2tu\del_tu + (n-1)t\delb_rt\,u \delb_r u.
%- \frac{n-1}{2}u^2.
\endaligned
$$
On the other hand, 
$$
\aligned
\frac{\zeta^2}{2A}\big(2(n-1)tu K_2u + (n-1)^2t^2u^2\big)
=&\frac{(n-1)\zeta^2 tu}{A}\big(A\del_tu + 2tr\delb_ru\big)
+ \frac{(n-1)^2\zeta^2}{2A}(tu)^2
\\
=&(n-1)\zeta^2tu\del_t u + \frac{2(n-1)tr\zeta^2 tu}{A}\delb_ru
+ \frac{(n-1)^2\zeta^2}{2A}(tu)^2.
\endaligned
$$
We thus obtain
$$
\aligned
(n-1)tu\big(\del_t  + \delb_r t\,\del_r\big)u 
%- \frac{n-1}{2}u^2
=&\frac{\zeta^2}{2A}\Big(2(n-1)tu K_2u + (n-1)^2t^2u^2\Big)
\\
&+(n-1)t\Big(\delb_r t - \frac{2\zeta^2 rt}{A}\Big)u\delb_ru
-\frac{(n-1)^2\zeta^2 t^2u^2}{2A}.
%- \frac{n-1}{2}u^2.
\endaligned
$$
Substitute the above identity into \eqref{eq3-02-02-2022}, we obtain the desired result.

%Then we obtain
%\textcolor{blue}{
%$$
%\aligned
%\econ[u] =& \frac{\zeta^2}{2A} |K_2u|^2 
%+ \frac{A}{2}\sum_{a<b}|r^{-1}\Omega_{ab}|^2 + \frac{(t^2-r^2)^2}{2A} |\delb_r u|^2 
%+\frac{\zeta^2}{2A}\Big(2(n-1)tu K_2u + (n-1)^2 t^2u^2\Big)
%\\
%&+(n-1)t\Big(\delb_r t  - \frac{2\zeta^2 rt}{A}\Big)u\delb_ru
%-\frac{(n-1)^2\zeta^2 t^2u^2}{2A} - \frac{n-1}{2}u^2.
%\endaligned
%$$
%Combining the first and the fourth term, we obtain
%}
\end{proof}

\subsection{Positivity of the conformal energy}
Now we need to show that the energy is positive. For this purpose, we rely on polar coordinates.  Let $v = r^{(n-1)/2}u$. Then one has
$$
\aligned
\delb_r u =&\, r^{-(n-1)/2}\delb_r v - \frac{n-1}{2}r^{-(n+1)/2}v.
\\
|\delb_r u|^2 =&\, r^{1-n}|\delb_r v|^2 - (n-1)r^{-n}v\delb_rv + \frac{(n-1)^2}{4}r^{-n-1}v^2,
\\
u\delb_r u =&\, u\big(r^{-(n-1)/2}\delb_r v - \frac{n-1}{2}r^{-(n+1)/2}v\big)
=r^{1-n}v\delb_rv - \frac{n-1}{2}r^{-n}v^2.
\endaligned
$$
Substitute the above relations into \eqref{eq1-29-01-2022}. The last four terms in \eqref{eq1-29-01-2022} can be written as
\begin{equation}\label{eq1-30-01-2022}
\aligned
L_n[u] :=&\, \frac{(t^2-r^2)^2}{2A} |\delb_r u|^2 
+(n-1)t\Big(\delb_r t - \frac{2\zeta^2 rt}{A}\Big)u\delb_ru -\frac{(n-1)^2\zeta^2 t^2u^2}{2A} - \frac{n-1}{2}u^2
\\
=&\,\frac{(t^2-r^2)^2r^{1-n}}{2A} |\delb_r v|^2 - (n-1)\frac{t^2+r^2}{2r}r^{1-n}v\delb_rv
\\
&\,+ r^{1-n}\frac{(n-1)^2}{8r^2}A v^2 - \frac{(n-1)r^{1-n}}{2}v^2.
\endaligned
\end{equation}
In the above identity, for the coefficient of $v\delb_r v$, we make the following calculation: 
	$$
	\aligned
	&- \frac{(n-1)(t^2-r^2)^2}{2A}r^{-n}
	+ (n-1)t\delb_rt\, r^{1-n}
	- \frac{2(n-1)\zeta^2rt^2}{A}r^{1-n}
	\\
	=&(n-1)t\delb_rt\, r^{1-n}
	- (n-1)r^{-n}\frac{4\zeta^2t^2r^2 +(t^2-r^2)^2}{2A}
	\\
	=&(n-1)t\delb_rt\, r^{1-n}
	-(n-1)r^{-n}\frac{4t^2r^2 - 4|\delb_rt|^2r^2t^2 +(t^2-r^2)^2}{2A}
	\\
	=&(n-1)t\delb_rt\, r^{1-n}
	-(n-1)r^{-n}\frac{(t^2+r^2)^2 - 4|\delb_rt|^2r^2t^2}{2A}
	\\
	=&(n-1)t\delb_rt\, r^{1-n}- \frac{(n-1)}{2r^n}\big(t^2+r^2 + 2rt\delb_r\big)
	=-\frac{n-1}{2r^n}(t^2+r^2).
	\endaligned
	$$
For the coefficient of $v^2$, we make the following calculation:
$$
\aligned
&r^{-n}\frac{(n-1)^2}{4r}\frac{(t^2-r^2)^2}{2A}
-r^{-n}\frac{n-1}{2}(n-1)\frac{\xi(s,r)tr}{(s^2+r^2)^{1/2}}
\\
&+r^{-n}\frac{n-1}{2}\frac{2(n-1)\zeta^2rt^2}{A} 
-r^{-n}\frac{(n-1)^2\zeta^2 t^2r}{2A} - \frac{(n-1)r^{1-n}}{2}
\\
=&r^{-n}\frac{(n-1)^2}{4r}\frac{(t^2-r^2)^2}{2A} -r^{-n}\frac{(n-1)^2}{2}\frac{\xi(s,r)tr}{(s^2+r^2)^{1/2}}
+r^{-n}\frac{(n-1)^2\zeta^2 t^2r}{2A} - \frac{(n-1)r^{1-n}}{2}
\\
=&r^{-n+1}\frac{(n-1)^2}{8r^2 A}\big((t^2-r^2)^2 + 4\zeta^2t^2r^2\big)
- r^{-n}\frac{(n-1)^2}{2}\frac{\xi(s,r)tr}{(s^2+r^2)^{1/2}} - \frac{(n-1)r^{1-n}}{2}
\\
=&r^{1-n}\frac{(n-1)^2}{8r^2A}
\big((t^2+r^2)^2 - 4|\delb_r t|^2t^2r^2\big)
-r^{-n}\frac{(n-1)^2}{2}\frac{\xi(s,r)tr}{(s^2+r^2)^{1/2}} - \frac{(n-1)r^{1-n}}{2}
\\
=&r^{1-n}\frac{(n-1)^2}{8r^2}\big(t^2+r^2 +2rt\delb_r \big) - r^{1-n}\frac{(n-1)^2}{2}rt\del_rt - \frac{(n-1)r^{1-n}}{2}
\\
=&r^{1-n}\frac{(n-1)^2A}{8r^2} - \frac{(n-1)r^{1-n}}{2}.
\endaligned
$$
We remark the following differential identity {\sl along $\Fcal_s$}:
$$
\aligned
\frac{r^2-t^2}{r}v\delb_rv = \frac{1}{2}\delb_r\Big(\frac{r^2-t^2}{r}v^2\Big) - \frac{1}{2}v^2\Big(1+t^2/r^2 - 2(t/r)\delb_rt\Big) 
=\frac{1}{2}\delb_r\Big(\frac{r^2-t^2}{r}v^2\Big) - \frac{A}{2r^2}v^2.
\endaligned 
$$
Integrate the above identity within polar coordinates with volume element $dx = r^{n-1}drd\sigma$ in the region $\{r\geq \vep\}\cap \Fcal_s$, one obtains
\begin{equation}\label{eq1-31-01-2022}
-\int_{\vep}^{\infty}\frac{r^2-t^2}{r}\int_{\mathbb{S}^{n-1}}v\delb_rv\,d\sigma dr 
= -\frac{1}{2}\int_{\vep}^{\infty}\delb_r\Big(\frac{r^2-t^2}{r}\int_{\mathbb{S}^{n-1}}v^2d\sigma\Big)\,dr
+\frac{1}{2}\int_{\{r\geq \vep\}}r^{-2}A\int_{\mathbb{S}^{n-1}}v^2\,d\sigma dr
\end{equation}
Remark that $A>0$. Then
%\textcolor{blue}{
%$$
%\aligned
%-\frac{r^2-t^2}{r}v\delb_rv 
%\leq
%\frac{1}{2}\Big(\frac{Av^2}{2r^2}
%+ \frac{2(r^2-t^2)^2|\delb_r v|^2}{A}\Big).
%\endaligned
%$$
%}
$$
-\frac{r^2-t^2}{r}v\delb_rv 
+ \frac{1}{2}\Big(\frac{A^{1/2}v}{\sqrt{2}r}
+ \frac{\sqrt{2}(r^2-t^2)\delb_r v}{A^{1/2}}\Big)^2
=
\frac{1}{2}\Big(\frac{Av^2}{2r^2}+ \frac{2(r^2-t^2)^2|\delb_r v|^2}{A}\Big).
$$
%\textcolor{blue}{
%Here
%$$
%\aligned
%&\frac{A^{1/2}v}{\sqrt{2}r}
%+ \frac{\sqrt{2}(r^2-t^2)\delb_r v}{A^{1/2}}
%\\
%=& \frac{A^{1/2}u}{\sqrt{2r}} - \frac{\sqrt{2}(t^2-r^2)}{A^{1/2}}
%\Big(r^{1/2}\delb_r u + \frac{1}{2r^{1/2}}u\Big) = \frac{A + r^2-t^2}{\sqrt{2rA}} u - \frac{\sqrt{2r}(t^2-r^2)}{A^{1/2}}\delb_ru
%\\
%=& \frac{\sqrt{2r}(r-t\delb_rt)}{A^{1/2}} u - \frac{\sqrt{2r}(t^2-r^2)}{A^{1/2}}\delb_r u
%\endaligned
%$$}
This leads to
$$
\frac{(r^2-t^2)^2|\delb_r v|^2}{2A} 
=
- \frac{r^2-t^2}{2r}v\delb_rv
-\frac{A v^2}{8r^2}
+ \frac{1}{4}\Big(\frac{A^{1/2}v}{\sqrt{2}r} + \frac{\sqrt{2}(r^2-t^2)\delb_r v}{A^{1/2}}\Big)^2.
$$
Thus by \eqref{eq1-31-01-2022}, we obtain
%\textcolor{blue}{
%$$
%\aligned
%&\frac{1}{2}\int_{\vep}^{\infty}\frac{(r^2-t^2)^2}{\big(r^2+t^2-\frac{2\xi(s,r)tr^2}{(s^2+r^2)^{1/2}}\big)}\int_{\mathbb{S}^{n-1}}|\del_rv|^2\,d\sigma dr
%\\
%&\geq  - \frac{1}{4}\int_{\vep}^{\infty}\delb_r\Big((r-t^2/r)\int_{\mathbb{S}^{n-1}}v^2d\sigma\Big)\,dr 
%+ \frac{1}{8} \int_{\vep}^{\infty}
%\frac{\big(r^2+t^2-\frac{2\xi(s,r)tr^2}{(s^2+r^2)^{1/2}}\big)\int_{\mathbb{S}^{n-1}}v^2d\sigma}{r^2}\,dr.
%\endaligned
%$$
%}
\begin{equation}\label{eq2-31-01-2022}
\aligned
&\frac{1}{2}\int_{\vep}^{\infty}\frac{(r^2-t^2)^2}{A}
\int_{\mathbb{S}^{n-1}}|\del_rv|^2d\sigma\,dr
\\
=&\int_{\vep}^{\infty}\int_{\mathbb{S}^{n-1}}- \frac{r^2-t^2}{2r}v\delb_rv
-\frac{A v^2}{8r^2}
+ \frac{1}{4}\Big(\frac{A^{1/2}v}{\sqrt{2}r} + \frac{\sqrt{2}(r^2-t^2)\delb_r v}{A^{1/2}}\Big)^2\,d\sigma dr
\\
=&  - \frac{1}{4}\int_{\vep}^{\infty}\delb_r\Big((r-t^2/r)\int_{\mathbb{S}^{n-1}}v^2d\sigma\Big)\,dr 
+ \frac{1}{8} \int_{\vep}^{\infty}r^{-2}A\int_{\mathbb{S}^{n-1}}v^2d\sigma\,dr
\\
&+\frac{1}{4}\int_{\vep}^{\infty}\int_{\mathbb{S}^{n-1}}\Big(\frac{A^{1/2}v}{\sqrt{2}r}
+ \frac{\sqrt{2}(r^2-t^2)\delb_r v}{A^{1/2}}\Big)^2\,d\sigma dr.
\endaligned
\end{equation}	

On the other hand, we remark that
$$
	\aligned
	-\frac{(n-1)(t^2+r^2)}{2r}v\delb_rv =& -\frac{n-1}{4}(r+(t^2/r))\delb_r(v^2) 
	\\
	=& -\frac{n-1}{4}\delb_r\big((r+t^2/r)v^2\big) + \frac{n-1}{4}(1-(t/r)^2+2(t/r)\delb_rt)v^2.
	\endaligned
$$
By integration on $\Fcal_s\cap\{r\geq \vep\}$, one obtains:
\begin{equation}\label{eq1-02-02-2022}
\aligned
&-\int_{\vep}^{\infty}\frac{(n-1)(t^2+r^2)}{2r}\int_{\mathbb{S}^{n-1}}v\delb_rv d\sigma\,dr 
\\
&=
-\frac{n-1}{4}\int_{\vep}^{\infty}\delb_r\Big((r+t^2/r)\int_{\mathbb{S}^{n-1}}v^2d\sigma\Big)\,dr 
+ \frac{n-1}{4}\int_{\vep}^{\infty}r^{-2}(r^2-t^2+2rt\delb_r t)\int_{\mathbb{S}^{n-1}}v^2d\sigma\,dr.
\endaligned
\end{equation}
Thus in the case $n=2$, by summing \eqref{eq1-02-02-2022} and \eqref{eq2-31-01-2022} (remark that the two singular terms at $r=0$ cancels each other),
%\textcolor{blue}
%{
%$$
%\aligned
%\int_{\{r\geq \vep\}\cap \Fcal_s} L_2[u]\,dx
%\geq& -\frac{1}{2}\int_{\vep}^{\infty}\delb_r\Big(r\int_{\mathbb{S}^2}v^2d\sigma\Big)\,dr
%\\
%&+ \frac{1}{8} \int_{ \vep}^{\infty}\frac{\int_{\mathbb{S}^2}\big(r^2+t^2-2rt\delb_rt\big)v^2d\sigma}{r^2}\,dr
%+\frac{1}{8}\int_{\vep}^{\infty}\frac{(t^2+r^2-2rt\delb_r)\int_{\mathbb{S}^2}v^2d\sigma}{r^2}\,dr 
%\\
%&+ \frac{1}{4}\int_{\vep}^{\infty}\frac{(r^2-t^2+2rt\delb_r t)\int_{\mathbb{S}^{n-1}}v^2d\sigma}{r^2}\,dr
%- \frac{1}{2}\int_{\vep}^{\infty}\int_{\mathbb{S}^2}v^2d\sigma\,dr 
%\\
%=&\, \frac{\vep^2}{4}\int_{\{|x|=\vep\}\cap\Fcal_s}u(x) d\sigma.
%\endaligned
%$$
%}
$$
\aligned
\int_{\{r\geq \vep\}\cap \Fcal_s} L_2[u]\,dx
=&\frac{1}{4}\int_{\vep}^{\infty}\int_{\mathbb{S}^{n-1}}\Big(\frac{A^{1/2}v}{\sqrt{2}r} + \frac{\sqrt{2}(r^2-t^2)\delb_r v}{A^{1/2}}\Big)^2\,d\sigma dr
-\frac{1}{2}\int_{\vep}^{\infty}\delb_r\Big(r\int_{\mathbb{S}^2}v^2d\sigma\Big)\,dr.
%\\
%&+ \frac{1}{4} \int_{\vep}^{\infty}r^{-2}A \int_{\mathbb{S}^2}v^2d\sigma\,dr
%%+\frac{1}{8}\int_{\vep}^{\infty}\frac{(t^2+r^2-2rt\delb_r)\int_{\mathbb{S}^2}v^2d\sigma}{r^2}\,dr 
%\\
%&+ \frac{1}{4}\int_{\vep}^{\infty}\frac{(r^2-t^2+2rt\delb_r t)\int_{\mathbb{S}^{n-1}}v^2d\sigma}{r^2}\,dr
%\\
%&- \frac{1}{2}\int_{\vep}^{\infty}\int_{\mathbb{S}^2}v^2d\sigma\,dr 
%\\
\endaligned
$$
Then by letting $\vep\rightarrow 0^+$, we obtain, recalling that $v = r^{1/2}u$,
%\begin{equation}
%	\int_{\Fcal_s} L_2[u]\,dx\geq 0,
%\end{equation}
\begin{equation}\label{eq2-02-02-2022}
\aligned
\int_{\{r\geq \vep\}\cap \Fcal_s} L_2[u]\,dx 
=&\, \frac{1}{4}\int_{\vep}^{\infty}\int_{\mathbb{S}^{n-1}}\Big(\frac{\sqrt{2r}(r-t\delb_rt)}{A^{1/2}} u - \frac{\sqrt{2r}(t^2-r^2)}{A^{1/2}}\delb_r u\Big)^2\,d\sigma dr
\\
=&\, \frac{1}{2}\int_{\vep}^{\infty}\int_{\mathbb{S}^1} \frac{r}{A}\big((t^2-r^2)\delb_r u - (r-t\delb_r t)u\big)^2\,d\sigma dr
\\
=&\, \frac{1}{2}\int_{\Fcal_s} A^{-1}\big((t^2-r^2)\delb_r u - (r-t\delb_r t)u\big)^2\,dx\geq 0.
\endaligned
\end{equation}
And thus we arrive at the following expression:
%\begin{equation}
%\Econ(s,u)\geq \frac{1}{2}\int_{\Fcal_s}\frac{\zeta^2}{A}(K_2u + tu)^2\,dx 	+ \frac{1}{2}\int_{\Fcal_s}A|r^{-1}\Omega u|^2\,dx 
%\end{equation}
%with 
\begin{equation}\label{eq4-02-02-2022}
\aligned
\Econ(s,u) =& \frac{1}{2}\int_{\Fcal_s}A^{-1}\zeta^2(K_2u + tu)^2\,dx 	+ \frac{1}{2}\int_{\Fcal_s}A|r^{-1}\Omega u|^2\,dx 
\\
&+ \frac{1}{2}\int_{\Fcal_s} A^{-1}\big((t^2-r^2)\delb_r u - (r-t\delb_r t)u\big)^2\,dx
\endaligned
\end{equation}\
with
$$
A(s,r) = t^2+r^2 - 2rt\delb_r t = t^2+r^2-\frac{2\xi(s,r)tr^2}{(s^2+r^2)^{1/2}}.
$$
\subsection{Conformal energy estimate}
Based on \eqref{eq4-02-02-2022}, we are ready to establish the following conformal energy estimate on $\Fcal_s$:
\begin{proposition}\label{prop1-22-03-2022-M}
Let $u$ be a $C^2$ function defined in $\Fcal_{[s_0,s_1]}$ and decreases sufficiently fast at spatial infinity. Then
	\begin{equation}\label{eq1-06-02-2022}
		\Econ(s_1,u)^{1/2}\leq \Econ(s_0,u)^{1/2} + \frac{\sqrt{2}}{2} \int_{s_0}^s\|J\zeta^{-1}A^{1/2}\Box u\|_{L^2(\Fcal_\tau)} \,ds.
	\end{equation}
\end{proposition}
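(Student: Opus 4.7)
The plan is to combine the integrated identity with the positivity decomposition \eqref{eq4-02-02-2022} via a weighted Cauchy--Schwarz, and then convert the resulting differential inequality for $\Econ(s,u)$ into one for $\Econ(s,u)^{1/2}$ by dividing out. Since the hard analytic work---producing the differential identity \eqref{eq2-26-01-2022}, and identifying the ``first positive piece'' in \eqref{eq4-02-02-2022} as $\tfrac{1}{2}\zeta^2 A^{-1}(K_2 u+tu)^2$---has already been done, what remains is essentially a Cauchy--Schwarz matching the weights.

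More concretely, I would first differentiate the integrated identity in $s$, obtaining
\begin{equation*}
\frac{d}{ds}\Econ(s,u) \;=\; \int_{\Fcal_s} (K_2u+tu)\,\Box u \cdot J\,dx.
\end{equation*}
I would then split the integrand as
$(\zeta A^{-1/2}(K_2u+tu))\cdot (J\zeta^{-1}A^{1/2}\Box u)$,
which is the unique factorization that balances against the first term of \eqref{eq4-02-02-2022}. Cauchy--Schwarz then yields
\begin{equation*}
\frac{d}{ds}\Econ(s,u) \;\leq\; \|\zeta A^{-1/2}(K_2u+tu)\|_{L^2(\Fcal_s)}\; \|J\zeta^{-1}A^{1/2}\Box u\|_{L^2(\Fcal_s)}.
\end{equation*}
By \eqref{eq4-02-02-2022} the first factor is bounded by $\sqrt{2}\,\Econ(s,u)^{1/2}$, since the corresponding piece is one of three nonnegative summands contributing to $\Econ(s,u)$. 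Using $\frac{d}{ds}\Econ^{1/2} = \frac{1}{2\Econ^{1/2}}\frac{d}{ds}\Econ$, this gives
\begin{equation*}
\frac{d}{ds}\Econ(s,u)^{1/2} \;\leq\; \frac{\sqrt{2}}{2}\|J\zeta^{-1}A^{1/2}\Box u\|_{L^2(\Fcal_s)},
\end{equation*}
and integrating from $s_0$ to $s_1$ produces \eqref{eq1-06-02-2022}.

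The main obstacle is not conceptual but technical: the division by $\Econ(s,u)^{1/2}$ is only legitimate where $\Econ(s,u)>0$. I would handle this by applying the estimate to the regularized quantity $\big(\Econ(s,u)+\delta\big)^{1/2}$ with $\delta>0$ (so that $\frac{d}{ds}(\Econ+\delta)^{1/2}$ is well-defined), carrying out the same Cauchy--Schwarz argument, and then passing to the limit $\delta\to 0^{+}$ using monotone convergence of the integrals on the right-hand side. The spatial-decay assumption on $u$ ensures that all boundary terms at infinity in the derivation of \eqref{eq2-26-01-2022} and in the polar-coordinate reduction leading to \eqref{eq4-02-02-2022} vanish, so no additional control at spatial infinity is required beyond what is already assumed.
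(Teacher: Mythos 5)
Your proposal is correct and follows essentially the same route as the paper: differentiate the integrated identity in $s$, factor the integrand as $\bigl(\zeta A^{-1/2}(K_2u+tu)\bigr)\cdot\bigl(J\zeta^{-1}A^{1/2}\Box u\bigr)$, apply Cauchy--Schwarz, bound the first factor by $\sqrt{2}\,\Econ(s,u)^{1/2}$ via the positivity decomposition \eqref{eq4-02-02-2022}, and integrate the resulting inequality for $\frac{d}{ds}\Econ(s,u)^{1/2}$. Your regularization $(\Econ+\delta)^{1/2}$ to justify the division is a small technical refinement the paper leaves implicit, but it does not change the argument.
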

\begin{proof}
Once \eqref{eq4-02-02-2022} is established, the proof of \eqref{eq1-06-02-2022} becomes a standard energy estimate. We differentiate \eqref{eq2-26-01-2022} with respect to $s$ and obtain:
$$
\aligned
\frac{d}{ds} \Econ(s,u) =& \int_{\Fcal_s}\frac{\zeta}{A^{1/2}}(K_2 + t)u\, \frac{A^{1/2}}{\zeta}\Box u J dx
\leq \|\frac{\zeta}{A^{1/2}}(K_2 + t)u\|_{L^2(\Fcal_s)}\|\frac{JA^{1/2}}{\zeta}\Box u\|_{L^2(\Fcal_s)}
\\
\leq& \sqrt{2}\Econ(s,u)^{1/2}\|\frac{JA^{1/2}}{\zeta}\Box u\|_{L^2(\Fcal_s)}.
\endaligned
$$
On the other hand, $\frac{d}{ds}\Econ(s,u) = 2\Econ(s,u)^{1/2}\frac{d}{ds}\Econ(s,u)^{1/2}$. Then 
$$
\frac{d}{ds}\Econ(s,u)^{1/2}\leq \frac{\sqrt{2}}{2}\|J\zeta^{-1}A^{1/2}\Box u\|_{L^2(\Fcal_s)}.
$$
Integrate the above inequality from on $[s_0,s_1]$, the desired bound is obtained.
\end{proof}
\subsection{Analysis on the conformal energy}
The expression \eqref{eq4-02-02-2022} is {\sl not} satisfactory because it does not control the gradient of $u$. In the present subsection we will establish the following estimates. 
\begin{proposition}\label{prop1-11-03-2022-M}
Let $u$ be a function defined in $\Fcal_{[s_0,s_1]}$, sufficiently regular and decreasing sufficiently fast at spatial infinity. Suppose that $s_0\geq 2$, then the following quantities 
\begin{equation}
\aligned
&\|\zeta u\|_{L^2(\Fcal_s)},\qquad &&\|\zeta t^{-1}|r^2-t^2|\del_\alpha u\|_{L^2(\Fcal_s)},
\\
&\|\zeta^{-1}t^{-1}|r^2-t^2|\delt_au\|_{L^2(\Fcal_s)} ,\qquad &&\|\zeta^{-1}t^{-1}|r^2-t^2|\delb_au\|_{L^2(\Fcal_s)}
\endaligned
\end{equation}
are controlled by
\begin{equation}\label{eq1-11-03-2022-M}
\Fbf(s,u;s_0)^{1/2} := \|\zeta u\|_{L^2(\Fcal_{s_0})} + \Ebf(s,u)^{1/2} + \int_{s_0}^s\tau^{-1}\Ebf(\tau,u)^{1/2}d\tau
\end{equation}
modulo a universal constant.
\end{proposition}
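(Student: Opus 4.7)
The strategy is to combine the conformal-type decomposition \eqref{eq4-02-02-2022} of the energy density $\econ[u]$ with a first-order differential inequality for $F(s):=\|\zeta u\|_{L^2(\Fcal_s)}$ along the foliation parameter. The scalar bound on $\zeta u$ is the core of the argument; the three derivative bounds then fall out of \eqref{eq4-02-02-2022} once $F(s)$ is known to satisfy the claim.

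First I would establish $F(s)\leq C\Fbf(s,u;s_0)^{1/2}$. Differentiating $F(s)^2=\int_{\RR^2}\zeta(s,x)^2 u(T(s,x),x)^2\,dx$ in $s$ and using $\del_s u|_{\Fcal_s}=J\del_tu$ (with $J=\del_sT$) yields
\begin{equation*}
\tfrac{d}{ds}F(s)^2 = 2\int_{\Fcal_s}\zeta(\del_s\zeta)u^2\,dx + 2\int_{\Fcal_s}\zeta^2 u\,J\del_tu\,dx.
\end{equation*}
A direct computation from $\zeta^2=1-(\delb_rt)^2$ shows $|\del_s\zeta|\lesssim s^{-1}\zeta$ in $\Hcal^*_s$ and $\TPcal_s$, and vanishes on $\Pcal_s$, so the first integral contributes at most $s^{-1}F(s)^2$. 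For the principal integral I would split by region: on $\Hcal^*_s$ one has $\zeta J=(s/t)^2\leq 1$, and Cauchy--Schwarz together with $\|(s/t)\del_tu\|_{L^2(\Hcal^*_s)}\leq\Ebf(s,u)^{1/2}$ gives the control $F(s)^{1/2}\Ebf(s,u)^{1/2}$ with no spurious factor of $s$. On $\TPcal_s$ the factor $J\sim s$ is larger, but $t=T(s)$ is essentially $x$-independent on $\Pcal_s$, so $\del_t$ can be traded for a derivative in $s$; combining this with a radial integration by parts (using the spatial decay of $u$) and the scaling $t(s)\sim s^2/2$ should allow one to recover the missing $s^{-2}$ factor. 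The outcome is a differential inequality $F'(s)\lesssim s^{-1}\Ebf(s,u)^{1/2}+s^{-1}F(s)$, which integrates via Grönwall to the claim.

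Second, once the bound on $F(s)$ is in hand, I would derive the three derivative bounds from \eqref{eq4-02-02-2022}. The third summand there controls the combination $\|A^{-1/2}\bigl((t^2-r^2)\delb_r u-(r-t\delb_rt)u\bigr)\|_{L^2(\Fcal_s)}$. The key observation is that $A^{-1/2}|r-t\delb_rt|\lesssim \zeta$ region by region (trivially on $\Hcal^*_s$ where $r-t\delb_rt\equiv 0$ by direct computation, and by substitution of the explicit forms on $\TPcal_s$), so the $u$-piece is absorbed by Step~1. Isolating $A^{-1/2}(t^2-r^2)\delb_ru$ and using the region-by-region identity $A^{-1}(t^2-r^2)^2\simeq \zeta^{-2}t^{-2}(r^2-t^2)^2$ yields the fourth bound after decomposing $\delb_au$ into radial plus angular parts (the latter controlled by the middle summand $\frac12\|A^{1/2}r^{-1}\Omega u\|^2$). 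The bound on $\delt_au$ then follows from the identity \eqref{eq10-11-03-2022-M} together with the standard energy control of $\zeta\del_tu$. Finally, the $\zeta$-weighted derivative bound comes from the first summand $\|\zeta A^{-1/2}(K_2u+tu)\|^2$: expanding $K_2u=A\del_tu+2rt\delb_ru$ isolates $\zeta A^{1/2}\del_tu$, and \eqref{eq1a-29-10-2021}--\eqref{eq1b-29-10-2021} convert this to a bound on all $\del_\alpha u$.

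The main obstacle is the $\TPcal_s$ contribution to $F'(s)$ in Step~1: naive Cauchy--Schwarz there produces a factor of $J\sim s$ rather than the needed $s^{-1}$, and only by exploiting that $t(s,x)$ is essentially constant in $x$ on $\Pcal_s$ together with the radial decay of $u$ can one recover the factor $s^{-2}$ via integration by parts. Once this technical point is resolved, the remaining steps reduce to the algebra of the explicit weights $\zeta$, $A$ and $\delb_rt$, together with standard Cauchy--Schwarz.
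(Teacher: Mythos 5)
Your second step (extracting the four weighted $L^2$ bounds from the conformal-energy decomposition once $\|\zeta u\|_{L^2(\Fcal_s)}$ is controlled) is essentially the paper's argument, up to using \eqref{eq4-02-02-2022} where the paper works with the equivalent form \eqref{eq2-02-03-2022-M}; the weight algebra you sketch ($|r-t\delb_rt|\lesssim\zeta^2 t$, $\zeta t^{-1}|t^2-r^2|\lesssim A^{1/2}$, etc.) is the same as in the paper. The problem is your first step, which is the heart of the proposition. The claim ``$|\del_s\zeta|\lesssim s^{-1}\zeta$ in $\Hcal^*_s$ and $\TPcal_s$'' is false in the transition region $\Tcal_s$: at fixed $x$, $\del_s\xi(s,r)=s\,\chi'(r-\rhoH(s))$ because the matching zone sweeps outward at speed $\rhoH'(s)=s$, so $|\del_s(\zeta^2)|$ is of size $s\zeta$ there, not $s^{-1}\zeta^2$. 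Hence the term $\int\zeta(\del_s\zeta)u^2$ is not absorbed by $s^{-1}F(s)^2$. Moreover, even granting your differential inequality $F'\lesssim s^{-1}\Econ^{1/2}+s^{-1}F$, Gr\"onwall produces a factor $(s/s_0)^C$ with a universal (not small) constant $C$, which is not present in the conclusion and would be fatal in the application where the conformal energy grows like $s^{4\delta}$. Finally, your treatment of the bulk term on $\TPcal_s$ is left as a hope (``should allow one to recover the missing $s^{-2}$''): near the light cone in $\Pcal_s$ the conformal energy only controls $t^{-1}|t^2-r^2|\,\del_t u$ — a degenerate weight, and one of the very bounds you are trying to prove, whose extraction itself requires the $\|\zeta u\|$ bound — so the Cauchy--Schwarz you would need there is either unavailable or circular.

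The paper avoids all of this by not differentiating $\|\zeta u\|_{L^2(\Fcal_s)}^2$ directly. It differentiates $\int_{\Fcal_s}\bigl(1-(r/t)\delb_rt\bigr)u^2\,dx$ (equivalent to $\|\zeta u\|^2$ by \eqref{eq2-11-03-2022-M}) and uses the exact pointwise identity \eqref{eq1-05-03-2022-M}: the product $(s/t)Ju\bigl(\zeta^2t\del_tu+(r+t\delb_rt)\delb_ru+u\bigr)$ — note the full conformal multiplier combination, not just $\zeta^2 Ju\del_tu$ — equals $(s/2)\delb_s\bigl((1-(r/t)\delb_rt)u^2\bigr)$ plus a spatial divergence plus a term whose weight is $(s/t)J\zeta^{-1}\delb_rt$, with all derivative-of-weight terms cancelling identically. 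Combined with the uniform weight bounds of Lemma \ref{lem1-10-01-2022-M} (which use $J\lesssim s\zeta^2$, \eqref{eq13-07-10-2021}, precisely to tame the transition region) and the decomposition \eqref{eq2-02-03-2022-M}, this yields \eqref{eq7-10-03-2022-M}, $\frac{d}{ds}\|(1-(r/t)\delb_rt)^{1/2}u\|_{L^2(\Fcal_s)}\lesssim s^{-1}\Econ(s,u)^{1/2}$, with no self-interaction term and hence no Gr\"onwall loss. You would need to reproduce this cancellation (or an equivalent mechanism) for your Step 1 to close; also note that the $\Ebf$ appearing in $\Fbf$ is the conformal energy $\Econ$, not the standard energy you invoke on $\Hcal^*_s$.
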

In \cite{M3} we have established this estimate on hyperboloids. Here is its generalization on Euclidean-hyperboloidal slices. The proof is done in several steps. We get started with the following Lemma.
\begin{lemma}
The conformal energy can be written into the following form when $u$ decreases sufficiently fast at spatial infinity: 
\begin{equation}\label{eq2-02-03-2022-M}
\aligned
\Econ(s,u) =& \frac{1}{2}\int_{\Fcal_s}\big(\zeta^2t\del_tu + (r+t\delb_rt)\delb_ru + u\big)^2\, dx
\\
&+ \frac{1}{2}\int_{\Fcal_s}\big(\zeta(r-t\delb_rt) \del_tu + \zeta t\delb_ru\big)^2\,dx
+ \frac{1}{2}\int_{\Fcal_s} A|r^{-1}\Omega u|^2\,dx.
\endaligned
\end{equation}
\end{lemma}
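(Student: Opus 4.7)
The plan is to deduce \eqref{eq2-02-03-2022-M} from the already established formula \eqref{eq4-02-02-2022} by a purely pointwise algebraic rearrangement. Since both expressions contain the identical angular term $\tfrac{1}{2}\int_{\Fcal_s}A|r^{-1}\Omega u|^{2}dx$, it suffices to prove the pointwise identity
\begin{equation*}
A^{-1}\zeta^{2}\bigl(K_{2}u+tu\bigr)^{2}+A^{-1}\bigl((t^{2}-r^{2})\delb_{r}u-(r-t\delb_{r}t)u\bigr)^{2}=P^{2}+Q^{2},
\end{equation*}
where $P:=\zeta^{2}t\del_{t}u+(r+t\delb_{r}t)\delb_{r}u+u$ and $Q:=\zeta(r-t\delb_{r}t)\del_{t}u+\zeta t\delb_{r}u$. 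Integrating over $\Fcal_s$ and adding the common angular term then delivers \eqref{eq2-02-03-2022-M}. No further integration by parts is required; the spatial-decay hypothesis is inherited solely from the invocation of \eqref{eq4-02-02-2022}.

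First I would substitute $K_{2}u=A\del_{t}u+2rt\delb_{r}u$ into the left-hand side so that both sides become quadratic forms in the three scalars $\del_{t}u,\delb_{r}u,u$. Matching the six coefficients (those of $(\del_{t}u)^{2},(\delb_{r}u)^{2},u^{2}$ and of the three cross terms) reduces to two elementary identities intrinsic to the foliation: the relation $\zeta^{2}=1-(\delb_{r}t)^{2}$, which instantly yields $\zeta^{2}t^{2}+(r-t\delb_{r}t)^{2}=A$ and handles the coefficients of $u^{2}$ and $(\del_{t}u)^{2}$; and the factorization
\begin{equation*}
(t^{2}-r^{2})^{2}+4r^{2}t^{2}\zeta^{2}=(t^{2}+r^{2})^{2}-4r^{2}t^{2}(\delb_{r}t)^{2}=A\cdot(t^{2}+r^{2}+2rt\delb_{r}t),
\end{equation*}
which produces precisely the desired coefficient of $(\delb_{r}u)^{2}$ on the right. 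The two $u\del_{t}u$ and $u\delb_{r}u$ cross terms fall out from the same two identities.

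A more conceptual presentation, and the one I would adopt in the final write-up, is to observe that both pairs represent the same vector in two different orthonormal frames. Viewing $\bigl(A^{-1/2}\zeta(K_{2}u+tu),\,A^{-1/2}((t^{2}-r^{2})\delb_{r}u-(r-t\delb_{r}t)u)\bigr)^{T}$ and $(P,Q)^{T}$ as column vectors with entries linear in $(\del_{t}u,\delb_{r}u,u)$, a direct check shows they are related by the explicit orthogonal matrix
\begin{equation*}
O=A^{-1/2}\begin{pmatrix}\zeta t & r-t\delb_{r}t\\ -(r-t\delb_{r}t) & \zeta t\end{pmatrix},
\end{equation*}
whose orthogonality is exactly the identity $\zeta^{2}t^{2}+(r-t\delb_{r}t)^{2}=A$. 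Since $O$ preserves sums of squares, the claimed pointwise identity is immediate.

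The main (and essentially only) obstacle is combinatorial bookkeeping in the six coefficient matchings; the rotation viewpoint eliminates any guesswork and reduces the verification to the two one-line algebraic checks above, after which the proof is complete.
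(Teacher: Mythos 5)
Your proposal is correct, and I checked the key pointwise identity: with $P:=\zeta^2t\del_tu+(r+t\delb_rt)\delb_ru+u$ and $Q:=\zeta(r-t\delb_rt)\del_tu+\zeta t\delb_ru$, one indeed has $A^{-1}\zeta^2(K_2u+tu)^2+A^{-1}\big((t^2-r^2)\delb_ru-(r-t\delb_rt)u\big)^2=P^2+Q^2$, the orthogonality relation being exactly $\zeta^2t^2+(r-t\delb_rt)^2=A$ (and, for precision, your matrix $O$ carries $(P,Q)^T$ onto the vector built from \eqref{eq4-02-02-2022}, i.e.\ the relation is $O(P,Q)^T=\big(A^{-1/2}\zeta(K_2u+tu),\,A^{-1/2}((t^2-r^2)\delb_ru-(r-t\delb_rt)u)\big)^T$; since $O$ is orthogonal this is all you need). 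However, this is a genuinely different route from the paper's. The paper does not pass through \eqref{eq4-02-02-2022} at all: it returns to the density formula \eqref{eq1-02-02-2022-M}, completes the squares in $\del_tu,\delb_ru$ directly, and observes that the leftover linear terms $tu(\del_t+\delb_rt\,\del_r)u-\tfrac12u^2$ combine with the squares into $\tfrac12(P^2+Q^2)$ modulo a pure tangential divergence $\delb_a(x^au^2)$, which integrates to zero on $\Fcal_s$ — that single divergence term is where the spatial-decay hypothesis enters. Your argument instead imports the decay hypothesis wholesale through \eqref{eq4-02-02-2022}, whose own derivation is the heavier part of the section (the substitution $v=r^{1/2}u$, polar-coordinate integrations by parts, the cancellation of the singular terms and the $\vep\to0^+$ limit); what you gain is that no new integration by parts is needed and the equality of the two representations is exposed as a plane rotation preserving sums of squares, while the paper's proof gains independence, so that \eqref{eq4-02-02-2022} and \eqref{eq2-02-03-2022-M} stand as two separate consequences of \eqref{eq1-29-01-2022}. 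Since \eqref{eq4-02-02-2022} is established before this lemma, there is no circularity, and your proof is valid once the $u\delb_ru$ cross-term matching (the only coefficient that is not a one-liner) is written out, as it does follow from $\zeta^2=1-(\delb_rt)^2$ and the definition of $A$.
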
 
\begin{proof}
Recalling \eqref{eq1-02-02-2022-M}, we remark the following identity:
$$
\aligned
&\econ[u]
\\
% =&\, \frac{1}{2}\zeta^2A|\del_t u|^2 + \frac{1}{2}(r^2+t^2+2rt\delb_rt)|\delb_ru|^2
%\\
%&\, + 2rt\zeta^2\del_tu\delb_ru + tu(\zeta^2\del_t + \delb_rt \delb_r)u - \frac{1}{2}u^2 + \frac{1}{2}A|r^{-1}\Omega u|^2
%\\
=&\, \frac{1}{2}\zeta^2\big(t^2 - (t\delb_rt)^2\big)|\del_t u|^2 + \frac{1}{2}\big(r^2+t^2|\delb_rt|^2 + 2rt\delb_rt\big)|\delb_r u|^2 
\\
&\, + \frac{1}{2}\zeta^2\big(r^2 + t^2|\delb_rt|^2 - 2rt\delb_rt\big)|\del_tu|^2 
+ \frac{1}{2}\big(t^2 - (t\delb_rt)^2\big)|\delb_r u|^2 
 + \zeta^2t(r+t\delb_rt)\del_tu\delb_ru 
\\
&\,+ \zeta^2 t(r-t\delb_r t)\delb_ru\del_tu
 + u\big(\zeta^2t\del_tu + (r + t\delb_rt)\delb_ru\big) - ru\delb_ru - \frac{1}{2}u^2 + \frac{1}{2}A|r^{-1}\Omega u|^2
\\
=&\, \frac{1}{2}\big(\zeta^2t\del_tu + (r+t\delb_rt)\delb_ru\big)^2 
+ \frac{1}{2}\big(\zeta(r-t\delb_rt) \del_tu + \zeta t\delb_ru\big)^2
\\
&\, + u\big(\zeta^2t\del_tu + (r + t\delb_rt)\delb_ru\big) - \delb_a(x^au^2) + \frac{1}{2}u^2 + \frac{1}{2}A|r^{-1}\Omega u|^2
\\
=&\, \frac{1}{2}\big(\zeta^2t\del_tu + (r+t\delb_rt)\delb_ru + u\big)^2
+ \frac{1}{2}\big(\zeta(r-t\delb_rt) \del_tu + \zeta t\delb_ru\big)^2
+ \frac{1}{2}A|r^{-1}\Omega u|^2 - \delb_a(x^au^2). 
\endaligned
$$
The last term is in divergence form and when integrating on $\Fcal_s$, the desired result is established.
\end{proof}
Then we establish the following relation.
\begin{lemma}
Let $u$ be a function defined in $\Fcal_{[s_0,s_1]}$, sufficiently regular and decreasing sufficiently fast at spatial infinity. Then
\begin{equation}\label{eq6-10-03-2022-M}
	\aligned
	\frac{d}{2ds}\int_{\Fcal_s}\big(1-(r/t)\delb_rt\big)u^2\,dx
	=& 
	s^{-1}\int_{\Fcal_s}\big((s/t)\zeta^{-1}\delb_rt\big)\,Ju\,\big(\zeta(r-t\delb_rt) \del_tu + \zeta t\delb_ru\big)\, dx
	\\
	&- s^{-1}\int_{\Fcal_s}(s/t)Ju\,\big(\zeta^2t\del_tu + (r+t\delb_rt)\delb_ru + u\big) dx.
	\endaligned
\end{equation}	
\end{lemma}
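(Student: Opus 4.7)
The plan is to recognize the weighted integral on the left as the flux of a Euclidean spacetime vector field across $\Fcal_s$, and then differentiate via Stokes' theorem as in Subsection \ref{subsec1-24-11-2021}. Concretely, set
\begin{equation*}
F^0 := u^2, \qquad F^a := (x^a/t)\,u^2.
\end{equation*}
Recalling from \eqref{eq3-26-01-2022} that $\vec n\,d\sigma = \big(1,-\xi(s,r)x^a/(s^2+r^2)^{1/2}\big)\,dx = (1,-\delb_a t)\,dx$ along $\Fcal_s$, a direct computation gives
\begin{equation*}
F^0 - F^a\,\delb_a t \;=\; u^2 - \tfrac{\xi r^2}{t\sqrt{s^2+r^2}}u^2 \;=\; \big(1-(r/t)\delb_r t\big)u^2,
\end{equation*}
so the LHS equals $\tfrac12\tfrac{d}{ds}\!\int_{\Fcal_s} F\cdot\vec n\,d\sigma$.

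\textbf{Step 1.} Apply the divergence theorem on $\Fcal_{[s_0,s_1]}$ (the spatial-infinity contribution vanishes by the decay assumption on $u$), then use the change of variable $dtdx = J\,dsdx$ of \eqref{eq3-07-10-2021} and differentiate in $s_1$ to obtain the slicewise identity
\begin{equation*}
\frac{d}{ds}\!\int_{\Fcal_s}\!\big(1-(r/t)\delb_r t\big)u^2\,dx \;=\; \int_{\Fcal_s}\!\big(\partial_t F^0 + \partial_a F^a\big)\,J\,dx .
\end{equation*}

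\textbf{Step 2.} Compute the Euclidean divergence:
\begin{equation*}
\partial_t F^0 + \partial_a F^a = 2u\,\partial_t u + (2/t)u^2 + 2u\,(r/t)\,\partial_r u,
\end{equation*}
where we used $\partial_a(x^a/t)=2/t$ in $\RR^{1+2}$. The identity $\delb_r = \delb_r t\,\partial_t + \partial_r$ (immediate from the definition of $\delb_a$) yields $\partial_r u = \delb_r u - \delb_r t\,\partial_t u$, so
\begin{equation*}
\partial_t F^0 + \partial_a F^a \;=\; \frac{2}{t}\,u\,\Big[(t-r\delb_r t)\,\partial_t u + r\,\delb_r u + u\Big].
\end{equation*}

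\textbf{Step 3.} Simplify the right-hand side of \eqref{eq6-10-03-2022-M}. In the first integrand the factors $\zeta^{-1}\cdot\zeta$ cancel, and using $\zeta^2 = 1-|\delb_r t|^2$ the coefficient of $\partial_t u$ becomes
\begin{equation*}
\delb_r t(r-t\delb_r t) - \zeta^2 t \;=\; r\delb_r t - t(|\delb_r t|^2 + \zeta^2) \;=\; r\delb_r t - t,
\end{equation*}
while the coefficient of $\delb_r u$ reduces to $t\delb_r t-(r+t\delb_r t) = -r$. Combining and using $s^{-1}(s/t)=1/t$, the RHS collapses to $\int_{\Fcal_s}(1/t)\,J\,u\,\big[(r\delb_r t - t)\partial_t u - r\delb_r u - u\big]\,dx$, which matches (up to the factor $1/2$ absorbed into $\tfrac{d}{2ds}$) the expression produced by Steps~1--2.

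The routine, essentially bookkeeping, parts are the Stokes calculation and the divergence computation in Step~2. The main obstacle, and the reason for the slightly unusual choice of $F^a=(x^a/t)u^2$, is the algebraic reconciliation in Step~3: the conformal-energy decomposition \eqref{eq2-02-03-2022-M} dictates that the RHS be written in terms of the two linear combinations $\zeta(r-t\delb_r t)\partial_t u + \zeta t\delb_r u$ and $\zeta^2 t\partial_t u+(r+t\delb_r t)\delb_r u+u$, and one must verify that the cross-cancellations (driven precisely by the identity $|\delb_r t|^2+\zeta^2=1$) collapse this combination onto the much simpler divergence expression of Step~2.
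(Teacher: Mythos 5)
Your route is genuinely different from the paper's and, up to the last step, sound. The paper never invokes the divergence theorem here: it works pointwise in the $(s,x)$ parameterization and rewrites $(s/t)Ju\big(\zeta^2t\del_tu+(r+t\delb_rt)\delb_ru+u\big)$ as an exact $\delb_s$-derivative of $\big(1-(r/t)\delb_rt\big)u^2$, plus the cross term $\big(J\zeta^{-1}\delb_rt\big)(s/t)u\big(\zeta(r-t\delb_rt)\del_tu+\zeta t\delb_ru\big)$, plus a pure spatial divergence $\tfrac12\delb_a\big((s/t)\delb_st\,x^au^2\big)$ (this is \eqref{eq1-05-03-2022-M}), and then integrates in $x$. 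Your version — recognizing $\int_{\Fcal_s}\big(1-(r/t)\delb_rt\big)u^2dx$ as the flux of $\big(u^2,(x^a/t)u^2\big)$ through $\Fcal_s$ via \eqref{eq3-26-01-2022}, applying Gauss's theorem and the Jacobian relation \eqref{eq3-07-10-2021}, and computing the Euclidean divergence — is the integrated form of the same computation and is arguably cleaner: the cancellation the paper must organize by hand (``the last four terms cancel each other'') happens automatically.

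However, Step 3 does not close as written. Your Steps 1--2 give $\frac{d}{2ds}\int_{\Fcal_s}\big(1-(r/t)\delb_rt\big)u^2dx=\int_{\Fcal_s}t^{-1}Ju\big[(t-r\delb_rt)\del_tu+r\delb_ru+u\big]dx$, while your (correct) algebraic simplification of the right-hand side of \eqref{eq6-10-03-2022-M} yields $\int_{\Fcal_s}t^{-1}Ju\big[(r\delb_rt-t)\del_tu-r\delb_ru-u\big]dx$ — the \emph{negative} of it. So the asserted ``match'' is false; what your computation actually proves is \eqref{eq6-10-03-2022-M} with the signs of both right-hand-side terms flipped. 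That flipped-sign identity is in fact what the paper's own proof produces: integrating \eqref{eq1-05-03-2022-M} and solving for the $\delb_s$-term puts the plus sign on the $\zeta^2t\del_tu$-combination and the minus sign on the $\delb_rt$-combination, the opposite of the printed statement. A quick sanity check on flat slices ($\xi\equiv0$, so $\delb_rt=0$, $\zeta=1$, and $\int_{\RR^2}u(r\del_ru+u)dx=0$) confirms the printed signs cannot be correct, so the statement carries a sign typo; it is harmless downstream since only the Cauchy--Schwarz bound \eqref{eq7-10-03-2022-M} is used. You should have flagged this discrepancy explicitly rather than declaring agreement — as it stands, the final sentence of your Step 3 asserts an equality that your own computation contradicts.
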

\begin{proof}
We firstly remark that $\delb_t = J^{-1} \delb_s $
$$
\aligned
&u\,\big(\zeta^2t\del_tu + (r+t\delb_rt)\delb_ru + u\big)J = \frac{1}{2}\zeta^2t\, J\del_t(u^2) 
+ J u\,t\delb_rt\delb_ru + Ju(r\delb_ru + u)
\\
=&\, \frac{1}{2}\big(t - r\delb_rt\big)\delb_s\big(u^2\big)
+ Ju\,\big(r\delb_rt - t(\delb_rt)^2\big)\delb_tu + Ju\,t\delb_rt\delb_ru 
+ \frac{1}{2}\delb_st\delb_a(x^au^2)
\\
=&\, \frac{1}{2}\big(t - r\delb_rt\big)\delb_s\big(u^2\big) 
+ J\zeta^{-1}\delb_rt\, u\,\big(\zeta(r-t\delb_rt) \del_tu + \zeta t\delb_ru\big)
+ \frac{1}{2}\delb_st\delb_a(x^au^2).
\endaligned
$$
Then
$$
\aligned
&(s/t)u\,\big(\zeta^2t\del_tu + (r+t\delb_rt)\delb_ru + u\big)J 
\\
=&\frac{s}{2}\delb_s\big(u^2\big) - \frac{s}{2}(r/t)\delb_rt\delb_s\big(u^2\big) 
+ \big((s/t)J\zeta^{-1}\delb_rt\big)u\,\big(\zeta(r-t\delb_rt) \del_tu + \zeta t\delb_ru\big) 
+ \frac{1}{2}(s/t)\delb_st\delb_a\big(x^au^2\big)
\\
=&\frac{s}{2}\delb_s\big(u^2\big)  - \frac{s}{2}\delb_s\big((r/t)\delb_rt\,u^2\big)
+ \frac{s}{2}\delb_s\big((r/t)\delb_rt\big)u^2 
\\
& + \big((s/t)J\zeta^{-1}\delb_rt\big)u\,\big(\zeta(r-t\delb_rt) \del_tu + \zeta t\delb_ru\big) 
 + \frac{1}{2}\delb_a\big((s/t)\delb_st\,x^au^2\big) - \frac{1}{2}ru^2\delb_r\big((s/t)\delb_st\big) 
\\
=& (s/2)\delb_s\Big(\big(1-(r/t)\delb_rt\big)u^2\Big)
+ \big((s/t)J\zeta^{-1}\delb_rt\big)u\,\big(\zeta(r-t\delb_rt) \del_tu + \zeta t\delb_ru\big)  
+ \frac{1}{2}\delb_a\big((s/t)\delb_st\,x^au^2\big) 
\\
& + (s/2)(r/t)\delb_s\delb_rt\, u^2 - (s/2)rt^{-2}\delb_st\delb_rt\, u^2
- \frac{s}{2}(r/t)u^2 \delb_r\delb_st + (s/2)rt^{-2}\delb_st\delb_rt\,u^2.
\endaligned
$$
Here the last four terms cancel each other. We conclude by
\begin{equation}\label{eq1-05-03-2022-M}
\aligned
&(s/t)Ju\,\big(\zeta^2t\del_tu + (r+t\delb_rt)\delb_ru + u\big) 
\\
=& 
(s/2)\delb_s\Big(\big(1-(r/t)\delb_rt\big)u^2\Big)
+ \big(J\zeta^{-1}\delb_rt\big)\,(s/t)u\,\big(\zeta(r-t\delb_rt) \del_tu + \zeta t\delb_ru\big)  
+ \frac{1}{2}\delb_a\big((s/t)\delb_st\,x^au^2\big).
\endaligned
\end{equation}
Integrating the above identity on $\Fcal_s$ respect to the variable $x$, we obtain
$$
\aligned
\int_{\Fcal_s}(s/t)Ju\,\big(\zeta^2t\del_tu + (r+t\delb_rt)\delb_ru + u\big)\, dx
=&\,\frac{s}{2}\int_{\Fcal_s}\delb_s\Big(\big(1-(r/t)\delb_rt\big)u^2\Big)dx
\\
&\, +\int_{\Fcal_s}\big((s/t)J\zeta^{-1}\delb_rt\big)u\,\big(\zeta(r-t\delb_rt) \del_tu + \zeta t\delb_ru\big)  dx.
\endaligned
$$
This leads to the desired identity.
\end{proof}
Remark that 
\begin{equation}\label{eq3-10-02-2022-M}
1-(r/t)\delb_rt = 
\left\{
\aligned
& (s/t)^2, && r\leq \rhoH_s,
\\
& (1-r/t) + c,&& \rhoH_s\leq r\leq \rhoH_s+1,
\\
& 1,&& r\geq \rhoH_s+1.
\endaligned
\right.
\end{equation}
Here $c = c(s)$ is determined in Lemma \ref{lem1-07-10-2021}. We will show that
\begin{lemma}\label{lem1-10-01-2022-M}
In the region $\Fcal_{[s_0,\infty)}$ with $s_0\geq 2$, the following relations hold:
\begin{equation}\label{eq5-10-03-2022-M}
(s/t)\zeta^{-1}J\delb_rt\lesssim (1-(r/t)\delb_rt)^{1/2},
\end{equation}
\begin{equation}\label{eq4-10-03-2022-M}
(s/t)J\lesssim (1-(r/t)\delb_rt)^{1/2},
\end{equation}
\begin{equation}\label{eq2-11-03-2022-M}
 1-(r/t)\delb_r \simeq \zeta^2.
\end{equation}
\end{lemma}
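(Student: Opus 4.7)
\textbf{Proof proposal for Lemma \ref{lem1-10-01-2022-M}.} The plan is to split $\Fcal_{[s_0,\infty)}$ into the three regions $\Hcal^*_{[s_0,\infty)}$, $\Tcal_{[s_0,\infty)}$, $\Pcal_{[s_0,\infty)}$ and verify each of the three claimed inequalities by direct computation, using: the explicit formulas $\delb_r t = \xi(s,r)r/(s^2+r^2)^{1/2}$ and $\zeta^2 = 1-\xi^2 r^2/(s^2+r^2)$; the Lemma \ref{lem1-07-10-2021} location bounds; and the bounds on $J=\del_sT$ from Lemma \ref{lem2-07-10-2021}.

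The cleanest starting point is \eqref{eq2-11-03-2022-M}. In $\Hcal^*_s$ one has $\xi\equiv 1$ and $t=\sqrt{s^2+r^2}$, so a direct substitution gives $1-(r/t)\delb_rt=(s/t)^2=\zeta^2$ exactly. In $\Pcal_s$ we have $\xi\equiv 0$, so both sides equal $1$. The substantive case is $\Tcal_s$: here $r\in[\rhoH(s),\rhoH(s)+1]$ together with Proposition \ref{prop1-07-10-2021} forces $r/\sqrt{s^2+r^2}=1-O(s^{-2})$ and $r/t=1-O(s^{-2})$, hence
\[
1-(r/t)\delb_rt = 1-\xi + O(s^{-2}), \qquad \zeta^2 = 1-\xi^2 + \frac{s^2}{s^2+r^2} = (1-\xi)(1+\xi) + O(s^{-2}).
\]
Since $\xi\in[0,1]$, both quantities are comparable to $(1-\xi)+s^{-2}$, establishing \eqref{eq2-11-03-2022-M}. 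As a byproduct, $\zeta^2 \gtrsim s^{-2}$ uniformly on $\Tcal_s$, which will be needed below.

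For \eqref{eq4-10-03-2022-M} and \eqref{eq5-10-03-2022-M}, thanks to \eqref{eq2-11-03-2022-M} it suffices to prove $(s/t)J\lesssim\zeta$ and $(s/t)J\delb_rt\lesssim\zeta^2$ respectively. In $\Hcal^*_s$, $J=s/t$, so $(s/t)J=(s/t)^2\le s/t=\zeta$, and $(s/t)J\delb_rt=(s/t)^2(r/t)\le(s/t)^2=\zeta^2$. In $\Pcal_s$, $\delb_rt=0$ (so \eqref{eq5-10-03-2022-M} is trivial) and $J\lesssim s$, $t\simeq s^2$, giving $(s/t)J\lesssim 1=\zeta$. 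For $\Tcal_s$, Lemma \ref{lem2-07-10-2021} and $\sqrt{s^2+r^2}\simeq s^2$ yield
\[
J\simeq (1-\xi)s+\xi s^{-1},\qquad s/t\simeq s^{-1},
\]
so $(s/t)J\simeq (1-\xi)+s^{-2}\lesssim (1-\xi)^{1/2}+s^{-1}\lesssim \zeta$, and $(s/t)J\delb_r t\simeq \xi(1-\xi)+\xi^2 s^{-2}\lesssim (1-\xi)+s^{-2}\lesssim \zeta^2$, using $\zeta^2\gtrsim s^{-2}$ from above.

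The only delicate point is the transition region $\Tcal_s$, where none of the nice asymptotics of $\Hcal^*_s$ or $\Pcal_s$ are available; the key technical input that makes the argument go through there is the fact that $r\simeq t\simeq\sqrt{s^2+r^2}$ up to relative error $O(s^{-2})$, combined with the uniform lower bound $\zeta^2\gtrsim s^{-2}$, so that the $O(s^{-2})$ error terms produced by $J$ and by $\delb_rt$ can always be absorbed into $\zeta^2$. Beyond this, no conceptual difficulty is expected; the whole proof is a bookkeeping exercise on the explicit formulas.
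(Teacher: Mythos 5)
Your proposal is correct and follows essentially the same route as the paper: split $\Fcal_{[s_0,\infty)}$ into $\Hcal^*$, $\Tcal$, $\Pcal$, observe the hyperbolic and flat parts are trivial, and in $\Tcal_s$ compare everything to $\zeta^2$ using the explicit formulas, the location bounds, and the $J$-estimates of Lemma \ref{lem2-07-10-2021}; the paper merely packages the transition-region computation through its ready-made bounds $J\zeta^{-1}\lesssim s\zeta$ (Lemma \ref{lem3-07-10-2021}), \eqref{eq12-07-10-2021} and \eqref{eq6-23-01-2022}, where you recompute the equivalent facts directly via the normal form $(1-\xi)+s^{-2}$. The only point to make explicit is the lower bound $1-(r/t)\delb_rt\geq \max\bigl(1-\xi,\,1-r/(s^2+r^2)^{1/2}\bigr)\gtrsim (1-\xi)+s^{-2}$ in $\Tcal_s$, since the identity $1-(r/t)\delb_rt=1-\xi+O(s^{-2})$ by itself gives only the upper bound needed for \eqref{eq2-11-03-2022-M}.
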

\begin{proof}
In $\Hcal^*_{[s_0,\infty)}$ and $\Pcal_{[s_0,\infty)}$, the above relations holds trivially. We only need to check them in $\Tcal_{[s_0,\infty)}$.

For the first two bounds, we only need to remark that 
\begin{equation}\label{eq2-10-03-2022-M}
\zeta^2\lesssim 1 - (r/t)\delb_rt. 
\end{equation}
Remark that by Lemma \ref{lem1-07-10-2021}, in $\TPcal_{[s_0,\infty)}$ one has $t-1\leq r\leq t$. Thus
$$
1 - r/t\geq 0.
$$
On the other hand, $(r/t)\geq 1/2$ when $s_0\geq 2$. Thus by \eqref{eq5-23-01-2022} and \eqref{eq3-10-02-2022-M}
$$
\zeta^2 = 1 - \frac{\xi^2(s,r)r^2}{s^2+r^2} \lesssim 1 - \frac{\xi(s,r)r}{(s^2+r^2)^{1/2}}\lesssim 1-(r/t)\delb_rt.
$$
Thus \eqref{eq2-10-03-2022-M} is concluded. Then recalling \eqref{eq9-07-10-2021}, we remark that in $\TPcal_{[s_0,\infty)}$, $s^2\simeq t$. By \eqref{eq13-07-10-2021}, 
$$
(s/t)J\lesssim (s/t)s\zeta^2\lesssim \zeta^2\lesssim 1 - (r/t)\delb_rt \lesssim (1-(r/t)\delb_rt)^{1/2}
$$
which concludes \eqref{eq4-10-03-2022-M}. For \eqref{eq5-10-03-2022-M}, we remark that
$$
(s/t)\zeta^{-1}J\delb_rt\lesssim \zeta\lesssim (1 - (r/t)\delb_r t)^{1/2}.
$$
For the last bound \eqref{eq2-11-03-2022-M}, we firstly remark that in $\Tcal_{[s_0,\infty)}$, 
$$
(r/t) \Big(1-\frac{\xi(s,r)r}{(s^2+r^2)^{1/2}}\Big)\lesssim \zeta^2.
$$
On the other hand, recalling Lemma \ref{lem2-23-01-2022},
$$
0\leq 1-r/t = ((t-r)/r) (t/r)\lesssim \zeta^2.
$$
Combing this together with \eqref{eq2-10-03-2022-M}, we conclude \eqref{eq2-11-03-2022-M}.
\end{proof}
\begin{proof}[Proof of Proposition \ref{prop1-11-03-2022-M}]
Based on the above Lemma \ref{lem1-10-01-2022-M}, we observe that \eqref{eq6-10-03-2022-M} leads to
$$
\aligned
&\big\|(1-(r/t)\delb_rt)^{1/2}u\big\|_{L^2(\Fcal_s)}
\frac{d}{ds}\big\|(1-(r/t)\delb_rt)^{1/2}u\big\|_{L^2(\Fcal_s)} 
\\
\lesssim&
s^{-1} \big\|(1-(r/t)\delb_rt)^{1/2}u\big\|_{L^2(\Fcal_s)}\|\zeta(r-t\delb_rt)\del_tu + \zeta t\delb_r u\|_{L^2(\Fcal_s)}
\\
&+ s^{-1}\big\|(1-(r/t)\delb_rt)^{1/2}u\big\|_{L^2(\Fcal_s)}\|\zeta^2t\del_tu + (r+t\delb_r)\delb_ru + u\|_{L^2(\Fcal_s)}
\endaligned
$$
thus by \eqref{eq2-02-03-2022-M},
\begin{equation}\label{eq7-10-03-2022-M}
\frac{d}{ds}\big\|(1-(r/t)\delb_rt)^{1/2}u\big\|_{L^2(\Fcal_s)} \lesssim s^{-1}\Econ(s,u)^{1/2}.
\end{equation}
Integrate the above inequality on $[s_0,s]$, we obtain
$$
\aligned
\big\|(1-(r/t)\delb_rt)^{1/2}u\big\|_{L^2(\Fcal_s)}
\leq& \,
\big\|(1-(r/t)\delb_rt)^{1/2}u\big\|_{L^2(\Fcal_{s_0})}	+ C\int_{s_0}^s\tau^{-1}\Econ(\tau,u)^{1/2}d\tau.
\\
\lesssim&\, \|\zeta u\|_{L^2(\Fcal_{s_0})} + \int_{s_0}^s\tau^{-1}\Econ(\tau,u)^{1/2}d\tau\lesssim \Fbf(s,u;s_0)^{1/2}
\endaligned
$$
where \eqref{eq2-11-03-2022-M} is applied. This leads to, thanks to \eqref{eq2-11-03-2022-M},
\begin{equation}\label{eq3-11-03-2022-M}
\|\zeta u\|_{L^2(\Fcal_s)}\lesssim \Fbf(s,u;s_0)^{1/2}.
\end{equation}

Then we recall \eqref{eq2-02-03-2022-M}, the $L^2$ norm of the following quantities are bounded by $\Fbf(s,u;s_0)^{1/2}$:
$$
\aligned
\zeta^2t\del_tu + (r+t\delb_rt)\delb_ru + u, \quad A^{1/2}r^{-1}\Omega u,\quad 
\zeta(r-t\delb_rt) \del_tu + \zeta t\delb_ru,\quad\zeta u.
\endaligned
$$
Remark that
$$
\aligned
&\zeta\big(\zeta^2t\del_tu + (r+t\delb_rt)\delb_ru + u\big) - \zeta u 
- \frac{r+t\delb_r}{t}\big(\zeta(r-t\delb_rt) \del_tu + \zeta t\delb_ru\big)
\\
=& \Big(\zeta^3t - \frac{\zeta}{t}\big(r^2-(t\delb_rt)^2\big)\Big)\del_tu = \zeta t^{-1}(t^2-r^2)\del_t u,
\endaligned
$$
and
$$
\aligned
&-\frac{r-t\delb_rt}{\zeta^2t}\Big(\zeta\big(\zeta^2t\del_tu + (r+t\delb_rt)\delb_ru + u\big) - \zeta u\Big) 
+ \big(\zeta(r-t\delb_rt) \del_tu + \zeta t\delb_ru\big)
\\
=&\, -\zeta(r-t\delb_rt)\del_tu - \zeta^{-1}t^{-1}\big(r^2 - (t\delb_rt)^2\big)\delb_ru + \zeta(r-t\delb_rt)\del_tu + \zeta t\delb_ru
= \zeta^{-1} t^{-1}(t^2-r^2)\delb_ru.
\endaligned
$$
Here we remark that 
$$
\frac{r-t\delb_rt}{\zeta^2t} = \frac{r-t}{\zeta^2t} + \Big(1 + \frac{\xi(s,r)r}{(s^2+r^2)^{1/2}}\Big)^{-1},
$$
thus, thanks to \eqref{eq3-23-01-2022} and \eqref{eq5-23-01-2022}, 
\begin{equation}\label{eq7-11-03-2022-M}
|r-t\delb_rt|\lesssim \zeta^2t.
\end{equation}
These leads to the bound
\begin{equation}\label{eq4-11-03-2022-M}
\|\zeta t^{-1}|r^2-t^2|\del_tu\|_{L^2(\Fcal_s)} + \|\zeta^{-1} t^{-1}|r^2-t^2|\delb_ru\|_{L^2(\Fcal_s)} \lesssim \Fbf(s,u;s_0)^{1/2}.
\end{equation}

Recalling the following relation:
$$
\del_r u = \delb_ru - \frac{\xi(s,r)r}{(s^2+r^2)^{1/2}}\del_tu.
$$
This leads to 
\begin{equation}\label{eq5-11-03-2022-M}
\|\zeta t^{-1}|r^2-t^2|\del_r u\|_{L^2(\Fcal_s)}\lesssim \Fbf(s,u;s_0)^{1/2}.
\end{equation}

In order to bound $\del_a u$ and $\delb_au$, we need to remark the following relation:
$$
\sum_{a}|\del_a u|^2  = |r^{-1}\Omega u|^2 + |\del_r u|^2.
$$
On the other hand, remark that in $\Hcal^*_{[s_0,\infty)}$,
$$
\zeta t^{-1}|r^2-t^2| = s^3t^{-2}  \lesssim s = \sqrt{t^2-r^2} = A^{1/2}  
$$
and in $\Pcal_{[s_0,\infty)}$,
$$
\zeta t^{-1}|r^2-t^2| = t^{-1}|r^2-t^2| \lesssim \sqrt{r^2+t^2} = A^{1/2}.
$$
In $\Tcal_{[s_0\infty)}$, we remark that $t-1\leq r\leq t$. Then
$$
\zeta t^{-1}(t^2-r^2)\lesssim A^{1/2} \Leftarrow (t^2-r^2)^2\lesssim t^4 + r^2t^2 - 2rt^3\delb_rt
\Leftarrow r^4\lesssim r^2t^2 + 2t^2r^2-2rt^3\delb_rt
$$
where we have remarked that $0<\zeta\leq 1$ and $1\leq \delb_rt\leq 1$. Then we conclude that
$$
\zeta t^{-1}|(r^2-t^2)\del_au|\lesssim \zeta t^{-1}|(r^2-t^2)\del_ru| + A^{-1/2}|r^{-1}\Omega u|
$$
and this leads to 
\begin{equation}\label{eq6-11-03-2022-M}
\|\zeta t^{-1}|r^2-t^2|\del_a u\|_{L^2(\Fcal_s)}\lesssim \Fbf(s,u;s_0)^{1/2}.	
\end{equation}
In the same manner, we remark 
\begin{equation}\label{eq8-11-03-2022-M}
\sum_{a}|\delb_a u|^2  = |r^{-1}\Omega u|^2 + |\delb_r u|^2.
\end{equation}
We remark that in $\Tcal_{[s_0,\infty)}$,
$$
0\leq \frac{|t^2-r^2|}{\zeta t} = \zeta^{-1}\frac{t-r}{t}(r+t)\lesssim \frac{|t-r|^{1/2}}{t^{1/2}}(r+t),
$$
and
$$
A= t^2+r^2-2rt + 2rt(1-\delb_rt) = (t-r)^2 + 2rt(1-\delb_rt) \geq 2rt(1-\delb_rt).
$$
Remark that
$$
\frac{|t-r|}{t}\lesssim \zeta^2 = 1-(\delb_rt)^2\lesssim 1-\delb_rt
$$
and $r\simeq t$ in $\Tcal_{[s_0\infty)}$, we obtain
\begin{equation}\label{eq9-11-03-2022-M}
\frac{|t^2-r^2|^2}{\zeta^2t^2}\lesssim A.
\end{equation}
This bound holds trivially in $\Hcal^*_s$ and $\Pcal_s$. Now combining \eqref{eq8-11-03-2022-M} with \eqref{eq2-02-03-2022-M} and \eqref{eq9-11-03-2022-M}, we obtain
\begin{equation}
\|\zeta^{-1}t^{-1}|r^2-t^2|\delb_au\|_{L^2(\Fcal_s)}\lesssim \Fbf(s,u;s_0)^{1/2}.
\end{equation}
Finally we turn to $\delt_au$. We only need to recall \eqref{eq10-11-03-2022-M} and obtain
\begin{equation}
\|\zeta^{-1}t^{-1}|r^2-t^2|\delt_au\|_{L^2(\Fcal_s)}\lesssim \Fbf(s,u;s_0)^{1/2}.
\end{equation}
\end{proof}

\section{Normal form transform}
\label{sec1-03-04-2022-M}
\subsection{Objective and technical preparations}
We remark that in the auxiliary system \eqref{eq1-24-11-2021}, the Klein--Gordon equations have pure quadratic Klein--Gordon terms coupled on the right-hand side, i.e., 
$$
E_{\jh\kh}(t,x) v^{\jh}v^{\kh} +  F_{\jh\kh}^{\ih\alpha}(t,x)v^{\jh}\del_{\alpha}v^{\kh}.
$$
As explained in Introduction, these terms are critical in $\RR^{1+2}$ case and will be handled by normal from transform. This section is devoted to a variety of this technique. 

We only need to carry out the normal form transform in the region $\Fcal_{[s_0,\infty)}\cap\{r\leq 3t\}$ (or ``far from the spatial infinity''). The rest of this subsection is devoted to some technique preparations in this region.

We firstly recall the cut-off function $\chi$ defined in \eqref{eq1-03-10-2021}. We define the following conical cut-off function
$$
\chih(t,x) := 1- \chi(|x|/t-2).
$$
Remark that $\chih(t,x)\equiv 0$ when $|x|/t\geq 3$ and $\chih(t,x)\equiv 1$ when $|x|/t\leq 2$. Furthermore, $|\del_{\alpha}\chi(t,x)| = 0$ when $|x|/t\leq 2$ and $|x|/t\geq 3$, 
\begin{equation}\label{eq4-07-02-2022}
|\del\chih|_{p,k}\leq C(p) \la r+t\ra^{-1-p+k}\mathbbm{1}_{\{2\leq r/t\leq 3\}}.
\end{equation}
This can be easily checked by homogeneity, we omit the detail. 

A second result is the bound on the function $(t^2-r^2)/t^2$. We prove that
\begin{lemma}
In $\Fcal_{[s_0,\infty)}\cap\{r\leq 3t\}$,
\begin{equation}
|(t^2-r^2)/t^2|_{p,k}\leq
\begin{cases} 
&C(p) \la t + r\ra^{-p+k},\quad p>k,
\\
&C(p) |t^2-r^2|t^{-2},\quad p=k.
\end{cases}
\end{equation}
\end{lemma}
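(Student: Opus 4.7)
The plan is to reduce everything to the identities $\Omega f=0$ and $L_af=-(2x^a/t)f$, where $f:=(t^2-r^2)/t^2=1-r^2/t^2$, together with a homogeneity bookkeeping in the region $\{r\le 3t,\,t\ge s_0\}$. First I would record that $f$ is smooth on $\{t\ge s_0\}$ and homogeneous of degree $0$ under the Euclidean dilation $(t,x)\mapsto(\lambda t,\lambda x)$. A direct computation gives $\Omega f=0$ (rotational invariance of $r^2/t^2$) and
\begin{equation*}
L_af=(x^a\del_t+t\del_a)f=\frac{2x^ar^2}{t^3}-\frac{2x^a}{t}=-\frac{2x^a}{t}\cdot\frac{t^2-r^2}{t^2}=-\frac{2x^a}{t}\,f.
\end{equation*}

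Next I would introduce the class $\mathcal{H}_m$ of smooth functions on $\{t\ge s_0,\,r/t\le 7/2\}$ which are homogeneous of degree $-m$ under the above dilation. In $\{r\le 3t\}\subset\{r/t\le 3\}$ one has $t\simeq\la t+r\ra$, so any $h\in\mathcal{H}_m$ satisfies $|h|\le C(h)\,\la t+r\ra^{-m}$. The generators act as
\begin{equation*}
\del_\alpha:\mathcal{H}_m\to\mathcal{H}_{m+1},\qquad L_a,\ \Omega:\mathcal{H}_m\to\mathcal{H}_m,
\end{equation*}
and each is a first-order linear operator with coefficients bounded in $\{r\le 3t\}$. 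Since $f\in\mathcal{H}_0$, for any monomial $Z=D_p\cdots D_1$ with $D_i\in\{\del_\alpha,L_a,\Omega\}$ containing exactly $m_\partial$ pure partial derivatives and $k'$ boost/rotation factors (so $\ord(Z)=m_\partial+k'$ and $\rank(Z)=k'$), applying the generators one at a time yields $Zf\in\mathcal{H}_{m_\partial}$. No commutation is required: homogeneity degree is additive across the chain of operator applications.

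For the first case $p>k$: any $Z\in\mathcal{I}_{p,k}$ realising the maximum has order $p$ and rank $k'\le k$, hence $m_\partial=p-k'\ge p-k\ge 1$. The homogeneity bound gives
\begin{equation*}
|Zf|\le C(p)\,t^{-(p-k')}\le C(p)\,\la t+r\ra^{-(p-k)},
\end{equation*}
as claimed. The lower-order contributions in $\mathcal{I}_{p,k}$ either reduce to the identity (handled by absorbing into the $p=k$ bound) or to the same homogeneity argument with at least $p-k$ derivatives.

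For the second case $p=k$: every $Z\in\mathcal{I}_{p,p}$ is a pure product of $L$'s and $\Omega$'s. I would prove by induction on $p$ that
\begin{equation*}
Zf=Q_Z\cdot f,\qquad Q_Z\in\mathcal{H}_0,
\end{equation*}
from which $|Zf|\le\bigl(\sup_{\{r\le 3t\}}|Q_Z|\bigr)|f|\le C(p)|t^2-r^2|/t^2$. The base case $p=1$ is precisely the two identities displayed above, with $Q_{L_a}=-2x^a/t\in\mathcal{H}_0$ and $Q_\Omega=0$. For the inductive step, write $Z=D\,Z_0$ with $Z_0\in\mathcal{I}_{p-1,p-1}$, hence $Z_0f=Q_{Z_0}f$ with $Q_{Z_0}\in\mathcal{H}_0$. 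Leibniz and $D\in\{L_b,\Omega\}$ give
\begin{equation*}
Zf=(DQ_{Z_0})f+Q_{Z_0}(Df)=\bigl(DQ_{Z_0}+Q_{Z_0}\,Q_D\bigr)f,
\end{equation*}
and the coefficient lies in $\mathcal{H}_0$ since $L_b,\Omega$ preserve $\mathcal{H}_0$ and the product of two $\mathcal{H}_0$ functions remains in $\mathcal{H}_0$.

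I expect the only mildly delicate point to be verifying that the coefficient functions appearing after iterated Leibniz expansions (e.g.\ iterates of $x^a/t$ under $L_b$, $\Omega$) stay uniformly bounded in $\{r\le 3t\}$; this follows immediately from the stability of $\mathcal{H}_0$ under $L$ and $\Omega$, so there is no true obstacle. The rest is bookkeeping.
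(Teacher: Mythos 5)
Your proof is correct and follows essentially the same route as the paper: the key identity $L_a\big((t^2-r^2)/t^2\big) = -2(x^a/t)\,(t^2-r^2)/t^2$ with a degree-zero (interior-)homogeneous coefficient, an induction showing that pure boost/rotation derivatives produce $Q_Z\cdot f$ with $Q_Z$ homogeneous of degree zero (the paper's $\Lambda^J$), and the homogeneity count whereby each $\del_\alpha$ gains one power of $\la t+r\ra^{-1}$ in $\{r\leq 3t\}$ for the case $p>k$. The only cosmetic difference is that the paper first reduces to ordered operators $\del^I L^J$ via Proposition \ref{prop1-23-10-2021} before invoking homogeneity, whereas you track the homogeneity class through an arbitrary ordering of the generators directly; both amount to the same bookkeeping.
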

\begin{proof}
In $\Hcal^*_{s_0,\infty}$ this is exactly the bound on $(s/t)^2$ established in \cite{LM1}. Here we need to generalize it to a larger region. The argument is in fact the same. Firstly, by \eqref{prop1-23-10-2021}, we only need to bound $\big|\del^IL^J \big((t^2-r^2)/t^2\big)\big|$. Remark that
$$
L_a \big((t^2-r^2)/t^2\big) = -2(x^a/t)\big((t^2-r^2)/t^2\big),
$$
where $-2(x^a/t)$ is interior-homogeneous. Then by induction we conclude that 
$$
L^J \big((t^2-r^2)/t^2\big) = \Lambda^J\big((t^2-r^2)/t^2\big),
$$
where $\Lambda^J$ is homogeneous of degree zero. Thus the case $p=k$ is deduced. When $p>k$, we regard
$$
\del^IL^J\big((t^2-r^2)/t^2\big) = \del^I\big(\Lambda^J\big((t^2-r^2)/t^2\big)\big).
$$ 
Then by homogeneity, we conclude by the desired result.
\end{proof}

%-------------------------------------------------------------------------------------------------------------------
\subsection{Basic differential identities}
Recalling the semi-hyperboloidal frame $\{\delu_0=\del_t,\,\delu_a = (x^a/t)\del_t+\del_a\}$ is well defined in the region $\{r\leq 3t\}$. We firstly remark that 
\begin{equation}\label{eq1-20-01-2022}
	\Box u =  \frac{t^2-r^2}{t^2}\del_t\del_t u + t^{-1}A_\m[u].
\end{equation}
where 
$$
A_\m[u] = \Big(2(x^a/t)\del_tL_a - \sum_a\delu_aL_a - (x^a/t)\delu_a + (2+(r/t)^2)\del_t\Big)u.
$$
Here the subscript ``$\m$'' represents the Minkowski metric. $\m^{00} = 1$, $\m^{aa} = -1$ and the rest components are zero. It is clear that in $\{r\leq 3t\}$,
\begin{equation}\label{eq2-20-01-2022}
	|A_\m[u]|_{p,k}\lesssim |\del u|_{p+1,k+1}.
\end{equation}
On the other hand, in semi-hyperboloidal frame,
$$
\big(\minu^{\alpha\beta}\big)_{\alpha\beta} 
=\left(
\begin{array}{ccc}
	1-(r/t)^2 &x^1/t &x^2/t
	\\
	x^1/t &-1 &0
	\\
	x^2/t &0 &-1
\end{array} 
\right).
$$
The ``good components'' of the quadratic form $\m(\del u,\del v) := \m^{\alpha\beta}\del_{\alpha}u\del_{\beta}v$ is denoted by
$$
\minu(\del u,\del v) = \sum_{(\alpha,\beta) \neq (0,0)}\minu^{\alpha\beta}\del_{\alpha}u\del_{\beta}v
= (x^a/t^2)(\del_tu L_av + \del_tvL_au) - t^{-2}\sum_aL_auL_av.
$$
Remark that in $\{r\leq 3t\}$,
\begin{equation}\label{eq3-20-01-2022}
	\aligned
	\big|\minu(\del u,\del v)\big|_{p,k}
	\lesssim&\, t^{-1}\sum_{p_1+p_2=p\atop k_1+k_2=k}\big(|\del u|_{p_1,k_1}|v|_{p_2+1,k_2+1} 
	+ |\del u|_{p_1,k_1}|u|_{p_2+1,k_2+1}\big) 
	\\
	&+ t^{-2}\!\!\!\sum_{p_1+p_2=p\atop k_1+k_2=k}|u|_{p_1+1,k_1+1}|v|_{p_2+1,k_2+1}
	\\
	\lesssim&\, t^{-1}\!\!\!\!\sum_{p_1+p_2=p}\big(|\del u|_{p_1}|v|_{p_2+1} + |\del v|_{p_1}|u|_{p_2+1}\big).
	\endaligned
\end{equation}

From now on, we consider the Klein--Gordon system 
$$
\Box v_i +c_i^2v_i = F_i
$$
with the following {\sl non-resonant} condition:
\begin{equation}
c_i+c_j\neq c_k,\quad i,j,k = 1,2,\cdots,n.
\end{equation}
The above condition leads to 
\begin{equation}\label{eq10-07-02-2022}
(c_i^2 - c_j^2 - c_k^2)^2 - 4c_j^2c_j^2\neq 0.
\end{equation}

Then we establish three basic identity for normal form transform. We get started with  the following observation.
\begin{equation}\label{eq9-04-02-2022}
(\Box + c_i^2)\vt^0_{jk} = (c_i^2-c_j^2-c_k^2)v_jv_k + 2 \frac{t^2-r^2}{t^2}\del_tv_j\del_tv_k + \tilde{\Rbb}^0_{ijk}[v]
\end{equation}
with
\begin{equation}\label{eq5-07-02-2022}
\aligned
\vt^0_{jk} :=&\, v_jv_k, 
\\
\tilde{\Rbb}^0_{jk}[v] :=&\, 2\minu(\del v_j,\del v_k) + v_jF_k + v_kF_j.
\endaligned
\end{equation}
On the other hand, thanks to \eqref{eq1-20-01-2022},
	$$
	\aligned
	\Box (\del_tv_j\del_tv_k) =& \del_tv_j(\Box\del_t v_k) + \del_tv_k\Box(\del_tv_j) + 2\frac{t^2-r^2}{t^2}\del_t\del_tv_j\del_t\del_tv_k + 2\minu(\del\del_tv_j,\del\del_tv_k)
	\\
	=& \del_tv_j (\del_tF_k - c_k^2\del_tv_k) + \del_tv_k(\del_tF_j-c_j^2\del_tv_j) 
	+ 2\del_t\del_tv_j\big(\Box v_k - t^{-1} A_\m[v_k]\big)
	\\
	& +  2\minu(\del\del_tv_j,\del\del_tv_k)
	\\
	=&-(c_j^2+c_k^2)\del_tv_j\del_tv_k - 2c_k^2v_k\del_t\del_tv_j 
	\\
	&+ 2\del_t\del_tv_j(F_k- t^{-1}A_\m[v_k])  +  2\minu(\del\del_tv_j,\del\del_tv_k) + \del_tv_j\del_tF_k + \del_tv_k\del_tF_j.
	\endaligned
	$$
Apply again \eqref{eq1-20-01-2022} on $\del_t\del_t v_j$, we obtain
%	$$
%	\aligned
%	&\frac{t^2-r^2}{t^2}\big(\Box+ c_i^2\big) (\del_tv_j\del_tv_k)
%	\\
%	=&  \frac{t^2-r^2}{t^2}(c_i^2-c_j^2-c_k^2)\del_tv_j\del_tv_k - 2c_k^2v_k\frac{t^2-r^2}{t^2}\del_t\del_tv_j
%	\\
%	& +\frac{t^2-r^2}{t^2} \big( 2\minu(\del\del_t v_j,\del\del_tv_k) + \del_tv_j\del_tF_k + \del_tv_k\del_tF_j + 2\del_t\del_tv_jF_k 
%- 2t^{-1}\del_t\del_tv_jA_\m[v_k]\big)
%	\\
%	=&\frac{t^2-r^2}{t^2}(c_i^2-c_j^2-c_k^2)\del_tv_j\del_tv_k - 2c_k^2v_k(\Box v_j - t^{-1}A_\m[v_j])
%	\\
%	& + \frac{t^2-r^2}{t^2}\big(2\minu(\del\del_t v_j,\del\del_tv_k) + \del_tv_j\del_tF_k + \del_tv_k\del_tF_j + 2\del_t\del_tv_jF_k 
%- 2t^{-1}\del_t\del_tv_jA_\m[v_k]\big)
%\\
%=& \frac{t^2-r^2}{t^2}(c_i^2-c_j^2-c_k^2)\del_tv_j\del_tv_k + 2c_j^2c_k^2v_jv_k - 2c_k^2v_kF_j + 2t^{-1}c_k^2v_kA_{\m}[v_j]
%\\
%&+ \frac{t^2-r^2}{t^2}\big(2\minu(\del\del_t v_j,\del\del_tv_k) + \del_tv_j\del_tF_k + \del_tv_k\del_tF_j + 2\del_t\del_tv_jF_k 
%- 2t^{-1}\del_t\del_tv_jA_\m[v_k]\big).
%	\endaligned
%	$$
%	Then we obtain
	$$
	\aligned
	&\frac{t^2-r^2}{t^2}\big(\Box + c_i^2\big)  (\del_tv_j\del_tv_k) 
	\\
	=& \frac{t^2-r^2}{t^2}(c_i^2-c_j^2-c_k^2)\del_tv_j\del_tv_k + 2c_j^2c_k^2v_jv_k 
	-2c_k^2v_kF_j + 2t^{-1}c_k^2v_kA_\m[v_j]
	\\
	&+ \frac{t^2-r^2}{t^2}\big(2\minu(\del\del_t v_j,\del\del_tv_k) + \del_tv_j\del_tF_k + \del_tv_k\del_tF_j + 2\del_t\del_tv_jF_k 
- 2t^{-1}\del_t\del_tv_jA_\m[v_k] \big).
	\endaligned
	$$
%	and we obtain
%	$$
%	(\Box+c_i^2)\Big(\frac{t^2-r^2}{t^2}\del_tv_j\del_tv_k\Big)
%	=  2c_j^2c_k^2v_jv_k + (c_i^2-c_j^2-c_k^2)\frac{t^2-r^2}{t^2}\del_tv_j\del_tv_k + \tilde{\Rbb}^2_{jk}[v].
%	$$
\begin{equation}\label{eq6-04-02-2022}
	\big(\Box + c_i^2\big)\vt^2_{jk} 
	=  2c_j^2c_k^2v_jv_k + (c_i^2-c_j^2-c_k^2)\frac{t^2-r^2}{t^2}\del_tv_j\del_tv_k + \tilde{\Rbb}^2_{jk}[v]
\end{equation}
with 
\begin{equation}\label{eq8-07-02-2022}
\aligned
\vt^2_{jk} :=&\, (t^2-r^2)t^{-2}\del_tv_j\del_tv_k,
\\
\tilde{\Rbb}^2_{jk}[v] :=&\, v_jv_k\Box\big((t^2-r^2)t^{-2}\big) + 2\m\big(\del\big((t^2-r^2)t^{-2}\big),\del(v_jv_k)\big)
 -2c_k^2v_kF_j + 2t^{-1}c_k^2v_kA_\m[v_j]
\\
&+ \frac{t^2-r^2}{t^2} \big(2\minu(\del\del_t v_j,\del\del_tv_k) + \del_tv_j\del_tF_k + \del_tv_k\del_tF_j + 2\del_t\del_tv_jF_k - 2t^{-1}\del_t\del_tv_jA_\m[v_k]\big) .
\endaligned
\end{equation}
Now we define , recalling \eqref{eq10-07-02-2022},
\begin{equation}\label{eq9-02-07-2022}
\aligned
v^0_{ijk} =&\, \big( (c_i^2-c_j^2-c_k^2)^2 - 4c_j^2c_k^2\big)^{-1}
\big((c_i^2-c_j^2-c_k^2)\vt^0_{jk} - 2\vt^2_{jk}\big),
\\
v^2_{ijk} =&\, \big( (c_i^2-c_j^2-c_k^2)^2 - 4c_j^2c_k^2\big)^{-1}
\big((c_i^2-c_j^2-c_k^2)\vt^2_{jk} - 2c_j^2c_k^2\vt^0_{jk} \big),
\endaligned
\end{equation}
and obtain, thanks to \eqref{eq9-04-02-2022} and \eqref{eq6-04-02-2022},
\begin{equation}\label{eq1-05-02-2022}
	(\Box + c_i^2)v^0_{ijk} = v_jv_k + \Rbb^0_{ijk}[v] ,
	\quad
	(\Box + c_i^2)v^2_{ijk} = \frac{t^2-r^2}{t^2}\del_tv_j\del_tv_k + \Rbb^2_{ijk}[v] ,
\end{equation}
with
\begin{equation}\label{eq10-02-07-2022}
\aligned
\Rbb^0_{ijk}[v] &=  \big( (c_i^2-c_j^2-c_k^2)^2 - 4c_j^2c_k^2\big)^{-1}
\big((c_i^2-c_j^2-c_k^2)\tilde{\Rbb}^0_{jk} - 2\tilde{\Rbb}^2_{jk}\big),
\\
\Rbb^2_{ijk}[v] &=  \big( (c_i^2-c_j^2-c_k^2)^2 - 4c_j^2c_k^2\big)^{-1}
\big((c_i^2-c_j^2-c_k^2)\tilde{\Rbb}^2_{jk} - 2c_j^2c_k^2\tilde{\Rbb}^0_{jk} \big).
\endaligned
\end{equation}

Finally, we remark that
$$
\aligned
\Box (v_j\del_t v_k) + c_i^2(v_j\del_t v_k) 
%\\
%=&\, \textcolor{blue}{c_i^2v_j\del_tv_k + \del_tv_k\Box v_j + v_j\del_t\Box v_k + 2\m(\del v_j,\del\del_tv_k)}
%\\
%=&\,\textcolor{blue}{c_i^2v_j\del_tv_k +\del_tv_k(F_j-c_j^2v_j) + v_j(\del_tF_k - c_k^2\del_tv_k) + 2\del_t v_j\frac{t^2-r^2}{t^2}\del_t\del_t v_k}
%\\
%=&\,\textcolor{blue}{(c_i^2-c_j^2-c_k^2)v_j\del_tv_k + F_j\del_tv_k+v_j\del_tF_k + 2\del_tv_j(\Box v_k - t^{-1}A_\m[v_k] )}
%\\
%=&\,\textcolor{blue}{(c_i^2-c_j^2-c_k^2)v_j\del_tv_k + F_j\del_tv_k+v_j\del_tF_k + 2\del_tv_j(F_k-c_k^2v_k - t^{-1}A_\m[v_k] )}
%\\
=&\, (c_i^2-c_j^2-c_k^2)v_j\del_tv_k - 2c_k^2v_k\del_tv_j
\\
&\,+F_j\del_tv_k + v_j\del_tF_k + 2\del_tv_jF_k + 2\minu(\del v_j,\del\del_t v_k) - 2t^{-1}\del_tv_j A_\m[v_k].
\endaligned
$$
Let $\vt^1_{jk} := v_j\del_tv_k$, the above identity is written as
\begin{subequations}\label{eq2-05-02-2022}
\begin{equation}\label{eq2a-05-02-2022}
(\Box + c_i^2)\vt^1_{jk} = (c_i^2-c_j^2-c_k^2)v_j\del_tv_k - 2c_k^2v_k\del_tv_j + \tilde{\Rbb}^1_{jk}[v]
\end{equation}
with
\begin{equation}\label{eq2b-05-02-2022}
\aligned
%&\vt_{jk} := v_j\del_tv_k,
%\\
&\tilde{\Rbb}^1_{jk}[v] := F_j\del_tv_k + v_j\del_tF_k + 2\del_tv_jF_k + 2\minu(\del v_j,\del\del_t v_k) - 2t^{-1}\del_tv_j A_\m[v_k].
\endaligned
\end{equation}\label{eq2c-05-02-2022}
Exchanging $j,k$ in \eqref{eq2a-05-02-2022}, one has
\begin{equation}
(\Box + c_i^2)\vt^1_{kj} = (c_i^2-c_j^2-c_k^2)v_k\del_tv_j - 2c_j^2v_j\del_tv_k + \tilde{\Rbb}^1_{kj}[v].
\end{equation}
\end{subequations}
Then from the above two identities, one obtains, 
%\\
%\textcolor{blue}{
%From \eqref{eq2a-05-02-2022} we obtain
%$$
%(\Box + c_i^2) \Big(\frac{v_j\del_tv_k}{c_i^2-c_j^2-c_k^2}\Big) =   v_j\del_tv_k - \frac{2c_k^2}{c_i^2-c_j^2-c_k^2}v_k\del_tv_j + \frac{\tilde{\Rbb}^1_{jk}[v]}{c_i^2-c_j^2-c_k^2}.
%$$
%Exchanging $j,k$, we also write
%$$
%(\Box + c_i^2) \Big(\frac{v_k\del_tv_j}{c_i^2-c_j^2-c_k^2}\Big) =  - \frac{2c_j^2}{c_i^2-c_j^2-c_k^2}v_j\del_tv_k
%+ v_k\del_tv_j  + \frac{\tilde{\Rbb}^1_{kj}[v]}{c_i^2-c_j^2-c_k^2}.
%$$
%Denoting (locally) that $\wt_{ijk}:= \frac{v_j\del_tv_k}{c_i^2-c_j^2-c_k^2}$. Linear algebra leads us to
%$$
%\aligned
%\big(\Box + c_i^2\big) \Big(\wt_{ijk} - \frac{2c_k^2\wt_{ikj}}{c_i^2-c_j^2-c_k^2}\Big) 
%=& \Big(\frac{4c_j^2c_k^2}{(c_i^2-c_j^2-c_k^2)^2}-1\Big) v_j\del_tv_k 
%+ \frac{\tilde{\Rbb}^1_{jk}[v]}{c_i^2-c_j^2-c_k^2} - \frac{2c_k^2\tilde{\Rbb}^1_{kj}[v]}{(c_i^2-c_j^2-c_k^2)^2} .
%\endaligned
%$$
%Then let 
%$$
%\aligned
%\vt_{ijk} :=& \Big(\frac{4c_j^2c_k^2}{(c_i^2-c_j^2-c_k^2)^2}-1\Big)^{-1}
%\Big(\wt_{ijk} - \frac{2c_k^2\wt_{ikj}}{c_i^2-c_j^2-c_k^2}\Big)
%\\
%=&\Big(\frac{4c_j^2c_k^2}{(c_i^2-c_j^2-c_k^2)^2}-1\Big)^{-1}
%\Big(\frac{v_j\del_tv_k}{c_i^2-c_j^2-c_k^2} - \frac{2c_k^2v_k\del_tv_j}{(c_i^2-c_j^2-c_k^2)^2}\Big)
%\endaligned
%$$}
\begin{equation}\label{eq3-05-02-2022}
(\Box + c_i^2)v^1_{ijk} = v_j\del_tv_k + \Rbb^1_{ijk}[v].
\end{equation}
where, thanks to \eqref{eq10-07-02-2022}
\begin{equation}\label{eq11-07-02-2022}
\aligned
v^1_{ijk} =& -\big((c_i^2-c_j^2-c_k^2)^2-4c_j^2c_k^2\big)^{-1}
\big((c_i^2-c_j^2-c_k^2)\vt^1_{jk} - 2c_k^2\vt^1_{kj}\big),
\\
\Rbb^1_{ijk}[v] =& -\big((c_i^2-c_j^2-c_k^2)^2-4c_j^2c_k^2\big)^{-1}
\big((c_i^2-c_j^2-c_k^2)\tilde{\Rbb}^1_{jk}[v] - 2c_k^2\tilde{\Rbb}^1_{kj}[v]\big).
\endaligned
\end{equation}
%-----------------------------------------------------------------------------------------------------------------
\subsection{Normal form transform}\label{subsec1-04-04-2022-M}
Recalling \eqref{eq1-27-02-2022_M}, we now consider the following nonlinear Klein--Gordon system
\begin{equation}\label{eq8-04-02-2022}
	(\Box + c_i^2)v_i = A_i^{jk}v_jv_k + B_i^{jk}v_j\del_tv_k + f_i,
\end{equation}
where $c_i,A_i^{jk}$ and $B_i^{jk}$ are constants. $A_i^{jk} = A_i^{kj}$, $c_i>0$. The functions $f_i$ are defined in $\Fcal_{[s_0,s_1]}$. $v = (v_1v_2,\cdots, v_n)$ is a solution of \eqref{eq8-04-02-2022} defined in $\Fcal_{[s_0,s_1]}$, sufficiently regular and decreases sufficiently fast at spatial infinity. Then we consider the following auxiliary variable
$$
w_i =: v_i -\chih(t,x)\big(A_i^{jk}v^0_{ijk} + B_i^{jk}v^1_{ijk}\big). 
$$
By \eqref{eq1-05-02-2022} and \eqref{eq3-05-02-2022}, we obtain
\begin{equation}\label{eq4-05-02-2022}
(\Box + c_i^2)w_i = (1-\chih)\big( A_i^{jk}v_jv_k + B_i^{jk}v_j\del_tv_k\big)
+f_i + \Rbb_i[v],
\end{equation}
where
$$
\aligned
\Rbb_i[v] :=& (1-\chih)\big( A_i^{jk}\Rbb^0_{jk}[v] + B_i^{jk}\Rbb^1_{jk}[v]\big)
\\
&+\big( A_i^{jk}v^0_{ijk} + B_i^{jk}v^1_{ijk} \big)\Box\chih
+ 2\m\big(\del \chih,\del\big( A_i^{jk}v^0_{ijk} + B_i^{jk}v^1_{ijk}\big)\big).
\endaligned
$$
The main feature is that, on the right-hand side of \eqref{eq4-05-02-2022} the quadratic terms vanishes in $\{r\leq 2t\}$. 

It remains to show that the remaining terms on the right-hand side are ``super-critical'' in the sens that their $L^2$ norms are integrable with respect to the time function $s$. This will be the main work of the next subsection.

\subsection{Bounds on remaining terms}
Remark that we are working in the region $\{r\leq 3t\}$. For the simplicity we make the following convention.
$$
\aligned
&|v|_{p,k} := \max_{i=1,2,\cdots,n}|v_i|_{p,k},
\quad
&&\ebf^{p,k}[v] := \sum_{i=1}^n\ebf^{p,k}[v],
\\
&|F|_{p,k} :=  \max_{i=1,2,\cdots,n}|F_i|_{p,k},
\quad 
&&|f|_{p,k} :=  \max_{i=1,2,\cdots,n}|f_i|_{p,k}.
\endaligned
$$

We firstly remark in our case of \eqref{eq8-04-02-2022},
\begin{equation}\label{eq6-07-02-2022}
|F|_{p,k} \leq C(p) \sum_{p_1+p_2=p}\big(|v|_{p_1}|v|_{p_2} + |\del v|_{p_1}|v|_{p_2} + |\del v|_{p_1}|\del v|_{p_2}\big) + |f|.
\end{equation}

Then we recall \eqref{eq5-07-02-2022} and obtain
%\textcolor{blue}{
%$$
%\aligned
%|\tilde{\Rbb}^0_{jk}[v]|_{p,k}\leq	C(p)\big|\minu(\del v,\del v)\big|_{p,k} 
%+ \sum_{p_1+p_2=p}\!\!\!\!|v|_{p_1}|F|_{p_2}
%\leq C(p)\sum_{p_1+p_2=p}\!\!\!\!\big(t^{-1}|\del v|_{p_1}|v|_{p_2+1} + |v|_{p_1}|F|_{p_2}\big).
%\endaligned
%$$
%}
\begin{equation}\label{eq7-07-02-2022}
\aligned
|\vt^0_{jk}|_{p,k}\leq& C(p) \sum_{p_1+p_2=p}|v|_{p_1}|v|_{p_2},
\\
|\tilde{\Rbb}^0_{jk}[v]|_{p,k}\leq& C(p)\sum_{p_1+p_2=p}\big(t^{-1}|\del v|_{p_1}|v|_{p_2+1} + |v|_{p_1}|F|_{p_2}\big).
\endaligned
\end{equation}

Then we turn to $\vt^2$ and $\tilde{\Rbb}^2$. By \eqref{eq8-07-02-2022}, we obtain
%\textcolor{blue}{
%$$
%\aligned
%&\Big|v_jv_k\Box\Big(\frac{t^2-r^2}{t^2}\Big)\Big|_{p,k}\lesssim t^{-2}|v|_{p_1}|v|_{p_2},
%\\
%&\Big|2\m\Big(\del\big(\frac{t^2-r^2}{t^2}\big),\del(v_jv_k)\Big)\Big|_{p,k}\lesssim t^{-1}|\del v|_{p_1}|v|_{p_2}
%\\
%& \big|2\minu(\del\del_t v_j,\del\del_tv_k)\big|_{p,k}\lesssim t^{-1}|\del\del v|_{p_1}|\del v|_{p_2+1}
%\lesssim t^{-1}|\del v|_{p_1+1}|\del v|_{p_2+1}
%\\
%&\big|2t^{-1}\del_t\del_tv_jA_\m[v_k]\big|_{p,k}\lesssim t^{-1}|\del \del v|_{p_1}|\del v|_{p_2+1}
%\lesssim t^{-1}|\del v|_{p_1+1}|\del v|_{p_2+1}
%\\
%&\big|2t^{-1}c_k^2v_kA_\m[v_j]|_{p,k}\lesssim t^{-1}|v|_{p_1}|\del v|_{p_2+1}
%\\
%&-----------------
%\\
%&|\del_tv_j\del_tF_k|_{p,k} + |\del_tv_k\del_tF_j|_{p,k}\lesssim |\del v|_{p_1}|\del F|_{p_2} 
%\\
%&|2\del_t\del_tv_jF_k|_{p_,k} \lesssim |\del\del v|_{p_1}|F|_{p_2}\lesssim |\del v|_{p_1+1}|F|_{p_2}
%\\
%&|-2c_k^2v_kF_j|_{p,k}\lesssim |v|_{p_1}|F|_{p_2}.
%\endaligned
%$$
%which leads to
%$$
%\aligned
%|\tilde{\Rbb}^2_{jk}|_{p,k}|\leq& C(p)t^{-1}\sum_{p_1+p_2=p}\big(|\del v|_{p_1+1}|\del v|_{p_2+1}
%+ |\del v|_{p_1+1}|v|_{p_2} + |v|_{p_1}|v|_{p_2} \big)
%\\
%&+C(p) \big(|\del v|_{p_1}|\del F|_{p_2} +|\del v|_{p_1+1}|F|_{p_2} + |v|_{p_1}F_{p_2}\big).
%\endaligned
%$$
%}
\begin{equation}\label{eq3-07-02-2022}	
\aligned
|\tilde{\Rbb}^2_{jk}|_{p,k}\leq& C(p)t^{-1}\sum_{p_1+p_2=p}(|\del v|_{p_1+1}+|v|_{p_1})(|\del v|_{p_2+1}+|v|_{p_2})
\\
&+C(p) \big(|\del v|_{p_1}|\del F|_{p_2} +|\del v|_{p_1+1}|F|_{p_2} + |v|_{p_1}|F|_{p_2}\big),
\endaligned
\end{equation}
\begin{equation}
|\vt^2_{jk}|_{p,k}\leq C(p)\zeta^2\sum_{p_1+p_2=p}|\del v|_{p_1}|\del v|_{p_2}.
\end{equation}
Here \eqref{eq6-23-01-2022} is applied on $\frac{t^2-r^2}{t^2}$ for the region $\TPcal_[s_,s_1]$. In $\Hcal^*_{[s_0,s_1]}$ one has $\zeta^2 = (s/t)^2 = \frac{t^2-r^2}{t^2}$.

Finally we turn to $\vt^1$ and $\tilde{\Rbb}^1$. By \eqref{eq2b-05-02-2022}, 
%\textcolor{blue}{
%$$
%\aligned
%\big|2\minu(\del v_j,\del\del_tv_k)\big|_{p,k}
%\leq& 
%C(p)t^{-1}\big(|v|_{p_1+1}|\del \del v|_{p_2} + |\del v|_{p_1}|\del v|_{p_2+1}\big)
%\\
%\leq& C(p)t^{-1}|v|_{p_1+1}|\del v|_{p_2+1},
%\\
%\big|2t^{-1}\del_tv_j A_\m[v_k]\big|_{p,k}
%\leq& C(p) t^{-1}|\del v|_{p_1}|\del v|_{p_2+1}\leq C(p)t^{-1}|v|_{p_1+1}|\del v|_{p_2+1},
%\\
%&----------------------
%\\
%|F_j\del_tv_k|_{p,k}\leq& C(p)|\del v|_{p_1}|F|_{p_2},
%\\
%|v_j\del_tF_k|_{p,k}\leq& C(p)|v|_{p_1}|\del F|_{p_2},
%\\
%|2\del_tv_jF_k|_{p,k}\leq& C(p)|\del v|_{p_1}|F|_{p_2}.
%\endaligned
%$$
%}
\begin{equation}
\aligned
&|\tilde{\Rbb}^1_{jk}[v]|_{p,k}\leq C(p)t^{-1}|v|_{p_1+1}|\del v|_{p_2+1} + |\del v|_{p_1}|F|_{p_2} + |v|_{p_1}|\del F|_{p_2},
\\
&|\vt^1_{jk}|_{p,k}\leq C(p)t^{-1}|v|_{p_1}|\del v|_{p_2}
\endaligned
\end{equation}

Form \eqref{eq9-02-07-2022}, \eqref{eq10-02-07-2022}, \eqref{eq11-07-02-2022} together with the non-resonant condition \eqref{eq10-07-02-2022}, we obtain
\begin{equation}\label{eq2-07-02-2022}
\aligned
|v^0_{ijk}|_{p,k}\leq& C(p)\sum_{p_1+p_2=p}(|v|_{p_1}|v|_{p_2} + \zeta^2|\del v|_{p_1}|\del v|_{p_2}),
\quad
|v^1_{ijk}|_{p,k} \leq C(p)\sum_{p_1+p_2=p}|v|_{p_1}|\del v|_{p_2}.
%\\
%|v^2_{ijk}|_{p,k}\leq& C(p)\sum_{p_1+p_2=p}\zeta^2|\del v|_{p_1}|\del v|_{p_2}.
\endaligned
\end{equation}
\begin{equation}\label{eq12-07-02-2022}
\aligned
|\Rbb^1_{ijk}|_{p,k}+|\Rbb^0_{ijk}[v]|_{p,k}\leq& C(p)t^{-1}\sum_{p_1+p_2=p}(|\del v|_{p_1+1} + |v|_{p_2+1})(|\del v|_{p_1+1} + |v|_{p_2+1})
\\
%&\textcolor{blue}{+ \sum_{p_1+p_2=p}|v|_{p_1}|\del F|_{p_2} + |\del v|_{p_1+1}|F|_{p_2} + |\del v|_{p_1}|\del F|_{p_2} + |v|_{p_1}|F|_{p_2}}
%\\
&+ \sum_{p_1+p_2=p}(|\del v|_{p_1} + |v|_{p_1})|\del F|_{p_2} + (|\del v|_{p_1+1} + |v|_{p_1})|F|_{p_2}.
\endaligned
\end{equation}

Now we are ready to estimate $\Rbb_i[v]$ on the right-hand side of \eqref{eq4-05-02-2022}. Remark that
$$
|\Box \chih|_{p,k} + |\del \chih|_{p,k}\leq C(p)t^{-1}\mathbbm{1}_{\{2t\leq r\leq 3t\}}.
$$
Furthermore,
%
%\textcolor{blue}{
%$$
%\aligned
%|\Rbb_i[v]|_{p,k}\lesssim& \sum_{a=1}^3|\Rbb^a_{ijk}[v]|_{p,k} 
%+ C(p)t^{-1}\sum_{p_1+p_2=p}(|\del v|_{p_1} + |v|_{p_1})(|\del v|_{p_2}+|v|_{p_2}) 
%\\
%&+ C(p)t^{-1}\sum_{p_1+p_2=p}(|\del v|_{p_1}|v|_{p_2} + |\del\del v|_{p_1}|v|_{p_2} + |\del v|_{p_1}|\del v|_{p_2}
%+ |\del\del v|_{p_1}|\del v|_{p_2})
%\\
%\lesssim& \sum_{a=1}^3|\Rbb^a_{ijk}[v]|_{p,k} 
%+ C(p)t^{-1}\sum_{p_1+p_2=p}(|\del v|_{p_1} + |v|_{p_1})(|\del v|_{p_2}+|v|_{p_2}) 
%\\
%&+ C(p)t^{-1}\sum_{p_1+p_2=p}(|\del v|_{p_1}|v|_{p_2} + |\del v|_{p_1+1}|v|_{p_2} + |\del v|_{p_1}|\del v|_{p_2})
%\\
%\lesssim& C(p)t^{-1}\sum_{p_1+p_2=p}(|\del v|_{p_1+1} + |v|_{p_2+1})(|\del v|_{p_1+1} + |v|_{p_2+1})
%\\
%&+ \sum_{p_1+p_2=p}(|\del v|_{p_1} + |v|_{p_1})|\del F|_{p_2} + (|\del v|_{p_1+1} + |v|_{p_1})|F|_{p_2}.
%\endaligned
%$$
%}
\begin{equation}\label{eq1-08-02-2022}
\aligned
|\Rbb_i[v]|_{p,k}\lesssim & C(p)t^{-1}\sum_{p_1+p_2=p}(|\del v|_{p_1+1} + |v|_{p_2+1})(|\del v|_{p_1+1} + |v|_{p_2+1})
\\
&+ \sum_{p_1+p_2=p}(|\del v|_{p_1} + |v|_{p_1})|\del F|_{p_2} + (|\del v|_{p_1+1} + |v|_{p_1})|F|_{p_2}.
\endaligned
\end{equation}
On the other hand, we remark that
$$
\aligned
\big|(1-\chih)\big( A_i^{jk}v^0_{ijk}[v] + B_i^{jk}v^1_{ijk}[v]\big)\big|_{p,k}
\leq C(p) \mathbbm{1}_{\{r\geq 2t\}}\sum_{p_1+p_2=p}(|v|_{p_1}|v|_{p_2} + |\del v|_{p_1}|v|_{p_2}).
\endaligned
$$
We thus obtain:
\begin{equation}\label{eq2-08-02-2022}
\aligned
|(\Box + c_i^2)w_i|_{p,k}
\leq& |f|_{p,k}  + \big(C(p)t^{-1} + \mathbbm{1}_{\{r\geq 2t\}}\big)\sum_{p_1+p_2=p}(|\del v|_{p_1+1} + |v|_{p_1+1})(|\del v|_{p_2+1} + |v|_{p_2+1})
\\
&+ \sum_{p_1+p_2=p}(|\del v|_{p_1} + |v|_{p_1})|\del F|_{p_2} + (|\del v|_{p_1+1} + |v|_{p_1})|F|_{p_2}.
\endaligned
\end{equation}

In the following discussion, we will prove that, under suitable bootstrap assumptions, the right-hand side of \eqref{eq2-08-02-2022} enjoys integrable $L^2$ bound. Thus one expects a uniform energy bound on $w_i$. Finally, we need to deduce the bound on $v$ form the bound on $w$. This relies on the following estimate:
\begin{equation}
|w_i-v_i|_{p,k}\leq C(p) \sum_{p_1+p_2=p}\big(|v|_{p_1}|v|_{p_2} + |\del v|_{p_1}|v|_{p_2} + \zeta^2|\del v|_{p_1}|\del v|_{p_2}\big).
\end{equation} 
This is a direct result of \eqref{eq2-07-02-2022}. As a conclusion, we establish the following energy estimate.
\begin{proposition}\label{prop1-04-04-2022-M}
Suppose that $(v_i)_{i=1,2,\cdots,N}$ be a sufficiently regular solution to \eqref{eq8-04-02-2022} in $\Fcal_{[s_0,s_1]}$ with $s_0\geq 2$. 
Suppose furthermore that
\begin{equation}
\zeta^{-1}|v|_{[p/2]} + |\del v|_{[p/2]}\leq \vep_s
\end{equation}
with $\vep_s$ sufficiently small (determined by $p$). Then
\begin{equation}\label{eq6-04-04-2022-M}
\Ebf_{\eta,c}^p(s,v) + \int_{s_0}^s\la r-t\ra^{2\eta-1}(|\delt v|_p + |v|_p)J\,dxd\tau
\lesssim \sum_{k\leq p}\int_{s_0}^s\int_{\Fcal_s}|\del v|_p|T|_{p,k} \,Jdxd\tau. 
\end{equation}
Here $T$ represents the right-hand side of \eqref{eq2-08-02-2022}.
\end{proposition}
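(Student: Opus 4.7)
The plan is to run the weighted energy estimate of Proposition \ref{prop2-24-11-2021} not on $v$ itself but on the normal-form corrected unknown
$$
w_i := v_i - \chih(t,x)\big(A_i^{jk}v^0_{ijk} + B_i^{jk}v^1_{ijk}\big)
$$
introduced in Subsection \ref{subsec1-04-04-2022-M}. By construction $w_i$ solves \eqref{eq4-05-02-2022}, whose source is no longer critical: the pointwise bound \eqref{eq2-08-02-2022} shows that every term on the right-hand side is controlled by $|T|_{p,k}$, where $T$ denotes exactly the expression appearing in the statement of the proposition.

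First, for every $Z\in\mathcal{I}_{p,k}$, I would apply $Z$ to \eqref{eq4-05-02-2022}. Since the admissible vector fields $\del_\alpha, L_a, \Omega$ all commute with $\Box+c_i^2$ modulo homogeneous combinations of operators of the same family, $(\Box+c_i^2)Zw_i$ is bounded pointwise by $|T|_{p,k}$. Then I would apply the weighted energy identity \eqref{eq7-22-03-2022-M} to $Zw_i$ and sum over all $Z$ with $\ord(Z)\leq p$ and over $i$. The ghost-weight integrand in \eqref{eq7-22-03-2022-M} degenerates like $\omega^{2\eta-1}\aleph'(2+r-t)$; using $\aleph'(2+r-t)\geq c\,\mathbbm{1}_{r\geq t-1}$ and $\omega\simeq\la r-t\ra$ in the support of $\aleph'$, this delivers the spacetime integral $\int\la r-t\ra^{2\eta-1}(|\delt v|_p+|v|_p)\,J\,dxd\tau$ on the left-hand side (after the $w\leftrightarrow v$ replacement below). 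The source integral is, by Cauchy--Schwarz on the slices and the pointwise bound, controlled by the right-hand side of \eqref{eq6-04-04-2022-M} with $|\del w|_p$ instead of $|\del v|_p$.

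The remaining step is to transfer the estimate from $w$ to $v$. From \eqref{eq2-07-02-2022} one has
$$
|v-w|_{p,k}+|\del(v-w)|_{p,k}\leq C(p)\!\!\sum_{p_1+p_2=p}\!\!\big(|v|_{p_1}|v|_{p_2+1}+|v|_{p_1}|\del v|_{p_2}+\zeta^2|\del v|_{p_1+1}|\del v|_{p_2}\big),
$$
plus analogous contributions coming from $\del \chih$, which are supported in $\{2t\leq r\leq 3t\}$ where the coefficients are benign. Placing the lower-order factor in $L^\infty$ and using the bootstrap assumption $\zeta^{-1}|v|_{[p/2]}+|\del v|_{[p/2]}\leq\vep_s$, each of these three types of terms produces an $L^2$ bound $\|\omega^\eta\del(Zv-Zw)\|_{L^2(\Fcal_s)}\leq C\vep_s\,\Ebf_{\eta,c}^{p}(s,v)^{1/2}$; for the cubic term $\zeta^2|\del v||\del v|$ the factor $\zeta$ pairs precisely with $\|\omega^\eta\zeta\del v\|_{L^2(\Fcal_s)}\lesssim\Ebf_{\eta,c}^p(s,v)^{1/2}$ from Proposition \ref{prop1-26-10-2021}. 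The same argument applied to $Zw$ in the Klein--Gordon mass term yields $\|Zv-Zw\|_{L^2(\Fcal_s)}\leq C\vep_s\Ebf_{\eta,c}^p(s,v)^{1/2}/c$. Consequently
$$
\big|\Ebf_{\eta,c}^p(s,w)-\Ebf_{\eta,c}^p(s,v)\big|\leq C\vep_s\,\Ebf_{\eta,c}^p(s,v),
$$
and an analogous bound holds for the ghost-weight spacetime integral; both can be absorbed into the left-hand side of the estimate on $w$ provided $\vep_s$ is sufficiently small. Replacing $|\del w|_p$ by $|\del v|_p$ in the source integral via the same inequality closes \eqref{eq6-04-04-2022-M}.

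The delicate point, which is where I expect most of the technical work to lie, is the transfer step for the $\vt^2$-type contribution to $w-v$. This term carries the weight $(t^2-r^2)/t^2$, which only coincides with $(s/t)^2$ inside $\Hcal^*_s$ but, on $\Tcal_s\cup\Pcal_s$, need only be comparable to $\zeta^2$ and not to any smaller quantity. One therefore has to check that the $\zeta$-weights produced by $\del$ falling on $(t^2-r^2)/t^2$ (estimated via the homogeneity lemma on this coefficient in Subsection on Basic differential identities) match exactly the $\|\omega^\eta\zeta\del v\|_{L^2}$ control available from the energy and do not require the sharper $\|\omega^\eta(s/t)\del v\|_{L^2}$ bound, which is unavailable in the exterior. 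The pointwise estimates \eqref{eq3-07-02-2022}, \eqref{eq2-07-02-2022}, \eqref{eq12-07-02-2022}--\eqref{eq2-08-02-2022} have been stated in a form consistent with this matching, so the remaining book-keeping is a careful verification slice by slice ($\Hcal^*_s$, $\Tcal_s$, $\Pcal_s$) that the replacement $w\leftrightsquigarrow v$ does not lose the small factor $\vep_s$.
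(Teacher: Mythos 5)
Your proposal follows the paper's own route exactly: the paper establishes this proposition precisely by applying the ghost-weighted energy estimate of Proposition \ref{prop2-24-11-2021} to the corrected unknown $w_i$ satisfying \eqref{eq4-05-02-2022}, bounding the commuted source pointwise by \eqref{eq2-08-02-2022} (the quantity $T$), and transferring back to $v$ through the quadratic difference bound on $|w_i-v_i|_{p,k}$ together with the smallness hypothesis $\zeta^{-1}|v|_{[p/2]}+|\del v|_{[p/2]}\leq\vep_s$. The one place where you are (like the paper) loose is the absorption step for $\del_\alpha Z(v-w)$, whose worst contributions contain second derivatives of $v$ at order $p+1$ (e.g. $\tfrac{t^2-r^2}{t^2}\,\del_t v_k\,\del_\alpha Z\del_t v_j$ and $v_j\,\del_\alpha Z\del_t v_k$) and must first be reduced via the identity \eqref{eq1-20-01-2022} and the Klein--Gordon equations rather than paired directly with $\|\omega^{\eta}\zeta\del v\|_{L^2(\Fcal_s)}\lesssim\Ebf_{\eta,c}^p(s,v)^{1/2}$, a point the paper itself leaves equally implicit.
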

%====================================================================================================================
\part{Global stability result}
\section{Construction of auxiliary system and initial data}
\label{sec3-11-04-2022-M}
\subsection{Construction of auxiliary system}

Following the strategy  of \cite{Duan-Ma-2020}, we introduce an auxiliary  system, which make use of the divergence structure of the first term on the right-hand side of \eqref{eq1-26-02-2022-M}. Instead of considering $\phi^1$, we consider the ``primitive'' of $\phi^1$. 
%To be more precise, we impose the initial data on the plane $\{t=2\}$:
%\begin{equation}\label{eq3-24-11-2021}
%	\aligned
%	&\phi^1(2,x) = \phi^1_0(x),\quad \del_t \phi^1(2,x) = \phi^1_1(x),
%	\\
%	&\phi^{\ih}(2,x) = \phi^{\ih}_0(x),\quad \del_t \phi^{\ih}(2,x) = \phi^{\ih}_1(x),\quad 2\leq \ih\leq n.
%	\endaligned
%\end{equation}
We denote by $u = w_0 + A^{\alpha\jh}\del_{\alpha}w_{\jh}$ and consider the following {\sl auxiliary system}:
\begin{equation}\label{eq1-24-11-2021}
\aligned
\Box w_0 =& T_W[u,v] + S_W[u,v],
\\
\Box w_{\jh} =& v_{\jh}^2,
\\
\big(\Box + c_{\ih}^2\big) v_{\ih} =&K_{\ih}^{\alpha}v_{\ih}\del_{\alpha}u
 + E_{\ih}^{\jh\kh} v_{\jh}v_{\kh} +  F_{\ih}^{\alpha\jh\kh}v_{\jh}\del_{\alpha}v_{\kh}
 + T_{KG}^{\ih}[u,v] + S_{KG}^{\ih}[u,v]
\endaligned
\end{equation}
with $2\leq \ih,\jh,\kh,\cdots\leq n$ and the coefficients defined in Subsection \ref{subsec1-15-04-2022-M}. We impose the following initial data
\begin{equation}\label{eq2-24-11-2021}
\aligned
&w_0(2,x) = \phi^1_0(x),\quad \del_t w_0(2,x) = \phi^1_1(x) - \sum_{\ih}A^{0\ih}(\phi^{\ih}_0(x))^2,
\quad
w_{\ih}(2,x) = \del_t w_{\ih}(2,x) = 0,
\\
&v_{\ih}(2,x) = \phi^{\ih}_0(x),\quad \del_t v_{\ih}(2,x) = \phi^{\ih}_1(x),\quad 2\leq \ih\leq n.
\endaligned
\end{equation}
Then we state the following result.
\begin{proposition}\label{prop1-24-11-2021}
Let $(w_0,w_{\ih},v_{\ih})$ be a $C^3$ solution to \eqref{eq1-24-11-2021} with initial data \eqref{eq2-24-11-2021}. Then $(\phi^1,\phi^{\ih})$ is a $C^2$ solution to  \eqref{eq2-27-02-2022-M} with initial data \eqref{eq3-27-02-2022-M} with
	$$
	\phi^1 = w_0 + A^{\alpha\ih}\del_{\alpha}w_{\ih},\quad \phi^{\ih} = v_{\ih},\quad 2\leq \ih\leq n.
	$$
\end{proposition}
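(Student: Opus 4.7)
\smallskip

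\noindent\textbf{Proof proposal.} My plan is a direct verification, exploiting the fact that the coefficients $A^{\alpha\ih} = A^\alpha_{\ih} = -\kappa_{\ih\ih}\m^{\alpha\beta}\del_\beta\varphi_S$ are \emph{constants} on $\RR^{1+2}$ (since $\varphi_S$ is affine and $\kappa_{\ih\ih}$ is constant along $\varphi_I(\RR)$, by the discussion in Subsection~\ref{subsec1-09-04-2022-M}). Consequently $A^{\alpha\ih}$ commutes with every partial derivative, which is what makes the substitution $u = w_0 + A^{\alpha\ih}\del_\alpha w_{\ih}$ compatible with the PDE structure. The proof splits into two tasks: (i) checking that the initial values agree; (ii) checking that the PDEs of \eqref{eq2-27-02-2022-M} follow from those of \eqref{eq1-24-11-2021}.

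\smallskip

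\noindent\emph{Initial data.} At $t=2$, we have $\phi^{\ih}(2,x) = v_{\ih}(2,x) = \phi^{\ih}_0(x)$ and $\del_t\phi^{\ih}(2,x) = \del_tv_{\ih}(2,x) = \phi^{\ih}_1(x)$ immediately. For $\phi^1 = w_0 + A^{\alpha\ih}\del_\alpha w_{\ih}$, observe that $w_{\ih}(2,\cdot)\equiv 0$ and $\del_tw_{\ih}(2,\cdot)\equiv 0$ imply $\del_aw_{\ih}(2,x)=0$ and $\del_a\del_tw_{\ih}(2,x)=0$. Therefore
\[
\phi^1(2,x) = w_0(2,x) + A^{0\ih}\del_tw_{\ih}(2,x) = w_0(2,x) = \phi^1_0(x).
\]
For $\del_t\phi^1(2,x)$, the only non-trivial contribution comes from $A^{0\ih}\del_t\del_tw_{\ih}(2,x)$. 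Using the PDE $\Box w_{\ih} = v_{\ih}^2$, i.e.\ $\del_t\del_tw_{\ih}=\sum_a\del_a\del_aw_{\ih}+v_{\ih}^2$, and the fact that $w_{\ih}(2,\cdot)\equiv 0$ gives $\del_a\del_aw_{\ih}(2,\cdot)=0$, we obtain $\del_t\del_tw_{\ih}(2,x) = (\phi^{\ih}_0(x))^2$. Hence
\[
\del_t\phi^1(2,x) = \del_tw_0(2,x) + \sum_{\ih}A^{0\ih}(\phi^{\ih}_0(x))^2 = \phi^1_1(x),
\]
by the prescribed value of $\del_tw_0(2,x)$ in \eqref{eq2-24-11-2021}.

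\smallskip

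\noindent\emph{PDE verification.} Since $\phi^{\ih}=v_{\ih}$ and $u = w_0 + A^{\alpha\ih}\del_\alpha w_{\ih} = \phi^1$ by definition, the Klein--Gordon equations of \eqref{eq2-27-02-2022-M} follow word-for-word from the third line of \eqref{eq1-24-11-2021}. For the wave equation, commuting $\Box$ with $A^{\alpha\ih}\del_\alpha$ (possible because $A^{\alpha\ih}$ is constant) yields
\[
\Box\phi^1 \;=\; \Box w_0 + A^{\alpha\ih}\del_\alpha\Box w_{\ih} \;=\; T_W[u,v] + S_W[u,v] + A^{\alpha\ih}\del_\alpha\bigl(v_{\ih}^2\bigr).
\]
Substituting $u=\phi^1,\,v_{\ih}=\phi^{\ih}$, using $A^{\alpha\ih} = A^\alpha_{\ih}$, and relabeling the dummy index $\ih\mapsto\jh$, the last term becomes $A^\alpha_{\jh}\del_\alpha(|\phi^{\jh}|^2)$, which matches \eqref{eq1-26-02-2022-M} exactly. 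This completes the proof.

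\smallskip

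\noindent\emph{Main point.} There is no genuine obstacle here; the whole content of the proposition is the compatibility of the substitution with both the equation and the initial data. The only place where any care is needed is the initial-data matching for $\del_t\phi^1$, where one must use the auxiliary wave equation $\Box w_{\ih}=v_{\ih}^2$ to convert the Cauchy data $w_{\ih}(2,\cdot)=\del_tw_{\ih}(2,\cdot)=0$ into information about $\del_t\del_tw_{\ih}(2,\cdot)$; this is precisely what forces the correction term $-\sum_{\ih}A^{0\ih}(\phi^{\ih}_0)^2$ in the prescription of $\del_tw_0(2,\cdot)$.
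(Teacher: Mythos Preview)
Your proof is correct and follows the same approach as the paper, which simply says ``direct calculation shows that $(u,v_{\ih})$ and $(\phi^1,\phi^{\ih})$ solve the same Cauchy problem.'' Your version is a faithful expansion of that direct calculation, including the careful initial-data check for $\del_t\phi^1$ via the equation $\Box w_{\ih}=v_{\ih}^2$; the paper's additional remark about uniqueness is not actually needed for the proposition as stated.
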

\begin{proof}
	Direct calculation shows that $(u = w_0 + A^{\alpha\ih}\del_{\alpha}w_{\ih},v_{\ih})$ and $(\phi^1,\phi^{\ih})$ solve the same Cauchy problem. Uniqueness result guarantees the desired result.
\end{proof}

\subsection{Key structures of the auxiliary system}  
In $\RR^{1+2}$, a wave-Klein-Gordon system with general quadratic and/or cubic nonlinearties blows up in finite time with small regular initial data. Thus the structure of the nonlinear terms contained in \eqref{eq1-24-11-2021} becomes crucial. 

We first remark that at the quadratic level, $w_0$ is decoupled from the system. $w_{\ih}$ are the essential wave components and they are coupled in the Klein-Gordon equations only by their Hessian components. This is of course due to the divergence structure on the right-hand side of \eqref{eq1-26-02-2022-M}, based on which the auxiliary is constructed. As explained in \cite{Duan-Ma-2020}, the Hessian components enjoy better $L^2$ and pointwise decay (see \eqref{eq1-25-03-2022-M} and \eqref{eq2-04-04-2022-M}) than the gradient. This feature helps us to obtain sufficient energy estimates. 

Then we regard the cubic terms. In $\RR^{1+2}$, in general cubic terms may leads to finite blow-up (see \cite{Zhou-Han-2011}). However in the present case, each cubic term contains at least one Klein-Gordon factor, which is sufficient for global existence. For more general cases about the cubic wave and Klein-Gordon systems, the reader is referred to \cite{Katayama-Matsumura-Sunagawa-2015} (for wave) and \cite{Cheng-2021} (for wave-Klein-Gordon). 

%\subsection{Structure of the auxiliary system}
%In this subsection we will list out the key structures of \eqref{eq1-24-11-2021}. We firstly concentrate on the quadratic terms. Remark that
%\begin{equation}
%K_{\ih}^{\alpha}v_{\ih}\del_{\alpha}u 
%= K_{\ih}^{\alpha}v_{\ih}\del_{\alpha}(w_0 + A^{\beta\jh}\del_{\beta}w_{\jh}) 
%=  K_{\ih}^{\alpha}v_{\ih}\del_{\alpha}w_0 + K_{\ih}^{\alpha}A^{\beta\jh}\del_{\alpha}\del_{\beta}w_{\jh}.
%\end{equation}
%where $w_{\jh}$ is coupled via its Hessian components.
%
%Then we regard the trilinear terms $T_W[u,v]$. Recalling its definition, we have
%\begin{equation}\label{eq2-14-03-2022-M}
%\aligned
%|T_W[u,v]|_p\lesssim &  \sum_{p_1+p_2+p_3=p}|\del\del w|_{p_1}|\del v|_{p_2}|v|_{p_3}
%\\
%& + \sum_{p_1+p_2+p_3=p}|\del w_0|_{p_1}|\del v|_{p_2}|v|_{p_3} 
%+ \sum_{p_1+p_2+p_3=p}|v|_{p_1}|v|_{p_2}|v|_{p_3}
%\\
%& + \sum_{p_1+p_2+p_3=p}
%\big(|\del v|_{p_1}|v|_{p_2}|v|_{p_3} + |\del v|_{p_1}|\del v|_{p_2}|v|_{p_3}\big).
%\endaligned
%\end{equation}
%For $T_{KG}$, we firstly remark the following bounds:
%\begin{equation}
%\aligned
%|T_{KG}[u,v]|_p\lesssim& \sum_{p_1+p_2} |\m(du,du)|_{p_1}|v|_{p_2} 
%\\
%&+ \sum_{p_1+p_2+p_3=p}\big(|\del u|_{p_1}|\del v|_{p_2}|v|_{p_3} + |\del u|_{p_1}|v|_{p_2}|v|_{p_3}\big)
%\\
%&+ \sum_{p_1+p_2+p_3=p}\big(|v|_{p_1}|v|_{p_2}|v|_{p_3} + |\del v|_{p_1}|v|_{p_2}|v|_{p_3} + |\del v|_{p_1}|\del v|_{p_2}|v|_{p_3}\big).
%\endaligned
%\end{equation}

\subsection{Construction of initial data}\label{subsec1-22-04-2022-M}
For our purpose, we need the following initial energy estimates.
\begin{proposition}
Suppose that \eqref{eq1-22-04-2022-M} holds with $N\geq 4$ and $\vep$ sufficiently small (determined by $N$ and the system). Then the following initial energy bounds hold  
\begin{equation}\label{eq3-22-04-2022-M}
\Ebf^N_{\eta}(2,w_0)  
+ \Econ^{N-1}(2,w_0) + \|\zeta |w_0|_{N-1}\|_{L^2(\Fcal_2)}^2 
+
\sum_{\ih} (\Ebf^N_{\eta}(2,w_{\ih}) + \Ebf^N_{\eta,c}(2,v_{\ih}))\leq (C_0\vep)^2,
\end{equation}
where $C_0$ is determined by $N$ and the system. 
\end{proposition}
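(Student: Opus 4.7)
The plan is to transport the weighted Sobolev bounds in \eqref{eq1-22-04-2022-M} from the flat slice $\{t=2\}$ onto the Euclidean-hyperboloidal slice $\Fcal_2$. The decisive geometric fact is that, by Proposition \ref{prop1-07-10-2021}, the interpolating region
\[
\Fcal^{\flat}_2:=\{(t,x)\,:\,2\le t\le T(2,|x|)\}
\]
is bounded in $t$ (with $t\le \sqrt{41}/2$), so local well-posedness applies on it and power weights $\la r\ra^{\alpha}$ are preserved in time.

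First I would translate \eqref{eq1-22-04-2022-M} into weighted Sobolev bounds for the auxiliary initial data at $t=2$. For $w_{\ih}$ there is nothing to do, since both $w_{\ih}(2,\cdot)$ and $\del_t w_{\ih}(2,\cdot)$ vanish identically; for $v_{\ih}$ the bounds are immediate from the hypothesis on $\phi^{\ih}_0,\phi^{\ih}_1$. For $w_0$ the quadratic correction $\sum_{\ih}A^{0\ih}(\phi^{\ih}_0)^2$ in $\del_t w_0(2,\cdot)$ is of size $O(\vep^2)$ by Sobolev embedding in $\RR^2$, so the linear part of the data dominates. Mixed time-space derivatives of order $\le N$ at $t=2$ are then reduced to purely spatial derivatives of the initial data by iteratively inserting $\del_t^2=\Delta+(\text{nonlinear})$ from \eqref{eq1-24-11-2021}, which costs only the same weighted $H^{\le N}$ norms of the data up to higher-order corrections in $\vep$.

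Second, standard local existence for the quasilinear wave-Klein-Gordon system \eqref{eq1-24-11-2021} produces a solution on $\Fcal^{\flat}_2$ with weighted $H^N$-type bounds of order $\vep$, once $\vep_0$ is taken small enough. To convert these flat bounds into energies on $\Fcal_2$ I would apply the identity \eqref{eq2-22-04-2022-M} to every high-order derivative $Zu$ with $Z\in\mathcal{I}_{p,k}$, $p\le N$:
\[
\Ebf_{\eta,c}(2,Zu)=\Ebf^{\flat}_{\eta,c}(2,Zu)+2\int_{\Fcal^{\flat}_2}\omega^{2\eta}\,\del_t Zu\,(\Box+c^2)Zu\,dxdt.
\]
The flat piece is controlled by the previous step; the spacetime integral is controlled by Cauchy-Schwarz together with the local bounds, using that every nonlinearity in \eqref{eq1-24-11-2021} is at least quadratic in $(u,v)$. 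In particular for $w_{\ih}$ the flat piece vanishes and $\Box w_{\ih}=v_{\ih}^2$ forces the whole energy to be $O(\vep^4)$. The conformal energy $\Econ^{N-1}(2,w_0)$ and the $L^2$ piece $\|\zeta|w_0|_{N-1}\|_{L^2(\Fcal_2)}$ are treated analogously: for the first one integrates the differential identity \eqref{eq2-26-01-2022} applied to $Zw_0$ over $\Fcal^{\flat}_2$; for the second a two-dimensional weighted Hardy inequality turns the gradient control into an $L^2$ control on $w_0$, using that $\zeta\equiv 1$ on $\Pcal_2$ and $\zeta$ is bounded below on the compact set $\Hcal^*_2\cup\Tcal_2$ (since $t$ is bounded there).

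The main obstacle is the accounting of $r$-weights: a vector field $Z$ of rank $k$ evaluated at $t=2$ schematically inserts a factor $\la r\ra^k$ via $L_a=x^a\del_t+t\del_a$ and the rotation $\Omega$, and the conformal energy density further weights each gradient by $\sqrt{t^2+r^2}\simeq\la r\ra$ in the flat region. This is exactly why the hypothesis is posed with the weight $\la r\ra^{\eta+1/2+|I|}$: since $\eta>1/2$ gives $\la r\ra^{\eta+1/2+|I|}\ge \la r\ra^{1+|I|}$, the weights demanded by $\Econ^{N-1}(2,w_0)$ (which dominates the other pieces) are precisely those supplied by \eqref{eq1-22-04-2022-M}. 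Carrying out this bookkeeping uniformly for all admissible $(p,k)$ and commuting $Z$ through the equations without losing weights is delicate, but because all nonlinearities are quadratic or higher, the resulting error terms are $O(\vep^3)$ and are absorbed into $(C_0\vep)^2$.
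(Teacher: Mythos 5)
Your proposal is correct and, for the standard energies, follows essentially the same route as the paper: solve locally on the bounded slab $\Fcal^{\flat}_2$, transfer the flat weighted bounds at $\{t=2\}$ to $\Fcal_2$ through the identity \eqref{eq2-22-04-2022-M}, convert repeated time derivatives via $\del_t^2=\Delta+(\text{nonlinear})$, absorb the quadratic correction in $\del_t w_0(2,\cdot)$ and the quadratic/cubic source terms as $O(\vep^2)$ errors, and use the weight hierarchy $\la r\ra^{\eta+1/2+|I|}$ (resp. $\la r\ra^{\eta+|I|}$) to pay for the factors of $r$ inserted by boosts and rotations at bounded time. The one place you diverge is the conformal energy: you propose to integrate the conformal identity \eqref{eq2-26-01-2022} over $\Fcal^{\flat}_2$, which forces you to control a flat conformal-type flux at $t=2$ (with its $tu\,\del u$ and $u^2$ terms) and a bulk term carrying the weight $t^2+r^2\simeq\la r\ra^2$ against the cubic right-hand side; this is workable under the stated hypotheses but costs extra bookkeeping. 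The paper instead bounds $\Econ(2,Zw_0)$ directly on the slice, using the explicit density \eqref{eq4-22-04-2022-M} and the boundedness of $t$ on $\Fcal_2$ to get $\Econ(2,Zw_0)^{1/2}\lesssim\|\la r\ra\,\del Zw_0\|_{L^2(\Fcal_2)}+\|\zeta Zw_0\|_{L^2(\Fcal_2)}$ as in \eqref{eq5-22-04-2022-M}; the first term is the already-transferred energy $\Ebf_{1/2+\eta}$ (here $1/2+\eta>1$ is used), and the second is handled by the same two-dimensional Hardy inequality $\|u\|_{L^2(\RR^2)}\lesssim\|\la r\ra\del u\|_{L^2(\RR^2)}$ that you invoke for $\|\zeta|w_0|_{N-1}\|_{L^2(\Fcal_2)}$. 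So the paper's treatment of $\Econ^{N-1}(2,w_0)$ is a shortcut that recycles the standard-energy transfer, while yours re-runs a spacetime conformal estimate; both close under the hypothesis \eqref{eq1-22-04-2022-M}.
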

\begin{proof}[Sketch of proof]
We recall \eqref{eq1-22-04-2022-M} and \eqref{eq2-27-02-2022-M}. The local solution exists and extends to $\Fcal_2$ provided that $\vep$ sufficiently small. Then we recall \eqref{eq2-22-04-2022-M}. Following the local theory, the source term on the right-hand side is controlled by $C^*\vep$ where $C^*$ is a constant determined by the system and $N$. Then \eqref{eq3-22-04-2022-M} reduces to the estimates on $\Ebf^{\flat,N}_{\eta}(s,\phi^1)$ and $\Ebf^{\flat,N}_{\eta,c}(s,\phi^{\ih})$. These bounds are guaranteed by \eqref{eq1-22-04-2022-M}. Here we remark that for the bounds on $\del_t\del_tZ\phi^i$, we need to use the equation and express it as a linear combination of $\Delta Z\phi^i$ together with the nonlinear terms on the right-hand side. 

The conformal energy $\Econ(2,Zw_0)$ will be bounded by $\Ebf_{1/2+\eta,c}(2,Zw_0)$. Thanks to \eqref{eq4-22-04-2022-M}, we remark that for $\ord(Z)\leq N-1$, 
\begin{equation}\label{eq5-22-04-2022-M}
\Econ(2,Zw_0)^{1/2}\lesssim \|\la r\ra\del Z w_0\|_{L^2(\Fcal_2)} + \|\zeta Zw_0\|_{L^2(\Fcal_2)}.
\end{equation}
The first term is bounded by $\Ebf_{1/2+\eta}(2,Zw_0)^{1/2}$ due to the fact that $1/2+\eta>1$, For the second term, we make the following observation. Let $u$ be a $C^1$ function defined in $\RR^2$, decreasing sufficiently fast at infinity. Let $x = r\theta$,
$$
\int_{\mathbb{S}^1}u^2(x)d\theta 
= -2\int_{\mathbb{S}^1}\int_0^{\infty}\mathbbm{1}_{\{\rho\geq r\}}u(\rho\theta)\del_r u(\rho\theta)\,d\rho d\theta.
$$
Then
$$
\aligned
\int_{\RR^2}u^2dx =&\, \int_0^{\infty}r\int_{\mathbb{S}^1}u^2d\sigma\,dr
= -2\int_0^{\infty}\int_{\mathbb{S}^1}\int_0^{\infty}\mathbbm{1}_{\{\rho\geq r\}}u(\rho\theta)\del_r u(\rho\theta)\,r d\rho d\theta dr
\\
=&\, -2\int_0^{\infty}\int_{\mathbb{S}^1}u(\rho\theta)\del_ru(\rho\theta)\Big(\int_0^\rho rdr\Big)d\theta d\rho
= -\int_0^{\infty}\int_{\mathbb{S}^1}\rho^2u(\rho\theta)\del_ru(\rho\theta)d\theta d\rho 
\\
=&\, - \int_{\RR^2} u(x)\del_r u(x) rdx.
\endaligned
$$
This leads to
$$
\|u\|_{L^2(\RR^2)}^2\lesssim \|u\|_{L^2(\RR^2)}\|\la r\ra\del u\|_{L^2(\RR^2)}\quad \Rightarrow 
\quad \|u\|_{L^2(\RR^2)}\lesssim \|\la r\ra\del u\|_{L^2(\RR^2)}. 
$$
Then we take $u(x) = Zw_0(2,x)$. Then $\del_a u(x) = \delb_a Zw_0(2,x)$, which is bounded by $\Ebf_{1/2+\eta}^{N-1}(2,w_0)$ because $1/2+\eta>1$. Then the second term on the right-hand side of \eqref{eq5-22-04-2022-M} is also bounded $C^*\vep$ with $C^*$ a constant determined by $N$. This concludes the desired result.
\end{proof}
%====================================================================================================================
\section{Bootstrap argument and linear estimates}\label{sec4-11-04-2022-M}
In this section we start the bootstrap argument. We fix $0<\delta\ll (\eta-1/2)< 1/2$ and $N\geq 4 $. The symbol ``$\lesssim$'' meas smaller or equal to modulo a constant determined by $s_0,\delta,\eta$ and  $N$. We denote by $|w|_{p,k} = \max_{\ih}|w_{\ih}|_{p,k}$, $|v|_{p,k} := \max_{\ih}|v_{\ih}|_{p,k}$. Furthermore,
$$
\Ebf_{\eta}^{p,k}(s,w):=\sum_{\ih}\Ebf_{\eta}^{p,k}(s,w_{\ih}),\quad 
\Ebf_{\eta,c}^{p,k}(s,v):=\sum_{\ih}\Ebf_{\eta,c}^{p,k}(s,v_{\ih}).
$$
\subsection{Bootstrap assumption and Sobolev decay}
Suppose that on a time interval $[s_0,s_1]$ the following energy bounds hold:
\begin{subequations}\label{eq7-29-10-2021}
\begin{equation}\label{eq7d-29-10-2021}
\Ebf_{\eta}^N(s,\del w) + \int_{s_0}^s\int_{\TPcal_{\tau}}\la r-t\ra^{2\eta-1} |\delt\del w|_N^2\,J dxd\tau 
\leq (C_1\vep)^2 s^{2\delta},
\end{equation}
\begin{equation}\label{eq7a-29-10-2021}
\Ebf_{\eta}^N(s,w)  + \int_{s_0}^s\int_{\TPcal_{\tau}}\la r-t\ra^{2\eta-1}|\delt w|_N^2\,J dxd\tau 
\leq (C_1\vep)^2 s^{2\delta},
\end{equation}
\end{subequations}

\begin{equation}\label{eq4-17-03-2022-M}
\Ebf_{\eta}^N(s,w_0) + \int_{s_0}^s\int_{\TPcal_{\tau}}\la r-t\ra^{2\eta-1}|\delt w_0|_N^2\,J dxd\tau 
\leq (C_1\vep)^2s^{2\delta},
\end{equation}
%\begin{equation}\label{eq4b-17-03-2022-M}
%\int_{s_0}^s\int_{\TPcal_{\tau}}\la r-t\ra^{2\eta-1}|\delt w_0|_N^2\,J dxd\tau 
%\leq (C_1\vep)^2 s^{2\delta},	
%\end{equation}

\begin{subequations}\label{eq8-14-03-2022-M}
\begin{equation}\label{eq7c-29-10-2021}
\Ebf_{\eta,c}^N(s,v) 
+ \int_{s_0}^s\int_{\TPcal_\tau}\la r-t\ra^{2\eta-1}\big(|v|_N^2 + |\delt v|_N^2 \big)\,Jdxd\tau 
 \leq(C_1\vep)^2 s^{2\delta},
\end{equation}
\begin{equation}\label{eq7e-29-10-2021}
\Ebf_{\eta,c}^{N-1}(s,v) 
+ \int_{s_0}^s\int_{\TPcal_\tau}\la r-t\ra^{2\eta-1}\big(|v|_{N-1}^2 + |\delt v|_{N-1}^2 \big)\,Jdxd\tau 
\leq (C_1\vep)^2.
\end{equation}
\end{subequations}
Recalling the relation $u = w_0 + A^{\alpha\jh}\del_{\alpha}w_{\jh}$, we obtain

\begin{equation}\label{eq3-17-03-2022-M}
\Ebf_{\eta}^N(s,u)^{1/2} + \int_{s_0}^s\int_{\TPcal_{\tau}}\la r-t\ra^{2\eta-1}|\delt u|_N^2\,J dxd\tau 
\leq C_1\vep s^{\delta},
\end{equation}
%\begin{equation}\label{eq3b-17-03-2022-M}
%\int_{s_0}^s\int_{\TPcal_{\tau}}\la r-t\ra^{2\eta-1}|\delt u|_N^2\,J dxd\tau 
%\leq (C_1\vep)^2 s^{2\delta}.
%\end{equation}

Our purpose is establishing, on the same interval, the following {\sl improved energy bounds}:
\begin{subequations}\label{eq3-14-03-2022-M}
\begin{equation}\label{eq3a-14-03-2022-M}
\Ebf_{\eta}^N(s,w) + \int_{s_0}^s\int_{\TPcal_{\tau}}\la r-t\ra^{2\eta-1}|\delt w|_N^2\,J dxd\tau \leq \frac{1}{2}(C_1\vep)^2 s^{2\delta},
\end{equation}
\begin{equation}\label{eq3b-14-03-2022-M}
\Ebf_{\eta}^N(s,\del w) + \int_{s_0}^s\int_{\TPcal_{\tau}}\la r-t\ra^{2\eta-1}|\delt\del w|_N^2 \,J dxd\tau 
\leq \frac{1}{2}(C_1\vep)^2 s^{2\delta},
\end{equation}
\begin{equation}\label{eq3c-14-03-2022-M}
\Ebf_{\eta}^N(s,w_0) + \int_{s_0}^s\int_{\TPcal_{\tau}}\la r-t\ra^{2\eta-1}|\delt w_0|^2\,J dxd\tau \leq \frac{1}{2}(C_1\vep)^2 s^{2\delta},
\end{equation}
%\begin{equation}
%\int_{s_0}^s\int_{\TPcal_{\tau}}\la r-t\ra^{2\eta-1}|\delt w_0|^2\,J dxd\tau \
%\leq \frac{1}{2}(C_1\vep)^2 s^{2\delta},	
%\end{equation}
\begin{equation}\label{eq4a-14-03-2022-M}
\Ebf_{\eta,c}^N(s,v) + \int_{s_0}^s\int_{\TPcal_\tau}\la r-t\ra^{2\eta-1}\big(|v|_N^2 + |\delt v|_N^2\big)\,Jdxd\tau \leq \frac{1}{2}(C_1\vep)^2 s^{2\delta},
\end{equation}
\begin{equation}\label{eq4b-14-03-2022-M}
\Ebf_{\eta,c}^{N-1}(s,v) + \int_{s_0}^s\int_{\TPcal_\tau}\la r-t\ra^{2\eta-1}\big(|v|_{N-1}^2 + |\delt v|_{N-1}^2\big)\,Jdxd\tau \leq \frac{1}{2}(C_1\vep)^2.
\end{equation}
\end{subequations}
These bounds will be established via the energy estimates Proposition \ref{prop2-24-11-2021}. More precisely, we need to bound 
\begin{equation}\label{eq7-14-03-2022-M}
\int_{s_0}^s\int_{\Fcal_{\tau}}\la r-t\ra^{2\eta} |ZT\,\del_t Z\phi|\,Jdxd\tau,
\end{equation}
where $T$ represents the terms on the right-hand side of \eqref{eq1-24-11-2021} and $\phi = w_0,w,v$, $\ord(Z)\leq N$.  The rest of this article is essentially devoted to the estimates on the above quantity.

\subsection{Sobolev decay}
\paragraph{Sobolev bounds in $\TPcal_{[s_0,s_1]}$.} We combine \eqref{eq7-29-10-2021}, \eqref{eq8-14-03-2022-M} with \eqref{eq2-15-03-2022-M} and \eqref{eq1-30-10-2021}. Remark that in $\TPcal_s$, $\omega \simeq \la r-t\ra$, we thus obtain
\begin{subequations}\label{eq4-31-10-2021}
%\begin{equation}\label{eq1-17-03-2022-M}
%\frac{\la r\ra^{1/2}}{\la r-t\ra^{1/2}}|\delt\del w|_{N-2} + |\del\del w|_{N-2}
%\lesssim C_1\vep \la r-t\ra^{-1/2-\eta}s^{\delta},
%\end{equation}
%\begin{equation}\label{eq2-17-03-2022-M}
%\frac{\la r\ra^{1/2}}{\la r-t\ra^{1/2}}|\delt\del w|_{N-3} + |\del\del w|_{N-3}
%\lesssim C_1\vep \la r\ra^{-1/2}\la r-t\ra^{-\eta}s^{\delta},
%\end{equation}
\begin{equation}\label{eq4a-31-10-2021}
\frac{\la r\ra^{1/2}}{\la r-t\ra^{1/2}}|\delt \bar{w}|_p + |\del \bar{w}|_p
\lesssim
\left\{
\aligned
& C_1\vep \la r-t\ra^{-1/2-\eta}s^{\delta}, \quad && p=N-2,
\\
& C_1\vep \la r\ra^{-1/2}\la r-t\ra^{-\eta}s^{\delta},&&p=N-3,
\endaligned
\right.
\end{equation}
with $\bar{w} = w_0, w_{\ih}$ or $\del w_{\ih}$. 
%\begin{equation}\label{eq4b-31-10-2021}
%\frac{\la r\ra^\eta}{\la r-t\ra^\eta}|\delt w|_{N-3} + |\del w|_{N-3}
%\lesssim
%C_1\vep \la r\ra^{-1/2}\la r-t\ra^{-\eta}s^{\delta},
%\end{equation}
%\begin{equation}\label{eq4c-31-10-2021}
%\frac{\la r\ra^{1/2}}{\la r-t\ra^{1/2}}|\delt w_0|_{N-2} + |\del w_0|_{N-2}
%\lesssim C_1\vep \la r-t\ra^{-1/2-\eta}s^{\delta},
%\end{equation}
%\begin{equation}\label{eq4d-31-10-2021}
%\frac{\la r\ra^\eta}{\la r-t\ra^\eta}|\delt w_0|_{N-3} + |\del w_0|_{N-3} 
%\lesssim  C_1\vep \la r\ra^{-1/2}\la r-t\ra^{-\eta}s^{\delta},
%\end{equation}
\begin{equation}\label{eq4e-31-10-2021}
\frac{\la r\ra^{1/2}}{\la r-t\ra^{1/2}}(|\delt v|_p + |v|_p) + |\del v|_p\lesssim 
\left\{
\aligned 
&C_1\vep \la r-t\ra^{-1/2-\eta}s^{\delta},\quad &&p=N-2,
\\
&C_1\vep \la r-t\ra^{-1/2-\eta},\quad &&p=N-3.
\endaligned
\right.
\end{equation}
\begin{equation}\label{eq4f-31-10-2021}
\frac{\la r\ra^\eta}{\la r-t\ra^\eta}|\delt v|_p + |\del v|_p\lesssim 
\left\{
\aligned 
&C_1\vep \la r\ra^{-1/2}\la r-t\ra^{-\eta}s^{\delta},\quad &&p=N-3,
\\
&C_1\vep \la r\ra^{-1/2}\la r-t\ra^{-\eta},\quad &&p=N-4.
\endaligned
\right.
\end{equation}
Also by the relation $u = w_0 + A^{\alpha\jh}\del_{\alpha}w_{\jh}$, we obtain
\begin{equation}\label{eq5-17-03-2022-M}
\frac{\la r\ra^{1/2}}{\la r-t\ra^{1/2}}|\delt u|_p + |\del u|_p
\lesssim 
\left\{
\aligned
&C_1\vep \la r-t\ra^{-1/2-\eta}s^{\delta},\quad && p=N-2,
\\
&C_1\vep \la r\ra^{-1/2}\la r-t\ra^{-\eta}s^{\delta},&& p=N-3.
\endaligned
\right.
\end{equation}
%\begin{equation}\label{eq6-17-03-2022-M}
%\frac{\la r\ra^\eta}{\la r-t\ra^\eta}|\delt u|_{N-3} + |\del u|_{N-3}
%\lesssim
%C_1\vep \la r\ra^{-1/2}\la r-t\ra^{-\eta}s^{\delta}.
%\end{equation}
\end{subequations}

\paragraph{Bounds in $\Hcal^*_{[s_0,s_1]}$.} We combine \eqref{eq7-29-10-2021}, \eqref{eq8-14-03-2022-M} with Proposition \ref{prop1-21-01-2022} and denote by $\bar{w} = w_0,w_{\ih}$ or $\del w$,
\begin{subequations}\label{eq8-15-03-2022-M}
\begin{equation}\label{eq3-22-03-2022-M}
(s/t)|\del\bar{w}|_{N-2} + |\delu \bar{w}|_{N-2} \lesssim C_1\vep (s/t)s^{-1+\delta},
\end{equation}
%\begin{equation}\label{eq8a-15-03-2022-M}
%(s/t)|\del w|_{N-2} + |\delu w|_{N-2}\lesssim C_1\vep (s/t)s^{-1+\delta},
%\end{equation}
%\begin{equation}\label{eq8b-15-03-2022-M}
%	(s/t)|\del w_0|_{N-2} + |\delu w_0|_{N-2}\lesssim C_1\vep (s/t)s^{-1+\delta},
%\end{equation}
\begin{equation}\label{eq3-31-10-2021}
(s/t)|\del v|_{p-2} + |\delu v|_{p-2} + |v|_{p-2} + |\del v|_{p-3}\lesssim
\left\{
\aligned
& C_1\vep (s/t)s^{-1+\delta},\quad &&p=N,
\\
& C_1\vep (s/t)s^{-1}, && p=N-1,
\endaligned
\right.
\end{equation}
\begin{equation}\label{eq7-17-03-2022-M}
(s/t)|\del u|_{N-2} + |\delu u|_{N-2}\lesssim C_1\vep (s/t)s^{-1+\delta}.
\end{equation}
\end{subequations}

We can obtain a rough bound on $|u|_{N-2}$ by integrating $|\del_r Zu|$ from spatial infinity, where $\ord (Z)=p\leq N-2$:
$$
|Zu(t_0,x_0)| \leq \int_{r_0}^{\infty} |\del_rZu(t_0,\rho(x_0/|x_0|))|d\rho.
$$
Firstly, in $\TPcal_{[s_0,s_1]}$, by \eqref{eq5-17-03-2022-M},
\begin{equation}\label{eq1-20-03-2022-M}
|u|_p\lesssim 
\left\{
\aligned
&C_1\vep s^{\delta},\quad && p=N-2,
\\
&C_1\vep, \quad && p =N-3.
\endaligned
\right.
\end{equation}
%\textcolor{blue}{
%This is because in $\TPcal_{[s_0,s_1]}$, one has $s\simeq t^{1/2}$. Then when $p=N-2$,
%$$
%|Zu(t_0,x_0)| \lesssim C_1\vep t_0^{\delta/2}\int_{r_0}^{\infty}\la\rho-t_0\ra^{-1/2-\eta}d\rho 
%\lesssim C_1\vep t_0^{\delta/2}\la r_0-t_0\ra^{1/2-\eta}
%$$
%and when $p=N-3$,
%$$
%\aligned
%|Zu(t_0,x_0)| \lesssim& C_1\vep t_0^{\delta/2}\int_{r_0}^{\infty}\la \rho\ra^{-1/2}\la\rho-t_0\ra^{-\eta}d\rho
%\\
%\lesssim& C_1\vep t_0^{1/2+\delta-\eta}\int_{r_0}^{\infty}\la \rho\ra^{-1-\delta/2+\eta}\la \rho-t_0\ra^{-\eta} d\rho
%\\
%\lesssim& C_1\vep t_0^{1/2+\delta-\eta}\int_{r_0}^{\infty}\la \rho-t_0\ra^{-1-\delta/2}d\rho
%\lesssim C_1\vep t_0^{1/2+\delta-\eta}\lesssim C_1\vep.
%%\lesssim C_1\vep s_0^{\delta}.
%\endaligned
%$$
%}
When $(t_0,x_0)\in \Hcal^*_{[s_0,s_1]}$, one has
\begin{equation}\label{eq2-20-03-2022-M}
|u|_{N-2}\lesssim C_1\vep s^{\delta}.
\end{equation}
%\textcolor{blue}
%{
%$$
%\aligned
%|Zu(t_0,x_0)|\lesssim& \int_{t_0-1}^{r_0} |\del_r Zt(t_0,x_0(\rho/r_0))|d\rho
%\\
%\lesssim&\, |Zu(t_0,x_0(t_0-1)/r_0)|
% + C_1\vep \int_{t_0-1}^{r_0}t_0^{-1/2+\delta/2}\la t_0-\rho \ra^{-1/2+\delta/2}d\rho
%\\
%\lesssim&\, C_1\vep s_0^{\delta} + C_1\vep t_0^{-1/2+\delta/2} \la r_0-t_0\ra^{1/2+\delta/2} 
%\\
%\lesssim&\, C_1\vep s_0^{\delta}.
%\endaligned
%$$
%}
%================================================================================================================
\section{Estimates on Klein--Gordon components in $\TPcal_{[s_0,s_1]}$}  
\subsection{Objective of this section}
This section is devoted to the following estimates.
\begin{proposition}\label{prop1-17-03-2022-M}
Suppose that \eqref{eq7-29-10-2021}, \eqref{eq4-17-03-2022-M} and \eqref{eq8-14-03-2022-M} hold on $[s_0,s_1]$ with $C_1\vep$ sufficiently small (determined by the system and $N$). Then in $\TPcal_{[s_0,s_1]}$,
\begin{equation}\label{eq1-15-03-2022-M}
\frac{\la r\ra|v|_{p-4}}{\la r-t\ra} + \frac{\la r\ra^{1/2}|v|_{p-3}}{\la r-t\ra^{1/2}}\lesssim 
\left\{
\aligned
&C_1\vep \la r\ra^{-1/2}\la r-t\ra^{-\eta} s^{\delta}, \quad && p=N,
\\
&C_1\vep \la r\ra^{-1/2}\la r-t\ra^{-\eta} , \quad && p=N-1.
\endaligned
\right.
\end{equation}
\begin{equation}\label{eq5-15-03-2022-M}
\|\la r\ra\la r-t\ra^{-1}\zeta\omega^{\eta}|v|_{N-1}\|_{L^2(\TPcal_s)} \lesssim C_1\vep s^{2\delta}.
\end{equation}
\begin{equation}\label{eq2-02-04-2022-M}
\| \la r\ra\la r-t\ra^{-1}\zeta\omega^{\eta}|v|_p\|_{L^2(\TPcal_s)}\lesssim C_1\vep,\quad p\leq N-2,
\end{equation}
\begin{equation}\label{eq4-04-04-2022-M}
\|\la r\ra^{-3/2}\la r-t\ra^{3/2}\zeta\omega^{\eta}|v|_{N-3}\|_{L^2(\TPcal_s)}\lesssim C_1\vep s^{3\delta}.
\end{equation}
\end{proposition}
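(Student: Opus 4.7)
The strategy is to use the Klein-Gordon equations $(\Box+c_{\ih}^{2})v_{\ih}=f_{\ih}$, where $f_{\ih}$ denotes the right-hand side of the $v_{\ih}$-equation in \eqref{eq1-24-11-2021}, and to algebraically trade each undifferentiated $v_{\ih}$ for derivatives via the semi-hyperboloidal splitting \eqref{eq1-20-01-2022}. Combined with the decomposition $\TPcal_s=\TPcal_s^{\near}\cup\TPcal_s^{\far}$, this reduces matters to (i) energy bounds on $\del v_{\ih}$ already provided by the bootstrap, and (ii) bounds on the nonlinear source $f_{\ih}$.

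\medskip

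\textbf{Pointwise bounds \eqref{eq1-15-03-2022-M}.} I would apply the two variants of the Klein-Gordon pointwise decay (Proposition~\ref{eq4-30-10-2021}) to each $v_{\ih}$. The linear contribution produces exactly the stated decay from the bootstrap energies $\Ebf_{\eta,c}^{N}(s,v)$ and $\Ebf_{\eta,c}^{N-1}(s,v)$. For the source $|f_{\ih}|_{p,k}$ I would split each quadratic product into a high-regularity factor controlled by an $L^{2}$ energy and a low-regularity factor bounded pointwise by the Sobolev decay estimates \eqref{eq4-31-10-2021} and \eqref{eq8-15-03-2022-M}; the cubic terms $T_{KG}$, $S_{KG}$ generate strictly better decay and are absorbed.

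\medskip

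\textbf{$L^{2}$ bounds.} I would start from \eqref{eq5-21-03-2022-M} to derive
\begin{equation*}
\la r\ra\la r-t\ra^{-1}\zeta\omega^{\eta}c_{\ih}^{2}|v_{\ih}|_{p,k}\lesssim\zeta\omega^{\eta}|\del v_{\ih}|_{p+1,k+1}+\la r\ra\la r-t\ra^{-1}\zeta\omega^{\eta}|f_{\ih}|_{p,k},
\end{equation*}
using $\la r-t\ra^{-1}\leq 1$ and $\la r\ra\lesssim t$ in the near region. The first term is controlled in $L^{2}(\TPcal_s)$ by $\Ebf_{\eta,c}^{p+1}(s,v)^{1/2}$ via Proposition~\ref{prop1-26-10-2021}, yielding $C_{1}\vep s^{\delta}$ for $p=N-1$ and $C_{1}\vep$ for $p\leq N-2$, which handles \eqref{eq5-15-03-2022-M} and \eqref{eq2-02-04-2022-M} at the linear level. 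For \eqref{eq4-04-04-2022-M} I would split further: on $\TPcal_s^{\far}$ the relations $\la r-t\ra\sim r$, $\omega\sim\la r-t\ra$, $\zeta\equiv 1$ reduce the weight $\la r\ra^{-3/2}\la r-t\ra^{3/2}\zeta\omega^{\eta}$ to $\omega^{\eta}$, and $c_{\ih}\|\omega^{\eta}|v_{\ih}|_{N-3}\|_{L^{2}(\TPcal_s^{\far})}\lesssim C_{1}\vep$ follows directly from the bootstrap; on $\TPcal_s^{\near}$ the weights are uniformly bounded, the slab volume is $\lesssim s^{2}$, and the refined Klein-Gordon pointwise decay produces a strongly decaying contribution.

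\medskip

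\textbf{Main obstacle.} The delicate step is controlling $\|\la r\ra\la r-t\ra^{-1}\zeta\omega^{\eta}|f_{\ih}|_{N-1}\|_{L^{2}(\TPcal_s)}$ by $C_{1}\vep s^{2\delta}$ in \eqref{eq5-15-03-2022-M}. At top order the wave-Klein-Gordon coupling $K^{\alpha}_{\ih}v_{\ih}\del_{\alpha}u$ is the most critical: when $N-1$ derivatives fall on $v\cdot\del u$, either factor may carry near-top regularity with an $s^{\delta}$ growth from \eqref{eq3-17-03-2022-M} and the bootstrap, so keeping the total growth strictly below $s^{2\delta}$ requires exploiting the ghost-weight space-time integrals on the left-hand side of \eqref{eq7c-29-10-2021} together with the extra $\la r\ra\la r-t\ra^{-1}$ weight to absorb the $\la r-t\ra^{-1}$ singularity in a compatible manner.
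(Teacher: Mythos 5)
Your skeleton (use the Klein--Gordon equation through \eqref{eq5-21-03-2022-M}/\eqref{eq7-21-03-2022-M} and Proposition \ref{eq4-30-10-2021}, split $\TPcal_s$ into near and far parts, control the linear contribution by the bootstrap energies) is the paper's, but the way you treat the nonlinear source leaves genuine gaps. For the pointwise bound \eqref{eq1-15-03-2022-M}, a pointwise estimate of $|f_{\ih}|_{p,k}$ cannot use ``a high-regularity factor controlled by an $L^2$ energy'': every factor must be controlled pointwise, or else one keeps the full-order Klein--Gordon factor as $|v|_{p,k}$. The paper does the latter: since each term of the source contains at least one Klein--Gordon factor, one shows $|(\Box+c_{\ih}^2)v_{\ih}|_{p,k}\lesssim C_1\vep\,|v|_{p,k}$ for $p\leq N-3$ using only the bootstrap Sobolev decay on the remaining factor, and then \emph{absorbs} $C_1\vep|v|_{p,k}$ into $c_{\ih}^2|v_{\ih}|_{p,k}$ on the left of Proposition \ref{eq4-30-10-2021} for $C_1\vep$ small. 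The same absorption is indispensable in the $L^2$ steps, and your proposal never closes that loop: after inserting \eqref{eq7-21-03-2022-M}, the quadratic source $Q_{KG}$ reproduces exactly $C_1\vep\|\la r\ra\la r-t\ra^{-1}\zeta\omega^{\eta}|v|_p\|_{L^2(\TPcal_s)}$ on the right (the top-order factor may be $v$ itself), which must be absorbed into the left-hand side. Moreover your ``main obstacle'' is misidentified: \eqref{eq5-15-03-2022-M} is a fixed-time estimate on each slice, and the paper closes it with $s^{\delta}\Ebf_{\eta,c}^{\TPcal,N}(s,v)^{1/2}\lesssim C_1\vep s^{2\delta}$ (the allowed growth $s^{2\delta}$ is precisely $s^{\delta}\cdot s^{\delta}$) together with the improved pointwise bound \eqref{eq1-15-03-2022-M} on the factor $|v|_{N-3}$ multiplying $|\del u|_p$; the ghost-weight space-time integrals in \eqref{eq7c-29-10-2021} are neither needed nor capable of controlling an $L^2(\TPcal_s)$ norm at a fixed $s$. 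This also shows the logical ordering you should make explicit: the pointwise bounds must be proved first, because they feed the source estimates of all three $L^2$ bounds.

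For \eqref{eq4-04-04-2022-M} your near-light-cone argument fails quantitatively: the region $\TPcal_s^{\near}$ on the slice has area of order $s^4$ (its radius is $\simeq T(s)\simeq s^2$), not $\lesssim s^2$, and even inserting the improved decay $|v|_{N-3}\lesssim C_1\vep s^{\delta}\la r\ra^{-1}\la r-t\ra^{1/2-\eta}$ the weighted integrand is not square-integrable at the stated size; a pointwise-times-volume bound produces a contribution of order $s^{2+\delta}$, far above $s^{3\delta}$. The paper instead applies the weighted inequality \eqref{eq7-21-03-2022-M} a \emph{second} time, trading the stronger weight for one more derivative, bounds $|\del v|_{N-2}\lesssim |v|_{N-1}$, invokes the already-proved \eqref{eq5-15-03-2022-M}, and absorbs the small self-term once more. (Note also that the exponents in the weight of the displayed statement are swapped relative to the weight $\la r\ra^{3/2}\la r-t\ra^{-3/2}$ that the proof actually establishes and that is used later, so the far-region trivialization you rely on is not the point; the near region with the strong weight is.)
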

Let us give a description on the strategy. We mainly rely on Proposition \ref{eq4-30-10-2021}. For this purpose we need to bound the right-hand side of the Klein--Gordon equations. The pointwise bounds will be firstly established and then they will be applied in the proof of $L^2$ bounds. At the end of this Section we give a direct result of \eqref{eq1-15-03-2022-M} and \eqref{eq5-15-03-2022-M}, which is the following uniform energy bound:
\begin{proposition}\label{prop1-21-03-2022-M}
Suppose that \eqref{eq1-15-03-2022-M} and \eqref{eq5-15-03-2022-M} hold on $[s_0,s_1]$. Then
\begin{equation}\label{eq6-21-03-2022-M}
\Ebf_{\eta}^{\TPcal,N}(s,w) + \int_{s_0}^s\int_{\TPcal_\tau}\omega^{2\eta-1}|\delt w|_N^2\,Jdxd\tau
\leq (C_1\vep)^2.
\end{equation}
%\begin{equation}\label{eq3-03-04-2022-M}
%\Ebf_{\eta}^{\TPcal,N}(s,\del w) + \int_{s_0}^s\int_{\TPcal_\tau}\omega^{2\eta-1}|\delt\del w|_N^2\,Jdxd\tau
%\leq (C_1\vep)^2s^{\delta}.
%\end{equation}
\end{proposition}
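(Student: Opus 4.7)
The plan is to apply the exterior energy identity \eqref{eq5-14-03-2022-M} to $Zw_{\ih}$ for each admissible vector field $Z$ with $\ord(Z)\leq N$. Since $\del_{\alpha}, L_a, \Omega$ all commute with $\Box$ and $\Box w_{\ih} = v_{\ih}^{2}$, one has $\Box(Zw_{\ih}) = Z(v_{\ih}^{2})$. Combined with the initial-data control $\Ebf_{\eta}^{\TPcal,N}(s_{0},w)\lesssim (C_{0}\vep)^{2}\leq \tfrac14 (C_{1}\vep)^{2}$ from \eqref{eq3-22-04-2022-M} (for $C_{1}$ chosen large enough relative to $C_{0}$), the proposition reduces to bounding
\[
I_{Z} := \int_{s_{0}}^{s}\!\!\int_{\TPcal_{\tau}} \omega^{2\eta}\,|Z(v_{\ih}^{2})|\,|\del_{t}Zw_{\ih}|\,J\,dx\,d\tau \;\lesssim\; (C_{1}\vep)^{3}
\]
uniformly in $s\in[s_{0},s_{1}]$.

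The core strategy is a Leibniz expansion $Z(v_{\ih}^{2}) = \sum_{Z_{1}Z_{2}=Z} c_{Z_{1}Z_{2}}(Z_{1}v_{\ih})(Z_{2}v_{\ih})$ with $\ord(Z_{1})\leq\ord(Z_{2})$ (so $\ord(Z_1)\leq N/2$), followed by a slicewise H\"older split $L^{\infty}\times L^{2}\times L^{2}$:
\[
\int_{\TPcal_{\tau}}\omega^{2\eta}|Z_{1}v||Z_{2}v||\del_{t}Zw|\,J\,dx \leq \|A\|_{L^{\infty}(\TPcal_{\tau})}\|B\|_{L^{2}(\TPcal_{\tau})}\|C\|_{L^{2}(\TPcal_{\tau})},
\]
where I choose $A := J\la r\ra^{-1}\la r-t\ra\zeta^{-1}|Z_{1}v|$, $B:=\la r\ra\la r-t\ra^{-1}\zeta\omega^{\eta}|Z_{2}v|$, $C:=\omega^{\eta}|\del_{t}Zw|$, arranged so $ABC = \omega^{2\eta}|Z_1 v||Z_2 v||\del_t Zw|J$. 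The bootstrap \eqref{eq7a-29-10-2021} yields $\|C\|_{L^{2}}\leq C_{1}\vep\tau^{\delta}$; the hypothesis \eqref{eq5-15-03-2022-M} yields $\|B\|_{L^{2}}\lesssim C_{1}\vep\tau^{2\delta}$ whenever $\ord(Z_{2})\leq N-1$; and $\|A\|_{L^{\infty}}$ is controlled by the decisive pointwise bound $|v|_{N-4}\lesssim C_{1}\vep\la r\ra^{-1}\la r-t\ra^{1/2-\eta}$ drawn from \eqref{eq1-15-03-2022-M} with $p=N-1$ (critically, carrying \emph{no} $\tau^{\delta}$), together with $J\zeta^{-1}\lesssim \tau$ in $\TPcal$ (from \eqref{eq13-07-10-2021} and Lemma \ref{lem2-07-10-2021}) and the region-dependent behavior $\omega\simeq\la r-t\ra$ in $\Pcal$, $\omega\simeq 1$ in $\Tcal$. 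A subregion-by-subregion computation yields $\|A\|_{L^{\infty}(\TPcal_{\tau})}\lesssim C_{1}\vep\tau^{-1-\sigma}$ for some $\sigma>0$, at least whenever $\ord(Z_{1})\leq N-4$ (which covers $\ord(Z_1)\leq N/2$ as soon as $N\geq 8$; for $4\leq N\leq 7$ one replaces the pointwise bound by the intermediate one from \eqref{eq4e-31-10-2021}--\eqref{eq4f-31-10-2021} on $|v|_{N-3}, |v|_{N-2}$, at the price of a harmless additional $\tau^{\delta}$). The single exceptional allocation $\ord(Z_{2})=N$ forces $\ord(Z_{1})=0$; there I pair $\|\omega^{\eta}Z_{2}v\|_{L^{2}}\leq C_{1}\vep\tau^{\delta}$ from \eqref{eq7c-29-10-2021} with the sharp pointwise bound on $v$ itself, and the same mechanism applies.

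Summing over the finitely many relevant $Z$ yields $I_Z \lesssim \int_{s_0}^\infty (C_1\vep)^3 \tau^{-1-\sigma+4\delta}\,d\tau \lesssim (C_1\vep)^3$ thanks to $\delta\ll \eta-1/2$, and choosing $\vep$ small this is $\leq \tfrac14(C_1\vep)^2$, whereupon \eqref{eq5-14-03-2022-M} closes the desired bound, with the spacetime integral $\int\int \omega^{2\eta-1}|\delt w|_N^2 J\,dx d\tau$ absorbed on the left by the same identity. The main obstacle is the $L^{\infty}$ estimate on $A$ across the three subregions $\Hcal^{*}\cap\TPcal$, $\Tcal$, and $\Pcal$: in $\Tcal$ the factor $\zeta^{-1}$ may be as large as $t/s$, but it is precisely compensated by $J\lesssim s\zeta^{2}$ from \eqref{eq13-07-10-2021}, while in $\Pcal$ one has $\zeta=1$ and $J\simeq s$ so that the $\la r\ra^{-1}$ gain from \eqref{eq1-15-03-2022-M} (with $r\gtrsim s^{2}$ on $\Pcal_s$) supplies the negative power of $\tau$. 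The indispensable structural ingredient is that the $p=N-1$ case of \eqref{eq1-15-03-2022-M} provides the low-order $|v|_{N-4}$ decay \emph{without} any $\tau^{\delta}$ growth: this is what upgrades the bootstrap bound from $(C_{1}\vep)^{2}\tau^{2\delta}$ to the uniform $(C_{1}\vep)^{2}$.
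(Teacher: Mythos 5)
Your overall architecture is the same as the paper's (apply the exterior estimate \eqref{eq5-14-03-2022-M} to $Zw_{\ih}$ with $\Box Zw_{\ih}=Z(v_{\ih}^2)$, split the bilinear term, use the pointwise decay of Proposition \ref{prop1-17-03-2022-M} and the weighted $L^2$ bound \eqref{eq5-15-03-2022-M}), but your H\"older allocation misplaces a factor of $\zeta$, and this is a genuine gap. You claim $\|C\|_{L^2(\TPcal_\tau)}=\|\omega^{\eta}|\del_t Zw|\|_{L^2(\TPcal_\tau)}\leq C_1\vep\tau^{\delta}$ from \eqref{eq7a-29-10-2021}; but the exterior energy only controls $\zeta\,\omega^{\eta}\del_t Zw$ in $L^2$ (see \eqref{eq2-07-10-2021} and Proposition \ref{prop1-26-10-2021}), and on the inner part of the transition region $\Tcal_\tau$ one has $\zeta\simeq s/t\sim\tau^{-1}$, so dropping the $\zeta$ costs a full factor of $\tau$. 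Since \eqref{eq5-15-03-2022-M} also carries a $\zeta$ on the $|v|_{N-1}$ factor, your product $ABC$ needs \emph{two} factors of $\zeta$ on the $L^2$ side, hence the $L^{\infty}$ factor must absorb $J\zeta^{-2}$. This is exactly what \eqref{eq13-07-10-2021} provides ($J\lesssim \tau\zeta^{2}$), whereas you only used the weaker $J\zeta^{-1}\lesssim\tau$ and thereby left $C$ without the $\zeta$ the energy demands. The repair is precisely the paper's bookkeeping: estimate $\int\|\omega^{\eta}\zeta|\del w|_N\|_{L^2(\TPcal_\tau)}\,\|\omega^{\eta}J\zeta^{-1}|v_{\ih}^2|_N\|_{L^2(\TPcal_\tau)}\,d\tau$ and bound the second factor via $J\zeta^{-1}\lesssim\tau\zeta$, keeping that spare $\zeta$ to invoke \eqref{eq5-15-03-2022-M}.

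A second, related weak point is your top-order allocation $\ord(Z_2)=N$, $\ord(Z_1)=0$. If you only use the second-line decay $|v|\lesssim C_1\vep\la r\ra^{-1}\la r-t\ra^{1/2-\eta}$, the slicewise bound is merely $J|v|\lesssim C_1\vep\tau^{-1}$, so the time integrand is $(C_1\vep)^3\tau^{-1+2\delta}$, which is not integrable and would reintroduce a $\tau^{2\delta}$ growth incompatible with the uniform bound \eqref{eq6-21-03-2022-M}. You must use the first-line (Hessian-type) decay of \eqref{eq1-15-03-2022-M}, $|v|_{p-4}\lesssim C_1\vep s^{\delta}\la r\ra^{-3/2}\la r-t\ra^{1-\eta}$, whose extra $\la r\ra^{-1/2}\la r-t\ra^{1/2}$ converts the rate into $\tau^{-2\eta+O(\delta)}$; this is exactly how the paper treats the term $\|\omega^{\eta}|v_{\ih}|_{N-4}|v_{\ih}|_N\|_{L^2(\TPcal^{\near}_s)}$, pairing it with $\|\omega^{\eta}|v|_N\|_{L^2}\lesssim C_1\vep\tau^{\delta}$ from \eqref{eq7c-29-10-2021}. (Your side remark that $N\geq 8$ would be needed for the low/high split is unnecessary: with $N\geq 4$ one has $[N/2]\leq N-2$, and only the combinations $|v|_{N-4}|v|_N$ and $|v|_{N-2}|v|_{N-1}$-type products arise, as in the paper.) With these two corrections your argument closes along the paper's lines; as written, the $\Tcal$-region $L^2$ bound on $\del_t Zw$ and the non-integrable top-order case are real gaps.
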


\subsection{The pointwise estimates}
We remark that in $\TPcal^{\far}_{[s_0,s_1]}$, \eqref{eq1-15-03-2022-M} is covered by \eqref{eq4e-31-10-2021}. We only need to prove them in $\TPcal^{\near}_{[s_0,s_1]}$. In order to apply Proposition \ref{eq4-30-10-2021}, we need the following estimates.
\begin{lemma}
Suppose that \eqref{eq7-29-10-2021}, \eqref{eq8-14-03-2022-M} hold on $[s_0,s_1]$. Then
\begin{equation}\label{eq1-28-11-2021}
\big|(\Box+c_{\ih}^2\big)v_{\ih}|_{p,k} \lesssim C_1\vep |v|_{p,k},\quad p\leq N-3
\end{equation}
\end{lemma}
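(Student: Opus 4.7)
The plan is to apply a differential operator $Z$ with $\ord(Z)\le p \le N-3$ and $\rank(Z)\le k$ to each nonlinear term on the right-hand side of the Klein--Gordon equation in \eqref{eq1-24-11-2021}, expand by Leibniz, and bound each resulting summand by routing the majority of the derivatives onto a designated $v$-factor (which is then absorbed into $|v|_{p,k}$) while keeping the remaining factors at order $\le \lfloor p/2\rfloor \le N-4$, where they admit pointwise bounds of size $O(C_1\vep)$.

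The terms to handle are: the linear coupling $K_\ih^\alpha v_\ih \del_\alpha u$; the Klein--Gordon self-interactions $E_\ih^{\jh\kh}v_\jh v_\kh$ and $F_\ih^{\alpha\jh\kh}v_\jh\del_\alpha v_\kh$; the trilinear source $T_{KG}^\ih[u,v]$; and the higher-order remainder $S_{KG}^\ih[u,v]$. Each summand contains at least one Klein--Gordon factor ($v$ or $\del v$), which is the factor we select to carry the high derivative. For the bilinear terms, this is straightforward: in a Leibniz split $Z(v_\jh \cdot G) = \sum Z_1 v_\jh \cdot Z_2 G$ (with $G=v_\kh$ or $G=\del v_\kh$ or $G=\del u$), we choose either to place $\ord(Z_2)\le \lfloor p/2\rfloor$ on $G$ and use a pointwise decay estimate on $|Z_2 G|$, or to place $\ord(Z_1)\le \lfloor p/2\rfloor$ on $v_\jh$ and use $|Z_1 v|\lesssim C_1\vep$ while $Z_2 G$ contributes $|v|_{p,k}$ (through $|\del v|_{p-1}$ or $|\del u|_{p-1}$, which are in turn controlled by $|v|_{p,k}$ for $v$-factors and by $C_1\vep$-smallness for $u$-factors after the auxiliary identity $u = w_0 + A^{\alpha\jh}\del_\alpha w_\jh$).

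The crucial pointwise inputs, all valid in $\TPcal_{[s_0,s_1]}$ where $\la r-t\ra\ge 1$, are: \eqref{eq4e-31-10-2021}--\eqref{eq4f-31-10-2021} for $|v|_{\le N-3}$ and $|\del v|_{\le N-4}$, giving $\lesssim C_1\vep$ uniformly (the weights $\la r-t\ra^{-1/2-\eta}$, $\la r\ra^{-1/2}\la r-t\ra^{-\eta}$ are $\le 1$ in $\TPcal$); \eqref{eq1-20-03-2022-M} for $|u|_{N-3}\lesssim C_1\vep$; and \eqref{eq4a-31-10-2021}--\eqref{eq5-17-03-2022-M} for $|\del u|$ at orders $\le N-4$. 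For the cubic $T_{KG}^\ih$, the extra factor beyond the bilinear structure brings an extra pointwise smallness of $C_1\vep$, which is absorbed into the constant; for $S_{KG}^\ih$, which is of order $\ge 4$ in the unknowns, two extra smallness factors appear and the bound is immediate.

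The main obstacle will be the careful bookkeeping at the edge case $N=4$ (so $p\le 1$), where $\lfloor p/2\rfloor = 0$ and the Leibniz splits cannot be balanced; there one must rely on the fact that every factor (low-order $v$, $\del v$, $u$) is already pointwise bounded by $C_1\vep$ uniformly in $\TPcal$, and that the high-order derivative falls on a $v$ or $\del v$-factor that lies within $|v|_{p,k}$. A secondary nuisance is verifying that the coefficient functions $K_\ih^\alpha$, $E_\ih^{\jh\kh}$, $F_\ih^{\alpha\jh\kh}$ (evaluated at $(\varphi_S,0)$ via the geodesic-principle-coordinate expansion of Subsection \ref{subsec1-15-04-2022-M}) together with the Taylor-remainder coefficients in $S_{KG}^\ih$ are smooth in their arguments and bounded, so they and all their derivatives contribute only harmless constants—this follows from the local symmetry assumption together with Proposition \ref{prop1-19-02-2022-M}.
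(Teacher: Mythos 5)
Your overall strategy (Leibniz expansion plus pointwise Sobolev smallness of the low factors, singling out a Klein--Gordon factor to carry $|v|_{p,k}$) is the right spirit and is essentially what the paper does in one line, but as written your treatment of the ``unbalanced'' Leibniz splits has a genuine gap. In the branch where the derivatives concentrate on the non-designated factor $G$, you bound the surviving Klein--Gordon factor by $|Z_1 v|\lesssim C_1\vep$ and then control $Z_2G$ either by ``$C_1\vep$-smallness for $u$-factors'' or ``through $|\del v|_{p-1}$''. For $G=\del_\alpha u$ (the term $K_{\ih}^{\alpha}v_{\ih}\del_{\alpha}u$, and the gradient factors inside $T_{KG}$, $S_{KG}$) this produces a bound of size $(C_1\vep)^2$ with no $|v|$-factor left at all; that is \emph{not} $\lesssim C_1\vep\,|v|_{p,k}$ pointwise, since $|v|_{p,k}(t,x)$ may be far smaller than $C_1\vep$ at a given point. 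This matters because the whole purpose of \eqref{eq1-28-11-2021} is the absorption step in the proof of \eqref{eq1-15-03-2022-M}: the term $C_1\vep|v|_{p,k}$ must be absorbed into $c_{\ih}^2|v_{\ih}|_{p,k}$, and a remainder of size $(C_1\vep)^2$ without the $|v|_{p,k}$ structure cannot be absorbed, so the lemma as stated is not proved. For $G=\del_\alpha v_{\kh}$ there is a further off-by-one: when all $p$ derivatives land on $\del_\alpha v_{\kh}$ you face $|\del v|_{p,k}$, which is controlled by $|v|_{p+1,k}$, not by $|v|_{p,k}$, so the claim that this factor ``contributes $|v|_{p,k}$ through $|\del v|_{p-1}$'' does not cover the top split; likewise, in your $N=4$ discussion the assertion that ``the high-order derivative falls on a $v$ or $\del v$-factor that lies within $|v|_{p,k}$'' fails both because the derivatives can fall on the $u$/$w$-factors and because $\del v$ at order $p$ is not within $|v|_{p,k}$.

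The repair is simpler than your $\lfloor p/2\rfloor$ bookkeeping and is exactly the paper's argument: every term on the right-hand side of the Klein--Gordon equation contains at least one \emph{undifferentiated} Klein--Gordon factor $v_{\jh}$; in every Leibniz split keep that factor, however many of the $p$ derivatives it receives, estimated by $|v|_{p_1,k_1}\leq|v|_{p,k}$, and bound \emph{every other} factor pointwise by $C_1\vep$. This works without any balancing precisely because $p\leq N-3$: the Sobolev decay bounds \eqref{eq4-31-10-2021} give $|\del v|_{N-3}+|v|_{N-3}\lesssim C_1\vep$ and $|\del u|_{N-3}\lesssim C_1\vep$ in $\TPcal_{[s_0,s_1]}$ (the weights are harmless there since $r\gtrsim s^2$), and \eqref{eq1-20-03-2022-M} gives $|u|_{N-3}\lesssim C_1\vep$, so the non-designated factors are pointwise small at the \emph{full} order $p$, not just at order $\lfloor p/2\rfloor$. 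With this ordering of roles the product in each split is automatically $\lesssim C_1\vep\,|v|_{p,k}$ (cubic and higher terms only gain extra powers of $C_1\vep$), the $N=4$ case requires no separate treatment, and the multiplicative structure needed for the subsequent absorption is preserved.
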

\begin{proof}
We only need to remark that on the right-hand side of \eqref{eq1-27-02-2022_M}, each term contains at least one Klein--Gordon factor. Then we apply the Sobolev bounds \eqref{eq4-31-10-2021} and \eqref{eq1-20-03-2022-M}.
\end{proof}
\begin{proof}[Proof of pointwise bound \eqref{eq1-15-03-2022-M}]
Based on Proposition \ref{eq4-30-10-2021}, one has, for $p\leq N-3$
$$
\aligned
c_{\ih}^2|v_{\ih}|_{p,k}\leq C(p)C_1\vep |v|_{p,k}+
\begin{cases}
C(p) \la r\ra^{-3/2}\la r-t\ra^{1-\eta}\Ebf_{\eta}^{\TPcal,p+4,k+4}(s,u)^{1/2}  ,
\\
C(p) \la r\ra^{-1}\la r-t\ra^{1/2-\eta}\Ebf_{\eta}^{\TPcal,p+3,k+3}(s,u)^{1/2}.
\end{cases}
\endaligned
$$
Taking $C_1\vep$ sufficiently small and recalling \eqref{eq7-29-10-2021}, \eqref{eq8-14-03-2022-M} , the above inequalities lead to \eqref{eq1-15-03-2022-M}.
\end{proof}

\subsection{$L^2$ estimates on high-order terms}
Once \eqref{eq1-15-03-2022-M} is established, we are able to obtain sufficient $L^2$ bounds on the higher-order terms on the right-hand side of the Klein--Gordon equations. More precisely, we will establish the following result.
\begin{lemma}\label{lem1-20-03-2022-M}
Suppose that \eqref{eq7-29-10-2021}, \eqref{eq8-14-03-2022-M} hold on $[s_0,s_1]$. Then for $p\leq N$,
\begin{equation}\label{eq4-20-03-2022-M}
\aligned
|T_W[u,v]|_p + |T_{KG}[u,v]|_p
\lesssim 
&\, (C_1\vep)^2 s^{2\delta} \la r\ra^{-3/2}\la r-t\ra^{1/2-2\eta}(|\del u|_p + |\del v|_p) 
\\
& + (C_1\vep)^2s^{2\delta} \la r\ra^{-1}\la r-t\ra^{-2\eta}(|\delt u|_p + |\delt v|_p + |v|_p),
\endaligned
\end{equation}
and for $p\leq N-1$,
\begin{equation}\label{eq4-21-03-2022-M}
\aligned
|T_W[u,v]|_p + |T_{KG}[u,v]|_p
\lesssim&\, 
(C_1\vep)^2 s^{2\delta} \la r\ra^{-2}\la r-t\ra^{1-2\eta}(|\del u|_p + |\del v|_p )
\\
& + (C_1\vep)^2s^{2\delta} \la r\ra^{-3/2}\la r-t\ra^{1/2-2\eta}(|\delt u|_p + |\delt v|_p + |v|_p),
\endaligned
\end{equation}
In the same manner, for $p\leq N$,
\begin{equation}\label{eq5-20-03-2022-M}
\aligned
|S_W[u,v]|_p + |S_{KG}[u,v]|_p \lesssim &\, (C_1\vep)^2 s^{3\delta} \la r\ra^{-3/2}\la r-t\ra^{1/2-2\eta}
(|\del u|_p + |\del v|_p) 
\\
& + (C_1\vep)^2s^{3\delta} \la r\ra^{-1}\la r-t\ra^{-2\eta}(|\delt u|_p + |\delt v|_p + |v|_p),
\endaligned
\end{equation}
and for $p\leq N-1$,
\begin{equation}\label{eq3-21-03-2022-M}
\aligned
|S_W[u,v]|_p + |S_{KG}[u,v]|_p \lesssim &\, (C_1\vep)^2 s^{3\delta} \la r\ra^{-2}\la r-t\ra^{1-2\eta}(|\del u|_p + |\del v|_p )
\\
& + (C_1\vep)^2s^{3\delta} \la r\ra^{-3/2}\la r-t\ra^{1/2-2\eta}(|\delt u|_p + |\delt v|_p + |v|_p).
\endaligned
\end{equation}
\end{lemma}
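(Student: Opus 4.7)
To prove Lemma \ref{lem1-20-03-2022-M} my plan is to apply the Leibniz rule to each trilinear term in $T_W, T_{KG}$ and each quartic-and-higher term in $S_W, S_{KG}$ listed in \eqref{eq5-20-11-2021}, distribute the $p$ high-order derivatives $Z\in\mathcal{I}_{p,k}$ across the factors of the product, and estimate all but one ``top-order'' factor pointwise using the Sobolev bounds already collected in this section: \eqref{eq4-31-10-2021}, \eqref{eq5-17-03-2022-M}, \eqref{eq1-20-03-2022-M}, and crucially the sharp decay $|v|_{N-3}\lesssim C_1\vep\la r\ra^{-1}\la r-t\ra^{1/2-\eta}s^\delta$ from \eqref{eq1-15-03-2022-M}. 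The one preliminary structural observation I need is the identity in the semi-null frame,
\[
\m(du,dv) \;=\; -\delt_a u\,\delt_a v + (x^a/r)\bigl(\del_t u\,\delt_a v + \delt_a u\,\del_t v\bigr),
\]
which follows from $\del_a=\delt_a-(x^a/r)\del_t$ and $\sum_a(x^a/r)^2=1$; this shows that every occurrence of $\m(d\phi^i,d\phi^j)$ in $T_W, T_{KG}$ is a sum of products each containing at least one $\delt$-derivative factor. This is exactly why the second bracket of \eqref{eq4-20-03-2022-M} features $|\delt u|_p+|\delt v|_p$ rather than $|\del u|_p+|\del v|_p$ whenever the top factor in a product arises from such an inner product.

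Given this preparation, every trilinear term has one of the schematic shapes $v\cdot v\cdot v$, $\delt u\cdot v\cdot v$, $\del v\cdot v\cdot v$, $\delt v\cdot\delt v\cdot v$, $\del u\cdot v\cdot v$, or $\delt u\cdot\del v\cdot v$ (together with symmetric variants). For any Leibniz distribution $p_1+p_2+p_3=p$ with $p\leq N$, the pigeon-hole principle gives at least two indices $p_i\leq \lfloor p/2\rfloor\leq N-2$, so the corresponding factors admit pointwise bounds. I would select these bounds so that whenever one of the two low-order factors is an undifferentiated $v$, the product of the two pointwise estimates gives $(C_1\vep)^2 s^{2\delta}\la r\ra^{-3/2}\la r-t\ra^{1/2-2\eta}$ (using one factor from \eqref{eq1-15-03-2022-M} and one from \eqref{eq4-31-10-2021}), matching the first bracket of \eqref{eq4-20-03-2022-M}; and whenever both low-order factors carry a derivative, the product gives $(C_1\vep)^2 s^{2\delta}\la r\ra^{-1}\la r-t\ra^{-2\eta}$ (two factors from \eqref{eq4-31-10-2021}), matching the second bracket, in which case the top factor is necessarily $v$ or a $\delt$-derivative as required. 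The remaining top factor then contributes exactly the $|\del u|_p$, $|\del v|_p$, $|\delt u|_p$, $|\delt v|_p$, or $|v|_p$ appearing on the right-hand side of \eqref{eq4-20-03-2022-M}.

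The sharper bound \eqref{eq4-21-03-2022-M} for $p\leq N-1$ follows by the identical argument but with one extra order of regularity to spare: a low-order factor can now be taken at order $\leq N-4$, where \eqref{eq1-15-03-2022-M} and \eqref{eq4-31-10-2021} give bounds without the $s^\delta$ loss and with an additional factor $\la r\ra^{-1/2}\la r-t\ra^{1/2}$ of decay, which accounts exactly for the improved weights $\la r\ra^{-2}\la r-t\ra^{1-2\eta}$ and $\la r\ra^{-3/2}\la r-t\ra^{1/2-2\eta}$ in \eqref{eq4-21-03-2022-M}. The quartic and higher-order remainders $S_W[u,v], S_{KG}[u,v]$ are handled by the same scheme; each additional factor must be absorbed pointwise at low order, contributing one more $C_1\vep\la r\ra^{-1/2}\la r-t\ra^{-\eta}s^\delta$, which since $\la r-t\ra^{-\eta}\lesssim 1$ produces only the extra $s^\delta$ in \eqref{eq5-20-03-2022-M}, \eqref{eq3-21-03-2022-M}. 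I expect no genuine obstacle beyond careful bookkeeping: the Christoffel-derivative coefficients in \eqref{eq5-20-11-2021} are uniformly bounded along $\varphi_I(\RR)$ by the local-symmetry hypothesis and Proposition \ref{prop1-19-02-2022-M}, and the elementary inequality $\la r-t\ra\lesssim \la r\ra$ valid in $\TPcal_{[s_0,s_1]}$ (Lemma \ref{lem1-07-10-2021}) allows me to weaken any first-type weight to the second-type weight whenever the top factor of a given product ends up in the second bracket, ensuring that all contributions organize themselves into exactly the two groups on the right-hand sides of \eqref{eq4-20-03-2022-M}--\eqref{eq3-21-03-2022-M}.
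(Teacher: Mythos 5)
Your proposal follows essentially the same route as the paper: Leibniz distribution via the product estimate \eqref{eq7-15-03-2022-M}, the semi-null decomposition of the Minkowski inner products (your frame identity is exactly the paper's reduction $|\m(du,dv_{\jh})v_{\kh}|\lesssim |v\,\delt u\,\del v|+|v\,\del u\,\delt v|$), the sharp Klein--Gordon decay \eqref{eq1-15-03-2022-M} on the low-order factors, the gain of one order of regularity (hence one factor $\la r\ra^{-1/2}\la r-t\ra^{1/2}$) for $p\leq N-1$, and \eqref{eq1-20-03-2022-M} for the undifferentiated $u$'s in $S_W,S_{KG}$. In that sense the plan is sound and would reproduce the lemma.

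However, one of your two bookkeeping claims is false as stated, and it is the organizing principle of your case analysis. You assert that \emph{whenever one of the two low-order factors is an undifferentiated $v$}, the product of the low-order bounds is $\lesssim (C_1\vep)^2 s^{2\delta}\la r\ra^{-3/2}\la r-t\ra^{1/2-2\eta}$. Take the configuration where the top derivative falls on a good factor, e.g.\ $|\delt u|_p\,|\del v|_{N-2}\,|v|_{N-3}$: the best available bounds give $|\del v|_{N-2}\lesssim C_1\vep\la r-t\ra^{-1/2-\eta}s^{\delta}$ and $|v|_{N-3}\lesssim C_1\vep\la r\ra^{-1}\la r-t\ra^{1/2-\eta}s^{\delta}$, whose product is only $\la r\ra^{-1}\la r-t\ra^{-2\eta}s^{2\delta}$; upgrading this to $\la r\ra^{-3/2}\la r-t\ra^{1/2-2\eta}$ would require $\la r\ra\lesssim\la r-t\ra$, which fails near the light cone. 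The correct dichotomy (the one the paper's displayed computation of $|v\,\delt u\,\del v|_p$ implements) is keyed to the nature of the \emph{top-order} factor, not to the low pair: after the null decomposition every trilinear term contains at most one ``bad'' factor $\del u$ or $\del v$, so if the top derivative lands on the bad factor both low factors are good ($v$, $\delt u$ or $\delt v$), and pairing one at order $N-2$ (bound $\la r\ra^{-1/2}\la r-t\ra^{-\eta}s^{\delta}$) with one at order $N-3$ (bound $\la r\ra^{-1}\la r-t\ra^{1/2-\eta}s^{\delta}$, using \eqref{eq1-15-03-2022-M} for $v$ or the second line of \eqref{eq5-17-03-2022-M}/\eqref{eq4f-31-10-2021} for $\delt u,\delt v$) yields exactly the first-bracket weight; whereas if the top derivative lands on a good factor, the weaker second-bracket weight $\la r\ra^{-1}\la r-t\ra^{-2\eta}s^{2\delta}$ is all that is needed and is what the low pair actually delivers. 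With the case analysis re-keyed in this way your argument closes, and the rest of your proposal (the $p\leq N-1$ improvement and the quartic-and-higher terms, where the extra factor is either a decaying $v$ or an undifferentiated $u$ bounded by $C_1\vep s^{\delta}$ via \eqref{eq1-20-03-2022-M}) matches the paper's proof.
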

\begin{proof}
We first consider the bounds with order $N$. Remark that when $N\geq 4$,
\begin{equation}\label{eq7-15-03-2022-M}
\aligned
|ABC|_p\lesssim& |A|_p(|B|_{N-2}|C|_{N-3} + |B|_{N-3}|C|_{N-2}) 
 + |B|_p(|A|_{N-2}|C|_{N-3} + |A|_{N-3}|C|_{N-2})
\\
& + |C|_p(|A|_{N-2}|B|_{N-3} + |A|_{N-3}|B|_{N-2}).
\endaligned
\end{equation} 
We substitute the Sobolev decay bounds \eqref{eq4-31-10-2021}, \eqref{eq8-15-03-2022-M} and \eqref{eq1-15-03-2022-M} into the corresponding expression (on $|\cdot|_{N-2}$ and $|\cdot|_{N-3}$). 

We then regard the null terms in $T_W[u,v]$ and $T_{KG}[u,v]$. 
$$
|\m(du,dv_{\jh})v_{\kh}|\lesssim |v\delt u\, \del v| + |v\del u\,\delt v|,
$$
where
$$
\aligned
|v\delt u\del v|_p\lesssim& |\delt u|_p\big(|\del v|_{N-2}|v|_{N-3} + |\del v|_{N-3}|v|_{N-2}\big)
+ |\del v|_p\big(|\delt u|_{N-2}|v|_{N-3} + |\delt u|_{N-3}|v|_{N-2}\big)
\\
&+ |v|_p\big(|\delt u|_{N-2}|\del v|_{N-3} + |\delt u|_{N-3}|\del v|_{N-2}\big)
\\
\lesssim& (C_1\vep)^2s^{2\delta} \la r\ra^{-1}\la r-t\ra^{-2\eta}  (|\delt u|_p + |v|_p) 
+ (C_1\vep)^2s^{2\delta} \la r\ra^{-3/2}\la r-t\ra^{1/2-2\eta} |\del v|_p.
\endaligned
$$
and similarly,
$$
\aligned
|v\del u\,\delt v|_p
%\lesssim& 
%\,\textcolor{blue}{|\del u|_p\big(|\delt v|_{N-2}|v|_{N-3} + |\delt v|_{N-3}|v|_{N-2}\big)}
%\\
%& \textcolor{blue}{+ |\delt v|_p\big(|\del u|_{N-2}|v|_{N-3} + |\del u|_{N-3}|v|_{N-2}\big)}
%\textcolor{blue}{+ |v|_p\big(|\del u|_{N-2}|\delt v|_{N-3} + |\del u|_{N-3}|\delt v|_{N-2}\big)}
%\\
\lesssim&\, (C_1\vep)^2s^{2\delta}\la r\ra^{-3/2}\la r-t\ra^{1/2-2\eta}|\del u|_p 
+ (C_1\vep)^2s^{2\delta} \la r\ra^{-1}\la r-t\ra^{-2\eta}(|\delt v|_p + |v|_p).
\endaligned
$$
So we conclude that
\begin{equation}\label{eq2-16-03-2022-M}
\aligned
|\m(du,dv_{\jh})v_{\kh}|_p 
\lesssim&\, (C_1\vep)^2 s^{2\delta} \la r\ra^{-3/2}\la r-t\ra^{1/2-2\eta}(|\del u|_p + |\del v|_p) 
\\
& + (C_1\vep)^2s^{2\delta} \la r\ra^{-1}\la r-t\ra^{-2\eta}(|\delt u|_p + |\delt v|_p + |v|_p).
\endaligned
\end{equation}
Similarly, one can show that all null terms enjoy the same bound. 

For the rest of the terms, the proofs are direct calculations and we omit the details. Then we arrive at \eqref{eq4-20-03-2022-M}.

The higher-order terms $S_{W}, S_{KG}$ are bounded similarly. For the bound on $|u|_{N-2}$ we need \eqref{eq1-20-03-2022-M}.  We omit the details.

For the lower-order case, wen remark that for $p\leq N-1$, $[p/2]\leq N-3$ provided that $N\geq 4$. Then
\begin{equation}
\aligned
|ABC|_p\lesssim& |A|_p(|B|_{N-3}|C|_{N-3} + |B|_{N-3}|C|_{N-3}) 
 + |B|_p(|A|_{N-3}|C|_{N-3} + |A|_{N-3}|C|_{N-3})
\\
& + |C|_p(|A|_{N-3}|B|_{N-3} + |A|_{N-3}|B|_{N-3}). 
\endaligned
\end{equation}
Then repeating the above discussion, we obtain the lower-order bounds. In fact we remark that, comparing the pointwise bounds enjoyed by 
$$
|\del u|_{N-2},\quad |\del v|_{N-2},\quad |\delt u|_{N-2},\quad |\delt v|_{N-2},\quad |v|_{N-2}
$$
with the bounds enjoyed by 
$$
|\del u|_{N-3},\quad |\del v|_{N-3},\quad |\delt u|_{N-3},\quad |\delt v|_{N-3},\quad |v|_{N-3},
$$
we find that $|T|_{N-3}\lesssim \la r\ra^{-1/2}\la r-t\ra^{1/2}|T|_{N-2}$ where $T$ represents any of the above terms. This fact directly leads to \eqref{eq4-21-03-2022-M} and \eqref{eq3-21-03-2022-M}.
\end{proof}

From Lemma \ref{lem1-20-03-2022-M}, we obtain the following results.
\begin{corollary}\label{lem1-04-04-2022-M}
Suppose that the estimates in Lemma \ref{lem1-20-03-2022-M} hold. Then
\begin{equation}\label{eq2-21-03-2022-M}
\int_{s_0}^s\int_{\TPcal_\tau}\omega^{2\eta}|\del_t Z v_{\jh}|\,|Z(T_{KG}^{\ih} + S_{KG}^{\ih})|J\,dxd\tau\lesssim 
\left\{
\aligned
& (C_1\vep)^4s^{\delta},\quad && \ord (Z) = N,
\\
& (C_1\vep)^4,\quad&& \ord(Z)\leq N-1.
\endaligned
\right.
\end{equation}
\begin{equation}\label{eq6-22-03-2022-M}
\int_{s_0}^{s}\int_{\TPcal_\tau}\omega^{2\eta}|\del_t Z w_0|\,|Z(T_W + S_W)|J\,dxd\tau\lesssim 
\left\{
\aligned
& (C_1\vep)^4s^{\delta},\quad && \ord (Z) = N,
\\
& (C_1\vep)^4,\quad&& \ord(Z)\leq N-1.
\endaligned
\right.
\end{equation}
\end{corollary}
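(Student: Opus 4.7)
The plan is to apply Cauchy--Schwarz in the spatial variable, combining the pointwise bounds of Lemma \ref{lem1-20-03-2022-M} on $|ZT|$ with the bootstrap $L^2$ energies on $|\del_t Zv|$ (or $|\del_t Zw_0|$) and the ghost-weight spacetime integrals in the bootstrap assumptions \eqref{eq7-29-10-2021}--\eqref{eq8-14-03-2022-M}. The essential geometric input is that on $\TPcal_\tau$ one has $\la r\ra \gtrsim \tau^2$ and the Jacobian satisfies $J \lesssim \tau$, so that weights like $\la r\ra^{-3/2}J$ and $\la r\ra^{-2}J$ give pointwise decay $\tau^{-2}$ and $\tau^{-3}$, respectively. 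Throughout we use $\omega \simeq \la r-t\ra$ in $\TPcal_\tau$ and $\omega^{a}\la r-t\ra^{b-a}\lesssim \la r-t\ra^{b}\lesssim 1$ whenever $b\leq 0$, which is the case for the exponents appearing below since $\eta>1/2$.

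First, consider $\ord(Z)=N$. Apply Cauchy--Schwarz in $x$ to split $\omega^\eta|\del_t Zv|$ (controlled by $\Ebf_{\eta,c}^N(\tau,v)^{1/2}\lesssim C_1\vep\tau^\delta$ via \eqref{eq7c-29-10-2021}) from $\omega^\eta|ZT|\cdot J$. For the ``$\del$-piece'' $\la r\ra^{-3/2}\la r-t\ra^{1/2-2\eta}|\del U|_N$ in \eqref{eq4-20-03-2022-M}, the weight $\omega^\eta\la r\ra^{-3/2}\la r-t\ra^{1/2-2\eta}J\lesssim \la r\ra^{-3/2}J\lesssim \tau^{-2}$, while $\||\del U|_N\|_{L^2(\TPcal_\tau)}\lesssim C_1\vep\tau^\delta$ by the energy bound. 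Multiplied by the $(C_1\vep)^2\tau^{2\delta}$ prefactor from Lemma \ref{lem1-20-03-2022-M}, the integrand is $\lesssim (C_1\vep)^4\tau^{4\delta-2}$, whose time integral is uniformly bounded. For the ``$(\delt,v)$-piece'' $\la r\ra^{-1}\la r-t\ra^{-2\eta}(|\delt U|_N+|v|_N)$, the decay $\la r\ra^{-1}$ alone is insufficient; instead, apply Cauchy--Schwarz in both $x$ and $\tau$, pairing $\omega^{\eta-1/2}|\delt U|_N$ with the ghost-weight spacetime integral in \eqref{eq7c-29-10-2021} (bounded by $C_1\vep s^\delta$) and pairing the residual $\omega^{1/2-\eta}\la r\ra^{-1}|\del_t Zv|\cdot J$ with an $L^2_\tau$ estimate. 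Using $\omega^{1-2\eta}\lesssim \omega^{-2\eta+1}\leq \omega^{2\eta}$ and $\la r\ra^{-2}J\lesssim \tau^{-3}$, the $L^2_\tau$ norm of this residual is bounded by $C_1\vep$ uniformly. Including the $(C_1\vep)^2\tau^{2\delta}$ prefactor then produces the claimed $(C_1\vep)^4 s^\delta$. The higher-order remainders $S_W, S_{KG}$ enjoy the same pointwise bounds with an extra $\tau^\delta$ factor by \eqref{eq5-20-03-2022-M}, \eqref{eq3-21-03-2022-M}, which is absorbed by the margin of integrability provided $\delta$ is small enough.

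For $\ord(Z)\leq N-1$, we employ the improved pointwise bound \eqref{eq4-21-03-2022-M}, whose additional factor $\la r\ra^{-1/2}\la r-t\ra^{1/2}\lesssim \tau^{-1}\la r\ra^{1/2}\la r\ra^{-1/2}\cdot \la r-t\ra^{1/2}\lesssim \tau^{-1}$ in $\TPcal$ (after absorbing into the weights) yields an extra $\tau^{-1}$ decay. Combined with the uniform bound $\|\omega^\eta\del_t Zv\|_{L^2}\lesssim \Ebf_{\eta,c}^{N-1}(\tau,v)^{1/2}\leq C_1\vep$ from \eqref{eq7e-29-10-2021} (observing that $[\del_t,Z]$ has order $\leq N-1$), the same Cauchy--Schwarz argument now produces integrands of order $\tau^{4\delta-3}$ and $\tau^{2\delta-2}B(\tau)$ with $B(\tau)\in L^2_\tau$, both giving uniformly integrable contributions $(C_1\vep)^4$.

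The wave-equation variant \eqref{eq6-22-03-2022-M} is handled identically by replacing $\del_t Zv_{\jh}$ with $\del_t Zw_0$, using \eqref{eq4-17-03-2022-M} to obtain $\|\omega^\eta\del_t Zw_0\|_{L^2}\lesssim C_1\vep\tau^\delta$ and the same pointwise bounds of Lemma \ref{lem1-20-03-2022-M} for $T_W+S_W$. The main technical obstacle is the $(\delt,v)$-piece at top order $\ord(Z)=N$, where the pure energy bound is too weak and one is forced to invoke the ghost-weight spacetime integral; every other piece closes by straightforward Cauchy--Schwarz together with the $\la r\ra\gtrsim \tau^2$ decay available on $\TPcal_\tau$.
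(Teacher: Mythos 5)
Your plan follows the same architecture as the paper's proof (Cauchy--Schwarz in $x$; energies for the $\la r\ra^{-3/2}$-piece; Cauchy--Schwarz in $\tau$ against the ghost-weight space-time integrals for the $\la r\ra^{-1}\la r-t\ra^{-2\eta}(|\delt u|+|\delt v|+|v|)$-piece; the improved bound \eqref{eq4-21-03-2022-M} plus the uniform energy \eqref{eq7e-29-10-2021} at order $\leq N-1$), but there is a genuine gap in your weight bookkeeping. You repeatedly claim that $\|\omega^{\eta}\del_t Z v\|_{L^2(\TPcal_\tau)}$, $\||\del u|_N+|\del v|_N\|_{L^2(\TPcal_\tau)}$ (and the squared version $\int \la r-t\ra^{1-2\eta}|\del_t Zv|^2dx$ in your residual estimate) are controlled by the bootstrap energies. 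On the Euclidean--hyperboloidal slices the energy \eqref{eq2-07-10-2021} only controls $\omega^{\eta}\zeta\del_{\alpha}$ (together with $\delb_a$, $\delb_{\perp}$ and $c\,v$), and in the transition region $\Tcal_\tau$ the factor $\zeta$ degenerates: near $r=\rhoH(\tau)$ one has $\zeta\sim\tau^{-1}$, so each of these claims loses a factor of $\tau$. Since your argument also uses only the crude Jacobian bound $J\lesssim\tau$, the two lost $\zeta$'s are not compensated: done honestly, your top-order ``$\del$-piece'' integrand is $\sim(C_1\vep)^4\tau^{4\delta}$ instead of $\tau^{-2+4\delta}$, and the estimate does not close.

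The repair is precisely the mechanism of the paper, which you never invoke: split the integrand as $(\omega^{\eta}\zeta\,\del_t Zv)\cdot(J\zeta^{-1}\omega^{\eta}|ZT|)$, bound the Jacobian by $J\zeta^{-1}\lesssim \tau\zeta$ (Lemma \ref{lem3-07-10-2021}, i.e. $J\lesssim\tau\zeta^{2}$) and use $\zeta^{2}\gtrsim \la r-t\ra/\la r\ra$ from \eqref{eq6-23-01-2022}; the $\zeta$ released by the Jacobian then re-attaches to $|\del u|_p,|\del v|_p$ inside $|ZT|$, which makes those factors energy-controlled, while the remaining $J^{1/2}\la r-t\ra^{\eta-1/2}(|\delt u|_p+|\delt v|_p+|v|_p)$ is what the ghost-weight integrals in \eqref{eq7c-29-10-2021}, \eqref{eq3-17-03-2022-M} control. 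With this correction your numerology (integrable $\tau^{-2+4\delta}$ for the first piece, $(C_1\vep)^4 s^{\delta}$ from the time Cauchy--Schwarz for the second, and the uniform lower-order bound via \eqref{eq4-21-03-2022-M}) coincides with the paper's; without it, the key norm bounds you invoke are false in $\Tcal_\tau$.
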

\begin{proof}
We only write the prove of \eqref{eq2-21-03-2022-M}.

We first treat the top-order case. Suppose that $\ord(Z) = p = N$. Then by \eqref{eq4-20-03-2022-M}, \eqref{eq5-20-03-2022-M} and \eqref{eq13-07-10-2021},
\begin{equation}\label{eq1-21-03-2022-M}
\aligned
&\int_{s_0}^s\int_{\TPcal_\tau}\omega^{2\eta}|\del_t Z v_{\ih}|\,| ZT_{KG}^{\ih}[u,v]|J\,dxd\tau
\lesssim \int_{s_0}^s \int_{\TPcal_\tau}|\omega^{\eta}\zeta\del_t Zv_{\ih}|\,J\zeta^{-1}\la r-t\ra^{\eta}|T_{KG}[u,v]|_p\,dxd\tau
\\
\lesssim&\,(C_1\vep)^2 \int_{s_0}^s\tau^{1+2\delta}
\Ebf_{\eta,c}^{\TPcal,p}(\tau,v)^{1/2}\|\la r\ra^{-3/2}\la r-t\ra^{1/2-\eta}\zeta(|\del u|_p+|\del v|_p)\|_{L^2(\TPcal_\tau)}\,d\tau
\\
%&+ (C_1\vep)^2\int_{s_0}^s\tau^{2\delta}\Ebf_{\eta,c}^{\TPcal,p}(\tau,v)^{1/2}
%\|J\zeta^{-1}\la r\ra^{-1}\la r-t\ra^{-\eta}|\delt u|_p\|_{L^2(\TPcal_\tau)}\,d\tau
%\\
&+(C_1\vep)^2\int_{s_0}^s\tau^{2\delta}\Ebf_{\eta,c}^{\TPcal,p}(\tau,v)^{1/2}
\|J\zeta^{-1}\la r\ra^{-1}\la r-t\ra^{-\eta}(|\delt u|_p + |\delt v|_p + |v|_p)\|_{L^2(\TPcal_\tau)}\,d\tau.
\endaligned
\end{equation}
The first term on the right-hand side is bounded as follows.
$$
\aligned
&\int_{s_0}^s\tau^{1+2\delta}\Ebf_{\eta,c}^{\TPcal,p}(\tau,v)^{1/2}
\|\la r\ra^{-3/2}\la r-t\ra^{1/2-\eta}\zeta(|\del u|_p+|\del v|_p)\|_{L^2(\TPcal_\tau)}\,d\tau
\\
\lesssim&\, 
\int_{s_0}^s\tau^{-2 + 2\delta}\Ebf_{\eta,c}^{\TPcal,p}(\tau,v)^{1/2}\|\la r-t\ra^{\eta}\zeta(|\del u|_p + |\del v|_p)\|_{L^2(\TPcal_\tau)}\,d\tau
\\
\lesssim&\, \int_{s_0}^s\tau^{-2+2\delta}\Ebf_{\eta,c}^{\TPcal,N}(\tau,v)\,d\tau
\lesssim (C_1\vep)^2\int_{s_0}^s\tau^{-1+4\delta}\,d\tau
\lesssim (C_1\vep)^2,
\endaligned
$$
where we have applied \eqref{eq2-26-10-2021}, \eqref{eq3-17-03-2022-M}, and the fact that $s^2\lesssim t\lesssim r$.
For the second term on right-hand side of \eqref{eq1-21-03-2022-M}, we remark that
$$
\aligned
&\int_{s_0}^s\tau^{2\delta}\Ebf_{\eta,c}^{\TPcal,p}(\tau,v)^{1/2}
\|J\zeta^{-1}\la r\ra^{-1}\la r-t\ra^{-\eta}(|\delt u|_p + |\delt v|_p + |v|_p)\|_{L^2(\TPcal_\tau)}\,d\tau
\\
\lesssim&\,\int_{s_0}^s\tau^{1/2+2\delta}\Ebf_{\eta,c}^{\TPcal,p}(\tau,v)^{1/2}
\|J^{1/2}\la r\ra^{-1}\la r-t\ra^{-\eta-1/2}(|\delt u|_p + |\delt v|_p + |v|_p)\|_{L^2(\TPcal_\tau)}\,d\tau
\\
\lesssim&\, \int_{s_0}^s\tau^{-3/2+2\delta}\Ebf_{\eta,c}^{\TPcal,p}(\tau,v)^{1/2}
\|J^{1/2}\la r-t\ra^{\eta-1/2}(|\delt u|_p + |\delt v|_p + |v|_p)\|_{L^2(\TPcal_\tau)}\,d\tau
\\
\lesssim&\,C_1\vep\int_{s_0}^s\tau^{-3/2+3\delta}
\|J^{1/2}\la r-t\ra^{\eta-1/2}(|\delt u|_p + |\delt v|_p + |v|_p)\|_{L^2(\TPcal_\tau)}\,d\tau
\\
\lesssim&\,C_1\vep \Big(\int_{s_0}^s\tau^{-3+6\delta}d\tau\Big)^{1/2}\
\Big(\int_{s_0}^s\|J^{1/2}\la r-t\ra^{\eta-1/2}(|\delt u|_p + |\delt v|_p + |v|_p)\|_{L^2(\TPcal_\tau)}^2d\tau\Big)^{1/2}
\lesssim (C_1\vep)^2s^{\delta},
\endaligned 
$$
where \eqref{eq3-17-03-2022-M} is applied. The term containing the higher-order terms $S_{KG}$ is treated in the same way.  So we conclude by the case $p=N$ of  \eqref{eq2-21-03-2022-M}.

The lower-order case is relatively simpler. We only need to remark that by \eqref{eq4-21-03-2022-M},
$$
\aligned
|T_{KG}[u,v]|_p
\lesssim&\, 
(C_1\vep)^2 s^{2\delta} \la r\ra^{-2}\la r-t\ra^{1-2\eta}(|\del u|_p + |\del v|_p + |v|_p).
\endaligned
$$
Then
$$
\aligned
&\int_{s_0}^s\int_{\TPcal_\tau}\omega^{2\eta}|\delt Zv_{\ih}||ZT_{KG}^{\ih}[u,v]|J\,dxd\tau
\\
\lesssim&\, (C_1\vep)^2\int_{s_0}^s\tau^{-3 + 2\delta}\Ebf_{\eta,c}^{\TPcal,p}(\tau,v)^{1/2}
\|J\zeta^{-1}\la r-t\ra^{\eta}(|\del u|_p+|\del v|_p+|v|_p)\|_{L^2(\TPcal_\tau)}\,d\tau
\\
\lesssim&\, (C_1\vep)^2\int_{s_0}^s\tau^{-2+2\delta}\Ebf_{\eta,c}^{\TPcal,p}(\tau,v)^{1/2}\Ebf_{\eta}^{\TPcal,p}(\tau,u)^{1/2}\,d\tau
\lesssim (C_1\vep)^2.
\endaligned
$$
The bound on the terms containing $S_{KG}$ is similar, we omit the details.
\end{proof}

\subsection{The $L^2$ estimates}
This subsection is devoted to the proofs of \eqref{eq5-15-03-2022-M} -- \eqref{eq4-04-04-2022-M}. We recall that in $\TPcal^{\far}_{[s_0,s_1]}$, the weight $\la r\ra\la r-t\ra^{-1}$ is trivial. So we only need to bound the $L^2$ norm on $\TPcal^{\near}_s$. to this juncture let us recall \eqref{eq5-21-03-2022-M}:
\begin{equation}\label{eq7-21-03-2022-M}
c^2\la r\ra\la r-t\ra^{-1}|v|_p \lesssim |\del v|_{p+1} + \la r\ra\la r-t\ra^{-1}|f|_p
\end{equation}
where $f$ represents the right-hand  side of the Klein-Gordon equations. We denote by
$$
Q_{KG}[u,v] := K_{\ih}^{\alpha}v_{\ih}\del_{\alpha}u
+ E_{\ih}^{\jh\kh} v_{\jh}v_{\kh} +  F_{\ih}^{\alpha\jh\kh}v_{\jh}\del_{\alpha}v_{\kh}
$$
the quadratic terms on the right-hand side of the Klein-Gordon equations. Then when $p\leq N-1$, one has $[p/2]\leq N-3$. Then by the Sobolev decay and \eqref{eq1-15-03-2022-M},
$$
\aligned
|Q_{KG}[u,v]|_p\lesssim&\, |v|_p(|\del u|_{N-3} + |\del v|_{N-3} + |v|_{N-3}) + |\del v|_p|v|_{N-3} + |\del u|_p|v|_{N-3}
%\\
%\lesssim& \, \textcolor{blue}{C_1\vep s^{\delta}\la r\ra^{-1/2}\la r-t\ra^{-\eta}|v|_p + C_1\vep s^{\delta}\la r\ra^{-1}\la r-t\ra^{1/2-\eta} (|\del u|_p + |\del v|_p)}
\\
\lesssim&\, C_1\vep|v|_p + C_1\vep s^{\delta}\la r\ra^{-1}\la r-t\ra^{1/2-\eta} (|\del u|_p + |\del v|_p).
\endaligned
$$
Substitute this bound together with \eqref{eq4-21-03-2022-M} and \eqref{eq3-21-03-2022-M} in \eqref{eq7-21-03-2022-M}, we obtain, thanks to \eqref{eq6-23-01-2022},
$$
\aligned
&\|\la r\ra\la r-t\ra^{-1}\zeta\omega^{\eta}|v|_p\|_{L^2(\TPcal^{\near}_s)}
\\
\lesssim&\, \|\zeta\omega^{\eta}|\del v|_{p+1}\|_{L^2(\TPcal^{\near}_s)}
 + C_1\vep\|\la r\ra\la r-t\ra^{-1}\zeta\omega^{\eta}|v|_p\|_{L^2(\TPcal^{\near}_s)}
\\
&+ C_1\vep s^{\delta}\|\la r-t\ra^{-1/2-\eta}\zeta\omega^{\eta}(|\del u|_p + |\del v|_p)\|_{L^2(\TPcal^{\near}_s)}
\\
&+ (C_1\vep)^2 s^{3\delta}\|\la r\ra^{-1/2}\la r-t\ra^{-1/2-2\eta}\zeta\omega^{\eta}(|\del u|_p + |\del v|_p + |v|_p)\|_{L^2(\TPcal^{\near}_s)}
\\
\lesssim&\,  C_1\vep\|\la r\ra\la r-t\ra^{-1}\zeta\omega^{\eta}|v|_p\|_{L^2(\TPcal^{\near}_\tau)}
+  s^{\delta}\Ebf_{\eta,c}^{\TPcal,p+1}(s,v)^{1/2} + s^{-1+3\delta}\Ebf_{\eta}^{\TPcal,p}(s,u)^{1/2},
\endaligned
$$
which proves \eqref{eq5-15-03-2022-M} provided that $C_1\vep$ sufficiently small.

For \eqref{eq2-02-04-2022-M}, we remark that when $p\leq N-2$, the quadratic terms are bounded as follows:
\begin{equation}\label{eq3-04-04-2022-M}
|Q_{KG}[u,v]|_p\lesssim C_1\vep|v|_p + C_1\vep s^{\delta}\la r\ra^{-3/2}\la r-t\ra^{1-\eta}(|\del u|_p + |\del v|_p)
\end{equation}
%\textcolor{blue}{
%$$
%\aligned
%|v_{\ih}v_{\jh}|_p\lesssim&\, C_1\vep |v|_p,
%\\
%|v\del v|_p\lesssim&\, |\del v|_p|v|_{N-4} + |\del v|_{p-1}|v|_{N-2}
%\lesssim C_1\vep s^{\delta} \la r\ra^{-3/2}\la r-t\ra^{1-\eta} |\del v|_p + |\del v|_{N-3}|v|_p
%\\
%\lesssim&\, C_1\vep s^{\delta}\la r\ra^{-3/2}\la r-t\ra^{1-\eta}|\del v|_p + C_1\vep |v|_p,
%\\
%|v\del u|_p\lesssim&\, |\del u|_p|v|_{N-4} + |\del u|_{N-3}|v|_p
%\lesssim C_1\vep s^{\delta}\la r\ra^{-3/2}\la r-t\ra^{1-\eta}|\del u|_p + C_1\vep |v|_p.
%\endaligned
%$$
%}
Then similar to the higher-order case, 
$$
\aligned
&\|\la r\ra\la r-t\ra^{-1}\zeta\omega^{\eta}|v|_p\|_{L^2(\TPcal^{\near}_s)}
\\
\lesssim&\, \|\zeta\omega^{\eta}|\del v|_{p+1}\|_{L^2(\TPcal^{\near}_s)}
 + C_1\vep \|\la r\ra\la r-t\ra^{-1}\zeta\omega^{\eta}|v|_p\|_{L^2(\TPcal^{\near}_s)}
 \\
& + C_1\vep s^{\delta}\|\la r\ra^{-1/2}\la r-t\ra^{-\eta}\zeta\omega^{\eta}(|\del u|_p + |\del v|_p)\|_{L^2(\TPcal^{\near}_s)} 
\\
&+ (C_1\vep)^2 s^{3\delta}\|\la r\ra^{-1/2}\la r-t\ra^{-1/2-2\eta}\zeta\omega^{\eta}(|\del u|_p + |\del v|_p + |v|_p)\|_{L^2(\TPcal^{\near}_s)}
\\
\lesssim&\, 
C_1\vep \|\la r\ra\la r-t\ra^{-1}\zeta\omega^{\eta}|v|_p\|_{L^2(\TPcal^{\near}_s)} 
+ \Ebf_{\eta,c}^{\TPcal,p+1}(s,v)^{1/2} + s^{-1+3\delta}\Ebf_{\eta}^{\TPcal,p}(s,u)^{1/2}.
\endaligned
$$
Then we obtain \eqref{eq2-02-04-2022-M}.

For \eqref{eq4-04-04-2022-M}, we apply twice \eqref{eq7-21-03-2022-M}. We remark that
$$
\aligned
c^2\la r\ra^{3/2}\la r-t\ra^{-3/2}|v|_{N-3}\lesssim&  \la r\ra^{1/2}\la r-t\ra^{-1/2}|\del v|_{N-2} 
+ \la r\ra^{3/2}\la r-t\ra^{-3/2}|f|_{N-3}
\\
\lesssim&\, \la r\ra^{1/2}\la r-t\ra^{-1/2}|v|_{N-1}  + \la r\ra^{3/2}\la r-t\ra^{-3/2}|f|_{N-3}.
\endaligned
$$
Then 
$$
\aligned
&\|\la r\ra^{3/2}\la r-t\ra^{-3/2}\zeta \omega^{\eta}|v|_{N-3}\|_{L^2(\TPcal^{\near}_s)}
\\
&\lesssim \|\la r\ra^{1/2}\la r-t\ra^{-1/2}\zeta\omega^{\eta}|v|_{N-1}\|_{L^2(\TPcal^{\near}_s)} 
+ C_1\vep \|\la r\ra^{3/2}\la r-t\ra^{-3/2}\zeta \omega^{\eta}|v|_{N-3}\|_{L^2(\TPcal^{\near}_s)}
\\
&\quad + C_1\vep s^{\delta}\|\la r-t\ra^{-1/2-\eta}\zeta\omega^{\eta}(|\del u|_p + |\del v|_p)\|_{L^2(\TPcal^{\near}_s)} 
\\
&\quad +(C_1\vep)^2 s^{3\delta}\|\la r-t\ra^{-1-2\eta}\zeta\omega^{\eta}(|\del u|_p + |\del v|_p + |v|_p)\|_{L^2(\TPcal^{\near}_s)}
\\
&\lesssim C_1\vep s^{3\delta} 
+ C_1\vep \|\la r\ra^{3/2}\la r-t\ra^{-3/2}\zeta \omega^{\eta}|v|_{N-3}\|_{L^2(\TPcal^{\near}_s)},
\endaligned
$$
where \eqref{eq5-15-03-2022-M} is applied. Then when $C_1\vep$ sufficiently small, we obtain \eqref{eq4-04-04-2022-M}.
\subsection{Proof of Proposition \ref{prop1-21-03-2022-M}}
Recalling Lemma \ref{lem3-07-10-2021},
$$
\aligned
&\|\omega^{\eta}J\zeta^{-1}|v_{\ih}^2|_N\|_{L^2(\TPcal^{\far}_s)} \lesssim s\|\omega^{\eta}|v_{\ih}^2|_N\|_{L^2(\TPcal^{\far}_s)}
\lesssim C_1\vep s^{1+\delta}\|r^{-1/2-\eta}\, \omega^{\eta}|v|_N\|_{L^2(\TPcal^{\far}_s)}
\\
\lesssim& C_1\vep s^{-2\eta+\delta}\|\omega^{\eta}|v|_N\|_{L^2(\TPcal^{\far}_s)} \lesssim (C_1\vep)^2 s^{-1-2\delta},
\endaligned
$$
where we have remark that $\delta\ll \eta-1/2$. 

The estimate in $\TPcal^{\near}$ is more involved. We firstly remark that, because $N\geq 4$, one has $[N/2]\leq N-2$. Then
$$
\aligned
&\|\omega^{\eta}J\zeta^{-1}|v_{\ih}^2|_N\|_{L^2(\TPcal^{\near}_s)} 
\lesssim s\|\omega^{\eta}\zeta|v_{\ih}^2|_N\|_{L^2(\TPcal^{\near}_s)}
\\
\lesssim& s\| \omega^{\eta}|v_{\ih}|_{N-4}|v_{\ih}|_N\|_{L^2(\TPcal^{\near}_s)} 
+ s\|\omega^{\eta}|v_{\ih}|_{N-2}\zeta|v_{\ih}|_{N-1}\|_{L^2(\TPcal^{\near}_s)} 
\\
\lesssim& C_1\vep s^{1+\delta}\|\la r\ra^{-3/2}\la r-t\ra^{1-\eta}\,\omega^{\eta}|v|_N\|_{L^2(\TPcal^{\near}_s)}
\\
&+ s\|\la r\ra^{-3/2}\la r-t\ra^{1-\eta}|v|_{N-2}\zeta \la r\ra \la r-t\ra^{-1} \omega^{\eta}|v|_{N-1}\|_{L^2(\TPcal^{\near}_s)}
\\
\lesssim& \, C_1\vep s^{-2\eta+\delta}\|\omega^{\eta}|v|_N\|_{L^2(\TPcal^{\near}_s)}
+ C_1\vep s^{-2\eta+\delta}\|\la r\ra\la r-t\ra^{-1}\zeta\omega^{\eta}|v|_{N-1}\|_{L^2(\TPcal^{\near}_s)}
\\
\lesssim&\, (C_1\vep)^2s^{-2\eta+3\delta}\lesssim (C_1\vep)^2 s^{-1-2\delta}.
\endaligned
$$
Here for the last inequality we have applied \eqref{eq5-15-03-2022-M} and the fact that $\delta\ll \eta-1/2$. This bound is integrable with respect to $s$. Then by Proposition \ref{prop2-24-11-2021}, for $\ord(Z)\leq N$,
$$
\aligned
\int_{s_0}^s\int_{\TPcal_s}|\omega^{2\eta}\del_t Zw_{\ih}\,Z(v_{\ih}^2)|J\,dxd\tau
\lesssim&
\int_{s_0}^s\|\omega^{\eta}\zeta|\del w_{\ih}|_N\|_{L^2(\TPcal_\tau)}\|\omega^{\eta}J\zeta^{-1}|v_{\ih}^2|_N\|_{\TPcal_\tau}\,d\tau
\\
\lesssim& (C_1\vep)^2\int_{s_0}^s\Ebf_{\eta}^{\TPcal,N}(\tau,w)^{1/2}\tau^{-1-2\delta}\,d\tau
\lesssim (C_1\vep)^3.
\endaligned
$$
Then by Proposition \ref{prop2-24-11-2021} applied on 
$$
\Box Z w_{\ih} = Z(v_{\ih}^2)
$$
with $\ord(Z) \leq N$, we obtain 
$$
\Ebf_{\eta}^{\TPcal,N}(s,w) + 2\eta\int_{s_0}^s\int_{\TPcal_\tau}\la r-t\ra^{2\eta-1}|\delt w|_NJ\,dxd\tau\lesssim (C_0\vep)^2 + (C_1\vep)^3.
$$
Recall that $C_0<C_1$ and $C_1\vep \leq 1$, we obtain \eqref{eq6-21-03-2022-M}.

\section{Estimates on $w_0$}
\subsection{Objective of this section}
In this section we establish the following result:
\begin{proposition}[Improved energy estimates]
Suppose that the estimates in Lemma \ref{lem1-20-03-2022-M} hold. Then
\begin{equation}\label{eq8-21-03-2022-M}
\Ebf_{\eta}^p(s,w_0) + \int_{s_0}^s\int_{\TPcal_\tau}\la r-t\ra^{2\eta-1}|\delt w_0|_p^2\,Jdxd\tau
\leq (C_0\vep)^2
 + \left\{
\aligned
&C(C_1\vep)^4s^{\delta},\quad p=N,
\\
&C(C_1\vep)^4,\quad p\leq N-1,
\endaligned
\right.
\end{equation}
\begin{equation}\label{eq9-21-03-2022-M}
\Econ^{N-1}(s,w_0)^{1/2}\lesssim (C_1\vep)^3s^{4\delta}.
\end{equation}
\end{proposition}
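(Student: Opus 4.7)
The plan is to apply the flat energy estimate (Proposition \ref{prop2-24-11-2021}) to $Zw_0$ for $\ord Z \leq N$ (respectively $\ord Z \leq N-1$) to obtain the first bound, and the conformal energy estimate (Proposition \ref{prop1-22-03-2022-M}) to $Zw_0$ for $\ord Z \leq N-1$ to obtain the second. In both cases the initial data contribution is bounded by $(C_0\vep)^2$ thanks to \eqref{eq3-22-04-2022-M}, so the task reduces to estimating the source term driven by $\Box w_0 = T_W[u,v] + S_W[u,v]$.

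For the first bound, I would split the source integral
$\int_{s_0}^s\!\!\int_{\Fcal_\tau}|\omega^{2\eta}\del_tZw_0\cdot Z(T_W+S_W)|\,J\,dxd\tau$
according to $\Fcal_\tau = \Hcal^*_\tau \cup \TPcal_\tau$. The contribution from $\TPcal_\tau$ is immediately bounded by $(C_1\vep)^4 s^\delta$ (respectively $(C_1\vep)^4$) via equation \eqref{eq6-22-03-2022-M} in Corollary \ref{lem1-04-04-2022-M}. For the $\Hcal^*_\tau$ piece, $\omega\equiv 1$ and we exploit the crucial structural fact that every monomial in $T_W+S_W$ contains at least one Klein-Gordon factor; using the hyperboloidal Sobolev decay \eqref{eq3-22-03-2022-M}, \eqref{eq3-31-10-2021} and \eqref{eq2-20-03-2022-M} to put the lower-order factors in $L^\infty$ with the $(s/t)$ decay of $|v|_{N-2}$ and $|\del v|_{N-2}$, we get a pointwise bound of the form $|T_W+S_W|_p \lesssim (C_1\vep)^2\tau^{-2+2\delta}(s/t)^2(|\text{top order}|)$. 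Pairing with $\|(s/t)\del Zw_0\|_{L^2(\Hcal^*_\tau)}\lesssim \Ebf_\eta^p(\tau,w_0)^{1/2}$ (from Proposition \ref{prop1-20-10-2021}) via Cauchy-Schwarz yields an integrable bound in $\tau$.

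For the conformal energy, Proposition \ref{prop1-22-03-2022-M} reduces the problem to controlling $\int_{s_0}^s\|J\zeta^{-1}A^{1/2}Z(T_W+S_W)\|_{L^2(\Fcal_\tau)}\,d\tau$ for $\ord Z \leq N-1$. Direct inspection of $J$, $\zeta$ and $A = t^2+r^2-2rt\delb_rt$ together with Lemma \ref{lem1-10-01-2022-M} shows that the conformal weight satisfies $J\zeta^{-1}A^{1/2} \simeq s$ on $\Hcal^*_\tau$ and $J\zeta^{-1}A^{1/2}\lesssim s\la r+t\ra$ on $\TPcal_\tau$. I would then substitute the order $N-1$ pointwise estimates \eqref{eq4-21-03-2022-M} and \eqref{eq3-21-03-2022-M} on $|T_W|_{N-1}+|S_W|_{N-1}$. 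The key point is that on $\TPcal_\tau$ one has $\la r\ra\gtrsim\tau^2$, so the $\la r\ra^{-2}$ and $\la r\ra^{-3/2}$ factors in those estimates more than compensate the conformal weight $\la r+t\ra$; the $\la r-t\ra^{1-2\eta}$ and $\la r-t\ra^{1/2-2\eta}$ factors are absorbed against $\omega^{-\eta}$ using $\omega\simeq\la r-t\ra$ in $\TPcal$ and $\eta>1/2$. Combined with the uniform bound $\|\omega^\eta(|\del u|+|\del v|+|\delt u|+|\delt v|+|v|)_{N-1}\|_{L^2(\Fcal_\tau)}\lesssim C_1\vep$ (from \eqref{eq7e-29-10-2021} together with $u = w_0 + A^{\alpha\hat j}\del_\alpha w_{\hat j}$), this produces an integrand of size $(C_1\vep)^3\tau^{-1+c\delta}$ whose time integral yields the desired $(C_1\vep)^3 s^{4\delta}$ bound.

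The main obstacle is the conformal source in the exterior region $\Pcal_\tau$, where both the conformal weight $A^{1/2}\simeq\la r+t\ra$ grows and the pointwise decay of the Klein-Gordon quadratic factors is weakest. Closing the estimate at the lower order $N-1$ forces one to rely essentially on the cubic structure of $T_W+S_W$ (each term supplying at least one Klein-Gordon factor obeying the sharp pointwise bound \eqref{eq1-15-03-2022-M}), together with the spatial lower bound $\la r\ra\gtrsim s^2$ on $\TPcal$ and the sign of $1-3\eta$; any attempt to prove the bound without using both the weighted decay in $r$ and the sharp $\omega^\eta$-energies would fail at the borderline exponents.
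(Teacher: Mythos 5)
Your treatment of \eqref{eq8-21-03-2022-M} is essentially the proof in the paper: the same interior/exterior splitting of the source integral, the exterior piece handled by \eqref{eq6-22-03-2022-M}, and the interior piece handled by the null-structure/Sobolev bound of Lemma \ref{lem1-22-03-2022-M} paired against $\|(s/t)\del_t Zw_0\|_{L^2(\Hcal^*_\tau)}$ by Cauchy--Schwarz; that half is fine.

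The gap is in the conformal estimate \eqref{eq9-21-03-2022-M}, in the exterior region, for the terms of \eqref{eq4-21-03-2022-M}--\eqref{eq3-21-03-2022-M} carrying the weaker spatial weight, namely $\la r\ra^{-3/2}\la r-t\ra^{1/2-2\eta}\big(|\delt u|_p+|\delt v|_p+|v|_p\big)$. Your power counting does not give an integrand of size $\tau^{-1+c\delta}$ there. Indeed, the conformal weight contributes $J\zeta^{-1}A^{1/2}\lesssim \tau\,\zeta\,\la r\ra$ on $\TPcal_\tau$; after cancelling $A^{1/2}\simeq \la r\ra$ against part of $\la r\ra^{-3/2}$ and converting the remaining $\la r\ra^{-1/2}\lesssim \tau^{-1}$, you are left with $(C_1\vep)^2\tau^{3\delta}\la r-t\ra^{1/2-2\eta}\big(|\delt u|_p+|\delt v|_p+|v|_p\big)$. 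Absorbing $\la r-t\ra^{1/2-2\eta}\leq \omega^{\eta}$ buys no decay in $\tau$ near the light cone (where $\la r-t\ra\simeq 1$), and paying with the surface energies yields an integrand of size $(C_1\vep)^3\tau^{3\delta}$, or rather $\tau^{4\delta}$, since the uniform order-$(N-1)$ bound you invoke is only available for the Klein--Gordon components $v$ via \eqref{eq7e-29-10-2021}, not for $u$, $w_0$, $w$. Its time integral is $(C_1\vep)^3 s^{1+4\delta}$, a full power of $s$ worse than \eqref{eq9-21-03-2022-M}. The paper closes exactly this borderline case by a different mechanism: it keeps a factor $J^{1/2}$ inside the $L^2$ norm (using only $J^{1/2}\zeta^{-1}\lesssim \tau^{1/2}$ from Lemma \ref{lem3-07-10-2021}), replaces $\la r-t\ra^{1/2-2\eta}\leq \la r-t\ra^{\eta-1/2}$, and applies Cauchy--Schwarz in $\tau$, so that the quantity $\int_{s_0}^s\int_{\TPcal_\tau}\la r-t\ra^{2\eta-1}\big(|\delt u|_p+|\delt v|_p+|v|_p\big)^2 J\,dxd\tau$ is controlled by the spacetime ghost-weight integrals built into the bootstrap assumptions \eqref{eq3-17-03-2022-M} and \eqref{eq8-14-03-2022-M}, while the remaining factor $\big(\int_{s_0}^s\tau^{-1+6\delta}d\tau\big)^{1/2}\lesssim s^{3\delta}$. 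Your first group of terms (with $\la r\ra^{-2}\la r-t\ra^{1-2\eta}$) does close by your argument, but without invoking the spacetime integrals the second group cannot, so the proof of \eqref{eq9-21-03-2022-M} as written is incomplete.
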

%Based on the conformal energy bound, we also have the following pointwise estimates.
%\begin{corollary}
%Suppose that \eqref{eq9-21-03-2022-M} holds. Then
%\begin{equation}
%|\del w_0|_{N-4}\lesssim ...
%\end{equation}
%\end{corollary}

%\noindent {\tt A recall on the $L^2$ bounds based on conformal energy..}

\subsection{Standard energy estimate}
We differentiate the equation of $w_0$ with respect to a high-order operator $Z$ with $\ord(Z) = p\leq N$: 
\begin{equation}\label{eq10-21-03-2022-M}
\Box Zw_0 = Z(T_W[u,v] + S_{W}[u,v]).
\end{equation}
We recall Proposition \ref{prop2-24-11-2021}. Our main task is to estimate the following quantities:
\begin{equation}\label{eq11-21-03-2022-M}
\int_{s_0}^s\int_{\Fcal_\tau}|\omega^{2\eta}\del_tZ w_0\,Z(T_W[u,v] + S_{W}[u,v])|J\,dxd\tau.
%\\
%&\int_{s_0}^s\|\omega^{\eta}J\zeta^{-1}\,Z(T_W[u,v] + S_{W}[u,v])\|_{L^2(\Fcal_\tau)}d\tau.
\end{equation}

We first concentrate on the hyperbolic domain.  The Sobolev decay estimates are already sufficient. More precisely, we will establish the following result.
\begin{lemma}\label{lem1-22-03-2022-M}
	Suppose \eqref{eq7-29-10-2021}, \eqref{eq4-17-03-2022-M}, \eqref{eq8-14-03-2022-M} together with  \eqref{eq8-15-03-2022-M} hold in $\Hcal^*_{[s_0,s_1]}$. Then for $p\leq N$,
	\begin{equation}\label{eq1-22-03-2022-M}
		|T_W[u,v]|_p + |S_W[u,v]|_p\lesssim \,(C_1\vep)^2s^{-2+2\delta}(s/t)(\ebf^p[u] +\ebf_c^p[v])^{1/2},
	\end{equation}
	\begin{equation}\label{eq2-22-03-2022-M}
		|T_{KG}[u,v]|_p + |S_{KG}[u,v]|_p \lesssim \, (C_1\vep)^2s^{-2+3\delta}(s/t)(\ebf^p[u] +\ebf_c^p[v])^{1/2},
	\end{equation}
	in $\Hcal^*_{[s_0,s_1]}$.
\end{lemma}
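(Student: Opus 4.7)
In $\Hcal^*_{[s_0,s_1]}$, we have $\omega\simeq 1$ and $\zeta=s/t$, so the local energy density gives the symbolic pointwise identities
\begin{equation*}
(s/t)|\del Zu|+|\delu Zu| \lesssim (\ebf^p[u])^{1/2},\qquad
(s/t)|\del Zv|+|\delu Zv|+c|Zv| \lesssim (\ebf_c^p[v])^{1/2}
\end{equation*}
for any $Z$ with $\ord(Z)\leq p$. The nonlinear terms $T_W,T_{KG},S_W,S_{KG}$ are polynomials in $(u,\del u,v,\del v)$ with coefficients built from $d\varphi_S$ and $\del^A\Gamma^i_{jk}(\varphi_S,0)$, all of which are uniformly bounded thanks to Proposition~\ref{prop1-19-02-2022-M}. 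The strategy is to apply the Leibniz rule and distribute the $p\leq N$ derivatives, retaining the top-order factor symbolically (to be absorbed into $(\ebf^p)^{1/2}$) while bounding the remaining two or three low-order factors pointwise via \eqref{eq3-22-03-2022-M}, \eqref{eq3-31-10-2021}, \eqref{eq7-17-03-2022-M} and \eqref{eq2-20-03-2022-M}.

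I would first handle the trilinear terms in $T_W,T_{KG}$. By Leibniz,
\begin{equation*}
|Z(\phi_1\phi_2\phi_3)|\lesssim \sum_{p_1+p_2+p_3=p}|\phi_1|_{p_1}|\phi_2|_{p_2}|\phi_3|_{p_3},
\end{equation*}
and since $N\geq 4$ at most one $p_j$ may exceed $N-3$. On the "top" factor I keep one derivative $\del Z\phi_{j^*}$, $\delu Z\phi_{j^*}$, or (for a Klein--Gordon factor) $Z\phi_{j^*}$ itself and convert it to $(t/s)(\ebf^p)^{1/2}$, $(\ebf^p)^{1/2}$, or $(\ebf_c^p)^{1/2}$ respectively. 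The other two factors are controlled by the Sobolev bounds, each Klein--Gordon contribution supplying a factor $C_1\vep(s/t)s^{-1+\delta}$ and each bare $\del u$ contribution a factor $C_1\vep s^{-1+\delta}$. The combinatorics of two low-order Klein--Gordon factors always produces $(s/t)^2 s^{-2+2\delta}$, which after absorbing a possible $(t/s)$ loss from the top factor yields the target rate $(C_1\vep)^2 s^{-2+2\delta}(s/t)(\ebf^p)^{1/2}$ for $T_W$. For the $T_{KG}$ term $\m(d\phi^1,d\phi^1)\del_{\jh}\Gamma^{\ih}_{11}\phi^{\jh}$, where the naive decay of $(\del u)^2$ is insufficient, I exploit the null decomposition \eqref{eq1-20-01-2022}, \eqref{eq3-20-01-2022}, writing $\m(du,du)=(s/t)^2(\del_tu)^2+\minu(\del u,\del u)$; the good component $\minu$ carries factors of $\delu u$ or $t^{-1}Lu$ that supply the missing $(s/t)$. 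The same null decomposition is used for every $\m(du,dv)$ and $\m(dv,dv)$ appearing in $T_W$ and $T_{KG}$. A similar case analysis for each of the explicit trilinear entries in the definitions of $T_W$ and $T_{KG}$ in Section~\ref{subsec1-15-04-2022-M} establishes \eqref{eq1-22-03-2022-M}--\eqref{eq2-22-03-2022-M} at the trilinear level.

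Next I would treat the higher-order remainders $S_W,S_{KG}$, which differ from $T_W,T_{KG}$ by an additional Taylor factor $\phi^A$ with $|A|\geq 1$ multiplied against a trilinear Klein--Gordon triple. Each additional factor is either an undifferentiated $u$ or an additional Klein--Gordon component. For the former, the Sobolev bound \eqref{eq2-20-03-2022-M} gives $|u|_{N-2}\lesssim C_1\vep s^{\delta}$ (with no spatial decay beyond $s^{\delta}$); multiplying the trilinear bound by this extra factor produces the $s^{3\delta}$ exponent appearing in \eqref{eq2-22-03-2022-M} (and the analogous statement for $S_W$), while the power of $C_1\vep$ is harmlessly increased because $C_1\vep\leq 1$. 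For the latter case (extra Klein--Gordon factor), the extra $(s/t)s^{-1+\delta}$ only improves the decay.

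\textbf{Main obstacle.} The principal difficulty is that we have no pointwise control of $|u|_p$ for $p>N-2$: the bound \eqref{eq2-20-03-2022-M} stops at $p=N-2$, and in $\Hcal^*$ the wave component $u$ carries neither a $cu$ term in the energy nor a standard Sobolev estimate at top order. Hence whenever the Leibniz expansion of a quartic term involves an undifferentiated $u$-factor that receives close to $N$ derivatives, we must rearrange so that $Z$ lands instead on one of the differentiated Klein--Gordon factors present in every term; this uses crucially the fact that each trilinear block inside $S_W,S_{KG}$ already contains at least two Klein--Gordon entries coming from the Taylor expansion of $\del^A\Gamma^i_{jk}(\varphi_S,0)\phi^{\jh}\phi^{\kh}\phi^{\lh}$. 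A secondary technical point is the bookkeeping of the null decomposition inside the expanded factor $\m(d(\varphi_S+\phi^1),d(\varphi_S+\phi^1))$ of $S_W$, where cross terms $\m(d\varphi_S,d\phi^1)=\m^{\alpha\beta}\del_\alpha\varphi_S\del_\beta u$ appear; these are not genuine null forms but, having only one factor of $\del u$, already fit into the counting above without requiring the null cancellation.
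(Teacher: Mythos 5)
Your overall route coincides with the paper's: a product estimate of the type \eqref{eq7-15-03-2022-M} keeping exactly one top-order factor that is converted into the energy density, Sobolev decay bounds on the remaining factors, and the semi-hyperboloidal decomposition of $\m(d\phi,d\psi)$ for the null terms (your use of \eqref{eq1-20-01-2022}--\eqref{eq3-20-01-2022} is equivalent to the paper's estimate \eqref{eq4-22-03-2022-M}). The genuine gap lies precisely in the point you flag as the main obstacle. In the Leibniz expansion of the quartic terms of $S_W,S_{KG}$ (those where $\phi^A$ contains a $\phi^1=u$ factor), the distribution in which all of $Z$, $\ord(Z)=p\geq N-1$, falls on the undifferentiated wave factor is genuinely present; you cannot ``rearrange so that $Z$ lands instead on one of the differentiated Klein--Gordon factors''. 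The presence of other Klein--Gordon entries in the block does not remove the term $|u|_p\cdot(\text{low-order factors})$ from the sum, and since $\ebf^p[u]$ contains no zeroth-order piece, this term is not covered by the conversion rules $( s/t)|\del Zu|+|\delu Zu|\lesssim(\ebf^p[u])^{1/2}$ with which you open the argument. As written, your proof does not produce the claimed bound for these contributions.

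The step that closes this (and which the paper's ``we omit the details'' conceals) is a direct bound on the top-order undifferentiated factor: for an ordered operator either it contains a translation, whence $|Zu|\leq|\del u|_{p-1}\lesssim (t/s)(\ebf^{p-1}[u])^{1/2}$, or it consists of boosts and rotations, and then $L_a=t\delu_a$ and $\Omega=x^1\delu_2-x^2\delu_1$ give $|Zu|\lesssim t\,(\ebf^{p-1}[u])^{1/2}$; combined with \eqref{eq2-20-03-2022-M} this yields $|u|_p\lesssim C_1\vep s^{\delta}+t\,(\ebf^p[u])^{1/2}$. The loss of the factor $t$ (resp. $t/s$) is then absorbed because, when $u$ carries $p\geq N-1$ derivatives, the at least three accompanying Klein--Gordon-type factors are left with at most one derivative, so at least two of them sit at order $\leq N-4$ where, by \eqref{eq3-31-10-2021}, $|v|+|\del v|\lesssim C_1\vep (s/t)s^{-1}$; the resulting product $(s/t)^2 s^{-3}\cdot t\,(\ebf^p[u])^{1/2}=(s/t)s^{-2}(\ebf^p[u])^{1/2}$ fits inside the stated rate $(s/t)s^{-2+2\delta}$ (resp. $s^{-2+3\delta}$). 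With this replacement for your ``rearrangement'' step, the rest of your proposal — the trilinear terms, the null structure for $\m(du,du)v_{\ih}$, and attributing the extra $s^{\delta}$ in \eqref{eq2-22-03-2022-M} to $|u|_{N-2}\lesssim C_1\vep s^{\delta}$ — is consistent with the paper's argument.
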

\begin{proof}
We recall \eqref{eq7-15-03-2022-M}. In general, we only need to substitute the Sobolev pointwise bounds into the corresponding expression. However, for the null terms we need their null structure:\footnote{see for example \cite{M-2020-strong}, Appendix B, for a detailed proof.}
	\begin{equation}\label{eq4-22-03-2022-M}
		\aligned
		|\m(du,dv)|_p\lesssim&\, (s/t)^2(|\del u|_p|\del v|_{N-2} + |\del u|_{N-2}|\del v|_p) 
		\\
		+& |\delu u|_p|\del v|_{N-2} + |\del v|_p|\delu u|_{N-2} + |\del u|_p|\delu v|_{N-2} + |\del u|_{N-2}|\delu v|_p.
		\endaligned
	\end{equation}
	Denote by $\phi = u$ or $v_{\ih}$, one has
	\begin{equation}\label{eq5-22-03-2022-M}
		|\m(\del \phi,\del \phi)v_{\ih}|_p\lesssim (C_1\vep)^2(s/t)s^{-2+2\delta}((s/t)|\del \phi|_p + |\delu \phi|_p + |v|_p).
	\end{equation}
	The rest of the terms in $T_W,T_{KG}, S_W$ and $S_{KG}$ are bounded directly by \eqref{eq7-15-03-2022-M}, we omit the details.
\end{proof}

\begin{proof}[Proof of \eqref{eq8-21-03-2022-M}]
 Recall that
$$
\aligned
&\int_{s_0}^s\int_{\Fcal_\tau}|\omega^{2\eta}\del_tZ w_0\,Z(T_W[u,v] + S_{W}[u,v])|J\,dx d\tau
\\
=&\, \int_{s_0}^s\int_{\Hcal^*_\tau} + \int_{s_0}^s\int_{\TPcal_\tau}|\omega^{2\eta}\del_tZ w_0\,Z(T_W[u,v] + S_{W}[u,v])|J\,dx d\tau
=: I^{\Hcal} + I^{\TPcal}.
\endaligned
$$
Based on Lemma \ref{lem1-22-03-2022-M} and thanks to  \eqref{eq4-17-03-2022-M}, \eqref{eq7c-29-10-2021} and \eqref{eq3-17-03-2022-M},
$$
\aligned
I^{\Hcal} \lesssim &\,
\int_{s_0}^s\|(s/t)|\del_t w_0|_p\|_{L^2(\Hcal^*_\tau)}\||T_W[u,v] + S_{W}[u,v]|_p\|_{L^2(\Hcal^*_\tau)}\,d\tau
\\
\lesssim&\, (C_1\vep)^2\int_{s_0}^s\tau^{-2+2\delta}\Ebf^{\Hcal,p}(\tau,w_0)^{1/2}(\Ebf^{\Hcal,p}(\tau,u)^{1/2} + \Ebf_c^{\Hcal,p}(\tau,v)^{1/2})d\tau
\lesssim (C_1\vep)^4.
\endaligned
$$
On the other hand, $I^{\TPcal}$ is bounded by \eqref{eq6-22-03-2022-M}. We thus conclude by \eqref{eq7-22-03-2022-M}.
\end{proof}
\subsection{Conformal energy estimate}
This subsection is devoted to the proof of \eqref{eq9-21-03-2022-M}. Recalling Proposition \ref{prop1-22-03-2022-M}, we need to bound the following quantity with $\ord(Z) = p\leq N-1$:
$$
\int_{s_0}^s\|J\zeta^{-1}A^{1/2}\Box Z w_0\|_{L^2(\Fcal_\tau)} \,d\tau.
$$
which reduces to the sum of the following two terms:
$$
\aligned
I_{\text{con}}^{\Hcal} := \int_{s_0}^s \tau\||\Box w_0|_p\|_{L^2(\Hcal^*_\tau)}d\tau,
\quad
I^{\TPcal}_{\text{con}} := \int_{s_0}^s\big\|J\zeta^{-1} (t^2+r^2-2rt\delb_r t)^{1/2}|\Box w_0|_p\big\|_{L^2(\TPcal_\tau)} d\tau.
\endaligned
$$
Recalling its expression and \eqref{eq1-22-03-2022-M},
$$
\aligned
I^{\Hcal}_{\text{con}} 
\lesssim (C_1\vep)^2\int_{s_0}^s\tau^{-1+3\delta}\Big(\int_{\Hcal^*_\tau}(\tau/t)^2(\ebf^p[u]+ \ebf_c^p[v])dx\Big)^{1/2}\,d\tau
\lesssim (C_1\vep)^3s^{4\delta}.
\endaligned
$$
On the other hand, we recall \eqref{eq4-21-03-2022-M} and \eqref{eq3-21-03-2022-M},
\begin{equation}\label{eq5-23-03-2022-M}
\aligned
I^{\TPcal}_{\text{con}} =& \int_{s_0}^s\big\|J\zeta^{-1} (t^2+r^2-2rt\delb_r t)^{1/2}|\Box w_0|_p\big\|_{L^2(\TPcal_\tau)} d\tau
\\
\lesssim&\, 
(C_1\vep)^2\int_{s_0}^s\tau^{1+3\delta}\|\la r\ra^{-1}\la r-t\ra^{1-2\eta}\zeta(|\del u|_p + |\del v|_p)\|_{L^2(\TPcal_\tau)}d\tau
\\
&\, + (C_1\vep)^2\int_{s_0}^s\tau^{1/2+3\delta}\|J^{1/2}\la r\ra^{-1/2}\la r-t\ra^{1/2-2\eta}
(|\delt u|_p + |\delt v|_p + |v|_p)\|_{L^2(\TPcal_\tau)}d\tau.
\endaligned
\end{equation}
The first term on the right-hand side is bounded directly:
$$
\aligned
\int_{s_0}^s\tau^{1+3\delta}\|\la r\ra^{-1}\la r-t\ra^{1-2\eta}\zeta(|\del u|_p + |\del v|_p)\|_{L^2(\TPcal_\tau)}d\tau
\lesssim& 
\int_{s_0}^s\tau^{-1+3\delta}\big(\Ebf_{\eta}^p(\tau,u)^{1/2} + \Ebf_{\eta,c}^p(\tau,v)^{1/2}\big)d\tau
\\
\lesssim&\, C_1\vep s^{4\delta}.
\endaligned
$$
For the second term in right-hand side of \eqref{eq5-23-03-2022-M}, 
$$
\aligned
&\int_{s_0}^s\tau^{1/2+3\delta}\|J^{1/2}s^{1/2}\la r\ra^{-1/2}\la r-t\ra^{1/2-2\eta}
(|\delt u|_p + |\delt v|_p + |v|_p)\|_{L^2(\TPcal_\tau)}d\tau
\\
%\lesssim&\,\textcolor{blue}{ \int_{s_0}^s\tau^{-1/2 + 3\delta}\|\la r-t\ra^{1-3\eta}\, J^{1/2}\la r-t\ra^{\eta-1/2}(|\delt u|_p + |\delt v|_p + |v|_p)\|_{L^2(\TPcal_\tau)}d\tau}
%\\
\lesssim&\,  \int_{s_0}^s\tau^{-1/2 + 3\delta}\|J^{1/2}\la r-t\ra^{\eta-1/2}(|\delt u|_p + |\delt v|_p + |v|_p)\|_{L^2(\TPcal_\tau)}d\tau
\\
\lesssim&\, \Big(\int_{s_0}^s\tau^{-1+6\delta}d\tau\Big)^{1/2}
\Big(\int_{s_0}^s\la r-t\ra^{2\eta-1}(|\delt u|_p + |\delt v|_p + |v|_p)^2\,Jdxd\tau\Big)^{1/2}
\lesssim C_1\vep s^{4\delta}.
\endaligned
$$
This concludes the desired energy estimate.

\subsection{Pointwise estimate based on improved energy bounds}
For our purpose, we need the following result.
\begin{proposition}\label{prop1-26-03-2022-M}
Suppose that \eqref{eq9-21-03-2022-M}, then in $\Hcal^*_{[s_0,s_1]}$
\begin{equation}\label{eq1-26-03-2022-M}
|\del w_0|_{N-3}\lesssim C_1\vep (s/t)^{-1}s^{-2+4\delta}.
\end{equation}	
Suppose that \eqref{eq8-21-03-2022-M} holds. Then
\begin{equation}\label{eq1-02-04-2022-M}
|\del w_0|_{N-4}\lesssim C_1\vep \la r\ra^{-1/2}\la r-t\ra^{-\eta},\quad \text{in } \TPcal_{[s_0,s_1]},
\end{equation}
\begin{equation}\label{eq2-26-03-2022-M}
|\del w_0|_{N-3}\lesssim
\left\{
\aligned
& C_1\vep s^{-1},\quad && \text{in } \Hcal^*_{[s_0,s_1]},
\\
& C_1\vep\la r-t\ra^{-1/2-\eta}, && \text{in } \TPcal_{[s_0,s_1]}.
\endaligned
\right.
\end{equation} 
\end{proposition}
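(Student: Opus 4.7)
The plan is to split into three cases. The estimates \eqref{eq1-02-04-2022-M} and \eqref{eq2-26-03-2022-M} follow from the improved weighted energy bound \eqref{eq8-21-03-2022-M} via the Sobolev decay estimates already established, while the sharper \eqref{eq1-26-03-2022-M} requires the conformal energy bound \eqref{eq9-21-03-2022-M} together with the Klainerman--Sobolev inequality on hyperboloids (Proposition \ref{prop1-21-10-2021}).

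For \eqref{eq2-26-03-2022-M}, in $\Hcal^{*}_{[s_0,s_1]}$ I would apply Proposition \ref{prop1-21-01-2022} at order $N-3$ to get $s|\del w_0|_{N-3}\lesssim \Ebf^{\Hcal,N-1}(s,w_0)^{1/2}$; the uniform part ($p\leq N-1$) of \eqref{eq8-21-03-2022-M} gives $\Ebf_{\eta}^{N-1}(s,w_0)^{1/2}\lesssim C_1\vep$, yielding the $C_1\vep s^{-1}$ bound. In $\TPcal_{[s_0,s_1]}$ I would instead use \eqref{eq2-15-03-2022-M} at $p=N-3$ to bound $\omega^{1/2+\eta}|\del w_0|_{N-3}$ by the same uniform energy, which gives $C_1\vep\la r-t\ra^{-1/2-\eta}$ since $\omega\simeq \la r-t\ra$ on $\TPcal_s$. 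Estimate \eqref{eq1-02-04-2022-M} is completely analogous, but it uses the sharper Sobolev decay \eqref{eq1-30-10-2021} at order $p=N-4$; this needs three additional derivatives and still fits within $\Ebf_{\eta}^{N-1}$, producing the weight $\la r\ra^{-1/2}\la r-t\ra^{-\eta}$ demanded.

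The substantive work is \eqref{eq1-26-03-2022-M}. A plain Klainerman--Sobolev decay from the standard energy alone would only give $C_1\vep s^{-1}$, which is insufficient near the light cone where $(s/t)^{-1}s^{-2+4\delta}$ can be much smaller. The key is to exploit the conformal energy through Proposition \ref{prop1-11-03-2022-M}: applied to $Zw_0$ for each $Z$ with $\ord(Z)\leq N-1$, and using that $\zeta t^{-1}|r^2-t^2|=s(s/t)^2$ in $\Hcal^{*}_s$, it gives
\[
\|(s/t)^2\del_\alpha Zw_0\|_{L^2(\Hcal^{*}_s)}\lesssim s^{-1}\Fbf(s,Zw_0;s_0)^{1/2}.
\]
The quantity $\Fbf^{1/2}$ is controlled via the initial bound \eqref{eq3-22-04-2022-M} (which supplies $\|\zeta Zw_0\|_{L^2(\Fcal_{s_0})}\lesssim C_0\vep$ for $\ord(Z)\leq N-1$), the uniform standard energy from \eqref{eq8-21-03-2022-M}, and the time integral of the conformal energy \eqref{eq9-21-03-2022-M}, giving $\Fbf^{1/2}\lesssim C_1\vep\,s^{4\delta}$. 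Then I would apply Proposition \ref{prop1-21-10-2021} to $f=(s/t)^2\del_\alpha Zw_0$ for $\ord(Z)\leq N-3$, using the standard fact that $|L^J(s/t)^2|\lesssim (s/t)^2$ (so that all Lorentz boosts effectively fall on $\del_\alpha Zw_0$, keeping the total order $\leq N-1$). This yields $(s/t)^2|\del Zw_0|\lesssim t^{-1}C_1\vep\,s^{-1+4\delta}$; multiplying by $(t/s)^2$ produces the target $|\del Zw_0|\lesssim C_1\vep(s/t)^{-1}s^{-2+4\delta}$.

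The principal technical point to verify will be that the conformal energy estimate \eqref{eq9-21-03-2022-M} genuinely survives commutation with $Z$ at order $N-1$ when inserted into Proposition \ref{prop1-11-03-2022-M} --- which is exactly the content of the notation $\Econ^{N-1}$ used there. Once this is in hand, the weight manipulation is elementary and the three bounds close.
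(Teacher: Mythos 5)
Your proposal is correct and follows essentially the same route as the paper: \eqref{eq2-26-03-2022-M} and \eqref{eq1-02-04-2022-M} from the uniform part of \eqref{eq8-21-03-2022-M} via Proposition \ref{prop1-21-01-2022}, \eqref{eq2-15-03-2022-M} and \eqref{eq1-30-10-2021}, and \eqref{eq1-26-03-2022-M} by combining Proposition \ref{prop1-11-03-2022-M} (with $\zeta t^{-1}|r^2-t^2|=s(s/t)^2$ on $\Hcal^*_s$, the initial bound \eqref{eq3-22-04-2022-M} and the conformal bound \eqref{eq9-21-03-2022-M}) with the hyperboloidal Sobolev inequality of Proposition \ref{prop1-21-10-2021} and the homogeneity bound $|L^J(s/t)|\lesssim (s/t)$. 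The commutation point you flag at the end is exactly what the notation $\Econ^{N-1}$ already encodes, so no extra work is needed there.
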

\begin{proof}
The estimate \eqref{eq1-02-04-2022-M} is direct from \eqref{eq1-30-10-2021} and the uniform energy bound in \eqref{eq8-21-03-2022-M}. The estimate \eqref{eq2-26-03-2022-M} is a direct consequence of the uniform energy bound on $\Ebf_{\eta}^{N-1}(s,w_0)$ combined with \eqref{eq2-21-01-2022} and \eqref{eq2-15-03-2022-M}.

The bound \eqref{prop1-26-03-2022-M} is  based on the global Sobolev inequality on $H^*_s$. We remark that by Proposition \ref{prop1-11-03-2022-M},
\begin{equation}\label{eq1-04-04-2022-M}
\|(s/t)^2s|\del w_0|_{N-1}\|_{L^2(\Hcal^*_s)}\lesssim \|\zeta |u|_{N-1}\|_{L^2(\Fcal_{s_0})} + \Econ^{N-1}(s,u)^{1/2} + \int_{s_0}^s\tau^{-1}\Econ^{N-1}(\tau,u)^{1/2}d\tau
\lesssim C_1\vep s^{4\delta},
\end{equation}
and for $\ord(Z_1) \leq 2$, $\ord(Z_2)\leq N-3$, 
$$
|Z_1((s/t)^2sZ_2\del w_0)|\lesssim (s/t)^2s|\del w_0|_{N-1}\quad \text{in } \Hcal^*_{[s_0,s_1]}.
$$
Then by \eqref{prop1-21-10-2021}, we obtain the desired result.
\end{proof}

%===================================================================================================================
\section{Estimates on $w_{\ih}$}

\subsection{Improved energy estimates}
We first establish the following estimates.
\begin{proposition}
Suppose that \eqref{eq7-29-10-2021}, \eqref{eq8-14-03-2022-M} and the Sobolev bounds \eqref{eq4e-31-10-2021}, \eqref{eq4f-31-10-2021} hold. Then
\begin{equation}\label{eq2-25-03-2022-M}
\Ebf_{\eta}^N(s,\del w) + \int_{s_0}^s\int_{\TPcal_\tau}\la r-t\ra^{2\eta-1}|\del\delt w|_N\,J dxd\tau
\leq (C_0\vep)^2 + C(C_1\vep)^3s^{2\delta}.
\end{equation}
\begin{equation}\label{eq2-24-03-2022-M}
\Ebf_{\eta}^N(s,w) 
+ \int_{s_0}^s\int_{\TPcal_\tau}\la r-t\ra^{2\eta-1} |\delt w|_N\,Jdxd\tau
\leq(C_0\vep)^2 + C(C_1\vep)^3s^{2\delta},
\end{equation}
\end{proposition}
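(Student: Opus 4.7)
The plan is to apply the full energy estimate \eqref{eq7-22-03-2022-M} to the differentiated equations $\Box Z w_{\ih} = Z(v_{\ih}^2)$ and $\Box Z(\del w_{\ih}) = 2Z(v_{\ih}\del v_{\ih})$ for $\ord(Z)\leq N$; the task then reduces to controlling the spacetime source integrals
\[
I_w := \int_{s_0}^s \int_{\Fcal_\tau} |\omega^{2\eta}\del_t(Zw_{\ih})\cdot Z(v_{\ih}^2)|\,J\,dx\,d\tau
\]
(and its analogue $I_{\del w}$ obtained by inserting $\del$) by $C(C_1\vep)^3 s^{2\delta}$. Because the integrand behaves very differently inside and outside the light cone, I split $I_w = I_w^{\Hcal} + I_w^{\TPcal}$ and treat the two pieces separately.

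For the exterior contribution $I_w^{\TPcal}$, I would reuse the argument of Proposition \ref{prop1-21-03-2022-M} essentially verbatim: a Cauchy--Schwarz in the form $\|\omega^\eta\zeta\del_t Zw\|_{L^2(\TPcal_\tau)}\cdot\|\omega^\eta\zeta^{-1}JZ(v^2)\|_{L^2(\TPcal_\tau)}$, exploiting $\|\omega^\eta\zeta\del_\alpha w\|_{L^2}\leq \Ebf_\eta^N(\tau,w)^{1/2}$ from Proposition \ref{prop1-26-10-2021}, together with the Leibniz decomposition $|v^2|_N\lesssim |v|_{N-4}|v|_N + |v|_{N-2}\cdot \zeta|v|_{N-1}$ (valid for $N\geq 4$), the pointwise bounds \eqref{eq4e-31-10-2021}--\eqref{eq1-15-03-2022-M} on $|v|_{N-4},|v|_{N-2}$, the improved weighted $L^2$ bound \eqref{eq5-15-03-2022-M} on $\zeta|v|_{N-1}$, and the relations $J\zeta^{-1}\lesssim s\zeta$ in $\TPcal^{\near}$ and $\la r\ra\gtrsim s^2$ in $\TPcal$. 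These yield $\|\omega^\eta J\zeta^{-1}|v^2|_N\|_{L^2(\TPcal_\tau)}\lesssim (C_1\vep)^2\tau^{-1-2\delta}$ (the hypothesis $\delta\ll\eta-1/2$ is used to turn the intermediate $\tau^{-2\eta+3\delta}$ into $\tau^{-1-2\delta}$). Integrating against $\Ebf_\eta^N(\tau,w)^{1/2}\leq C_1\vep\tau^\delta$ yields $I_w^{\TPcal}\lesssim(C_1\vep)^3$, uniformly in $s$.

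For the hyperbolic contribution $I_w^{\Hcal}$, where $\omega\simeq 1$ and $J=s/t\leq 1$, Cauchy--Schwarz gives
\[
\int_{\Hcal^*_\tau}|\omega^{2\eta}\del_t Zw\cdot Z(v^2)|\,J\,dx
\leq \|(s/t)\del_t Zw\|_{L^2(\Hcal^*_\tau)}\,\|Z(v^2)\|_{L^2(\Hcal^*_\tau)}
\leq C_1\vep\tau^\delta\,\|Z(v^2)\|_{L^2(\Hcal^*_\tau)}.
\]
The crucial step is the bound $\|Z(v^2)\|_{L^2(\Hcal^*_\tau)}\lesssim (C_1\vep)^2\tau^{-1+\delta}$: expand $|v^2|_N\lesssim\sum_{k+l=N}|v|_k|v|_l$ by Leibniz and, on each term with $k\leq l$, pair the pointwise bound on the lower-order factor (the $\delta$-free estimate $|v|_{N-3}\lesssim C_1\vep(s/t)s^{-1}$ from \eqref{eq3-31-10-2021} whenever $k\leq N-3$, and the weaker $|v|_{N-2}\lesssim C_1\vep(s/t)s^{-1+\delta}$ only for the unique boundary term $|v|_{N-2}^2$ appearing at $N=4$) with the higher-order $L^2$ norm of the other factor, namely the uniform $\||v|_{N-1}\|_{L^2}\lesssim C_1\vep$ coming from \eqref{eq7e-29-10-2021} or $\||v|_N\|_{L^2}\lesssim C_1\vep\tau^\delta$ from \eqref{eq7c-29-10-2021}. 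The resulting pointwise-in-$\tau$ bound $(C_1\vep)^3\tau^{-1+2\delta}$ integrates to $(C_1\vep)^3 s^{2\delta}$.

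The main obstacle is the sharpness required in the hyperbolic region: if one naively used the $s^\delta$-growing $L^2$ bound \eqref{eq7c-29-10-2021} for every high-order factor in the Leibniz decomposition, the integrand would degrade to $(C_1\vep)^3\tau^{-1+3\delta}$ and produce a growth $s^{3\delta}$ that is just barely too large for the bootstrap to close. The improved (non-growing) $L^2$ estimate \eqref{eq7e-29-10-2021} at order $N-1$ is therefore indispensable. The second bound \eqref{eq2-25-03-2022-M} is obtained by the identical scheme applied to $I_{\del w}$, using the Leibniz decomposition $|v\del v|_N\lesssim\sum|v|_k|\del v|_{N-k}$ and the parallel pointwise/$L^2$ controls on $\del v$, which are furnished by the very same energies $\Ebf_{\eta,c}^N, \Ebf_{\eta,c}^{N-1}$ and Sobolev bounds \eqref{eq4e-31-10-2021}, \eqref{eq4f-31-10-2021}. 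Summing the $\Hcal$ and $\TPcal$ contributions with the initial bound $\Ebf_\eta^N(s_0,w)\leq(C_0\vep)^2$ coming from \eqref{eq3-22-04-2022-M} yields both \eqref{eq2-24-03-2022-M} and \eqref{eq2-25-03-2022-M}.
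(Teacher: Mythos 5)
Your splitting into $\Hcal^*$ and $\TPcal$ contributions, your treatment of the hyperbolic region (pairing the top-order $L^2$ factor with the $\delta$-free pointwise bound at order $N-3$/$N-4$ and letting the uniform order-$(N-1)$ energy \eqref{eq7e-29-10-2021} absorb the $s^{\delta}$ losses), and your exterior estimate for the pure product $v_{\ih}^2$ all coincide with the paper's argument (the latter is literally the proof of Proposition \ref{prop1-21-03-2022-M}). The gap is in the final step, where you assert that \eqref{eq2-25-03-2022-M} follows ``by the identical scheme'' applied to $|v\del v|_N$ with ``parallel pointwise/$L^2$ controls on $\del v$''. In the exterior region this fails for the Leibniz term in which all $N$ derivatives fall on the \emph{undifferentiated} Klein--Gordon factor, i.e.\ $|v|_N|\del v|_{N-4}$. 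Your scheme needs $\|\omega^{\eta}J\zeta^{-1}|v\del v|_N\|_{L^2(\TPcal_\tau)}\lesssim(C_1\vep)^2\tau^{-1-2\delta}$, but the best available bound on the low-order factor is $|\del v|_{N-4}\lesssim C_1\vep\la r\ra^{-1/2}\la r-t\ra^{-\eta}$ from \eqref{eq4f-31-10-2021}; there is no analogue of the doubly improved decay \eqref{eq1-15-03-2022-M}, nor of the weighted $L^2$ bound \eqref{eq5-15-03-2022-M}, for $\del v$. Hence, using $J\zeta^{-1}\lesssim s\zeta$, $\omega\simeq\la r-t\ra$ and $\la r\ra\gtrsim s^2$, this term only gives $C_1\vep\,\||v|_N\|_{L^2(\TPcal_\tau)}\lesssim(C_1\vep)^2\tau^{\delta}$, which is not integrable in $\tau$ and, integrated against $\Ebf_{\eta}^N(\tau,\del w)^{1/2}\lesssim C_1\vep\tau^{\delta}$, would produce a growth of order $s^{1+2\delta}$ instead of $s^{2\delta}$.

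This is precisely the point where the paper's proof (which writes out the harder $v\del v$ case and notes that $v^2$ then follows) departs from the ``integrable source in $L^2$'' paradigm: its term $\mathcal{J}_5^{\TPcal}$ is \emph{not} bounded pointwise in $\tau$, but kept in the form $C_1\vep\,\tau^{-1/2-2\delta}\|J^{1/2}\la r-t\ra^{\eta-1/2}|v|_N\|_{L^2(\TPcal_\tau)}$, and the time integral is then closed by Cauchy--Schwarz in $\tau$ together with the ghost-weight spacetime integral $\int_{s_0}^{s}\int_{\TPcal_\tau}\la r-t\ra^{2\eta-1}|v|_N^2\,J\,dx\,d\tau\leq(C_1\vep)^2 s^{2\delta}$ contained in the bootstrap assumption \eqref{eq7c-29-10-2021} (this is also the structural reason why such spacetime integrals appear in the assumptions and on the left-hand sides of the improved bounds). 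Without invoking this mechanism your proposal does not yield \eqref{eq2-25-03-2022-M}; the rest of the argument, including \eqref{eq2-24-03-2022-M}, is sound.
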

\begin{proof}
We will only write the prove of \eqref{eq2-25-03-2022-M} because $|v_{\ih}^2|_p$ satisfies all bounds enjoyed by $|v_{\ih}\del v_{\ih}|_p$. 	
Recalling Proposition \ref{prop2-24-11-2021}, we need to bound
$$
\aligned
\mathcal{J} :=&\, \|J\zeta^{-1}\omega^{\eta}|v_{\ih}\del v_{\ih}|_N\|_{L^2(\Fcal_s)}
\leq \||v_{\ih}\del v_{\ih}|_N\|_{L^2(\Hcal^*_s)} + \|J\zeta^{-1}\omega^{\eta}|v_{\ih}\del v_{\ih}|_N\|_{L^2(\TPcal_s)}
\\
=:&\, \mathcal{J}^{\Hcal} + \mathcal{J}^{\TPcal}.
\endaligned
$$
The quantity $\mathcal{J}^{\Hcal}$ is bounded directly by the bootstrap bounds and Sobolev decay. We remark that (recalling $N\leq 4$)
$$
\aligned
\mathcal{J}^{\Hcal}\lesssim&\, \||\del v_{\ih}|_N|v_{\ih}|_{N-3}\|_{L^2(\Hcal^*_s)} 
+ \||\del v_{\ih}|_{N-1}|v_{\ih}|_{N-2}\|_{L^2(\Hcal^*_s)}
\\
& + \|| v_{\ih}|_N|\del v_{\ih}|_{N-3}\|_{L^2(\Hcal^*_s)}
+ \| v_{\ih}|_{N-1}|\del v_{\ih}|_{N-2}\|_{L^2(\Hcal^*_s)}
\\
\lesssim&\, C_1\vep s^{-1}\|(s/t)|\del v|_N\|_{L^2(\Hcal^*_s)} 
+ C_1\vep s^{-1+\delta}\|(s/t)|\del v|_{N-1}\|_{L^2(\Hcal^*_s)} 
\\
& + C_1\vep s^{-1}\|| v_{\ih}|_N|\|_{L^2(\Hcal^*_s)} 
+ C_1\vep s^{-1+\delta}\|| v_{\ih}|_{N-1}\|_{L^2(\Hcal^*_s)} 
\lesssim (C_1\vep)^2s^{-1+\delta}.
\endaligned
$$ 
Here we remark that the uniform energy bound \eqref{eq7e-29-10-2021} the loss of decay of $|v|_{N-2}$ and $|\del v|_{N-2}$.

For the bound on $\mathcal{J}^{\TPcal}$, we recall that $N\leq 4$, then
$$
\aligned
\mathcal{J}^{\TPcal} \lesssim &\, \|J\zeta^{-1}\omega^{\eta}|\del v|_N|v|_{N-4}\|_{L^2(\TPcal_s)} 
+ \|J\zeta^{-1}\omega^{\eta}|\del v|_{N-1}|v|_{N-3}\|_{L^2(\TPcal_s)} 
\\
&+ \|J\zeta^{-1}\omega^{\eta}|\del v|_{N-2}|v|_{N-2}\|_{L^2(\TPcal_s)} 
+ \|J\zeta^{-1}\omega^{\eta}| v|_{N-1}|\del v|_{N-3}\|_{L^2(\TPcal_s)} 
\\
&+ \|J\zeta^{-1}\omega^{\eta}|v|_N|\del v|_{N-4}\|_{L^2(\Hcal_s)} 
=: \mathcal{J}^{\TPcal}_1+\mathcal{J}^{\TPcal}_2+\mathcal{J}^{\TPcal}_3+\mathcal{J}^{\TPcal}_4+J^{\TPcal}_5.
\endaligned
$$
Each term will be bounded in different manner.
$$
\aligned
\mathcal{J}_1^{\TPcal}\lesssim&\,
C_1\vep s^{1+\delta}\|\la r\ra^{-3/2}\la r-t\ra^{1-\eta}\zeta\omega^{\eta}|\del v|_N\|_{L^2(\TPcal_s)} 
\lesssim\,C_1\vep s^{-2\eta+\delta}\|\zeta\omega^{\eta}|\del v|_N\|_{L^2(\TPcal_s)} 
\\
\lesssim&\, (C_1\vep)^2s^{-1+\delta},
\endaligned
$$
where \eqref{eq1-15-03-2022-M} case $p=N$ is applied.
$$
\aligned
\mathcal{J}_2^{\TPcal}\lesssim&\, s\||v|_{N-3}\zeta\omega^{\eta}|\del v|_{N-1}\|_{L^2(\TPcal_s)} 
\lesssim C_1\vep s^{1+\delta}\|\la r\ra^{-1}\la r-t\ra^{1/2-\eta}\zeta\omega^{\eta}|\del v|_{N-1}\|_{L^2(\TPcal_s)} 
%\\
%\lesssim&\, \textcolor{blue}{C_1\vep \int_{s_0}^s\tau^{-1+\delta}\|\zeta\omega^{\eta}|\del v|_{N-1}\|_{L^2(\TPcal_\tau)}d\tau}
\\
\lesssim&\, C_1\vep s^{-1+\delta}\Ebf^{N-1}_{\eta,c}(\tau,v)^{1/2}d\tau
\lesssim (C_1\vep)^2s^{-1+\delta},
\endaligned
$$
where for the last inequality we apply \eqref{eq7e-29-10-2021}, which offsets the loss of decay $s^{\delta}$.
For the third term, we apply the Sobolev decay on $|v|_{N-2}$ and observe that $|\del v|_{N-2}\lesssim |v|_{N-1}$. Then we obtain
$$
\aligned
\mathcal{J}_3^{\TPcal}\lesssim&\, 
C_1\vep s^{1+\delta} \|\la r\ra^{-3/2}\la r-t\ra^{1-\eta}\zeta \la r\ra\la r-t\ra^{-1} \omega^{\eta}|v|_{N-1}\|_{L^2(\TPcal_s)} 
\lesssim 
(C_1\vep)^2 s^{-2\eta+3\delta}
\\
\lesssim&\, (C_1\vep)^2s^{-1+\delta}.
%\\
%\lesssim&\, C_1\vep\Big(\int_{s_0}^s\tau^{-1+2\delta}d\tau\Big)^{1/2}\Big(\int_{s_0}^s\la r-t\ra^{2\eta-1}|v|_{N-1}\,Jdxd\tau\Big)^{1/2}
%\lesssim (C_1\vep)^2s^{\delta}.
\endaligned
$$
Here \eqref{eq5-15-03-2022-M} is applied. For the fourth term is bounded in the same manner. We omit the details.
%$$
%\aligned
%\mathcal{J}_4^{\TPcal} \lesssim&\, 
%C_1\vep s^{1+\delta}\|\la r\ra^{-1/2}\la r-t\ra^{-\eta} \zeta\omega^{\eta}|v|_{N-1}\|_{L^2(\TPcal_s)} 
%\\
%\lesssim&\, C_1\vep s^{1+\delta}\|\la r\ra^{-3/2}\la r-t\ra^{1-\eta}\zeta\la r\ra\la r-t\ra^{-1+\eta}|v|_{N-1}\|_{L^2(\TPcal_s)} 
%\\
%\lesssim&\, 
%C_1\vep s^{-2\eta+\delta}\|\la r\ra\la r-t\ra^{-1+\eta}\zeta|v|_{N-1}\|_{L^2(\TPcal_s)} 
%\lesssim (C_1\vep)^2 s^{-1+\delta},
%\endaligned
%$$
%where for the last inequality we apply \eqref{eq7e-29-10-2021} and the fact that $\delta\ll \eta-1/2$. 
For the last term, we recall that
$$
\aligned
\mathcal{J}_5^{\TPcal}\lesssim&\, \|J\zeta^{-1}\omega^{\eta}|v|_N|\del v|_{N-4}\|_{L^2(\Hcal_s)} 
\lesssim C_1\vep s^{1/2}\||v|_{N-3} J^{1/2}\omega^{\eta}|v|_N\|_{L^2(\TPcal_s)} 
\\
\lesssim&\, C_1\vep s^{1/2+\delta}\|\la r\ra^{-1}\la r-t\ra^{1-\eta}J^{1/2}\la r-t \ra^{\eta-1/2}|v|_N\|_{L^2(\TPcal_s)} 
\\
\lesssim&\, C_1\vep s^{-2\eta+1/2+\delta}\|J^{1/2}\la r-t\ra^{\eta-1/2}|v|_N\|_{L^2(\TPcal_s)} 
\lesssim C_1\vep s^{-1/2-2\delta}\|J^{1/2}\la r-t\ra^{\eta-1/2}|v|_N\|_{L^2(\TPcal_s)}.
%\\
%\lesssim&\,\Big(\int_{s_0}^s\tau^{-4\eta+1+2\delta}d\tau\Big)^{1/2}\Big(\int_{s_0}^s\la r-t\ra^{2\eta-1}|v|_N\,Jdxd\tau\Big)^{1/2}
%\lesssim (C_1\vep)^2s^\delta.
\endaligned
$$
We thus conclude that
\begin{equation}\label{eq3-25-03-2022-M}
\mathcal{J}\lesssim (C_1\vep)^2s^{-1+\delta}
+ C_1\vep s^{-1/2-2\delta}\|J^{1/2}\la r-t\ra^{\eta-1/2}|v|_N\|_{L^2(\TPcal_s)}.
\end{equation}
Then we make the following calculation:
$$
\aligned
&\int_{s_0}^s \omega^{\eta}|\del_t Zw_{\ih}\, Z(v_{\ih}\del v_{\ih})|J\,dxd\tau
\lesssim \int_{s_0}^s\Ebf_{\eta,c}^N(\tau,v)^{1/2}(\mathcal{J}^{\Hcal} + \mathcal{J}^{\TPcal})d\tau
\\
\lesssim&\, (C_1\vep)^3\int_{s_0}^s \tau^{-1+2\delta} d\tau 
+ (C_1\vep)^2 \int_{s_0}^s\tau^{-1/2-\delta} \|J^{1/2}\la r-t\ra^{\eta-1/2}|v|_N\|_{L^2(\TPcal_\tau)}\,d\tau
\\
\lesssim&\, (C_1\vep)^3s^{2\delta} 
+ \Big(\int_{s_0}^s\tau^{-1-2\delta}\Big)^{1/2}\Big(\int_{s_0}^s\la r-t\ra^{2\eta-1}|v|_NJ\,dxd\tau\Big)^{1/2}.
\endaligned
$$
This concludes \eqref{eq2-25-03-2022-M}.
\end{proof}

\subsection{Estimates on Hessian of $w_{\ih}$}
This subsection is devoted to the following result:
\begin{proposition}
Suppose that the Sobolev decay estimates \eqref{eq4-31-10-2021}, \eqref{eq8-15-03-2022-M} hold. Then
\begin{equation}\label{eq1-25-03-2022-M}
|\del\del w|_{N-3}\lesssim C_1\vep (s/t)^{-1}s^{-2+2\delta},\quad \text{in } \Hcal^*_{[s_0,s_1]},
\end{equation}
\begin{equation}\label{eq2-04-04-2022-M}
\|s(s/t)^2|\del\del w|_{N-1}\|_{L^2(\Hcal^*_s)}\lesssim C_1\vep s^{2\delta}.
\end{equation}
\end{proposition}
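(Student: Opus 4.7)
\medskip

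\textbf{Proof proposal for the final proposition.} The starting point is the Hessian estimate \eqref{eq1-28-10-2021} applied to $w_{\ih}$, which satisfies $\Box w_{\ih}=v_{\ih}^{2}$. That estimate gives, in $\Hcal^{*}_{[s_{0},s_{1}]}$,
\begin{equation*}
(s/t)^{2}\,|\del\del w|_{p,k}\;\lesssim\;|v^{2}|_{p,k}+t^{-1}|\del w|_{p+1,k+1}.
\end{equation*}
Thus both assertions reduce to controlling the right-hand side with $p=N-3$ in the pointwise case and $p=N-1$ in the $L^{2}$ case. The plan is to handle the quadratic source $v^{2}$ by distributing derivatives, always putting the higher-order factor where the available control (Sobolev decay versus $L^{2}$ energy bound) is best, and to handle the linear term $t^{-1}|\del w|_{p+1}$ directly by the bootstrap bounds on $w$.

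For the pointwise bound \eqref{eq1-25-03-2022-M} I take $p=N-3$. Since $N\ge 4$ we have $[(N-3)/2]\le N-3$, so by Leibniz and the Sobolev decay \eqref{eq3-31-10-2021} of $|v|_{N-3}$ in the hyperbolic region,
\begin{equation*}
|v^{2}|_{N-3}\;\lesssim\;|v|_{N-3}^{2}\;\lesssim\;(C_{1}\vep)^{2}(s/t)^{2}s^{-2}.
\end{equation*}
For the second term, \eqref{eq3-22-03-2022-M} gives $|\del w|_{N-2}\lesssim C_{1}\vep (s/t)s^{-1+\delta}$, and since $t\ge s$ in $\Hcal^{*}_{s}$ one has $t^{-1}|\del w|_{N-2}\lesssim C_{1}\vep(s/t)s^{-2+\delta}$. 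Dividing by $(s/t)^{2}$ both contributions are majorised by $C_{1}\vep(s/t)^{-1}s^{-2+2\delta}$, which is exactly \eqref{eq1-25-03-2022-M}.

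For the weighted $L^{2}$ bound \eqref{eq2-04-04-2022-M} I take $p=N-1$. Multiplying the Hessian estimate by $s$ and taking $L^{2}(\Hcal^{*}_{s})$ norms,
\begin{equation*}
\|s(s/t)^{2}|\del\del w|_{N-1}\|_{L^{2}(\Hcal^{*}_{s})}
\;\lesssim\;\|s|v^{2}|_{N-1}\|_{L^{2}(\Hcal^{*}_{s})}
+\|(s/t)|\del w|_{N}\|_{L^{2}(\Hcal^{*}_{s})},
\end{equation*}
using $st^{-1}\le (s/t)$. The second term is directly dominated by $\Ebf^{\Hcal,N}(s,w)^{1/2}\lesssim C_{1}\vep s^{\delta}$ from \eqref{eq7a-29-10-2021}. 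For the first term, Leibniz and the fact that $N\ge 4$ give $|v^{2}|_{N-1}\lesssim |v|_{N-1}|v|_{N-3}$, and the $L^{\infty}$ factor $|v|_{N-3}$ is again controlled by \eqref{eq3-31-10-2021}, yielding $s|v^{2}|_{N-1}\lesssim C_{1}\vep(s/t)|v|_{N-1}$. The remaining $L^{2}$ norm $\|(s/t)|v|_{N-1}\|_{L^{2}(\Hcal^{*}_{s})}$ is bounded by $\Ebf_{c}^{\Hcal,N-1}(s,v)^{1/2}\lesssim C_{1}\vep$ thanks to \eqref{eq7e-29-10-2021} (the uniform, unloss-free energy at order $N-1$). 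Summing produces the desired $C_{1}\vep s^{2\delta}$.

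No real obstacle is expected: the entire argument is a two-line application of the Hessian identity plus bookkeeping. The only subtle point is the redistribution of derivatives in $|v^{2}|_{N-1}$ so that the $L^{\infty}$ factor is exactly at the order $N-3$ where the Sobolev decay carries the crucial $(s/t)s^{-1}$, and the $L^{2}$ factor is kept at the order $N-1$ where one has a $\delta$-free energy bound. This is what prevents an extra $s^{\delta}$ loss and allows the final estimate to close at $s^{2\delta}$ rather than $s^{3\delta}$.
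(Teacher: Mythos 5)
Your proof is correct and takes essentially the same route as the paper: apply the Hessian estimate \eqref{eq1-28-10-2021} to $\Box w_{\ih}=v_{\ih}^2$, bound the source by $|v^2|_p\lesssim |v|_p\,|v|_{N-3}$ together with the Sobolev decay \eqref{eq3-31-10-2021}, and control $t^{-1}|\del w|_{p+1}$ by \eqref{eq3-22-03-2022-M} in the pointwise case and by the energy bounds \eqref{eq7a-29-10-2021}, \eqref{eq7e-29-10-2021} in the $L^2$ case. Only note that \eqref{eq3-22-03-2022-M} actually yields $|\del w|_{N-2}\lesssim C_1\vep\, s^{-1+\delta}$ (without the extra factor $(s/t)$ you quoted), which still gives the bound $t^{-1}|\del w|_{N-2}\lesssim C_1\vep (s/t)s^{-2+\delta}$ that you use, so the conclusion is unaffected.
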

\begin{proof}
This is by applying \eqref{eq1-28-10-2021}. We first remark that in $\Hcal^*_{[s_0,s_1]}$,
$$
|\Box w|_{N-3}\lesssim |v_{\ih}^2|_{N-3}\lesssim (C_1\vep)^2(s/t)^2s^{-2+2\delta}.
$$
This leads to
$$
|\del\del w|_{N-3}\lesssim (s/t)^{-2}t^{-1}|\del w|_{N-2} + (s/t)^{-2}|\Box w|_{N-3}\lesssim C_1\vep (s/t)^{-1}s^{-2+2\delta},
$$
where \eqref{eq3-22-03-2022-M} is applied. On the other hand, 
$$
|\Box w|_{N-1}\lesssim |v_{\ih}^2|_{N-1}\lesssim |v_{\ih}|_{N-1}|v_{\ih}|_{N-3}\lesssim (C_1\vep)^2 (s/t)s^{-1+\delta}|v|_{N-1}.
$$
Then
$$
\|s(s/t)^2|\del\del w|_{N-1}\|_{L^2(\Hcal_s)}
\lesssim \|s|\Box w|_{N-1}\|_{L^2(\Hcal^*_s)} + \|(s/t)|\del w|_N\|_{L^2(\Hcal^*_s)}
\lesssim C_1\vep s^{2\delta}.
$$
\end{proof}

\subsection{Sharp decay estimates on $w_{\ih}$}
This subsection is devoted to 
\begin{equation}\label{eq5-31-10-2021}
|\del\del w|_{N-4}\lesssim C_1\vep t^{-1/2}\la r-t\ra^{-1},\quad \text{in } \Hcal^*_{[s_0,s_1]}.
\end{equation}
This is proved by Proposition \ref{prop1-31-10-2021}. To do so we need the following preparation. 
\begin{lemma}
Suppose that Proposition \ref{prop1-21-03-2022-M} holds. Then
\begin{equation}\label{eq1-31-10-2012}
	\sup_{\Hcal^*_{s_0}\cup\del\Kcal_{[s_0,s_1]}}\{t^{1/2}|\del w|_{N-3}\}\lesssim (C_0+C_1)\vep.
\end{equation}
\end{lemma}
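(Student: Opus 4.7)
The plan is to split the supremum into its two pieces and handle them by different ingredients already at our disposal: on the initial hyperboloid $\Hcal^*_{s_0}$ we use the initial data bound together with the hyperboloidal Sobolev decay of Proposition \ref{prop1-21-01-2022}, while on the light-cone frontier $\del\Kcal_{[s_0,s_1]}$ we use the uniform transition-exterior energy \eqref{eq6-21-03-2022-M} just established, together with the transition-exterior Sobolev inequality \eqref{eq1-30-10-2021}.

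On $\Hcal^*_{s_0}$ the geometry is compact: since $s_0=2$ we have $r\leq \rhoH(2) = 3/2$ and $2\leq t=\sqrt{r^2+4}\leq 5/2$, so $t$ (and $t^{1/2}$) are bounded by absolute constants. Applying \eqref{eq2-21-01-2022} with $p=N-3$ gives
\[
s|\del w|_{N-3}\lesssim \Ebf^{\Hcal,N-1}(s,w)^{1/2}\lesssim \Ebf^{N}_{\eta}(s,w)^{1/2},
\]
and evaluating at $s=s_0$ the initial data bound \eqref{eq3-22-04-2022-M} gives $|\del w|_{N-3}\lesssim C_0\vep$ on $\Hcal^*_{s_0}$. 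Multiplying by the bounded factor $t^{1/2}$ yields the desired estimate with constant $C_0\vep$ on this piece.

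On the frontier $\del\Kcal_{[s_0,s_1]}=\{r=t-1,\ \rhoH(s_0)+1\leq t\leq \rhoH(s_1)+1\}$ we first observe that $\la r\ra\simeq t$ and $\omega(t,r)=1+\aleph(1)$ is a bounded constant, so the weight $\la r\ra^{1/2}\omega^{\eta}$ reduces to $\simeq t^{1/2}$. The Sobolev estimate \eqref{eq1-30-10-2021} applied at order $p=N-3$ requires the energy $\Ebf^{\TPcal,N}_{\eta}(s,w)$, which is precisely what the already-established improved bound \eqref{eq6-21-03-2022-M} controls uniformly by $(C_1\vep)^2$. Thus
\[
t^{1/2}|\del w|_{N-3}\lesssim \la r\ra^{1/2}\omega^{\eta}|\del w|_{N-3}\lesssim \Ebf^{\TPcal,N}_{\eta}(s,w)^{1/2}\lesssim C_1\vep \quad\text{on }\del\Kcal_{[s_0,s_1]}.
\]

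Combining the two cases gives the conclusion with constant $(C_0+C_1)\vep$. No obstacle of substance is anticipated: the only point requiring attention is verifying that the uniform (i.e., $\delta$-loss-free) energy bound \eqref{eq6-21-03-2022-M} is available at top order $N$ and not only at $N-1$, since otherwise the Sobolev estimate \eqref{eq1-30-10-2021} would force a weaker order $N-4$ pointwise bound; fortunately \eqref{eq6-21-03-2022-M} is stated exactly at order $N$, so the argument goes through cleanly.
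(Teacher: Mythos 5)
Your proof is correct and follows essentially the same route as the paper: the frontier piece is handled by combining the uniform exterior energy bound \eqref{eq6-21-03-2022-M} with the sharp exterior Sobolev decay (the high-order form \eqref{eq1-30-10-2021} of \eqref{eq1-29-10-2021}), noting $\la r\ra\simeq t$ and $\omega\simeq 1$ on $\{r=t-1\}$, while the $\Hcal^*_{s_0}$ piece uses boundedness of $t$ there. Your explicit use of \eqref{eq2-21-01-2022} and the initial energy bound \eqref{eq3-22-04-2022-M} on the initial slice merely spells out what the paper dismisses as ``direct by compactness,'' so there is no substantive difference.
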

\begin{proof}
We apply \eqref{eq1-29-10-2021} together with the uniform bound \eqref{eq6-21-03-2022-M}, and obtain
\begin{equation}\label{eq1-05-04-2022-M}
|\del w|_{N-3}\lesssim C_1\vep \la r\ra^{-1/2}\la r-t\ra^{-\eta},\quad \text{in } \TPcal_{[s_0,s_1]}.
\end{equation}
This leads to 
$$
\sup_{\del\Kcal_{[s_0,s_1]}}\{t^{1/2}|\del w|_{N-3}\}\lesssim C_1\vep.	
$$
The bound on initial slice $\Hcal^*_{s_0}$ is direct by its compactness and the fact that $t$ is uniformly bounded on $\Hcal^*_{s_0}$. This leads us to the desired result.
\end{proof}

On the other hand, we remark that for $\ord(Z)\leq N-3$, by \eqref{eq3-31-10-2021}, 
\begin{equation}\label{eq2-31-10-2012}
	| \Box Z w_{\ih}| =  |v_{\ih}|_{N-3}^2\lesssim (C_1\vep)^2t^{-2}s^{2\delta},
\end{equation}
\begin{equation}\label{eq3-31-10-2012}
	|\delu_a\delu_a Z w|\lesssim t^{-1}|\delu w|_{N-2} \lesssim C_1\vep t^{-2}s^{\delta}.
\end{equation}
Substitute the above bounds into \eqref{eq1-31-10-2021}, we obtain, for $(t_1,x_1)\in \Hcal^*_{[s_0,s_1]}$
$$
\aligned
|t_1^{1/2}\del_t Zw(t_1,x_1)|
&\lesssim (C_0+C_1)\vep
+ C_1\vep\int_{s_0}^{t_1} (t-r)^{\delta}t^{-3/2 + \delta}\Big|_{\gamma_{t_1,x_1}(\tau)}e^{-\int_{\tau}^tP_{t_1,x_1}(\eta)d\eta}d\tau
\\
&\lesssim (C_0+C_1)\vep 
+ C_1\vep\int_{s_0}^{t_1} \tau^{-3/2 + 2\delta}d\tau\lesssim (C_0+C_1)\vep\lesssim C_1\vep.
\endaligned
$$
Thus in $\Hcal^*_{[s_0,s_1]}$ and for $\ord(Z)\leq N-3$,
\begin{equation}
	|\del_tZ w|\lesssim C_1\vep t^{-1/2}.
\end{equation}
Remark that $|\del_aZw| \leq |\del_t Zw| +|\delu_aZw|\lesssim C_1\vep t^{-1/2}$. Then by Proposition \ref{prop1-23-10-2021},

\begin{equation}\label{eq2-31-10-2021}
	|\del w|_{N-3}\lesssim C_1\vep t^{-1/2}.
\end{equation}

Now recalling \eqref{eq1-28-10-2021}  and 
$$
|\Box w|_{N-2}\lesssim (C_1\vep)^2t^{-2}s^{2\delta},
$$
we arrive at \eqref{eq5-31-10-2021}.

%===============================================================================================================

\section{Conclusion of the bootstrap}
\subsection{Objective} In this section we will establish the improved energy estimates \eqref{eq3-14-03-2022-M}. Recalling \eqref{eq8-21-03-2022-M}, \eqref{eq2-25-03-2022-M} and \eqref{eq2-24-03-2022-M}, the refined energy bounds are already established on $w_0, \del w_{\ih}$ and $w_{\ih}$. We still need to bound the Klein--Gordon components. In the present section we will establish the following result.
\begin{proposition}\label{prop1-25-03-2022-M}
Suppose that the bootstrap assumptions \eqref{eq7-29-10-2021} -- \eqref{eq8-14-03-2022-M} hold on $[s_0,s_1]$ with $C_1\vep$ sufficiently small (determined by the system and $s_0, \eta, \delta, N$), then
\begin{equation}\label{eq4-25-03-2022-M}
\Ebf_{\eta,c}^p(s,v) + \int_{s_0}^s\int_{\Fcal_{\tau}}\la r-t\ra^{2\eta-1}\big(|\delt v|_p\ + |v|_p\big)\,Jdxd\tau
\leq (C_0\vep)^2 +
\left\{
\aligned
&C(C_1\vep)^3s^{2\delta},\quad &&p=N,
\\
&C(C_1\vep)^3,\quad &&p\leq N-1.
\endaligned
\right.
\end{equation}
\end{proposition}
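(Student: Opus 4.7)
The plan is to apply the energy estimate of Proposition \ref{prop2-24-11-2021} to the third equation of \eqref{eq1-24-11-2021}, with the critical Klein--Gordon self-interaction terms $E_\ih^{\jh\kh} v_\jh v_\kh$ and $F_\ih^{\alpha\jh\kh} v_\jh \del_\alpha v_\kh$ absorbed via the normal form transform of Subsection \ref{subsec1-04-04-2022-M}. After first reducing $\del_a = t^{-1}L_a - (x^a/t)\del_t$ (so that the spatial-derivative piece of the $F$-term either gains a factor $t^{-1}$ or fits the $v_\jh \del_t v_\kh$ template), we introduce the auxiliary variables $\tilde{v}_\ih$ as in \eqref{eq4-05-02-2022}. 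Because $[N/2]\leq N-3$ and the Sobolev bounds \eqref{eq4e-31-10-2021}, \eqref{eq4f-31-10-2021} together with \eqref{eq1-15-03-2022-M} give $\zeta^{-1}|v|_{[N/2]} + |\del v|_{[N/2]} \lesssim C_1\vep$, the smallness hypothesis of Proposition \ref{prop1-04-04-2022-M} holds once $C_1\vep$ is chosen sufficiently small. The inequality \eqref{eq6-04-04-2022-M} then reduces the proof to controlling the spacetime integral $\int_{s_0}^s\!\!\int_{\Fcal_\tau}|\del v|_p\,|T|_{p,k}\,Jdxd\tau$, where $T$ stands for the terms appearing on the right-hand side of \eqref{eq2-08-02-2022}.

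The terms in $T$ split into three families. The \emph{wave--Klein--Gordon coupling} $K_\ih^\alpha v_\ih \del_\alpha u$ is bounded using the decomposition $u = w_0 + A^{\alpha\jh}\del_\alpha w_\jh$: the $\del w_0$ piece is estimated by \eqref{eq1-02-04-2022-M} and \eqref{eq2-26-03-2022-M} in $\TPcal_{[s_0,s_1]}$ and by Proposition \ref{prop1-26-03-2022-M} in $\Hcal^*_{[s_0,s_1]}$, while the Hessian piece $\del\del w_\jh$ benefits from the improved pointwise decay \eqref{eq1-25-03-2022-M} and the improved $L^2$ bound \eqref{eq2-04-04-2022-M}. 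The \emph{cubic and higher-order terms} $T^\ih_{KG}[u,v]$ and $S^\ih_{KG}[u,v]$ are precisely what Corollary \ref{lem1-04-04-2022-M} handles: \eqref{eq2-21-03-2022-M} provides the bounds $(C_1\vep)^4 s^\delta$ at top order and $(C_1\vep)^4$ at lower order. The \emph{normal form remainders} $\Rbb^0, \Rbb^1, \Rbb^2$ from \eqref{eq1-08-02-2022} are products of two factors each gaining a decay of $t^{-1}\la r-t\ra^{-\eta}$ or $\la r\ra^{-1/2}\la r-t\ra^{-\eta}$ by \eqref{eq4-31-10-2021}, hence integrable against the remaining energy factor via Cauchy--Schwarz, exactly as in the proof of Corollary \ref{lem1-04-04-2022-M}.

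For the top-order case $p = N$ one follows the pattern of \eqref{eq1-21-03-2022-M}: split $\Fcal_\tau = \Hcal^*_\tau \cup \TPcal_\tau$, bound each source factor pointwise by the Sobolev estimates with one bad factor paired against a good $L^2$ factor; the borderline contributions involving $|\delt v|_N$ and $|v|_N$ are absorbed into the spacetime ghost-weight integral on the left-hand side of \eqref{eq6-04-04-2022-M} after Cauchy--Schwarz with a factor $\tau^{-1-2\delta}$. For the lower-order case $p \leq N-1$ one exploits the Klein--Gordon inversion $\la r\ra\la r-t\ra^{-1}|v|_p \lesssim |\del v|_{p+1} + \la r\ra\la r-t\ra^{-1}|\text{RHS}|_p$ already used in the proof of \eqref{eq2-02-04-2022-M}, so that the bound at order $p\leq N-1$ inherits a factor $C_1\vep$ from the pointwise decay of the top-order $|v|_{N-1}$, compensating the loss $s^\delta$ of the top-order energy.

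The main obstacle is the self-consistency of the normal form within the bootstrap. Since the auxiliary variable $\tilde v_\ih$ differs from $v_\ih$ by quantities of size $|v|_{N-2}^2 + \zeta^2|\del v|_{N-2}^2$ (see \eqref{eq2-07-02-2022}), one must verify that the energy of $\tilde v_\ih$ and that of $v_\ih$ are equivalent \emph{uniformly along the bootstrap interval}, and that the lower-order bound \eqref{eq7e-29-10-2021} (with no $s^\delta$ loss) is reproduced rather than merely propagated with a logarithmic loss. This forces us to be careful that every cubic contribution in \eqref{eq1-08-02-2022} contains at least one Klein--Gordon factor carrying $\la r-t\ra^{-1/2-\eta}$ decay away from the light cone, which is exactly what \eqref{eq4e-31-10-2021} supplies, and that every wave factor $\del u$ appears only against a Klein--Gordon factor with the improved decay \eqref{eq1-15-03-2022-M}; this is the structural content of the system \eqref{eq2-27-02-2022-M} that the non-resonance hypothesis \eqref{eq2-09-03-2022_M} is designed to make usable.
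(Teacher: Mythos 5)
Your treatment of the lower-order case $p\leq N-1$ is essentially the paper's route: there the proof does pass through the normal form transform of Section \ref{sec1-03-04-2022-M}, verifies the smallness hypothesis of Proposition \ref{prop1-04-04-2022-M} from the Sobolev bounds, bounds the $f$-contribution $v_{\ih}\del_\alpha(w_0+A^{\beta\jh}\del_\beta w_{\jh})$ using the improved estimates on $\del w_0$ and $\del\del w$ (\eqref{eq1-26-03-2022-M}, \eqref{eq1-04-04-2022-M}, \eqref{eq1-25-03-2022-M}, \eqref{eq2-04-04-2022-M}, \eqref{eq5-15-03-2022-M}, \eqref{eq4-04-04-2022-M}), invokes \eqref{eq2-21-03-2022-M} for $T_{KG},S_{KG}$, and absorbs the borderline pieces into the ghost-weight spacetime integral.

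The genuine gap is at top order. You propose to run $p=N$ through the transformed equation as well, i.e.\ through \eqref{eq2-08-02-2022} and \eqref{eq6-04-04-2022-M}, but the normal form costs one derivative: the right-hand side of \eqref{eq2-08-02-2022} at order $p$ contains $\sum_{p_1+p_2=p}(|\del v|_{p_1+1}+|v|_{p_1+1})(|\del v|_{p_2+1}+|v|_{p_2+1})$ and $(|\del v|_{p_1}+|v|_{p_1})|\del F|_{p_2}$, where $F$ itself is quadratic in $(v,\del v)$, so at $p=N$ you need $|\del v|_{N+1}$, $|v|_{N+1}$ and second derivatives of $v$ at order $N$ — quantities not controlled by the bootstrap energy \eqref{eq7c-29-10-2021}, and which cannot be recovered by commuting (e.g.\ $L^{J}\del v$ with $|J|=N+1$ is simply out of reach). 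This is precisely why Proposition \ref{prop1-04-04-2022-M} is usable only up to order $N-1$ given order-$N$ energies. The paper instead proves the $p=N$ case of \eqref{eq4-25-03-2022-M} by a \emph{direct} energy estimate (Lemma \ref{lem2-04-04-2022-M}): the untransformed quadratic terms $v\del v$, $v_{\ih}^2$, $v\del w_0$, $v\del\del w$ and the cubic remainders are bounded term by term in $\Hcal^*_s$ and $\TPcal_s$, using the uniform lower-order Klein--Gordon energy \eqref{eq7e-29-10-2021}, the sharp Hessian decay \eqref{eq5-31-10-2021}, and the $L^2$ bounds of Proposition \ref{prop1-17-03-2022-M}, accepting the $s^{2\delta}$ growth that the statement allows at top order; the ghost-weight absorption with a factor $\tau^{-1/2}$ handles the borderline $|v|_N$, $|\delt v|_N$ contributions. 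To repair your argument you must either add this direct top-order estimate (and reserve the normal form for $p\leq N-1$, as the paper does), or raise the regularity of the bootstrap by one order, which would change the hypotheses. A secondary, smaller point: reducing $F_{\ih}^{\alpha\jh\kh}v_{\jh}\del_\alpha v_{\kh}$ to the template $B_i^{jk}v_j\del_t v_k$ via $\del_a=t^{-1}L_a-(x^a/t)\del_t$ introduces the non-constant coefficient $x^a/t$, so the constant-coefficient identities \eqref{eq9-04-02-2022}--\eqref{eq3-05-02-2022} do not apply verbatim and additional commutator terms must be tracked; this is manageable (the coefficient is homogeneous of degree zero in $\{r\leq 3t\}$) but needs to be said.
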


Once \eqref{eq4-25-03-2022-M} is established, we fix $C_1>\sqrt{2}C_0$, and make the following choice on  $\vep$:
\begin{equation}
\vep\leq \frac{C_1^2 - 2C_0^2}{CC_1^3},
\end{equation}
where $C$ is a constant determined by the system and $s_0, \delta, \eta$ and $N$. On the other hand, we demand $C_1\vep$ sufficiently small such that Proposition \ref{prop1-04-04-2022-M}, Proposition \ref{prop1-17-03-2022-M} and Lemma \ref{lem2-04-04-2022-M} hold.  Then \eqref{eq8-21-03-2022-M}, \eqref{eq2-25-03-2022-M}, \eqref{eq2-24-03-2022-M} and \eqref{eq4-25-03-2022-M} lead to \eqref{eq3-14-03-2022-M}. Then the bootstrap argument is concluded and the global existence is established. The rest of this section is devoted to the proof of Proposition \ref{prop1-25-03-2022-M}.

\subsection{Improved energy estimate: top order}
In this subsection we establish \eqref{eq4-25-03-2022-M} for $p=N$. To do so we will rely on Proposition \ref{prop2-24-11-2021}. More precisely, we need the following result:
\begin{lemma}\label{lem2-04-04-2022-M}
Suppose that the bootstrap assumptions \eqref{eq7-29-10-2021} -- \eqref{eq8-14-03-2022-M} hold on $[s_0,s_1]$ with $C_1\vep\leq 1$ sufficiently small. Then
\begin{equation}\label{eq6-03-04-2022-M}
\int_{s_0}^s\int_{\Fcal_\tau}|\del_t Z v_{\ih}\,Z(\Box v_{\ih} + c_{\ih}^2v_{\ih})|J\,dxd\tau \lesssim (C_1\vep)^3s^{2\delta}.
\end{equation}
\end{lemma}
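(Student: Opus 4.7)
The plan is to decompose the source $Z(\Box v_{\ih}+c_{\ih}^2v_{\ih})$ according to its analytic nature: the wave--Klein--Gordon cross term $K_{\ih}^{\alpha}v_{\ih}\del_{\alpha}u$, the pure Klein--Gordon quadratic interactions $E_{\ih}^{\jh\kh}v_{\jh}v_{\kh}+F_{\ih}^{\alpha\jh\kh}v_{\jh}\del_{\alpha}v_{\kh}$, and the higher-order remainders $T_{KG}^{\ih}[u,v]+S_{KG}^{\ih}[u,v]$. The remainder term is immediate from \eqref{eq2-21-03-2022-M} in Corollary~\ref{lem1-04-04-2022-M}, which already delivers an $(C_1\vep)^4 s^{\delta}$ contribution. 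Thus the task reduces to controlling the contributions of the genuinely quadratic pieces.

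For the cross term $K_{\ih}^{\alpha}v_{\ih}\del_{\alpha}u$, I would split the spacetime integral as $\int_{\Hcal^*_\tau}+\int_{\TPcal_\tau}$. In $\Hcal^*_{[s_0,s_1]}$ the $(s/t)$ decay of $|\del u|_{N-3}$ delivered by the Hessian bound \eqref{eq1-25-03-2022-M}, combined with the sharp decay \eqref{eq5-31-10-2021} of $|\del\del w|_{N-4}$ and the conformal refinement \eqref{eq1-26-03-2022-M} on $|\del w_0|$, yields enough pointwise control on $\del u$ to pair against the Klein--Gordon factor $v_{\ih}$ and the energy of $Zv_{\ih}$. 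In $\TPcal_{[s_0,s_1]}$ the improved pointwise bounds \eqref{eq1-02-04-2022-M}, \eqref{eq2-26-03-2022-M} on $|\del w_0|$ together with \eqref{eq1-05-04-2022-M} on $|\del w|$ give $|\del u|_{N-3}\lesssim C_1\vep\la r\ra^{-1/2}\la r-t\ra^{-\eta}$, which combines with the weighted $L^2$ bound \eqref{eq5-15-03-2022-M} of Proposition~\ref{prop1-17-03-2022-M} to close the estimate at a cost of $(C_1\vep)^3 s^{2\delta}$.

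The genuine obstacle is the pure Klein--Gordon quadratic interaction, for which the direct energy estimate fails in the critical dimension $1+2$. Here I would invoke the normal-form machinery of Subsection~\ref{subsec1-04-04-2022-M}: introduce the auxiliary quantity $\wh_{\ih}:=Zv_{\ih}-\chih\bigl(A_{\ih}^{\jh\kh}v^0_{\ih\jh\kh}+B_{\ih}^{\jh\kh}v^1_{\ih\jh\kh}\bigr)$, built from the algebraic identities \eqref{eq1-05-02-2022} and \eqref{eq3-05-02-2022} and enabled by the non-resonance condition \eqref{eq10-07-02-2022}. By \eqref{eq4-05-02-2022}, $\wh_{\ih}$ satisfies a Klein--Gordon equation whose source is effectively cubic inside $\{r\leq 2t\}$, while outside that cone the uncancelled quadratic piece benefits from the sharp $\la r-t\ra^{-1/2-\eta}$ decay of \eqref{eq4f-31-10-2021}. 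The remainder $\Rbb_i[v]$ is controlled by \eqref{eq1-08-02-2022}, and each of its contributions pairs against $\del_tZv_{\ih}$ in an integrable manner, entirely analogous to the proof of Corollary~\ref{lem1-04-04-2022-M}.

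The trickiest bookkeeping lies in converting the normal-form statement \eqref{eq6-04-04-2022-M}, which is phrased at the level of the weighted energy itself, into a bound on the unweighted source integral of \eqref{eq6-03-04-2022-M}. I would do this by re-running the integration by parts that underlies the normal-form derivation rather than quoting the energy inequality as a black box; the key point is that $\del_t Zv_{\ih}$ differs from $\del_t\wh_{\ih}$ only by terms of the shape $\del_t\bigl(\chih(v^0_{\ih\jh\kh}+v^1_{\ih\jh\kh})\bigr)$, whose pointwise bounds from \eqref{eq2-07-02-2022} combined with the Klein--Gordon decay \eqref{eq1-15-03-2022-M} and the energy bound \eqref{eq6-21-03-2022-M} place the resulting commutator errors below the $(C_1\vep)^3 s^{2\delta}$ threshold. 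Summing all contributions then yields \eqref{eq6-03-04-2022-M}.
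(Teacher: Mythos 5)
Your treatment of the remainder terms and of the wave--Klein--Gordon cross term $K_{\ih}^{\alpha}v_{\ih}\del_{\alpha}u$ is essentially the paper's: split $\Fcal_\tau$ into $\Hcal^*_\tau$ and $\TPcal_\tau$, use the Sobolev/sharp decay of the low-order factor (\eqref{eq5-31-10-2021}, \eqref{eq2-26-03-2022-M}, \eqref{eq1-05-04-2022-M}), the weighted $L^2$ bounds \eqref{eq5-15-03-2022-M}, \eqref{eq2-02-04-2022-M}, the bootstrap energies, and the ghost-weight spacetime integrals via Cauchy--Schwarz in $\tau$. The problem is your treatment of the pure Klein--Gordon quadratic terms $E_{\ih}^{\jh\kh}v_{\jh}v_{\kh}+F_{\ih}^{\alpha\jh\kh}v_{\jh}\del_{\alpha}v_{\kh}$. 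This lemma is the \emph{top-order} ($p=N$) estimate, where a growth $s^{2\delta}$ is allowed; the direct estimate does \emph{not} fail here. The paper bounds $\||v\del v|_N\|$ and $\||v_{\ih}^2|_N\|$ directly by a low--high splitting in which the loss $s^{\delta}$ in the pointwise decay of $|v|_{N-2},|\del v|_{N-2}$ is offset by the \emph{uniform} lower-order energy \eqref{eq7e-29-10-2021}, together with \eqref{eq1-15-03-2022-M}, \eqref{eq5-15-03-2022-M} and the spacetime integrals; no normal form is used in this lemma at all. The normal form of Section \ref{sec1-03-04-2022-M} is reserved for the genuinely harder statement, the uniform bound \eqref{eq4b-14-03-2022-M} at order $\leq N-1$, proved in the next subsection.

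Beyond being unnecessary, invoking the normal form at order $N$ as you propose does not go through as written. First, your corrected variable $\wh_{\ih}=Zv_{\ih}-\chih\bigl(A_{\ih}^{\jh\kh}v^0_{\ih\jh\kh}+B_{\ih}^{\jh\kh}v^1_{\ih\jh\kh}\bigr)$, with the correction built from the undifferentiated $v$'s, does not cancel the commuted source $Z\bigl(E_{\ih}^{\jh\kh}v_{\jh}v_{\kh}+F_{\ih}^{\alpha\jh\kh}v_{\jh}\del_{\alpha}v_{\kh}\bigr)$; the cancellation in \eqref{eq1-05-02-2022}, \eqref{eq3-05-02-2022} is at the level of the undifferentiated system. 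Second, and more seriously, the normal-form remainders \eqref{eq12-07-02-2022}, \eqref{eq1-08-02-2022} involve $|\del F|_{p_2}$ with $p_1+p_2=p$; at $p=N$ this brings in $|\del\del v|_N$ and, through $f_i\ni v\del_\alpha u$, terms like $|\del\del u|_N\sim|\del\del\del w|_N$, i.e.\ one derivative beyond what the bootstrap energies \eqref{eq7d-29-10-2021}, \eqref{eq7c-29-10-2021} control. This derivative loss is exactly why the paper applies Proposition \ref{prop1-04-04-2022-M} only at order $\leq N-1$. Finally, your plan to convert $\del_t\wh_{\ih}$ back to $\del_t Zv_{\ih}$ by ``re-running the integration by parts'' is left unspecified, and that conversion is where the extra top-order derivatives would again appear. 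To repair the proof you should drop the normal form here and estimate the quadratic Klein--Gordon terms directly, as in the paper's hierarchy argument, accepting the $s^{2\delta}$ growth.
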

\begin{proof}
We denote by
$$
I^{\Hcal} := \||\Box v_{\ih} + c_{\ih}^2v_{\ih}|_N\|_{L^2(\Hcal^*_s)}
,\quad
I^{\TPcal} := \|J\zeta^{-1}\omega^{\eta}|\Box v_{\ih} + c_{\ih}^2v_{\ih}|_N\|_{L^2(\TPcal_s)}.
$$
For the bound in $\Hcal^*{[s_0,s_1]}$, recalling \eqref{eq1-24-11-2021}, we need to bound the following terms:
\begin{equation}\label{eq6-25-03-2022-M}
\aligned
&\||v\del w_0|_N\|_{L^2(\Hcal^*_s)},\quad \||v\del\del w|_N\|_{L^2(\Hcal^*_s)},\quad
\||v\del v|_N\|_{L^2(\Hcal^*_s)},\quad
\||v_{\ih}^2|_N\|_{L^2(\Hcal^*_s)},
\\
&\||T_{KG}[u,v] + S_{KG}[u,v]|_N\|_{L^2(\Hcal^*_s)}.
\endaligned
\end{equation}
We will show that the above terms are bounded by $(C_1\vep)^2 s^{-1+\delta}$ (modulo a constant determined by the system and $s_0,\delta,\eta$ and $N$). For the pure Klein-Gordon quadratic terms, we recall that $N\leq 4$, then
$$
\aligned
\||v\del v|_N\|_{L^2(\Hcal^*_s)}
&\lesssim \||\del v|_{N-3}|v|_N\|_{L^2(\Hcal^*_s)} + \||\del v|_{N-2}|v|_{N-2}\|_{L^2(\Hcal^*_s)}
 + \||v|_{N-3}|\del v|_N\|_{L^2(\Hcal^*_s)}
\\
&\lesssim C_1\vep s^{-1}\big(\||v|_N\|_{L^2(\Hcal^*_s)} + (s/t)|\del v|_N\|_{L^2(\Hcal^*_s)}\big) 
 + C_1\vep s^{-1+\delta}\||v|_{N-1}\|_{L^2(\Hcal^*_s)}
\\
&\lesssim (C_1\vep)^2s^{-1+\delta}.
\endaligned
$$
Here the uniform energy bound on $|v|_{N-1}, |\del v|_{N-1}$ offsets the loss of decay from $|v|_{N-2}\lesssim C_1\vep s^{\delta}t^{-1}$ and $|\del v|_{N-2}\lesssim C_1\vep s^{-1+\delta}$. The term $|v_{\ih}^2|_N$ is bounded in the same manner. The term $|v\del w_0|$ is bounded similarly:
$$
\aligned
\||v\del w_0|_N\|_{L^2(\Hcal^*_s)}\lesssim&\, \||v|_{N-3}|\del w_0|_N\|_{L^2(\Hcal^*_s)}
+ \||v|_{N-2}|\del w_0|_{N-2}\|_{L^2(\Hcal^*_s)}
+ \||v|_{N}|\del w_0|_{N-3}\|_{L^2(\Hcal^*_s)} 
\\
\lesssim&\, (C_1\vep)^2s^{-1+\delta}.
\endaligned
$$
Here we have applied \eqref{eq3-31-10-2021} (case $p=N-1$) and \eqref{eq7a-29-10-2021} on the first term;  \eqref{eq3-31-10-2021} and \eqref{eq8-21-03-2022-M} (case $p=N-1$) on the second term; \eqref{eq7c-29-10-2021} and \eqref{eq2-26-03-2022-M} on the third term; \eqref{eq7e-29-10-2021} and \eqref{eq3-22-03-2022-M} on the last term. 
The term $|v\del\del w|_N$ is bounded as following. Recall that $N\leq 4$,
$$
\aligned
&\||v\del\del w|_N\|_{L^2(\Hcal^*_s)}
\\
\lesssim&\, \||\del\del w|_N|v|_{N-3}\|_{L^2(\Hcal^*_s)} 
+ \||\del\del w|_{N-3}|v|_{N-1}\|_{L^2(\Hcal^*_s)}
+ \||\del\del w|_{N-4}|v|_N\|_{L^2(\Hcal^*_s)}
\\
\lesssim&\, C_1\vep s^{-1}\|(s/t)|\del\del w|_N\|_{L^2(\Hcal^*_s)}  
+ C_1\vep s^{-1+\delta}\||v|_{N-1}\|_{L^2(\Hcal^*_s)}
+ C_1\vep \|t^{-1/2}\la r-t\ra^{-1}|v|_N\|_{L^2(\Hcal^*_s)}
\\
\lesssim&\, (C_1\vep)^2 s^{-1+\delta}.
\endaligned
$$
where on the last term we have applied \eqref{eq5-31-10-2021} and on the rest of the terms the bootstrap energy bounds and the Sobolev bounds are sufficient. 
%Then we conclude that
%$$
%\||v\del\del w|_N\|_{L^2(\Hcal^*_s)}\lesssim (C_1\vep)^2s^{-1+\delta}.
%$$
Finally, we return to the high-order terms $T_{KG}$ and $S_{KG}$. This relies on \eqref{eq2-22-03-2022-M}.
%We firstly prove the following pointwise bounds on $T_{KG}$ and $S_{KG}$ in $\Hcal^*{[s_0,s_1]}$:
%\begin{equation}\label{eq3-26-03-2022-M}
%|T_{KG}[u,v]|_{N-2} + |S_{KG}[u,v]|_{N-2}\lesssim (C_1\vep)^3(s/t)^2s^{-3+4\delta}.
%\end{equation}
%This is a direct result of \eqref{eq2-22-03-2022-M}. Remark that in $\Hcal^*_{[s_0,s_1]}$, $\ebf_c^{N-2}[u]$ is bounded by $(s/t)^2|\del u|_{N-2}^2, |\delu u|_{N-2}^2$ and $c^2|u|_{N-2}^2$. These quantities enjoy Sobolev pointwise bounds. Thus
%$$
%|\ebf^{N-2}[u]|^{1/2} + |\ebf_c^{N-2}[v]|^{1/2}\lesssim (C_1\vep)(s/t)s^{-1+\delta}.
%$$
%Then we arrive at \eqref{eq3-26-03-2022-M}. 
Based on this bound and \eqref{eq2-22-03-2022-M}, one has
$$
\aligned
\||T_{KG}[u,v] + S_{KG}[u,v]|_N\|_{L^2(\Hcal^*_s)}
\lesssim&\, (C_1\vep)^3s^{-2+3\delta}\big(\Ebf^{\Hcal,N}(s,u)^{1/2} + \Ebf_c^{\Hcal,N}(s,v)^{1/2}\big) 
\\
\lesssim&\,(C_1\vep)^4s^{-2+4\delta}.
\endaligned
$$
Concluding the above bounds, we thus have
\begin{equation}\label{eq4-03-04-2022-M}
I^{\Hcal}\lesssim (C_1\vep)^2s^{-1+\delta}.
\end{equation}

Then we turn to the estimate on $I^{\TPcal}$. We need to bound the quantities in the following list:
\begin{equation}\label{eq5-26-03-2022-M}
\aligned
&\|\la r-t\ra^{\eta}J\zeta^{-1}|v\del w_0|_N\|_{L^2(\TPcal_s)},\quad 
\|\la r-t\ra^{\eta}J\zeta^{-1}|v\del\del w|_N\|_{L^2(\TPcal_s)},
\\
&\|\la r-t\ra^{\eta}J\zeta^{-1}|v\del v|_N\|_{L^2(\TPcal_s)},\quad
\|\la r-t\ra^{\eta}J\zeta^{-1}|v_{\ih}^2|_N\|_{L^2(\TPcal_s)},
\\
&\|\la r-t\ra^{\eta}J\zeta^{-1}|v(T_{KG}[u,v] + S_{KG}[u,v])|_N\|_{L^2(\TPcal_s)}.
\endaligned
\end{equation}
We begin with the pure Klein-Gordon quadratics. 
$$
\aligned
&\|\la r-t\ra^{\eta}J\zeta^{-1}|v\del v|_N\|_{L^2(\TPcal_s)}
\\
\lesssim&\, s\|\la r-t\ra^{\eta}\zeta|v|_{N-4}|\del v|_N\|_{L^2(\TPcal_s)}
+ s\|\zeta\la r-t\ra^{\eta}|v|_{N-3}|\del v|_{N-1}\|_{L^2(\TPcal_s)}
\\
&+ s^{1/2}\|\la r-t\ra^{\eta}J^{1/2} |v|_{N-2}|\del v|_{N-2}\|_{L^2(\TPcal_s)}
+ s\|\la r-t\ra^{\eta}\zeta |v|_{N-1}|\del v|_{N-3}\|_{L^2(\TPcal_s)}
\\
&+ s^{1/2}\|\la r-t\ra^{\eta}J^{1/2}|v|_N|\del v|_{N-4}\|_{L^2(\TPcal_s)}
\\
\lesssim&\, C_1\vep s^{1+\delta}\|\la r-t\ra^{-3/2}\la r-t\ra^{1-\eta}\zeta \la r-t\ra^{\eta}|\del v|_N\|_{L^2(\TPcal_s)}
\\
&+ C_1\vep s^{1+\delta} 
\|\la r\ra^{-1}\la r-t\ra^{1/2-\eta}\zeta\la r-t\ra^{\eta}|\del v|_{N-1}\|_{L^2(\TPcal_s)}
\\
&+ C_1\vep s^{1+\delta} \|\la r\ra^{-1/2}\la r-t\ra^{-\eta}\zeta\la r-t\ra^{\eta}|v|_{N-1}\|_{L^2(\TPcal_s)}
\\
&+ C_1\vep s^{1+\delta} \|\la r\ra^{-1/2}\la r-t\ra^{-\eta}\zeta\la r-t\ra^{\eta}|v|_{N-1}\|_{L^2(\TPcal_s)}
\\
&+  C_1\vep s^{1/2+\delta} \|\la r\ra^{-1}\la r-t\ra^{1/2-\eta} J^{1/2}\la r-t\ra^{\eta}|v|_N\|_{L^2(\TPcal_s)}
\\
\lesssim&\, C_1\vep s^{-2\eta+\delta}\Ebf_{\eta,c}^N(s,v)^{1/2} 
+ C_1\vep s^{-1+\delta}\Ebf_{\eta,c}^{N-1}(s,v)^{1/2} 
\\
&\,+ C_1\vep s^{1+\delta}\|\la r\ra^{-3/2}\la r-t\ra^{1-\eta}\zeta\la r\ra\la r-t\ra^{-1+\eta}|v|_{N-1}\|_{L^2(\TPcal_s)}
\\
&\,+ C_1\vep s^{1/2+\delta}\|\la r\ra^{-1}\la r-t\ra^{1/2-\eta} J^{1/2}\la r-t\ra^{\eta-1/2}|v|_N\|_{L^2(\TPcal_s)}
\endaligned
$$
This leads to
$$
\aligned
\|\la r-t\ra^{\eta}J\zeta^{-1}|v\del v|_N\|_{L^2(\TPcal_s)}
\lesssim&\, (C_1\vep)^2 s^{-1 + \delta}
 + C_1\vep s^{-1/2}\|J^{1/2}\la r-t\ra^{\eta-1/2}|v|_N\|_{L^2(\TPcal_s)},
%\\
%&+ C_1\vep s^{-2\eta+\delta}\|\la r\ra\la r-t\ra^{-1+\eta}\zeta|v|_{N-1}\|_{L^2(\TPcal_s)}
%\\
%&+ C_1\vep s^{-3/2+\delta}\|J^{1/2}\la r-t\ra^{\eta-1/2}|v|_N\|_{L^2(\TPcal_s)}
%\\
%\lesssim& (C_1\vep)^2 s^{-2\eta+5\delta} 
%+ C_1\vep s^{-1/2-\delta}\|J^{1/2}\la r-t\ra^{\eta-1/2}|v|_N\|_{L^2(\TPcal_s)},
\endaligned
$$
where \eqref{eq5-15-03-2022-M} and the fact that $\delta\ll \eta-1/2<1/2$ are applied. The term $|v_{\ih}^2|_N$ is bounded in the same manner because $|v|$ always enjoys better estimates than $|\del v|$. 
%So we conclude that
%\begin{equation}\label{eq2-03-04-2022-M}
%\aligned
%&\|\la r-t\ra^{\eta}J\zeta^{-1}|v\del v|_N\|_{L^2(\TPcal_s)}	+
%\|\la r-t\ra^{\eta}J\zeta^{-1}|v_{\ih}v_{\jh}|_N\|_{L^2(\TPcal_s)}
%\\
%&\lesssim  C_1\vep s^{-1/2-\delta}\|J^{1/2}\la r-t\ra^{\eta-1/2}|v|_N\|_{L^2(\TPcal_s)} + (C_1\vep)^2s^{-1-2\delta}.
%\endaligned
%\end{equation}
For the term $|v\del w_0|$, we remark that 
$$
\aligned
&\|\la r-t\ra^{\eta}J\zeta^{-1}|v\del w_0|_N\|_{L^2(\TPcal_s)}
\\
\lesssim & s^{1/2}\||\del w_0|_{N-4}\la r-t\ra^{\eta}J^{1/2}|v|_N\|_{L^2(\TPcal_s)} 
+ s\||\del w_0|_{N-3}\zeta\la r-t\ra^{\eta}|v|_{N-1}\|_{L^2(\TPcal_s)}
\\
& + s\||v|_{N-2}\zeta\la r-t\ra^{\eta}|\del w_0|_{N-2}\|_{L^2(\TPcal_s)}
+ s\||v|_{N-3}\zeta\la r-t\ra^{\eta}|\del w_0|_{N-1}\|_{L^2(\TPcal_s)}
\\
&+ s\||v|_{N-4}\zeta\la r-t\ra^{\eta}|\del w_0|_N\|_{L^2(\TPcal_s)}
\\
\lesssim& C_1\vep s^{1/2}\|\la r\ra^{-1/2}J^{1/2}\la r-t\ra^{\eta-1/2}|v|_N\|_{L^2(\TPcal_s)}
\\
&+ C_1\vep s^{1+\delta}\|\la r\ra^{-1/2}\la r-t\ra^{-\eta}\zeta\la r-t\ra^{\eta}|v|_{N-1}\|_{L^2(\TPcal_s)}
\\
&+ C_1\vep s^{1+\delta}\|\la r-t\ra^{-1/2-\eta}\zeta\la r-t\ra^{\eta}|v|_{N-2}\|_{L^2(\TPcal_s)}
\\
&+ C_1\vep s^{1+\delta}\|\la r\ra^{-1}\la r-t\ra^{1/2-\eta}\zeta\la r-t\ra^{\eta}|\del w_0|_{N-1}\|_{L^2(\TPcal_s)}
\\
&+ C_1\vep s^{1+\delta}\|\la r\ra^{-3/2}\la r-t\ra^{1-\eta}\zeta\la r-t\ra^{\eta}|\del w_0|_N\|_{L^2(\TPcal_s)}
\\
\lesssim&\, C_1\vep s^{-1/2}\|J^{1/2}\la r-t\ra^{\eta-1/2}|v|_N\|_{L^2(\TPcal_s)}
\\
&\,+ C_1\vep s^{1+\delta} 
\|\la r\ra^{-3/2}\la r-t\ra^{1-\eta}\zeta\la r\ra\la r-t\ra^{-1+\eta}|v|_{N-1}\|_{L^2(\TPcal_s)}
\\
&\, + C_1\vep s^{1+\delta} 
\|\la r\ra^{-1}\la r-t\ra^{1/2-\eta}\zeta\la r\ra\la r-t\ra^{-1+\eta}|v|_{N-2}\|_{L^2(\TPcal_s)}
\\
&\, +C_1\vep s^{-1+\delta}\Ebf_{\eta}^{\TPcal,N-1}(s,w_0)^{1/2} 
+ C_1\vep s^{-2\eta+\delta} \Ebf_{\eta}^{\TPcal,N}(s,w_0)^{1/2}.
\endaligned
$$
Then we apply \eqref{eq5-15-03-2022-M} and \eqref{eq2-02-04-2022-M} together with the bootstrap energy bounds, and obtain
\begin{equation}\label{eq1-03-04-2022-M}
\aligned
\|\la r-t\ra^{\eta}J\zeta^{-1}|v\del w_0|_N\|_{L^2(\TPcal_s)}
\lesssim&  C_1\vep s^{-1/2}\|J^{1/2}\la r-t\ra^{\eta-1/2}|v|_N\|_{L^2(\TPcal_s)}
 +(C_1\vep)^2 s^{-1+\delta} .
\endaligned
\end{equation}
Then we turn to the term $|v\del\del w|$. Thanks to \eqref{eq1-05-04-2022-M},
$$
\aligned
|v\del\del w|_N\lesssim& 
|\del\del w|_{N-4}|v|_N + |\del\del w|_{N-3}|v|_{N-1} 
\\
&+ |\del\del w|_{N-2}|v|_{N-2} + |\del\del w|_{N-1}|v|_{N-3} 
+ |\del\del w|_N|v|_{N-4}
\\
\lesssim& |\del w|_{N-3}|v|_N + C_1\vep s^{\delta}\la r\ra^{-1/2}\la r-t\ra^{-\eta}|v|_{N-1} 
\\
&+ C_1\vep s^{\delta}\la r-t\ra^{-1/2-\eta}|v|_{N-2} 
+ C_1\vep s^{\delta}\la r\ra^{-1}\la r-t\ra^{1/2-\eta}|\del w|_N
\\
& +C_1\vep s^{\delta}\la r\ra^{-3/2}\la r-t\ra^{1-\eta}|\del\del w|_N 
\\
\lesssim& C_1\vep \la r\ra^{-1/2}\la r-t\ra^{-\eta}|v|_N + C_1\vep s^{\delta}\la r\ra^{-1/2}\la r-t\ra^{-\eta}|v|_{N-1} 
\\
&+ C_1\vep s^{\delta}\la r-t\ra^{-1/2-\eta}|v|_{N-2} 
+ C_1\vep s^{\delta}\la r\ra^{-1}\la r-t\ra^{1/2-\eta}|\del w|_N
\\
& +C_1\vep s^{\delta}\la r\ra^{-3/2}\la r-t\ra^{1-\eta}|\del\del w|_N.
\endaligned
$$
Then
$$
\aligned
\|\la r-t\ra^{\eta}J\zeta^{-1}|v\del\del w|_N\|_{L^2(\TPcal_s)}
&\lesssim C_1\vep s^{1/2}\|\la r\ra^{-1/2}\la r-t\ra^{1/2-\eta}J^{1/2}\la r-t\ra^{\eta-1/2}|v|_N\|_{L^2(\TPcal_s)}
\\
&\quad + C_1\vep s^{1+\delta}
\|\la r\ra^{-3/2}\la r-t\ra^{1-\eta}\zeta\la r\ra\la r-t\ra^{\eta-1}|v|_{N-1}\|_{L^2(\TPcal_s)}
\\
&\quad + C_1\vep s^{1+\delta}
\|\la r\ra^{-1}\la r-t\ra^{1/2-\eta}\zeta\la r\ra\la r-t\ra^{\eta-1}|v|_{N-2}\|_{L^2(\TPcal_s)}
\\
&\quad + C_1\vep s^{1+\delta}
\|\la r\ra^{-1}\la r-t\ra^{1/2-\eta}\zeta\la r-t\ra^{\eta}|\del w|_N\|_{L^2(\TPcal_s)}
\\
&\quad + C_1\vep s^{1+\delta}
\|\la r\ra^{-3/2}\la r-t\ra^{1-\eta}\zeta\la r-t\ra^{\eta}|\del\del w|_N\|_{L^2(\TPcal_s)}.
\endaligned
$$
We apply \eqref{eq5-15-03-2022-M}, \eqref{eq2-02-04-2022-M}, \eqref{eq6-21-03-2022-M} and \eqref{eq7a-29-10-2021}.
This leads to 
\begin{equation}\label{eq5-03-04-2022-M}
\aligned
\|\la r-t\ra^{\eta}J\zeta^{-1}|v\del\del w|_N\|_{L^2(\TPcal_s)}
\lesssim& C_1\vep s^{-1/2}\|J^{1/2}\la r-t\ra^{\eta-1/2}|v|_N\|_{L^2(\TPcal_s)}
\\
& + (C_1\vep)^2s^{-1+\delta}.
\endaligned
\end{equation}

For the higher-order terms ($T_{KG}, S_{KG}$), we apply \eqref{eq2-21-03-2022-M}. 
%$$
%\aligned
%&\|\la r-t\ra^{\eta}J\zeta^{-1}|T_{KG} + S_{KG}|_N\|_{L^2(\TPcal_s)}
%\\
%&\lesssim\, (C_1\vep)^2 s^{1+3\delta}\|\la r\ra^{-3/2}\la r-t\ra^{1/2-2\eta}\zeta(|\del u|_N + |\del v|_N)\|_{L^2(\TPcal_s)}
%\\
%&\quad + (C_1\vep)^2 s^{1/2+3\delta}
%\|\la r\ra^{-1}\la r-t\ra^{-2\eta}J^{1/2}(|\delt u|_N + |\delt v|_N + |v|_N)\|_{L^2(\TPcal_s)}
%\\
%&\lesssim\, (C_1\vep)^2 s^{-2+3\delta}\big(\Ebf_{\eta}^{\TPcal,N}(s,u)^{1/2} + \Ebf_{\eta,c}^{\TPcal,N}(s,v)^{1/2}\big)
%\\
%& + (C_1\vep)^2s^{-3/2+3\delta}
%\|J^{1/2}\la r-t\ra^{\eta-1/2}(|\delt u|_N + |\delt v|_N + |v|_N)\|_{L^2(\TPcal_s)}
%\\
%&\lesssim (C_1\vep)^3s^{-2+4\delta} 
%+ (C_1\vep)^2s^{-1/2}\|J^{1/2}\la r-t\ra^{\eta-1/2}(|\delt u|_N + |\delt v|_N + |v|_N)\|_{L^2(\TPcal_s)}
%\endaligned
%$$
Then we conclude that
\begin{equation}\label{eq3-03-04-2022-M}
\aligned
I^{\TPcal}\lesssim& (C_1\vep)^2s^{-1+\delta} + 
C_1\vep s^{-1/2}\|J^{1/2}\la r-t\ra^{\eta-1/2}(|\delt u|_N + |\delt v|_N + |v|_N)\|_{L^2(\TPcal_s)}.
\endaligned
\end{equation}
Now we make the following calculation:
$$
\aligned
&\int_{s_0}^s\int_{\TPcal_\tau}|\del_t Z v_{\ih}\,Z(\Box v_{\ih} + c_{\ih}^2v_{\ih})|J\,dxd\tau
\leq\int_{s_0}^s\Ebf_{\eta,c}(\tau,v)^{1/2}I^{\TPcal}(\tau)d\tau
%\\
%=& (C_1\vep)^2\int_{s_0}^s\tau^{-1+\delta}\Ebf_{\eta,c}^{\TPcal,N}(\tau,v)^{1/2}d\tau 
%\\
%&+ C_1\vep\int_{s_0}^s\tau^{-1/2}\Ebf_{\eta,c}^{\TPcal,N}(\tau,v)^{1/2}
%\|J^{1/2}\la r-t\ra^{\eta-1/2}(|\delt u|_N + |\delt v|_N + |v|_N)\|_{L^2(\TPcal_\tau)}d\tau
\\
\lesssim&\, (C_1\vep)^3s^{2\delta} 
+ C_1\vep \int_{s_0}^s\tau^{-1/2+\delta}\|J^{1/2}\la r-t\ra^{\eta-1/2}(|\delt u|_N + |\delt v|_N + |v|_N)\|_{L^2(\TPcal_\tau)}d\tau
\\
\lesssim&\, (C_1\vep)^3s^{2\delta} + C_1\vep \Big(\int_{s_0}^s\tau^{-1+2\delta}d\tau\Big)^{1/2}
\Big(\int_{s_0}^s\int_{\TPcal_s}\la r-t\ra^{2\eta-1}(|\delt u|_N + |\delt v|_N + |v|_N)J\,dxd\tau\Big)^{1/2}
\\
\lesssim&\, (C_1\vep)^3s^{2\delta}.
\endaligned
$$
$$
\aligned
\int_{s_0}^s\int_{\Hcal^*_\tau}|\del_t Z v_{\ih}\,Z(\Box v_{\ih} + c_{\ih}^2v_{\ih})|J\,dxd\tau
\lesssim&\, \int_{s_0}^s\|(s/t)\del v\|_{L^2(\Hcal^*_{\tau})}I^{\Hcal}d\tau 
= (C_1\vep)^3\int_{s_0}^s\tau^{1+2\delta}d\tau
\\
\lesssim& (C_1\vep)^3s^{1+2\delta}.
\endaligned
$$
These estimates concludes \eqref{eq6-03-04-2022-M}
\end{proof}
Then from Proposition \ref{prop2-24-11-2021}, we conclude \eqref{eq4-25-03-2022-M} with $p=N$.

\subsection{Improved energy estimates: lower order}
The improved energy bound \eqref{eq4b-14-03-2022-M} is a uniform bound, which is much more difficult. Our strategy is to apply the normal form transform introduced in Section \ref{sec1-03-04-2022-M}. 
%We first state the estimate to be established:
%\begin{lemma}
%Suppose that the bootstrap assumptions \eqref{eq7-29-10-2021}, \eqref{eq4-17-03-2022-M} and \eqref{eq8-14-03-2022-M} hold on $[s_0,s_1]$ with $C_1\vep\leq 1$ sufficiently small. Then
%\begin{equation}\label{eq7-03-04-2022-M}
%\Ebf_{\eta,c}^{N-1}(s,v) + \int_{s_0}^s\int_{\TPcal_s}\la r-t\ra^{2\eta-1}(|\delt v|_{N-1} + |v|_{N-1})J\,dxd\tau \lesssim (C_0\vep)^2 + (C_1\vep)^3.
%\end{equation}
%\end{lemma}
Before the proof of \eqref{eq4-25-03-2022-M} case $p\leq N-1$, we need the following estimates.
\begin{lemma}
Let $Z$ be a high-order operator with $\ord(Z)\leq N-1$. Then
\begin{equation}\label{eq5-04-04-2022-M}
\int_{s_0}^s\int_{\Fcal_s}\omega^{2\eta}\big|\del_t Zv_{\ih}\,Z(v_{\ih}\del_{\alpha}(w_0 + A^{\beta\jh}\del_{\beta}w_{\jh}))\big|J\,dxd\tau
\lesssim (C_1\vep)^3.
\end{equation}
\end{lemma}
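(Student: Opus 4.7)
First I would decompose $u = w_0 + A^{\beta\jh}\del_\beta w_\jh$, so that $\del_\alpha u = \del_\alpha w_0 + A^{\beta\jh}\del_\alpha\del_\beta w_\jh$ splits into a first derivative of the scalar wave component and a \emph{Hessian} of the essential wave components. The Hessian part is the whole reason the coupling $v_\ih \del_\alpha u$ is tractable in the auxiliary system, since $|\del\del w|$ enjoys improved decay by \eqref{eq1-25-03-2022-M}, \eqref{eq2-04-04-2022-M}, and \eqref{eq5-31-10-2021}. The space-time integral is then split as
\begin{equation*}
\int_{s_0}^s\int_{\Fcal_\tau} = \int_{s_0}^s\int_{\Hcal^*_\tau} + \int_{s_0}^s\int_{\TPcal_\tau},
\end{equation*}
and in each region Cauchy--Schwarz in $x$ is applied between $\del_t Zv_\ih$ (absorbed into the Klein--Gordon energy factor $\Ebf_{\eta,c}^{N-1}(\tau,v)^{1/2}\lesssim C_1\vep$ via the \emph{uniform} bootstrap bound \eqref{eq7e-29-10-2021}) and an appropriately weighted $L^2$ norm of $Z(v_\ih\del_\alpha u)$. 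The task then reduces to showing the resulting $\tau$-integral converges to $(C_1\vep)^2$ uniformly.

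In the hyperbolic region $\Hcal^*_{[s_0,s_1]}$, I apply Leibniz to $Z(v_\ih\del_\alpha u)$ with $\ord(Z)\leq N-1$ and distribute into binary products $|v|_a|\del u|_b$ with $a+b\leq N-1$. Since $N\geq 4$, one of $a,b$ is at most $[(N-1)/2]\leq N-3$. For the low-order factor I use the Sobolev bounds \eqref{eq3-31-10-2021} (in particular $|v|_{N-3}\lesssim C_1\vep(s/t)s^{-1}$, \emph{without} the $s^\delta$ loss), the pointwise bound \eqref{eq2-26-03-2022-M} giving $|\del w_0|_{N-3}\lesssim C_1\vep s^{-1}$, and the Hessian bound \eqref{eq1-25-03-2022-M} giving $|\del\del w|_{N-3}\lesssim C_1\vep(s/t)^{-1}s^{-2+2\delta}$; the $(s/t)$-weight buried in the integrating factor $J\cong (s/t)$ on $\Hcal^*$ absorbs the compensating $(s/t)^{-1}$. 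For the high-order factor I use the bootstrap energies \eqref{eq8-14-03-2022-M} and the uniform wave energies \eqref{eq8-21-03-2022-M} plus \eqref{eq2-04-04-2022-M}. The net outcome is an integrand $\lesssim (C_1\vep)^2\tau^{-1-c}$ for some $c>0$, hence integrable.

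In the transition--flat region $\TPcal_{[s_0,s_1]}$ the situation is tighter because of the weight $J\zeta^{-1}\omega^\eta$ (cf. the treatment leading to \eqref{eq3-03-04-2022-M}). Here I combine the sharp Sobolev decay \eqref{eq4a-31-10-2021} for $v, \del v, \del w_0$; the uniform pointwise estimate \eqref{eq1-02-04-2022-M} giving $|\del w_0|_{N-4}\lesssim C_1\vep\la r\ra^{-1/2}\la r-t\ra^{-\eta}$; and crucially the weighted $L^2$ Klein--Gordon bound \eqref{eq2-02-04-2022-M} providing $\|\la r\ra\la r-t\ra^{-1}\zeta\omega^\eta |v|_{p}\|_{L^2(\TPcal_\tau)}\lesssim C_1\vep$ for $p\leq N-2$, which converts the bad factor $\la r\ra\la r-t\ra^{-1}$ produced by $J\zeta^{-1}\omega^{2\eta}$ into a finite $L^2$ weight. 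For the Hessian $\del\del w$ contribution, I pair the top-order factor with the lower-order Klein--Gordon factor carrying the $\la r\ra\la r-t\ra^{-1}$-weighted $L^2$ control from \eqref{eq2-02-04-2022-M}, and control the gradient of $\del w$ through the (near-)uniform lower-order version of \eqref{eq2-25-03-2022-M}. A residual ghost-weight piece $J^{1/2}\la r-t\ra^{\eta-1/2}|v|$ is absorbed by the ghost-integral on the left of \eqref{eq7e-29-10-2021} after one further Cauchy--Schwarz in $\tau$, exactly as in the treatment of \eqref{eq5-03-04-2022-M}.

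\textbf{Main obstacle.} The delicate point is obtaining a \emph{uniform} bound $(C_1\vep)^3$ rather than one growing in $s$. The only room available is the structural fact that at order $\leq N-1$ every factor admits a uniform Sobolev or energy estimate: $\Ebf^{N-1}_\eta(s,w_0)\lesssim(C_1\vep)^2$ (no $s^\delta$), $\Ebf^{N-1}_{\eta,c}(s,v)\lesssim(C_1\vep)^2$, plus the weighted $L^2$ control \eqref{eq2-02-04-2022-M}. The hard sub-case is the pure wave-derivative coupling $v_\ih\del_\alpha w_0$ (no Hessian gain available), where one must realize that the \emph{best} splitting pits $|\del w_0|_{N-1}$ (uniform by \eqref{eq8-21-03-2022-M}) against $|v|_{N-3}$ (which in $\TPcal$ decays like $C_1\vep\la r\ra^{-1/2}\la r-t\ra^{-\eta}$, by \eqref{eq1-15-03-2022-M} with $p=N-1$), producing a spacetime integral $\int_{s_0}^s\tau^{-1-2\delta}d\tau$ that converges. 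Once this balancing is in place, summing all pieces gives the desired $(C_1\vep)^3$ bound uniformly in $s\in[s_0,s_1]$.
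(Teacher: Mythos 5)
Your overall architecture (split $\del_\alpha u$ into $\del_\alpha w_0$ plus the Hessian $\del_\alpha\del_\beta w_\jh$, split the spacetime integral into $\Hcal^*$ and $\TPcal$, pair $\del_tZv_\ih$ with the uniform Klein--Gordon energy \eqref{eq7e-29-10-2021}) matches the paper, and your treatment of the Hessian part via \eqref{eq1-25-03-2022-M}, \eqref{eq2-04-04-2022-M}, \eqref{eq5-31-10-2021} is the right one. But there is a genuine gap in the interior region for the coupling $v_\ih\del_\alpha w_0$: the tools you cite there are \eqref{eq2-26-03-2022-M} (that is $|\del w_0|_{N-3}\lesssim C_1\vep s^{-1}$) and the uniform standard energy \eqref{eq8-21-03-2022-M} for $|\del w_0|_{N-1}$, and with these the two extreme splittings give, after Cauchy--Schwarz against $\|(s/t)\del_tZv\|_{L^2(\Hcal^*_\tau)}\lesssim C_1\vep$ and $\sup_{\Hcal^*}|v|_{N-3}\lesssim C_1\vep(s/t)\tau^{-1}$ from \eqref{eq3-31-10-2021}, an integrand of size $(C_1\vep)^3\tau^{-1}$ -- not $\tau^{-1-c}$ as you assert -- so the time integral grows like $\log s$ and the uniform bound $(C_1\vep)^3$ needed to close \eqref{eq4b-14-03-2022-M} is lost. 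The paper closes exactly this hole with the conformal energy on the Euclidean-hyperboloidal slices: \eqref{eq9-21-03-2022-M} yields, via Propositions \ref{prop1-26-03-2022-M} and \ref{prop1-11-03-2022-M}, the sharper bounds $|\del w_0|_{N-3}\lesssim C_1\vep(s/t)^{-1}s^{-2+4\delta}$ (\eqref{eq1-26-03-2022-M}) and $\|(s/t)^2 s|\del w_0|_{N-1}\|_{L^2(\Hcal^*_s)}\lesssim C_1\vep s^{4\delta}$ (\eqref{eq1-04-04-2022-M}), which turn the interior integrand into $(C_1\vep)^3\tau^{-2+C\delta}$, integrable. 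This extra $\tau^{-1}$ gain is not obtainable from the standard energy, and it is the main reason the conformal energy machinery is in the paper at all; your proof never invokes it.

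There is a second, smaller defect in your exterior "hard sub-case". Pairing $\sup_{\TPcal}|v|_{N-3}\lesssim C_1\vep\la r\ra^{-1/2}\la r-t\ra^{-\eta}$ against $\|\omega^\eta\zeta|\del w_0|_{N-1}\|_{L^2(\TPcal_\tau)}\lesssim C_1\vep$ and multiplying by $J\zeta^{-1}\lesssim s\zeta$ (\eqref{eq13-07-10-2021}) gives only $(C_1\vep)^2$ with no decay in $\tau$ (the factor $\la r\ra^{-1/2}\simeq s^{-1}$ exactly cancels the $s$ from $J\zeta^{-1}$, and the ghost integral on the left of \eqref{eq7e-29-10-2021} does not help since neither factor here is $|\delt v|$ or $|v|$ in $L^2$), so this splitting alone produces linear growth. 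The paper avoids it by always keeping the Klein--Gordon factor next to $|\del w_0|_{N-1}$ at order $\leq N-4$, where the iterated extra-decay \eqref{eq1-15-03-2022-M} gives the full $\la r\ra^{-3/2}\la r-t\ra^{1-\eta}$ gain, and by handling the intermediate splitting $|v|_{N-3}|\del w_0|_{N-2}$ through the twice-iterated weighted $L^2$ bound \eqref{eq4-04-04-2022-M} (together with \eqref{eq5-15-03-2022-M} for $|v|_{N-1}|\del w_0|_{N-3}$); each piece then comes out as $(C_1\vep)^2\tau^{-1-2\delta}$. So the exterior part of your plan is in the right spirit but the specific balancing must be corrected, and the interior part needs the conformal energy input to be salvageable.
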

\begin{proof}
We firstly regard the hyperbolic domain. Remark that
$$
\aligned
\|(s/t)|v_{\ih}\del_{\beta}w_0|_{N-1}\|_{L^2(\Hcal^*_s)}
&\lesssim \||v|_{N-1} (s/t)|\del w_0|_{N-3}\|_{L^2(\Hcal^*_s)} + \||v|_{N-3}(s/t)|\del w_0|_{N-1}\|_{L^2(\Hcal^*_s)}
\\
&\lesssim C_1\vep s^{-2+4\delta}\|(s/t)|v|_{N-1}\|_{L^2(\Hcal^*_s)}
 + C_1\vep s^{-2+\delta}\|(s/t)^2s|\del w_0|_{N-1}\|_{L^2(\Hcal^*_s)}
\\
&\lesssim (C_1\vep)^2s^{-1-2\delta},
\endaligned
$$
where we have applied \eqref{eq1-26-03-2022-M}, \eqref{eq1-04-04-2022-M} and the fact that $\delta\ll 1$. On the other hand,
$$
\aligned
\|(s/t)|v_{\ih}\del_{\alpha}\del_{\beta}w_{\jh}|_{N-1}\|_{L^2(\Hcal^*_s)}
&\lesssim \|(s/t)|v|_{N-1}|\del\del w|_{N-3}\|_{L^2(\Hcal^*_s)} 
+ \|(s/t)|v|_{N-3}|\del\del w|_{N-1}\|_{L^2(\Hcal^*_s)} 
\\
&\lesssim C_1\vep s^{-2+2\delta}\||v|_{N-1}\|_{L^2(\Hcal^*_s)} 
+ C_1\vep s^{-2+\delta}\|(s/t)^2s|\del\del w|_{N-1}\|_{L^2(\Hcal^*_s)}
\\
&\lesssim (C_1\vep)^2s^{-2+3\delta} \lesssim (C_1\vep)^2s^{-1-2\delta}.
\endaligned
$$
where \eqref{eq1-25-03-2022-M} and \eqref{eq2-04-04-2022-M} are applied.

Then we work in the transition-flat region.
$$
\aligned
&\|\omega^{\eta}J|v_{\ih}\del_{\beta}w_0|_{N-1}\|_{L^2(\TPcal_s)}
\\
&\lesssim \|J\omega^{\eta}|v|_{N-1}|\del w_0|_{N-3}\|_{L^2(\TPcal_s)}
+ \|J\omega^{\eta}|v|_{N-3}|\del w_0|_{N-2}\|_{L^2(\TPcal_s)}
+ \|J\omega^{\eta}|v|_{N-4}|\del w_0|_{N-1}\|_{L^2(\TPcal_s)}
\\
&\lesssim C_1\vep s^{1+\delta}
\|\la r\ra^{-3/2}\la r-t\ra^{1-\eta}\zeta\la r\ra\la r-t\ra^{-1}\omega^{\eta}|v|_{N-1}\|_{L^2(\TPcal_s)}
\\
&\quad + C_1\vep s^{1+\delta}\|\la r\ra^{-3/2}\la r-t\ra^{1-\eta}\zeta \la r\ra^{3/2}\la r-t\ra^{-3/2}\omega^{\eta}|v|_{N-3}\|_{L^2(\TPcal_s)}
\\
&\quad + C_1\vep s^{1+\delta}
\|\la r\ra^{-3/2}\la r-t\ra^{1-\eta}\zeta\omega^{\eta}|\del w_0|_{N-1}\|_{L^2(\TPcal_s)}
\\
&\lesssim\, (C_1\vep)^2 s^{-2\eta+4\delta} \lesssim (C_1\vep)^2s^{-1-2\delta},
\endaligned
$$
where \eqref{eq5-15-03-2022-M} and \eqref{eq4-04-04-2022-M} are applied.
%The first term is integrable with respect to $s$. For the second term we need more analysis. We remark that
%$$
%\aligned
%&\|J\la r-t\ra^{\eta}|v|_{N-3}|\del w_0|_{N-2}\|_{L^2(\TPcal_s)}
%\\
%&\lesssim s\|\la r-t\ra^{\eta}\zeta^2|\del w_0|_{N-2}|v|_{N-3}\|_{L^2(\Pcal_s)}
%+ s\|\la r-t\ra^{\eta}\zeta^2|\del w_0|_{N-2}|v|_{N-3}\|_{L^2(\Tcal_s)}
%\\
%&\lesssim C_1\vep s^{1+\delta}
%\|\la r\ra^{-3/2}\la r-t\ra^{1-\eta}\zeta \la r\ra\la r-t\ra^{-1+\eta}|v|_{N-3}\|_{L^2(\Pcal_s)}
%\\
%&\quad + C_1\vep s^{1+\delta}\|\la r-t\ra^{\eta}\zeta^2|\del w_0|_{N-2}|v|_{N-3}\|_{L^2(\Tcal_s)}
%\\
%&\lesssim (C_1\vep)^2s^{-2\eta + 3\delta} 
%+  C_1\vep s^{1+\delta}\|\la r-t\ra^{\eta}\zeta^2|\del w_0|_{N-2}|v|_{N-3}\|_{L^2(\Tcal_s)}.
%\endaligned
%$$

The term $\|\omega^{\eta}J|v_{\ih}\del_{\alpha}\del_{\beta} w_{\jh}|_{N-1}\|_{\TPcal_s}$ enjoys the same bound because $|\del\del w_{\ih}|$ satisfies all the Soblev pointwise bounds and bootstrap energy bounds enjoyed by $|\del w_0|$. 

As a conclusion, we have
$$
\|J|v_{\ih}\del_{\alpha}(w_0 + A^{\beta\jh}\del_{\beta}w_{\jh})|_{N-1}\|_{L^2(\Fcal_s)}\lesssim (C_1\vep)^2s^{-1-2\delta}.
$$
Then direct calculation leads us to \eqref{eq5-04-04-2022-M}.
\end{proof}

According to the notation in Subsection \ref{subsec1-04-04-2022-M}, the above estimate \eqref{eq5-04-04-2022-M} together with \eqref{eq2-21-03-2022-M} (case $p = N-1$) leads to
\begin{equation}\label{eq7-04-04-2022-M}
\int_{s_0}^s\int_{\Fcal_s}\omega^{2\eta}|\del v|_{N-1}|f|_{N-1}J\,dxd\tau\lesssim (C_1\vep)^3.
\end{equation}
For the second term on the right-hand side of \eqref{eq2-08-02-2022}, we remark that due to the decreasing factor $t^{-1}$, it enjoys integrable $L^2$ bounds. We only write the most critical term:
$$
\aligned
&\sum_{p_1+p_2=N-1}\!\!\!\!\|t^{-1}\omega^{2\eta}J\zeta^{-1}|\del v|_{p_1+1}|\del v|_{p_2+1}\|_{L^2(\TPcal_s)}
\\
&\lesssim C_1\vep s \|t^{-1}\omega^{\eta}\zeta|\del v|_N|\del v|_{N-3}\|_{L^2(\TPcal_s)}
+ C_1\vep s^{1/2} \|t^{-1}\omega^{\eta}J^{1/2}\zeta |\del v|_{N-1}|\del v|_{N-2}\|_{L^2(\TPcal_s)}
\\
&\lesssim C_1\vep s^{1+\delta} \|t^{-1}\la r\ra^{-1/2}\la r-t\ra^{-\eta}\zeta \omega^{\eta}|\del v|_N\|_{L^2(\TPcal_s)}
\\
&\quad + C_1\vep s^{1+\delta}\|t^{-1}\la r-t\ra^{-1/2-\eta} \zeta \omega^{\eta}|v|_{N-1}\|_{L^2(\TPcal_s)}
\\
&\lesssim C_1\vep s^{-2+\delta} \Ebf_{\eta,c}^{\TPcal,N-1}(s,v)^{1/2}
+ C_1\vep s^{1+\delta}\|t^{-1}\la r\ra^{-1}\la r-t\ra^{1/2-\eta}\zeta \la r\ra\la r-t\ra^{-1+\eta}|v|_{N-1}\|_{L^2(\TPcal_s)}
\\
&\lesssim (C_1\vep)^2 s^{-2+2\delta}\lesssim (C_1\vep)^2 s^{-1-2\delta},
\endaligned
$$ 
where \eqref{eq5-15-03-2022-M} is applied. Thus
$$
\aligned
&\sum_{p_1+p_2=N-1}\int_{s_0}^s\int_{\Fcal_s} 
\omega^{2\eta}|\del v|_{N-1}(t^{-1}|\del v|_{p_1+1}|\del v|_{p_2+1})J\,dxd\tau
\\
&\lesssim \sum_{p_1+p_2=N-1}\int_{s_0}^s\|\omega^{\eta}\zeta|\del v|_{N-1}\|_{L^2(\TPcal_s)}
\|t^{-1}J\zeta^{-1}|\del v|_{p_1+1}|\del v|_{p_2+1}\|_{L^2(\TPcal_s)}d\tau
\\
&\lesssim (C_1\vep)^2 \int_{s_0}^s \tau^{-1-2\delta}\Ebf_{\eta,c}^{\TPcal,N-1}(s,v)^{1/2}\,d\tau
\lesssim (C_1\vep)^3.
\endaligned
$$
The third term only concerns the estimate in the far-light-cone region, where $\la r-t\ra\simeq \la r\ra$. We remark that
$$
\aligned
&\|\omega^{\eta}J\zeta^{-1}(|\del v|_N + |v|_N)(|\del v|_{N-2} + |v|_{N-2})\|_{L^2(\TPcal^{\far}_s)}
\\
&\lesssim C_1\vep s^{1+\delta}\|\la r\ra^{-1/2-\eta}\zeta\omega^{\eta}(|\del v|_N+|v|_N)\|_{L^2(\TPcal^{\far}_s)}
\lesssim (C_1\vep)^2s^{-2\eta + 2\delta}\lesssim (C_1\vep)^2s^{-1-2\delta}.
\endaligned
$$ 
The last term are high-order ones. Remark that among these terms each one contains at least two Klein-Gordon factor. This structure leads to an integrable $L^2$ bound ( {\tt To be checked}). So we conclude by \eqref{eq4-25-03-2022-M} case $p\leq N-1$.

Finally, thanks to Proposition \ref{prop2-24-11-2021}, we obtain \eqref{eq4-25-03-2022-M} case $p\leq N-1$.
%===============================================================================================================
%\section{Scattering property of the Klein-Gordon components}
%===============================================================================================================
\appendix

\section{Properties of the weight functions}\label{app1-05-10-2021}
In this subsection we establish the estimates on $\zeta$ defined in \eqref{eq5-23-01-2022}. 
\begin{lemma}
In $\TPcal_{[s_0,\infty)}$, the following estimates hold:
\begin{equation}\label{eq12-07-10-2021}
	\sqrt{1-\xi(s,r)}\leq \zeta(s,x),\quad \frac{1}{s}\lesssim \frac{s}{\sqrt{s^2+r^2}}\leq \zeta(s,x).
\end{equation}
\begin{equation}\label{eq6-23-01-2022}
r^{-1} \leq \frac{|r-t|+1}{r}\lesssim \zeta^2\leq \zeta\leq 1.
\end{equation}
\end{lemma}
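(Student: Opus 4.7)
The entire lemma is algebraic and follows from unpacking the definition
\[
\zeta(s,x)^2 \;=\; 1 \;-\; \frac{\xi^2(s,r)\,r^2}{s^2+r^2} \;=\; \frac{s^2 \,+\, (1-\xi^2(s,r))\,r^2}{s^2+r^2},
\]
together with $0\le \xi(s,r)\le 1$ and the location of $r$ inside the sub-regions $\Tcal_s$ and $\Pcal_s$ that make up $\TPcal_s$. I would first handle \eqref{eq12-07-10-2021}, then use it to close \eqref{eq6-23-01-2022}; this ordering matters because the lower bound $\zeta^2\geq s^2/(s^2+r^2)$ is the essential input for the transition-region estimate later.

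For \eqref{eq12-07-10-2021}, the two upper bounds on $\sqrt{1-\xi}$ and $s/\sqrt{s^2+r^2}$ come from two one-line identities:
\[
\zeta^2 - (1-\xi) \,=\, \xi\Bigl(1-\tfrac{\xi\,r^2}{s^2+r^2}\Bigr) \,\geq\, 0,
\qquad
\zeta^2 - \tfrac{s^2}{s^2+r^2} \,=\, \tfrac{(1-\xi^2)\,r^2}{s^2+r^2} \,\geq\, 0,
\]
both using only $0\le \xi\le 1$. For the lower bound $1/s \lesssim s/\sqrt{s^2+r^2}$, I would distinguish between $\Tcal_s$, where Lemma~\ref{lem1-07-10-2021} together with $\rhoH(s)\le r\le\rhoH(s)+1$ yields $r\simeq s^2$ hence $\sqrt{s^2+r^2}\simeq s^2$, and $\Pcal_s$, where $\xi\equiv 0$ forces $\zeta\equiv 1$ so the chain collapses to the trivial $1/s\leq 1\leq \zeta$ for $s\geq s_0\geq 2$.

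For \eqref{eq6-23-01-2022}, the outer inequalities $r^{-1}\leq (|r-t|+1)/r$ and $\zeta^2\leq\zeta\leq 1$ are immediate from $|r-t|+1\geq 1$ and $0\le\zeta\le 1$. The content is the middle inequality $(|r-t|+1)/r\lesssim \zeta^2$, which I would establish region by region. In $\Pcal_s$, $\zeta^2\equiv 1$ and the claim reduces to $|r-t|+1\lesssim r$, which follows because $t=T(s)$ is constant in $r$ and bounded by $\tfrac12\sqrt{s^4+6s^2+1}\lesssim s^2$, while $r\geq \rhoH(s)+1\simeq s^2$: for $r\geq t$ we have $|r-t|+1\leq r+1\lesssim r$, and for $r<t$ the gap is controlled by $c(s)+1\lesssim 1\lesssim r$ by Lemma~\ref{lem1-07-10-2021}. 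In $\Tcal_s$, the same lemma gives $|r-t|\leq 1$, so $(|r-t|+1)/r\leq 2/r \simeq 1/s^2$; the second bound of \eqref{eq12-07-10-2021} (already proven) yields $\zeta^2\geq s^2/(s^2+r^2)\simeq 1/s^2$, so $\zeta^2\gtrsim 1/r$ and we conclude.

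\textbf{Main obstacle.} There is no conceptual difficulty; the only care required is in $\Tcal_s$, where $\zeta$ can be small when $\xi$ is close to $1$, so one cannot afford to lose any powers of $s$. The argument closes only because the lower bound $\zeta^2\geq s^2/(s^2+r^2)$ exactly compensates for the largeness $r\simeq s^2$ and delivers the sharp $\zeta^2\gtrsim 1/r$ needed in the transition region. That is why the two displays of the lemma must be proven in the stated order.
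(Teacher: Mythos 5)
Your proof is correct and follows essentially the same route as the paper: reduce to the transition region (where $\zeta\equiv 1$ makes $\Pcal_s$ trivial for the bounds on $\zeta$), use the identity $\zeta^2=\frac{s^2+(1-\xi^2)r^2}{s^2+r^2}$ for \eqref{eq12-07-10-2021}, and combine $|r-t|\leq 1$ with $r\simeq s^2\simeq\sqrt{s^2+r^2}$ from Lemma \ref{lem1-07-10-2021} and Proposition \ref{prop1-07-10-2021} for \eqref{eq6-23-01-2022}. Your treatment is in fact slightly more explicit than the paper's (the one-line identities and the $|r-t|+1\lesssim r$ check in $\Pcal_s$), but it is the same argument.
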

\begin{proof}
We only need to verify these bounds in $\Tcal_{[s_0,\infty)}$ because in $\Pcal_{[s_0,\infty)}$, $\zeta\equiv 1$.  \eqref{eq12-07-10-2021} is due to the fact that
$$
\zeta^2(s,r) = \frac{s^2}{s^2+r^2} + \frac{(1-\xi^2(s,r))r^2}{s^2+r^2}
$$ 
and \eqref{eq9-07-10-2021}, i.e., on $\Tcal_s$ one has $s^2\cong t\cong r$.  The bound \eqref{eq6-23-01-2022} is similar because in $\Tcal_s$, $|r-t|\leq 1$ due to \eqref{eq6-07-10-2021}.
	
\end{proof}

\begin{lemma}\label{lem2-23-01-2022}
	In $\TPcal_{[s_0,\infty)}$, the following estimate holds:
	\begin{equation}\label{eq3-23-01-2022}
		\Big|\delb_r^i\Big(\big(\frac{r-t+3}{r}\big)^{1/2}\Big)\Big| \lesssim \big(\frac{r-t+3}{r}\big)^{1/2}\lesssim \zeta,\quad 0\leq i\leq 2.
	\end{equation}
\end{lemma}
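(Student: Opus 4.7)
\medskip

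\noindent\textbf{Proof plan.} The statement contains two independent estimates. The second one, $\sqrt{(r-t+3)/r} \lesssim \zeta$, is the easier half. In $\TPcal_{[s_0,\infty)}$ Lemma \ref{lem1-07-10-2021} yields $r \geq t-1$, so $r-t+3 \in [2, |r-t|+3]$; in particular $r-t+3 \simeq |r-t|+1$. Dividing by $r$ and invoking the bound $(|r-t|+1)/r \lesssim \zeta^2$ from \eqref{eq6-23-01-2022} and taking square roots gives the desired bound. (Since $s_0 \geq 2$, the lower bound $r \geq \rhoH(s_0) \geq 3/2$ is available throughout $\TPcal_{[s_0,\infty)}$, which I will use freely below.)

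\medskip

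For the derivative bounds, set $h := (r-t+3)/r$ and $g := h^{1/2}$; the goal is $|\delb_r^i g| \lesssim g$ for $i=0,1,2$. The case $i=0$ is trivial. For $i=1$ I use the chain rule $\delb_r g = \delb_r h / (2g)$ together with the formula
\[
\delb_r h = \frac{1 - \delb_r t}{r} - \frac{r-t+3}{r^2}, \qquad \delb_r t = \frac{\xi(s,r) r}{\sqrt{s^2+r^2}} \in [0,1].
\]
Since $|1 - \delb_r t| \leq 1$, this gives $|\delb_r h| \lesssim r^{-1} + g^2 r^{-1} \lesssim (1+g^2)/r$, whence $|\delb_r g| \lesssim (1+g^2)/(rg)$. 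The elementary inequality $g^2 = (r-t+3)/r \geq 2/r$ gives $r g^2 \geq 2$ and hence $r^2 g^2 \gtrsim 1$, which allows one to absorb the $1$ in $(1+g^2)$ against $g^2$ up to a universal constant, and one concludes $|\delb_r g| \lesssim g/r \lesssim g$.

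\medskip

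For $i=2$, I differentiate once more to get
\[
\delb_r^2 g = \frac{\delb_r^2 h}{2g} - \frac{(\delb_r h)^2}{4 g^3}.
\]
The second-derivative of $h$ requires $\delb_r^2 t$; a direct computation using $\del_r \xi(s,r) = -\chi'(r-\rhoH(s))$ (which is bounded by \eqref{eq6-19-10-2021}) gives $|\delb_r^2 t| \lesssim (s^2+r^2)^{-1/2} \lesssim 1/r$ in $\TPcal$, from which $|\delb_r^2 h| \lesssim (1+g^2)/r^2$. Plugging in and using again $r^2 g^2 \gtrsim 1$ and $g^2 \lesssim r \cdot g^2 \cdot g^2$ (i.e.\ $(r-t+3)^2 \geq 4$) to absorb constants, both terms are controlled by $g$.

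\medskip

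The only real subtlety is the denominator $g^3$ in the second term, which is small precisely where $r-t+3$ is close to its minimum; but in that regime $r-t+3$ is still bounded below by $2$, so $g \geq \sqrt{2/r}$ supplies the inverse powers of $r$ needed to absorb the polynomial growth in the numerator. I expect this to be the main (though still routine) bookkeeping step; no other obstruction arises.
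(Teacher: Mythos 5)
Your $i=0,1$ cases and the bound $\sqrt{(r-t+3)/r}\lesssim\zeta$ are correct and follow essentially the paper's route (explicit computation of $\delb_r$ applied to $h=(r-t+3)/r$, then the chain rule); your absorption device $rg^2=r-t+3\ge 2$ is a clean way to get the literal bound $\lesssim g$, from which $\lesssim\zeta$ follows via \eqref{eq6-23-01-2022}. One cosmetic slip there: from $|\delb_r g|\lesssim (1+g^2)/(rg)$ you may conclude $|\delb_r g|\lesssim g$, but not $\lesssim g/r$ — the latter would require $g\gtrsim 1$, which fails near the light cone where $g\simeq r^{-1/2}$. Since only $\lesssim g$ is needed, this does not hurt.

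The $i=2$ step, however, rests on a false estimate. You claim $|\delb_r^2 t|\lesssim (s^2+r^2)^{-1/2}\lesssim 1/r$. But $\delb_r t=\xi(s,r)\,r/(s^2+r^2)^{1/2}$, so differentiating at fixed $s$ produces the term $r\,\del_r\xi(s,r)/(s^2+r^2)^{1/2}$, whose prefactor $r/(s^2+r^2)^{1/2}$ is of order one in $\Tcal_s$ (there $r\simeq s^2\gg s$), while $\del_r\xi=-\chi'(r-\rhoH(s))$ is only bounded: in the middle of the transition zone it is an absolute constant, with no smallness in $r$. Hence $|\delb_r^2 t|$ is genuinely of size $1$ there (the sharp statement, which the paper uses, is $|\del_r\xi|\lesssim(1-\xi)^{1/2}\lesssim\zeta$ by \eqref{eq6-19-10-2021} and \eqref{eq12-07-10-2021}), and your intermediate bound $|\delb_r^2 h|\lesssim(1+g^2)/r^2$ is false: in $\Tcal_s$ the true size of $\delb_r^2 h$ is $\simeq 1/r$, larger than your claim by a factor $r\simeq s^2$. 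The lemma itself survives, because the correct bound is already enough: using only $|\del_r\xi|\lesssim 1$ and $g^2\le 3$ (valid since $r\ge\rhoH(s_0)\ge 3/2$), one gets $|\delb_r^2 h|\lesssim 1/r$, whence $|\delb_r^2 h|/(2g)\lesssim 1/(rg)\lesssim g$ by the same absorption $rg^2\ge 2$ you used for $i=1$, and $(\delb_r h)^2/(4g^3)\lesssim 1/(r^2g^3)=g/(rg^2)^2\lesssim g$. So replace the claimed $O(1/r)$ bound on $\delb_r^2 t$ by mere boundedness (or by $\lesssim\zeta$ via $|\chi'|\lesssim\chi^{1/2}$): this is exactly the point where the paper keeps the term $r^{-1}|\del_r\xi|$ separate, with the decay coming from the explicit $1/r$ prefactor produced by the chain rule rather than from $\del_r\xi$ itself.
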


\begin{proof}[Proof of Lemma \ref{lem2-23-01-2022}]
	When $i=0$ this is \eqref{eq6-23-01-2022}.
	
	We firstly make the following calculation with parameter $(s,x^a)$:
	$$
	\aligned
	\delb_r\Big(\frac{r-t+3}{r}\Big) 
%	=& \textcolor{blue}{
%		-\frac{r-t+3}{r^2} + \frac{1-\delb_rt}{r} 
%		= -\frac{r-t+3}{r^2} + \frac{1}{r} - r^{-1}\frac{\xi(s,r)r}{\sqrt{s^2+r^2}}
%	}
%\\
	=&\frac{t-3}{r^2} - \frac{\xi(s,r)}{\sqrt{s^2+r^2}}
	= 1-\frac{\xi(s,r)}{\sqrt{s^2+r^2}} + \frac{t-r+3}{r^2} - 6r^{-2}
	\\
	=&\frac{\zeta^2}{1+\frac{\xi(s,r)^2}{s^2+r^2}} +  \frac{t-r+3}{r^2} - 6r^{-2}.
	\\
	\delb_r\delb_r\Big(\frac{r-t+3}{r}\Big) =& \delb_r\Big(\frac{t-3}{r^2}\Big) - \delb_r\Big(\frac{\xi(s,r)}{\sqrt{s^2+r^2}}\Big)
%	\\
%	=& \textcolor{blue}{
%		-\frac{2(t-3)}{r^3} + \frac{\delb_rt-1}{r^2} - \frac{\del_r\xi(s,r)}{\sqrt{s^2+r^2}} + \frac{\xi(s,r)r}{(s^2+r^2)^{3/2}}
%	}
	\\
	=&-\frac{2(t-3)}{r^3} + \frac{\xi(s,r)}{r\sqrt{s^2+r^2}} + \frac{\xi(s,r)r}{(s^2+r^2)^{3/2}} -  \frac{\del_r\xi(s,r)}{\sqrt{s^2+r^2}} - r^{-2}.
	\endaligned
	$$	
	Then, let $f(s,r) = \frac{r-t(s,r)+3}{r}$, one has
	$$
	|\delb_rf|\lesssim \zeta^2 + r^{-2},\quad |\delb_r\delb_r f|\lesssim r^{-2} + r^{-1}|\del_r\xi(s,r)|,\qquad \text{in }\Tcal_{[s_0,\infty)}. 
	$$
	Remark that by \eqref{eq6-23-01-2022},
	$$
	|\delb_r f^{1/2}| = |f^{-1/2}\delb_rf|\lesssim \Big(\frac{r-t+3}{r}\Big)^{-1/2}\big(\zeta^2 + r^{-2}\big)
	\lesssim \zeta.
	$$
	 On the other hand
	$$
	|\delb_r\delb_rf^{1/2}|\lesssim |f^{-3/2}||\delb_r f|^2 + |f^{-1/2}||\delb_r\delb_r f|
	\lesssim \zeta + \zeta^{-1} (r^{-2} + r^{-1}|\delb_r\xi(s,r)|)\lesssim \zeta.
	$$
\end{proof}

There is an important result of \eqref{eq12-07-10-2021} which is frequently used (see also \cite{LM-2022}).
\begin{lemma}\label{lem3-07-10-2021}
	In $\TPcal_{[s_0,\infty)}$ with $s_0\geq 2$, 
	\begin{equation}\label{eq13-07-10-2021}
		J\zeta^{-1}\lesssim s\zeta
	\end{equation}
\end{lemma}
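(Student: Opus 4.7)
The plan is to verify the bound separately on the flat region $\Pcal_{[s_0,\infty)}$ and on the transition region $\Tcal_{[s_0,\infty)}$, using in each case the explicit expressions for $J=\del_sT$ from Lemma \ref{lem2-07-10-2021} and for $\zeta^2=1-\xi^2r^2/(s^2+r^2)$ from \eqref{eq5-23-01-2022}.

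On $\Pcal_{[s_0,\infty)}$ the situation is trivial: the cut-off $\xi$ vanishes, so $\zeta\equiv 1$, and \eqref{eq8-07-10-2021} gives $J\le s(1+2(s^2-1)^{-1})\lesssim s$ as soon as $s_0\ge 2$. Thus $J\zeta^{-1}\lesssim s=s\zeta$.

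On $\Tcal_{[s_0,\infty)}$ the key geometric fact I would use is the scaling $r\simeq t\simeq s^2$, which is built into the definition $\rhoH(s)\le r\le \rhoH(s)+1$ and recorded in \eqref{eq9-07-10-2021}; in particular $\sqrt{s^2+r^2}\simeq r\simeq s^2$ and $r^2/(s^2+r^2)\simeq 1$. From these one reads off the two main lower bounds on $\zeta^2$:
\begin{equation*}
\zeta^2\;\ge\;\frac{s^2}{s^2+r^2},\qquad \zeta^2=\frac{s^2}{s^2+r^2}+\frac{(1-\xi^2)r^2}{s^2+r^2}\;\gtrsim\;1-\xi,
\end{equation*}
where the second uses $1-\xi^2\ge 1-\xi$ together with $r^2/(s^2+r^2)\simeq 1$. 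The target inequality is equivalent to $J\lesssim s\zeta^2$, and by \eqref{eq7-07-10-2021} it suffices to control the two model terms $\xi s/\sqrt{s^2+r^2}$ and $(1-\xi)s$. The second is immediate from $\zeta^2\gtrsim 1-\xi$. For the first, using $\xi\le 1$ and the lower bound $\zeta^2\gtrsim s^2/(s^2+r^2)$, one has
\begin{equation*}
\frac{\xi s}{\sqrt{s^2+r^2}}\;\lesssim\;\frac{s}{\sqrt{s^2+r^2}}\;=\;\frac{s\cdot s^2}{s^2\sqrt{s^2+r^2}}\;\lesssim\;\frac{s\cdot s^2}{s^2+r^2}\;\lesssim\;s\zeta^2,
\end{equation*}
where the penultimate step is exactly $\sqrt{s^2+r^2}\simeq s^2$ on $\Tcal_s$.

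The proof is essentially a one-line verification once the two lower bounds on $\zeta^2$ and the relation $\sqrt{s^2+r^2}\simeq s^2$ on $\Tcal_s$ are in place; there is no real obstacle, only the small bookkeeping of checking that $s\ge 2$ is enough to get $\sqrt{s^2+r^2}\lesssim s^2$ uniformly on $\Tcal_{[s_0,\infty)}$, which is what makes the $\xi s/\sqrt{s^2+r^2}$ term (the only non-obvious contribution) absorb into $s\zeta^2$.
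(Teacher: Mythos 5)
Your argument is correct and is essentially the paper's own proof: the authors likewise combine the upper bound on $J=\del_sT$ from Lemma \ref{lem2-07-10-2021} with the two lower bounds $\zeta^2\geq 1-\xi$ and $\zeta^2\geq s^2/(s^2+r^2)$ of \eqref{eq12-07-10-2021}, the point being that $\sqrt{s^2+r^2}\simeq s^2$ where $\xi\neq 0$. The only difference is cosmetic — you split $\TPcal$ into $\Pcal$ and $\Tcal$, while the paper treats the whole exterior region at once — so there is nothing to change.
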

\begin{proof}
	From \eqref{eq12-07-10-2021} one has
	$$
	s\zeta^2\geq \frac{1}{2}(1-\xi^2(s,r))s,\quad \xi(s,r)\geq \frac{Cs^2}{s^2+r^2}\geq \frac{C\xi(s,r)s^2}{s^2+r^2}.
	$$
	Then recall Lemma \ref{lem2-07-10-2021}, one has
	$$
	Cs\zeta^2\geq J
	$$
	where $C$ is a universal constant. This leads to the desired result.
\end{proof}

\section{Proofs in Section \ref{sec1-22-10-2021}}\label{sec2-22-10-2021}

\subsection{Basic ordering property}
The following decomposition is established in \cite{LM-2022}.
\begin{proposition}\label{prop1-23-10-2021}
Let $Z$ be a high-order operator composed by $\{\del_{\alpha},L_a,\Omega\}$. Then in any region of $\RR^{1+2}$, 
\begin{equation}
Z u = \sum_{|I| = p-k\atop |J|+l=k}\Gamma(Z)_{IJl}\del^IL^J\Omega^l u
\end{equation}
with $\Gamma(Z)_{IJl}$ constants determined by $Z$ and $I,J,l$.
\end{proposition}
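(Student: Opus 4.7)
The plan is to prove this decomposition by strong induction on $p = \ord(Z)$. The base cases $p = 0$ (identity) and $p = 1$ (a single generator) are immediate. For the inductive step, one writes $Z = Z_1 \widetilde{Z}$ with $Z_1 \in \{\del_\alpha, L_a, \Omega\}$ and $\ord(\widetilde{Z}) = p-1$, applies the inductive hypothesis to expand $\widetilde{Z} u$ as a sum of $\del^{I'} L^{J'} \Omega^{l'} u$, and then shows that prepending $Z_1$ to each such ordered monomial and moving it through the $\del$'s and $L$'s (to restore the required ordering $\del^I L^J \Omega^l$) produces only terms covered by the claim.

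The key algebraic input is the closed system of constant-coefficient commutators among the generators: $[\del_\alpha, \del_\beta] = 0$, while direct computation in $\RR^{1+2}$ gives $[\del_t, L_b] = \del_b$, $[\del_a, L_b] = \delta_{ab}\del_t$, $[\del_a, \Omega] = \pm \del_{a'}$, $[\del_t,\Omega] = 0$, and finally $[L_1,L_2] = \Omega$, $[L_a,\Omega] = \pm L_{a'}$. The common feature is that every such commutator $[Z_i,Z_j]$ is itself a constant-coefficient linear combination of generators, so its total order is $1$ (strictly less than $\ord(Z_i) + \ord(Z_j) = 2$). Moreover commuting a $\del$ past an $L$ or $\Omega$ produces a $\del$ (the rank of the resulting commutator is $0$, one less than that of the product), while commuting two Lorentz generators produces another Lorentz generator (rank preserved). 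These two facts are exactly what is needed to ensure that the bookkeeping on $|I|$, $|J|$, and $l$ stated in the proposition holds after sorting.

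With these commutators in hand, the sorting is a bubble-sort argument applied to each monomial $Z_1 \del^{I'} L^{J'} \Omega^{l'}$: when $Z_1 = \del_\alpha$ the term is already in canonical form; when $Z_1 = L_b$, move it past the $\del$'s one factor at a time, each swap producing a main term of the desired shape plus a commutator term of strictly lower total order which is absorbed by the inductive hypothesis, then sort $L_b$ into the tuple $L^{J'}$ using $[L_a,L_b] = \pm\Omega$ and $[L_a,\Omega] = \pm L_{a'}$; the case $Z_1 = \Omega$ is analogous (and in $\RR^{1+2}$ trivial among $\Omega$'s since there is only one rotation generator). At every step the commutator term drops the order by at least one, so strong induction on $p$ closes the argument and gives constant coefficients $\Gamma(Z)_{IJl}$ depending only on $Z$ and the combinatorics.

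The main bookkeeping difficulty — and the only genuinely subtle point — is verifying that the rank/order indices really do stay within the range prescribed by the statement. This is where the structural observation "$[\del, L]$ and $[\del,\Omega]$ are $\del$'s" is decisive: it ensures that each commutator term decreases the rank by exactly one while also decreasing the order by one, so its contribution fits in a strictly smaller-order slot of the induction; conversely, Lorentz-Lorentz commutators preserve the rank while decreasing the order, which is again consistent with the inductive bound. Once these invariants are checked on the commutators of pairs of generators, iterating them through the sorting procedure is routine and yields the claimed normal form.
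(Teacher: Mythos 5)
Your proposal is correct and is essentially the paper's own argument: the paper proves Proposition \ref{prop1-23-10-2021} by the same sorting-by-commutators induction, merely packaging the $\del$--$L$ part as the cited decomposition \eqref{eq1-23-10-2021} from \cite{M-2020-strong} and the rotation part as the block identities \eqref{eq1-22-10-2021}--\eqref{eq2-22-10-2021}, both of which reduce to exactly your generator-level constant-coefficient commutators and strong induction on the order. One cosmetic point: since $[L_1,L_2]=\Omega$ and $[\del,L],[\del,\Omega]$ are again translations, the sorting (yours and the paper's alike) actually produces terms with $|I|=p-k$ and $|J|+l\le k$ rather than $|J|+l=k$, which is how the decomposition is in fact used later (e.g.\ in the bound for $Z\del_\alpha u$), so your slightly loose phrase ``rank preserved'' for Lorentz--Lorentz commutators is harmless.
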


For the proof we need the following commutations relations.
\begin{equation}\label{eq1-22-10-2021}
[L^J,\del^I]  = \sum_{|I'|=|I|\atop|J'|<|J|}\Gamma^{JI}_{I'J'}\del^{I'}L^{J'},
\end{equation}
\begin{equation}\label{eq2-22-10-2021}
[\Omega^l,\del^I] = \sum_{|I'|=|I|\atop l'<l}\Delta^{KI}_{I'l'}\del^{I'}\Omega^{l'},\quad
[\Omega^l,L^J]u = \sum_{|J'|=|J|\atop l'<l}\Theta^{KJ}_{J'l'}L^{J'}\Omega^{l'}u
\end{equation}
where $\Gamma^{JI}_{I'J'}, \Delta^{KI}_{I'l'}, \Theta^{KJ}_{J'l'}$ are constants. These can be checked by induction on $|I|,|J|$ and $|K|$.

\begin{proof}[Sketch of proof of Proposition \ref{prop1-23-10-2021}]
Let $Z$ be a high-order operator composed by vectors in $\{\del_{\alpha},L_a\}$, $\ord(Z) = p,\rank(Z) = k$. Then
\begin{equation}\label{eq1-23-10-2021}
Z = \sum_{|I| = p-k\atop |J| = k}\Gamma(Z)_{IJ}\del^IL^J.
\end{equation}
This is established in \cite{M-2020-strong} ((B.2) in Appendix B) in $\Hcal^*_{[s_0,\infty)}$. It can be checked directly that, the proof therein remains valid in $\Fcal_{[s_0,\infty)}$. Then Let $Z$ be a high-order operator composed by vectors in $\{\del_{\alpha},L_a,\Omega\}$. By \eqref{eq1-23-10-2021}, $Z$ can be written as a finite linear combination of the following terms with constant coefficients:
$$
\del^{I_1}L^{J_1}\Omega^{l_1}\del^{I_2}L^{J_2}\Omega^{l_2}\cdots \del^{I_m}L^{J_m}\Omega^{l_m}
$$
with $\sum_{j=1}^m|I_j| = |I|,\sum_{j=1}|J_j| \leq |J|$ and $\sum_{j=1}^ml_j = l$. Then we commute $\Omega^{l_j}$ with $\del^{I_{j+1}}L^{J_{j+1}}$. Thanks to \eqref{eq2-22-10-2021}, we obtain the desired result.
\end{proof}

The operators $\del^IL^J\Omega^l$ are called {\bf ordered} operators and they will play a special role in the following discussion.  

\subsection{Basic calculus in $\TPcal^{\near}_{[s_0\infty)}$}
In this Subsection we firstly recall some basic rules of calculation near the light cone. These are established in \cite{LM-2022}.
\begin{definition} 
	A smooth function $f$ defined in the domain $\big\{ r \geq t/2, \, t > 0 \big\}$ is called {\bf exterior-homogeneous\footnote{When there is no ambiguity, we will simply write `homogeneous'.}
		of degree $k$}
	if it satisfies ($\mathbb{S}^1 \subset \RR^2$ denoting the $1$-sphere)
	$$
	\aligned
	f(\lambda t, \lambda x) & =   \lambda^{k} f(t, x),
	& \lambda>0; 
	\qquad
	| \del^I f(t, \omega)|& \leq  C \, (I),   & \omega\in \mathbb{S}^1, \quad 0 < t\leq 2.
	\endaligned
	$$
	Similarly, a smooth function $g$ in $\big\{r\leq 3t, t\geq 1\big\}$ is called interior-homogeneous of degree $k$, if
	$$	
	\aligned
	f(\lambda t, \lambda x) & =   \lambda^{k} f(t, x),
	& \lambda>0; 
	\qquad
	| \del^I f(1, x)|& \leq  C \, (I),   & |x|\leq 3.
	\endaligned
	$$
\end{definition}
These coefficients enjoy the following properties.
\begin{lemma}
	\label{lem 1 homo-ext}
	Let $f$ and $g$ be exterior/interior-homogeneous functions of degree $m$ and $n$, respectively.  
	\begin{itemize} 	
		\item When $m=n$, then $\alpha f + \beta g$ is exterior/interior-homogeneous of degree $m$ (for any reals $\alpha,\beta$).
		
		\item The product $fg$ is exterior/interior-homogeneous of degree $(m+n)$.
		
		\item $Z f$ is exterior/interior-homogeneous of degree $m+k-p$ with $\ord(Z)= p$ and $\rank(Z) = k$ and, moreover, 
		it holds $|Z f| \lesssim C(Z) r^{m+k-p}$ in $\TPcal_{[s_0, + \infty)}$.		
	\end{itemize} 
\end{lemma}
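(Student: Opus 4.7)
The proof splits along the three bullets, with only the third requiring genuine work. For the first (linear combination of same-degree functions), the scaling identity is linear in $f,g$ and the $\del^I$-bounds on the reference compact set pass to $\alpha f+\beta g$ by the triangle inequality. For the product, the scaling identity is multiplicative, $(fg)(\lambda t,\lambda x)=\lambda^{m+n}(fg)(t,x)$, and the boundedness of $\del^I(fg)$ on the reference set follows from the Leibniz rule applied to the individual bounds on $\del^{I_1}f$ and $\del^{I_2}g$.

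For the third statement, my plan is to first reduce to an ordered operator $Z=\del^I L^J\Omega^l$ with $|I|=p-k$ and $|J|+l=k$ by invoking Proposition \ref{prop1-23-10-2021}, and then track the scaling separately for each of the three classes of basic vector fields. Differentiating the scaling identity in the $(t,x)$ variables at fixed $\lambda$ shows that $\del_\alpha$ lowers the degree by one, since $(\del_\alpha f)(\lambda t,\lambda x)=\lambda^{m-1}\del_\alpha f(t,x)$. The boosts $L_a=x^a\del_t+t\del_a$ and the rotation $\Omega=x^1\del_2-x^2\del_1$ preserve the degree, because the extra factor of $\lambda^1$ produced by the polynomial coefficients $x^a,t$ exactly cancels the $\lambda^{-1}$ from the derivative. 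Iterating, $Zf$ is homogeneous of degree $m-|I|=m+k-p$, and its boundedness on the reference compact set is a direct consequence of the Leibniz rule applied iteratively to the polynomial coefficients $x^a,t$ (which themselves are bounded on the reference set) together with finitely many of the original $\del^K f$-bounds.

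The pointwise estimate is then extracted from the scaling identity applied with $\lambda=1/r$. Writing $q=m+k-p$, we get $|Zf(t,x)|=r^q|Zf(t/r,x/r)|$; in $\TPcal_{[s_0,+\infty)}$ with $s_0\geq 2$, Lemma \ref{lem1-07-10-2021} gives $r\geq t-1$, and since $t\geq s_0\geq 2$ this forces $r\geq t/2$, so the rescaled point $(t/r,x/r)\in\mathbb{S}^1\times(0,2]$ lies in the reference set, where the bound $|Zf|\leq C(Z)$ just established yields $|Zf(t,x)|\lesssim C(Z)r^q$. The interior-homogeneous case is symmetric, rescaling by $\lambda=1/t$ on the region $r\leq 3t$.

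The only mildly technical point, and hence the main \emph{obstacle}, is the bookkeeping of how the reference bounds propagate through successive applications of $L_a$ and $\Omega$: each such application introduces polynomial factors $x^a,t$ (harmless on the reference set) and derivatives $\del_\beta f$ (whose reference-set bounds come from the hypothesis on $f$), so the induction closes, but the resulting constant $C(Z)$ depends on the order and rank of $Z$ through the combinatorics of the Leibniz expansion. Once this is in place, no analysis beyond homogeneity-tracking is needed.
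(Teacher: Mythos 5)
Your proof is correct, and in fact the paper itself states this lemma without proof, simply citing \cite{LM-2022}; your homogeneity-tracking argument (degree drops by one under $\del_\alpha$, is preserved by $L_a$ and $\Omega$, reduction to ordered operators via Proposition \ref{prop1-23-10-2021}, Leibniz bookkeeping on the reference set, then rescaling with $\lambda=1/r$, using $r\ge t-1$ and $t\ge s_0\ge 2$ so that $(t/r,x/r)$ lands in the reference set) is exactly the standard argument behind the cited result. The only step worth one extra line is the interior case: an interior-homogeneous function is only defined on $\{r\le 3t\}$, so the stated bound is to be read there, and after rescaling by $\lambda=1/t$ you should note that in $\TPcal_{[s_0,\infty)}\cap\{r\le 3t\}$ one has $t/2\le r\le 3t$, hence $t^{m+k-p}\simeq r^{m+k-p}$, which converts your $t$-power bound into the claimed $r$-power bound.
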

Then we regard a special weight function. 
\begin{lemma}\label{lem1-31-05-2021}
	In the region $\TPcal^{\near}_{[s_0,+\infty)}$, the following estimate holds 
	(with implied constants determined by the multi-indices $I,J$): 
	\begin{equation}\label{eq1-31-05-2021}
	\big|\del^IL^J (r-t)\big| \leq
	\begin{cases}
	 C(J)|r-t|,\quad  & |I| = 0,
	\\
	C(I,J)r^{-|I|+1}\leq|r-t|+1,\quad  & |I| \geq 1.
	\end{cases}
	\end{equation}
\end{lemma}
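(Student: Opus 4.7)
\textbf{Proof proposal for Lemma \ref{lem1-31-05-2021}.} The plan is to reduce both bounds to a single structural identity obtained by induction on $|J|$, and then handle the spatial derivatives via the Leibniz rule and the homogeneity calculus of Lemma \ref{lem 1 homo-ext}. The key computation is the direct one:
$$
L_a(r-t) = x^a\del_t(r-t) + t\del_a(r-t) = -x^a + t(x^a/r) = (x^a/r)(t-r),
$$
so $L_a$ applied to $(r-t)$ factors the operand back out with a coefficient $x^a/r$ that is interior-homogeneous of degree $0$ and bounded by $1$ on $\TPcal^{\near}_{[s_0,\infty)}$.

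First, for $|I|=0$, I would prove by induction on $|J|$ the structural claim
$$
L^J(r-t) \,=\, \psi_J(t,x)\,(t-r),
$$
where $\psi_J$ is a finite sum of products of rational functions in $t/r$ and $x^a/r$ (hence interior-homogeneous of degree $0$) and therefore $|\psi_J|\leq C(J)$ throughout $\TPcal^{\near}_{[s_0,\infty)}$, where $t\cong r$. The inductive step uses Leibniz together with the identity $L_b(x^a/r)=(t/r)(\delta^a_b-x^ax^b/r^2)$ and the fact (Lemma \ref{lem 1 homo-ext}) that $L_b$ preserves the class of degree-$0$ homogeneous functions; the $(t-r)$ factor simply reproduces itself from $L_b(t-r)=-(x^b/r)(t-r)$. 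This immediately gives $|L^J(r-t)|\leq C(J)|r-t|$, which is the $|I|=0$ case.

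For $|I|\geq 1$, apply Leibniz directly:
$$
\del^IL^J(r-t) \,=\, \sum_{I_1+I_2=I}\binom{I}{I_1}\,\del^{I_1}\psi_J\,\del^{I_2}(t-r).
$$
Since $\psi_J$ is homogeneous of degree $0$, one has $|\del^{I_1}\psi_J|\lesssim r^{-|I_1|}$. The factor $\del^{I_2}(t-r)$ equals $(t-r)$, $-1$, or $x^a/r$ for $|I_2|=0,1$ and satisfies $|\del^{I_2}(t-r)|\lesssim r^{1-|I_2|}$ for $|I_2|\geq 1$. Summing the contributions: when $|I_2|\geq 1$ one obtains $r^{-|I_1|}\cdot r^{1-|I_2|}=r^{1-|I|}$; when $|I_2|=0$ one obtains $r^{-|I|}|t-r|$, which in $\TPcal^{\near}_{[s_0,\infty)}$ is itself $\lesssim r^{1-|I|}$ because $r\cong t$ there forces $|r-t|\lesssim r$. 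Thus $|\del^IL^J(r-t)|\leq C(I,J)\,r^{-|I|+1}$. The trailing inequality $C(I,J)r^{-|I|+1}\leq |r-t|+1$ is trivial once $|I|\geq 1$: for $|I|=1$ it reduces to a constant bounded by $|r-t|+1\geq 1$ after possibly enlarging $C$, and for $|I|\geq 2$ one has $r^{-|I|+1}\leq 1\leq|r-t|+1$ since $r\geq \rhoH(s_0)\geq 3/2$.

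The only mildly delicate step is the bookkeeping in the structural identity for $L^J(r-t)$: one must check that each application of $L_b$ on a product of degree-$0$ rational coefficients with the privileged factor $(t-r)$ produces again a sum of products of degree-$0$ coefficients times $(t-r)$. This is the main conceptual obstacle, but it follows cleanly from Lemma \ref{lem 1 homo-ext} together with the identity $L_b(t-r)=-(x^b/r)(t-r)$; the remainder of the argument is straightforward Leibniz plus the region-specific comparison $t\cong r$ in $\TPcal^{\near}_{[s_0,\infty)}$.
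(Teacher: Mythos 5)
Your argument is correct and follows essentially the route the paper indicates (the paper omits the details, citing Lemma 5.1 of \cite{M-2018} and noting the proof is an induction on $|J|$ and $|I|$): the identity $L_a(r-t)=(x^a/r)(t-r)$ drives an induction giving $L^J(r-t)=\psi_J\,(t-r)$ with $\psi_J$ homogeneous of degree $0$, and the Leibniz rule together with the homogeneity calculus then yields $|\del^IL^J(r-t)|\lesssim r^{1-|I|}$, using $|r-t|\lesssim r$ in $\TPcal^{\near}_{[s_0,\infty)}$. The only blemish is the phrase ``after possibly enlarging $C$'' for the trailing inequality, which is backwards; what is needed (and true) is simply $r^{-|I|+1}\le 1\le |r-t|+1$ for $|I|\ge 1$, since $r\ge \rhoH(s_0)\ge 3/2$ in $\TPcal_{[s_0,\infty)}$, with the constant kept in front of the first bound.
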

This is the Lemma 5.1 of \cite{M-2018}. The proof is by induction on $|J|$ and $|I|$. We omit the detail.

\subsection{Proof of Proposition \ref{prop2-23-10-2021}}
Remark that in $\TPcal^{\near}_{[s_0,\infty)}$, $t^{-1}$ is interior-homogeneous and $(x^a/r)$ is exterior-homogeneous. Furthermore, $t-1\leq r\leq 3t$. We will firstly establish the following bound.
\begin{lemma}
In $\TPcal^{\near}_{[s_0,\infty)}$
\begin{equation}\label{eq3-23-10-2021}
|\delt u|_{p,k}
\lesssim C(p)  \sum_{\ord(Z)\leq p \atop \rank(Z)\leq k}\Big(\sum_a|\delt_a  Zu| + \frac{|r-t|+1}{r}\sum_{\alpha}|\del_\alpha Z u|\Big).
\end{equation} 
\end{lemma}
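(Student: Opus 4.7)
Inequality \eqref{eq3-23-10-2021} is a commutator estimate: by definition $|\delt u|_{p,k} = \max_{b,\,Z_0 \in \mathcal{I}_{p,k}} |Z_0 \delt_b u|$, so the plan is to commute each such $Z_0$ past $\delt_b$. I would prove by induction on $p=\ord(Z_0)$ the stronger pointwise identity
\begin{equation*}
Z_0 \delt_b u \;=\; \sum_{c,\, Z \in \mathcal{I}_{p,k}} \Gamma_{c,Z}\, \delt_c Z u \;+\; \sum_{\alpha,\, Z \in \mathcal{I}_{p,k}} C_{\alpha,Z}\, \del_\alpha Z u,
\end{equation*}
with smooth coefficients satisfying $|\Gamma_{c,Z}| \le C(p)$ and $|C_{\alpha,Z}| \le C(p)(|r-t|+1)/r$; taking absolute values then yields \eqref{eq3-23-10-2021}.

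The first step is a direct calculation of the basic commutators:
\begin{equation*}
[\del_t, \delt_b]=0,\qquad [\del_a,\delt_b] = \del_a(x^b/r)\,\del_t,\qquad [\Omega,\delt_1]=-\delt_2,\quad [\Omega,\delt_2]=\delt_1,
\end{equation*}
\begin{equation*}
[L_a,\delt_b] = \frac{t-r}{r}\Big(\delta_{ab} - \tfrac{x^a x^b}{r^2}\Big)\del_t \,-\, \frac{x^b}{r}\,\delt_a.
\end{equation*}
Each has the form $c^{(1)} \del_t + c^{(2)} \delt_a$, where $c^{(1)}$ either vanishes, is $O(1/r)$, or is $(t-r)/r$ times a bounded homogeneous factor, and $c^{(2)} = O(1)$. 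Since $r \gtrsim 1$ and $r \sim t$ in the near region, one has $|c^{(1)}| \lesssim (|r-t|+1)/r$ and $|c^{(2)}| \lesssim 1$. A short verification shows that applying any $V \in \{\del_\alpha, L_a, \Omega\}$ to coefficients of the three types $x^a/r$, $1/r$, $(t-r)/r$ keeps them in the same size classes — for instance $L_a((t-r)/r) = -x^a(t+r)(t-r)/r^3$ is still $O((|r-t|+1)/r)$, and $L_a(1/r) = -tx^a/r^3 = O(1/r)$.

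For the inductive step I would write $Z_0 = VZ'$ with $V \in \{\del_\alpha, L_a, \Omega\}$. A rank check shows $Z' \in \mathcal{I}_{p-1,k}$ when $V = \del_\alpha$ and $Z' \in \mathcal{I}_{p-1,k-1}$ when $V \in \{L_a,\Omega\}$, so the inductive hypothesis supplies an identity for $Z'\delt_b u$ of the form above. Applying $V$ to each summand and using Leibniz together with
\begin{equation*}
V(\Gamma\,\delt_c Zu) = V(\Gamma)\,\delt_c Zu \,+\, \Gamma\, \delt_c(VZ) u \,+\, \Gamma\,[V,\delt_c]Zu,
\end{equation*}
and the analogous expansion for $V(C\,\del_\alpha Zu)$ (in which $[V,\del_\alpha]$ produces only $\pm\del_\beta$, no weight), one recovers exactly the two allowed templates: the concatenation $VZ$ still lies in $\mathcal{I}_{p,k}$ by the rank bookkeeping, the derivatives of the coefficients stay in the correct size classes by the previous paragraph, and the new commutator contributions $[V,\delt_c]Zu$ are themselves of the stated form.

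The main obstacle, which is really just careful bookkeeping, is verifying that after $p$ iterations all generated coefficients remain bounded by $C(p)$ (for the $\delt_c Zu$ contributions) and by $C(p)(|r-t|+1)/r$ (for the $\del_\alpha Zu$ contributions), and that the rank count propagates correctly so every intermediate $Z$ stays in $\mathcal{I}_{p,k}$. Both are purely algebraic consequences of the commutator identities and the near-region estimates $r \gtrsim 1$, $r \sim t$.
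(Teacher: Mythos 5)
Your argument is correct, but it is organized differently from the paper's. The paper first reduces an arbitrary $Z\in\mathcal{I}_{p,k}$ to ordered operators $\del^IL^J\Omega^l$ (Proposition \ref{prop1-23-10-2021}), then applies such an operator to the identity \eqref{eq2-23-10-2021}, $\delt_a u = t^{-1}L_au - (x^a/r)(r-t)t^{-1}\del_t u$, and distributes derivatives by Leibniz, using homogeneity of $t^{-1}$, $x^a/r$ and the bound $|\del^IL^J(r-t)|\lesssim |r-t|+1$ of Lemma \ref{lem1-31-05-2021}; the factor $L_a$ coming from $t^{-1}L_a$ is absorbed at the end by converting $t^{-1}L_a$ back into $\delt_a$ plus a weighted $\del_t$ (this is the ``$|I_2|=0$, $|J_2|\geq 1$'' step). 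You never invoke the ordered decomposition or \eqref{eq2-23-10-2021}; instead you peel off the leftmost admissible field and induct on the order, with the explicit commutators $[\del_\alpha,\delt_b]$, $[\Omega,\delt_b]$, $[L_a,\delt_b]$ (your formulas check out) and the observation that the two coefficient classes -- bounded factors in front of $\delt_c Zu$, factors of size $(|r-t|+1)/r$ in front of $\del_\alpha Zu$ -- are stable under differentiation by $\del_\alpha,L_a,\Omega$ and under multiplication by bounded factors, using $r\gtrsim 1$ and $r\simeq t$ in $\TPcal^{\near}_{[s_0,\infty)}$. What each buys: the paper's route is shorter because it reuses machinery already established (and needed elsewhere), while yours is self-contained and its invariant -- every intermediate $Z$ stays in $\mathcal{I}_{p,k}$, every $\del_t$-type term is created only through $[V,\delt_c]$ or by differentiating an already weighted coefficient -- makes the rank-$\leq k$ and weight bookkeeping automatic, avoiding the spare boost that the paper's Leibniz expansion produces and must commute away. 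The only part you leave implicit, closure of the two coefficient classes after $p$ iterations, is indeed a routine verification in your setup, so there is no gap.
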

\begin{proof}[Sketch of proof]
We need to bound $Z\delt_au$. Let $Z$ be a high-order operator with $\ord(Z) = p$ and $\rank(Z) = k$. Thanks to Proposition \ref{prop1-23-10-2021}, we only need to consider $\del^IL^J\Omega^l \delt_a u$
Then we remark the following identity:
\begin{equation}\label{eq2-23-10-2021}
\delt_a u = t^{-1}L_au - (x^a/r)(r-t)t^{-1}\del_t u.
\end{equation}
Remark that in $\TPcal^{\near}_{[s_0,\infty)}$, $|\del^IL^J\Omega^K(t^{-1})|\leq C(I,J) t^{-1}$ (this is due to the homogeneity of $t^{-1}$ in the region $\{t/2\leq r\leq 3t\}$), and $|\del^IL^J\Omega^l(x^a/r)|\leq C(I,J,K)$ (because $x^a/r$ is homogeneous of degree zero). Thanks to \eqref{eq1-31-05-2021}, $|\del^IL^J\Omega^l(r-t)|\leq C(I,J)(|r-t|+1)$. Then  (remark that $\Omega t = \Omega r = 0$), 
\begin{equation}\label{eq1-26-10-2021}
\aligned
\del^IL^J\Omega^l\delt_au =& \sum_{I_1+I_2=I\atop J_1+J_2=J}
\!\!\!\!\del^{I_1}L^{J_1}\big(t^{-1}\big)\del^{I_2}L^{J_2}\Omega^lL_a u 
\\
&+ \sum_{I_1+I_2=I,J_1+J_2=J\atop l_1+l_2=l}
\!\!\!\!\del^{I_1}L^{J_1}\Omega^{l_1}\big(t^{-1}(x^a/t)(r-t)\big)\del^{I_2}L^{J_2}\Omega^{l_2}\del_t u 
\endaligned
\end{equation}
For the first term on the right-hand side, remark that $\big|\del^{I_1}L^{J_1}\big(t^{-1}\big)\big|\leq C(p)t^{-1}$. When $|I_2|\geq 1$, this term is bounded by $t^{-1}|\del_{\alpha}Z'u|$ with $\ord(Z')\leq p$ and $\rank(Z')\leq k$, thus bounded by the right-hand side of \eqref{eq3-23-10-2021}. When $|I_2|=0$ and $|J_2|\geq 1$, 
$$
\aligned
\big|\del^IL^{J_1}(t^{-1})L^{J_2}\Omega^lu\big| \leq& C(p) \big|t^{-1}L_aL^{J_2'}\Omega^lu\big|
\leq C(p) |\delu_aL^{J_2'}\Omega^{l_2}u| 
\\
\leq& C(p)|\delt_aL^{J_2'}\Omega^l u| + C(p)t^{-1}|r-t||\del_tL^{J_2'}\Omega^lu|.
\endaligned
$$
In the region $\TPcal^{\near}$ one has $t\simeq r$. Thus the above terms are bounded by the right-hand side of \eqref{eq3-23-10-2021}.

For the second term on the right-hand side of \eqref{eq1-26-10-2021}, we remark that in $\TPcal^{\near}_{[s_0,\infty)}$
$$
\big|\del^{I_1}L^{J_1}\Omega^{l_1}\big(t^{-1}(x^a/t)(r-t)\big)\big|\leq C(p)r^{-1}(|r-t|+1)
$$
where \eqref{eq1-31-05-2021} and the homogeneity of $(x^a/t^2)$ in $\TPcal^{\near}$ are applied. The term $\del^{I_2}L^{J_2}\Omega^{l_2}\del_t u $ is bounded via Proposition \ref{prop1-23-10-2021} by $|\del_\alpha Z'u|$ with
$\ord{Z}\leq p, \rank(Z)\leq k$.
\end{proof}

\paragraph{Proof of \eqref{eq2-26-10-2021}}
The bound on $\zeta|\del u|_{p,k}$ is a direct result of Proposition \ref{prop1-23-10-2021} together with the expression of $\ebf[u]$. For the bound on $|\delt u|_{p,k}$, we apply \eqref{eq3-23-10-2021} together with Lemma \ref{lem1-26-10-2021}.

\section{Proofs of Sobolev inequalities}\label{sec1-23-10-2021}
\subsection{Sobolev inequality in positive cone}

For the convenience of discussion, we recall the notion for $\rho>0$,
$$
C_{x_0,\rho} := \{x_0^a\leq x^a\leq \rho\},\quad C_{x_0,-\rho} :=\{x_0^a-\rho\leq x^a\leq x^a\} 
$$
and $\RR^2_+ = C_{0,+\infty} = \{0\leq x^a\}$. Then we recall the following result parallel to the $3-D$ case in \cite{LM-2022}.
\begin{proposition}\label{prop1-16-10-2021}
	Let $u$ be a $C^2$ function defined in $C_{0,\rho} = \{0\leq x^a\leq \rho\}$. Then
	\begin{equation}\label{eq1-16-10-2021}
	|u(0,0)|\leq C(\rho)\sum_{|I|\leq 2}\|\del^Iu\|_{L^2(C_{0,\rho})}. 
	\end{equation}
\end{proposition}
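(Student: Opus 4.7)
\textbf{Proof plan for Proposition \ref{prop1-16-10-2021}.}

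The strategy is to reduce the claim to a one-dimensional Sobolev-type bound and then apply it successively in the two coordinate directions. Since the point $(0,0)$ sits at the corner of the quarter square $C_{0,\rho}$, we cannot approximate $u$ by functions vanishing near this corner; we must instead exploit that $(0,0)$ is a boundary trace accessible from the interior.

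First I would establish the one-dimensional auxiliary inequality: for any $f\in C^1([0,\rho])$,
\begin{equation*}
|f(0)|\le C(\rho)\bigl(\|f\|_{L^2([0,\rho])}+\|f'\|_{L^2([0,\rho])}\bigr).
\end{equation*}
The proof is short: by the fundamental theorem of calculus, $f(0)=f(x)-\int_0^x f'(s)\,ds$, so integrating over $x\in[0,\rho]$ and applying Cauchy--Schwarz gives
\begin{equation*}
\rho|f(0)|\le \int_0^\rho |f(x)|\,dx+\int_0^\rho\!\int_0^x|f'(s)|\,ds\,dx\le \sqrt{\rho}\|f\|_{L^2}+\rho^{3/2}\|f'\|_{L^2}.
\end{equation*}

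Next I would apply this lemma to the single-variable function $g(x^2):=u(0,x^2)$, which is $C^1$ on $[0,\rho]$ by the regularity of $u$:
\begin{equation*}
|u(0,0)|\le C(\rho)\bigl(\|u(0,\cdot)\|_{L^2([0,\rho])}+\|\del_2 u(0,\cdot)\|_{L^2([0,\rho])}\bigr).
\end{equation*}
Then for each fixed $x^2\in[0,\rho]$ I would apply the same 1D inequality in the $x^1$ variable to the traces $x^1\mapsto u(x^1,x^2)$ and $x^1\mapsto \del_2 u(x^1,x^2)$, square both sides, and integrate over $x^2\in[0,\rho]$. This yields
\begin{equation*}
\|u(0,\cdot)\|_{L^2}^2\lesssim \|u\|_{L^2(C_{0,\rho})}^2+\|\del_1 u\|_{L^2(C_{0,\rho})}^2,\qquad
\|\del_2u(0,\cdot)\|_{L^2}^2\lesssim\|\del_2 u\|_{L^2(C_{0,\rho})}^2+\|\del_1\del_2 u\|_{L^2(C_{0,\rho})}^2,
\end{equation*}
with implied constants depending only on $\rho$. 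Combining these bounds yields \eqref{eq1-16-10-2021}.

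There is no serious obstacle here: everything reduces to Cauchy--Schwarz plus the fundamental theorem of calculus, and the $C^2$ regularity is just enough to make the Fubini argument rigorous (the second-order term $\del_1\del_2 u$ is the highest one encountered). The only small care to take is that the traces $u(0,\cdot)$ and $\del_2u(0,\cdot)$ exist and are $L^2$--controlled, which is automatic because $u\in C^2(\overline{C_{0,\rho}})$. The corner geometry is bypassed precisely because the iterated 1D argument never requires any extension or cutoff.
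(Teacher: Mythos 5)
Your proof is correct, but it takes a genuinely different route from the paper. You reduce everything to the one-dimensional trace bound $|f(0)|\le C(\rho)(\|f\|_{L^2}+\|f'\|_{L^2})$ (fundamental theorem of calculus plus Cauchy--Schwarz) and iterate it in the two coordinate directions, first in $x^2$ along the edge $\{x^1=0\}$ applied to $u(0,\cdot)$ and $\del_2u(0,\cdot)$, then in $x^1$ on each horizontal slice. The paper instead proves two intermediate results in the positive cone --- a Sobolev--Poincar\'e inequality $\|v\|_{L^p(C_{0,\rho})}\lesssim\|\del v\|_{L^2(C_{0,\rho})}$ for functions vanishing for $|x|\ge\rho$, and a Morrey-type estimate comparing $v(0)$ with the average of $v$ over the square via paths emanating from both corners, with an $L^q$ norm for $q>2$ --- and then combines them with a radial cutoff $\chi(1-\rho^{-1}|x|)$, taking $q=3$. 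Your argument is more elementary: it needs no cutoff, no $L^q$ interpolation, and among second derivatives it only uses the mixed one $\del_1\del_2u$, whereas the paper's route (mirroring its three-dimensional analogue in the cited reference and producing reusable intermediate lemmas) requires the full set of derivatives up to order two. Both yield \eqref{eq1-16-10-2021} with a constant depending only on $\rho$, and your use of the $C^2$ regularity up to the boundary to justify the traces $u(0,\cdot)$, $\del_2u(0,\cdot)$ is exactly what is needed.
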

The proof relies on the following two lemmas.
\begin{lemma}[Sobolev-Poincar\'e inequality in positive cone]
	Let $v$ be a $C^1$ function defined in $\RR^2_+$ and $v(x)\equiv 0$ for $|x|\geq \rho$. Then $\forall\, p \geq  2$, 
	\begin{equation}\label{eq3-16-10-2021}
	\|v\|_{L^p(C_{0,\rho})}\leq (p/2)\rho^{p/2}\,\|\del v\|_{L^2(C_{0,\rho})}.
	\end{equation}
\end{lemma}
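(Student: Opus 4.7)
The plan is to reduce the claim for general $p$ to the classical $p=2$ Poincar\'e inequality via the power substitution $|v|^p=(|v|^{p/2})^2$. Throughout I will exploit that $v$ (and hence $|v|^{p/2}$) vanishes for $|x|\geq\rho$, which in particular gives $v(\rho,x^2)=0$ for every $x^2\in[0,\rho]$ and allows me to apply the fundamental theorem of calculus in the $x^1$-direction.

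For the base case $p=2$, writing $v(x^1,x^2)=-\int_{x^1}^\rho\del_1 v(y^1,x^2)\,dy^1$ and applying Cauchy--Schwarz in the $y^1$-integral yields the pointwise estimate $|v(x^1,x^2)|^2\leq\rho\int_0^\rho|\del_1 v(y^1,x^2)|^2\,dy^1$. Integration over $C_{0,\rho}$ then gives $\|v\|_{L^2}\leq\rho\|\del_1 v\|_{L^2}\leq\rho\|\del v\|_{L^2}$, which matches the stated inequality for $p=2$.

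Next I would extend to $p>2$ by applying the same FTC-plus-Cauchy--Schwarz argument to $w:=|v|^{p/2}$, which also vanishes for $|x|\geq\rho$. Since $|\del w|\leq(p/2)|v|^{p/2-1}|\del v|$, this produces the preliminary bound
\begin{equation*}
\|v\|_{L^p(C_{0,\rho})}^p\leq (p/2)^2\rho^2\int_{C_{0,\rho}}|v|^{p-2}|\del_1 v|^2\,dx.
\end{equation*}

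The hard part will be closing this estimate with only $\|\del v\|_{L^2}$ on the right: a direct H\"older split of the right-hand side forces either $\|v\|_{L^\infty}$ or $\|\del v\|_{L^p}$ into the bound, and neither is available from $H^1$-regularity in dimension two. I plan to bypass this by combining the base-case Poincar\'e inequality $\|v\|_{L^2}\leq\rho\|\del v\|_{L^2}$ with the 2D Gagliardo--Nirenberg interpolation $\|v\|_{L^p}\leq C(p)\|v\|_{L^2}^{2/p}\|\del v\|_{L^2}^{1-2/p}$ for $v\in H^1_0$ supported in the quarter-ball, yielding $\|v\|_{L^p}\leq C(p)\rho^{2/p}\|\del v\|_{L^2}$; the (wasteful but convenient) constant $(p/2)\rho^{p/2}$ then follows from $\rho^{2/p}\leq\rho^{p/2}$ for $p\geq 2$ and $\rho\geq 1$, which holds in the intended applications since $\rho=\rhoH(s)\geq 3/2$.
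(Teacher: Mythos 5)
Your base case and the preliminary bound $\|v\|_{L^p(C_{0,\rho})}^p\leq (p/2)^2\rho^2\int_{C_{0,\rho}}|v|^{p-2}|\del_1 v|^2\,dx$ are fine, and you correctly identify that this one-directional estimate cannot be closed with only $\|\del v\|_{L^2}$. But the way you then close the argument has a genuine gap. You invoke the 2D Gagliardo--Nirenberg interpolation ``for $v\in H^1_0$ supported in the quarter-ball'', yet $v$ is \emph{not} in $H^1_0$ of the quarter-plane: it vanishes only for $|x|\geq\rho$ and is allowed to be nonzero on the coordinate axes -- that is precisely the point of stating the inequality in the positive cone (the restriction of a function on $\Fcal_s$ to $\Hcal^*_s$ or $\TPcal_s$ cannot be approximated by functions vanishing on the cut). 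To use whole-space or $H^1_0$ Gagliardo--Nirenberg you would need an extension step (e.g. even reflection across both axes), which you do not supply; and since the lemma \emph{is} a quarter-plane Ladyzhenskaya-type inequality, outsourcing exactly this step to GN leaves the substantive content unproved. The constant bookkeeping is also off: GN gives $C(p)\rho^{2/p}\|\del v\|_{L^2}$ with an unquantified $C(p)$, and your passage to $(p/2)\rho^{p/2}$ uses $\rho^{2/p}\leq\rho^{p/2}$, which requires $\rho\geq1$ -- not assumed in the statement, and false in the paper's actual applications, where after rescaling the lemma is used on $C_{0,1/2\sqrt{2}}$ and $C_{0,\pi/4}$ (your identification $\rho=\rhoH(s)$ is not how the lemma is applied).

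The missing idea is the two-directional argument: bound $|v(x^1,x^2)|^{p/2}$ by a one-dimensional integral of $\del_1|v|^{p/2}$ in $x^1$ \emph{and} separately by an integral of $\del_2|v|^{p/2}$ in $x^2$, take the sup over the integrated variable in each, multiply the two bounds, and integrate over the cone. Cauchy--Schwarz in each line integral then produces $\|v\|_{L^p}^p\leq(p/2)^2\|\del v\|_{L^2}^2\int_{C_{0,\rho}}|v|^{p-2}dx$, where the derivative factor is already quadratic and globally $L^2$, and the remaining factor is handled by H\"older on the bounded support, $\int|v|^{p-2}\leq\rho^{4/p}\|v\|_{L^p}^{p-2}$, so the $\|v\|_{L^p}^{p-2}$ can be absorbed into the left-hand side. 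This is exactly how the paper proves the lemma, self-contained and with the explicit constant $(p/2)$; note it only needs $v$ to vanish far out along each coordinate ray, not on the axes.
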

\begin{proof}
	This is a slight modification of the classical Sobolev's inequality on compactly supported functions. We introduce
	$$
	w^1(x^2) = \sup_{x^1\geq 0} \big\{|v(x^1,x^2)|^{p/2}\big\},\quad 
	w^2(x^1) = \sup_{x^2\geq 0} \big\{|v(x^1,x^2)|^{p/2}\big\}.
	$$
	$w^a$ are defined on $[0,\infty)$ and $w^a(\tau)\equiv 0$ when $\tau\geq \rho$. On the other hand, remark that $\del_a|v|^{p/2} = (p/2)|v|^{p/2-2}v\del_a v$. Then
	$$
	\aligned
	|v(x^1,x^2)|^{p/2} \leq &\int_0^\infty \big|\del_1|v|^{p/2}(\rho,x^2)\big|d\rho 
	\leq (p/2)\int_0^{\infty}|v(\rho,x^2)|^{p/2-1}|\del_1v(\rho,x_2)|d\rho,
	\\
	|v(x^1,x^2)|^{p/2} \leq &\int_0^\infty \big|\del_2|v|^{p/2}(x^1,\rho)\big|d\rho 
	\leq (p/2)\int_0^{\infty}|v(x^1,\rho)|^{p/2-1}|\del_2v(x^1,\rho)|d\rho.
	\endaligned
	$$
	These lead to
	$$
	\aligned
	\int_0^\infty w^1(x^2)dx^2 \leq& (p/2) \|\del_1 v\|_{L^2(\RR^2_+)}\Big(\int_{\RR^2_+}|v|^{p-2}dx\Big)^{1/2},
	\\
	\int_0^\infty w^2(x^1)dx^1 \leq& (p/2) \|\del_1 v\|_{L^2(\RR^2_+)}\Big(\int_{\RR^2_+}|v|^{p-2}dx\Big)^{1/2}.
	\endaligned
	$$
	Multiply the above two inequalities, we have
	\begin{equation}\label{eq2-16-10-2021}
	\int_{\RR^2_+}w^1(x^2)w^2(x^1)dx^1dx^2\leq (p/2)^2\|\del v\|_{L^2(\RR^2_+)}^2\int_{\RR^2_+}|v|^{p-2}dx
	\end{equation}
	For the right-hand side, we remark that $v$ is supported in $[0,\rho]\times[0,\rho]$. Thus by H\"order's inequality
	$$
	\int_{\RR^2_+}|v|^{p-2}dx\leq \rho^{4/p}\|v\|_{L^p(\RR^2_+)}^{p-2}.
	$$
	For the left-hand-side we remark that
	$$
	\|v\|_{L^p(\RR^2_+)}^p = \int_{\RR^2_+}|v(x^1,x^2)|^pdx\leq \int_{\RR^2_+}w^1(x^2)w^2(x^1)dx^1dx^2.
	$$
	Then by \eqref{eq2-16-10-2021} we obtain \eqref{eq3-16-10-2021}.	
\end{proof}
\begin{lemma}
	Let $v$ be a $C^1$ function defined in $\RR^2_+$. Then
	\begin{equation}\label{eq4-16-10-2021}
	|v(0)|\leq C(q,\rho)\|v\|_{H^1(C_{0,\rho})},\quad \forall\, q>2.
	\end{equation}
\end{lemma}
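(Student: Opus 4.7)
The plan is to derive the pointwise bound by a cutoff plus radial integration plus Hölder argument; this is exactly the mechanism that forces the parameter $q>2$ into the statement. First I would fix a smooth cutoff $\chi$ on $\RR^2_+$ with $\chi(0)=1$ and supported in a compact subset of $C_{0,\rho}$, and set $w := \chi v$, so that $w$ vanishes outside a compact set contained in $C_{0,\rho}$. For every direction $\theta$ in the positive quadrant $S^1_+ := S^1\cap \RR^2_+$, the fundamental theorem of calculus gives
\begin{equation*}
w(0) = -\int_0^\infty \partial_r w(r\theta)\,dr.
\end{equation*}
Averaging in $\theta\in S^1_+$ and passing to polar coordinates $dx = r\,dr\,d\theta$ yields a representation of $w(0)$ as a weighted integral of $|\nabla w|$:
\begin{equation*}
|w(0)|\;\lesssim\;\int_{C_{0,\rho}}\frac{|\nabla w(x)|}{|x|}\,dx.
\end{equation*}

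Next I would apply Hölder's inequality with exponent $q>2$ and conjugate $q'=q/(q-1)\in(1,2)$:
\begin{equation*}
|w(0)|\;\leq\;\bigl\|\,|x|^{-1}\,\bigr\|_{L^{q'}(C_{0,\rho})}\,\|\nabla w\|_{L^{q}(C_{0,\rho})}.
\end{equation*}
The weight $|x|^{-1}$ lies in $L^{q'}(C_{0,\rho})$ precisely when $q'<2$, i.e.\ $q>2$, and in polar coordinates its $L^{q'}$-norm is bounded by a constant depending only on $\rho$ and $q$. This is the step that forces the parameter $q$ to appear, and it reflects the fact that a genuine pointwise bound cannot hold with $L^2$-control of $\nabla v$ alone in two dimensions.

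It then remains to control $\|\nabla w\|_{L^q(C_{0,\rho})}$ by a Sobolev norm of $v$: since $\nabla w$ is itself compactly supported in $C_{0,\rho}$, one can apply \eqref{eq3-16-10-2021} to each component $\partial_a w$ with $p=q$, obtaining
\begin{equation*}
\|\nabla w\|_{L^q(C_{0,\rho})}\;\leq\;(q/2)\rho^{q/2}\,\|\nabla^2 w\|_{L^2(C_{0,\rho})},
\end{equation*}
and the Leibniz rule together with the $C^2$-bounds on $\chi$ (depending only on $\rho$) replaces the right-hand side by a Sobolev norm of $v$ on $C_{0,\rho}$; since $v(0)=w(0)$ this yields the claimed pointwise bound. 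The main obstacle is essentially quantitative rather than conceptual: one must track the blow-up of the constant $C(q,\rho)$ as $q\downarrow 2$, because both $\bigl\|\,|x|^{-1}\,\bigr\|_{L^{q'}}$ and the Sobolev--Poincaré constant $(q/2)\rho^{q/2}$ degenerate at the borderline $q=2$, which is exactly the logarithmic failure of the 2-D embedding $H^1\hookrightarrow L^\infty$. The cutoff construction near the two coordinate axes $\{x^1=0\}$ and $\{x^2=0\}$ is the only additional point requiring some care, but it is purely routine since $\chi$ may be chosen to decay smoothly into the interior of the positive cone without imposing any boundary condition on $v$.
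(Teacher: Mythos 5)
Your core mechanism is sound and is in fact a different route from the paper's: you represent $w(0)$ by radial integration over directions in the positive quadrant, getting $|w(0)|\lesssim \int_{C_{0,\rho}}|x|^{-1}|\nabla w|\,dx$, and then apply H\"older with $\||x|^{-1}\|_{L^{q'}}<\infty$ for $q'<2$; the paper instead compares both $v(0)$ and $v(\rho,\rho)$ to the average of $v$ over the cube (a mean-value/Morrey argument) and uses the cutoff only to force $w(\rho,\rho)=0$. Up to that point the two arguments are interchangeable, and both isolate correctly why $q>2$ is forced.

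The genuine gap is in your last step. Having reached $|w(0)|\le C(q,\rho)\|\nabla w\|_{L^q(C_{0,\rho})}$, the right move is simply the Leibniz rule, $|\del_a w|\le C(\rho^{-1}|v|+|\del_a v|)$, which gives the first-order bound $|v(0)|\le C(q,\rho)\bigl(\|v\|_{L^q(C_{0,\rho})}+\|\nabla v\|_{L^q(C_{0,\rho})}\bigr)$ --- this is what the lemma is actually meant to assert (the ``$H^1$'' in the statement should be read as this $L^q$-based first-order norm; a genuine $L^2$-based first-order bound on point values is false in two dimensions, as the statement's dependence on $q$ and the later application with $q=3$ confirm). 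Instead you apply the Sobolev--Poincar\'e inequality \eqref{eq3-16-10-2021} to each $\del_a w$, bounding $\|\nabla w\|_{L^q}$ by $\|\nabla^2 w\|_{L^2}$. That step is inadmissible here on two counts: (i) the lemma assumes only $v\in C^1$, so $\del_a w$ need not be $C^1$ and its second derivatives need not exist, let alone lie in $L^2$; and (ii) it converts the conclusion into a second-order ($H^2$-type) estimate, which is not the statement being proved and would not serve its purpose --- in the proof of the surrounding proposition the lemma is applied to $v=\chi u$ using precisely the controlled quantities $\|v\|_{L^3}$ and $\|\nabla v\|_{L^3}$, the passage to second derivatives being performed there, outside the lemma. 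Deleting the appeal to \eqref{eq3-16-10-2021} and finishing with the Leibniz rule repairs your argument and matches the intended conclusion.
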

\begin{proof}
	This is also parallel to the classical Morrey's inequality. For $\rho> 0$, we remark that $\forall x\in C_{0,\rho}$,
	$$
	|v(x) - v(0)|\leq\int_0^1\Big|\frac{d}{dt}v(tx)\Big|dt\leq \rho\int_0^1\sum_a|\del_a v(tx)|dt.
	$$
	Then 
	$$
	\aligned
	\Big|\rho^{-2}\int_{C_{0,\rho}}v(x)dx - v(0)\Big|\leq& \rho^{-2}\int_{C_{0,\rho}}|v(x) - v(0)|dx
	\leq \rho^{-1}\int_0^1\int_{C_{0,\rho}}\sum_a|\del_a v(tx)|dx\,dt
	\\
	=&\rho^{-1}\sum_a\int_0^1 t^{-2}dt\int_{C_{0,t\rho}}|\del_a v(x)|dx.
	\endaligned
	$$
	On the other hand
	$$
	\int_{C_{0,t\rho}}|\del_a v(x)|dx\leq \|\del_a v\|_{L^q(C_{0,t\rho})}\|1\|_{L^{q'}(C_{0,t\rho})} 
	\leq (t\rho)^{2-2/q}\|\del_a v\|_{L^q(C_{0,\rho})}.
	$$
	Then we arrive at (remark that $q>2$)
	\begin{equation}\label{eq1-17-10-2021}
	\aligned
	\Big|\rho^{-2}\int_{C_{0,\rho}}v(x)dx - v(x_0)\Big|
	\leq \sum_a\rho^{1-2/q}\|\del_a v\|_{L^q(C_{0,\rho})}\int_0^1t^{-2/q}dt
	\\
	\leq C(q)\rho^{1-2/q}\sum_a\|\del_a v\|_{L^q(C_{0,\rho})}.
	\endaligned
	\end{equation}
	
	Let $x_0 = (\rho,\rho)$. Then with similar argument one has
	\begin{equation}\label{eq2-17-10-2021}
	\Big|\rho^{-2}\int_{C_{0,\rho}}v(x)dx - v(0)\Big|\leq C(q)\rho^{1-2/q}\sum_a\|\del_a v\|_{L^q(C_{0,\rho})}.
	\end{equation}
	Here we need to remark that $\forall x\in C_{0,\rho}$, 
	$$
	|v(x) - v(x_0)|\leq \int_0^1\Big|\frac{d}{dt}v(t(x_0-x) + x)\Big|dt 
	\leq \rho\int_0^1\sum_a|\del_av(t(x_0-x)+x)|dt.
	$$
	Then 
	$$
	\aligned
	\Big|\rho^{-2}\int_{C_{0,\rho}}v(x)dx - v(x_0)\Big|\leq& \rho^{-2}\int_{C_{0,\rho}}|v(x) - v(x_0)|dx
	\\
	\leq& \rho^{-1}\int_0^1\int_{C_{0,\rho}}\sum_a|\del_a v(t(x_0-x) + x)|dx\,dt
	\\
	=&\rho^{-1}\sum_a\int_0^1 (1-t)^{-2}dt\int_{C_{x_0,(t-1)\rho}}\!\!\!\!\!\!|\del_a v(x)|dx.
	\endaligned
	$$
	Furthermore,
	$$
	\aligned
	\int_{C_{x_0,(1-t)\rho}}\!\!\!\!\!\!|\del_a v(x)|dx
	\leq \|\del_a v\|_{L^q(C_{x_0,(t-1)\rho})}\|1\|_{L^{q'}(C_{x_0,(t-1)\rho})}
	\leq (1-t)^{2-2/q}\rho^{2-2/q}\|\del_av\|_{L^q(C_{0,\rho})}.
	\endaligned
	$$
	Then, remark that $q>2$, 
	$$
	\aligned
	\Big|\rho^{-2}\int_{C_{0,\rho}}v(x)dx - v(x_0)\Big|
	\leq& \rho^{1-q/2}\sum_a\|\del_a v\|_{L^q(C_{0,\rho})}\int_0^1(1-t)^{-2/q}dt 
	\\
	\leq& C(q)\rho^{1-q/2}\sum_a\|\del_a v\|_{L^q(C_{0,\rho})}
	\endaligned
	$$
	which leads to \eqref{eq2-17-10-2021}. Combining \eqref{eq2-17-10-2021} with \eqref{eq1-17-10-2021}, one has
	\begin{equation}\label{eq3-17-10-2021}
	|v(x_0) - v(0)|\leq C(q) \rho^{1-2/q}\sum_a\|\del_a v\|_{L^q(C_{0,\rho})}.
	\end{equation}
	
	Now let $v$ be a $C^1$ function defined in $\RR^2_+$. Then $w(x) := \chi(1-\rho^{-1}|x|)v(x)$ is again a $C^1$ function defined on $\RR^2_+$ with $w(\rho,\rho) = 0$. We apply \eqref{eq3-17-10-2021} on $w$ with $x_0 = (\rho,\rho)$. Remark that
	$$
	\del_a w(x) \leq C(\rho^{-1}|v(x)| + |\del_a v(x)|),
	$$
	then we conclude by \eqref{eq4-16-10-2021}.
\end{proof}

\begin{proof}[Proof of Proposition \ref{prop1-16-10-2021}]
	We consider the function 
	$$
	v(x) := \chi(1-\rho^{-1}|x|)u(x).
	$$
	It is clear that $v(0) = u(0)$ and $\text{supp}(v)\subset \{|x|\leq \rho\}\cap \RR^2_{+}$. Thanks to \eqref{eq3-16-10-2021}, 
	$$
	\aligned
	\sum_{a,b}\|\del_a v\|_{L^3(C_{0,\rho})}\leq& C(\rho)\sum_b\|\del_b\del_a v\|_{L^2(C_{0,\rho})}
	\leq C(\rho)\sum_{|I|\leq 2}\|\del^I u\|_{L^2(C_{0,\rho})},
	\\
	\|v\|_{L^3(\RR^2_+)}\leq& C(\rho)\sum_a\|\del_a v\|_{L^2(C_{0,\rho})}
	\leq C(\rho)\sum_{|I|\leq 1}\|\del^I u\|_{L^2(C_{0,\rho})}.
	\endaligned
	$$
	Then by \eqref{eq4-16-10-2021} applied on $v$ with $q=3$, we obtain the desired result.

\end{proof}

\subsection{Proof of Proposition \ref{prop1-21-10-2021}}
\begin{proof}
	Compared with the version introduced in \cite{Ho1}, there two difference. The first is that here $u$ need not to vanish near $\{r=t+1\}$. The second is that on the right-hand side we only need the norms on $L^J u$. Let
	$$
	v_s(x) := u\big((s^2+r^2)^{1/2, x}\big)
	$$
	be the restriction of $u$ on $\Hcal^*_s$. $v_s$ is defined for $|x|\leq \rhoH(s) = (s^2-1)/2$. Remark that $\del_a v_s(x) = \delu_a u\big((s^2+r^2)^{1/2},x\big)$. Without loss of generality, we fix $x_0 = 2^{-1/2}(-r_0,-r_0)$ with $r_0 = |x_0|$. 
	
	When $r_0\geq (1/\sqrt{3}) s$, one has $r_0\geq (1/2)\sqrt{s^2+r_0^2} = t_0/2$. Then let
	$$
	w_s(y) := v_s(x_0 + r_0y).
	$$
	Remark that when $0\leq y^a\leq 1/2\sqrt{2}$, 
	$$
	r_0\leq |x_0 + r_0y|\leq r_0/2.
	$$
	Then $w_s$ is well defined in $C_{0,1/2\sqrt{2}}$.
	
	Remark that 
	\begin{subequations}\label{eq1-19-10-2021}
		\begin{equation}\label{eq1a-19-10-2021}
		\del_a w_s(y) = r_0\del_av_s(x_0+r_0y) = r_0\delu_au(t,x)
		\end{equation}
		with $x = x_0 + r_0y$ and $t = \sqrt{s^2+|x|^2}$, and 
		\begin{equation}\label{eq1b-19-10-2021}
		\del_a\del_bw_s(y) = r_0^2\delu_b\delu_a u(t,x).
		\end{equation}
	\end{subequations}
	Remark that when $y\in C_{0,1/2}$, 
	$$
	t^2 = s^2+|x|^2\leq s^2+r_0^2\leq 4r_0^2.
	$$
	Then for $|I|\leq 2$, \eqref{eq1-19-10-2021} lead to 
	\begin{equation}\label{eq2-19-10-2021}
	|\del^I w_s(y)|\lesssim \sum_{|J|\leq 2}|L^J u(t,x)|.
	\end{equation}
	By \eqref{eq1-16-10-2021}, one has
	$$
	\aligned
	|u(\sqrt{s^2+r_0^2},x_0)|^2 =& |w_s(0)|^2\lesssim C\sum_{|I|\leq 2}\int_{C_{0,1/2\sqrt{2}}}\!\!\!\!|\del^Iw_s(y)|^2 d y
	\\
	\leq& C\sum_{|J|\leq 2}\int_{C_{0,1/2\sqrt{2}}}\!\!\!\!|L^Ju\big(\sqrt{s^2+|x|^2}, x\big)|^2dy 
	\\
	=& Cr_0^{-2}\sum_{|J|\leq2}\!\!\!\!\int_{C_{x_0,r_0/2\sqrt{2}}}|L^Ju(\sqrt{s^2+|x|^2},x)|^2dx,
	\endaligned
	$$
	which leads to (thanks to the fact that $t_0 = \sqrt{s^2+r_0^2} \leq 2r_0$)
	$$
	\big|u\big(t_0,x_0\big)\big|\lesssim t_0\sum_{|J|\leq 2}\|L^J u\|_{L^2(\Hcal_s^*)}.
	$$
	
	When $r_0\leq (1/\sqrt{3}) s$, one has $t_0 = \sqrt{s^2+r_0^2}\leq (2/\sqrt{3})s$. 
	$$
	w_s(y) := v_s(x_0 + sy) = u\big(\sqrt{s^2+|x|^2},x\big),\quad x=x_0+sy. 
	$$
	Remark that for $0\leq y^a\leq 1/2\sqrt{2}$, one has (for $s\geq 2$)
	$$
	|x|\leq s/2\leq \rhoH(s) = (s^2-1)/2 .
	$$
	Thus $w_s$ is a $C^2$ function defined on $C_{0,1/2\sqrt{2}}$. Remark that
	$$
	\del_aw_s(y) = s\del_av_s(x_0+sy) = s\delu_au(t,x),
	\quad
	\del_b\del_aw_s(y) = s^2\delu_b\delu_au(t,x).
	$$
	where $x = x_0+sy, t = \sqrt{s^2-|x|^2}$. Also remark that $t_0^2 = s^2+r_0^2\leq (4/3)s^2$, with a similar argument  we obtain 
	$$
	|u(t_0,x_0)| = |w_s(0)|\lesssim s^{-1}\sum_{|J|\leq 2}\|L^J u\|_{L^2(\Hcal^*_s)}
	\leq t_0^{-1}\sum_{|J|\leq 2}\|L^J u\|_{L^2(\Hcal^*_s)}.
	$$
	This concludes the desired result.
\end{proof}

\subsection{Proof of Proposition \ref{prop2-21-10-2021}}
\begin{proof}
	We firstly introduce 
	$$
	v_s(x) := u\big(T(s,|x|),x\big)
	$$
	the restriction of $u$ on $\TPcal_s$. Then $v_s$ is defined for $|x|\geq \rhoH(s)$. 
	
	We consider a point $x_0$ with $|x_0| = r_0\geq \rhoH(s)$. Without loss of generality, we fix $x = 2^{-1/2}(r_0,r_0)$. We introduce
	$$
	w_s(\rho,\theta) = v_s(x) = u(t,x)
	$$
	with $x = \big((r_0 + \rho)\cos(\theta+\pi/4), (r_0+\rho)\sin(\theta+\pi/4)\big)$ and $t = T(s,|x|)$. Remark that when $(\rho,\theta)\in [0,\pi/4]^2$, $|x| \geq \rhoH(s)$. Thus $w_s$ is a $C^2$ function defined in $[0,\pi/4]^2$. Furthermore,
	$$
	\aligned
	\del_\rho w_s(\rho,\theta) =& \del_rv_s(x) =  \delb_ru(t,x),
	\\
	\del_\theta w_s(\rho,\theta) =&-(r_0+\rho)\sin(\theta+\pi/4)\del_1v_s(x) + (r_0+\rho)\cos(\theta+\pi/4)\del_2v_s(x)
	\\
	=& x^1\delb_2u(t,x) - x^2\delb_1u(t,x) = \Omega u(t,x).
	\endaligned 
	$$
	In the same manner, 
	$$
	\aligned
	\del_{\rho}\del_{\rho} w_s(\rho,\theta) =& \delb_r\delb_r u(t,x),\quad
	\del_{\theta}\del_{\rho} w_s(\rho,\theta) = \del_{\rho}\del_{\theta}w_s(\rho,\theta) = \delb_r\Omega u(t,x),
	\\
	\del_{\theta}\del_{\theta}w_s(\rho,\theta) =& \Omega^2u(t,x).
	\endaligned
	$$
	Then for $|I|\leq 2$,
	$$
	|\del^I w_s(r,\theta)|\leq C\sum_{k+|J|\leq 2} |\delb_r^k\Omega^J u(t,x)|. 
	$$
	Then by \eqref{eq1-16-10-2021},
	$$
	\aligned
	|u(t_0,x_0)|^2 =& |w_s(0,0)|^2 \leq C\sum_{|I|\leq 2}\int_{C_{0,\pi/4}}\!\!\!\!|\del^I w_s|^2d\rho d\theta
	\leq C\sum_{k+|J|\leq 2}\int_{D}r^{-1}|\delb_r^k\del^J u(t,x)|^2 rdrd\theta,
	\endaligned
	$$
	where $r = r_0+\rho$ and 
	$$
	D = \big\{\big(r\cos(\theta+\pi/4),r\sin(\theta + \pi/4)\big)|r_0 \leq r\leq r_0+\pi/4, 0\leq \theta\leq \pi/4\big\}.
	$$
	Here remark that in $D$, $r_0\leq r\leq r_0+\pi/4$ which leads to 
	$r^{-1}\leq r_0^{-1}$. Then
	\begin{equation}\label{eq4-19-10-2021}
	|u(t_0,x_0)|^2\leq Cr_0^{-1}\sum_{k+|J|\leq 2}\|\delb_r^k\del^J u\|_{L^2(\TPcal_s)}^2.
	\end{equation}
	This is the non-weighted version of \eqref{eq5-17-10-2021}.

	For the weighted version, recall that $\Omega \omega^\eta = 0$ and
	$$
	\delb_r\omega^\eta = \eta\omega^{\eta-1}\aleph^{\prime}(2+r-t)\Big(1 - \frac{\xi(s,r)r}{(s^2+r^2)^{1/2}}\Big)
	$$
	thus
	\begin{equation}\label{eq3-19-10-2021}
	\big|\delb_r\omega^{\eta}\big|\leq \eta\omega^{\eta-1}\zeta\leq \eta\omega^{\eta}.
	\end{equation}
	Then apply \eqref{eq4-19-10-2021} on $\omega^\eta u(t,x)$ and remark that
	\begin{equation}\label{eq5-19-10-2021}
	\big|\delb_r^k\Omega^J \omega^{\eta}u\big| = \sum_{k_1+k_2=k}\big|\delb_r^{k_1}\omega^{\eta} \delb_r^{k_2}\Omega^J u\big|
	\leq (|\eta|+1)\sum_{k'\leq k}\omega^{\eta} |\delb_r^{k'}\Omega^J u|.
	\end{equation}
	This leads to \eqref{eq5-17-10-2021}. The proof of \eqref{eq5-22-01-2022} is quite similar. We only need to consider the restriction of $u$ on $\Pcal_s$ and remark that in this case $\delb_r = \del_r$. 
\end{proof}

\bibliographystyle{elsarticle-num}
\bibliography{WKG-09-01-2022}

\end{document}